\documentclass[]{article}

\usepackage[style=alphabetic, backend=biber, url=false, doi=false, isbn=false, eprint=false, maxbibnames=99, maxcitenames=4]{biblatex}
\usepackage{graphicx}
\usepackage{amsmath}
\usepackage{amssymb}
\usepackage{amsthm} 
\usepackage{bm}
\usepackage{enumerate}
\usepackage{color}
\usepackage{mathdots}
\usepackage{sectsty}
\usepackage{hyperref}
\hypersetup{hidelinks}
\usepackage{tikz}
\usepackage{caption}
\usepackage{adjustbox}
\usepackage{yhmath}
\usepackage{tikz-cd}
\usepackage{mathrsfs}
\usepackage{quiver}
\sectionfont{\scshape\centering\fontsize{12}{14}\selectfont}
\subsectionfont{\scshape\fontsize{12}{14}\selectfont}
\usepackage{fancyhdr}

\newcommand\shorttitle{Ind-Banach approach to Grothendieck duality}
\newcommand\authors{Arun Soor}

\newtheorem{thm}{Theorem}[section]
\newtheorem{cor}[thm]{Corollary}
\newtheorem{lem}[thm]{Lemma}
\newtheorem{defn}[thm]{Definition}
\newtheorem{rmk}[thm]{Remark}
\newtheorem{example}[thm]{Example}

\newtheorem{prop}[thm]{Proposition}

\newtheorem{notation}[thm]{Notation}
\newtheorem{construction}[thm]{Construction}

\newtheorem*{thm*}{Theorem}

\newtheorem{warning}[thm]{Warning}
\newtheorem*{claim*}{Claim}
\newcounter{egapar}[section]

\renewcommand{\theegapar}{\thesection.\arabic{egapar}}

\newcommand{\egapar}{%
  \par\vspace{0.5\baselineskip}\noindent 
  \refstepcounter{egapar}
  \textbf{\theegapar.} 
  \ignorespaces 
}

\newcommand*{\sheafhom}{\mathcal{H}\kern -.5pt om}

\newcommand{\Addresses}{{
  \bigskip
  \footnotesize

  Arun Soor, \textsc{Department of Mathematics, University of Michigan, 4831 East Hall, 530 Church Street, Ann Arbor, MI 48109}\par\nopagebreak
  \textit{E-mail address}, Arun Soor, \texttt{\href{mailto:soor@umich.edu}{soor@umich.edu}}}}

\title{\large \bf Ind-Banach approach to Grothendieck duality in Rigid-analytic geometry}
\author{Arun Soor}
\date{\today}
\begin{document}
\maketitle
\begin{abstract}
We prove a duality theorem for quasi-compactly supported cohomology of quasi-coherent sheaves on rigid-analytic spaces, with respect to a smooth and Kiehl partially-proper morphism. This includes an identification of the dualizing object with volume forms. The functional analysis underlying our theory does not use condensed mathematics, but rather Ind-Banach spaces, following Ben-Bassat--Kelly--Kremnizer.
\end{abstract}
\tableofcontents
\section{Introduction}
\egapar I will start by providing some historical background. For the purposes of this introduction only, $F$ denotes an abstract field, $\mathbf{C}$ denotes the field of complex numbers and $K$ denotes a non-Archimedean Banach field with non-trivial valuation. \emph{No assumptions are made on the characteristic of $F$ or $K$} and additionally \emph{we do not assume that $K$ is discretely valued}.  If $X$ is a smooth proper complex analytic manifold, rigid analytic space over $K$, or algebraic variety over $F$, of dimension $d$ then there is a canonical isomorphism\footnote{Here the $\operatorname{Ext}$-group is computed in the category of coherent sheaves on $X$.} $\operatorname{Ext}^i_X(M, \Omega_X^d) \cong H^{d-i}(M)^\lor$ for every $i \geqslant 0$ and every coherent sheaf $M$ on $X$. In the complex and algebraic settings this is due to Serre \cite{SerreComplex,SerreICM}, and consequently, this isomorphism is known as Serre duality; in the rigid-analytic setting, this is due to Chiarelotto, van der Put, and Beyer \cite{ChiarellottoDuality,vanderPutSerre,BeyerSerre}. This is the generalization to higher dimensions of much older results on the duality of linear series, on Riemann surfaces due to Roch in 1865 \cite{Roch}, and on algebraic curves due to Hasse \cite{HasseDuality} and Chevalley \cite{ChevalleyIntroduction}. Serre duality can be naturally phrased in terms of the derived category. Let $f: X \to *$ be the structure morphism and let $f_*$ be the derived pushforward functor. Then there is a canonical equivalence $\operatorname{Hom}_X(M,\Omega^d_X[d]) \simeq (f_*M)^\lor$. (Our convention is that all functors appearing here are implicitly derived). In the case when $X$ is an algebraic variety, Grothendieck proved the following, the details of which were provided in Hartshorne's book \emph{Residues and Duality} \cite{hartshorne_residues_1966}. Our convention is that $\operatorname{QCoh}(X)$ stands for the derived category of quasi-coherent sheaves.
\begin{thm}[Grothendieck]\label{thm:Intro1}
Let $f: X \to Y$ be a smooth proper morphism of algebraic varieties over $F$, of dimension $d$. Then for any $M \in \operatorname{QCoh}(X)$, $N \in \operatorname{QCoh}(Y)$, there is a canonical equivalence $\underline{\operatorname{Hom}}_Y(f_*M,N) \simeq f_*\underline{\operatorname{Hom}}_X(M, f^*N \otimes_X \Omega_{X/Y}^d[d])$ in $\operatorname{QCoh}(Y)$. (In other words, there is an adjunction $f_* \dashv f^* \otimes_X \Omega^d_{X/Y}[d]$). 
\end{thm}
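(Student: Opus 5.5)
The plan is to build the right adjoint $f^!$ of $f_*$ abstractly and then identify it with $f^*(-)\otimes_X \Omega^d_{X/Y}[d]$.

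\textbf{Step 1: existence of $f^!$ and the internal form.} Since $f$ is quasi-compact and quasi-separated, $f_*\colon \operatorname{QCoh}(X)\to\operatorname{QCoh}(Y)$ preserves colimits; the categories are presentable (indeed compactly generated by perfect complexes), so $f_*$ has a right adjoint $f^!$. Because $f$ is qcqs, the projection formula $f_*(M\otimes f^*P)\simeq f_*M\otimes P$ holds. Feeding a test object $P$ through the global adjunction and the projection formula upgrades $\operatorname{Hom}_Y(f_*M,N)\simeq \operatorname{Hom}_X(M,f^!N)$ to the internal statement $\underline{\operatorname{Hom}}_Y(f_*M,N)\simeq f_*\underline{\operatorname{Hom}}_X(M,f^!N)$. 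It therefore remains to produce a natural equivalence $f^!N\simeq f^*N\otimes_X\Omega^d_{X/Y}[d]$.

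\textbf{Step 2: reduction to $N=\mathcal{O}_Y$.} Since $f$ is proper and smooth, $f_*$ preserves perfect complexes. From this I get that $f^!$ commutes with colimits, and that the comparison map $f^*N\otimes f^!\mathcal{O}_Y\to f^!N$ (adjoint to $f_*(f^*N\otimes f^!\mathcal{O}_Y)\simeq N\otimes f_*f^!\mathcal{O}_Y\to N$) is an equivalence. To check this, it suffices to test on perfect $N$, where it follows by dualizing. Moreover $f^!$ is compatible with flat base change on $Y$, again because of the projection formula and base change for $f_*$. Thus the question is local on $Y$, and I reduce to $Y$ affine and to identifying $\omega:=f^!\mathcal{O}_Y$.

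\textbf{Step 3: $\omega\simeq\Omega^d_{X/Y}[d]$.} This is the main obstacle. First, $\omega$ is an invertible sheaf placed in degree $-d$. The argument is that $f^!$ is compatible with base change to fibres, and on a smooth proper variety over a field Serre duality shows the dualizing complex is a shifted line bundle; equivalently, one can argue via the diagonal. Concretely, $\Delta\colon X\to X\times_Y X$ is a regular immersion of codimension $d$, so $\Delta^!$ can be computed by the Koszul complex, giving $\Delta^!\mathcal{O}\simeq \det(N_\Delta)[-d]=(\Omega^d_{X/Y})^{-1}[-d]$. Combining this with $\mathrm{id}=\Delta^!p_2^!$ and base change $p_2^!\mathcal{O}_X\simeq p_1^*\omega$ yields $\mathcal{O}_X\simeq \omega\otimes(\Omega^d_{X/Y})^{-1}[-d]$, hence $\omega\simeq\Omega^d_{X/Y}[d]$. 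I expect the delicate points to be two: making the identification canonical, in the sense that it is independent of choices and compatible with the trace, and justifying the base change $p_2^!\simeq$ pullback of $f^!$ along the flat map $p_1$. For the latter I would use Step 2: the Tor-independent base change of $f_*$ gives, by adjunction, a base-change map for $f^!$, which is an equivalence since both sides commute with colimits and agree on perfect objects.
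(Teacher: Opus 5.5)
The paper does not prove this statement itself; it quotes it from Grothendieck and Hartshorne as motivation. The relevant comparison is therefore with its proof of the analytic analogue (Theorems \ref{thm:GD1} and \ref{thm:MainThmFinal}). Your argument is correct and shares the paper's overall architecture, but it differs at the two main steps.

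\textbf{Step 2.} You obtain $f^!\simeq f^*(-)\otimes\omega$ from the fact that $f_*$ preserves perfect complexes. The paper cannot use properness here, since $f$ is only partially proper and $f_!\neq f_*$. It instead proves suaveness locally on $X$ in the partially-proper topology, factoring $f$ through a quasi-smooth closed immersion into $Y\times\mathring{\mathbf{D}}^d$ and computing the open disc by hand (Proposition \ref{prop:Opendisksuave}). Your diagonal identity $\mathcal{O}_X\simeq\Delta^!p_2^!\mathcal{O}_X\simeq\omega\otimes\Delta^!\mathcal{O}$ is exactly the relation $f^!1\simeq(\Delta_f^!1)^{\otimes-1}$ that the paper imports from Heyer--Mann.

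\textbf{Step 3.} You compute $\Delta^!\mathcal{O}$ by local Koszul complexes. The paper instead uses the $\mathbf{G}_m$-equivariant deformation to the normal cone (Proposition \ref{prop:deformationofdualizing}) to replace $\Delta$ by the zero section of $V(\mathbf{L}[-1])$. It then identifies $s_0^!1[n]\simeq\det E$ by a universal computation over $\operatorname{Vect}_n\simeq */\operatorname{GL}_n$ (Lemma \ref{lem:DualizingOfVectorBundle}).

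Your route is more elementary and adequate classically. There $\Delta^!\mathcal{O}$ is a line bundle in a single degree, so the local Koszul identifications glue once you know the fundamental local isomorphism is independent of the chosen regular sequence; this is the canonicity point you flag. The paper's route gives a choice-free identification compatible with all pullbacks. That is what derived spaces require, because mapping spaces between shifted line bundles have higher homotopy and naive gluing is unavailable.

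One justification needs repair. For $p_2^!\mathcal{O}_X\simeq p_1^*\omega$ you argue that the base-change map is an equivalence because both sides commute with colimits in $N$ and agree on perfect $N$. But by your Step 2, agreement on perfect $N$ reduces to $N=\mathcal{O}_Y$, which is the claim itself.

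Instead, localize so that the base change $g\colon Y'\to Y$ is affine; this is possible since varieties are separated. Then the objects $g'^*P$ with $P\in\operatorname{Perf}(X)$ generate $\operatorname{QCoh}(X')$. Put $\omega':=f'^!\mathcal{O}_{Y'}$. Using the projection formula for $g'$, flat base change for $g_*$ along $f$, and Tor-independent base change for $f_*$, you get
\[
\operatorname{Hom}(g'^*P,g'^*\omega)\simeq\operatorname{Hom}(P,\omega\otimes f^*g_*\mathcal{O}_{Y'})\simeq\operatorname{Hom}(P,f^!g_*\mathcal{O}_{Y'})\simeq\operatorname{Hom}(f'_*g'^*P,\mathcal{O}_{Y'})\simeq\operatorname{Hom}(g'^*P,\omega').
\]
The second step is Step 2 applied to the non-perfect object $g_*\mathcal{O}_{Y'}$; this is where colimit-preservation of $f^!$ is genuinely used. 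In the paper's formalism, this is simply base change for the dualizing object of a suave morphism.
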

\noindent The above equivalence is known as Grothendieck duality. We highlight that Grothendieck duality generalizes Serre duality in two important directions. Firstly, from the absolute to the relative setting: the base $Y$ is arbitrary. Secondly, no finiteness conditions are imposed on the coefficients: $M$ and $N$ are allowed to be any complexes of quasi-coherent sheaves.

\egapar Since Serre duality exists in both the algebraic and analytic settings, it is natural to ask if there is an analytic counterpart of Grothendieck duality. There are two main obstructions to achieving this goal. 

The first problem is that one must find the correct definition of $\operatorname{QCoh}$ in the analytic setting. Fortunately, this has been resolved by new theories of analytic geometry \cite{CondensedMathematics, CondensedComplexGeometry,DAnG}. In short, the correct definition $\operatorname{QCoh}$ must account for the analytic structure on the rings we work with. At the same time, it must also be derived: not only because this is the most natural way to formulate Grothendieck duality, but also because of the pathology that the Grothendieck topologies appearing in analytic geometry are not flat, and so descent for $\operatorname{QCoh}$ cannot hold without passing to the stable setting. The crux is therefore the question of how to synthesise functional analysis and homological algebra. Our definition of $\operatorname{QCoh}$ follows Ben-Bassat--Kelly--Kremnizer \cite{DAnG} and uses the theory of $\operatorname{Ind}$-Banach spaces, rather than the condensed mathematics of \cite{CondensedMathematics,CondensedComplexGeometry}.

The second problem is that, for applications in rigid geometry, duality for proper morphisms is not enough. Intuitively, the analytic topology is much finer than the Zariski topology and so being quasi-compact is a much stronger condition. For instance, the affine line $\mathbf{A}^1$, the open unit disk $\mathring{\mathbf{D}}^1$, and the upper half-plane are not quasi-compact, but we should still expect to have a duality theory for these. For a morphism $f: X \to Y$ of rigid or complex-analytic spaces subject to some mild hypotheses, we need to define the functor $f_!: \operatorname{QCoh}(X) \to \operatorname{QCoh}(Y)$ of pushforward with \emph{relative} quasi-compact supports, in such a way that $f_!$ is compatible with base-change and satisfies the projection formula. This is a highly non-trivial problem which has been resolved by the recent theory of six-functor formalisms \cite{liu_enhanced_2017, GaitsgoryStudy1,  heyer_6-functor_2024}. As has been demonstrated in \cite{ScholzeSixFunctors}, this theory also vastly simplifies the proof of many duality theorems.
\egapar In this article we make use of the six-functor formalism for quasi-coherent sheaves in rigid geometry as developed in our previous work \cite{SoorThesis}. Before stating our Main Theorem, we explain how this implemented. Because our category $\operatorname{QCoh}$ of coefficients is derived, it stands to reason that our algebras should be too, otherwise we will not be able to obtain basic properties such as base-change. The category $\operatorname{Afnd}$ of \emph{derived affinoid spaces} is opposite to the category $\operatorname{AfndAlg}$ of \emph{derived affinoid algebras}, defined as in Definition \ref{defn:DerivedAffinoid}. The category $\operatorname{Afnd}$ is endowed with the \emph{rational Grothendieck topology} $\tau_\mathrm{rat}$. There is a six-functor formalism in the sense of \cite{heyer_6-functor_2024}, denoted $\operatorname{QCoh}$, defined on $\operatorname{Shv}_{\tau_{\mathrm{rat}}}(\operatorname{Afnd})$, where $\operatorname{Shv}$ stands for the category of sheaves valued in $\infty$-groupoids. When $X = \operatorname{Spec}(A)$ is the object of $\operatorname{Afnd}$ corresponding to $A \in \operatorname{AfndAlg}$, one has
\begin{equation}
    \operatorname{QCoh}(X) = \operatorname{Mod}_A D(\operatorname{IndBan}_K),
\end{equation}
where $D(\operatorname{IndBan}_K)$ is the derived category of the $\operatorname{Ind}$-category of non-Archimedean $K$-Banach spaces. We denote by $E$ the class of edges for which $!$-functors are defined in this six-functor formalism. The category of derived rigid spaces is a full subcategory of $\operatorname{Shv}_{\tau_{\mathrm{rat}}}(\operatorname{Afnd})$, defined in Definition \ref{defn:derivedrigidspace}. For a morphism $f: X \to Y$ in $\operatorname{Shv}_{\tau_{\mathrm{rat}}}(\operatorname{Afnd})$ we denote the six operations by $(f^*,f_*,f_!,f^!,\hat{\otimes}_X, \underline{\operatorname{Hom}}_X)$, where as usual every second functor is right adjoint to the previous, and the $!$-functors are only defined if $f \in E$.
\begin{thm}\label{thm:intro2}
With notations as above. Let $f: X \to Y$ be a morphism of derived rigid spaces. Any undefined notions will be explained subsequently. 
\begin{enumerate}[(i)]
    \item If $f$ is \emph{taut} then $f$ belongs to the class $E$.
    \item If $f$ is \emph{qcqs} then there is a canonical equivalence $f_! \simeq f_*$. 
    \item If $f$ is a \emph{quasi-smooth Zariski-closed immersion} then $f$ is \emph{suave}.
    \item If $f$ is \emph{partially proper} and  \emph{\'etale} then there is a canonical equivalence $f^! \simeq f^*$. 
    \item If $f$ is \emph{Kiehl partially-proper} and \emph{smooth of dimension $d$} then $f$ is \emph{suave} and there is a canonical equivalence $f^! \simeq f^* \hat{\otimes}_X \operatorname{det}(\mathbf{L}_{X/Y})[d]$.
\end{enumerate}
\end{thm}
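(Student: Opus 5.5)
The plan is to treat each part using the six-functor formalism $\operatorname{QCoh}$ on $\operatorname{Shv}_{\tau_{\mathrm{rat}}}(\operatorname{Afnd})$ built in \cite{SoorThesis}, and the formal machinery of \cite{heyer_6-functor_2024,ScholzeSixFunctors}. There, suave (cohomologically smooth) and prim (cohomologically proper) morphisms are characterised by local, base-change-stable conditions, and $E$ is closed under composition, base change and $\tau_{\mathrm{rat}}$-local extension.

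\textbf{Parts (i) and (ii).} I will establish these by the standard extension procedure.
\begin{enumerate}[(1)]
    \item At the affinoid level, declare every morphism of derived affinoids to be in $E$ with $f_! = f_*$. This is legitimate because $f_*$ on $\operatorname{Mod}_A D(\operatorname{IndBan}_K)$ satisfies base change and the projection formula. Both follow from the fact that $\hat{\otimes}$ is computed by derived completed tensor products of Ind-Banach modules.
    \item Extend to qcqs maps by descent along finite rational covers, using the Čech description of $f_*$. This yields $f_! \simeq f_*$, i.e. (ii).
    \item For taut $f$, write $X$ locally on $Y$ as a filtered union of quasi-compact opens $U_i$. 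Each open immersion $j_i : U_i \to X$ is taut and lies in $E$ with $j_{i!}$ left adjoint to $j_i^*$. Then set $f_! = \operatorname{colim} (f j_i)_*$, which gives (i) via the extension results of \cite{heyer_6-functor_2024}.
\end{enumerate}

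\textbf{Part (iii).} Let $i : Z \to X$ be a quasi-smooth Zariski-closed immersion. The question is affinoid-local, so I will write $Z$ locally as the derived vanishing locus $A \to A/\!\!/(f_1,\dots,f_c)$. Then $i_* \mathcal{O}_Z$ is a Koszul complex, hence perfect, and I will compute
\[
i^! M \simeq i^* M \hat{\otimes} \det(N_{Z/X})[-c].
\]
Suaveness follows from the criterion that $i^!\mathcal{O}$ is invertible and the natural map $i^!\mathcal{O} \otimes i^* \to i^!$ is an equivalence. For a finite morphism $i_*$ is conservative, so it suffices to check this after applying $i_*$. There the statement reduces to $\underline{\operatorname{Hom}}_A(A/\!\!/f, M) \simeq M \otimes (A/\!\!/f)[-c]$, which holds by Koszul self-duality.

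\textbf{Part (iv).} An étale map is a local isomorphism for the relevant topology, so $f^! \simeq f^*$ locally; the real content is globalising. Partial properness makes the diagonal $\Delta$ a closed immersion that is also open, i.e. a clopen immersion. Hence $\Delta^! \simeq \Delta^*$ and $\Delta_! \simeq \Delta_*$, and the standard argument (cf. \cite{ScholzeSixFunctors}) identifies $f^!$ with $f^*$ through the trace $\Delta_! \mathcal{O} \to \mathcal{O}$.

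\textbf{Part (v).} This is the main obstacle. I will factor $X \to Y$ locally as $X \to \mathbf{A}^d_Y$ étale, so it suffices to treat the relative disc or affine line together with a gluing argument. The key computation is that $\mathbf{A}^1 \to *$ (a taut, Kiehl partially-proper map) is suave with $f^!\mathcal{O} \simeq \Omega^1[1]$. For this I would compute $f_!$ on the affine line as compactly supported cohomology of nested discs. The answer is $H^1_c$, the space of germs of functions on an annulus $\{r<|t|<1\}$ modulo those extending inward, with a residue map to $K$. The hard functional-analytic point is to check that the resulting pairing identifies the dual correctly in $\operatorname{IndBan}_K$. This requires nuclearity and compact-type properties, using the Kiehl condition (inner relatively compact inclusions of discs) to get transition maps that are compact.

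Next, the diagonal of a smooth $X \to Y$ is a quasi-smooth Zariski-closed immersion, so part (iii) computes $\Delta^!$ with conormal bundle $\mathbf{L}_{X/Y}$. Combining this with suaveness of $f$ via the formula $f^!\mathcal{O} \simeq (\Delta^! \mathcal{O})^{-1}$ for suave maps gives
\[
f^! \simeq f^* \hat{\otimes}_X \det(\mathbf{L}_{X/Y})[d].
\]
Finally, I would use part (iv) to check that this is compatible with the étale gluing.
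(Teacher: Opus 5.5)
Your (ii) is fine, and your (iii) is exactly the paper's argument: locally a Koszul quotient, which is perfect, hence dualizable, hence suave. The gaps are in (i), (iv) and (v), and they share one error: you treat open immersions and \'etale maps as in \'etale cohomology.

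Here a quasi-compact open immersion $j$ is qcqs, so $j_!\simeq j_*$ is the \emph{right} adjoint of $j^*$, and in general $j^!\not\simeq j^*$. Take the rational subdomain $j\colon U=\operatorname{Spec}K\langle T/\varpi\rangle\hookrightarrow X=\operatorname{Spec}K\langle T\rangle$. There is no nonzero bounded $K\langle T\rangle$-linear map $K\langle T/\varpi\rangle\to K\langle T\rangle$, so $\operatorname{Hom}_X(j_*1_U,1_X)=0$ while $\operatorname{Hom}_U(1_U,j^*1_X)\neq 0$.

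\textbf{Part (iv).} The map $j$ above is a separated \'etale monomorphism whose diagonal is an isomorphism. So your argument (clopen diagonal, hence $\Delta^!\simeq\Delta^*$, hence $f^!\simeq f^*$) would prove $j^!\simeq j^*$, which is false. The clopen diagonal only \emph{constructs} the map $f^!\to f^*$. Showing it is an equivalence is exactly where partial properness must enter, and you never use it beyond separatedness. Nor are \'etale maps local isomorphisms for the analytic topology. The missing inputs are:
\begin{enumerate}[(a)]
\item $j^!\simeq j^*$ for \emph{partially proper} open immersions (Proposition \ref{prop:partiallyproperopen}), which makes the question local for the partially-proper analytic topology on $X$;
\item Huber's theorem that a partially proper \'etale map is finite \'etale locally in that topology (Proposition \ref{prop:ppetalestructure});
\item the trace pairing in the finite \'etale case.
\end{enumerate}

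\textbf{Part (i).} The whole content is that $\{U\to X\}_{U\in qc(X)}$ is a universal $!$-cover when $X$ is taut. You appeal to ``the extension results'' without establishing this $!$-descent. Tautness also plays no role in your sketch, since every quasi-separated space is the filtered union of its quasi-compact opens. The paper proves the descent via $\operatorname{colim}_U\operatorname{QCoh}_!(U)\simeq\operatorname{colim}_Z\Gamma_Z\operatorname{QCoh}(X)\simeq\operatorname{QCoh}(X)$, with $Z$ ranging over quasi-compact closed constructible subsets; this is where Huber's Lemma 5.3.4 for taut spaces is used. The resulting formula is $f_!\simeq\operatorname{colim}_U f_{U,*}i_U^!$, with $i_U^!$ rather than $i_U^*$.

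\textbf{Part (v).} The same issue breaks your reduction. Suaveness is local on $X$ only for the partially-proper analytic topology (Lemma \ref{lem:suavelocal}). \'Etale charts $U\to\mathbf{A}^d_Y$ on affinoid pieces are in general not partially proper, so (iv) does not apply to them. The affinoid pieces are also not suave over $Y$: already $\mathbf{D}^1\to *$ is not, since $K\langle T\rangle$ is not dualizable in $D(\operatorname{IndBan}_K)$. So there is nothing to glue.

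The Kiehl hypothesis is what supplies usable local structure, not compact transition maps. It gives a cover of $X$ by partially proper opens $V_i$ with Zariski-closed immersions $V_i\hookrightarrow\mathring{\mathbf{D}}^{d_i}\times Y$, automatically quasi-smooth (Proposition \ref{prop:SmooothPartiallyProper}). Suaveness then follows from (iii) and the open disc.

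For the disc itself, a residue pairing on $H^1_c$ plus nuclearity would at best control special coefficients. Suaveness needs $f^!N\simeq f^*N[1]$ for all $N\in D(\operatorname{IndBan}_K)$. The paper obtains this algebraically, using that $K[T]\to K\langle T\rangle$ is a homotopy monomorphism and that $j_!M$ dies after base change to $K[T,T^{-1}\rangle$ (Proposition \ref{prop:Opendisksuave}).

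Finally, $f^!1\simeq(\Delta_f^!1)^{\otimes -1}$ is right. But your Koszul computation identifies $\Delta_f^!1$ with $\operatorname{det}(\mathbf{L}_{X/Y}^\vee)[-d]$ only locally, and only after choosing generators. A canonical global equivalence needs coherent gluing of these local identifications. The paper gets this from the deformation to the normal cone, which reduces to the zero section of $V(\mathbf{L}_{X/X\times_YX}[-1])$, together with the rigidity $\operatorname{Map}_*(\operatorname{Vect}_n,\operatorname{Vect}_1)\simeq\mathbf{Z}$.
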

\egapar Let us explain the notations appearing in the above Theorem. 

In (i), a morphism is \emph{taut} if the underlying morphism of classical rigid spaces is \emph{taut} in the sense of Huber \cite[Definition 0.4.7, Definition 5.1.2]{HuberEtale}. This is a very mild condition. In \emph{ibid., p. 18.,} Huber writes: \emph{``I hope that the class of taut morphisms is enough for all applications of rigid geometry."}

In (ii), \emph{qcqs} stands for \emph{quasi-compact and quasi-separated}. In fact, we already proved this part in \cite{SoorThesis}. 

In (iii), \emph{quasi-smooth Zariski-closed immersion} is defined as in Definition \ref{defn:Zariskiclosedquasismooth}. This is the derived analogue of a regular closed immersion. 

In (iii) and (v), the notion of \emph{suave} morphism is defined as in \cite[\S4.5]{heyer_6-functor_2024}. This already implies a version of Grothendieck duality up to the identification of the dualizing object $f^!1$. 

In (iv), we say that a morphism is \emph{partially proper} if the underlying morphism of classical rigid spaces is partially proper in the sense of \cite[Definition 1.3.3]{HuberEtale}. We think of such morphisms as being ``without boundary". Boundaries are a feature of analytic geometry which does not appear in algebraic geometry. 

In (iv), the notion of an \emph{\'etale} morphism of derived rigid spaces is defined in Definition \ref{defn:Derivedsmooth}. 

In (v), we say that a morphism is \emph{Kiehl-partially proper} if the underlying morphism of classical rigid spaces is as in \cite[Remark 1.3.19(i)]{HuberEtale}. This notion is almost the same as that of a partially-proper morphism. Indeed, Kiehl-partially proper implies partially-proper, and the two notions coincide whenever $K$ is discretely valued \cite[Remark 1.3.19]{HuberEtale}. 

In (v), the notion of a \emph{smooth} (of dimension $d$) morphism of derived rigid spaces is defined in Definition \ref{defn:Derivedsmooth}. The object $\mathbf{L}_{X/Y}$ appearing is the \emph{cotangent complex} of the morphism $f$. In fact, by Theorem \ref{thm:CoherentCotangent}, this exists for any morphism of derived rigid spaces. In the case when $f$ is smooth it is a vector bundle. The operation $\operatorname{det}$ is defined in \S\ref{sec:duality}, it is the \emph{determinant} of this vector bundle. 

\egapar By \cite[Proposition 3.2.2(iii)]{heyer_6-functor_2024}, part (v) of Theorem \ref{thm:intro2} yields the following: for every smooth and Kiehl partially-proper morphism $f:X \to Y$ of derived rigid spaces, of dimension $d$, and every $M \in \operatorname{QCoh}(X), N \in \operatorname{QCoh}(Y)$, there is a canonical equivalence 
\begin{equation}
    \underline{\operatorname{Hom}}_Y(f_!M, N) \simeq f_*\underline{\operatorname{Hom}}_X(M, f^*N \hat{\otimes}_X (\operatorname{det}_X \mathbf{L}_{X/Y})[d]). 
\end{equation}
This is the analytic counterpart of Theorem \ref{thm:Intro1}. It is a generalization of the traditional statement of Serre duality for rigid-analytic varieties \cite{ChiarellottoDuality,vanderPutSerre,BeyerSerre} in the following three ways. Firstly, from the absolute to the relative setting: $f$ is allowed to be an arbitrary smooth and Kiehl partially-proper morphism, in particular, there are no restrictions on $Y$. Secondly, no finiteness conditions such as coherence are imposed on $M$ and $N$, and there are no ``$\operatorname{Ext}$-independence" conditions as in \cite[Main Theorem 5.1]{vanderPutSerre}. Thirdly, $X$ and $Y$ are allowed to be derived, rather than just classical rigid spaces. 
\egapar I will outline some potential applications of the main result. 

For the first application, we let $p$ be a prime and specialize to the case when the base field is $\mathbf{C}_p$. We consider a compact open subgroup $K = K^pK_p \subseteq \operatorname{GL}_2(\mathbf{A}^\infty)$ and denote Emerton's completed cohomology of tame level $K^p$ \cite{emerton_interpolation_2006} by $\tilde{H}^i(K^p, \mathbf{C}_p)$. After passing to locally-analytic vectors we obtain an admissible locally-analytic representation $\tilde{H}^i(K^p, \mathbf{C}_p)^\mathrm{la}$ of $\mathrm{GL}_2(\mathbf{Q}_p)$. In \cite{pan_locally_2022-1}, Pan constructs a certain equivariant $\mathscr{D}_X$-module $\mathscr{O}^{\mathrm{la}}$ on $\mathbf{P}^{1, \mathrm{an}}$ such that $H^i(\mathbf{P}^{1, \mathrm{an}}, \mathscr{O}^\mathrm{la}) \simeq \tilde{H}^i(K^p, \mathbf{C}_p)^\mathrm{la}$. A related construction of an equivariant $\mathscr{D}_X$-module on $\mathbf{P}^{1, \mathrm{an}}$ proceeds\footnote{There is a technicality about infinitesimal characters which I am ignoring.} by applying Ardakov's equivariant Beilinson--Bernstein localization \cite{Equivariant} to the dual of $\tilde{H}^i(K^p, \mathbf{C}_p)^\mathrm{la}$. In \cite[Remark 4.2.10]{pan_locally_2022-1}, Pan posits that the two constructions are related by some kind of Serre duality. But $\mathscr{O}^\mathrm{la}$ is not coherent, so the existing Serre duality results do not apply: to establish this relation, we really need Grothendieck duality. 

Serre duality in rigid geometry has important applications in Iwasawa theory, for instance in the theory of explicit reciprocity laws \cite[\S4.2]{schneider2023reciprocitylaws}. For these applications it is important to know many functoriality properties of the duality isomorphism. This is precisely what the six-functor formalism gives.

Serre duality can be used to construct shifted symplectic structures on moduli spaces of perfect complexes on Calabi-Yau varieties \cite{ShiftedSymplectic}. Due to its relative nature, it is plausible that Grothendieck duality can be used to construct families of shifted symplectic structures. Therefore we expect that our result can be used to construct families of shifted symplectic structures in analytic geometry.

\egapar Now I will explain the organization of this paper, as well as highlight some ancillary results which may be of independent interest.

In \S\ref{sec:RestrictionExtension} we develop sheaf-theoretic preliminaries. Many of these results are ``folklore" in the sense that they are already proven in the classical setting in \cite{SGA4-1}, and we merely observe that the argument is formally the same in the higher-categorical setting. 

In \S\ref{sec:derivedalgebracontexts}, we recall Raksit's theory of \emph{derived algebraic contexts} and the theory of \emph{derived algebras} due to Mathew, Raksit and Brantner--Mathew. This is necessary because we do not make any assumption on the characteristic of the ground field $K$; for instance, $K = \mathbf{F}_q(\!(t)\!)$ is permitted. Therefore, we have to exercise caution in our definition of ``derived algebras": Raksit's theory gives the right thing in positive characteristic.  

In \S\ref{sec:Analytification}, we use the sheaf-theoretic tools developed to implement the functor of analytification, with very good functoriality properties: it is a geometric morphism of $\infty$-topoi. The analytification functor is written as a composite of two geometric morphisms, using an intermediate topos introduced in Definition \ref{defn:dri}. This intermediate topos plays a key role in the remainder of the paper as it is large enough to contain objects of both algebraic and analytic nature.

In \S\ref{sec:classical}, we recall some features of the functor of classical truncation introduced in \cite{SoorThesis}, and also recast this as a sheaf-theoretic operation. This section is important for backwards-compatibility. It also allows us to bootstrap results about classical rigid spaces, to prove results about derived rigid spaces.

In \S\ref{sec:relativespectrum} we define the relative spectrum, in analogy with the corresponding construction in algebraic geometry. This is used to define vector bundles and square-zero extensions of arbitrary stacks. 

In \S\ref{sec:cotangent} we define cotangent complexes, smooth and \'etale morphisms, and quasi-smooth Zariski-closed immersions and prove various properties of these. 

In \S\ref{sec:partiallyproper}, we define partially-proper morphisms and prove various properties of these. In particular we prove structure theorems for partially-proper \'etale morphisms (Proposition \ref{prop:ppetalestructure}) and Kiehl partially-proper smooth morphisms \ref{prop:SmooothPartiallyProper}. 

In \S\ref{sec:6FF1}, we prove the first four parts of Theorem \ref{thm:intro2}, and half of Theorem \ref{thm:intro2}(v): that is, we show that $f$ is suave, yielding the equivalence $f^! \simeq f^* \hat{\otimes}_X f^!1$, but do not yet identify the dualizing object $f^!1$. We remark that our proof of Theorem \ref{thm:intro2}(i) does not use compactifications. 

In \S\ref{sec:duality}, we construct the canonical equivalence $f^!1 \simeq \operatorname{det}(\mathbf{L}_{X/Y})[d]$, using a deformation to the normal cone. Our deformation to the normal cone is based on the construction from \cite{BenBassatHekkingBlowUp, hekking_khan_deformation_2025}, using derived Weil restriction. The idea of using a deformation to the normal cone to obtain this equivalence is not original to us. As far as we are aware, in algebraic geometry this was first used in \cite[Ch. 9, Corollary 7.3.2]{GaitsgoryStudy2}, and subsequently in the analytic setting in \cite{CondensedComplexGeometry, zavyalov_poincare_2023, camargo_notes_2026}. 

The final \S\ref{sec:bonus} contains results which are not used elsewhere in the paper. The motivation is the following. In \S\ref{sec:duality}, the deformation space which is constructed is algebro-analytic in nature: it interpolates between a Zariski-closed immersion and its algebraic normal bundle. In this section, we show that there is an analytic deformation space: this is a rigid-analytic space which interpolates between a quasi-smooth Zariski-closed immersion and its analytic normal bundle. 

\egapar  We note also that our method can be used with minimal modifications to prove a duality result for dagger-analytic spaces \cite{GrosseKlonneRigid}. By using Kedlaya's theory of reified adic spaces \cite{kedlaya2015reifiedvaluationsadicspectra}, it is likely that our method can be adapted to obtain a duality theorem for non-strict Berkovich analytic spaces. Further, we believe that our method is well-adapted to the Archimedean setting, and can be used to obtain a duality theorem in complex-analytic geometry --- this explains the title. 

\paragraph{Acknowledgements.} I would like to thank Jack Kelly for a number of helpful Zoom meetings. I also thank Kobi Kremnizer for insightful discussions during Trinity 2025 in Oxford. 

\section{Restriction and extension of 
sheaves}\label{sec:RestrictionExtension}
\egapar The purpose of this section is to record some results about sheaves and topoi which will be used later.

\egapar Let $f:\mathscr{C} \to \mathscr{D}$ be a functor between small $\infty$-categories. Precomposition with $f$ gives a functor
\begin{equation}
    f_P^*:  \operatorname{Psh}(\mathscr{D}) \to  \operatorname{Psh}(\mathscr{C}).
\end{equation}
By left and right Kan extension we obtain an adjoint triple
\begin{equation}
    f_{P,!} \dashv f_P^* \dashv f_{P,*}.
\end{equation}
\begin{lem}\label{lem:presheavesFF}
With notations as above. The following are equivalent:
\begin{enumerate}[(i)]
    \item The functor $f$ is fully faithful;
    \item The functor $f_{P,!}$ is fully faithful,
    \item The functor $f_{P,*}$ is fully faithful.
\end{enumerate}
\end{lem}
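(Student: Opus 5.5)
We prove (i)$\Leftrightarrow$(ii) using Yoneda and the unit of the adjunction $f_{P,!} \dashv f_P^*$, and then deduce (ii)$\Leftrightarrow$(iii) formally from the adjoint triple.

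\emph{(i)$\Rightarrow$(ii).} Write $y$ for the Yoneda embedding. Since $f_{P,!}$ is the left Kan extension along $f$, followed by inclusion into presheaves, there is a canonical equivalence $f_{P,!}\, y_{\mathscr{C}} \simeq y_{\mathscr{D}} \circ f$. Consider the unit $\eta: \mathrm{id} \to f_P^* f_{P,!}$. Evaluated on a representable $y(c)$, it is
\begin{equation}
    y(c) \to f_P^* y(f(c)),
\end{equation}
whose value at $c'$ is the map $\operatorname{Map}_{\mathscr{C}}(c',c) \to \operatorname{Map}_{\mathscr{D}}(f c', f c)$ induced by $f$. If $f$ is fully faithful, this map is an equivalence, so $\eta$ is an equivalence on representables. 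The functor $f_{P,!}$ preserves colimits because it is a left adjoint, and $f_P^*$ preserves colimits because colimits of presheaves are computed pointwise. Every presheaf is a colimit of representables, so $\eta$ is an equivalence on all of $\operatorname{Psh}(\mathscr{C})$. A left adjoint is fully faithful exactly when its unit is an equivalence, so $f_{P,!}$ is fully faithful.

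\emph{(ii)$\Rightarrow$(i).} Suppose $f_{P,!}$ is fully faithful. Then its restriction along the fully faithful functor $y_{\mathscr{C}}$ is fully faithful. This restriction is $y_{\mathscr{D}}\circ f$. Since $y_{\mathscr{D}}$ is fully faithful, $f$ itself is fully faithful.

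\emph{(ii)$\Leftrightarrow$(iii).} In an adjoint triple $L \dashv M \dashv R$, the functor $L$ is fully faithful if and only if $R$ is. The unit $\mathrm{id}\to ML$ and the counit $MR \to \mathrm{id}$ are mates, so one is an equivalence exactly when the other is. Concretely, for presheaves $F, G$ we have natural equivalences
\begin{equation}
    \operatorname{Map}(F, M R G) \simeq \operatorname{Map}(L F, R G) \simeq \operatorname{Map}(M L F, G).
\end{equation}
Under these equivalences, postcomposition with the counit $MRG \to G$ corresponds to precomposition with the unit $F \to MLF$. By Yoneda, the unit is an equivalence for all $F$ if and only if the counit is an equivalence for all $G$. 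This is the standard fact in the $\infty$-categorical setting, for example in Lurie's \emph{Higher Topos Theory}, and I would cite it there.

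\emph{Main obstacle.} The only delicate step is justifying the identification $f_{P,!}\,y \simeq y\circ f$ in the $\infty$-categorical setting, together with the identification of the unit on representables as the map induced by $f$. I would obtain both from the universal property of $\operatorname{Psh}(\mathscr{C})$ as the free cocompletion of $\mathscr{C}$. This property shows that $f_{P,!}$ is the unique colimit-preserving extension of $y\circ f$. Adjunction and Yoneda then give $f_P^*\, y(fc) = \operatorname{Map}_{\mathscr{D}}(f(-), fc)$.
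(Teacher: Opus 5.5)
Your proposal is correct and is essentially the argument the paper invokes by citing SGA~4, Expos\'e~I, Proposition~5.6. That argument checks the unit $\mathrm{id}\to f_P^*f_{P,!}$ on representables, where it is the action of $f$ on mapping spaces, extends to all presheaves by colimits, and passes between $f_{P,!}$ and $f_{P,*}$ using the adjoint triple (the unit of $f_{P,!}\dashv f_P^*$ and the counit of $f_P^*\dashv f_{P,*}$ are mates).
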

\begin{proof}
The proof is identical to \cite[Expos\'e I, Proposition 5.6]{SGA4-1}. 
\end{proof}
\begin{lem}\label{lem:adjointfunctorssites1}
Let $f: \mathscr{C} \leftrightarrows \mathscr{D}:g$ be an adjunction\footnote{I always write right adjoints on the right.} between small $\infty$-categories. Then there are canonical equivalences $f_{P}^* \simeq g_{P,!}$ and $f_{P,*} \simeq g_P^*$.
\end{lem}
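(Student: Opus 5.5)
We first prove the second equivalence $f_{P,*} \simeq g_P^*$, and then deduce the first one by uniqueness of adjoints. Since $f_P^* \dashv f_{P,*}$ and $g_{P,!} \dashv g_P^*$, each equivalence implies the other: an equivalence of right adjoints induces an equivalence of the corresponding left adjoints, canonically via the mates. So it suffices to show that $g_P^*$ is right adjoint to $f_P^*$.

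To do this, we write down the unit and counit directly from the adjunction data $\eta: \mathrm{id}_{\mathscr{C}} \to gf$ and $\epsilon: fg \to \mathrm{id}_{\mathscr{D}}$.

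\emph{Unit.} For $F \in \operatorname{Psh}(\mathscr{D})$ we need a map $F \to g_P^* f_P^* F = F \circ f^{\mathrm{op}} \circ g^{\mathrm{op}} = F\circ (fg)^{\mathrm{op}}$. This is obtained by whiskering $F$ with $\epsilon^{\mathrm{op}}: \mathrm{id}_{\mathscr{D}^{\mathrm{op}}} \to (fg)^{\mathrm{op}}$.

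\emph{Counit.} For $G \in \operatorname{Psh}(\mathscr{C})$ we need a map $f_P^* g_P^* G = G \circ (gf)^{\mathrm{op}} \to G$. This is obtained by whiskering $G$ with $\eta^{\mathrm{op}}: (gf)^{\mathrm{op}} \to \mathrm{id}_{\mathscr{C}^{\mathrm{op}}}$.

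The triangle identities for this pair then follow by whiskering the triangle identities of $f \dashv g$.

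In the $\infty$-categorical setting, verifying the triangle identities by hand requires some care with coherence. It is cleaner to use functoriality. Precomposition defines a functor of $(\infty,2)$-categories
\[
\operatorname{Fun}((-)^{\mathrm{op}}, \mathcal{S}) : \operatorname{Cat}_\infty^{\mathrm{co}} \to \operatorname{Cat}_\infty ,
\]
which is contravariant on $1$-morphisms. Passing to opposite categories reverses $2$-cells, so $f \dashv g$ in $\operatorname{Cat}_\infty$ gives $g^{\mathrm{op}} \dashv f^{\mathrm{op}}$. Precomposition reverses $1$-morphisms but preserves the direction of $2$-cells, so it sends $g^{\mathrm{op}} \dashv f^{\mathrm{op}}$ to an adjunction $f_P^* \dashv g_P^*$. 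Since any $2$-functor preserves adjunctions (see e.g. Riehl--Verity, or \cite[Proposition 5.2.2.? ]{}), this gives the claim. Alternatively, the adjunction can be checked on mapping spaces: one has
\[
\operatorname{Map}(f_P^*F, G) \simeq \operatorname{Map}(F, g_P^*G),
\]
naturally in $F$ and $G$, via the end formula for natural transformations. Concretely, the bifunctor $\operatorname{Map}_{\mathscr{D}}(f-,-) \simeq \operatorname{Map}_{\mathscr{C}}(-,g-)$ expresses $f^{\mathrm{op}}$ and $g$ through equivalent profunctors, so restriction along either agrees with co-restriction along the other.

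A quick sanity check is to evaluate on representables. The left Kan extension satisfies $g_{P,!}\, h_c \simeq h_{g c}$, while
\[
f_P^*\, h_{c}(d) = \operatorname{Map}_{\mathscr{D}}(fd, c) \simeq \operatorname{Map}_{\mathscr{C}}(d, gc) = h_{gc}(d).
\]
Here $h_c$ denotes the representable presheaf on $\mathscr{D}$ at $c$. Both $f_P^*$ and $g_{P,!}$ preserve colimits, since $f_P^*$ has the right adjoint $f_{P,*}$. The equivalence $f_P^* \simeq g_{P,!}$ therefore also follows from their agreement on representables, naturally in $c$, together with the universal property of $\operatorname{Psh}$ as the free cocompletion. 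This is the route I would actually write down, because it is the most economical. Naturality in $c$ is the Yoneda-level statement that the adjunction equivalence $\operatorname{Map}(f-,-) \simeq \operatorname{Map}(-,g-)$ is natural in both variables. The second equivalence $f_{P,*} \simeq g_P^*$ then follows by passing to right adjoints.

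The main obstacle is making naturality and coherence precise in the $\infty$-setting rather than only objectwise. I would handle this through the equivalence of mapping bifunctors $\mathscr{C}^{\mathrm{op}}\times\mathscr{D}\to\mathcal{S}$, which is part of the data of an $\infty$-adjunction.
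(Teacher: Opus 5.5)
Your proposal is correct, and each of your routes works. The paper gives no argument of its own beyond the remark that the proof is identical to SGA~4, Expos\'e~I, Proposition~5.5; in effect it asserts that the classical $1$-categorical proof transfers verbatim. Your proposal supplies the $\infty$-categorical justification that this remark leaves implicit.

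The route you say you would actually write down is the most economical one. The functor $f_P^*$ preserves colimits and sends $h_d$ to $h_{gd}$ naturally in $d \in \mathscr{D}$, and $g_{P,!}$ has the same two properties. Hence $f_P^* \simeq g_{P,!}$ by \cite[Theorem 5.1.5.6]{HigherToposTheory}, and $f_{P,*} \simeq g_P^*$ follows by passing to right adjoints. What this buys you is that all coherence is concentrated in a single input: the equivalence $\operatorname{Map}_{\mathscr{D}}(f-,-)\simeq\operatorname{Map}_{\mathscr{C}}(-,g-)$ of functors $\mathscr{C}^{\mathrm{op}}\times\mathscr{D}\to\infty\mathrm{Grpd}$, which is part of the data of an $\infty$-adjunction.

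Your unit/counit route instead needs the fact that triangle identities holding up to homotopy already determine an $\infty$-adjunction. This holds either by Riehl--Verity's result that adjunctions of quasi-categories are the same as adjunctions in the homotopy $2$-category, or by the unit criterion \cite[Proposition 5.2.2.8]{HigherToposTheory}. That is where your unfinished, empty reference should point.

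One cosmetic point: in your sanity check you write $c$ for objects of $\mathscr{D}$ and $d$ for objects of $\mathscr{C}$. This is consistent but easy to misread.
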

\begin{proof}
The proof is identical to \cite[Expos\'e I, Proposition 5.5]{SGA4-1}.
\end{proof}
\begin{lem}\label{lem:Pshexact}
Suppose that $\mathscr{C}$ and $\mathscr{D}$ have finite limits, and the functor $f: \mathscr{C} \to \mathscr{D}$ is left exact. Then $f_{P,!}$ is left exact. 
\end{lem}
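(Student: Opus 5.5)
We reduce left exactness of $f_{P,!}$ to a filteredness statement about comma categories, following the classical argument of \cite[Expos\'e I, Proposition 5.4]{SGA4-1}. Since $f_{P,!}$ is a left adjoint, it preserves colimits. We must show it preserves finite limits, i.e.\ the terminal object and pullbacks.

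We use the pointwise formula for left Kan extension. For $F \in \operatorname{Psh}(\mathscr{C})$ and $d \in \mathscr{D}$,
\begin{equation}
    (f_{P,!}F)(d) \simeq \operatorname{colim}_{(c,\, d \to f(c)) \in (\mathscr{C}_{d/})^{\mathrm{op}}} F(c),
\end{equation}
where $\mathscr{C}_{d/} := \mathscr{C} \times_{\mathscr{D}} \mathscr{D}_{d/}$ is the comma $\infty$-category of pairs $(c, d \to f(c))$. This is the presheaf-level version of the formula $f_{P,!}F(d) = \operatorname{colim}_{d \to f(c)} F(c)$.

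The key step is to show that $\mathscr{C}_{d/}$ is cofiltered, so that the indexing category $(\mathscr{C}_{d/})^{\mathrm{op}}$ is filtered. Here the hypotheses enter.
\begin{enumerate}[(i)]
    \item $\mathscr{C}_{d/}$ has a terminal object $(*_{\mathscr{C}}, d \to f(*) \simeq *_{\mathscr{D}})$, because $f$ preserves the terminal object.
    \item $\mathscr{C}_{d/}$ admits pullbacks: given $(c_1, u_1) \to (c_0,u_0) \leftarrow (c_2,u_2)$, the object $(c_1 \times_{c_0} c_2,\ d \to f(c_1) \times_{f(c_0)} f(c_2) \simeq f(c_1 \times_{c_0} c_2))$ is a pullback. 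This uses that $f$ preserves pullbacks and that the forgetful functor $\mathscr{D}_{d/} \to \mathscr{D}$ creates pullbacks.
\end{enumerate}
Thus $\mathscr{C}_{d/}$ has finite limits, so it is cofiltered \cite[Proposition 5.3.1.15 / Lemma 5.3.2.?]{heyer_6-functor_2024}; in Lurie's HTT, a category with finite colimits is filtered, and dually. The citation is the only point needing care. Rather than a literal citation, one can invoke that an $\infty$-category with finite colimits is filtered (HTT 5.3.1.15), applied to the opposite category.

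Then $f_{P,!}F(d)$ is computed as a filtered colimit in spaces. Evaluation at $d$ commutes with finite limits of presheaves, and filtered colimits commute with finite limits in $\mathcal{S}$ (HTT 5.3.3.3). Hence, for a finite diagram $F_\bullet$ in $\operatorname{Psh}(\mathscr{C})$,
\begin{equation}
    f_{P,!}(\lim F_\bullet)(d) \simeq \operatorname{colim}\lim F_\bullet(c) \simeq \lim \operatorname{colim} F_\bullet(c) \simeq \big(\lim f_{P,!}F_\bullet\big)(d),
\end{equation}
naturally in $d$. One must check that the comparison map is the canonical one, which follows since both are induced by the universal property of the Kan extension.

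I expect the main obstacle to be the $\infty$-categorical bookkeeping. First, the comma category must be set up so that the pointwise Kan extension formula holds with the correct variance (here it is $\mathscr{C}_{d/}$, opposed, because presheaves are contravariant). Second, finite limits in $\mathscr{C}_{d/}$ must genuinely be computed as claimed, not merely homotopically plausibly. The commutation of filtered colimits with finite limits in spaces is standard.
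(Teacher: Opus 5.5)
Your proposal is correct and is essentially the paper's proof, which simply defers to the classical SGA4 argument: $f_{P,!}F(d)$ is computed as a colimit over $(\mathscr{C}_{d/})^{\mathrm{op}}$, the category $\mathscr{C}_{d/}$ has finite limits because $f$ is left exact (so the indexing category is filtered), and filtered colimits commute with finite limits in spaces. The only blemish is your stray citation of \cite{heyer_6-functor_2024} with an unresolved number; the fact you need is immediate from the definition of a filtered $\infty$-category, since an $\infty$-category admitting finite colimits extends every finite diagram to a cocone.
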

\begin{proof}
The proof is identical to \cite[Expos\'e I, Proposition 5.2]{SGA4-1}. 
\end{proof}
\begin{defn}
Let $\mathscr{C}$ be a small $\infty$-category and let $X \in \mathscr{C}$. A \emph{sieve} is a full subcategory $\mathscr{C}^0_{/X} \subseteq \mathscr{C}_{/X}$ such that, if $[Y \to X] \in \mathscr{C}^0_{/X}$ and $[Z \to Y]$ is any morphism, then $[Z \to X] \in \mathscr{C}^0_{/X}$. 
\end{defn}
\begin{defn}
A site is a small $\infty$-category $\mathscr{C}$ with a collection $\tau$ of sieves satisfying the following properties:
\begin{enumerate}[(i)]
    \item[(T1)] For every $X \in \mathscr{C}$, $\mathscr{C}_{/X} \in \tau$;
    \item[(T2)] For every morphism $f: Y \to X$ of $\mathscr{C}$ and every $\mathscr{C}_{/X}^0 \in \tau$, one has $\mathscr{C}_{/Y} \times_{\mathscr{C}_{/X}}\mathscr{C}_{/X}^0 \in \tau$; 
    \item[(T3)] Let $\mathscr{C}^0_{/X}$ and $\mathscr{C}^1_{/X}$ be two sieves on $X$ such that $\mathscr{C}^0_{/X} \in \tau$. Suppose that for every $[Y \to X] \in \mathscr{C}^0_{/X}$ one has $\mathscr{C}_{/Y} \times_{\mathscr{C}_{/X}}\mathscr{C}_{/X}^1 \in \tau$. Then $\mathscr{C}_{/X}^1 \in \tau$.  
\end{enumerate}
A collection $\tau$ of sieves satisfying (T1)-(T3) is called a \emph{Grothendieck topology} on $\mathscr{C}$. A sieve belonging to $\tau$ is then called a \emph{$\tau$-covering sieve}.
\end{defn}
\begin{defn}
Let $(\mathscr{C}, \tau)$ be an $\infty$-site and let $\mathscr{D}$ be an $\infty$-category admitting small limits. A functor $F: \mathscr{C}^\mathrm{op} \to \mathscr{D}$ is said to be a \emph{$\mathscr{D}$-valued sheaf} on $(\mathscr{C},\tau)$, if for every $X \in \mathscr{C}$ and every $\tau$-covering sieve $\mathscr{C}^0_{/X}$, the natural morphism
\begin{equation}
    F(X) \to \underset{U \in (\mathscr{C}^0_{/X})^\mathrm{op}}{\operatorname{lim}} F(U)
\end{equation}
is an equivalence. We denote the full subcategory of $\operatorname{Fun}(\mathscr{C}^\mathrm{op},\mathscr{D})$ on $\mathscr{D}$-valued sheaves by $\operatorname{Shv}_\tau(\mathscr{C}, \mathscr{D})$. When $\mathscr{D} = \infty\mathrm{Grpd}$ we abbreviate to $\operatorname{Shv}_\tau(\mathscr{C})$. 
\end{defn}
\begin{defn}
Let $(\mathscr{C}, \tau)$ and $(\mathscr{D}, \tau^\prime)$ be $\infty$-sites. 
\begin{enumerate}[(i)]
    \item Let $f: \mathscr{C} \to \mathscr{D}$ be any functor. We denote by $f^*$ the composite functor 
    \begin{equation}
        \operatorname{Shv}_{\tau^\prime}(\mathscr{D}) \xrightarrow[]{j} \operatorname{Psh}(\mathscr{D}) \xrightarrow[]{f_P^*}  \operatorname{Psh}(\mathscr{C}) \xrightarrow[]{L}  \operatorname{Shv}_{\tau}(\mathscr{C}),
    \end{equation}
    and by $f_!$ the composite functor 
    \begin{equation}
        \operatorname{Shv}_{\tau}(\mathscr{C}) \xrightarrow[]{j} \operatorname{Psh}(\mathscr{C}) \xrightarrow[]{f_{P,!}} \operatorname{Psh}(\mathscr{D}) \xrightarrow[]{L} \operatorname{Shv}_{\tau^\prime}(\mathscr{D}), 
    \end{equation} 
    and by $f_*$ the composite functor
    \begin{equation}
        \operatorname{Shv}_{\tau}(\mathscr{C}) \xrightarrow[]{j} \operatorname{Psh}(\mathscr{C}) \xrightarrow[]{f_{P,*}} \operatorname{Psh}(\mathscr{D}) \xrightarrow[]{L} \operatorname{Shv}_{\tau^\prime}(\mathscr{D}), 
    \end{equation}
    here $j$ is the inclusion and $L$ stands for sheafification. 
    \item A functor $f: \mathscr{C} \to \mathscr{D}$ is called \emph{continuous} if the composite 
    \begin{equation}
        \operatorname{Shv}_{\tau^\prime}(\mathscr{D}) \xrightarrow[]{j} \operatorname{Psh}(\mathscr{D}) \xrightarrow[]{f_P^*} \operatorname{Psh}(\mathscr{C})
    \end{equation}
    factors through the full subcategory $\operatorname{Shv}_\tau(\mathscr{C}) \subseteq \operatorname{Psh}(\mathscr{C})$.
    \item A functor $f: \mathscr{C} \to \mathscr{D}$ is called \emph{cocontinuous} if the composite 
    \begin{equation}
    \operatorname{Shv}_{\tau}(\mathscr{C}) \xrightarrow[]{j} \operatorname{Psh}(\mathscr{C}) \xrightarrow[]{f_{P,*}} \operatorname{Psh}(\mathscr{D})
    \end{equation}
    factors through the full subcategory $\operatorname{Shv}_{\tau^\prime}(\mathscr{D}) \subseteq \operatorname{Psh}(\mathscr{D})$. 
\end{enumerate}
\end{defn}
\begin{lem}\label{lem:continuousfunctor}
Let $f: \mathscr{C} \to \mathscr{D}$ be a continuous functor between $\infty$-sites $(\mathscr{C}, \tau)$ and $(\mathscr{D}, \tau^\prime)$. Then there is an adjunction
\begin{equation}
    f_!: \operatorname{Shv}_\tau({\mathscr{C}}) \leftrightarrows \operatorname{Shv}_{\tau^\prime}({\mathscr{D}}) :f^*.
\end{equation}
If $\mathscr{C}$ and $\mathscr{D}$ both admit finite limits and $f: \mathscr{C} \to \mathscr{D}$ is left exact, then $f_!$ is left exact.
\end{lem}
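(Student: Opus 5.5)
Since $f$ is continuous, the restriction $f^*: \operatorname{Shv}_{\tau'}(\mathscr{D}) \to \operatorname{Shv}_\tau(\mathscr{C})$ is computed without sheafification. Concretely, $f_P^* \circ j$ already lands in $\operatorname{Shv}_\tau(\mathscr{C})$, so $L f_P^* j \simeq f_P^* j$, with $j$ viewed as the inclusion into presheaves on $\mathscr{C}$. The plan is to assemble the adjunction from the two adjunctions we already have: $L \dashv j$ (sheafification is left adjoint to the inclusion) and $f_{P,!} \dashv f_P^*$ (left Kan extension is left adjoint to restriction).

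For $F \in \operatorname{Shv}_\tau(\mathscr{C})$ and $G \in \operatorname{Shv}_{\tau'}(\mathscr{D})$, I would write the chain of natural equivalences
\begin{equation*}
\operatorname{Map}(L f_{P,!} j F, G) \simeq \operatorname{Map}(f_{P,!} j F, j G) \simeq \operatorname{Map}(jF, f_P^* j G) \simeq \operatorname{Map}(F, f^* G).
\end{equation*}
The first step uses $L \dashv j$ on $\mathscr{D}$ and the second uses $f_{P,!} \dashv f_P^*$. The third uses continuity: $f_P^* j G \simeq j f^* G$, and $j$ is fully faithful. Naturality in $F$ and $G$ is automatic, because every step is a composite of adjunction equivalences and the equivalence $f_P^* j \simeq j f^*$ of functors. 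This gives $f_! \dashv f^*$.

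For left exactness, assume $\mathscr{C}$ and $\mathscr{D}$ admit finite limits and $f$ is left exact. Then $f_! = L \circ f_{P,!} \circ j$, and I would check that each factor preserves finite limits.
\begin{enumerate}[(i)]
\item The inclusion $j$ is a right adjoint, so it preserves all limits.
\item $f_{P,!}$ is left exact by Lemma \ref{lem:Pshexact}.
\item Sheafification $L$ is left exact. This is the standard fact that the sheaves on an $\infty$-site form a left exact localization of presheaves (Lurie, HTT \S6.2.2).
\end{enumerate}
A composite of left exact functors is left exact, which finishes the proof.

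The only point requiring care is this last input, left exactness of $L$ in the $\infty$-categorical setting, and I would simply cite it. Everything else is formal.
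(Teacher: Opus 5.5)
Your proposal is correct and is essentially the paper's argument. The paper composes $L \dashv j$ with $f_{P,!} \dashv f_P^*$ to get $L\circ f_{P,!} \dashv f_P^*\circ j$, then restricts to $\operatorname{Shv}_\tau(\mathscr{C})$ using continuity. It deduces left exactness from Lemma \ref{lem:Pshexact} together with left exactness of sheafification (HTT 6.2.2.7); your explicit remark that $j$ preserves finite limits is the step the paper leaves implicit.
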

\begin{proof}
There is an adjunction
\begin{equation}
L \circ f_{P,!} : \operatorname{Psh}(\mathscr{C}) \leftrightarrows \operatorname{Shv}_{\tau^\prime}(\mathscr{D}) : f^*_P \circ j. 
\end{equation}
By assumption, $f^*_P \circ j$ factors through $ \operatorname{Shv}_\tau(\mathscr{C}) \subseteq \operatorname{Psh}(\mathscr{C})$, so this induces the desired adjunction 
\begin{equation}
f_! : \operatorname{Shv}_\tau(\mathscr{C}) \leftrightarrows \operatorname{Shv}_{\tau^\prime}(\mathscr{D}) : f^*.
\end{equation}
The last part follows from Lemma \ref{lem:Pshexact} together with the fact that sheafification is left exact \cite[Proposition 6.2.2.7]{HigherToposTheory}.
\end{proof}
\begin{cor}\label{cor:lowershriekrepresentable}
Let $(\mathscr{C}, \tau)$ and $(\mathscr{D}, \tau^\prime)$ be $\infty$-sites. Let $X \in \mathscr{C}$. Let us also denote its image under the Yoneda embedding $\mathscr{C} \hookrightarrow \operatorname{Psh}(\mathscr{C})$ by $X$. If $f: \mathscr{C} \to \mathscr{D}$ is a continuous functor, then there is a canonical equivalence $f_!(L X) \simeq Lf(X)$. 
\begin{proof}
Let $Y \in \operatorname{Shv}_{\tau^\prime}(\mathscr{D})$. By adjunction and using that $f_P^*$ preserves sheaves, we obtain a chain of equivalences 
\begin{equation}
\begin{aligned}
    \operatorname{Map}_{\operatorname{Shv}(\mathscr{D})}(f_!LX, Y) &\simeq  \operatorname{Map}_{\operatorname{Shv}(\mathscr{C})}(LX, f^*Y) \\
    &\simeq \operatorname{Map}_{\operatorname{Psh}(\mathscr{C})}(X, f_P^*Y)\\
    &\simeq (f_P^*Y)(X) = Y(f(X)) \\
    &\simeq \operatorname{Map}_{\operatorname{Psh}(\mathscr{D})}(f(X), Y)\\
    &\simeq  \operatorname{Map}_{\operatorname{Shv}(\mathscr{D})}(Lf(X), Y),
\end{aligned}
\end{equation}
now the claim follows from Yoneda's lemma. 
\end{proof}
\end{cor}
\begin{lem}\label{lem:cocontinuousadjunction}
Let $f: \mathscr{C} \to \mathscr{D}$ be a cocontinuous functor between $\infty$-sites $(\mathscr{C}, \tau)$ and $(\mathscr{D}, \tau^\prime)$. Then there is an adjunction
\begin{equation}
        f^* : \operatorname{Shv}_{\tau^\prime}(\mathscr{D}) \leftrightarrows \operatorname{Shv}_\tau(\mathscr{C}): f_*.
\end{equation}
If $\mathscr{C}$ and $\mathscr{D}$ admit finite limits and $f: \mathscr{C} \to \mathscr{D}$ is left exact, then $f^*$ is left exact. 
\end{lem}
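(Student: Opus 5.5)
The argument is formally dual to the proof of Lemma \ref{lem:continuousfunctor}. We start from the presheaf-level adjunction $f_P^* \dashv f_{P,*}$ and restrict it to sheaves. Precomposing with the inclusion $j$ and postcomposing with sheafification $L$ gives an adjunction
\begin{equation}
L \circ f_P^* \circ j : \operatorname{Shv}_{\tau^\prime}(\mathscr{D}) \leftrightarrows \operatorname{Shv}_{\tau}(\mathscr{C}) : f_{P,*} \circ j .
\end{equation}
This is only an adjunction if the right-hand composite actually lands in $\operatorname{Shv}_{\tau^\prime}(\mathscr{D})$, and that is exactly the cocontinuity hypothesis. Concretely, for $G \in \operatorname{Shv}_{\tau^\prime}(\mathscr{D})$ and $F \in \operatorname{Shv}_\tau(\mathscr{C})$ we have the chain
\begin{equation}
\operatorname{Map}(L f_P^* jG, F) \simeq \operatorname{Map}_{\operatorname{Psh}(\mathscr{C})}(f_P^* jG, jF) \simeq \operatorname{Map}_{\operatorname{Psh}(\mathscr{D})}(jG, f_{P,*} jF).
\end{equation}
By cocontinuity, $f_{P,*}jF \simeq j L f_{P,*} jF = j f_*F$. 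Since $j$ is fully faithful, the last term is $\operatorname{Map}_{\operatorname{Shv}(\mathscr{D})}(G, f_*F)$. The left adjoint is by definition $f^*$, so this proves the first claim.

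For left exactness, assume $\mathscr{C}$ and $\mathscr{D}$ have finite limits and $f$ is left exact. The functor $f^* = L \circ f_P^* \circ j$ is a composite of three functors, and we check each one:
\begin{enumerate}[(i)]
\item $j$ is a right adjoint, so it preserves all limits.
\item $f_P^*$ is computed pointwise by precomposition, so it preserves all limits. This step does not even need the left exactness of $f$.
\item $L$ is left exact by \cite[Proposition 6.2.2.7]{HigherToposTheory}.
\end{enumerate}
Hence $f^*$ preserves finite limits.

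The only point needing care is bookkeeping. The functor named $f^*$ in the definition of sheaf-level functors carries a sheafification $L$. This is harmless here: $f^*$ already preserves finite limits without any hypothesis on $f$, so the lemma's assumption is not genuinely used. I do not expect any step to be a real obstacle; the statement is the formal dual of Lemma \ref{lem:continuousfunctor}, as in \cite[Expos\'e III]{SGA4-1}.
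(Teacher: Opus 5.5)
Your proof is correct. It is the dual of the paper's proof of Lemma \ref{lem:continuousfunctor}, which is what the paper intends when it omits the argument: you restrict the presheaf adjunction $f_P^* \dashv f_{P,*}$ using that cocontinuity makes $f_{P,*}\circ j$ land in sheaves, then invoke left exactness of $L$. Your remark that the left-exactness hypothesis on $f$ is superfluous is also right: the role played by Lemma \ref{lem:Pshexact} in the continuous case is taken here by $f_P^*$, which preserves all limits outright.
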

\begin{proof}
The proof is formally similar to that of Lemma \ref{lem:continuousfunctor}, and so we omit it.  
\end{proof}
\begin{lem}\label{lem:adjointfunctorssites2}
Let $(\mathscr{C}, \tau)$ and $(\mathscr{D}, \tau^\prime)$ be $\infty$-sites, and let $f: \mathscr{C} \leftrightarrows \mathscr{D}: g$ be an adjunction between the underlying $\infty$-categories. Then $g$ is continuous if and only if $f$ is cocontinuous. In this situation there are canonical equivalences $f_* \simeq g^*$ and $f^* \simeq g_!$. 
\end{lem}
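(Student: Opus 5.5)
The plan is to reduce everything to the presheaf-level statement, Lemma \ref{lem:adjointfunctorssites1}, which gives canonical equivalences $f_P^* \simeq g_{P,!}$ and $f_{P,*} \simeq g_P^*$ as functors between $\operatorname{Psh}(\mathscr{C})$ and $\operatorname{Psh}(\mathscr{D})$. The key observation is that continuity of $g$ and cocontinuity of $f$ are both conditions on one and the same functor restricted to sheaves. Continuity of $g$ says that $g_P^* \circ j: \operatorname{Shv}_{\tau}(\mathscr{C}) \to \operatorname{Psh}(\mathscr{D})$ factors through $\operatorname{Shv}_{\tau'}(\mathscr{D})$. Cocontinuity of $f$ says that $f_{P,*} \circ j: \operatorname{Shv}_\tau(\mathscr{C}) \to \operatorname{Psh}(\mathscr{D})$ factors through $\operatorname{Shv}_{\tau'}(\mathscr{D})$. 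Since $f_{P,*} \simeq g_P^*$, the two conditions coincide, because lying in the full subcategory of sheaves is invariant under equivalence. This proves the ``if and only if''.

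For the identifications, assume these equivalent conditions hold.

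\textbf{The equivalence $f_* \simeq g^*$.} By definition, $f_* = L \circ f_{P,*} \circ j$ and $g^* = L \circ g_P^* \circ j$, so these agree directly by Lemma \ref{lem:adjointfunctorssites1}. Moreover, both $f_{P,*}\circ j$ and $g_P^*\circ j$ already land in sheaves, so the sheafification $L$ does nothing here.

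\textbf{The equivalence $f^* \simeq g_!$.} I would argue via adjoints rather than on the nose.
\begin{enumerate}[(1)]
    \item Lemma \ref{lem:continuousfunctor}, applied to the continuous functor $g$, gives an adjunction $g_! \dashv g^*$ between $\operatorname{Shv}_{\tau'}(\mathscr{D})$ and $\operatorname{Shv}_\tau(\mathscr{C})$.
    \item Lemma \ref{lem:cocontinuousadjunction}, applied to the cocontinuous functor $f$, gives an adjunction $f^* \dashv f_*$ between the same categories.
    \item Since $f_* \simeq g^*$, the functors $f^*$ and $g_!$ are both left adjoint to the same functor. Uniqueness of adjoints then gives $f^* \simeq g_!$.
\end{enumerate}

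Alternatively, the second equivalence can be seen directly. We have $f^* = L f_P^* j$ and $g_! = L g_{P,!} j$, and $f_P^* \simeq g_{P,!}$ by Lemma \ref{lem:adjointfunctorssites1}. This route needs no continuity hypothesis at all, which is a useful consistency check.

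The main subtlety is only making sure that the equivalences of Lemma \ref{lem:adjointfunctorssites1} are natural, and compatible with the inclusion $j$ and with $L$. This is formal, so I do not expect a genuine obstacle. The proof should be a few lines, in the style of the preceding lemmas, mirroring \cite[Expos\'e III, Proposition 2.5]{SGA4-1}.
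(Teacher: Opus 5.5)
Your proof is correct and matches the paper's argument: Lemma \ref{lem:adjointfunctorssites1} gives $f_{P,*}\simeq g_P^*$, which identifies the continuity and cocontinuity conditions and yields $f_*\simeq g^*$; then $f^*\simeq g_!$ follows by uniqueness of left adjoints using Lemmas \ref{lem:continuousfunctor} and \ref{lem:cocontinuousadjunction}. Your alternative route, $f^* = Lf_P^*j\simeq Lg_{P,!}j = g_!$, is also valid and follows directly from the definitions without any continuity hypothesis.
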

\begin{proof}
By Lemma \ref{lem:adjointfunctorssites1} there is an equivalence $f_{P, *} \simeq g_P^*$, hence $f_{P, *}$ preserves sheaves if and only if $g_P^*$ does. In this situation we obtain a canonical equivalence $f_* \simeq g^*$. Passing to left adjoints using Lemma \ref{lem:continuousfunctor} and Lemma \ref{lem:cocontinuousadjunction}, we obtain the equivalence $f^* \simeq g_!$.  
\end{proof}
\begin{lem}\label{lem:continuouscriterion}
Let $f: \mathscr{C} \to \mathscr{D}$ be a functor between $\infty$-sites $(\mathscr{C}, \tau)$ and $(\mathscr{D}, \tau^\prime)$. Assume that $\mathscr{C}$ and $\mathscr{D}$ admit finite limits and that $f$ is left-exact. Suppose that, for every $X \in \mathscr{C}$ and every $\tau$-covering sieve $\mathscr{C}^0_{/X}$, the sieve $\mathscr{D}^1_{/f(X)}$ generated by the image of $\mathscr{C}^0_{/X} \to \mathscr{D}_{/f(X)}$ is a $\tau^\prime$-covering sieve. Then $f$ is continuous.
\end{lem}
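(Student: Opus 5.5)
We must show that $f_P^*$ sends $\tau'$-sheaves to $\tau$-sheaves. Fix $F \in \operatorname{Shv}_{\tau'}(\mathscr{D})$, an object $X \in \mathscr{C}$, and a $\tau$-covering sieve $\mathscr{C}^0_{/X}$. Identify the sieve with the subobject $S \hookrightarrow X$ of the representable presheaf, namely the colimit in $\operatorname{Psh}(\mathscr{C})$ of the representables $U$ over $U \in \mathscr{C}^0_{/X}$. The sheaf condition for $f_P^*F$ at this sieve then reads
\begin{equation}
\operatorname{Map}_{\operatorname{Psh}(\mathscr{C})}(X, f_P^*F) \xrightarrow{\sim} \operatorname{Map}_{\operatorname{Psh}(\mathscr{C})}(S, f_P^*F).
\end{equation}
By the adjunction $f_{P,!} \dashv f_P^*$, this is equivalent to
\begin{equation}
\operatorname{Map}(f_{P,!}X, F) \xrightarrow{\sim} \operatorname{Map}(f_{P,!}S, F).
\end{equation}
Since $f_{P,!}$ preserves colimits and sends representables to representables, $f_{P,!}X = f(X)$ and $f_{P,!}S = \operatorname{colim}_{U \in \mathscr{C}^0_{/X}} f(U)$, mapping to $f(X)$.

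The key step is to compare $f_{P,!}S \to f(X)$ with the sieve $S' \hookrightarrow f(X)$ generated by the image, which is $\tau'$-covering by hypothesis. First, $f_{P,!}$ is left exact by Lemma \ref{lem:Pshexact}, so it preserves monomorphisms, since these are characterized by the diagonal being an equivalence. Hence $f_{P,!}S \to f(X)$ is a monomorphism, i.e.\ a sieve on $f(X)$. Second, its image is precisely the sieve generated by the maps $f(U) \to f(X)$: the image of a colimit of representables in $\operatorname{Psh}(\mathscr{D})$ is the sieve consisting of the morphisms factoring through some $f(U)$. Combining these, $f_{P,!}S \simeq S'$ as subobjects of $f(X)$. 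Since $F$ is a $\tau'$-sheaf and $S'$ is covering, $\operatorname{Map}(f(X),F) \to \operatorname{Map}(S',F)$ is an equivalence, which gives the required equivalence for $f_P^*F$.

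The main obstacle is this middle step in the $\infty$-categorical setting: we need the colimit presentation of the sieve, that left-exact left Kan extension preserves monomorphisms, and that a monomorphism $f_{P,!}S \to f(X)$ coincides with its image, the (effective epi, mono) factorization. I would argue these as in \cite[Expos\'e III, Proposition 1.6]{SGA4-1}, invoking \cite[\S6.2.2]{HigherToposTheory} for the identification of covering sieves with monomorphisms into representables and the sheaf condition as locality with respect to them.
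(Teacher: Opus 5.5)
Your proof is correct, but it follows a different route from the paper's.

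\textbf{The paper's route.} The paper never passes to subobjects or uses $f_{P,!}$. It shows directly that $\mathscr{C}^0_{/X} \to \mathscr{D}^1_{/f(X)}$ is right cofinal, via Quillen's Theorem A. For each $[Z \to f(X)] \in \mathscr{D}^1_{/f(X)}$, it considers the category $\mathscr{E}$ of factorizations $Z \to f(X') \to f(X)$ with $[X' \to X] \in \mathscr{C}^0_{/X}$. This category is nonempty and has binary products, given by $f(X_1)\times_{f(X)} f(X_2) \simeq f(X_1 \times_X X_2)$; this uses left exactness of $f$ and the stability of sieves under base change. Hence $\mathscr{E}^{\mathrm{op}}$ is sifted, so $\mathscr{E}$ is weakly contractible. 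The sheaf condition of $F$ on $\mathscr{D}^1_{/f(X)}$ then restricts along this cofinal functor.

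\textbf{Your route.} You run the classical SGA4 argument instead. Left exactness enters through Lemma \ref{lem:Pshexact}, which makes $f_{P,!}S \to f(X)$ a monomorphism. The image computation then identifies this monomorphism with $S'$.

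\textbf{Comparison.} Your conclusion $f_{P,!}S \simeq S'$ is exactly the presheaf-level form of the paper's cofinality statement, obtained by taking colimits of representables. Your argument is more conceptual and reuses an earlier lemma. However, its $\infty$-categorical content is hidden inside Lemma \ref{lem:Pshexact}. Proving that lemma requires showing that the categories $\mathscr{C}\times_{\mathscr{D}}\mathscr{D}_{d/}$ indexing the left Kan extension are cofiltered. That is essentially the same kind of contractibility argument the paper carries out by hand, so the paper's proof is more self-contained.
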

\begin{proof}
Let $X \in \mathscr{C}$ and let $\mathscr{C}^0_{/X}$ be a $\tau$-covering sieve. We need to show that for every $F \in \operatorname{Shv}_{\tau^\prime}(\mathscr{D})$, the canonical morphism $F(f(X)) \to \operatorname{lim}_{U \in (\mathscr{C}^0_{/X})^{\mathrm{op}}} F(f(U))$ is an equivalence. It suffices to show that the functor $\mathscr{C}^0_{/X} \to \mathscr{D}^1_{/f(X)}$ is right cofinal. We use Quillen's Theorem A \cite[\href{https://kerodon.net/tag/02NX}{Tag 02NX}]{kerodon}: it suffices to show that for each $[Z \to f(X)] \in \mathscr{D}^1_{/f(X)}$ the $\infty$-category
\begin{equation}
    \mathscr{E} := \mathscr{C}^0_{/X} \times_{\mathscr{D}^1_{/f(X)}} \big(\mathscr{D}^1_{/f(X)}\big)_{Z/}
\end{equation}
is weakly contractible. Concretely, this is the full subcategory of $\mathscr{D}_{Z/\!/f(X)}$ on objects of the form $[Z \to f(X^\prime) \to f(X)]$ where the second morphism is induced by some $[X^\prime \to X] \in \mathscr{C}^0_{/X}$. It is clearly nonempty. By the assumption that $f$ preserves finite limits, we see that $\mathscr{E}$ admits finite products, and hence $\mathscr{E}^\mathrm{op}$ admits finite coproducts; in particular $\mathscr{E}^\mathrm{op}$ is sifted and so by \cite[Proposition 5.5.8.7]{HigherToposTheory} $\mathscr{E}$ is weakly contractible.
\end{proof}
\begin{rmk}
It should also be possible to show that the condition in Lemma \ref{lem:continuouscriterion} is necessary for $f$ to be continuous.
\end{rmk}
\begin{lem}\label{lem:cocontinuousmap}
Let $f: \mathscr{C} \to \mathscr{D}$ be a functor between $\infty$-sites $(\mathscr{C}, \tau)$ and $(\mathscr{D}, \tau^\prime)$. Then the following are equivalent:
\begin{enumerate}[(i)]
    \item $f$ is cocontinuous;
    \item For every $X \in \mathscr{C}$ and every $\tau^\prime$-covering sieve $\mathscr{D}^0_{/f(X)} \subseteq \mathscr{D}_{/f(X)}$, the subcategory $\mathscr{C}_{/X} \times_{\mathscr{D}_{/f(X)}} \mathscr{D}^0_{/f(X)} \subseteq \mathscr{C}_{/X}$ is a $\tau$-covering sieve.
\end{enumerate}
\end{lem}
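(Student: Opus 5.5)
The plan is to compute $f_{P,*}$ explicitly and reduce the sheaf condition for $f_{P,*}F$ to the sheaf condition for $F$ on pulled-back sieves.

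For $F \in \operatorname{Psh}(\mathscr{C})$ and $Y \in \mathscr{D}$, right Kan extension gives
\begin{equation}
(f_{P,*}F)(Y) \simeq \operatorname{Map}_{\operatorname{Psh}(\mathscr{C})}(f_P^* h_Y, F),
\end{equation}
where $h_Y$ denotes the Yoneda image of $Y$. This follows from $f_P^* \dashv f_{P,*}$ and Yoneda. Similarly, for a sieve $\mathscr{D}^0_{/Y}$, viewed as a subobject $S \hookrightarrow h_Y$ in $\operatorname{Psh}(\mathscr{D})$, we have $\operatorname{lim}_{U \in (\mathscr{D}^0_{/Y})^{\mathrm{op}}} (f_{P,*}F)(U) \simeq \operatorname{Map}(S, f_{P,*}F) \simeq \operatorname{Map}(f_P^*S, F)$. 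So $f_{P,*}F$ is a $\tau'$-sheaf if and only if $F$ is local with respect to the maps $f_P^*S \to f_P^*h_Y$ for all $\tau'$-covering sieves $S \hookrightarrow h_Y$. Here $f_P^*$ preserves monomorphisms and colimits. Recall also that a presheaf is a $\tau$-sheaf if and only if it is local with respect to all $\tau$-covering sieve inclusions $R \hookrightarrow h_X$, and that the class of such local maps is closed under colimits and base change along maps from representables.

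For (ii)$\Rightarrow$(i), write $f_P^*h_Y = \operatorname{colim}_{(X, u)} h_X$ over the category of elements, i.e.\ pairs $(X\in\mathscr{C},\, u: f(X)\to Y)$. Since colimits in $\operatorname{Psh}(\mathscr{C})$ are universal, $f_P^*S \to f_P^*h_Y$ is the colimit of its pullbacks $h_X \times_{f_P^*h_Y} f_P^*S \to h_X$. Unwinding, this pullback is the sieve on $X$ consisting of those $X' \to X$ such that $f(X') \to f(X) \to Y$ lies in $S$. That is the sieve $\mathscr{C}_{/X} \times_{\mathscr{D}_{/f(X)}} u^*\mathscr{D}^0_{/Y}$, and $u^*\mathscr{D}^0_{/Y}$ is $\tau'$-covering by (T2). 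By (ii) it is $\tau$-covering, so $F$ is local for each piece and hence for the colimit. Thus $f_{P,*}F$ is a sheaf.

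For (i)$\Rightarrow$(ii), take $X$ and a covering sieve $\mathscr{D}^0_{/f(X)}$, and let $R \subseteq h_X$ be the pulled-back sieve. By the computation above, $R$ is the pullback of $f_P^*S \to f_P^*h_{f(X)}$ along the unit $h_X \to f_P^*h_{f(X)}$. By (i), every $\tau$-sheaf is local for $f_P^*S \to f_P^*h_{f(X)}$, so this map becomes an equivalence after sheafification $L$. Since $L$ is left exact, it also preserves the pullback, so $LR \to Lh_X$ is an equivalence. It remains to invoke the standard fact that a sieve $R \hookrightarrow h_X$ with $LR \simeq Lh_X$ is $\tau$-covering, as in \cite[Proposition 6.2.2.7]{HigherToposTheory} and its surrounding discussion of the bijection between topologies and left exact localizations.

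The main obstacle is this last step: the identification of covering sieves with those monomorphisms inverted by sheafification, together with the compatibility of $L$ with the pullback. Both are standard in Lurie's setup and I would cite them rather than reprove them. The rest is formal, mirroring \cite[Expos\'e III, Proposition 2.2]{SGA4-1}.
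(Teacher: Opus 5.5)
Your proposal is correct and follows essentially the same route as the paper, which simply defers to \cite[Expos\'e III, Proposition 2.2]{SGA4-1}: your argument is the SGA4 proof run in $\operatorname{Psh}(\mathscr{C})$. It identifies sections over a sieve with maps out of $f_P^*S$, decomposes $f_P^*S \to f_P^*h_Y$ as a colimit of pulled-back sieves, and for the converse uses left exactness of $L$ together with the fact that covering sieves are exactly the monomorphisms $R \hookrightarrow h_X$ inverted by sheafification (HTT \S6.2.2).
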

\begin{proof}
The proof is identical to \cite[Expos\'e III, Proposition 2.2]{SGA4-1}.
\end{proof}
\egapar The following construction is important for the construction of the analytification functor in the next section.
\begin{defn}\label{defn:pushforwardtopology}
Let $(\mathscr{C}, \tau)$ be an $\infty$-site, and let $\mathscr{D}$ be a small $\infty$-category. Let $f: \mathscr{C} \to \mathscr{D}$ be any functor. We define a collection of sieves $f_*\tau$ on $\mathscr{D}$ in the following way:
\begin{itemize}
    \item[$\star$] For each $Y \in \mathscr{D}$, a full subcategory $\mathscr{D}^0_{/Y} \subseteq \mathscr{D}_{/Y}$ belongs to $f_*\tau$ if and only if, for every $X \in \mathscr{C}$ and every morphism $f(X) \to Y$ in $\mathscr{D}$, the subcategory $\mathscr{C}_{/X} \times_{\mathscr{D}_{/Y}}\mathscr{D}^0_{/Y} \subseteq \mathscr{C}_{/X}$ is a $\tau$-covering sieve\footnote{Here the functor $\mathscr{C}_{/X} \to \mathscr{D}_{/Y}$ sends $[Z \to X]$ to the composite $[f(Z) \to f(X) \xrightarrow[]{g} Y]$.}.
\end{itemize}
\end{defn}
\begin{lem}\label{lem:cocontinuousinitial}
With notations as in Definition \ref{defn:pushforwardtopology}. The collection $f_*\tau$ of sieves forms a Grothendieck topology on $\mathscr{D}$. With respect to this topology the functor $f: \mathscr{C} \to \mathscr{D}$ is cocontinuous. 
\end{lem}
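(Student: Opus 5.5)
I will check the axioms (T1)--(T3) directly from Definition \ref{defn:pushforwardtopology}, in each case reducing to the corresponding axiom for $\tau$ on $\mathscr{C}$. Cocontinuity will then follow from Lemma \ref{lem:cocontinuousmap}.

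Throughout I use one bookkeeping device. For $Y \in \mathscr{D}$, a sieve $\mathscr{D}^0_{/Y}$, and a morphism $g: f(X) \to Y$, write $g^{-1}\mathscr{D}^0_{/Y} := \mathscr{C}_{/X} \times_{\mathscr{D}_{/Y}} \mathscr{D}^0_{/Y}$. This consists of those $[Z \to X]$ such that the composite $f(Z) \to f(X) \to Y$ lies in $\mathscr{D}^0_{/Y}$, and it is a sieve on $X$ because $\mathscr{D}^0_{/Y}$ is a sieve. Its key property is compatibility with composition: for $h: Y' \to Y$ and $g': f(X) \to Y'$, one has $g'^{-1}(h^*\mathscr{D}^0_{/Y}) = (h g')^{-1}\mathscr{D}^0_{/Y}$. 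Likewise, for $u: X' \to X$, one has $(g \circ f(u))^{-1}\mathscr{D}^0_{/Y} = u^* (g^{-1}\mathscr{D}^0_{/Y})$. Both identities are tautological on objects. In the $\infty$-categorical setting they are identifications of full subcategories, so they can be checked on objects, and no coherence issues arise.

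\textbf{(T1).} For the maximal sieve $\mathscr{D}_{/Y}$, every preimage $g^{-1}\mathscr{D}_{/Y}$ is all of $\mathscr{C}_{/X}$, which lies in $\tau$ by (T1) for $\tau$.

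\textbf{(T2).} Let $h: Y' \to Y$ and $\mathscr{D}^0_{/Y} \in f_*\tau$. For any $g': f(X) \to Y'$, the first identity gives $g'^{-1}(h^*\mathscr{D}^0_{/Y}) = (hg')^{-1}\mathscr{D}^0_{/Y}$. This lies in $\tau$ by the hypothesis applied to $hg'$, so $h^*\mathscr{D}^0_{/Y} \in f_*\tau$.

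\textbf{(T3).} This is the main step. Suppose $\mathscr{D}^0_{/Y} \in f_*\tau$, and that $v^*\mathscr{D}^1_{/Y} \in f_*\tau$ for every $[v: V \to Y] \in \mathscr{D}^0_{/Y}$. Fix $g: f(X) \to Y$. I apply (T3) for $\tau$ on $X$, using the covering sieve $S_0 = g^{-1}\mathscr{D}^0_{/Y}$ and the target sieve $S_1 = g^{-1}\mathscr{D}^1_{/Y}$. So I must show $u^*S_1 \in \tau$ for every $[u: X' \to X] \in S_0$. By definition of $S_0$, the composite $v := g \circ f(u): f(X') \to Y$ lies in $\mathscr{D}^0_{/Y}$, so $v^*\mathscr{D}^1_{/Y} \in f_*\tau$. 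Testing this against the identity morphism $\mathrm{id}: f(X') \to f(X')$ gives $\mathrm{id}^{-1}(v^*\mathscr{D}^1_{/Y}) = v^{-1}\mathscr{D}^1_{/Y} = u^*S_1 \in \tau$, using the two identities above. Hence $S_1 \in \tau$, so $\mathscr{D}^1_{/Y} \in f_*\tau$.

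The only subtlety is this step: one must test membership in $f_*\tau$ against morphisms from objects in the image of $f$, here $\mathrm{id}_{f(X')}$, rather than against arbitrary objects of $\mathscr{D}$. The definition is designed to allow exactly this.

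\textbf{Cocontinuity.} I verify condition (ii) of Lemma \ref{lem:cocontinuousmap}. Given $X \in \mathscr{C}$ and $\mathscr{D}^0_{/f(X)} \in f_*\tau$, take $g = \mathrm{id}_{f(X)}$ in the definition of $f_*\tau$. This shows that $\mathscr{C}_{/X} \times_{\mathscr{D}_{/f(X)}} \mathscr{D}^0_{/f(X)}$ is a $\tau$-covering sieve, which is exactly condition (ii). Hence $f$ is cocontinuous.
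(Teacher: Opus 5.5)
Your proof is correct and follows the paper's argument essentially step for step. You check (T1) and (T2) by reducing to $\tau$. For (T3) you apply (T3) for $\tau$ to the pulled-back sieves on $X$, and test the hypothesis for $[f(X') \to Y] \in \mathscr{D}^0_{/Y}$ against $\mathrm{id}_{f(X')}$. Cocontinuity then follows from Lemma \ref{lem:cocontinuousmap} with $g = \mathrm{id}_{f(X)}$. Your preimage notation simply spells out the identifications that the paper attributes to ``associativity of fiber products.''
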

\begin{proof}
Let us first prove that $f_*\tau$ is a Grothendieck topology. The axiom (T1) is clear. The axiom (T2) is clear from the associativity of fiber products. For axiom (T3), let $\mathscr{D}^0_{/Y} \in f_*\tau$ and let $\mathscr{D}^1_{/Y}$ be a sieve on $\tau$ such that, for every $[Y^\prime \to Y] \in \mathscr{D}^0_{/Y}$, the subcategory
\begin{equation}
    \mathscr{D}_{/Y^\prime} \times_{\mathscr{D}_{/Y}} \mathscr{D}^1_{/Y} \subseteq \mathscr{D}_{/Y^\prime}
\end{equation}
is a $f_*\tau$-covering sieve on $Y^\prime$. Let us fix $f(X) \to Y$; we need to show that
\begin{equation}\label{eq:coveringsievelemma}
    \mathscr{C}_{/X} \times_{\mathscr{D}_{/Y}} \mathscr{D}^1_{/Y}  \subseteq \mathscr{C}_{/X}
\end{equation}
is a $\tau$-covering sieve. Let $[X^\prime \to X] \in \mathscr{C}_{/X} \times_{\mathscr{D}_{/Y}} \mathscr{D}^0_{/Y} \subseteq \mathscr{C}_{/X}$; by assumption, this is a $\tau$-covering sieve. Hence by (T3) it is sufficient to show that 
\begin{equation}\label{eq:coveringsievelemma2}
    \mathscr{C}_{/X^\prime} \times_{\mathscr{C}_{/X}}\big( \mathscr{C}_{/X} \times_{\mathscr{D}_{/Y}} \mathscr{D}^1_{/Y}\big)
\end{equation}
is a $\tau$-covering sieve. Now by the associativity of fiber products, 
\begin{equation}
\begin{aligned}
   \mathscr{C}_{X^\prime} \times_{\mathscr{C}_{/X}}\big( \mathscr{C}_{/X} \times_{\mathscr{D}_{/Y}} \mathscr{D}^1_{/Y}\big) &\simeq \mathscr{C}_{/X^\prime} \times_{\mathscr{D}_{/Y}} \mathscr{D}^1_{/Y}\\
   &\simeq \mathscr{C}_{/X^\prime} \times_{\mathscr{D}_{/f(X^\prime)}}\big( \mathscr{D}_{/f(X^\prime)} \times _{\mathscr{D}_{/Y}} \mathscr{D}^1_{/Y}\big). 
\end{aligned}
\end{equation}
Now $[f(X^\prime) \to Y] \in \mathscr{D}^0_{/Y}$, so $\mathscr{D}_{/f(X^\prime)} \times _{\mathscr{D}_{/Y}} \mathscr{D}^1_{/Y}$ is a $f_*\tau$-covering sieve. This shows that \eqref{eq:coveringsievelemma2} is a $\tau$-covering sieve. By taking $g = \operatorname{id}: f(X) \to Y = f(X)$ in Definition \ref{defn:pushforwardtopology}, it is clear from Lemma \ref{lem:cocontinuousmap} that $f: \mathscr{C} \to \mathscr{D}$ is a cocontinuous functor with respect to this topology. 
\end{proof}
\begin{rmk}
It should also be possible to show that $f_*\tau$ is the finest topology on $\mathscr{D}$ such that the functor $f: \mathscr{C} \to \mathscr{D}$ is cocontinuous. 
\end{rmk}
\begin{thm}\label{thm:pushforwardtopology}
Let $(\mathscr{C},\tau)$ be an $\infty$-site and let $\mathscr{D}$ be a small $\infty$-category. Assume that $\mathscr{C}$ and $\mathscr{D}$ admit finite limits, and let $f: \mathscr{C} \to \mathscr{D}$ be a fully-faithful left-exact functor. Then the functor $f$ is both continuous and cocontinuous with respect to the topology $f_*\tau$ on $\mathscr{D}$. 
\end{thm}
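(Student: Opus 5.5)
Cocontinuity of $f$ with respect to $f_*\tau$ is already Lemma \ref{lem:cocontinuousinitial}, so only continuity needs proof. The plan is to apply the criterion of Lemma \ref{lem:continuouscriterion}, which is available because $\mathscr{C}$ and $\mathscr{D}$ have finite limits and $f$ is left exact. Fix $X \in \mathscr{C}$ and a $\tau$-covering sieve $\mathscr{C}^0_{/X}$. Let $\mathscr{D}^1_{/f(X)}$ be the sieve generated by the image of $\mathscr{C}^0_{/X}$; its objects are the maps $[W \to f(X)]$ that factor as $W \to f(U) \to f(X)$ for some $[U \to X] \in \mathscr{C}^0_{/X}$. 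We must show $\mathscr{D}^1_{/f(X)} \in f_*\tau$.

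Unwinding Definition \ref{defn:pushforwardtopology}, fix $X' \in \mathscr{C}$ and a morphism $g: f(X') \to f(X)$. Since $f$ is fully faithful, $g = f(h)$ for an essentially unique $h: X' \to X$. We need the pulled-back sieve
\[
S := \mathscr{C}_{/X'} \times_{\mathscr{D}_{/f(X)}} \mathscr{D}^1_{/f(X)}
\]
to be $\tau$-covering. By (T2), $h^*\mathscr{C}^0_{/X} = \mathscr{C}_{/X'} \times_{\mathscr{C}_{/X}} \mathscr{C}^0_{/X}$ is $\tau$-covering, so it suffices to show $h^*\mathscr{C}^0_{/X} \subseteq S$. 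Upward closure of covering sieves follows from (T1)--(T3): if $S \supseteq R$ with $R$ covering, then every pullback of $S$ along a member of $R$ is maximal, hence covering by (T1), and (T3) gives that $S$ is covering.

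The containment is immediate. If $[Z \to X'] \in h^*\mathscr{C}^0_{/X}$, then $Z \to X' \to X$ lies in $\mathscr{C}^0_{/X}$. Hence $f(Z) \to f(X)$ is in the image of $\mathscr{C}^0_{/X}$, so it lies in $\mathscr{D}^1_{/f(X)}$.

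The converse containment also holds, though it is not needed. Suppose $f(Z) \to f(X)$ factors as $f(Z) \to f(U) \to f(X)$ with $[U \to X] \in \mathscr{C}^0_{/X}$. Full faithfulness lifts $f(Z) \to f(U)$ to a map $Z \to U$, so $Z \to X$ lies in $\mathscr{C}^0_{/X}$. Thus $S = h^*\mathscr{C}^0_{/X}$ exactly.

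The only delicate point is bookkeeping in the $\infty$-categorical setting. One must check that the factorization $g = f(h)$ and the lift of $f(Z) \to f(U)$ are compatible with the structure maps to $f(X)$, up to coherent homotopy. This follows from $f$ inducing equivalences on mapping spaces, and hence equivalences $\mathscr{C}_{/X'} \simeq \mathscr{D}_{/f(X')} \times_{\mathscr{D}} \mathscr{C}$ on slices. With this in hand, Lemma \ref{lem:continuouscriterion} gives continuity, completing the proof.
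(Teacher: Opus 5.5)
Your proposal is correct and follows the paper's proof essentially verbatim: cocontinuity comes from Lemma \ref{lem:cocontinuousinitial}, continuity comes via Lemma \ref{lem:continuouscriterion}, you lift $f(X')\to f(X)$ to $h\colon X'\to X$ by full faithfulness, and you compare the pulled-back sieve with $h^*\mathscr{C}^0_{/X}$. The only cosmetic difference is that the paper proves the equality $\mathscr{C}_{/X'}\times_{\mathscr{C}_{/X}}\mathscr{C}^0_{/X}=\mathscr{C}_{/X'}\times_{\mathscr{D}_{/f(X)}}\mathscr{D}^1_{/f(X)}$ directly, whereas you need only the easy containment together with upward closure of covering sieves, which you correctly derive from (T1)--(T3).
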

\begin{proof}
Because of Lemma \ref{lem:cocontinuousinitial}, we just need to show that $f$ is continuous. Let $X \in \mathscr{C}$ and let $\mathscr{C}^0_{/X} \subseteq \mathscr{C}_{/X}$ be a $\tau$-covering sieve. Let $\mathscr{D}^1_{/f(X)} \subseteq \mathscr{D}_{/f(X)}$ be the sieve generated by the image of $\mathscr{C}^0_{/X} \to \mathscr{D}_{/f(X)}$. By Lemma \ref{lem:continuouscriterion}, it is sufficient to show that the sieve $\mathscr{D}^1_{/f(X)}$ is a $f_*\tau$-covering sieve. Let us fix a morphism $f(X^\prime) \to f(X)$; by fully-faithfulness this is induced by a morphism $X^\prime \to X$. We need to check that $\mathscr{C}_{/X^\prime} \times_{\mathscr{D}_{/f(X)}} \mathscr{D}^1_{/f(X)}$ is a $\tau$-covering sieve. By construction, this sieve consists of those morphisms $Y \to X^\prime$ such that $f(Y) \to f(X)$ factors over some $f(Y^\prime) \to f(X)$, with $[Y^\prime \to X] \in \mathscr{C}^0_{/X}$. By fully-faithfulness this is equivalent to saying that $Y \to X$ factors over $Y^\prime \to X$. Hence  
\begin{equation}
    \mathscr{C}_{/X^\prime} \times_{\mathscr{C}_{/X}} \mathscr{C}_{/X}^0 = \mathscr{C}_{/X^\prime} \times_{\mathscr{D}_{/f(X)}} \mathscr{D}^1_{/f(X)}
\end{equation}
as full subcategories of $\mathscr{C}_{/X^\prime}$, and so $\mathscr{D}^1_{/f(X)}$ is a $f_*\tau$-covering sieve, according to Definition \ref{defn:pushforwardtopology}. 
\end{proof}
\begin{lem}\label{lem:fullyfaithful}
Let $(\mathscr{C}, \tau)$ and $(\mathscr{D},\tau^\prime)$ be $\infty$-sites. Assume that $\mathscr{C}$ and $\mathscr{D}$ admit finite limits, and let $f: \mathscr{C} \to \mathscr{D}$ be a fully-faithful functor which is both continuous and cocontinuous. Then the functors $f_!$ and $f_*$ are fully-faithful. 
\end{lem}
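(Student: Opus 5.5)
We will show that the unit $\operatorname{id} \to f^* f_!$ and the counit $f^* f_* \to \operatorname{id}$ are equivalences. Since $f$ is both continuous and cocontinuous, there is an adjoint triple $f_! \dashv f^* \dashv f_*$ on sheaf categories. Here Lemma \ref{lem:continuousfunctor} supplies $f_! \dashv f^*$, where $f^* = f_P^* \circ j$ with no sheafification needed. Lemma \ref{lem:cocontinuousadjunction} supplies $f^* \dashv f_*$, where $f_* = f_{P,*} \circ j$, again without sheafification. For an adjoint triple, $f_!$ is fully faithful if and only if $f_*$ is; indeed the unit of $f_! \dashv f^*$ is an equivalence exactly when the counit of $f^* \dashv f_*$ is, since these are mates. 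It therefore suffices to treat $f_*$.

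Let $F \in \operatorname{Shv}_\tau(\mathscr{C})$. Cocontinuity says $f_{P,*} jF$ is already a sheaf, so $f_* F = f_{P,*} jF$. Continuity says $f^*$ is computed by restriction $f_P^*$ with no sheafification. Hence
\begin{equation}
f^* f_* F \simeq f_P^* f_{P,*} jF ,
\end{equation}
and the counit on sheaves is the counit $f_P^* f_{P,*} \to \operatorname{id}$ at the presheaf level, applied to $jF$. By Lemma \ref{lem:presheavesFF}, full faithfulness of $f$ implies that $f_{P,*}$ is fully faithful, so this presheaf counit is an equivalence. Concretely, $(f_{P,*}G)(f(X)) = \lim_{(Y, f(Y')\to f(X))} G(Y')$, and since $f$ is fully faithful the indexing category $(\mathscr{C}_{/X})$ has terminal object $\operatorname{id}_X$, so the limit is $G(X)$.

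The only point needing care is identifying the sheaf-level counit with the presheaf-level one. The adjunction $f^* \dashv f_*$ of Lemma \ref{lem:cocontinuousadjunction} is built by restricting $f_P^* \dashv f_{P,*}$ along $j$ (after $L$), and $L j \simeq \operatorname{id}$, so the counits match under $j$. Since $j$ is fully faithful, an equivalence of presheaves gives one of sheaves.

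Alternatively, and as a check, one can argue directly for $f_!$. Both $f_!$ and $f^*$ preserve colimits. The unit $LX \to f^* f_! LX \simeq f^* L f(X)$, via Corollary \ref{cor:lowershriekrepresentable}, can be checked on the generators $LX$. There it reduces to $\operatorname{Map}(LX', LX) \simeq \operatorname{Map}(Lf X', Lf X)$, which follows from the mate argument. The finite-limit hypotheses are not needed for this route, beyond ensuring the lemmas above apply.
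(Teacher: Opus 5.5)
Your main argument is correct and is exactly the paper's proof. You use the adjoint triple to pass between $f_!$ and $f_*$, identify the sheaf-level counit $f^*f_*\to\operatorname{id}$ with the restriction to sheaves of $f_P^*f_{P,*}\to\operatorname{id}$ (both $f_P^*$ and $f_{P,*}$ preserve sheaves), and conclude by Lemma~\ref{lem:presheavesFF}. The closing ``alternative'' route is not an independent check, because its last step just appeals back to the same mate argument, so you can drop it.
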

\begin{proof}
It is sufficent to prove that the counit morphism $
f^*f_* \to \operatorname{id}$ is an equivalence; then the equivalence $\operatorname{id} \xrightarrow[]{\sim} f^*f_!$ then follows by passing to left adjoints. Because $f$ is both continuous and cocontinuous, the functors $f_P^*$ and $f_{P,*}$ preserve sheaves; hence the natural transformation $
f^*f_* \to \operatorname{id}$ identifies with the restriction of $f_{P}^*f_{P,*} \to \operatorname{id}$ to sheaves. The latter is an equivalence by Lemma \ref{lem:presheavesFF}. 
\end{proof}
\begin{lem}\label{lem:tautologicallimitlemma}
Let $\mathscr{C}$ be an $\infty$-category equipped with a subcanonical Grothendieck topology $\tau$. Then for every $X \in \mathscr{C}$ and every $\tau$-covering sieve $\mathscr{C}^0_{/X}$ on $X$, the limit $\operatorname{lim}_{Y \in (\mathscr{C}^0_{/X})^\mathrm{op}} \mathscr{O}(Y)$ exists in $\mathscr{C}^\mathrm{op}$ and the natural morphism
\begin{equation}\label{eq:tautologicallimitlemma}
    \mathscr{O}(X) \to \underset{Y \in (\mathscr{C}^0_{/X})^\mathrm{op}}{\operatorname{lim}} \mathscr{O}(Y)
\end{equation} 
is an equivalence\footnote{If we pretend that $\mathscr{C}$ admits all small colimits (this is in fact not possible since $\mathscr{C}$ is small), this is the same as saying that the identity functor $\mathscr{C}^\mathrm{op} \to \mathscr{C}^\mathrm{op}$ is a $\mathscr{C}^\mathrm{op}$-valued sheaf on $\mathscr{C}$.} in $\mathscr{C}^\mathrm{op}$. Here $\mathscr{O}(X)$ denotes the object of $\mathscr{C}^\mathrm{op}$ corresponding to $X \in \mathscr{C}$. 
\end{lem}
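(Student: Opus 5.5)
The plan is to reduce the statement to the Yoneda lemma together with the definition of subcanonical. Writing out what a limit in $\mathscr{C}^\mathrm{op}$ means, the claim is that $X$ is the colimit in $\mathscr{C}$ of the forgetful diagram $\mathscr{C}^0_{/X} \to \mathscr{C}$, $[Y \to X] \mapsto Y$, with cocone given by the structure maps to $X$. So I have to check that for every $Z \in \mathscr{C}$ the natural map
\begin{equation*}
\operatorname{Map}_{\mathscr{C}}(X, Z) \to \underset{Y \in (\mathscr{C}^0_{/X})^\mathrm{op}}{\operatorname{lim}} \operatorname{Map}_{\mathscr{C}}(Y, Z)
\end{equation*}
is an equivalence of $\infty$-groupoids.

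First, fix $Z \in \mathscr{C}$ and consider the representable presheaf $h_Z = \operatorname{Map}_{\mathscr{C}}(-, Z) \in \operatorname{Psh}(\mathscr{C})$. By definition, subcanonicity of $\tau$ means that every representable presheaf is a $\tau$-sheaf, so $h_Z \in \operatorname{Shv}_\tau(\mathscr{C})$. Second, I apply the sheaf condition to $F = h_Z$ and the covering sieve $\mathscr{C}^0_{/X}$. This gives exactly that
\begin{equation*}
h_Z(X) \to \underset{U \in (\mathscr{C}^0_{/X})^\mathrm{op}}{\operatorname{lim}} h_Z(U)
\end{equation*}
is an equivalence, which is the displayed map above.

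Third, I check that this equivalence is induced by a single cocone, so that it is a genuine statement about colimits. The maps $h_Z(X) \to h_Z(U)$ are given by precomposition with the structure maps $U \to X$. These structure maps assemble into the tautological cocone from $\mathscr{C}^0_{/X} \to \mathscr{C}$ to $X$: this is the cocone obtained by restricting the canonical colimit cocone of $\mathscr{C}_{/X} \to \mathscr{C}$, whose target is the terminal object $\operatorname{id}_X$. The comparison map therefore arises from one cocone, independent of $Z$. Since it is an equivalence for every $Z$, the criterion for colimits via corepresentability (HTT, the characterization of colimit diagrams by mapping spaces) shows that this cocone is a colimit cocone. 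Passing to the opposite category, the limit of $\mathscr{O}(Y)$ exists in $\mathscr{C}^\mathrm{op}$ and the map \eqref{eq:tautologicallimitlemma} is an equivalence.

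The only point requiring care is this third step: the sheaf-condition maps for varying $Z$ must all come from one cocone in $\mathscr{C}$. I would handle it by noting that the sheaf map is itself obtained by applying the functor $h_Z$ to that fixed cocone. Everything else is formal.
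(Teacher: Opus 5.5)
Your proposal is correct and follows the paper's argument: subcanonicity makes $h_Z$ a sheaf, so $\operatorname{Map}(X,Z)\to\lim_{Y}\operatorname{Map}(Y,Z)$ is an equivalence for every $Z$, and the mapping-space characterization of (co)limits then gives the claim in $\mathscr{C}^\mathrm{op}$. Your remark that all these equivalences are induced by one fixed tautological cocone is exactly the detail the paper leaves implicit in ``by the universal property of limits.''
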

\begin{proof}
The Lemma is a tautology. Let $Z \in \mathscr{C}$. Then as $\tau$ is subcanonical, the natural morphism $\operatorname{Map}(X,Z) \to  \operatorname{lim}_{Y \in(\mathscr{C}^0_{/X})^\mathrm{op}}\operatorname{Map}(Y,Z)$
is an equivalence, which is nothing but the map 
\begin{equation}
    \operatorname{Map}(\mathscr{O}(Z),\mathscr{O}(X)) \xrightarrow[]{\sim}  \underset{Y \in(\mathscr{C}^0_{/X})^\mathrm{op}}{\operatorname{lim}}\operatorname{Map}(\mathscr{O}(Z),\mathscr{O}(Y)).
\end{equation}
By the universal property of limits, this implies that $\operatorname{lim}_{Y \in(\mathscr{C}^0_{/X})^\mathrm{op}}\mathscr{O}(Y)$ exists and the natural morphism \eqref{eq:tautologicallimitlemma} is an equivalence. 
\end{proof}
\egapar Let $\mathscr{X}$ be an $\infty$-topos and let $f: X \to Y$ be a morphism in $\mathscr{X}$. This induces an adjunction
\begin{equation}
    f_\sharp : \mathscr{X}_{/X} \leftrightarrows \mathscr{X}_{/Y}: f^\sharp
\end{equation}
where $f_\sharp$ is given by postcomposition with $f$, and $f^\sharp$ is given by pullback along $f$.
\begin{lem}\label{lem:weilresdef}
The functor $f^\sharp: \mathscr{X}_{/Y} \to \mathscr{X}_{/X}$ admits a right adjoint.
\end{lem}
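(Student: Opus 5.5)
The plan is to use the adjoint functor theorem for presentable $\infty$-categories. For any $X \in \mathscr{X}$ the slice $\mathscr{X}_{/X}$ is again an $\infty$-topos (\cite[Proposition 6.3.5.1]{HigherToposTheory}); in particular it is presentable. So $f^\sharp: \mathscr{X}_{/Y} \to \mathscr{X}_{/X}$ is a functor between presentable $\infty$-categories, and by \cite[Corollary 5.5.2.9]{HigherToposTheory} it admits a right adjoint as soon as it preserves small colimits. The argument therefore reduces to showing that pullback along $f$ commutes with colimits.

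To check this, I first reduce to colimits computed in $\mathscr{X}$. The forgetful functors $\mathscr{X}_{/X} \to \mathscr{X}$ and $\mathscr{X}_{/Y} \to \mathscr{X}$ preserve and jointly detect small colimits (\cite[Proposition 1.2.13.8]{HigherToposTheory}). Take a diagram $p: I \to \mathscr{X}_{/Y}$ with $Z_i \to Y$ and colimit $Z = \operatorname{colim}_i Z_i \to Y$. It then suffices to show that the canonical map
\begin{equation}
\operatorname{colim}_i (Z_i \times_Y X) \to \Big(\operatorname{colim}_i Z_i\Big) \times_Y X
\end{equation}
is an equivalence in $\mathscr{X}$. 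This is exactly the statement that colimits in an $\infty$-topos are universal, i.e.\ stable under pullback. It is one of the Giraud axioms characterizing $\infty$-topoi (\cite[Theorem 6.1.0.6]{HigherToposTheory}), equivalently \cite[Lemma 6.1.3.3]{HigherToposTheory}. Given this, $f^\sharp$ preserves colimits and the right adjoint, which I would denote $f_\sharp^{\mathrm{W}}$ or similar (the derived Weil restriction), exists.

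The only point needing care is the bookkeeping of colimits in slice categories, where one uses that colimits there are computed in $\mathscr{X}$. The main ingredient, universality of colimits, is part of the definition or characterization of an $\infty$-topos. I would also note that the same argument says $\mathscr{X}$ is locally cartesian closed, and that the right adjoint is the dependent product $\Pi_f$. I do not expect a genuine obstacle here; the lemma is a direct citation of standard facts from \cite{HigherToposTheory}.
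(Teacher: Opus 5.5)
Your proposal is correct and is the paper's proof: the slices are $\infty$-topoi and hence presentable, $f^\sharp$ preserves colimits because colimits in an $\infty$-topos are universal (the Giraud axiom), and \cite[Corollary 5.5.2.9]{HigherToposTheory} then gives the right adjoint. Your extra step, that colimits in the slices are computed in $\mathscr{X}$, is a detail the paper leaves implicit; note that the paper calls the right adjoint $f_\flat$, because $f_\sharp$ already denotes postcomposition with $f$, so your proposed name would clash.
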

\begin{proof}
By the ``fundamental theorem of topos theory" \cite[Proposition 6.3.5.1]{HigherToposTheory} the slice over any object is again a topos. In particular, by the first Giraud axiom \cite[Theorem 6.1.0.6(3)(i)]{HigherToposTheory} $\mathscr{X}_{/X}$ and $\mathscr{X}_{/Y}$ are both presentable. That $f^\sharp$ preserves colimits is equivalent to the second Giraud axiom \cite[Theorem 6.1.0.6(3)(ii)]{HigherToposTheory} which says that colimits in $\mathscr{X}$ are universal. Hence the adjoint functor theorem \cite[Corollary 5.5.2.9]{HigherToposTheory} implies that $f^\sharp$ admits a right adjoint. 
\end{proof}
\egapar The right adjoint to $f^\sharp$, denoted $f_\flat$, is called the \emph{Weil restriction} along $f$. The Weil restriction is compatible with base-change:
\begin{lem}\label{lem:weilresbasechange}
Let $f: X \to Y$ and $g:Y^\prime \to Y$ be morphisms in $\mathscr{X}$. Denote by $f^\prime: X^\prime \to Y^\prime $ (resp. $g^\prime: X^\prime \to X$) the base-change of $f$ (resp. $g$) along $g$ (resp. $f$). Then one has $g^\sharp f_\flat \xrightarrow[]{\sim} f^{\prime}_\flat g^{\prime, \sharp}$ via the canonical (i.e., Beck-Chevalley) morphism. 
\end{lem}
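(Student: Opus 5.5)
The plan is to pass to left adjoints. Both functors in question are right adjoints: $f_\flat$ is right adjoint to $f^\sharp$ by Lemma \ref{lem:weilresdef}, and $g^\sharp$ is right adjoint to $g_\sharp$. The composites $g^\sharp f_\flat$ and $f^\prime_\flat g^{\prime,\sharp}$, viewed as functors $\mathscr{X}_{/X} \to \mathscr{X}_{/Y^\prime}$, therefore have left adjoints
\begin{equation}
f^\sharp g_\sharp \quad \text{and} \quad g^\prime_\sharp f^{\prime,\sharp} \colon \mathscr{X}_{/Y^\prime} \to \mathscr{X}_{/X}.
\end{equation}
The Beck--Chevalley morphism $g^\sharp f_\flat \to f^\prime_\flat g^{\prime,\sharp}$ is the mate of a canonical transformation between these left adjoints. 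Concretely, it is the mate of the exchange map $g^\prime_\sharp f^{\prime,\sharp} \to f^\sharp g_\sharp$.

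So I would first identify this mate explicitly. Take an object $[T \to Y^\prime]$ of $\mathscr{X}_{/Y^\prime}$. The functor $f^{\prime,\sharp}$ sends it to $T \times_{Y^\prime} X^\prime$, and then $g^\prime_\sharp$ regards this as an object over $X$ via $X^\prime \to X$. On the other side, $f^\sharp g_\sharp$ sends it to $T \times_Y X$, where $T$ is viewed over $Y$ via $g$. The exchange map is the canonical comparison
\begin{equation}
T \times_{Y^\prime} X^\prime = T \times_{Y^\prime}(Y^\prime \times_Y X) \longrightarrow T \times_Y X,
\end{equation}
and it is an equivalence by the pasting law for pullbacks in any $\infty$-category. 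Hence the left-adjoint transformation is a natural equivalence.

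The last step is formal. A natural transformation between left adjoints is an equivalence if and only if its mate between the right adjoints is an equivalence, since taking mates is a bijection compatible with composition and inverses. This gives $g^\sharp f_\flat \xrightarrow{\sim} f^\prime_\flat g^{\prime,\sharp}$. The same conclusion also follows from Yoneda: for $S \in \mathscr{X}_{/Y^\prime}$ and $M \in \mathscr{X}_{/X}$, compute $\operatorname{Map}(S, g^\sharp f_\flat M)$ and $\operatorname{Map}(S, f^\prime_\flat g^{\prime,\sharp} M)$ via the adjunctions, and observe that both reduce to maps out of the same pullback.

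The main obstacle is bookkeeping rather than mathematics. One has to check that the canonical Beck--Chevalley morphism as the paper defines it, built from the units and counits of $f^\sharp \dashv f_\flat$ and $f^{\prime,\sharp} \dashv f^\prime_\flat$, really is the mate of the pasting equivalence above, and not some other map. I would settle this using the standard calculus of mates, for example the results in Lurie's Higher Algebra or in Kerodon on adjoint squares.
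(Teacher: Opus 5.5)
Your proposal is correct and follows the paper's proof: the paper also shows that $g^\prime_\sharp f^{\prime,\sharp} \xrightarrow{\sim} f^\sharp g_\sharp$ via the equivalence $X^\prime \times_{Y^\prime} Z \simeq X \times_Y Z$, and then passes to right adjoints. The paper does not spell out that the Beck--Chevalley map is the mate of this exchange map; your appeal to the standard calculus of mates settles that point as you suggest.
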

\begin{proof}
It is clear that $g^\prime_\sharp f^{\prime,\sharp} \xrightarrow[]{\sim} f^\sharp g_\sharp$: indeed, if $Z \in \mathscr{X}_{/Y^\prime}$ there is an equivalence
\begin{equation}
X^\prime \times_{Y^\prime} Z = X \times_Y Y^\prime \times_{Y^\prime} Z \simeq X \times_YZ,
\end{equation}
in $\mathscr{X}_{/X}$. The claim then follows by passing to right adjoints. 
\end{proof}
\begin{lem}\label{lem:weilresfunctorial}
Let $\mathscr{X}, \mathscr{Y}$ be $\infty$-topoi and let $F: \mathscr{X} \leftrightarrows \mathscr{Y}:G$ be an adjunction in which the left adjoint $F$ is left-exact. Let $f: X \to Y$ be a morphism in $\mathscr{X}$. Then for every $Y^\prime \in \mathscr{Y}_{/F(X)}$ there is a canonical equivalence
\begin{equation}
    G(F(f)_\flat Y^\prime) \times_{GF(Y)} Y \simeq f_\flat(G(Y^\prime)\times_{GF(X)} X).
\end{equation}
In particular if $F$ is fully-faithful then there is a canonical equivalence 
\begin{equation}
G(F(f)_\flat Y^\prime) \simeq f_\flat G(Y^\prime).
\end{equation} 
\end{lem}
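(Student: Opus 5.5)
Both sides are objects of $\mathscr{X}_{/Y}$: the left via $GF(Y)\times Y\to Y$, the right as $f_\flat$ of an object of $\mathscr{X}_{/X}$. So I will prove the equivalence by Yoneda, comparing $\operatorname{Map}_{\mathscr{X}_{/Y}}(Z,-)$ of both sides for an arbitrary $Z\to Y$. The only inputs are the adjunctions $f^\sharp\dashv f_\flat$ and $F\dashv G$, left exactness of $F$, and the standard description of mapping spaces in slices as fibres of mapping spaces.

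For the right-hand side, adjunction gives
\[\operatorname{Map}_{/Y}(Z, f_\flat(G(Y')\times_{GF(X)}X))\simeq \operatorname{Map}_{/X}(Z\times_Y X, G(Y')\times_{GF(X)}X).\]
A map over $X$ into $G(Y')\times_{GF(X)}X$ is the same as a map $Z\times_YX\to G(Y')$ over $GF(X)$, where $Z\times_Y X\to X\to GF(X)$ uses the unit. By the adjunction $F\dashv G$ in its slice form (the unit/counit triangle identifies the two structure maps), this is $\operatorname{Map}_{/F(X)}(F(Z\times_YX),Y')$. Since $F$ is left exact, $F(Z\times_YX)\simeq F(Z)\times_{F(Y)}F(X)=F(f)^\sharp F(Z)$. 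Hence this space is $\operatorname{Map}_{/F(Y)}(F(Z),F(f)_\flat Y')$.

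For the left-hand side, the same reasoning run backwards applies. A map $Z\to G(F(f)_\flat Y')\times_{GF(Y)}Y$ over $Y$ is a map $Z\to G(F(f)_\flat Y')$ over $GF(Y)$. By adjunction this is a map $F(Z)\to F(f)_\flat Y'$ over $F(Y)$. The two sides therefore agree, and the identifications are natural in $Z$, so Yoneda in $\mathscr{X}_{/Y}$ gives the first equivalence.

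The main point requiring care is the "slice adjunction" step. If $Z\to GW$ is a map, with $W$ over $F(Y)$, and the composite $Z\to GW\to GF(Y)$ is the unit composed with $Z\to Y$, then its adjoint $F(Z)\to W$ lies over $F(Y)$ via $F(Z)\to F(Y)$. This follows from the triangle identity $\epsilon_{F}\circ F(\eta)=\mathrm{id}$ and is an equivalence of fibres of mapping spaces. I would state it once as a small sublemma and use it twice.

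For the "in particular" statement: if $F$ is fully faithful, the unit $\mathrm{id}\to GF$ is an equivalence. Then $GF(Y)\simeq Y$ and $GF(X)\simeq X$, so both fibre products collapse and the equivalence reduces to $G(F(f)_\flat Y')\simeq f_\flat G(Y')$.
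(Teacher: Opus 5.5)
Your proposal is correct and is essentially the paper's argument. The paper observes that left-exactness of $F$ gives a commuting square $F\circ f^\sharp \simeq F(f)^\sharp \circ F$ of functors $\mathscr{X}_{/Y} \to \mathscr{Y}_{/F(X)}$, then passes to right adjoints using the slice adjunction of \cite[Proposition 5.2.5.1]{HigherToposTheory}, whose right adjoint is $W \mapsto G(W)\times_{GF(Y)} Y$; your Yoneda computation is exactly this passage to right adjoints written out on mapping spaces, and your proposed sublemma is that slice adjunction.
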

\begin{proof}
Since $F$ is left-exact, there is a commutative square 
\begin{equation}
\begin{tikzcd}
	{\mathscr{X}_{/Y}} & {\mathscr{Y}_{/F(Y)}} \\
	{\mathscr{X}_{/X}} & {\mathscr{Y}_{/F(X)}}
	\arrow["F", from=1-1, to=1-2]
	\arrow["{f^\sharp}"', from=1-1, to=2-1]
	\arrow["{F(f)^\sharp}", from=1-2, to=2-2]
	\arrow["F", from=2-1, to=2-2]
\end{tikzcd}
\end{equation}
The claim follows by passing to right adjoints, using the slicing of adjunctions from \cite[Proposition 5.2.5.1]{HigherToposTheory}.
\end{proof}
\begin{prop}\label{prop:Slicetopoifunctoriality}
Let $(\mathscr{C}, \tau)$ be an $\infty$-site and let $X \in \operatorname{Shv}(\mathscr{C})$. We endow $\mathscr{C}_{/X}$ with the topology induced by the projection $p_X:\mathscr{C}_{/X} \to \mathscr{C}$ (with respect to this topology, the functor $p_X$ is both continuous and cocontinuous). Then, the functor $p_{X,!}: \operatorname{Shv}(\mathscr{C}_{/X}) \to \operatorname{Shv}(\mathscr{C})$ factors over an equivalence $\operatorname{Shv}(\mathscr{C}_{/X}) \xrightarrow[]{\sim} \operatorname{Shv}(\mathscr{C})_{/X}$. This is natural in the sense that if $f: X \to Y$ is any morphism in $\operatorname{Shv}(\mathscr{C})$ and $p_{XY}:\mathscr{C}_{/X} \to \mathscr{C}_{/Y}$ is the projection then there is a commutative square 
\begin{equation}
\begin{tikzcd}
	{\operatorname{Shv}(\mathscr{C}_{/X})} & {\operatorname{Shv}(\mathscr{C})_{/X}} \\
	{\operatorname{Shv}(\mathscr{C}_{/Y})} & {\operatorname{Shv}(\mathscr{C})_{/Y}}
	\arrow["\sim", from=1-1, to=1-2]
	\arrow["{p_{XY,!}}"', from=1-1, to=2-1]
	\arrow["{f_{\sharp}}", from=1-2, to=2-2]
	\arrow["\sim", from=2-1, to=2-2]
\end{tikzcd}
\end{equation}
and further, the squares
\begin{equation}
\begin{aligned}
\begin{tikzcd}
	{\operatorname{Shv}(\mathscr{C}_{/Y})} & {\operatorname{Shv}(\mathscr{C})_{/Y}} \\
	{\operatorname{Shv}(\mathscr{C}_{/X})} & {\operatorname{Shv}(\mathscr{C})_{/X}}
	\arrow["\sim", no head, from=1-1, to=1-2]
	\arrow["{p_{XY}^*}"', from=1-1, to=2-1]
	\arrow["{f^{\sharp}}", from=1-2, to=2-2]
	\arrow["\sim", no head, from=2-1, to=2-2]
\end{tikzcd} && \text{ and } && 
\begin{tikzcd}
	{\operatorname{Shv}(\mathscr{C}_{/X})} & {\operatorname{Shv}(\mathscr{C})_{/X}} \\
	{\operatorname{Shv}(\mathscr{C}_{/Y})} & {\operatorname{Shv}(\mathscr{C})_{/Y}}
	\arrow["\sim", no head, from=1-1, to=1-2]
	\arrow["{p_{XY,*}}"', from=1-1, to=2-1]
	\arrow["{f_\flat}", from=1-2, to=2-2]
	\arrow["\sim", no head, from=2-1, to=2-2]
\end{tikzcd}
\end{aligned}
\end{equation}
are naturally commutative.
\end{prop}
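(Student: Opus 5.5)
The plan is to follow the classical argument of \cite[Expos\'e III, \S5]{SGA4-1} and \cite[Proposition 5.1.6.12]{HigherToposTheory}, using the lemmas of this section. First I would make the induced topology precise. A sieve on $[U \to X] \in \mathscr{C}_{/X}$ is covering iff its image under $p_X$ generates a $\tau$-covering sieve on $U$. Since $\mathscr{C}_{/X}$ is identified with $\mathscr{C}_{/U}$ after slicing over the object $[U \to X]$, sieves on $[U\to X]$ correspond bijectively to sieves on $U$. Hence the topology is well-defined, and axioms (T1)--(T3) are inherited from $\tau$. Cocontinuity of $p_X$ then follows from Lemma \ref{lem:cocontinuousmap}. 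For continuity, I would show directly that $p_{X,P}^*F$, which sends $[U\to X]$ to $F(U)$, satisfies descent along these covering sieves. The relevant limit diagrams for $p_{X,P}^*F$ and for $F$ are literally the same, because $(\mathscr{C}_{/X})_{/[U \to X]} \simeq \mathscr{C}_{/U}$.

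Next I would construct the comparison functor. For $G \in \operatorname{Shv}(\mathscr{C}_{/X})$, the sheaf $p_{X,!}G$ carries a canonical map to $X$: apply $p_{X,!}$ to $G \to *$ and use $p_{X,!}(*) \simeq L\operatorname{colim}_{[U\to X]} U \simeq X$. The colimit identification holds because $X$ is the colimit of representables over its category of elements, which is $\mathscr{C}_{/X}$ (Yoneda), and $X$ is already a sheaf. This gives a functor $\Phi:\operatorname{Shv}(\mathscr{C}_{/X}) \to \operatorname{Shv}(\mathscr{C})_{/X}$. Its right adjoint is $\Psi(Z \to X)([U \xrightarrow{u} X]) = \operatorname{Map}_{/X}(U,Z) = \operatorname{fib}_u(Z(U) \to X(U))$. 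I would verify that $\Psi(Z)$ is a sheaf using the continuity from the previous step and the fact that limits commute with fibers.

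To show the adjunction is an equivalence, I would first do it at the presheaf level. The equivalence $\operatorname{Psh}(\mathscr{C}_{/X}) \simeq \operatorname{Psh}(\mathscr{C})_{/X}$ is \cite[Corollary 5.1.6.12]{HigherToposTheory}, and under it $\Phi$ is $p_{X,P,!}$ followed by the map to $X$. It then remains to see that this equivalence matches sheaves with objects $Z\to X$ whose source $Z$ is a sheaf. The key point is that $\Psi(Z)(U \to X)$ is a fiber of $Z(U)\to X(U)$, and covering sieves correspond on both sides. Sheafiness of $Z$ over a sheaf $X$ is equivalent to sheafiness of all these fibers, since a map of limit diagrams over a sheaf is an equivalence iff it is so fiberwise. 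I expect this matching of the sheaf conditions to be the main obstacle. It needs care because descent for $Z$ must be deduced from descent of the fibers, using that $X$ itself is a sheaf, so that $Z(U) \simeq \coprod_{u \in X(U)}$-fibers in the $\infty$-groupoid sense, compatibly with the limits over sieves.

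Finally, for naturality: the first square commutes because both composites are left adjoints agreeing on representables $[U\to X]$. Corollary \ref{cor:lowershriekrepresentable} gives $p_{XY,!}L[U\to X] \simeq L[U \to X \to Y]$, which is exactly $f_\sharp$ of $U \to X$, and both functors preserve colimits. Passing to right adjoints gives the $p_{XY}^* \leftrightarrow f^\sharp$ square; here I would also check that $p_{XY}$ is continuous and cocontinuous, so that $p_{XY}^*$ is the restriction. Taking right adjoints once more gives the $p_{XY,*}\leftrightarrow f_\flat$ square, using Lemma \ref{lem:weilresdef} for the existence of $f_\flat$.
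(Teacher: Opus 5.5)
Your proposal is correct and follows essentially the same route as the paper, which likewise builds the comparison functor from $p_{X,!}$ via $p_{X,P,!}(*)\simeq X$ and deduces the equivalence from the presheaf-level equivalence of \cite[Corollary 5.1.6.12]{HigherToposTheory}. The paper finishes with a retract argument resting on a diagram whose key square (that $p_{X,P,!}$ carries sheaves on $\mathscr{C}_{/X}$ to sheaves over $X$) it asserts without proof; your fiberwise comparison of descent conditions, using that $X$ is a sheaf, supplies exactly that step, and your check of the first naturality square on representables via Corollary \ref{cor:lowershriekrepresentable} replaces the paper's reduction to the case $Y=*$.
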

\begin{proof}
The argument is essentially the same as in \cite[Exposé III, Proposition 5.4, Proposition 5.5]{SGA4-1}. By definition, the functor $p_{X,!}$ is the composite 
\begin{equation}
\operatorname{Shv}(\mathscr{C}_{/X}) \xrightarrow[]{j} \operatorname{Psh}(\mathscr{C}_{/X}) \xrightarrow[]{p_{X,P,!}} \operatorname{Psh}(\mathscr{C}) \xrightarrow[]{L} \operatorname{Shv}(\mathscr{C}).
\end{equation}
By \cite[Corollary 5.1.6.12]{HigherToposTheory} the functor $p_{X,P,!}$ factors over an equivalence 
\begin{equation}
   e_{X,P}: \operatorname{Psh}(\mathscr{C}_{/X}) \xrightarrow[]{\sim} \operatorname{Psh}(\mathscr{C})_{/X}
\end{equation}
in particular the image of the terminal object under $p_{X,P,!}$ is $X$. Hence $p_{X,!}$ carries the terminal object to $X$ and so factors over a morphism $e_X: \operatorname{Shv}(\mathscr{C}_{/X}) \to \operatorname{Shv}(\mathscr{C})_{/X}$. There is a commutative diagram 
\begin{equation}
\begin{tikzcd}
	{\operatorname{Psh}(\mathscr{C}_{/X})} & {\operatorname{Shv}(\mathscr{C}_{/X})} & {\operatorname{Psh}(\mathscr{C}_{/X})} \\
	{\operatorname{Psh}(\mathscr{C})_{/X}} & {\operatorname{Shv}(\mathscr{C})_{/X}} & {\operatorname{Psh}(\mathscr{C})_{/X}}
	\arrow["{L_X}", from=1-1, to=1-2]
	\arrow["{e_{X,P}}"', from=1-1, to=2-1]
	\arrow["{j_X}", hook, from=1-2, to=1-3]
	\arrow["{e_X}"', from=1-2, to=2-2]
	\arrow["{e_{X,P}}", from=1-3, to=2-3]
	\arrow["{L_{/X}}", from=2-1, to=2-2]
	\arrow["{j_{/X}}", hook, from=2-2, to=2-3]
\end{tikzcd}
\end{equation}
hence, $e_X$ is the retract of an equivalence, and therefore an equivalence. For the last part, we can reduce to the case $Y = *$ by replacing $\mathscr{C}$ with the slice $\mathscr{C}_{/Y}$. In this case the composite $ \operatorname{Shv}(\mathscr{C}_{/X}) \xrightarrow{\sim} \operatorname{Shv}(\mathscr{C})_{/X}  \xrightarrow[]{f_\sharp} \operatorname{Shv}(\mathscr{C})$ is equivalent to $p_{X,!}$ by definition. 
\end{proof}
\begin{defn}\cite[Definition 1.3.1.4]{SpectralAlgebraicGeometry}.
Let $\mathscr{X}$ be an $\infty$-topos and $\mathscr{E}$ be an $\infty$-category admitting all small limits. An \emph{$\mathscr{E}$-valued sheaf on $\mathscr{X}$} is a limit-preserving functor $\mathscr{X}^\mathrm{op} \to \mathscr{E}$. We denote the $\infty$-category of such by $\operatorname{Shv}(\mathscr{X}, \mathscr{E})$.
\end{defn}
\begin{prop}\label{prop:sheavesontopos}
Let $(\mathscr{C},\tau)$ be an $\infty$-site and let $\mathscr{X} := \operatorname{Shv}_\tau(\mathscr{C})$ be the associated $\infty$-topos. Then restriction and right Kan extension along\footnote{This morphism is (the opposite of) the composite of the Yoneda embedding with sheafification.} $ \mathscr{C}^\mathrm{op} \to \mathscr{X}^\mathrm{op}$ yields an equivalence of $\infty$-categories $\operatorname{Shv}_\tau(\mathscr{C}, \mathscr{E}) \simeq \operatorname{Shv}(\mathscr{X}, \mathscr{E})$. 
\end{prop}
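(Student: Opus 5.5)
The natural strategy is to reduce to the case $\mathscr{E} = \infty\mathrm{Grpd}$, or more precisely to reduce both sides to statements about limit-preserving functors, using the universal property of $\operatorname{Psh}(\mathscr{C})$ and of its localization $\mathscr{X}$. Write $y: \mathscr{C} \to \mathscr{X}$ for $L$ composed with the Yoneda embedding. I first treat the presheaf level. Restriction along the opposite of the Yoneda embedding gives an equivalence between limit-preserving functors $\operatorname{Psh}(\mathscr{C})^\mathrm{op} \to \mathscr{E}$ and all functors $\mathscr{C}^\mathrm{op} \to \mathscr{E}$, with inverse given by right Kan extension. This is the dual of \cite[Theorem 5.1.5.6]{HigherToposTheory}: colimit-preserving functors out of $\operatorname{Psh}(\mathscr{C})$ into $\mathscr{E}^\mathrm{op}$, which has all small colimits, correspond to arbitrary functors $\mathscr{C} \to \mathscr{E}^\mathrm{op}$.

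Second, pass from $\operatorname{Psh}(\mathscr{C})$ to its left exact localization $\mathscr{X}$. By \cite[Proposition 5.5.4.20]{HigherToposTheory}, applied to $\mathscr{E}^\mathrm{op}$, precomposition with $L$ identifies colimit-preserving functors $\mathscr{X} \to \mathscr{E}^\mathrm{op}$ with colimit-preserving functors $\operatorname{Psh}(\mathscr{C}) \to \mathscr{E}^\mathrm{op}$ that invert the $L$-local equivalences. Since $L$ is the localization at the strongly saturated class generated by the covering sieve inclusions $\mathscr{C}^0_{/X} \hookrightarrow X$, it suffices to invert these generating monomorphisms. So it remains to check one thing. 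Let $F: \mathscr{C}^\mathrm{op} \to \mathscr{E}$ with right Kan extension $\tilde F: \operatorname{Psh}(\mathscr{C})^\mathrm{op} \to \mathscr{E}$. Then $\tilde F$ inverts every covering sieve inclusion if and only if $F$ is a $\tau$-sheaf.

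To verify this, write the sieve, viewed as a subpresheaf of $X$, as the colimit of the representables $U \in \mathscr{C}^0_{/X}$. Because $\tilde F$ sends colimits in $\operatorname{Psh}(\mathscr{C})$ to limits, we get $\tilde F(\mathscr{C}^0_{/X}) \simeq \lim_{U \in (\mathscr{C}^0_{/X})^\mathrm{op}} F(U)$. Under this identification the map induced by the inclusion is exactly the sheaf comparison map $F(X) \to \lim F(U)$. Hence the two conditions coincide.

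Finally, I would assemble the pieces. The composite equivalence $\operatorname{Shv}(\mathscr{X},\mathscr{E}) \simeq \operatorname{Shv}_\tau(\mathscr{C},\mathscr{E})$ is restriction along $y^\mathrm{op}$. Its inverse is the right Kan extension along $y^\mathrm{op}$; to identify it, note that the functor on $\mathscr{X}$ corresponding to $F$ restricts to $\tilde F$ on $\operatorname{Psh}(\mathscr{C})$, and that $y$ factors through the fully faithful inclusion $j$. I expect the main obstacle to be this bookkeeping of variance and Kan extensions in the presence of the opposite categories, together with justifying that $\tilde F$ inverts \emph{all} local equivalences once it inverts the generators. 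The latter should follow from the fact that the class of maps inverted by a limit-preserving functor is strongly saturated, together with \cite[Proposition 6.2.2.7]{HigherToposTheory}.
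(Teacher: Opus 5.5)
Your proposal is correct and is essentially the proof of Lurie's \cite[Proposition 1.3.1.7]{SpectralAlgebraicGeometry}, which is all the paper cites. The steps match: \cite[Theorem 5.1.5.6]{HigherToposTheory} at the presheaf level, \cite[Proposition 5.5.4.20]{HigherToposTheory} for the localization (which already handles your closing worry about inverting all local equivalences), and the colimit presentation of a covering sieve to recover the sheaf condition. One phrase should be corrected: $y$ does not factor through $j$. The clean identification of the inverse uses $y = L\circ h$ and $j^{\mathrm{op}} \dashv L^{\mathrm{op}}$, so that right Kan extension along $y^{\mathrm{op}}$ sends $F$ to $\tilde F \circ j^{\mathrm{op}}$.
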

\begin{proof}
This is \cite[Proposition 1.3.1.7]{SpectralAlgebraicGeometry}.
\end{proof}
\begin{lem}\label{lem:rightKanextensionLemma}
Let $(\mathscr{C},\tau)$ and $(\mathscr{D},\tau^\prime)$ be $\infty$-sites, let $\mathscr{E}$ be an $\infty$-category admitting all small limits, and let $f: \mathscr{C} \to \mathscr{D}$ be a continuous functor. Then for every $G \in \operatorname{Shv}_{\tau^\prime}(\mathscr{D}, \mathscr{E})$ there is a canonical equivalence 
\begin{equation}
    \operatorname{Ran}_G \circ f_! \xrightarrow[]{\sim} \operatorname{Ran}_{G \circ f},
\end{equation}
in $\operatorname{Shv}(\operatorname{Shv}_\tau(\mathscr{C}), \mathscr{E})$. Here $\operatorname{Ran}_G$ (resp. $\operatorname{Ran}_{G \circ f}$) denotes the right Kan extension of $G: \mathscr{D}^\mathrm{op} \to \mathscr{E}$  along $\mathscr{D}^{\mathrm{op}} \to \operatorname{Shv}_{\tau^\prime}(\mathscr{D})^\mathrm{op}$ (resp. the right Kan extension of $G\circ f: \mathscr{C}^\mathrm{op} \to \mathscr{E}$ along $\mathscr{C}^{\mathrm{op}} \to \operatorname{Shv}_{\tau}(\mathscr{C})^\mathrm{op}$). 
\end{lem}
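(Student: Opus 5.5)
Here $f_!: \operatorname{Shv}_\tau(\mathscr{C}) \to \operatorname{Shv}_{\tau'}(\mathscr{D})$ is the left adjoint of $f^*$ from Lemma \ref{lem:continuousfunctor}. The left-hand side $\operatorname{Ran}_G \circ f_!$ is regarded as a functor $\operatorname{Shv}_\tau(\mathscr{C})^\mathrm{op} \to \mathscr{E}$.

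The plan is to identify both sides as objects of $\operatorname{Shv}(\operatorname{Shv}_\tau(\mathscr{C}),\mathscr{E})$ and then use the equivalence of Proposition \ref{prop:sheavesontopos}. By that equivalence, a limit-preserving functor $\operatorname{Shv}_\tau(\mathscr{C})^\mathrm{op} \to \mathscr{E}$ is determined by its restriction along $\mathscr{C}^\mathrm{op} \to \operatorname{Shv}_\tau(\mathscr{C})^\mathrm{op}$, $X \mapsto LX$. Since $\operatorname{Ran}_{G\circ f}$ is by construction the right Kan extension of its restriction $G \circ f$, it suffices to do three things:
\begin{enumerate}[(a)]
\item show that $\operatorname{Ran}_G\circ f_!$ preserves limits;
\item produce a natural transformation $\operatorname{Ran}_G \circ f_! \to \operatorname{Ran}_{G\circ f}$;
\item check that this transformation restricts to an equivalence on $\mathscr{C}^\mathrm{op}$.
\end{enumerate}

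For (a), note that $f_!$ is a left adjoint, so it preserves colimits. Hence $f_!^\mathrm{op}$ preserves limits. Moreover $\operatorname{Ran}_G$ preserves limits, because it lies in $\operatorname{Shv}(\operatorname{Shv}_{\tau'}(\mathscr{D}),\mathscr{E})$ by Proposition \ref{prop:sheavesontopos} applied to the site $(\mathscr{D},\tau')$, using that $G$ is a sheaf. The composite therefore preserves limits and lies in $\operatorname{Shv}(\operatorname{Shv}_\tau(\mathscr{C}),\mathscr{E})$.

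For (b), we use the universal property of right Kan extension. A map $H \to \operatorname{Ran}_{G\circ f}$ is the same as a map from the restriction $H|_{\mathscr{C}^\mathrm{op}}$ to $G\circ f$. With $H = \operatorname{Ran}_G\circ f_!$, Corollary \ref{cor:lowershriekrepresentable} gives
\[
H(LX) = \operatorname{Ran}_G(f_!LX) \simeq \operatorname{Ran}_G(Lf(X)).
\]
The right-hand side is equivalent to $G(f(X))$, because $\operatorname{Ran}_G$ restricts to $G$ on $\mathscr{D}^\mathrm{op}$. This is part of the content of Proposition \ref{prop:sheavesontopos}: restriction is inverse to right Kan extension, and $\mathscr{D}^\mathrm{op} \to \operatorname{Shv}_{\tau'}(\mathscr{D})^\mathrm{op}$ need not be fully faithful, so this is not automatic. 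These equivalences assemble into an equivalence $H|_{\mathscr{C}^\mathrm{op}} \simeq G\circ f$, and we take the transformation in (b) to be its adjoint.

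For (c), the restriction of the transformation from (b) to $\mathscr{C}^\mathrm{op}$ is the equivalence just constructed, composed with the counit of $\operatorname{Ran}_{G\circ f}$, which is itself an equivalence. Hence the transformation is an equivalence after restriction. Since restriction is an equivalence of categories on limit-preserving functors, the transformation itself is an equivalence.

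The main obstacle is naturality in (b). The equivalence $f_!LX \simeq Lf(X)$ of Corollary \ref{cor:lowershriekrepresentable} was proven objectwise via Yoneda, and we need it as an equivalence of functors $\mathscr{C} \to \operatorname{Shv}_{\tau'}(\mathscr{D})$. I would obtain it functorially from the commutation $f_{P,!}\circ y_\mathscr{C} \simeq y_\mathscr{D}\circ f$ of left Kan extension with Yoneda. Then I would use that $L\circ f_{P,!}$ and $f_!\circ L$ agree as left adjoints of $j\circ f^* = f_P^*\circ j$, the latter identity being continuity of $f$. This makes the comparison natural in $X$.
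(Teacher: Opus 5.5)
Your proposal is correct and follows the paper's proof essentially step for step. Both arguments show that $\operatorname{Ran}_G\circ f_!$ preserves limits, identify its restriction to $\mathscr{C}^{\mathrm{op}}$ with $G\circ f$ via Corollary \ref{cor:lowershriekrepresentable} together with $\operatorname{Ran}_G|_{\mathscr{D}^{\mathrm{op}}}\simeq G$, and conclude by Proposition \ref{prop:sheavesontopos}. Your argument that $f_!L\simeq Lf$ is natural in $X$ (via $f_!\circ L\simeq L\circ f_{P,!}$ as left adjoints of $j\circ f^*\simeq f_P^*\circ j$) makes explicit a point the paper leaves implicit.
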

\begin{proof}
Put $G^\prime := \operatorname{Ran}_G$, by Proposition \ref{prop:sheavesontopos} this is limit-preserving and $G^\prime|_\mathscr{D} \simeq G$. The functor $f_!$ is colimit-preserving, therefore $\operatorname{Ran}_G \circ f_!$ is limit-preserving. By Corollary \ref{cor:lowershriekrepresentable} one has $(G^\prime \circ f_!)|_\mathscr{C} \simeq G^\prime \circ L \circ h \circ f$, where $h$ denotes the Yoneda embedding. But Proposition \ref{prop:sheavesontopos} implies that $G^\prime \circ L \circ h \simeq G$, hence $(G^\prime \circ f_!)|_\mathscr{C} \simeq G\circ f$ and by Proposition \ref{prop:sheavesontopos} again we deduce the claim. 
\end{proof}
\section{Derived algebraic contexts}\label{sec:derivedalgebracontexts}
\egapar The purpose of this section is the following. Our Main Theorem \ref{thm:intro2} does not impose any hypothesis on the characteristic of the ground non-Archimedean field $K$. For instance, $K = \mathbf{F}_q(\!(t)\!)$ is permitted. On the other hand, an essential component of our proof is the the use of derived geometry. Because of this, we need to exercise caution in positive characteristic, to ensure that we are working with the correct notion of ``derived algebras". This is achieved via the machinery developed in \cite{raksit_hochschild_2026} and \cite{BrantnerMathew}. Roughly speaking, our ``derived algebras" are an Ind-Banach version of simplicial commutative rings. The reason why we have to use this particular model, and not, for instance, $\mathbf{E}_\infty$-algebra objects, is because it is essential to define the derived symmetric algebra in a way which avoids introducing higher coinvariants for the action of the symmetric group (which do not vanish in positive characteristic). 
\egapar The definitions in this section are the same as those of Raksit \cite{raksit_hochschild_2026}, except that we prefer cohomological indexing notation. 
\begin{defn}\cite[Definition 3.3.1]{raksit_hochschild_2026}
Let $(\mathscr{V}, \otimes)$ be a presentably symmetric monoidal stable $\infty$-category. A $t$-structure $(\mathscr{V}^{\leqslant 0}, \mathscr{V}^{\geqslant 0})$ on the underlying stable $\infty$-category $\mathscr{V}$ is called \emph{compatible} if:
\begin{enumerate}[(i)]
    \item $\mathscr{V}^{\leqslant 0}$ is stable under filtered colimits in $\mathscr{V}$,
    \item the unit $1_{\mathscr{V}} \in \mathscr{V}^{\leqslant 0}$,
    \item if $M, N \in \mathscr{V}^{\leqslant 0}$ then $M \otimes N \in \mathscr{V}^{\leqslant 0}$.
\end{enumerate} 
\end{defn}
\begin{defn}\cite[Definition 4.2.1]{raksit_hochschild_2026}
A \emph{derived algebraic context} is a tuple \begin{equation}
(\mathscr{V},\mathscr{V}^{\leqslant 0}, \mathscr{V}^{\geqslant 0} ,\mathscr{V}^0)  
\end{equation}consisting of a presentably symmetric monoidal stable $\infty$-category $\mathscr{V}$, a compatible $t$-structure $(\mathscr{V}^{\leqslant 0}, \mathscr{V}^{\geqslant 0})$, and a small full subcategory $\mathscr{V}^0 \subseteq \mathscr{V}^\heartsuit$ such that:
\begin{enumerate}[(i)]
    \item The $t$-structure is right complete;
    \item $\mathscr{V}^0 \subseteq \mathscr{V}$ is a symmetric monoidal subcategory and $\mathscr{V}^0 \subseteq \mathscr{V}^\heartsuit$ is closed under the formation of $\mathscr{V}^\heartsuit$-symmetric powers;
    \item The subcategory $\mathscr{V}^0$ is closed under the formation of finite coproducts in $\mathscr{V}$ and its objects form a set of compact projective generators for $\mathscr{V}^{\leqslant 0}$.
\end{enumerate}
A \emph{morphism of derived algebraic contexts} $(\mathscr{V},\mathscr{V}^{\leqslant 0}, \mathscr{V}^{\geqslant 0} ,\mathscr{V}^0) \to (\mathscr{W},\mathscr{W}^{\leqslant 0}, \mathscr{W}^{\geqslant 0} ,\mathscr{W}^0)$ is a colimit-preserving, symmetric monoidal, right $t$-exact functor $\mathscr{V} \to \mathscr{W}$ which carries $\mathscr{V}^0$ into $\mathscr{W}^0$. 
\end{defn}
\begin{example}[The initial derived algebraic context]
Let $\mathscr{V} := D(\mathbf{Z})$ be the derived category of $\mathbf{Z}$-modules, let $(\mathscr{V}^{\leqslant 0}, \mathscr{V}^{\geqslant 0}) = (D^{\leqslant 0}(\mathbf{Z}), D^{\geqslant 0}(\mathbf{Z}))$ be the standard $t$-structure, and let $\mathscr{V}^0 \subseteq \mathscr{V}^\heartsuit$ be the category of finite free $\mathbf{Z}$-modules. Then $(\mathscr{V},\mathscr{V}^{\leqslant 0}, \mathscr{V}^{\geqslant 0} ,\mathscr{V}^0)$ is the initial object in the category of derived algebraic contexts. 
\end{example}
\egapar Here is the most important example for this purposes of this article.
\begin{example}\label{example:IndBancontext}
Let $K$ be a non-Archimedean field, and let $\operatorname{IndBan}_K$ denote the (ordinary) $\operatorname{Ind}$-category of the category $\operatorname{Ban}_K$ of non-Archimedean $K$-Banach spaces. We define $\mathscr{V} := (D(\operatorname{IndBan}_K), \hat{\otimes}^\mathbf{L}_K)$ where $\hat{\otimes}^\mathbf{L}_K$ is the derived completed tensor product. We define 
\begin{equation}
(\mathscr{V}^{\leqslant 0}, \mathscr{V}^{\geqslant 0}) = (D^{\leqslant 0}(\operatorname{IndBan}_K), D^{\geqslant 0}(\operatorname{IndBan}_K)) 
\end{equation}
to be the \emph{left $t$-structure} on $\mathscr{V}$, c.f. \cite{schneiders_quasi-abelian_1999}. For every small set $S$ we define the $K$-Banach space of \emph{zero sequences on $S$} as 
\begin{equation}
    c_0(S) := \{ \varphi:S \to K: \forall \varepsilon > 0 \ \exists \text{ at most finitely many }s \in S : |\varphi(s) |> \varepsilon\},
\end{equation}
equipped with the norm $\|\varphi\|:= \sup_{s \in S} |\varphi(s)|$. We define $\mathscr{V}_0$ to be the full subcategory of $\operatorname{Ban}_K$ on objects of the form $c_0(S)$. Then $(\mathscr{V},\mathscr{V}^{\leqslant 0}, \mathscr{V}^{\geqslant 0} ,\mathscr{V}^0)$ is a derived algebraic context. For instance, one has 
\begin{equation*}
\begin{aligned}
c_0(S) \oplus c_0(S^\prime) \cong c_0(S \amalg S^\prime ), && c_0(S) \hat{\otimes}_Kc_0(S^\prime) \cong c_0(S \times S^\prime), && \operatorname{Sym}^{n}_{\heartsuit,K} c_0(S) = c_0(S^n/\Sigma_n),
\end{aligned}
\end{equation*}
for small sets $S, S^\prime$. By \cite[Proposition 2.1.44]{SoorThesis} one has $\mathscr{V}^{\leqslant 0} \simeq \operatorname{sInd}(\mathscr{V}^0)$ and by \cite[Lemma 2.1.48]{SoorThesis} the $t$-structure is right complete. 
\end{example}
\begin{construction}\label{construction:LSymMonad}
Let $(\mathscr{V},\mathscr{V}^{\leqslant 0}, \mathscr{V}^{\geqslant 0} ,\mathscr{V}^0)$ be a derived algebraic context. We recall that Raksit \cite[Construction 4.2.19]{raksit_hochschild_2026}, following Brantner-Mathew \cite{BrantnerMathew}, has constructed a \emph{filtered monad}\footnote{A \emph{filtered monad} on $\mathscr{V}$ is lax monoidal functor $(\mathbf{Z}_{\geqslant 0} ,\times) \to (\operatorname{End}(\mathscr{V}), \circ)$.}  $\operatorname{LSym}^{\leqslant \bullet}_\mathscr{V}$ on $\mathscr{V}$ with the following properties:
\begin{enumerate}[(i)]
    \item For every $i \geqslant 0$, the underlying endofunctor of $\operatorname{LSym}^{\leqslant i}_\mathscr{V}$ is colimit preserving and \emph{excisively polynomial}\footnote{See \cite[Definition 4.2.7]{raksit_hochschild_2026}.}.
    \item For every $i \geqslant 0$ there are natural equivalences $\operatorname{LSym}^{\leqslant i}_\mathscr{V}|_{\mathscr{V}^0} \simeq \operatorname{Sym}^{ \leqslant i}_{\mathscr{V}^\heartsuit}$.
    \item There is a canonical morphism of filtered monads $\theta^{\leqslant \bullet}: \operatorname{Sym}^{\leqslant \bullet}_\mathscr{V} \to \operatorname{LSym}^{\leqslant \bullet}_\mathscr{V}$. Here $\operatorname{Sym}^{\leqslant \bullet}_\mathscr{V}$ is the symmetric powers filtered monad from \cite[Construction 4.1.8]{raksit_hochschild_2026}. 
\end{enumerate}
Taking colimits, we obtain a natural transformation of monads $\theta: \operatorname{Sym}_{\mathscr{V}} \to \operatorname{LSym}_{\mathscr{V}}$. We define the category of \emph{derived (commutative) algebra objects} of $\mathscr{V}$ as $\operatorname{DAlg}(\mathscr{V}) := \operatorname{Mod}_{\operatorname{LSym}_\mathscr{V}}\mathscr{V}$. The morphism $\theta$ induces a functor $\Theta: \operatorname{DAlg}(\mathscr{V}) \to \operatorname{CAlg}(\mathscr{V})$, where $\operatorname{CAlg}(\mathscr{V})$ denotes the $\infty$-category of $\mathbf{E}_\infty$-algebra objects in $\mathscr{V}$. By \cite[Proposition 4.2.27]{raksit_hochschild_2026} the functor $\Theta$ preserves all small limits and colimits. For $A \in \operatorname{DAlg}(\mathscr{V})$ the object $\Theta(A) \in \operatorname{CAlg}(\mathscr{V})$ is called the \emph{underlying $\mathbf{E}_\infty$-algebra of $A$}.
\end{construction}
\begin{rmk}
With notations as in Construction \ref{construction:LSymMonad}. Raksit also constructs for each $i \geqslant 0$ a sifted-colimit preserving, excisively polynomial functor $\operatorname{LSym}^i_\mathscr{V}: \mathscr{V} \to \mathscr{V}$ such that $\operatorname{LSym}^i_\mathscr{V}|_{\mathscr{V}^0} \simeq \operatorname{Sym}^i_{\mathscr{V}^\heartsuit}$. One then has canonical equivalences $\operatorname{LSym}^{\leqslant i}_{\mathscr{V}} \simeq \bigoplus_{j \leqslant i} \operatorname{LSym}^j$ for each $i \geqslant 0$. We may also define for each $i \geqslant 0$ the endofunctor $\operatorname{LSym}^{\geqslant i}_{\mathscr{V}} := \bigoplus_{j \geqslant i} \operatorname{LSym}^j$.
\end{rmk}
\begin{rmk}\label{rmk:connectivediscrete}
We define the category of connective (resp. discrete) derived commutative algebra objects as 
\begin{equation}
\begin{aligned}
    \operatorname{DAlg}^{\leqslant 0}(\mathscr{V}) := \operatorname{DAlg}(\mathscr{V}) \times_\mathscr{V} \mathscr{V}^{\leqslant 0}, && \operatorname{DAlg}^\heartsuit(\mathscr{V}) := \operatorname{DAlg}(\mathscr{V}) \times_\mathscr{V} \mathscr{V}^{\heartsuit}.
\end{aligned}
\end{equation}
As in \cite[Remark 4.2.24]{raksit_hochschild_2026} one has the following: 
\begin{enumerate}[(i)]
    \item There is a canonical equivalence $\operatorname{DAlg}^{\heartsuit}(\mathscr{V}) \simeq \operatorname{CAlg}(\mathscr{V}^\heartsuit)$, where the latter is the (ordinary) category of commutative algebra objects in $\mathscr{V}$;
    \item Let $\operatorname{Poly}^\mathscr{V}$ be the full subcategory of $\operatorname{CAlg}(\mathscr{V}^\heartsuit)$ on objects of the form $\operatorname{Sym}_{\mathscr{V}^\heartsuit} P$ for $P \in \mathscr{V}^0$. Then there is a canonical equivalence $\operatorname{sInd}(\operatorname{Poly}^\mathscr{V}) \simeq \operatorname{DAlg}^{\leqslant 0}(\mathscr{V})$.
\end{enumerate}
\end{rmk}
\begin{construction}\label{construction:DAlgmodules}
With notations as in Construction \ref{construction:LSymMonad}. All of the below is taken from \cite[Notation 4.2.28]{raksit_hochschild_2026}.
\begin{enumerate}[(i)]
    \item For $A \in \operatorname{DAlg}(\mathscr{V})$ we define $\operatorname{Mod}_A \mathscr{V} := \operatorname{Mod}_{\Theta(A)}\mathscr{V}$, we define $\operatorname{DAlg}_A\mathscr{V} := \operatorname{DAlg}(\mathscr{V})_{A/}$, we define the forgetful functor to be the composite $\operatorname{oblv}_A:\operatorname{DAlg}_A\mathscr{V} \to \operatorname{CAlg}(\mathscr{V})_{\Theta(A)/} \to \operatorname{Mod}_A\mathscr{V}$ and we denote by $\operatorname{LSym}_A : \operatorname{Mod}_A\mathscr{V} \to \operatorname{DAlg}_A\mathscr{V}$ its left adjoint. 
    \item The adjunction $ \operatorname{LSym}_A \dashv \operatorname{oblv}_A$ is monadic. The functor $\operatorname{oblv}_A$ is canonically symmetric-monoidal for the cocartesian monoidal structure on the source, therefore, we denote the coproduct on $\operatorname{DAlg}_A\mathscr{V}$ by $\otimes_A$. 
    \item If $A \to B$ is a morphism in $\operatorname{DAlg} (\mathscr{V})$ then there is an adjunction $B \otimes_A (-) :\operatorname{DAlg}_A\mathscr{V} \leftrightarrows \operatorname{DAlg}_B\mathscr{V} : \operatorname{oblv}$ and for every $M \in \operatorname{Mod}_A \mathscr{V}$ there is a canonical equivalence $B \otimes_A\operatorname{LSym}_A M\xrightarrow{\sim}\operatorname{LSym}_B B \otimes_A M$.
\end{enumerate}

\end{construction} 
\section{Analytification as a geometric morphism of $\infty$-topoi}\label{sec:Analytification}
\egapar In this section we use the sheaf-theoretic operations developed in \S\ref{sec:RestrictionExtension} to construct an analytification functor with the best possible functoriality properties. To be precise, the analytification functor is implemented as a \emph{geometric morphism of $\infty$-topoi}. Once we know that the analytification of the affine line is the right thing, it then becomes formal\footnote{Under appropriate finite-type hypotheses.} to see that the analytification functor sends (derived) schemes to (derived) analytic spaces and, more generally, sends (derived) algebraic stacks to (derived) analytic stacks.
To construct the analytification functor, the idea is to write it as a composite of two geometric morphisms, using a carefully chosen topos which is intermediate between the analytic and algebraic topoi. This is the purpose of the topology introduced in Definition \ref{defn:dri}. In fact, this intermediate topos has some nice properties: for instance, it admits a good definition of ``affine morphism", see Proposition \ref{prop:NCrepresentable}.

\egapar Let $K$ be a non-trivially valued non-Archimedean field with ring of integers $o$ and pseudo-uniformizer $\varpi$. \emph{From now until the end of this article we fix $\mathscr{V} = D(\operatorname{IndBan}_K)$ and $(\mathscr{V}, \mathscr{V}^{\leqslant 0}, \mathscr{V}^{\geqslant 0}, \mathscr{V}^0)$ denotes the derived algebraic context from Example \ref{example:IndBancontext}}. 
\begin{notation}\label{notation:Aff}
We define $\operatorname{Aff} := \operatorname{DAlg}(\mathscr{V})^\mathrm{op}$. For $A \in \operatorname{DAlg}(\mathscr{V})$ we denote the corresponding object of $\operatorname{Aff}$ by the formal expression $\operatorname{Spec}(A)$.
\end{notation}
\egapar Note that Notation \ref{notation:Aff} allows for ``nonconnective affines".
\begin{defn}\cite[Definition 3.1.1]{SoorThesis}\label{defn:DerivedAffinoid}
\begin{enumerate}[(i)]
    \item We let $\operatorname{AfndAlg}$ denote the full subcategory of $\operatorname{DAlg}(\mathscr{V})$ on objects such that $A \in \operatorname{DAlg}^{\leqslant 0}(\mathscr{V})$ is connective, $H^0A$ is a classical $K$-affinoid algebra and for every $n \leqslant 0$, $H^{n}A$ is finitely-generated as a $H^0A$-module. 
    \item We let $\mathrm{Afnd} \subseteq \mathrm{Aff}$ denote the corresponding full subcategory of $\mathrm{Aff}$, which we call the category of \emph{derived affinoid spaces}.
\end{enumerate}
\end{defn} 
\begin{lem}
The subcategory $\mathrm{Afnd} \subseteq \operatorname{Aff}$ is stable under fiber products, finite coproducts and retracts in $\mathrm{Aff}$.
\end{lem}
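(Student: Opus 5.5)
Fiber products in $\operatorname{Aff}$ correspond to pushouts $B \otimes_A C$ in $\operatorname{DAlg}(\mathscr{V})$, finite coproducts to finite products $A \times B$, and retracts to retracts. I will check each closure property against the three conditions of Definition \ref{defn:DerivedAffinoid}: connectivity, $H^0$ a classical affinoid algebra, and each $H^n$ finitely generated over $H^0$.

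\emph{Retracts.} Suppose $B$ is a retract of $A \in \operatorname{AfndAlg}$ in $\operatorname{DAlg}(\mathscr{V})$. Since $\mathscr{V}^{\leqslant 0}$ is closed under retracts, $B$ is connective. Then $H^0B$ is a retract of $H^0A$ as commutative algebras in $\mathscr{V}^\heartsuit$. The idempotent endomorphism $e$ of $H^0A$ has image $H^0B$, which is the equalizer of $e$ and $\operatorname{id}$. A kernel of an algebra idempotent of an affinoid algebra is a closed ideal, and $H^0B \cong H^0A/\ker(e)$ is a quotient of an affinoid algebra by a closed ideal, hence affinoid. Similarly, $H^nB$ is an $H^0A$-linear retract of $H^nA$, so it is finitely generated over $H^0A$, and therefore over $H^0B$, since the $H^0A$-action factors through $H^0B$.

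\emph{Finite coproducts.} Here the algebra is $A \times B$. Finite products in $\operatorname{DAlg}$ are computed underlying in $\mathscr{V}$ (the forgetful functor preserves limits), so $H^n(A \times B) = H^nA \oplus H^nB$. Connectivity is clear. The product $H^0A \times H^0B$ of affinoid algebras is affinoid, and finite generation holds componentwise. The empty coproduct is the zero algebra, which is trivially affinoid.

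\emph{Fiber products.} This is the main step. Given $B \leftarrow A \rightarrow C$ in $\operatorname{AfndAlg}$, the pushout is $B \otimes^{\mathbf{L}}_A C$, the relative tensor product of underlying modules with completed tensor $\hat\otimes^{\mathbf{L}}_K$. It is connective by compatibility of the $t$-structure. Its $H^0$ is $H^0B \hat\otimes_{H^0A} H^0C$, taken in $\mathscr{V}^\heartsuit$ for the left heart. I need to identify this with the classical completed tensor product of affinoid algebras, which is affinoid; I will use the strict-epimorphism presentation $T_n \to H^0B$, etc., and \cite{SoorThesis}. For the higher cohomology I would use the Tor spectral sequence
\begin{equation*}
E_2 = \operatorname{Tor}^{H^*A}_{*}(H^*B, H^*C) \Rightarrow H^*(B \otimes_A C),
\end{equation*}
or, more robustly, filter by Postnikov towers and reduce to the case of discrete modules. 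The key input is that for finite modules over affinoid algebras, the derived completed tensor products $M \hat\otimes^{\mathbf{L}}_{H^0A} N$ have cohomology groups that are finite over $H^0B \hat\otimes_{H^0A} H^0C$. This holds because finite modules admit resolutions by finite free modules over the Noetherian ring $H^0B$, and completed tensor products of finite free modules are finite free.

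The obstacle I expect is exactly the comparison of Ind-Banach derived tensor products with classical algebraic ones: one needs that finitely generated modules over affinoids, viewed in $\operatorname{IndBan}_K$, behave well (flatness of $T_n \hat\otimes T_m$ issues, strictness). I would cite the corresponding results of \cite{SoorThesis} for this comparison rather than reprove them; in fact the lemma is essentially \cite{SoorThesis}, so that is where I would first look.
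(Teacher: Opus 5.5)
\textbf{Verdict.} Your retract and finite-coproduct cases are fine, but the fiber-product step has a genuine gap: your justification for finiteness of the higher cohomology is circular. For comparison, the paper gives no argument at all; it simply cites \cite[Lemma 3.1.2]{SoorThesis}.

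\textbf{A small correction in the retract case.} If $H^nB$ is viewed as an $H^0A$-module via $H^0(r)$, then $H^n(i)$ is not $H^0A$-linear. So $H^nB$ is not an $H^0A$-linear retract of $H^nA$. What you actually need, and do have, is that $H^n(r)$ is an $H^0A$-linear surjection.

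\textbf{Why the fiber-product argument fails.} After any reduction to discrete pieces, the case that carries all the content is $M=H^0B$, $N=H^0C$. There your finite free resolutions are trivial, so they do no work. The claim that completed tensor products of finite free modules are finite free holds only for the \emph{underived} tensor product. Derived, one has $(H^0B)^{\oplus a}\hat\otimes^{\mathbf L}_{H^0A}(H^0C)^{\oplus b}\simeq (H^0B\hat\otimes^{\mathbf L}_{H^0A}H^0C)^{\oplus ab}$, which is exactly the object you are trying to control. That object is non-discrete in general, e.g.\ $K\hat\otimes^{\mathbf L}_{K\langle T\rangle}K\simeq K\oplus K[1]$. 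For the same reason, Noetherianity alone gets nothing out of the spectral sequence $\operatorname{Tor}^{H^*A}_*(H^*B,H^*C)$: neither $H^*B$ nor $H^*C$ is finitely generated over $H^*A$.

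\textbf{The missing idea: make one factor finite before tensoring.}
\begin{enumerate}[(i)]
\item Choose a surjection $H^0A\langle T_1,\dots,T_n\rangle\to H^0B$. Lift it to a morphism $A\langle T\rangle:=A\hat\otimes^{\mathbf L}_KK\langle T_1,\dots,T_n\rangle\to B$, using formal \'etaleness of $K[T]\to K\langle T\rangle$ exactly as in the proof of Proposition \ref{prop:affineline}.
\item Since $K\langle T_1,\dots,T_n\rangle$ is projective, $C\langle T\rangle:=A\langle T\rangle\hat\otimes^{\mathbf L}_AC\simeq C\hat\otimes^{\mathbf L}_KK\langle T_1,\dots,T_n\rangle$ is again a derived affinoid algebra.
\item One then has $B\hat\otimes^{\mathbf L}_AC\simeq B\hat\otimes^{\mathbf L}_{A\langle T\rangle}C\langle T\rangle$. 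Now $B$ has cohomology finitely generated over the Noetherian ring $H^0A\langle T\rangle$.
\item This is precisely the base-change situation in the first half of the proof of Lemma \ref{lem:coherentdescent}, where the Tor spectral sequence argument does work. It gives cohomology finitely generated over $H^0C\langle T\rangle$, hence over $H^0(B\hat\otimes^{\mathbf L}_AC)$.
\end{enumerate}

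\textbf{The same reduction settles the $H^0$ comparison you deferred.} Right exactness gives $H^0(B\hat\otimes^{\mathbf L}_AC)\cong H^0C\langle T\rangle/I\cdot H^0C\langle T\rangle$, with $I=\ker(H^0A\langle T\rangle\to H^0B)$ finitely generated. This is the cokernel of a strict map of finite Banach modules, since all ideals in affinoid algebras are closed. Hence it agrees with the classical $H^0B\hat\otimes_{H^0A}H^0C$, which is affinoid.
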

\begin{proof}
This is \cite[Lemma 3.1.2]{SoorThesis}. 
\end{proof}
\begin{defn}
\begin{enumerate}[(i)]
    \item Let $A \in \operatorname{DAlg}^{\leqslant 0}(\mathscr{V})$ and let $M \in \operatorname{Mod}_A \mathscr{V}^{\leqslant 0}$. We say that $M$ is \emph{derived strong} if, for every $n \leqslant 0$ the canonical morphism $H^n (A) \hat{\otimes}^\mathbf{L}_{H^0(A)}H^0(M) \to H^n(M)$ is an equivalence. 
    \item We say that a morphism $A \to B$ in $\operatorname{DAlg}^{\leqslant 0}(\mathscr{V})$ is \emph{derived strong} if $B$ is derived strong when regarded as an object of $\operatorname{Mod}_A \mathscr{V}^{\leqslant 0}$.
\end{enumerate}
\end{defn}
\begin{defn}
We say that a morphism $A \to B$ in $\operatorname{AfndAlg}$ is a \emph{derived rational localization} if it is derived strong, and $H^0(A) \to H^0(B)$ is a rational localization of classical affinoid algebras. The corresponding morphism $\operatorname{Spec}(B) \to \operatorname{Spec}(A)$ in $\operatorname{Afnd}$ is called a \emph{rational subdomain} or \emph{rational subspace}.
\end{defn}
\begin{lem}\label{lem:derivedrational}
Let $f: A \to B$ be morphism in $\operatorname{AfndAlg}$. Then the following are equivalent:
\begin{enumerate}[(i)]
    \item $f$ is a derived rational localization;
    \item There exists morphisms $a_0, \dots, a_n: K \to A$ whose images in $H^0(A)$ generate the unit ideal, such that $A \to B$ is equivalent to the morphism
    \begin{equation}
        A \to (A \hat{\otimes}^\mathbf{L}_K K\langle T_1, \dots, T_n\rangle)/^\mathbf{L}(a_0T_1-a_1, \dots, a_0T_n-a_n). 
    \end{equation}
\end{enumerate}
\end{lem}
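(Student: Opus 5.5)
We prove both implications by comparing $B$ with the explicit algebra
\begin{equation*}
C := (A \hat{\otimes}^\mathbf{L}_K K\langle T_1, \dots, T_n\rangle)/^\mathbf{L}(a_0T_1-a_1, \dots, a_0T_n-a_n).
\end{equation*}
The key input is a computation of the cohomology of $C$, carried out in three steps.

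\textbf{Step 1: $C$ is derived strong over $A$.} Put $P := A\hat{\otimes}^\mathbf{L}_K K\langle \underline{T}\rangle$. Since $K\langle \underline T\rangle \cong c_0(\mathbf{N}^n)$ lies in $\mathscr{V}^0$, it is flat over $K$, so
\begin{equation*}
H^m(P) \simeq H^m(A)\hat{\otimes}_K K\langle \underline T\rangle \simeq H^m(A)\hat{\otimes}_{H^0A} H^0(A)\langle \underline T\rangle .
\end{equation*}
The derived quotient $C$ is the Koszul complex $\operatorname{Kos}(P; a_0T_i - a_i)$. We show that the sequence $a_0T_1 - a_1, \dots, a_0T_n - a_n$ is regular on $H^0(A)\langle \underline T\rangle$, with classical quotient the rational localization $H^0(A)\langle a_1/a_0,\dots,a_n/a_0\rangle$. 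We also show that the sequence is Koszul-regular on every finitely generated $H^0(A)$-module extended to $\langle\underline T\rangle$. This is the classical fact that $H^0(A)\to H^0(A)\langle a/a_0\rangle$ is flat and the Koszul complex is a resolution; it is stated for finite modules in classical rigid geometry (Bosch–Güntzer–Remmert, or Tate acyclicity). It follows that
\begin{equation*}
H^m(C) \simeq H^m(A) \hat{\otimes}_{H^0A} H^0(C),
\end{equation*}
which is exactly derived strongness of $A \to C$, with $H^0(A)\to H^0(C)$ a classical rational localization. Since $H^m(C)$ is finite over $H^0(C)$, $C$ lies in $\operatorname{AfndAlg}$.

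\textbf{Step 2: (ii) $\Rightarrow$ (i).} This is immediate from Step 1.

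\textbf{Step 3: (i) $\Rightarrow$ (ii).} Suppose $f$ is a derived rational localization. Choose $\bar a_0,\dots,\bar a_n \in H^0(A)$ generating the unit ideal such that $H^0(B) \cong H^0(A)\langle \bar a/\bar a_0\rangle$. Since $A$ is connective, the maps $K\to H^0(A)$ lift to maps $a_i : K \to A$; this uses that $K$ is in $\mathscr{V}^0$ and is projective for $\mathscr{V}^{\leqslant 0}$. Next construct a map $C \to B$ of derived algebras over $A$:
\begin{itemize}
\item[$\star$] First send $T_i$ to a lift of $\bar a_i/\bar a_0 \in H^0(B)$. This uses the universal property of $A\hat{\otimes}^{\mathbf L}_K K\langle \underline T\rangle$ as a free algebra on power-bounded elements. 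That property requires an argument: $K\langle \underline T\rangle$ is not $\operatorname{LSym}$, so one either builds the map through the completed symmetric algebra or uses the universal property in \cite{SoorThesis} for maps into affinoid algebras, whose $H^0$ is a Banach algebra in which the element is power-bounded.
\item[$\star$] Then choose null-homotopies of $a_0T_i - a_i$ in $B$. These exist because the elements vanish in $H^0(B)$ and $B$ is connective.
\end{itemize}
The resulting map $C\to B$ induces an isomorphism on $H^0$. Both sides are derived strong over $A$, so on each $H^m$ it is the base change $H^m(A)\hat\otimes_{H^0A}(-)$ of this $H^0$-isomorphism. Hence it is an equivalence.

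The main obstacle is Step 1: the exactness and flatness statement for Koszul complexes in the Ind-Banach setting. There one must check that the completed tensor products with finite modules agree with the classical ones, using that finite modules over affinoid algebras are Banach and that the relevant strict exactness holds. The universal-property construction of the map $C\to B$ is the secondary technical point. Both should be handled by the corresponding lemmas for derived strong morphisms in \cite{SoorThesis}.
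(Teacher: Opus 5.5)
The paper gives no argument of its own here; it just cites \cite[Lemma 3.1.9]{SoorThesis}, so there is nothing in the text to compare against. Your outline is a workable self-contained proof: compute $H^*(C)$ from the Koszul complex for (ii)$\Rightarrow$(i); for (i)$\Rightarrow$(ii), build a map $C\to B$ under $A$ and use derived strongness of both sides to reduce to the $H^0$-isomorphism.

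One claim in Step 1 is false as stated. The sequence $a_0T_1-a_1,\dots,a_0T_n-a_n$ need not be regular, in the given order, on $H^0(A)\langle\underline{T}\rangle$. Take $A=K\langle x,y\rangle/(xy)$ and $(a_0,a_1,a_2)=(x,x,1-x)$. Then $a_0T_1-a_1=x(T_1-1)$ is annihilated by $y$. You should drop that claim. The argument only needs Koszul-regularity, which is order-independent and true: the higher Koszul homology of $M\langle\underline{T}\rangle$ vanishes for every finite $H^0(A)$-module $M$. With it, the spectral sequence whose $E_2$-term is the Koszul homology of $H^q(P)=H^q(A)\langle\underline{T}\rangle$ degenerates. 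This gives $H^m(C)\cong H^m(A)\hat{\otimes}_{H^0A}H^0(C)$. It also shows that the higher terms of $H^m(A)\hat{\otimes}^{\mathbf{L}}_{H^0A}H^0(C)$ vanish, which you need because the paper's definition of derived strong uses $\hat{\otimes}^{\mathbf{L}}$.

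For the map $C\to B$ you do not need a universal property of the Tate algebra into general targets. The device from the proof of Proposition \ref{prop:affineline} suffices.
\begin{enumerate}
\item The map $A[\underline{T}]\to P=A\hat{\otimes}^{\mathbf{L}}_KK\langle\underline{T}\rangle$ is a base change of the homotopy epimorphism $K[\underline{T}]\to K\langle\underline{T}\rangle$, hence formally \'etale.
\item Send each $T_i$ to a lift $K\to B$ of $\bar a_i/\bar a_0$; this is possible because $K$ is projective.
\item Extend along $A[\underline{T}]\to P$ by \cite[Corollary 2.1.36]{DAnG}, against the composite $P\to H^0(A)\langle\underline{T}\rangle\to H^0(A)\langle\underline{T}\rangle/(\bar a_0T_i-\bar a_i)\cong H^0(B)$. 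The two maps agree on $A[\underline{T}]$ because both send $T_i\mapsto \bar a_i/\bar a_0$.
\end{enumerate}
No separate power-boundedness check is needed, since the map on $H^0$ is just the quotient map. Your null-homotopies of $a_0T_i-a_i$ then produce $C\to B$, and the rest of Step 3 goes through.
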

\begin{proof}
This is \cite[Lemma 3.1.9]{SoorThesis}.
\end{proof}
\begin{defn}
In the setting of Lemma \ref{lem:derivedrational}(ii). We may use the notation
\begin{equation}
    A \langle a_1\dots a_n /a_0\rangle := (A \hat{\otimes}^\mathbf{L}_K K\langle T_1, \dots, T_n\rangle)/^\mathbf{L}(a_0T_1-a_1, \dots, a_0T_n-a_n),
\end{equation}
and call this the \emph{derived rational localization} at $a_0, \dots, a_n$. 
\end{defn}
\begin{defn}\cite[Definition 3.1.12]{SoorThesis} We define the \emph{rational Grothendieck topology}, denoted $\tau_{\mathrm{rat}}$, on $\operatorname{Afnd}$ as follows. The covering sieves are generated by finite families $\{\operatorname{Spec}(A_i) \to \operatorname{Spec}(A)\}_i$ of rational subspaces such that $\{\operatorname{Spec}(H^0(A_i)) \to \operatorname{Spec}(H^0(A))\}_i$ is a rational cover of the classical rigid space $\operatorname{Spec}(H^0(A))$. 
\end{defn}
\begin{defn}
\begin{enumerate}[(i)]
    \item We say that a morphism $A \to B$ in $\operatorname{DAlg}(\mathscr{V})$ is \emph{descendable} if the induced morphism $\Theta(A) \to \Theta(B)$ is a descendable morphism of $\mathbf{E}_\infty$-algebra objects in the sense of \cite[\S 3.3]{MathewGalois}. 
    \item We define the descendable topology, denoted $\tau_{\mathrm{desc}}$, on $\operatorname{Aff}$ in the following way: the covering sieves are generated by finite families $\{\operatorname{Spec}(A_i) \to \operatorname{Spec}(A)\}_i$ such that the induced morphism $A \to \prod_i A_i$ is descendable in the sense of (i). 
\end{enumerate}
\end{defn}
\begin{lem}\label{lem:NCdescent}
The functor $\operatorname{Aff}^\mathrm{op} \to \operatorname{Pr}^{L, \otimes}: \operatorname{Spec}(A) \mapsto \operatorname{DAlg}_A\mathscr{V}$ is a sheaf in the topology $\tau_{\mathrm{desc}}$. In particular the topology $\tau_{\mathrm{desc}}$ is subcanonical. 
\end{lem}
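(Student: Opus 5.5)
The plan is to deduce descent for derived algebras from descent for modules, using Mathew's theory of descendable morphisms \cite[\S3.3]{MathewGalois}. First I reduce to single morphisms. A covering sieve is generated by a finite family $\{\operatorname{Spec}(A_i)\to\operatorname{Spec}(A)\}_i$ with $A\to\prod_i A_i$ descendable. The functor $\operatorname{Spec}(A)\mapsto\operatorname{DAlg}_A\mathscr{V}$ sends finite coproducts in $\operatorname{Aff}$ (that is, finite products $\prod_i A_i$ in $\operatorname{DAlg}(\mathscr{V})$) to products: $\operatorname{DAlg}_{\prod_i A_i}\mathscr{V}\simeq\prod_i\operatorname{DAlg}_{A_i}\mathscr{V}$. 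This holds because modules over a finite product of $\mathbf{E}_\infty$-algebras split (via the idempotents), and $\operatorname{LSym}$ is compatible with this splitting. It therefore suffices to show that, for a single descendable morphism $A\to B$, the functor
\begin{equation*}
\operatorname{DAlg}_A\mathscr{V}\to\lim_{[n]\in\Delta}\operatorname{DAlg}_{B^{\otimes_A(n+1)}}\mathscr{V}
\end{equation*}
is an equivalence. (The sieve-theoretic limit is identified with this \v{C}ech limit by standard arguments, since $\operatorname{Aff}$ has fiber products and the sieve is generated by one morphism.)

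Next I invoke Mathew's theorem. Since $\Theta(A)\to\Theta(B)$ is descendable, \cite[Proposition 3.22]{MathewGalois} shows that $\operatorname{Mod}_A\mathscr{V}\to\lim_\Delta\operatorname{Mod}_{B^{\otimes_A(\bullet+1)}}\mathscr{V}$ is a symmetric monoidal equivalence. Here I use Construction \ref{construction:DAlgmodules}(i), which says $\operatorname{Mod}_A\mathscr{V}=\operatorname{Mod}_{\Theta(A)}\mathscr{V}$, and the fact that $\Theta$ preserves colimits, so relative tensor products of derived algebras are computed on underlying $\mathbf{E}_\infty$-algebras.

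Then I lift from modules to derived algebras via monadicity. By Construction \ref{construction:DAlgmodules}(ii), $\operatorname{DAlg}_C\mathscr{V}\simeq\operatorname{Alg}_{T_C}(\operatorname{Mod}_C\mathscr{V})$, where $T_C=\operatorname{oblv}_C\operatorname{LSym}_C$. By Construction \ref{construction:DAlgmodules}(iii), base change $C'\otimes_C-$ intertwines $\operatorname{LSym}_C$ and $\operatorname{LSym}_{C'}$, so these monads form a compatible family over the cosimplicial diagram. The point to check is that algebras over a limit of monads, taken over a limit of categories, give the limit of the algebra categories. Equivalently, the limit category $\lim_\Delta\operatorname{DAlg}_{B^\bullet}\mathscr{V}$ is monadic over $\lim_\Delta\operatorname{Mod}_{B^\bullet}\mathscr{V}\simeq\operatorname{Mod}_A\mathscr{V}$, with monad identified with $T_A$. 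I would verify this with Barr--Beck--Lurie. The forgetful functor on the limit is conservative and preserves sifted colimits, since each $\operatorname{oblv}$ does and the transition functors preserve sifted colimits. Its left adjoint is assembled from the $\operatorname{LSym}_{B^\bullet}$, using that base change commutes with $\operatorname{LSym}$. The resulting monad on $\operatorname{Mod}_A\mathscr{V}$ is then compared with $T_A$ by checking after the conservative base change to $B$. I expect this lifting step to be the main obstacle: one has to phrase the compatibility of the monads coherently, for instance via \cite[Corollary 4.7.5.3]{HigherAlgebra}-style results on limits of monadic adjunctions. I would organise it by treating $C\mapsto(\operatorname{Mod}_C\mathscr{V},T_C)$ as a functor to monadic adjunctions satisfying the Beck--Chevalley condition.

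Finally, subcanonicity follows from the sheaf property. Taking the full subcategories on initial objects, or equivalently passing to mapping spaces, $\operatorname{Map}(\operatorname{Spec}(-),\operatorname{Spec}(C))=\operatorname{Map}_{\operatorname{DAlg}}(C,-)$ is the space of sections of the sheaf $\operatorname{DAlg}_{(-)}\mathscr{V}$ under $C\otimes-$. More directly: for each $\operatorname{Spec}(A)$ the equivalence sends $A\in\operatorname{DAlg}_A\mathscr{V}$ to the cosimplicial object $(B^{\otimes\bullet+1})$, so $A\simeq\lim_\Delta B^{\otimes_A(\bullet+1)}$ in $\operatorname{DAlg}(\mathscr{V})$. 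Mapping $C$ into this limit gives the sheaf condition for representables.
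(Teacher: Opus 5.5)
Your proposal is correct, but the key step, lifting descent from modules to derived algebras, goes a different way from the paper.

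You show that $\lim_\Delta \operatorname{DAlg}_{B^{(\bullet+1)}}\mathscr{V}$ is monadic over $\lim_\Delta \operatorname{Mod}_{B^{(\bullet+1)}}\mathscr{V} \simeq \operatorname{Mod}_A\mathscr{V}$, with monad $T_A$. That requires assembling the adjunctions $\operatorname{LSym}_{B^{(n)}} \dashv \operatorname{oblv}_{B^{(n)}}$ into a coherent Beck--Chevalley diagram, building a left adjoint on the limit, and running Barr--Beck--Lurie there.

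The paper avoids all of this. It takes the comparison adjunction $\operatorname{DAlg}_A\mathscr{V} \leftrightarrows \lim_\Delta \operatorname{DAlg}_{B^{(\bullet+1)}}\mathscr{V}$, with base change on the left and totalization in $\operatorname{DAlg}_A\mathscr{V}$ on the right. It then checks directly that the unit $C \to \lim_n C \hat{\otimes}^{\mathbf{L}}_A B^{(n+1)}$ and the counit are equivalences. The forgetful functor $\operatorname{oblv}$ is conservative, preserves limits, and is symmetric monoidal for the cocartesian structure, so it commutes with base change. Applying it therefore turns the unit and counit into exactly the unit and counit of the module-level adjunction, which are equivalences by \cite[Proposition 3.22]{MathewGalois}.

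The inputs are the same as yours: conservativity of $\operatorname{oblv}$ and its compatibility with base change. But no monad on the limit category ever has to be built, so the coherence problem you flag as the main obstacle never arises. Your monadic argument is a valid but heavier instance of the same principle, that descent for modules propagates to algebras over a base-change-compatible monad; the unit/counit check gets there more economically.

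Your reduction from finite families to the single map $A \to \prod_i A_i$ is fine and fills in steps the paper leaves implicit. So is your proof of subcanonicity via $A \simeq \lim_\Delta B^{(\bullet+1)}$. Limits over the weakly contractible $\Delta$ in $\operatorname{DAlg}(\mathscr{V})_{A/}$ are computed in $\operatorname{DAlg}(\mathscr{V})$, so that identification holds.
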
 
\begin{proof}
Let $A \to B$ be a descendable morphism in $\operatorname{DAlg}(\mathscr{V})$. For brevity we put $B^{(n)} := B^{\hat{\otimes}^\mathbf{L}_An}$. We need to show that the canonical adjunction 
\begin{equation}\label{eq:DAlgdescent}
\operatorname{DAlg}_{A}\mathscr{V} \leftrightarrows \underset{[n] \in \Delta}{\operatorname{lim}}\operatorname{DAlg}_{B^{(n+1)}} \mathscr{V}
\end{equation}
is an equivalence of categories. For fully-faithfulness of the left adjoint, let $C \in \operatorname{DAlg}_{A}\mathscr{V}$; we need to show that the canonical morphism $C \to \operatorname{lim}_{[n] \in \Delta} C \hat{\otimes}^\mathbf{L}_A B^{(n+1)}$ is an equivalence. The forgetful functor $\operatorname{DAlg}_{A}\mathscr{V} \to \operatorname{Mod}_A \mathscr{V}$ is conservative, preserves limits and is strongly monoidal for the cocartesian monoidal structure on the source, c.f. \cite[Notation 4.2.28]{raksit_hochschild_2026}, hence, this follows from \cite[Proposition 3.22]{MathewGalois}. For essential surjectivity of the left adjoint, let $(C_n)_{[n] \in \Delta}$ be an object of the right-hand side of \eqref{eq:DAlgdescent}; we need to show for each $m \geqslant 0$ that the canonical morphism $C_m \to B^{(m+1)} \hat{\otimes}_A^\mathbf{L}\operatorname{lim}_{[n] \in \Delta} C_n$ is an equivalence. Again, since the forgetful functor $\operatorname{DAlg}_{A}\mathscr{V} \to \operatorname{Mod}_A \mathscr{V}$ is conservative, preserves limits and is strongly monoidal, this follows from \cite[Proposition 3.22]{MathewGalois}. 
\end{proof}
\begin{rmk}\label{rmk:connectivedescent}
Lemma \ref{lem:NCdescent} does not refer to any notion of connectivity, and is the reason why we work with nonconnective affines. Indeed \cite[Example 9.1.2]{BerkeleyLectures} can be used to show that the moduli of connective derived algebras does not satisfy descent. Let us explain the reasoning. We put $B = K \langle T, S\rangle$ consider the cover of $X = \mathbf{D}^2 = \operatorname{Spec}(B)$ by affinoid subsets $U_0 = \{ |T| \leqslant |\varpi| \text{ and } |S| \leqslant |\varpi|\}$, $U_1 = \{ |T| \geqslant |\varpi| \text{ and } |S| \leqslant |T|\}$, $U_2 = \{ |S| \geqslant |\varpi| \text{ and } |T| \leqslant |S|\}$. We put $A_i := \mathscr{O}(U_i)$, $A := \prod_{i= 0}^2 A_i$ and $A^{(n)} := A^{\hat{\otimes}_B^\mathbf{L}n}$.

The open subset $V := \{ |T| = 1 \text{ or } |S| = 1\}$ satisfies $V \cap U_0 = \emptyset$, $V \cap U_1 = \{ |T| \leqslant 1\}$ and $V \cap U_2 = \{ |S| \leqslant 1\}$. These are all affinoid, hence $V$ determines an object of the category of descent datum, that is, $\operatorname{lim}_{[n] \in \Delta} \operatorname{DAlg}_{A^{(n+1)}}^{\leqslant 0}\mathscr{V} $. The image of this object under the functor to $\operatorname{DAlg}^{\leqslant 0}_{K\langle T, S\rangle}\mathscr{V}$, given by taking the limit, is the connective truncation of the \v{C}ech complex associated to this covering of $V$, which computes $\mathscr{O}(V) = K \langle T,S \rangle$. But the pullback of $K \langle T,S \rangle$ to $U_0$ is not $0$. This shows that the natural adjunction
\begin{equation}
\operatorname{DAlg}_{K \langle T, S \rangle }^{\leqslant 0}\mathscr{V} \leftrightarrows  \underset{{[n] \in \Delta}}{\operatorname{lim}}\operatorname{DAlg}_{A^{(n+1)}}^{\leqslant 0}\mathscr{V}    
\end{equation}
is not an equivalence of categories. However, Lemma \ref{lem:NCdescent} shows that the moduli of \emph{nonconnective} derived algebras does satisfy descent, in particular, the space $V$ is nonconnectively affine. This positive assertion is reminiscent of the example in \cite[\S 2.2]{ToenChampsAffines} and \cite[Remark 2.1.9]{BraveNew}, and in some sense gives a solution to Problem (2) described in the introduction to \cite{XiaLiuMorphism}. 
\end{rmk}
\egapar We denote by $i$ the tautological inclusion functor $i: \mathrm{Afnd} \hookrightarrow \mathrm{Aff}$.
\begin{prop}
The inclusion functor $i: \operatorname{Afnd} \to \operatorname{Aff}$ is continuous for the rational topology on the source and the descendable topology on the target. 
\end{prop}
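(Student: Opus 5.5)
We will apply Lemma \ref{lem:continuouscriterion}. Its hypotheses need finite limits on both sides and a left-exact functor. $\operatorname{Aff} = \operatorname{DAlg}(\mathscr{V})^{\mathrm{op}}$ has finite limits, since $\operatorname{DAlg}(\mathscr{V})$ has pushouts (given by $\hat{\otimes}^\mathbf{L}$) and an initial object. By the lemma above, $\operatorname{Afnd}$ is stable under fiber products in $\operatorname{Aff}$. The terminal object $\operatorname{Spec}(K)$ is affinoid. Hence $i$ is left exact.

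It remains to check the covering condition. Let $\{\operatorname{Spec}(A_j) \to \operatorname{Spec}(A)\}_j$ be a finite family of rational subspaces inducing a classical rational cover. We must show that the sieve it generates in $\operatorname{Aff}$ is $\tau_{\mathrm{desc}}$-covering, i.e.\ that $A \to \prod_j A_j$ is descendable. Since covering sieves in $\tau_{\mathrm{rat}}$ are generated by such families, this suffices: the sieve generated by the image of a generated sieve is the sieve generated by the generating family.

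To prove descendability, we reduce to the standard Laurent and rational covers. By the classical refinement results (Gerritzen--Grauert, or BGR §8.2), every rational cover can be refined by a composition of Laurent covers $\{|f|\leqslant 1\}, \{|f|\geqslant 1\}$. Descendable morphisms are stable under base change and composition. Moreover, if a composite $A \to B \to C$ is descendable then so is $A \to B$ (Mathew, \cite{MathewGalois}, Prop.~3.24 and Prop.~3.27). So it suffices to treat the cover by $A\langle f\rangle$ and $A\langle f^{-1}\rangle$, derived rational localizations in the sense of Lemma \ref{lem:derivedrational}. By base change along $K\langle T\rangle \to A$, $T\mapsto f$, this further reduces to the universal case $K\langle T\rangle \to K\langle T\rangle \times K\langle T,S\rangle/(TS-1)$. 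Base change is harmless here because derived rational localizations are computed by derived quotients and therefore commute with $\hat{\otimes}^\mathbf{L}$.

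In the universal case, the derived Tate acyclicity (for instance \cite{SoorThesis}, where descent for $\operatorname{QCoh}$ on $\tau_{\mathrm{rat}}$ is established) gives the \v{C}ech sequence
\begin{equation*}
0 \to K\langle T\rangle \to K\langle T\rangle \times K\langle T^{-1}\rangle \to K\langle T,T^{-1}\rangle \to 0,
\end{equation*}
which is strictly exact. Thus $A$ is a finite (two-step) limit of the \v{C}ech nerve terms. This means $A$ lies in the thick $\otimes$-ideal generated by $B=\prod_j A_j$, which is exactly descendability. Because the \v{C}ech complex of a finite cover is bounded, the same argument also applies directly to a general finite rational cover, avoiding the reduction, provided Tate acyclicity is available in the derived setting.

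The main obstacle is this step: establishing that the \v{C}ech complex is strictly exact in $D(\operatorname{IndBan}_K)$, with derived tensor products over derived $A$. I would try first to quote the derived Tate acyclicity from \cite{SoorThesis}, reducing via derived strongness to $H^0$ and classical Tate acyclicity plus flatness of rational localizations.
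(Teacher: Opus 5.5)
Your skeleton is the paper's. Via Lemma \ref{lem:continuouscriterion}, whose left-exactness hypothesis you check more carefully than the paper does, everything reduces to descendability of $A\to\prod_j A_j$ for a finite derived rational cover. The paper gets this in one line from the proof of \cite[Lemma 3.1.16]{SoorThesis}, i.e.\ from the finite, strictly exact \v{C}ech complex, which is your ``direct'' route. That route is sound as you sketch it. Write $A_I$ for the derived rational localization of an intersection. Derived strongness plus transversality of rational localizations to finitely generated modules give $H^q(A_I)\cong H^0(A_I)\otimes_{H^0(A)}H^q(A)$. So on each $H^q$ the augmented \v{C}ech complex is the classical one for the finite module $H^q(A)$, which is strictly exact by Tate--Kiehl. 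Since the complex has finite length, the \v{C}ech-degree filtration converges.

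The Laurent-cover reduction, however, fails as written. A morphism $K\langle T\rangle\to A$ with $T\mapsto f$ exists only if $f$ is power-bounded. But then $|f|\le 1$ at every point, so $\{|f|\le 1\}$ is all of $\operatorname{Spec}(A)$. Your universal case $K\langle T\rangle\to K\langle T\rangle\times K\langle T,T^{-1}\rangle$ is therefore the trivial cover $\{\mathbf{D}^1,\{|T|=1\}\}$, and no nontrivial Laurent cover is a base change of it.

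The displayed sequence is also wrong. There is no map $K\langle T\rangle\to K\langle T^{-1}\rangle$ with $T\mapsto T$, and the equalizer of $K\langle T\rangle\times K\langle T^{-1}\rangle\rightrightarrows K\langle T,T^{-1}\rangle$ is $K$: that is the standard cover of $\mathbf{P}^1$.

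The repair is to base change along $K[T]\to A$, $T\mapsto f$, which exists for every $f$ since $K$ is projective. Then $A\langle f\rangle\simeq A\hat{\otimes}^\mathbf{L}_{K[T]}K\langle T\rangle$ and $A\langle 1/f\rangle\simeq A\hat{\otimes}^\mathbf{L}_{K[T]}K[T,T^{-1}\rangle$. The universal case $K[T]\to K\langle T\rangle\times K[T,T^{-1}\rangle$ has the strictly exact \v{C}ech sequence $0\to K[T]\to K\langle T\rangle\oplus K[T,T^{-1}\rangle\to K\langle T,T^{-1}\rangle\to 0$ (compare the proof of Proposition \ref{prop:Opendisksuave}). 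That $K[T]$ is not affinoid is irrelevant, since descendability only concerns $\mathbf{E}_\infty$-algebras in $\mathscr{V}$.

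With this fix the reduction becomes a genuinely different proof from the paper's. Because descendability is stable under base change, you need only one elementary exact sequence over $K[T]$. The other ingredients are the classical refinement lemmas, transported to derived affinoids via Theorem \ref{thm:topologicalinvariance}, and Mathew's stability properties. You never need derived Tate acyclicity over a derived $A$.
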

\begin{proof}
It is sufficient to show that, if $\{\operatorname{Spec}(A_i) \to \operatorname{Spec}(A)\}_i$ is a finite collection of rational localizations generating a $\tau_{\mathrm{rat}}$-covering sieve, then the canonical morphism $A \to \prod_i A_i$ is descendable. This follows from (the proof of) \cite[Lemma 3.1.16]{SoorThesis}. 
\end{proof}
\begin{defn}\label{defn:dri}
We define the the \emph{descendable and rational-induced topology}, denoted $\tau_{\mathrm{dri}}$, on $\operatorname{Aff}$ in the following way. By Lemma \ref{lem:cocontinuousinitial} the rational topology induces a Grothendieck topology $i_*\tau_{\mathrm{rat}}$ on $\mathrm{Aff}$. We define $\tau_{\mathrm{dri}} := i_*\tau_{\mathrm{rat}} \cap \tau_{\mathrm{desc}}$ on $\mathrm{Aff}$. In other words, the covering sieves in $\tau_{\mathrm{dri}}$ are the ones which belong to both $i_*\tau_{\mathrm{rat}}$ and $\tau_{\mathrm{desc}}$.
\end{defn}
\begin{prop}\label{prop:analyticinclusion}
With notations as above. There is an adjoint triple
    \begin{equation}
        (i_! \dashv i^* \dashv i_*): \operatorname{Shv}_{\tau_\mathrm{rat}}(\mathrm{Afnd}) \leftrightarrows \operatorname{Shv}_{\tau_{\mathrm{dri}}}(\mathrm{Aff}),
    \end{equation}
    of functors $i_!, i_*: \operatorname{Shv}_{\tau_\mathrm{rat}}(\mathrm{Afnd}) \to \operatorname{Shv}_{\tau_\mathrm{dri}}(\mathrm{Aff})$ and $i^*: \operatorname{Shv}_{\tau_\mathrm{dri}}(\mathrm{Aff}) \to \operatorname{Shv}_{\tau_\mathrm{rat}}(\mathrm{Afnd})$. Further, $i_!$ is left-exact, the functors $i_!$ and $i_*$ are fully-faithful, and $i_!$ preserves representable objects.
\end{prop}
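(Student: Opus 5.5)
The strategy is to show that $i:\operatorname{Afnd}\to\operatorname{Aff}$ is both continuous and cocontinuous for $\tau_{\mathrm{rat}}$ on the source and $\tau_{\mathrm{dri}}$ on the target. Once this is done, the formal results of \S\ref{sec:RestrictionExtension} give everything else.

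\textbf{Cocontinuity.} I will check criterion (ii) of Lemma \ref{lem:cocontinuousmap}. Let $X\in\operatorname{Afnd}$ and let $\mathscr{D}^0_{/i(X)}$ be a $\tau_{\mathrm{dri}}$-covering sieve. In particular it lies in $i_*\tau_{\mathrm{rat}}$. Taking the identity $i(X)\to i(X)$ in Definition \ref{defn:pushforwardtopology} shows that $\operatorname{Afnd}_{/X}\times_{\operatorname{Aff}_{/i(X)}}\mathscr{D}^0_{/i(X)}$ is a $\tau_{\mathrm{rat}}$-covering sieve. Note that shrinking the topology from $i_*\tau_{\mathrm{rat}}$ to $\tau_{\mathrm{dri}}$ only removes sieves, so cocontinuity is inherited from Lemma \ref{lem:cocontinuousinitial}.

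\textbf{Continuity.} Here I will use Lemma \ref{lem:continuouscriterion}. First, $\operatorname{Afnd}$ and $\operatorname{Aff}$ have finite limits, and $i$ is left exact, because $\operatorname{Afnd}\subseteq\operatorname{Aff}$ is closed under fiber products; the terminal object $\operatorname{Spec}(K)$ is affinoid. It then suffices to show the following: for a $\tau_{\mathrm{rat}}$-covering sieve on $X$, the sieve $\mathscr{D}^1_{/i(X)}$ generated by its image lies in both $i_*\tau_{\mathrm{rat}}$ and $\tau_{\mathrm{desc}}$.
\begin{itemize}
\item Membership in $i_*\tau_{\mathrm{rat}}$ is exactly what the proof of Theorem \ref{thm:pushforwardtopology} established, using full faithfulness of $i$.
\item For membership in $\tau_{\mathrm{desc}}$, the sieve contains a sieve generated by a finite family of rational subspaces $\{\operatorname{Spec}(A_i)\to\operatorname{Spec}(A)\}$ covering $X$. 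By the preceding Proposition, $A\to\prod_iA_i$ is descendable. Since sieves containing a covering sieve are covering, $\mathscr{D}^1_{/i(X)}\in\tau_{\mathrm{desc}}$.
\end{itemize}
One point needs care: that the sieve generated by the image is precisely the sieve generated by the finite family, so that the descendable generator lies inside it. This is immediate since every element of the image factors through some $\operatorname{Spec}(A_i)$.

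\textbf{Conclusion.} Continuity gives the adjunction $i_!\dashv i^*$ by Lemma \ref{lem:continuousfunctor}, together with left exactness of $i_!$. Cocontinuity gives $i^*\dashv i_*$ by Lemma \ref{lem:cocontinuousadjunction}. These are the same $i^*$, namely restriction $i_P^*$, which preserves sheaves by continuity. Since $i$ is fully faithful, Lemma \ref{lem:fullyfaithful} shows that $i_!$ and $i_*$ are fully faithful. Finally, $i_!$ preserves representables by Corollary \ref{cor:lowershriekrepresentable}: $i_!(LX)\simeq L\,i(X)$, and $L\,i(X)=i(X)$ because $\tau_{\mathrm{dri}}\subseteq\tau_{\mathrm{desc}}$ is subcanonical by Lemma \ref{lem:NCdescent}. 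Likewise $LX=X$ because $\tau_{\mathrm{rat}}$ is subcanonical; this is Tate acyclicity, available from \cite{SoorThesis}.

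I expect the main obstacle to be the descendability of rational covers, but that is already supplied by the Proposition preceding Definition \ref{defn:dri}. The rest is bookkeeping of sieves.
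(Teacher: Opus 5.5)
Your proof is correct and follows the paper's route. You show that $i$ is continuous and cocontinuous, then apply Lemma \ref{lem:fullyfaithful} and Corollary \ref{cor:lowershriekrepresentable}. You also usefully make explicit the $\tau_{\mathrm{desc}}$-half of continuity (via descendability of rational covers), which the paper's one-line citation of Theorem \ref{thm:pushforwardtopology} leaves implicit, since that theorem is stated for $i_*\tau_{\mathrm{rat}}$ rather than for $\tau_{\mathrm{dri}}=i_*\tau_{\mathrm{rat}}\cap\tau_{\mathrm{desc}}$.

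One slip is harmless. The sieve generated by the image of a $\tau_{\mathrm{rat}}$-covering sieve only \emph{contains} the sieve generated by the finite rational family; it need not equal it (take the maximal sieve). Containment is all your argument actually uses.
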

\begin{proof}
This follows from Theorem \ref{thm:pushforwardtopology}, Lemma \ref{lem:fullyfaithful} and Corollary \ref{cor:lowershriekrepresentable}. 
\end{proof}
\begin{prop}\label{prop:NCrepresentable}
Let $f: X \to Y$ be a morphism in $\operatorname{Shv}_{\tau_{\mathrm{dri}}}(\operatorname{Aff})$. Then the following are equivalent:
\begin{enumerate}[(i)]
\item $f$ is representable: that is, for every object $Y^\prime = \operatorname{Spec}(A)$ in $\operatorname{Aff}$ with a map $Y^\prime \to Y$, the pullback $X^\prime := X\times_Y Y^\prime$ belongs to $\operatorname{Aff}$.
\item $f$ is representable locally on $Y$: that is, there exists a cover $\{U_i = \operatorname{Spec}(A_i) \to Y\}_{i}$ of $Y$ by objects of $\operatorname{Aff}$, such that each pullback $X_i := Y \times_X U_i$ belongs to $\operatorname{Aff}$. 
\end{enumerate}
\end{prop}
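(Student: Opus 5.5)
The implication (i)$\Rightarrow$(ii) is immediate: every object of $\operatorname{Shv}_{\tau_{\mathrm{dri}}}(\operatorname{Aff})$ is covered by representables, so any cover $\{U_i \to Y\}$ by objects of $\operatorname{Aff}$ works. Here we identify objects of $\operatorname{Aff}$ with their images under the Yoneda embedding, using that $\tau_{\mathrm{dri}}$ is subcanonical: it is coarser than $\tau_{\mathrm{desc}}$, which is subcanonical by Lemma \ref{lem:NCdescent}. The content is (ii)$\Rightarrow$(i), and the plan is to deduce it from the descent statement of Lemma \ref{lem:NCdescent}.

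First, for (ii)$\Rightarrow$(i), fix $Y^\prime = \operatorname{Spec}(A) \to Y$. Pulling back the cover $\{U_i \to Y\}$ to $Y^\prime$ gives a cover of $Y^\prime$ by the sheaves $U_i \times_Y Y^\prime$. These need not be affine, so I would refine: cover each $U_i\times_Y Y^\prime$ by representables. Since $\operatorname{Spec}(A)$ is representable, the resulting family generates a $\tau_{\mathrm{dri}}$-covering sieve on $\operatorname{Spec}(A)$. Such a sieve contains a $\tau_{\mathrm{desc}}$-covering sieve, so it contains a finite family $\{\operatorname{Spec}(B_j) \to \operatorname{Spec}(A)\}$ with $A \to \prod_j B_j$ descendable. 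Each $\operatorname{Spec}(B_j)$ maps to some $U_i$, hence $X \times_Y \operatorname{Spec}(B_j) \simeq X_i \times_{U_i} \operatorname{Spec}(B_j)$. Because fiber products of representables in $\operatorname{Aff}$ are computed by relative tensor products and the Yoneda embedding preserves limits, this is $\operatorname{Spec}(C_i \otimes_{A_i} B_j)$, where $X_i = \operatorname{Spec}(C_i)$. Writing $B := \prod_j B_j$, I would then verify that $X\times_Y \operatorname{Spec}(B) = \coprod_j \operatorname{Spec}(C_j^\prime)$ is the spectrum of the product, $\operatorname{Spec}(\prod_j C^\prime_j)$. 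This uses that finite coproducts of representables in $\operatorname{Aff}$ are preserved by sheafification for $\tau_{\mathrm{dri}}$; I expect this to hold because the inclusions into $\prod_j$ form a covering of both kinds.

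Next I would show that the pullbacks of $X$ along the Čech nerve $B^{(n+1)}$ are all affine, with spectra $C \otimes_B B^{(n+1)}$ where $C := \prod_j C^\prime_j$. These fit together into a descent datum in $\lim_{[n]\in\Delta} \operatorname{DAlg}_{B^{(n+1)}}\mathscr{V}$. By Lemma \ref{lem:NCdescent} this datum descends to some $D \in \operatorname{DAlg}_A\mathscr{V}$ with $D \otimes_A B \simeq C$.

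The final step is to prove that $X^\prime := X\times_Y Y^\prime \simeq \operatorname{Spec}(D)$ as sheaves. Both objects lie over $\operatorname{Spec}(A)$ and agree after base change to the cover $\operatorname{Spec}(B) \to \operatorname{Spec}(A)$, compatibly with the Čech nerve. Since colimits in an $\infty$-topos are universal, each is the geometric realization of its restriction to the Čech nerve, which gives the identification.

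I expect the main obstacle to be making the coherent descent datum precise: the compatible equivalences over all $B^{(n+1)}$ must come from the sheaf $X$ itself, and a non-canonical choice would not assemble into an object of the limit. To handle this I would consider the functor $\operatorname{Spec}(A^\prime)\mapsto \operatorname{DAlg}_{A^\prime}\mathscr{V}$ as a stack, using Lemma \ref{lem:NCdescent}. Its core classifies relative affine morphisms, and I would argue that the full subcategory of $\operatorname{Shv}(\operatorname{Aff})_{/Y}$ on morphisms with affine fibres is the limit of these categories over $\operatorname{Aff}_{/Y}$. A second, smaller point is that the refinement needs a finite descendable subfamily; this follows from the definition of the generating families of $\tau_{\mathrm{desc}}$.
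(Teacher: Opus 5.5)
Your strategy is the paper's: pull back to $\operatorname{Spec}(A)$, choose a finite descendable family $\{\operatorname{Spec}(B_j)\to\operatorname{Spec}(A)\}$ through whose members the map to $Y$ factors over the $U_i$, descend along its \v{C}ech nerve with Lemma \ref{lem:NCdescent}, and conclude by universality of colimits. The gap is in that last step, and the paper's own proof passes over the same point. To present $X'$ and $\operatorname{Spec}(D)$ as geometric realizations over the \v{C}ech nerve, $\operatorname{Spec}(B)\to\operatorname{Spec}(A)$ must be an effective epimorphism in $\operatorname{Shv}_{\tau_{\mathrm{dri}}}(\operatorname{Aff})$. That is, the sieve it generates must be $\tau_{\mathrm{dri}}$-covering. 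You only extracted a finite $\tau_{\mathrm{desc}}$-covering subfamily of the $\tau_{\mathrm{dri}}$-covering sieve $S$, and such a subfamily need not be $i_*\tau_{\mathrm{rat}}$-covering.

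Here is a concrete example. On $\operatorname{Spec}(K[T])$, let $S$ be generated by the $\operatorname{Spec}(K\langle\varpi^nT\rangle)$, $n\in\mathbf{N}$, together with $\operatorname{Spec}(L[T])$ for a finite extension $L\neq K$. This $S$ is $\tau_{\mathrm{desc}}$-covering, because $K[T]\to L[T]$ is finite free with $K[T]$ a direct summand. It is $i_*\tau_{\mathrm{rat}}$-covering by Proposition \ref{prop:affineline}. So it is the cover in (ii) for $X=Y=\operatorname{Spec}(K[T])$. Yet no finite subfamily of $S$ generates a $\tau_{\mathrm{dri}}$-covering sieve. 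Take the $K$-point $T\mapsto c$ with $|c|$ larger than all radii involved. It factors through none of finitely many members, since there is no $K$-algebra map $L\to K$, so the pulled-back sieve on $\operatorname{Spec}(K)$ is empty. Your recipe permits $B=L[T]$, whose \v{C}ech nerve realizes only to a proper subsheaf of $\operatorname{Spec}(K[T])$. The argument therefore identifies $X'$ and $\operatorname{Spec}(D)$ only over that image.

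The repair is to descend along the whole sieve $S$ of those $W=\operatorname{Spec}(A_W)\to\operatorname{Spec}(A)$ that factor through some $U_i$, rather than along a \v{C}ech nerve. This also disposes of your coherence worry.
\begin{enumerate}[(i)]
\item Since $S$ is $\tau_{\mathrm{desc}}$-covering and Lemma \ref{lem:NCdescent} says $\operatorname{DAlg}$ is a $\tau_{\mathrm{desc}}$-sheaf, you get $\operatorname{DAlg}_A\mathscr{V}\simeq\lim_{W\in S^{\mathrm{op}}}\operatorname{DAlg}_{A_W}\mathscr{V}$.
\item The functor $W\mapsto X\times_YW$ on $S$ takes values in affines over $W$ and sends morphisms to pullback squares. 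So it is already an object of this limit, and no choices are made.
\item Descending yields $D$ with $\operatorname{Spec}(D)\times_{\operatorname{Spec}(A)}W\simeq X'\times_{\operatorname{Spec}(A)}W$, naturally in $W\in S$.
\item Because $S$ is $\tau_{\mathrm{dri}}$-covering, $\operatorname{Spec}(A)\simeq\operatorname{colim}_{W\in S}W$ in the topos. Universality of colimits then gives $X'\simeq\operatorname{Spec}(D)$.
\end{enumerate}
This is essentially the ``stack of relatively affine morphisms'' idea in your last paragraph. It should be the main line of the proof, not a patch to the \v{C}ech-nerve argument.
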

\begin{proof}
That (i) implies (ii) is trivial. To see that (ii) implies (i) we argue as in \cite[Lemma 9.3.1.1]{SpectralAlgebraicGeometry}. Put $U = \coprod_i U_i$ and let $ Y^\prime=\operatorname{Spec}(B) \to Y$ be a morphism from an object of $\operatorname{Aff}$. Then there exists a descendable morphism $V:= \operatorname{Spec}(C) \to \operatorname{Spec}(B)$ such that the composite $V \to Y^\prime \to Y$ factors over $U \to Y$. Consider the simplicial object $[n] \mapsto Z_n :=  V^{n+1/Y^\prime} \times_{Y} X$. By assumption, each morphism $V^{{n+1}/Y^\prime} \to Y$ factors through $U$, so we can write $Z_n = \operatorname{Spec}(D^{(n)})$ for some object $(D^{(n)})_{[n] \in \Delta} \in \operatorname{lim}_{[n] \in \Delta} \operatorname{DAlg}_{C^{(n+1)}} \mathscr{V}$; here we have abbreviated $C^{(n)} := C^{\hat{\otimes}^\mathbf{L}_B n}$. By Lemma \ref{lem:NCdescent} we deduce that there exists an object $D \in \operatorname{DAlg}_B \mathscr{V}$ such that $C^{(n+1)} \hat{\otimes}^\mathbf{L}_B D \simeq D^{(n)}$ for every $n \geqslant 0$. Then 
\begin{equation*}
    Z \simeq \underset{[n] \in \Delta^\mathrm{op}}{\operatorname{colim}} Z_n \simeq  \underset{[n] \in \Delta^\mathrm{op}}{\operatorname{colim}} \operatorname{Spec}(D^{(n)}) \simeq \operatorname{Spec}(D) \times_{Y^\prime} \underset{[n] \in \Delta^\mathrm{op}}{\operatorname{colim}} \operatorname{Spec}(C^{(n+1)}) \simeq \operatorname{Spec}(D),
\end{equation*}
hence, $f$ is representable. 
\end{proof}
\egapar We note that the same counterexample as in Remark \ref{rmk:connectivedescent} can be used to show that there is no connective analogue of Proposition \ref{prop:NCrepresentable} either.  
\begin{cor}
Let $f: X \to Y$ be a representable morphism in $\operatorname{Shv}_{\tau_{\mathrm{rat}}}(\operatorname{Afnd})$. Then the image $i_!(f) : i_!X \to i_!Y$ of $f$ under the functor $i_!: \operatorname{Shv}_{\tau_{\mathrm{rat}}}(\operatorname{Afnd}) \to \operatorname{Shv}_{\tau_{\mathrm{dri}}}(\operatorname{Aff})$ is representable.
\end{cor}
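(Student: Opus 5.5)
The plan is to reduce to Proposition \ref{prop:NCrepresentable}(ii): it suffices to produce a $\tau_{\mathrm{dri}}$-cover of $i_!Y$ by objects of $\operatorname{Aff}$ over which $i_!(f)$ pulls back to an affine. Representability of $f$ in $\operatorname{Shv}_{\tau_{\mathrm{rat}}}(\operatorname{Afnd})$ means that for every $\operatorname{Spec}(A) \to Y$ with $\operatorname{Spec}(A) \in \operatorname{Afnd}$, the pullback $X \times_Y \operatorname{Spec}(A)$ is (the sheafification of) an object of $\operatorname{Afnd}$.

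\textbf{Step 1: an effective epimorphism onto $Y$.} Write $Y$ as a colimit of representables, giving an effective epimorphism $U := \coprod_j \operatorname{Spec}(A_j) \to Y$ with each $\operatorname{Spec}(A_j) \in \operatorname{Afnd}$. Since $i_!$ is a left adjoint, it preserves colimits, hence coproducts and effective epimorphisms; the latter are characterized by the Čech nerve colimit, and $i_!$ is left-exact by Proposition \ref{prop:analyticinclusion}, so it preserves Čech nerves. Thus $i_!U = \coprod_j i_!\operatorname{Spec}(A_j) \to i_!Y$ is an effective epimorphism. By Proposition \ref{prop:analyticinclusion}, $i_!$ preserves representables, so $i_!\operatorname{Spec}(A_j) \simeq \operatorname{Spec}(A_j)$ viewed in $\operatorname{Aff}$. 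This gives the cover required in Proposition \ref{prop:NCrepresentable}(ii). If that proposition is read with ``cover'' meaning a covering family in the site, note that the proof only uses local factorization of maps $\operatorname{Spec}(B) \to i_!Y$ through $i_!U$ after a descendable refinement, which is exactly what an effective epimorphism in $\operatorname{Shv}_{\tau_{\mathrm{dri}}}(\operatorname{Aff})$ provides.

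\textbf{Step 2: pullbacks are affine.} By left-exactness of $i_!$,
\begin{equation*}
i_!X \times_{i_!Y} \operatorname{Spec}(A_j) \simeq i_!\big(X \times_Y \operatorname{Spec}(A_j)\big).
\end{equation*}
By representability of $f$, $X\times_Y \operatorname{Spec}(A_j) \simeq \operatorname{Spec}(B_j)$ with $\operatorname{Spec}(B_j)\in\operatorname{Afnd}$. Applying $i_!$ gives $\operatorname{Spec}(B_j) \in \operatorname{Aff}$, again because $i_!$ preserves representables (Corollary \ref{cor:lowershriekrepresentable}, which needs $\tau_{\mathrm{dri}}$ to be subcanonical; this holds since $\tau_{\mathrm{dri}} \subseteq \tau_{\mathrm{desc}}$ and Lemma \ref{lem:NCdescent} applies). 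Proposition \ref{prop:NCrepresentable} then yields the claim.

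\textbf{Main obstacle.} The delicate point is Step 1. We must make sure that the notion of ``cover'' in Proposition \ref{prop:NCrepresentable}(ii) is satisfied by an effective epimorphism from a possibly infinite coproduct, whereas $\tau_{\mathrm{desc}}$ is generated by finite families. Given $\operatorname{Spec}(B) \to i_!Y$, one must find a $\tau_{\mathrm{dri}}$-covering sieve of $\operatorname{Spec}(B)$ whose members factor through some $\operatorname{Spec}(A_j)$; this holds by definition of effective epimorphism of sheaves. It then remains to check that the proof of Proposition \ref{prop:NCrepresentable} only requires such a sieve, refined to a finite descendable family. If needed, we would redo that descent argument directly using Lemma \ref{lem:NCdescent} on that finite family.
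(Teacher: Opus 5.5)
Your proposal is correct and follows the paper's proof. Both cover $Y$ by affinoids, use that $i_!$ preserves colimits (hence covers), pullbacks (left-exactness, which the paper uses implicitly) and representables to see that $i_!(f)$ is representable locally on $i_!Y$, and then invoke Proposition \ref{prop:NCrepresentable}. Your worry about infinite covers is already handled inside the proof of that proposition, which only uses a finite descendable refinement of the local factorization through $\coprod_i U_i$.
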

\begin{proof}
Since $i_!$ is colimit preserving, it takes covers to covers. Further, $i_!$ preserves representable objects. So $i_!(f)$ is representable locally on $Y$ in the sense of Proposition \ref{prop:NCrepresentable}. But by Proposition \ref{prop:NCrepresentable}, this implies that $i_!(f)$ is representable.
\end{proof}
\egapar We denote by $\operatorname{AniAlg}$ be the category of (abstract) animated commutative $K$-algebras. We equip $\mathrm{AniAlg}^\mathrm{op}$ with the Zariski topology and denote the corresponding site by $(\mathrm{AniAlg}^\mathrm{op},\tau_{\mathrm{Zar}})$. We let $\operatorname{Poly}^a$ denote the category of abstract polynomial algebras over $K$. There is a fully-faithful coproduct-preserving functor $\operatorname{Poly}^a \to \operatorname{Poly}^\mathscr{V} \subseteq \operatorname{DAlg}^{\leqslant 0}(\mathscr{V})$, which endows a polynomial algebra with the fine bornology (here we use notations as in Remark \ref{rmk:connectivediscrete}). Extending by sifted colimits, we get a fully-faithful colimit-preserving functor $\operatorname{AniAlg} \to \operatorname{DAlg}^{\leqslant 0}(\mathscr{V})$, which we may post-compose with the (colimit-preserving) inclusion $\operatorname{DAlg}^{\leqslant 0}(\mathscr{V}) \hookrightarrow \operatorname{DAlg}(\mathscr{V})$ to obtain a fully-faithful colimit-preserving functor $\alpha: \operatorname{AniAlg} \to \operatorname{DAlg}(\mathscr{V})$. Passing to opposite categories we obtain a fully-faithful limit-preserving functor 
\begin{equation}
a: \operatorname{AniAlg}^\mathrm{op} \to \operatorname{Aff}. 
\end{equation}
\begin{lem}
The functor $a: \operatorname{AniAlg}^\mathrm{op} \to \operatorname{Aff}$ is continuous for the Zariski topology on the source and the descendable topology on the target. 
\end{lem}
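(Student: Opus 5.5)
Since $a$ is fully faithful and limit-preserving, and both $\operatorname{AniAlg}^\mathrm{op}$ and $\operatorname{Aff}$ admit finite limits, we can use Lemma \ref{lem:continuouscriterion}. It then suffices to show one thing: for every Zariski covering sieve on $\operatorname{Spec}(R)$, the sieve generated by its image under $a$ is a $\tau_{\mathrm{desc}}$-covering sieve on $a(\operatorname{Spec} R) = \operatorname{Spec}(\alpha(R))$. Every Zariski covering sieve contains one generated by a finite family of basic opens $\{\operatorname{Spec}(R[f_j^{-1}]) \to \operatorname{Spec}(R)\}_{j=1}^n$ with $(f_1,\dots,f_n) = \pi_0 R$. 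Sieves containing a covering sieve are covering, so the claim reduces to the following. For such $f_j$, the morphism $\alpha(R) \to \prod_j \alpha(R[f_j^{-1}])$ should be descendable in the sense of \cite[\S 3.3]{MathewGalois}.

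First we identify $\alpha(R[f^{-1}])$. Localization is a colimit: $R[f^{-1}] \simeq R \otimes_{K[T]} K[T,T^{-1}]$ with $T \mapsto f$, and $K[T^{\pm1}] = \operatorname{colim}(K[T]\xrightarrow{T}K[T]\to\cdots)$ as modules. Since $\alpha$ preserves colimits (pushouts in particular) and is symmetric monoidal on modules, $\Theta\alpha(R[f^{-1}]) \simeq \Theta\alpha(R)[f^{-1}]$. This is the telescope localization in $\operatorname{Mod}_{\Theta\alpha(R)}\mathscr{V}$. Moreover, $\Theta\alpha(R)$ is obtained from the underlying $K$-module of $R$ with the fine bornology, via a colimit-preserving symmetric monoidal functor $D(K) \to \mathscr{V}$.

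Next we prove descendability. Descendability is preserved under base change along the symmetric monoidal colimit-preserving functor $F: \operatorname{Mod}_R D(K) \to \operatorname{Mod}_{\alpha(R)}\mathscr{V}$. Indeed, the descendability witness, namely that $R$ lies in the thick $\otimes$-ideal generated by $B = \prod_j R[f_j^{-1}]$, is carried by $F$ to the corresponding statement for $F(B)$. Finite products agree with finite coproducts in both categories, so $F(B) \simeq \prod_j \alpha(R[f_j^{-1}])$. It therefore remains to invoke the algebraic fact that a finite Zariski cover $R \to \prod_j R[f_j^{-1}]$ is descendable in $\operatorname{Mod}_R$. This is standard. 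The fiber $I$ of $R \to B$ has the property that $I^{\otimes n} \to R$ is null for $n$ large. One way to see it is through the Čech/Koszul description: $R$ is built from the $R[f_S^{-1}]$ in finitely many steps, since the local cohomology $\Gamma_{(f_1,\dots,f_n)}R$ vanishes when the $f_j$ generate the unit ideal.

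The main obstacle is the base-change step. We must check that $F$ really exists as a symmetric monoidal, colimit-preserving functor, compatible with $\alpha$ at the level of $\Theta$. Concretely, $\Theta\circ\alpha$ should agree with the composite of the fine-bornology functor $D(K)\to\mathscr{V}$ with the forgetful functor from animated rings to $\mathbf{E}_\infty$-algebras. My first attempt would be to prove this on $\operatorname{Poly}^a$, where it follows from the equivalence $\operatorname{LSym}_\mathscr{V}|_{\mathscr{V}^0} \simeq \operatorname{Sym}_{\mathscr{V}^\heartsuit}$ (fine-bornological $K[x_1,\dots,x_n]$ is a countable colimit of finite-dimensional Banach spaces), and then extend by sifted colimits.
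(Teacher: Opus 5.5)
Your proposal is correct and follows essentially the same route as the paper, whose proof is the one-line remark that descendability of $\alpha(R)\to\prod_j\alpha(R[f_j^{-1}])$ comes from the boundedness of the alternating \v{C}ech complex of a finite Zariski cover. Your reduction via the continuity criterion and your transfer of algebraic descendability along the exact symmetric monoidal fine-bornology functor $D(K)\to\mathscr{V}$ (with $\Theta\circ\alpha$ identified with it by checking on $\operatorname{Poly}^a$ and extending by sifted colimits) just spell out what the paper leaves implicit: the bounded \v{C}ech complex still exhibits $\alpha(R)$ as a finite limit of $\prod_j\alpha(R[f_j^{-1}])$-modules in $\mathscr{V}$.
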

\begin{proof}
This is well-known: It follows from the boundedness of the alternating \v{C}ech complex associated to a Zariski covering. 
\end{proof}
\egapar The following Lemma is the most important technical ingredient in the construction of the analytification functor.
\begin{lem}\label{lem:alginclusioncontinuity}
The functor $a: \operatorname{AniAlg}^\mathrm{op} \to \operatorname{Aff}$ is continuous for the Zariski topology on the source and the topology $\tau_{\mathrm{dri}}$ on the target. Consequently there is an induced adjunction
\begin{equation}
    a_!: \operatorname{Shv}_{\tau_{\mathrm{Zar}}}(\mathrm{AniAlg}^\mathrm{op}) \leftrightarrows  \operatorname{Shv}_{\tau_{\mathrm{dri}}}(\operatorname{Aff}) :a^*,
\end{equation}
in which the left adjoint $a_!$ is left-exact.
\end{lem}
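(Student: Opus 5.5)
We use Lemma~\ref{lem:continuouscriterion}. The categories $\operatorname{AniAlg}^\mathrm{op}$ and $\operatorname{Aff}$ have finite limits, and $a$ preserves them. Indeed, $\alpha$ is colimit-preserving, and pushouts of animated algebras $R \otimes^\mathbf{L}_S T$ go to $\alpha(R)\hat{\otimes}^\mathbf{L}_{\alpha(S)}\alpha(T)$. So it suffices to fix a Zariski cover $\{\operatorname{Spec}(R_{f_j}) \to \operatorname{Spec}(R)\}_{j=1}^m$ with $(f_1,\dots,f_m)=1$ in $\pi_0R$, and show that the sieve it generates on $a(\operatorname{Spec} R)$ lies in $\tau_{\mathrm{dri}} = i_*\tau_{\mathrm{rat}} \cap \tau_{\mathrm{desc}}$. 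Membership in $\tau_{\mathrm{desc}}$ is the preceding Lemma. The left exactness of $a_!$ then follows from the last clause of Lemma~\ref{lem:continuouscriterion} applied through Lemma~\ref{lem:continuousfunctor}.

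The real content is membership in $i_*\tau_{\mathrm{rat}}$. By Definition~\ref{defn:pushforwardtopology}, fix a derived affinoid $\operatorname{Spec}(A) \in \operatorname{Afnd}$ and a morphism $\operatorname{Spec}(A) \to a(\operatorname{Spec} R)$, i.e.\ a map $\alpha(R) \to A$ in $\operatorname{DAlg}(\mathscr{V})$. We must show that the pulled-back sieve on $\operatorname{Spec}(A)$ is a $\tau_{\mathrm{rat}}$-covering sieve. This sieve contains exactly those $\operatorname{Spec}(A') \to \operatorname{Spec}(A)$ in $\operatorname{Afnd}$ such that the composite $\alpha(R) \to A'$ factors through some $\alpha(R_{f_j})$.

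Since $\alpha(R_{f_j}) = \alpha(R)[f_j^{-1}]$ is a localization, such a factorization exists (uniquely) as soon as the image $g_j$ of $f_j$ in $H^0(A')$ is invertible. Here we use that invertibility in a connective derived algebra is detected on $H^0$, and that the localization is computed by the usual telescope, compatibly with $\alpha$.

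Let $g_j$ also denote the images of the $f_j$ in $H^0(A)$. They generate the unit ideal, say $\sum_j h_j g_j = 1$. Choose $c \in K^\times$ with $|c|$ small relative to $\max_j \|h_j\|$, where the norm is taken with respect to any Banach norm on $H^0(A)$. Then at every point $x$ some $|g_j(x)| \geqslant |c|$. Hence the Laurent domains
\[
U_j = \{\, |g_j| \geqslant |c| \,\}
\]
cover the classical rigid space $\operatorname{Spec}(H^0A)$. Equivalently, after subdividing, one can use the rational domains $\{|g_k|\leqslant |g_j|\}$ defined by the unit-ideal-generating tuple $(g_1,\dots,g_m)$, with $a_0 = g_j$.

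By Lemma~\ref{lem:derivedrational}, these lift to derived rational localizations $A \to A_j$, computed by lifting the $g_j$ to maps $K \to A$. This gives a $\tau_{\mathrm{rat}}$-cover. On each $A_j$, the element $g_j$ is invertible in $H^0(A_j)$. So each $\operatorname{Spec}(A_j) \to \operatorname{Spec}(A)$ lies in the pulled-back sieve, which is therefore covering, because it contains a covering sieve.

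I expect the main obstacle to be the factorization step. One must check that a map $\alpha(R) \to A'$ sending $f_j$ to a unit of $H^0(A')$ factors through $\alpha(R_{f_j})$ in $\operatorname{DAlg}(\mathscr{V})$. For this I would use that $\alpha$ preserves colimits, so that $\alpha(R_{f_j}) \simeq \alpha(R)\hat{\otimes}^\mathbf{L}_{\alpha(K[x])}\alpha(K[x^{\pm1}])$. This reduces the check to the universal case: maps $\alpha(K[x^{\pm 1}]) \to A'$ are the same as points of $\Omega^\infty A'$ that are invertible in $H^0(A')$. That in turn follows because $K[x^{\pm1}]$ is the filtered colimit (telescope) of $K[x]$ along multiplication by $x$, and $\alpha$ preserves this colimit.
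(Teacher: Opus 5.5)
Your proposal is correct and follows the paper's proof essentially verbatim. You apply Lemma~\ref{lem:continuouscriterion}, pull the generated sieve back to a derived affinoid $\operatorname{Spec}(A)$, and use the rational cover by $\{|g_k|\leqslant|g_j|\}$ (the paper's $B\langle f_1\dots\hat f_j\dots f_n/f_j\rangle$), on which $g_j$ becomes a unit so the map factors through $\alpha(R_{f_j})$; your explicit check of $\tau_{\mathrm{desc}}$-membership, via descendability of $\alpha(R)\to\prod_j\alpha(R_{f_j})$, is a point the paper leaves implicit.

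One small fix: the telescope $K[x]\xrightarrow{x}K[x]\to\cdots$ is a colimit of modules, not of algebras, so "$\alpha$ preserves this colimit" is not the right justification. Instead, justify the factorization either via the presentation $K[x,y]/(xy-1)$, or via the fact that $K[x]\to K[x^{\pm1}]$ is a homotopy epimorphism whose base change to $A'$ has underlying module the telescope of $A'$ along $g_j$. This is exactly the step the paper takes for granted.
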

\begin{proof}
It is sufficient to show that the functor $a$ is continuous for the topology $\tau_{\mathrm{Zar}}$ on $\operatorname{AniAlg}^\mathrm{op} $ and $i_*\tau_{\mathrm{rat}}$ on $\operatorname{Aff}$. For the sake of brevity let us put $\mathscr{C} := \operatorname{AniAlg}^\mathrm{op}$, $\mathscr{D} := \operatorname{Aff}$ and $\mathscr{E} := \mathrm{Afnd}$. Let $X = \operatorname{Spec}(A) \in \mathscr{C}$ and let $\mathscr{C}^0_{/X}$ be a Zariski covering sieve on $X$. To be explicit this means the following: there exists a finite collection $\{A \to A_j\}_j$ of Zariski localizations which induces a jointly surjective morphism on Zariski spectra, such that $\{X_j \to X\}_j$ generates $\mathscr{C}^0_{/X}$ where $X_j = \operatorname{Spec}(A_j)$. By passing to a refinement if necessary, we may and will assume that the $A_j$ are of the form $A_j = A[f_j^{-1}]$ where the $f_j: K \to A$ are morphisms\footnote{In the derived category of $K$-vector spaces.} which generate the unit ideal in $H^0(A)$. Denote by $\mathscr{D}^1_{/a(X)}$ the sieve generated by the image of $\mathscr{C}^0_{/X} \to \mathscr{D}_{/a(X)}$. By Lemma \ref{lem:continuouscriterion}, it is sufficient to show that $\mathscr{D}^1_{/a(X)}$ is a $i_*\tau_{\mathrm{rat}}$-covering sieve.  Let $Y = \operatorname{Spec}(B) \in \operatorname{Afnd}$ and let $i(Y) \to a(X)$ be any morphism. We need to show that $\mathscr{E}^1_{/Y}:= \mathscr{E}_{/Y} \times_{\mathscr{D}_{/i(Y)}} \mathscr{D}^1_{/a(X)}$ is a $\tau$-covering sieve. Put $Z_j := a(X_j) \times_{a(X)} i(Y)$; then each $Z_j \to i(Y)$ is induced by\footnote{Here we abuse notation and write $f_j$ for the composite morphism $K \to A \to B$ in $\mathscr{V}$.} some $B \to B[f_j^{-1}]$, where the $f_j$ generate the unit ideal in $H^0(B)$. We define $B_j := B\langle f_1 \dots \hat{f}_j \dots f_n/f_j\rangle$ to be the rational localisation and $Y_j := \operatorname{Spec}(B_j)$. Then for each $j$ there is a factorization $i(Y_j) \to  Z_j \to i(Y)$, so $Y_j \to Y$ belongs to $\mathscr{E}^1_{/Y}$. But since the $f_j$ generate the unit ideal in $H^0(B)$, the collection $\{Y_j \to Y\}_j$ generates a $\tau_{\mathrm{rat}}$-covering sieve.
\end{proof}
\begin{defn}
The \emph{analytification functor} is defined to be 
\begin{equation}
   (-)^\mathrm{an} :=i^*a_! :  \operatorname{Shv}_{\tau_{\mathrm{Zar}}}(\operatorname{AniAlg}^\mathrm{op}) \to \operatorname{Shv}_{\tau_{\mathrm{rat}}}(\operatorname{Afnd}). 
\end{equation}
\end{defn}
The counit morphism $i_!i^* \to \operatorname{id}$ gives a morphism $i_!i^*a_! \to a_!$. So, the analytification and the original space can be compared in $\operatorname{Shv}_{\tau_{\mathrm{dri}}}(\operatorname{Aff})$. As a consequence of Lemma \ref{lem:alginclusioncontinuity} and Proposition \ref{prop:analyticinclusion} we obtain the following, which justifies the name of this section:
\begin{cor}
The analytification functor $(-)^\mathrm{an}$ is a left exact left adjoint. 
\end{cor}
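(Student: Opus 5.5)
The plan is to write $(-)^{\mathrm{an}} = i^* a_!$ as a composite and check each factor separately, since a composite of left-exact left adjoints is again a left-exact left adjoint.

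For the first factor, Lemma \ref{lem:alginclusioncontinuity} shows that $a_!: \operatorname{Shv}_{\tau_{\mathrm{Zar}}}(\operatorname{AniAlg}^\mathrm{op}) \to \operatorname{Shv}_{\tau_{\mathrm{dri}}}(\operatorname{Aff})$ is a left adjoint, with right adjoint $a^*$. The left exactness of $a_!$ is also part of that lemma. It comes from Lemma \ref{lem:continuousfunctor}, because $a$ is left exact: it is fully faithful and limit-preserving between categories with finite limits.

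For the second factor, Proposition \ref{prop:analyticinclusion} supplies the adjoint triple $i_! \dashv i^* \dashv i_*$, so $i^*$ is a left adjoint with right adjoint $i_*$. To see that $i^*$ is left exact, I would argue in one of two ways.
\begin{enumerate}[(a)]
\item Directly: $i^*$ is a right adjoint (of $i_!$), so it preserves all small limits, in particular finite ones.
\item Via Lemma \ref{lem:cocontinuousadjunction}: $i$ is cocontinuous for $i_*\tau_{\mathrm{rat}}$ by Theorem \ref{thm:pushforwardtopology}, and $i$ is left exact because $\operatorname{Afnd}$ is stable under fiber products in $\operatorname{Aff}$.
\end{enumerate}

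The one point needing a check is that the middle topos carries $\tau_{\mathrm{dri}}$ rather than $i_*\tau_{\mathrm{rat}}$. If the adjoint triple of Proposition \ref{prop:analyticinclusion} is taken as given for $\tau_{\mathrm{dri}}$, then route (a) suffices and nothing further is needed.

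Composing the two factors, $i^*a_!$ is left adjoint to $a_* i_*$, where I write $a_*$ for the right adjoint $a^*$ of $a_!$. It is also left exact, being a composite of left-exact functors. I do not expect any step to be a real obstacle, since the corollary is purely formal from the two cited results.
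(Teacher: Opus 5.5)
Your proposal is correct and matches the paper's argument: $(-)^{\mathrm{an}}=i^*a_!$, where $a_!$ is a left-exact left adjoint by Lemma \ref{lem:alginclusioncontinuity}, and $i^*$ is both left adjoint to $i_*$ and right adjoint to $i_!$ by Proposition \ref{prop:analyticinclusion}, so route (a) suffices; the worry you raise about route (b) is also harmless, since $\tau_{\mathrm{dri}}\subseteq i_*\tau_{\mathrm{rat}}$ and, by Lemma \ref{lem:cocontinuousmap}, cocontinuity survives passing to a coarser topology on the target. The only slip is notational: in the paper's conventions the right adjoint of the composite should be written $a^*i_*$, because $a_*$ already denotes a different functor there.
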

\egapar Aside from the excellent functorial properties of the analytification functor, we need to check that it ``does the right thing". This can be reduced to the case of the algebraic affine line, which we now compute using the universal property deduced from adjunction. 
\begin{prop}\label{prop:affineline}
One has $\operatorname{Spec}(K[T])^\mathrm{an} = \underset{n \in \mathbf{N}}{\operatorname{colim}} \operatorname{Spec}(K \langle \varpi^n T \rangle ) =: \mathbf{A}^{1,\mathrm{an}.}$ is\footnote{Here the colimit is taken in $\operatorname{Shv}_{\tau_{\mathrm{rat}}}(\mathrm{Afnd})$.} the analytic affine line. 
\end{prop}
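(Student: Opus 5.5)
We compute $\operatorname{Spec}(K[T])^{\mathrm{an}} = i^*a_!\operatorname{Spec}(K[T])$ through its functor of points on $\operatorname{Afnd}$. The first step is Corollary \ref{cor:lowershriekrepresentable}, which gives $a_!(L\operatorname{Spec}(K[T])) \simeq L\, a(\operatorname{Spec}(K[T]))$. Here $a(\operatorname{Spec}(K[T])) = \operatorname{Spec}(K[T]^{\mathrm{fine}})$, with $K[T]^{\mathrm{fine}}$ the polynomial algebra carrying the fine bornology, i.e. $\operatorname{LSym}_{\mathscr{V}}(K)$.

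The second step is to evaluate $i^*$. Since $i$ is both continuous and cocontinuous (Theorem \ref{thm:pushforwardtopology}), $i^*$ is just restriction of presheaves, with no sheafification. Hence for $Y = \operatorname{Spec}(B) \in \operatorname{Afnd}$ we get
\begin{equation*}
(L\,a\operatorname{Spec}(K[T]))(i(Y)) = \operatorname{Map}_{\operatorname{Shv}_{\tau_{\mathrm{dri}}}(\operatorname{Aff})}(i(Y), L\,a\operatorname{Spec}(K[T])).
\end{equation*}
Lemma \ref{lem:NCdescent} shows that $\tau_{\mathrm{desc}}$ is subcanonical. Since $\tau_{\mathrm{dri}}$ is coarser than $\tau_{\mathrm{desc}}$, every representable is a $\tau_{\mathrm{dri}}$-sheaf, so $L$ does nothing on representables. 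The value above is therefore
\begin{equation*}
\operatorname{Map}_{\operatorname{DAlg}(\mathscr{V})}(\operatorname{LSym}_{\mathscr{V}}K, B) \simeq \operatorname{Map}_{\mathscr{V}}(K, B).
\end{equation*}
In short, $\operatorname{Spec}(K[T])^{\mathrm{an}}$ is the sheaf $B \mapsto \operatorname{Map}_{\mathscr{V}}(K,B)$ on $\operatorname{Afnd}$, the ``underlying space of elements'' functor.

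The third step computes the other side, $\operatorname{colim}_n \operatorname{Spec}(K\langle\varpi^nT\rangle)$. On representables we have $\operatorname{Map}(\operatorname{Spec} B, \operatorname{Spec} K\langle\varpi^n T\rangle) \simeq \operatorname{Map}_{\operatorname{DAlg}}(K\langle \varpi^nT\rangle, B)$. The Tate algebra is $\operatorname{LSym}$ of nothing standard, so I would instead use its universal property among derived affinoid algebras. For connective $B$ with $H^0B$ affinoid, a map $K\langle \varpi^n T\rangle \to B$ should be the same as a map $K \to B$ whose image $b \in H^0B$ satisfies $|\varpi^n b|_{\sup}\le 1$. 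More precisely, the space of such maps is the union of those components of $\operatorname{Map}_{\mathscr{V}}(K,B)$ with this property. This is the key technical input. I would prove it by writing $K\langle\varpi^nT\rangle$ as a derived rational localization of a polynomial algebra, or via the derived-strong description of derived affinoid morphisms in \cite{SoorThesis}, reducing the higher homotopy to $H^0$ through the Postnikov tower of $B$: square-zero extensions by finite modules admit unique lifts, since $K\langle T\rangle$ is formally smooth relative to the fine polynomial algebra. I expect this lifting/étaleness of $K[T]^{\mathrm{fine}} \to K\langle \varpi^nT\rangle$ with respect to derived affinoids to be the main obstacle.

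Granting this, I finish as follows. The presheaf colimit of $\operatorname{Map}(\operatorname{Spec} B, \operatorname{Spec} K\langle\varpi^nT\rangle)$ over $n$ is already all of $\operatorname{Map}_{\mathscr{V}}(K,B)$, because every element of an affinoid algebra has finite spectral (sup) norm. The transition maps are component inclusions, so the filtered colimit is the union. Since $\operatorname{Map}_{\mathscr{V}}(K,-)$ is already a $\tau_{\mathrm{rat}}$-sheaf by the previous step, the sheafified colimit agrees with the presheaf colimit. The comparison map $\operatorname{colim}_n \operatorname{Spec}(K\langle\varpi^nT\rangle) \to \operatorname{Spec}(K[T])^{\mathrm{an}}$, induced by the maps $K[T] \to K\langle\varpi^n T\rangle$, is thus an equivalence on every $\operatorname{Spec} B$. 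That proves the proposition.
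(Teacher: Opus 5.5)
Your proof is correct and essentially the same as the paper's. Both identify $\operatorname{Spec}(K[T])^{\mathrm{an}}$ with $B\mapsto\operatorname{Map}(K[T],B)$, using Corollary \ref{cor:lowershriekrepresentable} and subcanonicity of the descendable topology, and then show that the union over $n$ of the subspaces $\operatorname{Map}(K\langle\varpi^nT\rangle,B)$ is everything, by lifting from $H^0(B)$. Your observation that the presheaf colimit is already a sheaf is a harmless substitute for the paper's appeal to finitarity of $\tau_{\mathrm{rat}}$.

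One correction: the property you need is formal \'{e}taleness, not formal smoothness. It holds because $K[T]\to K\langle\varpi^nT\rangle$ is a homotopy epimorphism, so its relative cotangent complex vanishes. This single fact gives both the component-inclusion (monomorphism) statement and the unique lifts along the Postnikov tower of $B$, which is exactly the paper's use of \cite[Corollary 2.1.36]{DAnG}.
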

\begin{proof}
Let $\operatorname{Spec}(A) \in \operatorname{Afnd}$; then by adjunction one has 
\begin{equation}
\begin{aligned}
    \operatorname{Map}(\operatorname{Spec}(A), \operatorname{Spec}(K[T])^\mathrm{an}) &= \operatorname{Map}(i_!\operatorname{Spec}(A), a_!\operatorname{Spec}(K[T])) \\
    &\simeq \operatorname{Map}(K[T], A)
\end{aligned}
\end{equation}
where we used Corollary \ref{cor:lowershriekrepresentable} together with the fact that the descendable topology is subcanonical. Because the rational topology is finitary one has
\begin{equation}
    \underset{n \in \mathbf{N}}{\operatorname{colim}} \operatorname{Map}(\operatorname{Spec}(A),\operatorname{Spec}(K \langle \varpi^nT \rangle )) \xrightarrow[]{\sim} \operatorname{Map}(\operatorname{Spec}(A),\underset{n \in \mathbf{N}}{\operatorname{colim}} \operatorname{Spec}(K \langle \varpi^nT \rangle )).
\end{equation}
We claim that the canonical map
\begin{equation}\label{eq:affinelineiso}
    \underset{n \in \mathbf{N}}{\operatorname{colim}}\operatorname{Map}(K \langle \varpi^nT \rangle , A) \to \operatorname{Map}(K[T],A)
\end{equation}
is an equivalence. Since each $K[T] \to K \langle \varpi^nT \rangle $ is a homotopy epimorphism, it is certainly true that \eqref{eq:affinelineiso} is a monomorphism. Hence, it suffices to show that \eqref{eq:affinelineiso} is surjective on connected components. This is clear if $A$ is a classical affinoid, as it is an expression of the equality of underlying sets:
\begin{equation}
\bigcup_{n  \in \mathbf{N}} \{ a \in A : \varpi^na \text{ is power-bounded}\} = A. 
\end{equation}
In general, take a morphism $[K \langle \varpi^n T\rangle \to H^0(A)]$. By restriction we obtain $[K[T] \to H^0(A)]$, which lifts to $[K[T] \to A]$, since $K[T]$ is projective. But $[K[T] \to K\langle \varpi^n T \rangle]$ is a homotopy epimorphism, hence formally \'etale, so \cite[Corollary 2.1.36]{DAnG} guarantees that $[K\langle \varpi^n T \rangle \to H^0(A)]$ lifts to a morphism $[K \langle \varpi^n T  \rangle \to A]$ under $K[T]$. This shows that the left arrow in the commutative square 
\begin{equation}
\begin{tikzcd}
	{\underset{n \in \mathbf{N}}{\operatorname{colim}}\operatorname{Map}(K\langle \varpi^n T\rangle, A)} & {\operatorname{Map}(K[T],A)} \\
	{\underset{n \in \mathbf{N}}{\operatorname{colim}}\operatorname{Map}(K\langle  \varpi^n T \rangle, H^0(A))} & {\operatorname{Map}(K[T],H^0(A))}
	\arrow[from=1-1, to=1-2]
	\arrow[from=1-1, to=2-1]
	\arrow[from=1-2, to=2-2]
	\arrow[from=2-1, to=2-2]
\end{tikzcd}
\end{equation}
is surjective on connected components. The bottom arrow is an isomorphism, and the right arrow is an isomorphism on connected components, since $K[T]$ is projective. Hence the top arrow is surjective on connected components.   
\end{proof}
\egapar We recall the definition of derived rigid spaces from \cite[Definition 3.1.19]{SoorThesis}, which we reproduce here for the reader's convenience. 
\begin{defn}\label{defn:derivedrigidspace}
\begin{enumerate}[(i)]
    \item An \emph{affinoid rigid space} is an object $X = \operatorname{Spec}(A)$ in the image of the Yoneda embedding $\operatorname{Afnd} \hookrightarrow \operatorname{Shv}_{\tau_\mathrm{rat}}(\operatorname{Afnd})$. 
    \item Let $X = \operatorname{Spec}(A)$ be an affinoid rigid space. An \emph{analytic subspace} of $X$ is a subsheaf $U \hookrightarrow X$ such that there exists a small collection $\{U_i \hookrightarrow U\}_i$ of rational subdomains of $X$ such that $\coprod_i U_i \twoheadrightarrow U$ is an effective epimorphism.
    \item Let $X \in \operatorname{Shv}_{\tau_\mathrm{rat}}(\operatorname{Afnd})$. An \emph{analytic subspace} of $X$ is a subsheaf $U \hookrightarrow X$ such that, for every affinoid rigid space $X^\prime \to X$ mapping to $X$, the pullback $U^\prime \hookrightarrow X^\prime$ is an analytic subspace in the sense of (ii) above; 
    \item A \emph{derived rigid space} is an object $X \in \operatorname{Shv}_{\tau_\mathrm{rat}}(\operatorname{Afnd})$ such that there exists a small family $\{U_i \hookrightarrow X\}$ of affinoid analytic subspaces such that $\coprod_i U_i \twoheadrightarrow X$ is an effective epimorphism. We denote the category of derived rigid spaces by $\operatorname{dRig}$.
    \item The \emph{strong} or \emph{analytic topology} on $\mathrm{dRig}$ is defined so that covering sieves are generated by small families $\{U_i \hookrightarrow X\}_i$ of analytic subspaces such that $\coprod_i U_i \twoheadrightarrow X$ is an effective epimorphism. 
\end{enumerate}
\end{defn}
\begin{rmk}\label{rmk:rigidfinitelimits}
We remark that the category $\operatorname{dRig}$ is stable under finite limits and retracts in $\operatorname{Shv}_{\tau_\mathrm{rat}}(\operatorname{Afnd})$. 
\end{rmk}
\begin{defn}\label{defn:HomotopicallyFinite}
An object $A \in \operatorname{AniAlg}$ is called \emph{homotopically of finite presentation} if $A$ is compact as an object of the $\infty$-category $\operatorname{AniAlg}$. 
\end{defn}
\egapar We denote by $K[-]$ the left adjoint to the forgetful functor $\operatorname{AniAlg} \to \infty\mathrm{Grpd}_*$ to pointed $\infty$-groupoids. For the purposes of the next Definition, it is better to think of $\infty\mathrm{Grpd}_*$ as the $\infty$-category of pointed spaces. For $n \geqslant 0$ we denote the pointed $n$-sphere by $S^n$ and the pointed $n$-disk by $D^n$. Of course, one has $D^n \simeq *$. 
\begin{defn}\label{defn:cellular}\cite[Appendix B]{hekking2021gradedalgebrasprojectivespectra}.
\begin{enumerate}[(i)]
    \item Let $A, B \in \operatorname{AniAlg}$ and let $\sigma: S^{n-1} \to A$ be a morphism of pointed spaces. If $n \geqslant 1$ we say that \emph{$B$ is obtained from $A$ by attaching an $n$-cell along $\sigma$} if there is a pushout square in $\operatorname{AniAlg}$:
    \begin{equation}
\begin{tikzcd}
	{K[S^{n-1}]} & {K[D^n] \simeq K} \\
	A & B
	\arrow[from=1-1, to=1-2]
	\arrow["\sigma"', from=1-1, to=2-1]
	\arrow[from=1-2, to=2-2]
	\arrow[from=2-1, to=2-2]
	\arrow["\lrcorner"{anchor=center, pos=0.125, rotate=180}, draw=none, from=2-2, to=1-1]
\end{tikzcd}
    \end{equation}
    If $n = 0$ we say that $A[T]$ is obtained from $A$ by attaching a $0$-cell.
    \item We say that a morphism $A \to B$ in $\operatorname{AniAlg}$ is \emph{finite cellular} if $B$ is obtained from $A$ by finitely many cell attachments, including $0$-cells. We say that $A \in \operatorname{AniAlg}$ is \emph{finite cellular} if the morphism $K \to A$ is finite cellular.
\end{enumerate}
\end{defn}
\begin{prop}\label{prop:AniAlgcompact}
Let $A \in \operatorname{AniAlg}$. Then the following are equivalent.
\begin{enumerate}[(i)]
    \item $A$ is homotopically of finite presentation.
    \item There exists a finite diagram $p:I \to \operatorname{Poly}_K$ such that $A$ is a retract of $\operatorname{colim}p$. Here the colimit is taken in $\operatorname{AniAlg}$.  
    \item $A$ is a retract of a finite cellular $K$-algebra.
\end{enumerate}
\end{prop}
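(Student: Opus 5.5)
The plan is to prove the cycle (iii) $\Rightarrow$ (ii) $\Rightarrow$ (i) $\Rightarrow$ (iii). This is the standard characterization of compact objects in $\operatorname{AniAlg} = \operatorname{sInd}(\operatorname{Poly}_K)$, following \cite[Appendix B]{hekking2021gradedalgebrasprojectivespectra} and \cite[\S 5.5.8]{HigherToposTheory}. Throughout I use the facts that $\operatorname{AniAlg}$ is compactly generated by the finitely generated polynomial algebras, and that these, being corepresenting objects for the (filtered-colimit preserving) forgetful functor to $\infty\mathrm{Grpd}_*$ in finitely many variables, are compact.

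\textbf{(iii) $\Rightarrow$ (ii).} Here I induct on the number of cells. Attaching a $0$-cell is a pushout along $K \to K[T]$. For $n \geqslant 1$, I write $K[S^{n-1}]$ as a finite colimit of copies of $K[D^0] = K[T]$: since $K[-]$ preserves colimits and $S^{n-1}$ is a finite colimit of points (iterated suspensions via pushouts $* \leftarrow S^{k-1} \to *$), $K[S^{n-1}]$ is a finite colimit of polynomial algebras on one generator. Consequently each cell attachment is a finite colimit of a diagram whose vertices are $A$ and finitely generated polynomial algebras. By induction, a finite cellular $A$ is the colimit of a finite diagram in $\operatorname{Poly}_K$. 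Retracts are preserved, which gives (ii).

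\textbf{(ii) $\Rightarrow$ (i).} This is formal. Finitely generated polynomial algebras are compact, finite colimits of compact objects are compact, and retracts of compact objects are compact \cite[Corollary 5.3.4.15]{HigherToposTheory}.

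\textbf{(i) $\Rightarrow$ (iii).} This is the main step. First I show that every $A \in \operatorname{AniAlg}$ is a filtered (indeed sequential) colimit of finite cellular algebras, via a cellular approximation argument:
\begin{enumerate}[(a)]
\item Start with $A_0 = K[\{T_a\}]$ surjecting onto $\pi_0A$.
\item Inductively kill the kernels on $\pi_k$ by attaching $(k+1)$-cells, obtaining a cell complex $\operatorname{colim}_k A_{(k)} \xrightarrow{\sim} A$. Connectivity estimates for the cofibre of a cell attachment (a cell of dimension $n$ changes homotopy only in degrees $\geqslant n-1$) ensure that the colimit is an equivalence.
\item Every cell complex is the filtered colimit of its finite subcomplexes, since each cell is attached along a map from the compact space $S^{n-1}$ and so factors through a finite subcomplex.
\end{enumerate}
Then, if $A$ is compact, the identity $A \to \operatorname{colim}_\alpha A_\alpha$ factors through some finite subcomplex $A_\alpha$, exhibiting $A$ as a retract of a finite cellular algebra.

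The main obstacle is the cellular approximation in (b), specifically checking that the homotopy groups of $\operatorname{colim}_k A_{(k)}$ agree with those of $A$. I would handle this by passing to underlying connective $K$-modules and using that the cofibre of $K[S^{n-1}] \to K$, after base change to $A$, is $(n-1)$-connected. Alternatively, to avoid explicit connectivity estimates, one could note that $\operatorname{Ind}$ of the full subcategory of finite cellular algebras and their retracts contains $\operatorname{Poly}_K$ and is closed under geometric realizations. This follows by \cite[Proposition 5.5.8.10]{HigherToposTheory}, since that subcategory is closed under finite colimits (pushouts of cell complexes are cell complexes, by the (iii) $\Rightarrow$ (ii) analysis run in reverse), and therefore it equals all of $\operatorname{AniAlg}$.
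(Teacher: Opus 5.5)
The gap is in (iii) $\Rightarrow$ (ii). Your cycle reaches (ii) only through this step, so (ii) is never established. You show that a cell attachment $B=A\otimes_{K[S^{n-1}]}K$ is a finite colimit of a diagram with vertices $A$, $K$ and copies of $K[T]$. (Note that $K[T]=K[S^0]$, whereas $K[D^0]\simeq K$.) You then conclude ``by induction'' that $B$ is the colimit of a \emph{single} finite diagram in $\operatorname{Poly}_K$. Substituting a presentation $A\simeq\operatorname{colim}_I p$ into that diagram requires the attaching map $\sigma\colon K[S^{n-1}]\to\operatorname{colim}_I p$ to be induced by a map of diagrams. Nothing gives you this, because $\operatorname{Map}(K[S^{n-1}],-)$ commutes with filtered colimits, not with finite ones. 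Already for $n=1$, the map $K[T]\to K[x]\otimes_{K[y]}K[z]$ (with $y\mapsto x^2$, $y\mapsto z^2$) sending $T\mapsto x+z$ factors through no vertex. Here one can re-present $A$ by a larger diagram, but for higher cells this becomes a simplicial-approximation problem that your argument does not address.

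The cleanest repair is never to flatten.
\begin{itemize}
\item (iii) $\Rightarrow$ (i) follows from your own formal argument: $K$, $K[T]$ and $K[S^{n-1}]$ are compact, and compact objects are closed under pushouts.
\item For (i) $\Rightarrow$ (ii), use density. $A$ is the colimit of the canonical diagram indexed by $\operatorname{Poly}_K\times_{\operatorname{AniAlg}}\operatorname{AniAlg}_{/A}$. This simplicial set is the filtered union of its finite simplicial subsets $I_\alpha$, so $A\simeq\operatorname{colim}_\alpha\operatorname{colim}_{I_\alpha}$, and compactness makes $A$ a retract of one $\operatorname{colim}_{I_\alpha}$.
\end{itemize}

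Your (i) $\Rightarrow$ (iii) is the right strategy, but it needs two repairs.
\begin{itemize}
\item In (b) you must also adjoin free generators to make $\pi_{k+1}$ surjective; these are cells along null maps, since $K[S^{k+1}]\simeq K\otimes_{K[S^k]}K$. As written, $A=K[S^1]$ is never reached from $A_0=K$.
\item (c) is best phrased as a closure statement. The essential image of $\operatorname{Ind}$ of the finite cellular algebras (fully faithful, since these are compact) is closed under filtered colimits. It is also closed under single cell attachments: factor $\sigma$ through a stage by compactness of $K[S^{n-1}]$, then commute the pushout past the filtered colimit. This avoids having to choose coherent ``finite subcomplexes'' in an $\infty$-category.
\end{itemize}

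Your fallback via closure of finite cellular algebras under finite colimits is circular as justified. Closure under pushouts along arbitrary maps is essentially (ii) $\Rightarrow$ (iii), so it needs its own argument, for example that the multiplication $A\otimes_K A\to A$ of a finite cellular $A$ is relatively finite cellular. (The paper itself simply cites Hekking, Appendix B, for all of this.)
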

\begin{proof}
This is \cite[Construction B.4.1]{hekking2021gradedalgebrasprojectivespectra}, see also \cite[Remark B.8.4]{hekking2021gradedalgebrasprojectivespectra}. Note that what we call \emph{homotopically of finite presentation} is called \emph{locally of finite presentation} in \emph{loc. cit.}. 
\end{proof}
\egapar In an entirely analogous way to Definition \ref{defn:derivedrigidspace}, we regard the category of derived schemes over $K$ as a full subcategory of $\operatorname{Shv}_{\tau_{\mathrm{Zar}}}(\operatorname{AniAlg}^\mathrm{op})$. We do not give the details as this construction is well-known.
\begin{defn}
A derived scheme is called \emph{locally homotopically of finite presentation} if there exists a Zariski cover $\coprod_i U_i \twoheadrightarrow X$ by affine open subschemes $U_i = \operatorname{Spec}(A_i) \subseteq X$ such that each $A_i \in \operatorname{AniAlg}$ is homotopically of finite presentation in the sense of Definition \ref{defn:HomotopicallyFinite}. 
\end{defn}
\begin{prop}
Let $X$ be a derived scheme (over $K$) which is locally homotopically of finite presentation. Then $X^\mathrm{an}$ is a derived rigid space. 
\end{prop}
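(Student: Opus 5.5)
The plan is to reduce in stages to the affine line (Proposition \ref{prop:affineline}), using only that $(-)^\mathrm{an} = i^*a_!$ is a left exact left adjoint.

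\emph{Step 1: reduction to affines.} Choose a Zariski cover $\coprod_j U_j \twoheadrightarrow X$ by affine opens $U_j = \operatorname{Spec}(A_j)$ with each $A_j$ homotopically of finite presentation. The analytification functor preserves colimits, hence effective epimorphisms. Being left exact, it also preserves monomorphisms and fiber products. Therefore $\coprod_j U_j^\mathrm{an} \twoheadrightarrow X^\mathrm{an}$ is an effective epimorphism, and each $U_j^\mathrm{an} \hookrightarrow X^\mathrm{an}$ is a monomorphism. By Definition \ref{defn:derivedrigidspace}(iv) it then suffices to prove two things. First, each $U_j^\mathrm{an}$ is a derived rigid space, i.e. it is covered by affinoid analytic subspaces. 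Second, an open immersion of derived schemes $U \hookrightarrow X$ analytifies to an analytic subspace in the sense of Definition \ref{defn:derivedrigidspace}(iii). Since Zariski open immersions are locally basic opens $\operatorname{Spec}(A[f^{-1}]) \hookrightarrow \operatorname{Spec}(A)$, and analytification commutes with the base change needed to test (iii), the second point reduces to checking that $\operatorname{Spec}(A[f^{-1}])^\mathrm{an} \to \operatorname{Spec}(A)^\mathrm{an}$ is an analytic subspace. This map is the pullback along $f : \operatorname{Spec}(A)^\mathrm{an} \to \mathbf{A}^{1,\mathrm{an}}$ of $\mathbf{G}_m^{\mathrm{an}} \hookrightarrow \mathbf{A}^{1,\mathrm{an}}$. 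The argument of Proposition \ref{prop:affineline} identifies $\mathbf{G}_m^{\mathrm{an}}$ with the union of the annuli $\{|\varpi^n| \leqslant |T| \leqslant |\varpi^{-n}|\}$, which are rational subdomains of the disks $\operatorname{Spec}(K\langle \varpi^n T\rangle)$. So this is an analytic subspace.

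\emph{Step 2: finite cellular algebras.} By Proposition \ref{prop:AniAlgcompact}, $A$ is a retract of a finite cellular algebra $B$. Then $\operatorname{Spec}(A)^\mathrm{an}$ is a retract of $\operatorname{Spec}(B)^\mathrm{an}$, and by Remark \ref{rmk:rigidfinitelimits} $\operatorname{dRig}$ is closed under retracts. It therefore suffices to treat finite cellular $B$, by induction on the number of cells.

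\emph{Step 3: the induction.} Attaching a $0$-cell replaces $\operatorname{Spec}(A)$ by $\operatorname{Spec}(A) \times \operatorname{Spec}(K[T])$, which analytifies to $\operatorname{Spec}(A)^\mathrm{an} \times \mathbf{A}^{1,\mathrm{an}}$ because analytification is left exact. Attaching an $n$-cell with $n \geqslant 1$ gives a fiber product
\[
\operatorname{Spec}(B) = \operatorname{Spec}(A) \times_{\operatorname{Spec}(K[S^{n-1}])} \operatorname{Spec}(K),
\]
which analytifies to the corresponding fiber product of analytifications. By Remark \ref{rmk:rigidfinitelimits}, it then suffices to know that $\operatorname{Spec}(K[S^{n-1}])^\mathrm{an}$ and $\operatorname{Spec}(K)^\mathrm{an} = *$ lie in $\operatorname{dRig}$. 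For $n \geqslant 2$, $S^{n-1}$ is a colimit of points, so $\operatorname{Spec}(K[S^{n-1}])$ is itself an iterated finite limit of copies of $\operatorname{Spec}(K)$ and the affine line. For example, $K[S^1]$ is the pushout of $K \leftarrow K[T] \to K$, giving $\operatorname{Spec}(K[S^1]) = 0 \times_{\mathbf{A}^1} 0$. For $n = 1$, $K[S^0] = K[T]$ is the free algebra, and its analytification is the affine line. Hence everything is a finite limit of copies of $\mathbf{A}^{1,\mathrm{an}}$ and $*$.

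\emph{Main obstacle.} The remaining substantive input is that $\mathbf{A}^{1,\mathrm{an}}$ is a derived rigid space. By Proposition \ref{prop:affineline} it is the increasing union of the disks $\operatorname{Spec}(K\langle \varpi^n T\rangle)$. One must check that each disk is an analytic subspace: its pullback to any affinoid $\operatorname{Spec}(C) \to \mathbf{A}^{1,\mathrm{an}}$ is the rational subdomain $\{|T| \leqslant |\varpi^{-n}|\}$, using that such a map factors through some disk. The derived strongness of these inclusions comes from Lemma \ref{lem:derivedrational}. One also has to ensure that the colimit presentation yields an effective epimorphism from the disjoint union of the disks. I expect this to be the main technical point, together with making the open-immersion compatibility in Step 1 precise.
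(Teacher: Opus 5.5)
Your proposal is correct and follows essentially the same route as the paper. You reduce to homotopically finitely presented affines via a Zariski cover, and show that basic opens analytify to analytic subspaces as base changes of $\mathbf{G}_m^{\mathrm{an}} \hookrightarrow \mathbf{A}^{1,\mathrm{an}}$. For the affine case, you exhibit $\operatorname{Spec}(A)^{\mathrm{an}}$ as a retract of a finite limit of copies of $\mathbf{A}^{1,\mathrm{an}}$ and $*$ and invoke Remark \ref{rmk:rigidfinitelimits}.

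The differences are minor. The paper uses the finite-diagram characterization (ii) of Proposition \ref{prop:AniAlgcompact} directly, where you induct on cells via (iii). The ``main obstacle'' you flag is routine. The map $\coprod_n \operatorname{Spec}(K\langle\varpi^nT\rangle) \to \operatorname{colim}_n \operatorname{Spec}(K\langle\varpi^nT\rangle)$ is automatically an effective epimorphism. Each disk maps monomorphically into $\mathbf{A}^{1,\mathrm{an}}$, because the transition maps are monomorphisms and filtered colimits are left exact.
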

\begin{proof}
We recall that the category $\operatorname{dRig}$ is stable under finite limits and retracts in $\operatorname{Shv}_{\tau_{\mathrm{rat}}}(\operatorname{Afnd})$, c.f. Remark \ref{rmk:rigidfinitelimits}. By Proposition \ref{prop:AniAlgcompact}, if $X = \operatorname{Spec}(A) \in \operatorname{AniAlg}^\mathrm{op}$ is homotopically of finite presentation, then there is a finite diagram $p: I \to \operatorname{Poly}_K$ such that $A$ is a retract of $\operatorname{colim}p$. Using left exactness of $(-)^\mathrm{an}$, to show that $X^\mathrm{an}$ is representable by a derived rigid space, we reduce to the cases when $A = K[T]$ and $A = K$. This follows from Proposition \ref{prop:affineline}. 

Further, if $A \to A[f^{-1}]$ is a the Zariski localization at some morphism $f:K \to A$, then $Y = \operatorname{Spec}(A[f^{-1}])$ is also homotopically of finite presentation so $Y^{\mathrm{an}}$ is also a derived rigid space. The morphism $Y^{\mathrm{an}} \to X^\mathrm{an}$ is the base-change of $\mathbf{G}_m^\mathrm{an} \to \mathbf{A}^{1, \mathrm{an}}$, which, by direct calculation, is an analytic subspace. Hence $Y^\mathrm{an}$ is an analytic subspace of $X^\mathrm{an}$. Now if $Y \subseteq X$ is a general Zariski open subspace, one can cover $Y$ by affine Zariski opens of $X$ of the form considered above, and we deduce that $Y^\mathrm{an} \subseteq X^\mathrm{an}$ is an analytic subspace; note that $(-)^\mathrm{an}$ preserves effective epimorphisms as it is a left-exact left adjoint. 

In the general case when $X$ is a derived scheme which is locally homotopically of finite presentation, one chooses a cover of $X$ by open affine subspaces $Y_i \subseteq X$ where each $Y_i$ is homotopically of finite presentation. By the above discussion each $Y_i^\mathrm{an} \subseteq X^\mathrm{an}$ is an analytic subspace. Further, as $(-)^\mathrm{an}$ is a left-exact left adjoint, $\coprod_i Y_i^\mathrm{an} \to X^\mathrm{an}$ is an effective epimorphism, so $X^\mathrm{an}$ is a derived rigid space. 
\end{proof}
\begin{example}
One has $(\mathbf{G}^{\mathrm{alg}}_m)^\mathrm{an} = \mathbf{G}^{\mathrm{an}}_m$ and $(\mathbf{P}^{1, \mathrm{alg}})^\mathrm{an} = \mathbf{P}^{1,\mathrm{an}}$.
\end{example}
\section{On classical truncation}\label{sec:classical}
\egapar We recall from \cite{SoorThesis} that every classical rigid space may be regarded as a derived rigid space, and further that the inclusion of classical rigid spaces admits a right adjoint, which is the functor of \emph{classical truncation}. The purpose of this section is to use the sheaf theory of \S\ref{sec:RestrictionExtension} to better understand this functor. This functor is an indispensable tool, because of the relatively inexplicit nature of derived geometry, and also because of the ansatz that ``all the geometry happens on the classical truncation". As an example of the latter we recall the invariance of the underlying topological space in Theorem \ref{thm:topologicalinvariance}.

\egapar Let $\operatorname{AfndAlg}_{\mathrm{cl}}$ denote the category of classical affinoid algebras and define $\operatorname{Afnd}_{\mathrm{cl}} := \operatorname{AfndAlg}_{\mathrm{cl}}^\mathrm{op}$. Then there is an adjunction
 \begin{equation}
    e: \operatorname{Afnd}_{\mathrm{cl}} \leftrightarrows \operatorname{Afnd} : t
 \end{equation}
 in which $e$ is fully-faithful, and $e$ and $t$ are both continuous for the respective rational topologies. Explicitly, $e$ is opposite to the inclusion $\operatorname{AfndAlg}_{\mathrm{cl}} \hookrightarrow \operatorname{AfndAlg}$ and $t$ is opposite to the functor $H^0: \operatorname{AfndAlg} \to \operatorname{AfndAlg}_{\mathrm{cl}}$. This adjunction has a surprising amount of structure.
 \begin{lem}\label{lem:classicaltruncationgeneralnonsense}
There is an adjoint triple $e_! \dashv e^* \dashv e_*$ and an adjoint pair $t_! \dashv t^*$ of functors $e_!, e_*, t^*: \operatorname{Shv}_{\tau_\mathrm{rat}}(\operatorname{Afnd}_\mathrm{cl}) \to \operatorname{Shv}_{\tau_\mathrm{rat}}(\operatorname{Afnd})$ and $t_!, e^*: \operatorname{Shv}_{\tau_\mathrm{rat}}(\operatorname{Afnd}) \to \operatorname{Shv}_{\tau_\mathrm{rat}}(\operatorname{Afnd}_{\mathrm{cl}})$ equipped with canonical equivalences $e_* \simeq t^*$ and $e^* \simeq t_!$. The functors $e_!$  and $e_* \simeq t^*$ are fully-faithful. The functor $t_!$ (resp. $e_!$) restricts to the functor $t$ (resp. $e$) on $\operatorname{Afnd}$ (resp. $\operatorname{Afnd}_{\mathrm{cl}}$).
 \end{lem}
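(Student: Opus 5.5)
The plan is to deduce everything formally from \S\ref{sec:RestrictionExtension}, using the adjunction $e \dashv t$ together with the stated fact that both $e$ and $t$ are continuous for the rational topologies.

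First, since $t$ is continuous, Lemma \ref{lem:continuousfunctor} gives an adjunction $t_! \dashv t^*$ with $t_!\colon \operatorname{Shv}_{\tau_\mathrm{rat}}(\operatorname{Afnd}) \to \operatorname{Shv}_{\tau_\mathrm{rat}}(\operatorname{Afnd}_\mathrm{cl})$. Since $e$ is continuous, Lemma \ref{lem:continuousfunctor} likewise gives $e_! \dashv e^*$. Next, $e \dashv t$ is an adjunction of small $\infty$-categories with $t$ continuous, so by Lemma \ref{lem:adjointfunctorssites2} the left adjoint $e$ is cocontinuous. That lemma then provides $e_* \simeq t^*$ and $e^* \simeq t_!$. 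Lemma \ref{lem:cocontinuousadjunction} gives $e^* \dashv e_*$, and combining these yields the adjoint triple $e_! \dashv e^* \dashv e_*$. (Alternatively, $e^* \simeq t_! \dashv t^* \simeq e_*$.)

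For full faithfulness, $e$ is fully faithful and both continuous and cocontinuous. Both sites admit finite limits: $\operatorname{Afnd}$ by the stability lemma, and $\operatorname{Afnd}_\mathrm{cl}$ classically, since completed tensor products of affinoids are affinoid. Hence Lemma \ref{lem:fullyfaithful} applies and shows that $e_!$ and $e_*\simeq t^*$ are fully faithful. The hypotheses of that lemma's proof really only use that $f_P^*$ and $f_{P,*}$ preserve sheaves, together with Lemma \ref{lem:presheavesFF}. So finite limits play no essential role there, and I do not expect this point to be an obstacle.

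Finally, the claim that $t_!$ restricts to $t$ on representables (and $e_!$ to $e$) is Corollary \ref{cor:lowershriekrepresentable}: $t_!(LX) \simeq L\,t(X)$. Since the rational topologies are subcanonical, $L$ does nothing on representables, so $t_!$ sends $X$ to $t(X)$, and similarly for $e$. Subcanonicity on $\operatorname{Afnd}$ follows from Tate acyclicity in the derived setting (\cite{SoorThesis}), and on $\operatorname{Afnd}_\mathrm{cl}$ from classical Tate acyclicity.

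The only genuinely nonformal input is the continuity of $e$ and $t$, which the text asserts. If it had to be justified, I would use Lemma \ref{lem:continuouscriterion}. For $t$ (that is, $H^0$), a derived rational cover maps to the classical rational cover by definition. For $e$, a classical rational localization $A\langle a/a_0\rangle$ of a classical $A$ has derived rational localization (Lemma \ref{lem:derivedrational}) whose $H^0$ is the classical one. The subtle point is that the derived localization of a classical algebra need not be discrete. One must therefore check that the sieve generated by the image of a classical rational cover under $e$ is covering, and this is where I would invoke Tate-type flatness results from \cite{SoorThesis}. This is the step I expect to be the main obstacle, though the statement treats it as given.
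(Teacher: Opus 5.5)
Your proof is correct and essentially reproduces the paper's argument. It obtains cocontinuity of $e$ via Lemma~\ref{lem:adjointfunctorssites2} (which also gives $e_*\simeq t^*$ and $e^*\simeq t_!$), the adjunctions from Lemmas~\ref{lem:continuousfunctor} and~\ref{lem:cocontinuousadjunction}, full faithfulness from Lemma~\ref{lem:fullyfaithful}, and the behaviour on representables from Corollary~\ref{cor:lowershriekrepresentable} plus subcanonicity.

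Your closing aside is not used in the proof, but two points in it are off. A derived rational localization of a classical affinoid is automatically classical, because derived strongness forces $H^n=0$ for $n<0$. Conversely, $e$ is \emph{not} left exact: the derived self-intersection of the origin in $\mathbf{D}^1$ is $\operatorname{Spec}(K\oplus K[1])$. So Lemma~\ref{lem:continuouscriterion} does not apply to $e$ as stated, and continuity of $e$ is better seen by noting that $e$ carries the \v{C}ech nerve of a classical rational cover to that of the corresponding derived rational cover.
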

\begin{proof}
By Lemma \ref{lem:adjointfunctorssites2} the functor $e$ is also cocontinuous. Using also Lemma \ref{lem:continuousfunctor} and Lemma \ref{lem:cocontinuousadjunction} we obtain the adjoint triple $e_! \dashv e^* \dashv e_*$ and the adjoint pair $t_! \dashv t^*$ together with the canonical equivalences $e_* \simeq t^*$ and $e^* \simeq t_!$. Then Lemma \ref{lem:fullyfaithful} implies that $e_!$ and $e_* \simeq t^*$ are fully-faithful. The rational topologies on $\operatorname{Afnd}_{\mathrm{cl}}$ and $\operatorname{Afnd}$ are both subcanonical, hence Corollary \ref{cor:lowershriekrepresentable} implies that $t_!$ (resp. $e_!$) restricts to $t$ (resp. $e$) on $\operatorname{Afnd}$ (resp. $\operatorname{Afnd}_{\mathrm{cl}}$). 
\end{proof}
\egapar Let $\operatorname{cRig}$ denote the category of classical rigid spaces. An easy consequence of Lemma \ref{lem:classicaltruncationgeneralnonsense} is that the adjunction $e_! \dashv t_!$ restricts to an adjunction $e_!: \operatorname{cRig} \leftrightarrows \operatorname{dRig}: t_!$, and we may view $\operatorname{cRig}$ as a full subcategory of $\operatorname{dRig}$. Further, we observe that the counit of the adjunction gives a canonical morphism $e_!t_!X \to X$ in $\operatorname{dRig}$. We may choose to suppress the functor $e_!$ in our notation.
\begin{defn}
Let $X \in \operatorname{dRig}$. We define its \emph{classical truncation} to be $t_!X \in \operatorname{cRig}$.
\end{defn}
\egapar We recall from \cite[Lemma 3.1.23]{SoorThesis} that for each $X \in \operatorname{dRig}$ the poset of analytic subspaces of $X$ forms a locale, denoted $\operatorname{An}(X)$. The locale $\operatorname{An}(X)$ is locally compact in the sense of \cite[VII.4]{JohnstoneStone} and therefore spatial. In this way one obtains\footnote{Here $\operatorname{pt}$ denotes the functor from locales to topological spaces, which sends the locale to the collection of completely prime filters on it.} a locally spectral topological space $|X| := \operatorname{pt}(\operatorname{An}(X))$ whose open subsets biject with analytic subspaces of $X$. This determines a functor $|\cdot|$ from $\operatorname{dRig}$ to the category of locally spectral topological spaces, with locally spectral morphisms. The following was proved in \cite[\S3.1.3]{SoorThesis}.  
\begin{thm}\label{thm:topologicalinvariance}
\begin{enumerate}[(i)]
    \item Let $X \in \operatorname{Afnd}$. Then the functor $t_!$ identifies the poset of derived rational subdomains of $X$ with the poset of rational subdomains of $t_!X$. 
    \item Let $X \in \operatorname{dRig}$. Then the functor $t_!$ identifies the locale of analytic subspaces of $X$ with the locale of analytic subspaces of $t_!X$. The morphism $e_!t_!X \to X$ induces a homeomorphism $|t_!X| \xrightarrow[]{\sim} |X|$.
\end{enumerate}
\end{thm}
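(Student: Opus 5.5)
The plan is to prove (i) directly from the algebra of derived rational localizations, and then deduce (ii) by gluing, using that analytic subspaces are determined locally by rational subdomains.

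For (i), let $X = \operatorname{Spec}(A)$. The map from derived rational subdomains of $X$ to rational subdomains of $t_!X = \operatorname{Spec}(H^0A)$ sends $\operatorname{Spec}(B)$ to $\operatorname{Spec}(H^0B)$. This lands in rational subdomains by definition, since $H^0A \to H^0B$ is a rational localization.

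\emph{Essential surjectivity.} Given a classical rational localization $H^0A \to H^0A\langle \bar a_1,\dots,\bar a_n/\bar a_0\rangle$, I lift each $\bar a_i$ to a morphism $a_i: K \to A$. This is possible because $K$ is projective in the connective part, so maps $K \to H^0A$ lift along $A \to H^0A$. Lemma \ref{lem:derivedrational} then gives the derived rational localization $A\langle a_1\dots a_n/a_0\rangle$, and its $H^0$ is the classical one. To see this, compute $H^0$ of the derived quotient: it is the underived quotient of $H^0A\langle T\rangle$ by the images $\bar a_0T_i - \bar a_i$, using right exactness of $H^0$ on connective objects.

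\emph{Full faithfulness.} I must show that for derived rational localizations $A \to B$ and $A \to C$, the space $\operatorname{Map}_A(B,C)$ is contractible if $\operatorname{Spec}(H^0C) \subseteq \operatorname{Spec}(H^0B)$, and empty otherwise.
\begin{itemize}
\item Emptiness in the second case is clear by applying $H^0$.
\item For contractibility, first note that derived rational localizations are homotopy epimorphisms, via the presentation of Lemma \ref{lem:derivedrational} and the fact that $K[T]\to K\langle T\rangle$-type maps are homotopy epimorphisms. Hence $\operatorname{Map}_A(B,C)$ is either empty or contractible.
\item For nonemptiness, derived strongness gives $B\hat\otimes^{\mathbf L}_A C$ with $H^0 = H^0B \hat\otimes_{H^0A} H^0C \cong H^0C$, using the classical fact that rational subdomains are closed under intersection. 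Moreover $B\hat\otimes^{\mathbf L}_A C$ is derived strong over $C$, hence its homotopy is $H^*(C)\hat\otimes H^0(\dots)$, so $C \to B\hat\otimes^{\mathbf L}_AC$ is an equivalence. This yields the desired map $B \to C$.
\end{itemize}
I expect this strongness computation, namely that $H^n$ of $B\hat\otimes^{\mathbf L}_AC$ equals $H^nC$, to be the main obstacle, since it requires controlling the derived completed tensor product of derived strong modules. I would cite the flatness of rational localizations on $H^0$-modules (Tate acyclicity/flatness, as in \cite{SoorThesis}) to compute $H^n(A)\hat\otimes^{\mathbf L}_{H^0A}H^0B\hat\otimes^{\mathbf L}_{H^0A} H^0 C$ in terms of $H^0$ only.

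For (ii), observe that analytic subspaces of $X$ are by definition subsheaves which, after pullback to each affinoid $X' \to X$, are unions of rational subdomains. The pullback of a rational subdomain of $X'$ along $t_!$-compatible maps is described by (i). The functor $t_!$ is a left adjoint, so it preserves effective epimorphisms and coproducts, and it preserves monomorphisms onto analytic subspaces. So $U \mapsto t_!U$ defines a map of posets $\operatorname{An}(X) \to \operatorname{An}(t_!X)$.
\begin{itemize}
\item For the case of affinoid $X$, I would use (i): an analytic subspace is determined by the family of rational subdomains it contains, i.e.\ by the rational subdomains $V$ with $V \to X$ factoring through it. Family-level correspondence plus (i) gives a bijection. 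Preservation of joins and finite meets follows from (i) together with the compatibility of $t_!$ with fiber products of rational subdomains established there.
\item For general $X$, I would reduce to the affinoid case by covering $X$ with affinoid analytic subspaces $U_i$ and using that $\operatorname{An}$ satisfies descent: a subspace is determined by its restrictions to each $U_i$ and to the (rationally covered) overlaps. The inverse map $\operatorname{An}(t_!X)\to\operatorname{An}(X)$ is obtained by pulling back along the counit $e_!t_!X \to X$ and gluing.
\end{itemize}
The homeomorphism $|t_!X| \cong |X|$ is then formal: the counit induces an isomorphism of locales, and $|X|$ is defined as $\operatorname{pt}(\operatorname{An}(X))$, so applying $\operatorname{pt}$ to this isomorphism gives the claim.
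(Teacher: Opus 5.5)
The paper does not prove this statement itself; it is quoted from \cite[\S3.1.3]{SoorThesis}, so there is no in-text argument to compare against, and your proposal has to stand on its own. It essentially does. Lifting generators for essential surjectivity, derived strongness for full faithfulness, and reducing (ii) to the affinoid case by gluing is a sound plan. It is also consistent with how the paper later uses the result, in \eqref{eq:liftfunctorofpoints} and in the proof of Lemma \ref{lem:etaleliftinggeneral}. Several steps should be tightened, though none is a real obstruction.

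\textbf{Part (i).} The step you flag as the main obstacle is easier than you expect.
\begin{itemize}
\item If $B = A\langle a_1 \dots a_n/a_0\rangle$, the presentation in Lemma \ref{lem:derivedrational} gives $B \hat{\otimes}^\mathbf{L}_A C \simeq C\langle a_1 \dots a_n/a_0\rangle$.
\item The images of the $a_i$ still generate the unit ideal of $H^0(C)$. So the same Lemma says this is a derived rational localization of $C$, hence derived strong over $C$ by definition. No Tor spectral sequence or flatness computation is needed.
\item This works for an arbitrary derived affinoid $A$-algebra $C$, not only a rational localization. It therefore gives the rational-subdomain case of \eqref{eq:liftfunctorofpoints} directly.
\item Taking $C = B$ shows $B \xrightarrow{\sim} B \hat{\otimes}^\mathbf{L}_A B$, so the multiplication $B \hat{\otimes}^\mathbf{L}_A B \to B$ is an equivalence. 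This is a cleaner proof of the homotopy-epimorphism claim than your sketch.
\item Your sketch is incomplete as stated, because the first map $A \to A \hat{\otimes}^\mathbf{L}_K K\langle T_1,\dots,T_n\rangle$ of the presentation is not a homotopy epimorphism. You would first need $A[T_1,\dots,T_n]/^\mathbf{L}(a_0T_i - a_i) \simeq A[a_0^{-1}]$, so that $B$ is a base change of $K[T] \to K\langle T\rangle$ over a Zariski localization.
\end{itemize}

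\textbf{Part (ii).}
\begin{itemize}
\item $t_!$ preserves monomorphisms and fiber products because $t_! \simeq e^*$ is also a right adjoint (Lemma \ref{lem:classicaltruncationgeneralnonsense}), not because it is a left adjoint.
\item In the affinoid case, state the fact that makes the family-level correspondence work. A finite family of derived rational subdomains generates a $\tau_{\mathrm{rat}}$-covering sieve iff its truncation is a rational cover; this is the definition of $\tau_{\mathrm{rat}}$. It is exactly what gives, for rational $V$, that $t_!V \subseteq t_!U$ implies $V \subseteq U$.
\item For general $X$, the inverse $\operatorname{An}(t_!X) \to \operatorname{An}(X)$ is not obtained by pulling back along $e_!t_!X \to X$; that pullback goes the other way, $\operatorname{An}(X) \to \operatorname{An}(e_!t_!X)$. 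The inverse comes from lifting on each affinoid $U_i$ and gluing. Use injectivity on $\operatorname{An}(U_i)$ to match the lifts on $U_i \cap U_j$: both are analytic subspaces of $U_i$ with the same truncation.
\item The homeomorphism is formal only after checking that pullback along the counit corresponds to $t_!$. For $U \in \operatorname{An}(X)$ one has $t_!(U \times_X e_!t_!X) \simeq t_!U \times_{t_!X} t_!e_!t_!X \simeq t_!U$. This holds because $t_!$ preserves limits and $t_!$ of the counit is an equivalence, by the triangle identity and full faithfulness of $e_!$.
\item Applying the locale statement to both $X$ and $e_!t_!X$ then shows that the pullback map of frames is an isomorphism. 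Applying $\operatorname{pt}$ gives the homeomorphism.
\end{itemize}
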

\egapar Let us explicate a number of useful consequences of Theorem \ref{thm:topologicalinvariance}. Part (i) implies that for every $X \in \operatorname{Afnd}$ and every rational subspace $U \subseteq t_!X$, there is a unique rational subspace $\tilde{U} \subseteq X$ such that $t_! \tilde{U} \simeq U$. Part (ii) implies that for every $X \in \operatorname{dRig}$ and every analytic subspace $U \subseteq t_!X$, there is a unique analytic subspace $\tilde{U} \subseteq X$ such that $t_! \tilde{U} \simeq U$.

Let $f: X \to Y$ be a morphism in $\operatorname{dRig}$ and let $V \subseteq Y$ be an analytic subspace. Then Theorem \ref{thm:topologicalinvariance}(ii) implies that $f$ factors over $V$ if and only if $t_!(f): t_!X \to t_!Y$ factors over $t_!V$. In particular, we obtain the following functor-of-points description of $V$:
\begin{equation}\label{eq:liftfunctorofpoints}
    \operatorname{Map}(X,V) \xrightarrow[]{\sim}   \operatorname{Map}(t_!X, t_!V) \times_{\operatorname{Map}(t_!X, t_!Y)}\operatorname{Map}(X,Y).
\end{equation}
\egapar\label{par:HuberSpace} Let $X \in \operatorname{dRig}$. As in \cite[Remark 3.1.28]{SoorThesis}, using \cite[Corollary 4.5]{huber_continuous_1993} we see that the space $|t_!X|$ is canonically identified with the underlying topological space of the analytic adic space associated to $t_!X$. We may use this fact in combination with Theorem \ref{thm:topologicalinvariance}(ii) implicitly. In particular, this gives us a way to explicitly write down analytic subspaces of $X$. 
\section{The relative spectrum}\label{sec:relativespectrum}
\egapar We recall that we have fixed $\mathscr{V} := D(\operatorname{IndBan}_K)$. We denote by $\operatorname{Pr}^{L, \otimes}_{\mathrm{st}, \mathscr{V}}$ the $\infty$-category of $\mathscr{V}$-linear stable presentable symmetric monoidal $\infty$-categories, with left adjoint symmetric monoidal functors. By \cite[Proposition 3.22]{MathewGalois}, the functor 
\begin{equation}
    \operatorname{QCoh}: \operatorname{Aff}^\mathrm{op} \to \operatorname{Pr}^{L, \otimes}_{\mathrm{st}, \mathscr{V}}: \operatorname{Spec}(A) \mapsto \operatorname{Mod}_A\mathscr{V}
\end{equation}
is a sheaf in the descendable topology. By Proposition \ref{prop:sheavesontopos}, right Kan extension along $\operatorname{Aff}^\mathrm{op} \to \operatorname{Shv}_{\tau_{\mathrm{dri}}}(\operatorname{Aff})^\mathrm{op}$ extends this to a limit-preserving functor 
\begin{equation}\label{eq:QCohdefn1}
    \operatorname{QCoh}: \operatorname{Shv}_{\tau_{\mathrm{dri}}}(\operatorname{Aff})^\mathrm{op} \to \operatorname{Pr}^{L, \otimes}_{\mathrm{st}, \mathscr{V}}. 
\end{equation}
By Lemma \ref{lem:NCdescent}, the functor
\begin{equation}
    \operatorname{QAlg}: \operatorname{Aff}^\mathrm{op} \to \operatorname{Pr}^{L, \otimes}: \operatorname{Spec}(A) \mapsto \operatorname{DAlg}_A\mathscr{V}
\end{equation}
is a sheaf in the descendable topology. By Proposition \ref{prop:sheavesontopos}, right Kan extension along $\operatorname{Aff}^\mathrm{op} \to \operatorname{Shv}_{\tau_{\mathrm{dri}}}(\operatorname{Aff})^\mathrm{op}$ extends this to a limit-preserving functor
\begin{equation}
     \operatorname{QAlg}: \operatorname{Shv}_{\tau_{\mathrm{dri}}}(\operatorname{Aff})^\mathrm{op} \to \operatorname{Pr}^{L, \otimes}.
\end{equation}
\egapar\label{par:PsiEquivalencePar} For the sake of this paragraph only, let us denote the functor of \eqref{eq:QCohdefn1} by $\operatorname{QCoh}^\mathscr{V}$. There are two (a priori different) ways to define quasi-coherent sheaves on $\operatorname{Shv}_{\tau_{\mathrm{rat}}}(\operatorname{Afnd})$. Firstly, we may pre-compose the functor $\operatorname{QCoh}^\mathscr{V}$ with  $i_!: \operatorname{Shv}_{\tau_\mathrm{rat}}(\mathrm{Afnd}) \to \operatorname{Shv}_{\tau_\mathrm{dri}}(\mathrm{Aff})$. Secondly, we can consider the continuous functor $i: \operatorname{Afnd} \to \operatorname{Aff}$ and then right Kan extend $\operatorname{QCoh} \circ i$ along $\operatorname{Afnd}^\mathrm{op} \to \operatorname{Shv}_{\tau_\mathrm{rat}}(\mathrm{Afnd})^\mathrm{op}$ to obtain a limit-preserving functor 
\begin{equation}\label{eq:Qcohanalytic}
    \operatorname{QCoh}: \operatorname{Shv}_{\tau_\mathrm{rat}}(\operatorname{Afnd})^\mathrm{op} \to \operatorname{Pr}^{L, \otimes}_{\mathrm{st}, \mathscr{V}}.
\end{equation}
Thankfully, Lemma \ref{lem:rightKanextensionLemma} gives a canonical equivalence 
\begin{equation}\label{eq:Psiequivalence}
   \Psi : \operatorname{QCoh}^\mathscr{V} \circ i_! \xrightarrow[]{\sim} \operatorname{QCoh} 
\end{equation}
of functors $\operatorname{Shv}_{\tau_\mathrm{rat}}(\operatorname{Afnd})^\mathrm{op} \to \operatorname{Pr}^{L, \otimes}_{\mathrm{st}, \mathscr{V}}$, where on the right $\operatorname{QCoh}$ is defined as in \eqref{eq:Qcohanalytic}, so that the two definitions are in agreement. Because of this we will usually suppress the functor $\Psi$ in our notations, except when it is necessary to avoid confusion. An entirely similar discussion holds with $\operatorname{QAlg}$ in place of $\operatorname{QCoh}$.
\egapar For a morphism $f: X \to Y$ in $\operatorname{Shv}_{\tau_{\mathrm{dri}}}(\operatorname{Aff})$ we write $f^*$ for the pullback functor $f^* : \operatorname{QCoh}(Y) \to \operatorname{QCoh}(X)$ and $f_*$ for its right adjoint. We write $1_X \in \operatorname{QCoh}(X)$ for the unit object, the ``structure sheaf". 

For a morphism $f: X \to Y$ in $\operatorname{Shv}_{\tau_{\mathrm{dri}}}(\operatorname{Aff})$ we write $f^*$ for the pullback functor $f^* : \operatorname{QAlg}(Y) \to \operatorname{QAlg}(X)$ and $f_*$ for its right adjoint. By functoriality of right Kan extension we obtain natural transformations $\operatorname{LSym} : \operatorname{QCoh} \to \operatorname{QAlg}$ and $\operatorname{oblv}: \operatorname{QAlg} \to \operatorname{QCoh}$ of functors $\operatorname{Shv}_{\tau_{\mathrm{dri}}}(\operatorname{Aff}) \to \operatorname{Pr}^{L}$. Concretely, this means that for every such $f$ there are commutative squares
\begin{equation}
\begin{aligned}
\begin{tikzcd}
	{\operatorname{QCoh}(Y)} & {\operatorname{QAlg}(Y)} \\
	{\operatorname{QCoh}(X)} & {\operatorname{QAlg}(X)}
	\arrow["{\operatorname{LSym}_Y}", from=1-1, to=1-2]
	\arrow["{f^*}"', from=1-1, to=2-1]
	\arrow["{f^*}", from=1-2, to=2-2]
	\arrow["{\operatorname{LSym}_X}"', from=2-1, to=2-2]
\end{tikzcd} && \text{and} &&
\begin{tikzcd}
	{\operatorname{QAlg}(Y)} & {\operatorname{QCoh}(Y)} \\
	{\operatorname{QAlg}(X)} & {\operatorname{QCoh}(X)}
	\arrow["{\operatorname{oblv}_Y}", from=1-1, to=1-2]
	\arrow["{f^*}"', from=1-1, to=2-1]
	\arrow["{f^*}", from=1-2, to=2-2]
	\arrow["{\mathrm{oblv}_X}"', from=2-1, to=2-2]
\end{tikzcd}
\end{aligned}
\end{equation}
Furthermore, for every $X$ the functor $\operatorname{oblv}_X$ is canonically symmetric monoidal, for the coCartesian symmetric monoidal structure on $\operatorname{QAlg}(X)$ and the symmetric monoidal structure $\hat{\otimes}_X$ on $\operatorname{QCoh}(X)$. Therefore, we denote the coproduct on $\operatorname{QAlg}(X)$ by $\hat{\otimes}_X$. By passing to right adjoints we obtain also a commutative square 
\begin{equation}
\begin{tikzcd}
	{\operatorname{QAlg}(X)} & {\operatorname{QCoh}(X)} \\
	{\operatorname{QAlg}(Y)} & {\operatorname{QCoh}(Y)}
	\arrow["{\operatorname{oblv}_X}", from=1-1, to=1-2]
	\arrow["{f_*}"', from=1-1, to=2-1]
	\arrow["{f_*}", from=1-2, to=2-2]
	\arrow["{\operatorname{oblv}_Y}"', from=2-1, to=2-2]
\end{tikzcd}
\end{equation}
In an entirely similar manner, functoriality of right Kan extension gives a natural transformation $\Theta: \operatorname{QAlg} \to \operatorname{CAlg}(\operatorname{QCoh}(-))$ of functors $\operatorname{Shv}_{\tau_{\mathrm{dri}}}(\operatorname{Aff}) \to \operatorname{Pr}^{L, \otimes}$, so that we obtain a commutative square
\begin{equation}
\begin{tikzcd}
	{\operatorname{QAlg}(Y)} & {\operatorname{CAlg}(\operatorname{QCoh}(Y))} \\
	{\operatorname{QAlg}(X)} & {\operatorname{CAlg}(\operatorname{QCoh}(X))}
	\arrow["{\Theta_Y}", from=1-1, to=1-2]
	\arrow["{f^*}"', from=1-1, to=2-1]
	\arrow["{f^*}", from=1-2, to=2-2]
	\arrow["{\Theta_X}"', from=2-1, to=2-2]
\end{tikzcd}
\end{equation}
where the right vertical arrow is induced by the canonical symmetric-monoidal structure on $f^*: \operatorname{QCoh}(Y) \to \operatorname{QCoh}(X)$. For $A \in \operatorname{QAlg}(X)$ we define $\operatorname{Mod}_A \operatorname{QCoh}(X) := \operatorname{Mod}_{\Theta_X(A)} \operatorname{QCoh}(X)$. 
\begin{defn}\label{defn:relativespec}
\begin{enumerate}[(i)]
    \item Let $X \in \operatorname{Shv}_{\tau_{\mathrm{dri}}}(\operatorname{Aff})$ and let $A \in \operatorname{QAlg}(X)$. We define the presheaf $\underline{\operatorname{Spec}}_XA$ on $\mathrm{Aff}_{/X}$ by 
\begin{equation}\label{eq:SpecVfunctorofpoints}
  (\underline{\operatorname{Spec}}_X^\mathscr{V}A)(Y) := \operatorname{Map}_{\operatorname{QAlg}(X)}(A, f_*1_Y)
\end{equation}
for every $[f:Y \to X] \in \mathrm{Aff}_{/X}$. 
\item Let $X \in \operatorname{Shv}_{\tau_\mathrm{rat}}(\operatorname{Afnd})$ and let $A \in \operatorname{QAlg}(X)$. We define the presheaf $\underline{\operatorname{Spec}}^\mathrm{an}_XA$ on $\operatorname{Afnd}_{/X}$ by
\begin{equation}\label{eq:Specanfunctorofpoints}
    (\underline{\operatorname{Spec}}_X^\mathrm{an}A)(Y) := \operatorname{Map}_{\operatorname{QAlg}(X)}(A, f_*1_Y)
\end{equation}
for every $[f:Y \to X] \in \mathrm{Afnd}_{/X}$.
\end{enumerate}
\end{defn}
\egapar For the purposes of the next Lemmas we recall that, by Proposition \ref{prop:Slicetopoifunctoriality}, for every $X \in \operatorname{Shv}_{\tau_{\mathrm{dri}}}(\operatorname{Aff})$ there is a canonical equivalence $\operatorname{Shv}_{\tau_{\mathrm{dri}}}(\operatorname{Aff}_{/X}) \xrightarrow[]{\sim}  \operatorname{Shv}_{\tau_{\mathrm{dri}}}(\operatorname{Aff})_{/X}$.
\begin{lem}\label{lem:RelativespecLemma1}
\begin{enumerate}[(i)]
    \item For any $X \in \operatorname{Shv}_{\tau_{\mathrm{dri}}}(\operatorname{Aff})$ and any $A \in \operatorname{QAlg}(X)$, the presheaf $ \underline{\operatorname{Spec}}_X^\mathscr{V}A$ is a sheaf in the topology $\tau_{\mathrm{dri}}$ on $\mathrm{Aff}_{/X}$.  
    \item For any $X \in \operatorname{Shv}_{\tau_{\mathrm{rat}}}(\operatorname{Afnd})$ and any $A \in \operatorname{QAlg}(X)$, the presheaf $ \underline{\operatorname{Spec}}_X^\mathrm{an}A$ is a sheaf in the topology $\tau_{\mathrm{rat}}$ on $\mathrm{Afnd}_{/X}$.  
    \item With notations as in (i). If $f: X^\prime \to X$ is any morphism in $\operatorname{Shv}_{\tau_\mathrm{dri}}(\operatorname{Aff})$ then there is a canonical equivalence $\big(\underline{\operatorname{Spec}}_X^\mathscr{V}A\big)\times_X X^\prime  \simeq \underline{\operatorname{Spec}}_{X^\prime}^\mathscr{V}f^*A$. 
    \item With notations as in (ii). If $f: X^\prime \to X$ is any morphism in $\operatorname{Shv}_{\tau_\mathrm{rat}}(\operatorname{Afnd})$ then there is a canonical equivalence $\big(\underline{\operatorname{Spec}}_X^\mathrm{an}A\big)\times_X X^\prime  \simeq \underline{\operatorname{Spec}}_{X^\prime}^\mathrm{an}f^*A$. 
    \item For any $X \in \operatorname{Shv}_{\tau_{\mathrm{rat}}}(\operatorname{Afnd})$ and any $A \in \operatorname{QAlg}(X)$ there is a canonical equivalence $\underline{\operatorname{Spec}}_X^\mathrm{an}A \simeq i^*\underline{\operatorname{Spec}}_{i_!X}^\mathscr{V}A$ over $X$.
    \item For any $X \in \operatorname{Shv}_{\tau_{\mathrm{dri}}}(\operatorname{Aff})$ and any $A, B \in \operatorname{QAlg}(X)$, there is a canonical equivalence $\underline{\operatorname{Spec}}^\mathscr{V}_X(A \hat\otimes_X B) \simeq \underline{\operatorname{Spec}}^\mathscr{V}_X(A) \times_X \underline{\operatorname{Spec}}^\mathscr{V}_X(B) $.
    \item For any $X \in \operatorname{Shv}_{\tau_{\mathrm{rat}}}(\operatorname{Afnd})$ and any $A, B \in \operatorname{QAlg}(X)$, there is a canonical equivalence $\underline{\operatorname{Spec}}^\mathrm{an}_X(A \hat\otimes_X B) \simeq \underline{\operatorname{Spec}}^\mathrm{an}_X(A) \times_X \underline{\operatorname{Spec}}^\mathrm{an}_X(B) $.
\end{enumerate}
\end{lem}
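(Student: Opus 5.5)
The plan is to rewrite the functor of points of $\underline{\operatorname{Spec}}_XA$ so that it becomes a mapping space in a limit of categories, and then read off all seven parts from the descent properties of $\operatorname{QAlg}$. For $[f:Y \to X]$ the adjunction $f^* \dashv f_*$ on $\operatorname{QAlg}$ gives
\begin{equation*}
(\underline{\operatorname{Spec}}_XA)(Y) \simeq \operatorname{Map}_{\operatorname{QAlg}(Y)}(f^*A, 1_Y).
\end{equation*}
Here $1_Y$ is the initial object of $\operatorname{QAlg}(Y)$, which is the unit for $\hat{\otimes}_Y$. Note that $\operatorname{QAlg}(Y) = \operatorname{DAlg}_B\mathscr{V}$ when $Y = \operatorname{Spec}(B)$. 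In this form, $Y \mapsto (f^*A, 1_Y)$ is a section of the functor $\operatorname{QAlg}(-)^{\simeq}$ paired with its unit section.

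For (i), let $\{Y_j \to Y\}$ be a $\tau_{\mathrm{dri}}$-covering sieve. Since $\tau_{\mathrm{dri}} \subseteq \tau_{\mathrm{desc}}$, Lemma \ref{lem:NCdescent} shows that $\operatorname{QAlg}(Y) \simeq \lim_{Y_j} \operatorname{QAlg}(Y_j)$. Mapping spaces in a limit of $\infty$-categories are the limits of the mapping spaces. Both $f^*A$ and $1$ are compatible along this equivalence, by pullback functoriality and because the $f^*$ are symmetric monoidal. Hence $\operatorname{Map}(f^*A, 1_Y) \simeq \lim_j \operatorname{Map}(f_j^*A, 1_{Y_j})$, which is the sheaf condition. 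Part (ii) is identical, using that $\operatorname{QAlg}\circ i$ is a $\tau_{\mathrm{rat}}$-sheaf; this follows because $i$ is continuous into $\tau_{\mathrm{desc}}$. I expect the main technical point here to be a small-cardinality issue: a covering sieve need not be generated by a finite family. I will handle it by reducing to covering sieves generated by finite families (descendable, respectively rational, covers), checking the sheaf condition on the Čech nerves of those, and using that the sieve condition is equivalent to the Čech condition for generating families.

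For (iii) and (iv), both sides are sheaves on $\operatorname{Aff}_{/X'}$, respectively $\operatorname{Afnd}_{/X'}$; this uses Proposition \ref{prop:Slicetopoifunctoriality} to view objects over $X'$. Their values on $[g:Y\to X']$ are $\operatorname{Map}((fg)^*A,1_Y)$ and $\operatorname{Map}(g^*f^*A,1_Y)$ respectively. These agree via $(fg)^*\simeq g^*f^*$, naturally in $Y$.

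For (v), I evaluate $i^*\underline{\operatorname{Spec}}^{\mathscr{V}}_{i_!X}A$ on an affinoid $Y\to X$. By Corollary \ref{cor:lowershriekrepresentable}, $i^*$ is restriction along $i$ for sheaves, since $i$ is continuous. The value is therefore $\operatorname{Map}((i_!f)^*A,1_{iY})$ computed in $\operatorname{QAlg}^{\mathscr{V}}(i_!Y)=\operatorname{DAlg}_{\mathscr{O}(Y)}\mathscr{V}$. Under the equivalence $\Psi$ of \S\ref{par:PsiEquivalencePar} (in its $\operatorname{QAlg}$ version), this is the defining value of $\underline{\operatorname{Spec}}^{\mathrm{an}}_XA$. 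The slicing $\operatorname{Shv}(\operatorname{Afnd})_{/X}$ versus $\operatorname{Shv}(\operatorname{Aff})_{/i_!X}$ is compatible because $i_!$ is left exact and fully faithful (Proposition \ref{prop:analyticinclusion}).

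For (vi) and (vii), I use that $\hat{\otimes}_X$ is the coproduct in $\operatorname{QAlg}(X)$ and that $f^*$ preserves coproducts. This gives
\begin{equation*}
\operatorname{Map}(f^*A\hat{\otimes}_Y f^*B,1_Y)\simeq\operatorname{Map}(f^*A,1_Y)\times\operatorname{Map}(f^*B,1_Y).
\end{equation*}
The right-hand side is the value of the fiber product over $X$ at $[Y\to X]$.
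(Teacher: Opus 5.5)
Your proposal is correct. For (iii)--(vii) it is the paper's argument:
\begin{itemize}
\item for (iii)/(iv), the slice identification of Proposition~\ref{prop:Slicetopoifunctoriality} plus adjunction and $(fg)^*\simeq g^*f^*$;
\item for (v), that $i^*$ is restriction along the continuous functor $i$;
\item for (vi)/(vii), that $\hat\otimes_X$ is the coproduct in $\operatorname{QAlg}(X)$.
\end{itemize}
One citation in (v) should be corrected. That $i^*$ is restriction along $i$ is just the definition of continuity: $i^*=L\circ i_P^*\circ j$, and $i_P^*\circ j$ already lands in sheaves. Corollary~\ref{cor:lowershriekrepresentable} concerns $i_!$ on representables and is not what you need.

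Where you differ is (i)/(ii). The paper stays on the pushforward side. It uses subcanonicity through Lemma~\ref{lem:tautologicallimitlemma} to get $1_Y\xrightarrow{\sim}\lim_{Y'}g_{Y',*}1_{Y'}$ in $\operatorname{QAlg}(Y)$, and then uses that $\operatorname{Map}_{\operatorname{QAlg}(X)}(A,f_*(-))$ preserves limits. You instead pass by adjunction to $\operatorname{Map}_{\operatorname{QAlg}(Y)}(f^*A,1_Y)$. You then use the categorical descent of Lemma~\ref{lem:NCdescent} together with the description of mapping spaces in a limit of $\infty$-categories.

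These are dual forms of the same input. Since your target is always $1_Y$, you only use full faithfulness of $\operatorname{QAlg}(Y)\to\lim\operatorname{QAlg}(Y')$ on maps into the unit. By adjunction, that is exactly the paper's limit formula.

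The paper's route is shorter because subcanonicity lets it handle an arbitrary covering sieve at once. The reduction to \v{C}ech nerves of finite descendable families is absorbed into the claim in Lemma~\ref{lem:NCdescent} that $\tau_{\mathrm{desc}}$ is subcanonical, although that lemma's proof only treats \v{C}ech nerves. Your route makes this reduction explicit. It is a matter of finite generation of covering sieves rather than cardinality. To match the \v{C}ech nerve of the family with that of $A\to\prod_j A_j$, you also need the standard fact that $\operatorname{QAlg}$ sends finite coproducts in $\operatorname{Aff}$ to products.

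In (ii) your version is a bit more careful than the paper's ``identical to (i)''. Applied to $(\operatorname{Afnd},\tau_{\mathrm{rat}})$, Lemma~\ref{lem:tautologicallimitlemma} only gives the limit in $\operatorname{AfndAlg}$. For arbitrary $A\in\operatorname{QAlg}(X)$ one needs it in $\operatorname{DAlg}_{\mathscr{O}(Y)}\mathscr{V}$. Your appeal to $\operatorname{QAlg}\circ i$ being a $\tau_{\mathrm{rat}}$-sheaf supplies exactly this, via continuity of $i$ and the cofinality argument of Lemma~\ref{lem:continuouscriterion}, which works for any target category.
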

\begin{proof}
(i): Let $Y \in \mathrm{Aff}_{/X}$ and let $\mathscr{C}^0_{/Y}$ be a $\tau_{\mathrm{dri}}$-covering sieve on $Y$. By Lemma \ref{lem:tautologicallimitlemma}, the canonical morphism $1_Y \to \lim_{Y^\prime \in \mathscr{C}^0_{/Y}} g_{Y^\prime,*}1_{Y^\prime}$ is an equivalence in $\operatorname{QAlg}(Y)$; here $g_{Y^\prime} : Y^\prime \to Y$ is the given morphism. Since the functor $\operatorname{Map}_{\operatorname{QAlg}(X)}(A, f_*(-))$ commutes with limits, we conclude. (ii): The proof is identical to (i). (iii) and (iv): This follows from Proposition \ref{prop:Slicetopoifunctoriality} and adjunction. (v): This is clear from \eqref{eq:SpecVfunctorofpoints} and \eqref{eq:Specanfunctorofpoints}, since $i^*$ is the restriction along $i: \operatorname{Afnd} \to \operatorname{Aff}$. (vi) and (vii): This is clear by looking at the functor of points in \eqref{eq:SpecVfunctorofpoints} and \eqref{eq:Specanfunctorofpoints}, recalling that $\hat{\otimes}_X$ is the coproduct on $\operatorname{QAlg}(X)$.
\end{proof}
\begin{lem}\label{lem:representable}
Let $f: X \to Y$ be a representable morphism in $\operatorname{Shv}_{\tau_{\mathrm{dri}}}(\operatorname{Aff})$. The the functor $f_*$ is compatible with base-change, satisfies the projection formula, preserves colimits, and is conservative.
\end{lem}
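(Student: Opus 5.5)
The plan is to reduce everything to the affine case $f:\operatorname{Spec}(B)\to\operatorname{Spec}(A)$, where $f_*$ is the forgetful functor $\operatorname{Mod}_B\mathscr{V}\to\operatorname{Mod}_A\mathscr{V}$. All four properties hold there by standard module theory.

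\begin{itemize}
\item It preserves all colimits, being the forgetful functor of modules over an algebra, i.e.\ monadic with colimit-preserving monad $B\hat\otimes^\mathbf{L}_A(-)$.
\item It is conservative.
\item It satisfies the projection formula $f_*(M)\hat\otimes_A N\simeq f_*(M\hat\otimes_B (B\hat\otimes_A N))$, since both sides are $M\hat\otimes_A N$.
\item It satisfies base change: for $A\to A'$ with $B' = B\hat\otimes^\mathbf{L}_A A'$, the Beck--Chevalley map $A'\hat\otimes_A f_*M\to f'_*(B'\hat\otimes_B M)$ is an equivalence by associativity of relative tensor products. Here the pullback in $\operatorname{Aff}$ of $\operatorname{Spec}(B)\to\operatorname{Spec}(A)$ along $\operatorname{Spec}(A')\to\operatorname{Spec}(A)$ is $\operatorname{Spec}(B')$, the coproduct in $\operatorname{DAlg}_A\mathscr{V}$ (Construction~\ref{construction:DAlgmodules}).
\end{itemize}

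For general $Y$, write $Y\simeq\operatorname{colim}_{\operatorname{Spec}(A)\to Y}\operatorname{Spec}(A)$, so that $\operatorname{QCoh}(Y)\simeq\lim_{\operatorname{Spec}(A)\to Y}\operatorname{Mod}_A\mathscr{V}$ as limits in $\operatorname{Pr}^L$. Representability gives $X\simeq\operatorname{colim}X_A$, using that colimits in the topos are universal. Here $X_A := X\times_Y\operatorname{Spec}(A)=\operatorname{Spec}(B_A)$, so $\operatorname{QCoh}(X)\simeq\lim\operatorname{Mod}_{B_A}\mathscr{V}$, and $f^*$ is the limit of the affine pullbacks $f_A^*$.

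The key step is to identify the right adjoint $f_*$ with the limit of the $f_{A,*}$. Since each square formed by the $f_A^*$ and the transition pullbacks is right adjointable (this is affine base change as above), a standard result on limits of adjointable diagrams in $\operatorname{Pr}^L$ (e.g.\ \cite[Corollary 4.7.4.18]{HigherAlgebra}-type statements) shows two things. First, $f_*$ is computed componentwise, i.e.\ $(f_*M)|_{\operatorname{Spec}(A)}\simeq f_{A,*}(M|_{X_A})$. Second, the base change maps to the evaluation functors are equivalences.

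All four properties then follow.

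\begin{itemize}
\item Colimit preservation and conservativity hold because colimits in the limit category are computed componentwise when the transition functors preserve colimits (they are left adjoints), and equivalences are detected componentwise.
\item The projection formula holds because the evaluation functors are symmetric monoidal and jointly conservative, so it reduces to the affine case.
\item Base change along an arbitrary $g:Y'\to Y$ in $\operatorname{Shv}_{\tau_{\mathrm{dri}}}(\operatorname{Aff})$ holds because the map $g^*f_*\to f'_*g'^*$ can be tested after restriction to every $\operatorname{Spec}(A')\to Y'$. There both sides become $f_{A',*}$ applied to the restriction of $M$ to $X_{A'}$, using componentwise computation over $Y$ and over $Y'$ (note $f'$ is again representable).
\end{itemize}

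The main obstacle is the adjointability bookkeeping: showing that the right adjoint of a limit of left adjoints with right-adjointable transition squares is the limit of the right adjoints. This holds in the stated generality because the indexing diagram $\operatorname{Aff}_{/Y}$ is large-ish and the limit is formed via right Kan extension. I would cite Lurie's result on limits of adjointable diagrams, applied to the diagram $\operatorname{Aff}_{/Y}^{\mathrm{op}}\to\operatorname{Fun}(\Delta^1,\operatorname{Pr}^L)$ sending $\operatorname{Spec}(A)$ to $f_A^*$. The remaining checks are routine.
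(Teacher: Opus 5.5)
Your proposal is correct and follows the paper's approach: reduce to the affine case $\operatorname{Spec}(B)\to\operatorname{Spec}(A)$, where $f_*$ is the forgetful functor and all four properties are immediate. Your adjointable-limit argument spells out the ``local on the target'' step that the paper only asserts; in particular, it gives base change along each $\operatorname{Spec}(A)\to Y$ directly, which is what makes that locality claim non-circular.
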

\begin{proof}
The question of whether $f_*$ is compatible with base-change, satisfies the projection formula, preserves colimits and is conservative is local on the target. Hence, we reduce to the case when $X = \operatorname{Spec}(B)$ and $Y = \operatorname{Spec}(A)$ are both affine, in which case all the assertions are clear. 
\end{proof}
\begin{lem}\label{lem:RelSpecRepresent}
Let $X \in \operatorname{Shv}_{\tau_{\mathrm{dri}}}(\operatorname{Aff})$ and $A \in \operatorname{QAlg}(X)$. Denote by $p: \underline{\operatorname{Spec}}^\mathscr{V}_X A \to X$ the structure morphism. Then $p$ is representable and there is a canonical equivalence $A \xrightarrow[]{\sim} p_*1 $ in $\operatorname{QAlg}(X)$ and a canonical equivalence $\operatorname{QCoh}(\underline{\operatorname{Spec}}^\mathscr{V}_X A) \simeq \operatorname{Mod}_A \operatorname{QCoh}(X)$ in $\operatorname{Pr}^{L, \otimes}_{\mathrm{st}, \mathscr{V}}$.
\end{lem}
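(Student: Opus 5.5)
The plan is to reduce to the affine case using the base-change compatibility of Lemma \ref{lem:RelativespecLemma1}(iii), together with the local criterion for representability in Proposition \ref{prop:NCrepresentable}.

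\emph{Affine case.} Suppose first that $X = \operatorname{Spec}(B)$ is affine, so that $\operatorname{QAlg}(X) = \operatorname{DAlg}_B\mathscr{V}$ and $A$ is a $B$-algebra. For $[f: Y = \operatorname{Spec}(C) \to X] \in \operatorname{Aff}_{/X}$ the object $f_*1_Y$ is $C$ regarded as a $B$-algebra. The functor of points \eqref{eq:SpecVfunctorofpoints} is then
\[
Y \mapsto \operatorname{Map}_{\operatorname{DAlg}_B\mathscr{V}}(A, C),
\]
which is the functor represented by $\operatorname{Spec}(A) \in \operatorname{Aff}_{/X}$. Since $\tau_{\mathrm{dri}}$ is contained in $\tau_{\mathrm{desc}}$, it is subcanonical by Lemma \ref{lem:NCdescent}, so this representable presheaf is already a sheaf. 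Hence $\underline{\operatorname{Spec}}^\mathscr{V}_X A \simeq \operatorname{Spec}(A)$, and so $p_*1 \simeq A$ and $\operatorname{QCoh}(\operatorname{Spec}(A)) = \operatorname{Mod}_A\mathscr{V} \simeq \operatorname{Mod}_A \operatorname{Mod}_B\mathscr{V}$. The last equivalence is the standard identification of modules over a $B$-algebra in $\operatorname{Mod}_B$ with modules in $\mathscr{V}$.

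\emph{Representability in general.} For a general $X$ and any $Y = \operatorname{Spec}(C) \to X$, Lemma \ref{lem:RelativespecLemma1}(iii) gives
\[
\underline{\operatorname{Spec}}^\mathscr{V}_X A \times_X Y \simeq \underline{\operatorname{Spec}}^\mathscr{V}_Y f^*A,
\]
and the right-hand side is affine by the affine case. This is exactly condition (i) of Proposition \ref{prop:NCrepresentable}, so $p$ is representable.

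\emph{Globalising the identifications.} Write $X \simeq \operatorname{colim}_{Y \in \operatorname{Aff}_{/X}} Y$. By the limit-preservation of $\operatorname{QCoh}$ and $\operatorname{QAlg}$ (their definition by right Kan extension, Proposition \ref{prop:sheavesontopos}), we have $\operatorname{QCoh}(X) \simeq \lim_Y \operatorname{Mod}_{C_Y}\mathscr{V}$. Because colimits in the topos are universal, $\underline{\operatorname{Spec}}^\mathscr{V}_X A \simeq \operatorname{colim}_Y \operatorname{Spec}(A_Y)$ with $A_Y = f_Y^*A$. Therefore
\[
\operatorname{QCoh}(\underline{\operatorname{Spec}}^\mathscr{V}_X A) \simeq \lim_Y \operatorname{Mod}_{A_Y}\mathscr{V}.
\]
It remains to identify this with $\operatorname{Mod}_A\operatorname{QCoh}(X) = \lim_Y \operatorname{Mod}_{A_Y}\operatorname{Mod}_{C_Y}\mathscr{V}$. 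This amounts to the claim that $\operatorname{Mod}_{(-)}$ commutes with the limit of symmetric monoidal categories when the algebra is compatible across the diagram, and that the affine equivalences are natural in $Y$.

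\emph{Main obstacle.} I expect this compatibility to be the main difficulty. I would address it by viewing the pairs $(\mathscr{C}, R \in \operatorname{CAlg}(\mathscr{C}))$ as objects of a category over $\operatorname{Pr}^{L,\otimes}$, and using that $\operatorname{Mod}$ is functorial and preserves limits there (as in Lurie's treatment in \emph{Higher Algebra}). In addition, the pullback functors $\operatorname{Mod}_{A_Y} \to \operatorname{Mod}_{A_{Y'}}$ are base change along $A_Y \to A_{Y'} = C_{Y'} \otimes_{C_Y} A_Y$. Finally, $p_*1 \simeq A$ follows from Lemma \ref{lem:representable}: $p_*$ commutes with base change, so it suffices to check the equivalence after pullback to each affine $Y$, where it is the affine statement. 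The canonical map $A \to p_*1$ is the unit corresponding to the identity point in \eqref{eq:SpecVfunctorofpoints}.
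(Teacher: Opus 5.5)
Your argument is correct. For representability and for $A \xrightarrow{\sim} p_*1$ it coincides with the paper's: you reduce to affine $X$ via Lemma \ref{lem:RelativespecLemma1}(iii), and via base change for $p_*$ from Lemma \ref{lem:representable}, respectively.

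The genuine difference is the identification $\operatorname{QCoh}(\underline{\operatorname{Spec}}^\mathscr{V}_X A) \simeq \operatorname{Mod}_A\operatorname{QCoh}(X)$.

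\emph{The paper's route.} It proves $A \simeq p_*1$ first and uses it as input. By Lemma \ref{lem:representable}, $p_*$ is conservative, colimit-preserving and satisfies the projection formula. So Barr--Beck--Lurie makes $p^* \dashv p_*$ monadic with monad $p_*p^*(-) \simeq p_*1 \hat{\otimes}_X (-)$. This gives $\operatorname{QCoh}(\underline{\operatorname{Spec}}^\mathscr{V}_X A) \simeq \operatorname{Mod}_{p_*1}\operatorname{QCoh}(X) = \operatorname{Mod}_A\operatorname{QCoh}(X)$ in one global step.

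\emph{Your route.} You glue the affine equivalences along $X \simeq \operatorname{colim}_Y Y$. This forces you to know two things:
\begin{enumerate}[(i)]
\item $(\mathscr{C}, R) \mapsto \operatorname{Mod}_R\mathscr{C}$ commutes with limits of symmetric monoidal categories;
\item the equivalences $\operatorname{Mod}_{A_Y}\operatorname{Mod}_{C_Y}\mathscr{V} \simeq \operatorname{Mod}_{A_Y}\mathscr{V}$ are natural in $Y$ under base change.
\end{enumerate}
Both are true and standard, but they are exactly the coherence bookkeeping the paper avoids. You already invoke Lemma \ref{lem:representable} at the end, so the shorter monadicity argument was available with no new input.

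\emph{What your route buys.} Since it is a limit of symmetric monoidal equivalences, it produces the equivalence in $\operatorname{Pr}^{L,\otimes}_{\mathrm{st},\mathscr{V}}$ directly. It also does not need conservativity of $p_*$. Barr--Beck by itself gives only an equivalence of underlying categories. Upgrading it needs two further standard remarks, both of which the paper leaves implicit:
\begin{enumerate}[(i)]
\item identifying $p_*p^*$ with $p_*1 \hat{\otimes}_X(-)$ as monads, not merely as functors;
\item making the comparison symmetric monoidal.
\end{enumerate}
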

\begin{proof}
To show that $p$ is representable, we may assume that $X = \operatorname{Spec}(B)$ is affine and $A \in \operatorname{DAlg}_{B}\mathscr{V}$. Then $\underline{\operatorname{Spec}}^\mathscr{V}_X A = \operatorname{Spec}(A)$ and $p$ is the canonical morphism $\operatorname{Spec}(A) \to \operatorname{Spec}(B)$. For the second part, for brevity we put $Y = \underline{\operatorname{Spec}}^\mathscr{V}_X A$.  The canonical morphism $A \to p_*1_{Y} $ corresponds under \eqref{eq:SpecVfunctorofpoints} to the identity morphism. By Lemma \ref{lem:representable}, $p_*$ is compatible with base-change, hence to prove that $A \to p_*1_{Y} $ is an equivalence we reduce to the case when $X = \operatorname{Spec}(B)$ is affine and $A \in \operatorname{DAlg}_B\mathscr{V}$. In this case, $\underline{\operatorname{Spec}}_X^\mathscr{V}A = \operatorname{Spec}(A)$ and $A \to p_*1_{Y} $ is just the identity $A \to A$. For the last part, by Barr--Beck and the projection formula we see that $ \operatorname{QCoh}(Y) \simeq \operatorname{Mod}_{p_*1_Y} \operatorname{QCoh}(X) = \operatorname{Mod}_A\operatorname{QCoh}(X)$.
\end{proof}
\begin{defn}
Let $X \in \operatorname{Shv}_{\tau_\mathrm{rat}}(\operatorname{Afnd})$. We say that $A \in \operatorname{QAlg}(X)$ is \emph{locally affinoid} if there exists a covering $\{f_i: U_i \to X\}$ by affinoids $U_i = \operatorname{Spec}(B_i)$ such that each $f_i^*A$ is an affinoid algebra. (To be precise, this means that the image of $f_i^*A$ under the forgetful functor $\operatorname{QAlg}(U_i) \to \operatorname{QAlg}(*)$ belongs to the full subcategory $\operatorname{AfndAlg} \subseteq \operatorname{QAlg}(*)$). 
\end{defn}
\begin{lem}\label{lem:affinoidrelative spectrum}
Let $X \in \operatorname{Shv}_{\tau_\mathrm{rat}}(\operatorname{Afnd})$ and suppose that $A \in \operatorname{QAlg}(X)$ is locally affinoid. Then there is a canonical equivalence $i_!\underline{\operatorname{Spec}}_X^\mathrm{an} A \xrightarrow[]{\sim} \underline{\operatorname{Spec}}^\mathscr{V}_{i_!X} A$ over $i_!X$. 
\end{lem}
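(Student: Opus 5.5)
First I construct the comparison morphism, then I check that it is an equivalence locally on $i_!X$.

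By Lemma \ref{lem:RelativespecLemma1}(v) there is an equivalence $\underline{\operatorname{Spec}}^\mathrm{an}_X A \simeq i^*\underline{\operatorname{Spec}}^\mathscr{V}_{i_!X}A$ over $X$. The counit $i_!i^* \to \operatorname{id}$ therefore gives a canonical morphism
\begin{equation*}
\phi: i_!\underline{\operatorname{Spec}}^\mathrm{an}_X A \to \underline{\operatorname{Spec}}^\mathscr{V}_{i_!X}A .
\end{equation*}
It lies over $i_!X$ because $i_!$ applied to $\underline{\operatorname{Spec}}^\mathrm{an}_XA \to X$ gives the structure map. Here I use the equivalence $\Psi$ of \S\ref{par:PsiEquivalencePar}, which identifies $\operatorname{QAlg}(i_!X)$ with $\operatorname{QAlg}(X)$, so $A$ makes sense on both sides.

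Next I reduce to the affinoid case. Choose a rational covering $\{f_j: U_j = \operatorname{Spec}(B_j) \to X\}$ such that each $f_j^*A$ is affinoid. Since $i_!$ is a left adjoint, it preserves effective epimorphisms, so $\coprod_j i_!U_j \to i_!X$ is an effective epimorphism in $\operatorname{Shv}_{\tau_\mathrm{dri}}(\operatorname{Aff})$. Colimits in a topos are universal, so it suffices to show that $\phi \times_{i_!X} i_!U_j$ is an equivalence for each $j$.

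To carry out the base change on both sides, I use three inputs:
\begin{enumerate}[(i)]
\item For the target, Lemma \ref{lem:RelativespecLemma1}(iii) gives $\underline{\operatorname{Spec}}^\mathscr{V}_{i_!X}A \times_{i_!X} i_!U_j \simeq \underline{\operatorname{Spec}}^\mathscr{V}_{i_!U_j} f_j^*A$.
\item For the source, $i_!$ is left exact by Proposition \ref{prop:analyticinclusion}, so it commutes with the fibre product. Lemma \ref{lem:RelativespecLemma1}(iv) then gives $i_!\underline{\operatorname{Spec}}^\mathrm{an}_{U_j}f_j^*A$.
\item Both identifications are compatible with the construction of $\phi$, because the counit is natural and the equivalences of Lemma \ref{lem:RelativespecLemma1} are Beck--Chevalley type and compatible with $\Psi$.
\end{enumerate}
I expect step (iii), keeping track of these compatibilities, to be the main bookkeeping obstacle. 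The cleanest way around it is to avoid the explicit check: show that both sides have the same functor of points on $\operatorname{Aff}_{/i_!X}$ after restriction to each $i_!U_j$.

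It remains to treat the case $X = U = \operatorname{Spec}(B)$ affinoid, with $A = C$ an affinoid algebra under $B$. The functor of points \eqref{eq:Specanfunctorofpoints} of $\underline{\operatorname{Spec}}^\mathrm{an}_U C$ on $\operatorname{Afnd}_{/U}$ is $\operatorname{Map}_{\operatorname{DAlg}_B}(C, \mathscr{O}(Y))$. Since the rational topology is subcanonical, this shows $\underline{\operatorname{Spec}}^\mathrm{an}_U C \simeq \operatorname{Spec}(C)$ in $\operatorname{Shv}_{\tau_\mathrm{rat}}(\operatorname{Afnd})$.

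On the other side, the proof of Lemma \ref{lem:RelSpecRepresent} shows that $\underline{\operatorname{Spec}}^\mathscr{V}_{i_!U}C$ is the representable $\operatorname{Spec}(C)$ in $\operatorname{Shv}_{\tau_\mathrm{dri}}(\operatorname{Aff})$; this uses that $\tau_\mathrm{dri}$ is coarser than $\tau_\mathrm{desc}$ and hence subcanonical. Finally, Proposition \ref{prop:analyticinclusion} and Corollary \ref{cor:lowershriekrepresentable} give $i_!\operatorname{Spec}(C) \simeq \operatorname{Spec}(C)$. Under these identifications $\phi$ is the counit at a representable, which is an equivalence because $i_!$ is fully faithful. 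This concludes the proof.
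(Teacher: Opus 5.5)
Your proposal is correct and follows the same route as the paper. You build the comparison map from Lemma \ref{lem:RelativespecLemma1}(v) and the counit of $i_! \dashv i^*$, check it locally on $i_!X$ using that $i_!$ preserves covers, fibre products and representable objects, and observe that in the affinoid case the map is just the identification $i_!\operatorname{Spec}(C) \simeq \operatorname{Spec}(iC)$; you simply supply more detail than the paper does. One small caution: your suggested shortcut of showing that both sides have abstractly equivalent functors of points over each $i_!U_j$ would not by itself prove that $\phi$ is an equivalence, since it is the naturality of $\phi$ under base change, as in your step (iii), that makes the local reduction work.
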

\begin{proof}
The canonical morphism $
i_!\underline{\operatorname{Spec}}_X^\mathrm{an} A \rightarrow \underline{\operatorname{Spec}}^\mathscr{V}_{i_!X} A$ is deduced from Lemma \ref{lem:RelativespecLemma1}(v) and adjunction. In order to check that it is an equivalence we can work locally on $i_!X$, and so we may assume (using that $i_!$ preserves covers and representable objects) that $X = \operatorname{Spec}(B)$ is affinoid and $A \in \operatorname{QAlg}(X)$ is an affinoid algebra. In this case the canonical morphism is the equality $i_!\operatorname{Spec}(A) = \operatorname{Spec}(iA)$. 
\end{proof}
\begin{defn}\label{defn:vectorbundles}
\begin{enumerate}[(i)]
    \item Let $X \in \operatorname{Shv}_{\tau_\mathrm{dri}}(\operatorname{Aff})$ and let $E \in \operatorname{QCoh}(X)$. We define $V_X^\mathscr{V}(E) := \underline{\operatorname{Spec}}^\mathscr{V}_X \operatorname{LSym}_X E$, so that the functor of points of $V_X^\mathscr{V}(E)$ can be described as 
\begin{equation}
    V_X^\mathscr{V}(E)(Y) \simeq \operatorname{Map}_{\operatorname{QCoh}(X)}(E, f_*1_Y)
\end{equation}
for every $[f: Y \to X] \in \operatorname{Aff}_{/X}$.
    \item Let $X \in \operatorname{Shv}_{\tau_\mathrm{rat}}(\operatorname{Afnd})$ and let $E \in \operatorname{QCoh}(X)$. We define $V_X^\mathrm{an}(E) := \underline{\operatorname{Spec}}^\mathrm{an}_X \operatorname{LSym}_X E$. so that the functor of points of $V_X^\mathrm{an}(E)$ can be described as \begin{equation}
    V_X^\mathrm{an}(E)(Y) \simeq \operatorname{Map}_{\operatorname{QCoh}(X)}(E, f_*1_Y)
\end{equation}
for every $[f: Y \to X] \in \operatorname{Afnd}_{/X}$. 
\end{enumerate}
\end{defn}
\begin{lem}
Let $X \in \operatorname{dRig}$ and suppose that $E \in \operatorname{QCoh}(X)$ is a locally free. Then $V_X^\mathrm{an}(E)$ is representable by a derived rigid space. 
\end{lem}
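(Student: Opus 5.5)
The plan is to reduce to the case of a trivial bundle over an affinoid, and then identify the total space with a relative analytic affine space built from Proposition \ref{prop:affineline}.

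\emph{Step 1: reduction to affinoids.} Since $E$ is locally free, there is a family of affinoid analytic subspaces $U_i = \operatorname{Spec}(B_i) \hookrightarrow X$ with $\coprod_i U_i \twoheadrightarrow X$ an effective epimorphism and $E|_{U_i} \simeq 1_{U_i}^{\oplus n_i}$. By Lemma \ref{lem:RelativespecLemma1}(iv), together with the compatibility of $\operatorname{LSym}$ with pullback, we have
\[
V^\mathrm{an}_X(E) \times_X U_i \simeq V^\mathrm{an}_{U_i}(E|_{U_i}).
\]
The morphisms $V^\mathrm{an}_X(E)\times_X U_i \to V^\mathrm{an}_X(E)$ are pullbacks of analytic subspaces, hence analytic subspaces, and they jointly give an effective epimorphism, since effective epimorphisms are stable under base change in a topos. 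It therefore suffices to show that each $V^\mathrm{an}_{U_i}(E|_{U_i})$ is a derived rigid space. Covering each of these by affinoid analytic subspaces and composing then gives the required cover of $V^\mathrm{an}_X(E)$.

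\emph{Step 2: the trivial bundle over an affinoid.} Let $X = \operatorname{Spec}(B)$ and $E = 1^{\oplus n}$. By Definition \ref{defn:vectorbundles}(ii), for $f: Y \to X$ with $Y$ affinoid,
\[
V^\mathrm{an}_X(E)(Y) \simeq \operatorname{Map}_{\operatorname{QCoh}(X)}(1^{\oplus n}, f_*1_Y) \simeq \operatorname{Map}(K^{\oplus n}, \mathscr{O}(Y)).
\]
Thus $V^\mathrm{an}_X(E) \simeq X \times (\mathbf{A}^{1,\mathrm{an}})^{n}$, provided we identify $\mathbf{A}^{1,\mathrm{an}}$ with the sheaf $Y \mapsto \operatorname{Map}(K[T], \mathscr{O}(Y)) = \operatorname{Map}_{\mathscr{V}}(K, \mathscr{O}(Y))$. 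This identification is exactly the content of the proof of Proposition \ref{prop:affineline}, which shows that the maps from an affinoid $Y$ into $\operatorname{colim}_m \operatorname{Spec}(K\langle \varpi^m T\rangle)$ are $\operatorname{Map}(K[T],\mathscr{O}(Y))$. Equivalently, $(-)^\mathrm{an}$ is left exact, so we may write $V^\mathrm{an}_X(E) = X \times (\mathbf{A}^{n,\mathrm{alg}})^\mathrm{an}$. By Remark \ref{rmk:rigidfinitelimits}, $\operatorname{dRig}$ is stable under finite limits, so it suffices that $\mathbf{A}^{1,\mathrm{an}}$ is a derived rigid space.

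\emph{Step 3: $\mathbf{A}^{1,\mathrm{an}}$ is a derived rigid space.} Write $D_m := \operatorname{Spec}(K\langle \varpi^m T\rangle)$. The maps $D_m \to D_{m+1}$ are rational subdomains $\{|\varpi^{m+1}T| \le |\varpi|\}$. The colimit is filtered, so the maps $D_m \to \operatorname{colim}_m D_m$ are monomorphisms, and $\coprod_m D_m \to \mathbf{A}^{1,\mathrm{an}}$ is an effective epimorphism. It remains to check that each $D_m \hookrightarrow \mathbf{A}^{1,\mathrm{an}}$ is an analytic subspace. Given $Y = \operatorname{Spec}(A) \to \mathbf{A}^{1,\mathrm{an}}$, the rational topology is finitary, so this map factors through some $D_{m'}$ with $m' \ge m$. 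The pullback is then $Y \times_{D_{m'}} D_m$, which is a rational subdomain of $Y$ because rational subdomains are stable under base change (Lemma \ref{lem:derivedrational}). Hence $D_m$ is an analytic subspace.

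The main obstacle is the bookkeeping in Step 1. One must confirm that \lq\lq locally free\rq\rq{} can be witnessed on an analytic affinoid cover of the required kind, and that a cover of a derived rigid space by analytic subspaces which are themselves derived rigid spaces is again a derived rigid space. This needs transitivity of analytic subspaces: an analytic subspace of an analytic subspace is an analytic subspace. This is checked affinoid-locally, using that a rational subdomain of a rational subdomain is rational.
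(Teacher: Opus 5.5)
Your proposal is correct and follows essentially the same route as the paper: localize on $X$ using the base-change compatibility of Lemma~\ref{lem:RelativespecLemma1}(iv) to reduce to a trivial bundle over an affinoid, then identify the result with $X \times \mathbf{A}^{n,\mathrm{an}}$ via the functor of points and Proposition~\ref{prop:affineline}. Your Step~3 and the transitivity of analytic subspaces fill in points the paper leaves implicit, namely that $\mathbf{A}^{1,\mathrm{an}}$ is a derived rigid space and that representability by a derived rigid space is local on $X$.
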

\begin{proof}
The question is local on $X$, and using the base-change compatibility of Lemma \ref{lem:RelativespecLemma1}(iv), we may assume that $X = \operatorname{Spec}(A)$ is affinoid and $E \simeq  A^{\oplus n}$. By the base-change compatibility again, we further reduce to the case when $X = *$ and $E \simeq K^{\oplus n}$. In this case $V_X^\mathrm{an}(E)$ is the functor which sends $\operatorname{Spec}(B) \in \operatorname{Afnd}$ to (the underlying space of) $B^{\oplus n}$, which is representable by $\mathbf{A}^{n, \mathrm{an}}$ by Proposition \ref{prop:affineline}.
\end{proof}
\section{Cotangent complexes}\label{sec:cotangent}
\egapar In this section we study infinitesimal properties of derived rigid spaces. In particular we define cotangent complexes, smooth and \'etale morphisms, and quasi-smooth Zariski-closed immersions. For the latter we prove the expected structure theorem (Lemma \ref{lem:Zariskiclosedquasismooth}). 
\egapar Lemma \ref{lem:coherentdescent} below is necessary to prove that the cotangent complex of a morphism of derived rigid spaces is connective.
\begin{defn}\label{defn:Cohcd}
Let $X = \operatorname{Spec}(A) \in \operatorname{Afnd}$ and let $d \in \mathbf{Z}$. We define the full subcategory $\operatorname{Coh}^{\leqslant d}(X) \subseteq \operatorname{QCoh}(X)$ on those complexes whose cohomology modules are finitely generated as $H^0(A)$-modules, supported in the interval $(-\infty,d]$. We define $\operatorname{Coh}^{-}(X) \subseteq \operatorname{QCoh}(X)$ as the union of the full subcategories $\operatorname{Coh}^{\leqslant d}(X)$ for $d \in \mathbf{Z}$. 
\end{defn}
\begin{lem}\label{lem:coherentdescent}
The collection of full subcategories $\operatorname{Coh}^{\leqslant d}(X) \subseteq \operatorname{QCoh}(X)$ extends naturally to a sub-prestack $\operatorname{Coh}^{\leqslant d}: \operatorname{Afnd}^\mathrm{op} \to \operatorname{Cat}_\infty$ of $\operatorname{QCoh}: \operatorname{Afnd}^\mathrm{op} \to \operatorname{Cat}_\infty$ which satisfies descent in the topology $\tau_\mathrm{rat}$. The same holds for $\operatorname{Coh}^-$ in place of $\operatorname{QCoh}$.
\end{lem}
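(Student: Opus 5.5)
We have to prove two things: that $\operatorname{Coh}^{\leqslant d}$ (and $\operatorname{Coh}^-$) is a sub-prestack of $\operatorname{QCoh}$, and that the resulting full sub-prestack satisfies $\tau_{\mathrm{rat}}$-descent. Since $\operatorname{QCoh}$ restricted to $\operatorname{Afnd}$ is already a $\tau_{\mathrm{rat}}$-sheaf (the rational topology maps into the descendable one), it suffices to show two things. First, pullback along any morphism preserves the subcategories. Second, membership in $\operatorname{Coh}^{\leqslant d}$ is local for rational covers: if $M \in \operatorname{QCoh}(X)$ and $M|_{U_i} \in \operatorname{Coh}^{\leqslant d}(U_i)$ for a rational cover $\{U_i \to X\}$, then $M \in \operatorname{Coh}^{\leqslant d}(X)$. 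A full subcategory of a sheaf of categories that is stable under pullback and local in this sense is again a sheaf. The statement for $\operatorname{Coh}^-$ follows from the statement for $\operatorname{Coh}^{\leqslant d}$, because a rational cover is finite: a descent datum in $\operatorname{Coh}^-$ lies in some common $\operatorname{Coh}^{\leqslant d}$ after taking $d$ to be the maximum over the finitely many $U_i$.

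\emph{Stability under pullback.} Let $f:\operatorname{Spec}(B) \to \operatorname{Spec}(A)$ and $M \in \operatorname{Coh}^{\leqslant d}(A)$. We need $B \hat{\otimes}^{\mathbf{L}}_A M$ to have finitely generated cohomology over $H^0(B)$, concentrated in degrees $\leqslant d$.
\begin{itemize}
\item The degree bound follows from the compatibility of the $t$-structure with $\hat\otimes$, since $B$ is connective.
\item For finite generation, use a Tor spectral sequence, or induct up the Postnikov tower. Because $A$ is connective with coherent homotopy groups over the Noetherian ring $H^0(A)$, each $M$ can be resolved by finite free $A$-modules in a range (a pseudo-coherence argument). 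Each $H^n(B \hat{\otimes}_A M)$ is then computed, in a given degree, by finitely many terms of the form $B^{\oplus r}$ shifted. Since each $H^m(B)$ is finitely generated over $H^0(B)$, finite generation follows.
\item One must check that the completed tensor product agrees with the algebraic one for finitely generated modules over affinoids. This is standard: finite modules over affinoid algebras carry a canonical Banach structure, and $\hat\otimes$ with a finite free module is the ordinary tensor product.
\end{itemize}

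\emph{Locality (the main step).} Let $\{U_i=\operatorname{Spec}(A_i)\}$ be a rational cover of $X=\operatorname{Spec}(A)$. Each $A \to A_i$ is derived strong, and $H^0(A)\to H^0(A_i)$ is a classical rational localization, hence flat. Derived strongness then gives
\[
H^n(A_i \hat{\otimes}^{\mathbf{L}}_A M) \cong H^0(A_i)\hat{\otimes}_{H^0(A)} H^n(M).
\]
This flat base change of cohomology is the point I expect to require the most care. I would prove it by writing $A_i$ via Lemma \ref{lem:derivedrational}(ii) as a Koszul-type quotient of $A\hat\otimes^{\mathbf{L}}_K K\langle T\rangle$, and using that $H^0(A)\langle T\rangle/(a_0T-a)$ has no higher Tor (the sequence is regular, as in the classical setting).

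Granting this formula, the degree bound $H^n(M)=0$ for $n>d$ follows from faithful flatness of $H^0(A)\to\prod_i H^0(A_i)$. Finite generation of $H^n(M)$ over $H^0(A)$ reduces to Kiehl/Tate descent for coherent modules on classical affinoid spaces, applied to the $H^0(A)$-module $H^n(M)$. There is a subtlety here: $H^n(M)$ is a priori an arbitrary object of the abelian heart, not a finite module. So I would use that the \v{C}ech complex of $H^n(M)$ with respect to the cover is exact, which comes from descent for $\operatorname{QCoh}$ applied to $H^n(M)$ together with the flat base-change formula. Then $H^n(M)$ is the equalizer of finitely generated modules glued by a classical coherent descent datum. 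By Kiehl's theorem that datum is effective, and the equalizer is the finite module it produces.
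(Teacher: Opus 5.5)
Your reduction to pullback-stability plus locality is fine, and so is your pullback-stability argument, which repackages the paper's Tor spectral sequence. The passage to $\operatorname{Coh}^-$ is also fine.

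The gap is in the locality step. The formula $H^n(A_i\hat{\otimes}^{\mathbf{L}}_A M)\cong H^0(A_i)\hat{\otimes}_{H^0(A)}H^n(M)$ is false for a general $M\in\operatorname{QCoh}(X)$, because in $\operatorname{IndBan}_K$ rational localizations are not flat against arbitrary modules. Regularity of $a_0T-a_1$ on $H^0(A)\langle T\rangle$ says nothing about its action on $N\langle T\rangle$ for an arbitrary $N$ in the heart. Here is an explicit failure:
\begin{itemize}
\item Let $A=K\langle S\rangle$ and $A_U=A\langle T\rangle/(\varpi T-S)$, the disk $|S|\leqslant|\varpi|$.
\item Let $N=c_0(\{1,2,\dots\})$, with $S$ acting by the backward shift $e_{n+1}\mapsto e_n$, $e_1\mapsto 0$.
\item Then $A_U\hat{\otimes}^{\mathbf{L}}_AN$ is the complex $N\langle T\rangle\xrightarrow{\varpi T-S}N\langle T\rangle$.
\item The series $\sum_{n\geqslant 0}\varpi^n e_{n+1}T^n$ is a nonzero element of the kernel.
\end{itemize}
So $H^{-1}(A_U\hat{\otimes}^{\mathbf{L}}_AN)\neq 0$ even though $N$ lies in the heart.

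Transversality of rational localizations (\cite[Proposition 2.3.88]{DAnG}) is only available for objects already known to be coherent. Coherence of $M$ is exactly what you are trying to prove. Consequently, two of your steps are unjustified:
\begin{itemize}
\item the ``faithful flatness'' argument for the degree bound;
\item the claim that the \v{C}ech complex of $H^n(M)$ is exact, which is also circular because it restricts $H^n(M)$ rather than $M$.
\end{itemize}

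The repair is never to base-change $H^n(M)$. Take a coherent descent datum $(M_n)_{[n]\in\Delta}$ on the \v{C}ech nerve of $Z=\coprod_i U_i\to X$. Write $M\simeq\lim_{[n]\in\Delta}f_{n,*}M_n$ and use the spectral sequence $E_2^{pq}=R^p\lim_{[n]\in\Delta}H^q(M_n)\Rightarrow H^{p+q}(M)$.

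Base change is now applied only to the coherent $M_n$, so $(H^q(M_n))_n$ is a classical coherent descent datum. Kiehl's theorem, together with Tate acyclicity, then gives:
\begin{itemize}
\item $E_2^{pq}=0$ for $p>0$;
\item $E_2^{0q}=\operatorname{eq}(H^q(M_0)\rightrightarrows H^q(M_1))$ is finitely generated and vanishes for $q>d$.
\end{itemize}
The spectral sequence collapses and yields the degree bound and finite generation at once; this is the paper's argument. Your base-change formula for $M$ does hold a posteriori, but it cannot be used as an input.
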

\begin{proof}
We prove the Lemma for $\operatorname{Coh}^{\leqslant d}$, as the case of $\operatorname{Coh}^-$ is essentially the same. For the first part, for any morphism $X =\operatorname{Spec}(B) \to \operatorname{Spec}(A) = Y$ in $\operatorname{Afnd}$ and any $M^\bullet \in \operatorname{Coh}^{\leqslant d}(Y)$ there is a convergent spectral sequence 
\begin{equation}\label{eq:Torspectral1}
    E_2^{pq}: \operatorname{Tor}_p^{H^*(A)}(H^*(B), H^{*}(M^\bullet))_q \Rightarrow H^{p-q}(B \hat{\otimes}^\mathbf{L}_AM^\bullet)
\end{equation}
Using that $H^0(A)$ (resp. $H^0(B)$) is Noetherian, that each $H^i(A)$ (resp. $H^i(B)$) is finitely-generated as a $H^0(A)$- (resp. $H^0(B)$-)module, and that each $H^j(M^\bullet)$ is finitely generated as a $H^0(A)$-module, it can be seen that each $E_2^{pq}$ is finitely-generated as a $H^0(B)$-module. By Noetherianity of $H^0(B)$ and convergence of \eqref{eq:Torspectral1} we see that $B\hat{\otimes}^\mathbf{L}_AM^\bullet \in \operatorname{Coh}^{\leqslant d}(X)$. For the second part, let $\{X_i = \operatorname{Spec}(B_i) \to \operatorname{Spec}(B) = X\}_i$ be a finite collection of rational open subspaces which is jointly surjective on underlying topological spaces. Put $Z := \coprod_i X_i$ and $C := \prod_i B_i$, so that $Z = \operatorname{Spec}(C)$. We will show that 
\begin{equation}\label{eq:cohdescent1}
    \operatorname{Coh}^{\leqslant d}(X) \xrightarrow[]{\sim} \underset{{[n] \in \Delta}}{\operatorname{lim}}\operatorname{Coh}^{\leqslant d}(Z^{n+1/X}). 
\end{equation}
By descent for quasi-coherent sheaves, the only thing to show is that if $(M_n^\bullet)_{[n] \in \Delta}$ is any Cartesian section belonging to the right-hand side of \eqref{eq:cohdescent1} the limit $\operatorname{lim}_{[n] \in \Delta} f_{n, *} M_n^\bullet$ belongs to the full subcategory $\operatorname{Coh}^{\leqslant d}(X) \subseteq \operatorname{QCoh}(X)$. Here $f_{n,*}: Z^{n+1/X} \to X$ is the canonical morphism. There is a convergent spectral sequence 
\begin{equation}
   E_2^{pq} : R^p\underset{{[n] \in \Delta}}{\operatorname{lim}}  H^p(f_{n,*} M_n^\bullet)  \Rightarrow H^{p+q}\big(\underset{{[n] \in \Delta}}{\operatorname{lim}} f_{n, *} M_n^\bullet\big).
\end{equation}
By \cite[Proposition 2.3.88]{DAnG}, for every cosimplicial morphism $Z^{m+1/X} \to Z^{n+1/X}$ and every $j \in \mathbf{Z}$ there is a canonical equivalence $H^0(C_{m}) \hat{\otimes}_{H^0(C_{n})}H^j(M_n^\bullet) \xrightarrow[]{\sim} H^j(C_{m} \hat{\otimes}^\mathbf{L}_{C_{n}} M_n^\bullet)$, here $C_{n} := C^{\hat{\otimes}^\mathbf{L}_B(n+1)}$. Hence by Kiehl's theorem and the classical theorem of descent for coherent sheaves one has $E_2^{pq} = 0$ for $p > 0$ and $E_2^{0q} \simeq \operatorname{eq}(H^q(M_0^\bullet) \rightrightarrows H^q(M_1^\bullet))$ is finitely-generated as a $H^0(B)$-module. Further, we see that $E_2^{0q} = 0$ for $q > d$. 
\end{proof}
\begin{rmk}
It can be proven that $\operatorname{Coh}^{\leqslant d}$ satisfies hyperdescent, even in the \'etale topology, c.f. \cite{kelly_localising_2025}. If $[c,d] \subseteq \mathbf{R}$ is any interval with $d < \infty$ it can also be shown that $\operatorname{Coh}^{[c,d]}$ (i.e., those complexes with cohomology supported in $[c,d]$), is a sheaf on the poset of rational subdomains of a fixed affinoid $X$. 
\end{rmk}
\egapar By right Kan extension along $\operatorname{Afnd}^\mathrm{op} \to \operatorname{Shv}_{\tau_\mathrm{rat}}(\operatorname{Afnd})^\mathrm{op}$ we obtain for any $d \in \mathbf{Z}$ a limit-preserving functor $\operatorname{Coh}^{\leqslant d} : \operatorname{Shv}_{\tau_\mathrm{rat}}(\operatorname{Afnd})^\mathrm{op} \to \operatorname{Cat}_\infty$, which is naturally a subfunctor of $\operatorname{QCoh}: \operatorname{Shv}_{\tau_\mathrm{rat}}(\operatorname{Afnd})^\mathrm{op} \to \operatorname{Cat}_\infty$. This will be used in the next Definition.
\begin{defn}\label{defn:CotangentComplex}
\begin{enumerate}[(i)]
    \item Let $Z \in \operatorname{Shv}_{\tau_\mathrm{dri}}(\operatorname{Aff})$ and let $M \in \operatorname{QCoh}(Z)$. We define $Z[M]_\mathscr{V} := \underline{\operatorname{Spec}}_Z^\mathscr{V}(1_Z \oplus M)$ where $1_Z \oplus M \in \operatorname{QAlg}(Z)$ denotes the trivial square-zero extension. We consider the Cartesian fibration $\operatorname{QCoh}_{/Y} \to \operatorname{Shv}_{\tau_\mathrm{dri}}(\operatorname{Aff})_{/Y}$ whose objects are pairs $(Z, M)$ where $Z \in \operatorname{Shv}_{\tau_\mathrm{dri}}(\operatorname{Aff})_{/Y}$ and $M \in \operatorname{QCoh}(Z)$. We say that a morphism $f: X \to Y$ in $\operatorname{Shv}_{\tau_\mathrm{dri}}(\operatorname{Aff})$ has a \emph{global cotangent complex} if the functor 
    \begin{equation}
        \operatorname{Der}_{X/Y} : (\operatorname{QCoh}_{/Y})^\mathrm{op} \to \infty\mathrm{Grpd}: (Z, M) \mapsto \operatorname{Map}_{/Y}(Z[M]_\mathscr{V},X) 
    \end{equation}
    is representable by an object of the form $(X, \mathbf{L}_{X/Y}^\mathscr{V})$. In this case $\mathbf{L}_{X/Y}^\mathscr{V} \in \operatorname{QCoh}(X)$ is called the \emph{cotangent complex} of $X/Y$. 
    \item Let $Z \in \operatorname{Shv}_{\tau_\mathrm{rat}}(\operatorname{Afnd})$ and let $M \in \operatorname{Coh}^{\leqslant0}(Z)$. We define $Z[M] := \underline{\operatorname{Spec}}_Z^\mathrm{an}(1_Z \oplus M)$ where $1_Z \oplus M \in \operatorname{QAlg}(Z)$ denotes the trivial square-zero extension. We consider the Cartesian fibration $\operatorname{Coh}^{\leqslant 0}_{/Y} \to \operatorname{Shv}_{\tau_\mathrm{rat}}(\operatorname{Afnd})_{/Y}$ whose objects are pairs $(Z, M)$ where $Z \in \operatorname{Shv}_{\tau_\mathrm{rat}}(\operatorname{Afnd})_{/Y}$ and $M \in \operatorname{Coh}^{\leqslant 0}(Z)$. We say that a morphism $f: X \to Y$ in $\operatorname{Shv}_{\tau_\mathrm{rat}}(\operatorname{Afnd})$ has a \emph{global cotangent complex} if the functor 
    \begin{equation}
        \operatorname{Der}_{X/Y} : (\operatorname{Coh}^{\leqslant 0}_{/Y})^\mathrm{op} \to \infty\mathrm{Grpd}: (Z, M) \mapsto \operatorname{Map}_{/Y}(Z[M],X) 
    \end{equation}
    is representable by an object of the form $(X, \mathbf{L}_{X/Y})$. In this case $\mathbf{L}_{X/Y} \in \operatorname{Coh}^{\leqslant 0}(X)$ is called the \emph{cotangent complex} of $X/Y$. 
\end{enumerate}
\end{defn}
\egapar We note the following standard properties of cotangent complexes. 
\begin{prop}\label{prop:cotangentprops}
\begin{enumerate}[(i)]
    \item If $f:X \to Y$ is any morphism in $\operatorname{Shv}_{\tau_\mathrm{dri}}(\operatorname{Aff})$ (resp. $\operatorname{Shv}_{\tau_\mathrm{rat}}(\operatorname{Afnd})$) such that $\mathbf{L}^\mathscr{V}_{X/Y}$ (resp. $\mathbf{L}_{X/Y}$) exists, and 
    \begin{equation}    
\begin{tikzcd}
	{X^\prime} & {Y^\prime} \\
	X & Y
	\arrow["{f^\prime}", from=1-1, to=1-2]
	\arrow["{g^\prime}"', from=1-1, to=2-1]
	\arrow["\lrcorner"{anchor=center, pos=0.125}, draw=none, from=1-1, to=2-2]
	\arrow["g", from=1-2, to=2-2]
	\arrow["f", from=2-1, to=2-2]
\end{tikzcd}
\end{equation}
    is a Cartesian square in $\operatorname{Shv}_{\tau_\mathrm{dri}}(\operatorname{Aff})$ (resp. $\operatorname{Shv}_{\tau_\mathrm{rat}}(\operatorname{Afnd})$), then $\mathbf{L}_{X^\prime/Y^\prime}^\mathscr{V}$ (resp. $\mathbf{L}_{X^\prime/Y^\prime}$) exists and is computed as $\mathbf{L}_{X^\prime/Y^\prime}^\mathscr{V} \simeq g^{\prime,*}\mathbf{L}^\mathscr{V}_{X/Y}$ (resp. $\mathbf{L}_{X^\prime/Y^\prime} \simeq g^{\prime,*}\mathbf{L}_{X/Y}$). 
    \item If $f:X \to Y$ is any morphism in $\operatorname{Shv}_{\tau_\mathrm{dri}}(\operatorname{Aff})$ (resp. $\operatorname{Shv}_{\tau_\mathrm{rat}}(\operatorname{Afnd})$) and $\{Y_i \to Y\}_i$ is a cover such that each base-change $f_i: X_i \to Y_i$ admits a cotangent complex, then $f: X \to Y$ admits a cotangent complex. 
    \item Let $X \xrightarrow[]{f} Y \xrightarrow{g} Z$ be morphisms in $\operatorname{Shv}_{\tau_\mathrm{dri}}(\operatorname{Aff})$ (resp. $\operatorname{Shv}_{\tau_\mathrm{rat}}(\operatorname{Afnd})$) . Assume that $g$ admits a cotangent complex. Then $gf$ admits a cotangent complex if and only if $f$ does. In this case there is a fiber sequence $g^*\mathbf{L}^\mathscr{V}_{Y/Z} \to \mathbf{L}_{X/Z}^\mathscr{V} \to \mathbf{L}^\mathscr{V}_{X/Y}$ in $\operatorname{QCoh}(X)$ (resp. there is a fiber sequence $g^*\mathbf{L}_{Y/Z} \to \mathbf{L}_{X/Z} \to \mathbf{L}_{X/Y}$ in $\operatorname{Coh}^{\leqslant 0}(X)$).  
\end{enumerate}
\end{prop}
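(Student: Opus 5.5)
The plan is to derive all three statements from the representing property in Definition \ref{defn:CotangentComplex}. For each one we compute the functor $\operatorname{Der}$ and show it is representable, using base-change of the relative spectrum (Lemma \ref{lem:RelativespecLemma1}(iii),(iv)) to move between $Z[M]$ over different bases. Everything is phrased for $\operatorname{Shv}_{\tau_\mathrm{dri}}(\operatorname{Aff})$ with $Z[M]_\mathscr{V}$. The $\operatorname{Shv}_{\tau_\mathrm{rat}}(\operatorname{Afnd})$ case is verbatim the same with $Z[M]$, plus the observation that $\operatorname{Coh}^{\leqslant 0}$ is stable under pullback and under the operations used. This observation is Lemma \ref{lem:coherentdescent} and its right Kan extension.

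\emph{Part (i).} Let $(Z,M)\in \operatorname{QCoh}_{/Y^\prime}$. A map $Z[M]_\mathscr{V}\to X^\prime$ over $Y^\prime$ is the same as a map $Z[M]_\mathscr{V}\to X$ over $Y$, where $Z\to Y$ is the composite through $g$. By the universal property of $\mathbf{L}^\mathscr{V}_{X/Y}$, such maps correspond to pairs consisting of $h\colon Z\to X$ over $Y$ and a morphism $h^*\mathbf{L}^\mathscr{V}_{X/Y}\to M$. Here we use that $Z[M]_\mathscr{V}\to Z$ has a section, so that $h$ is recovered by restricting along the zero section. The pair $h$ together with $Z\to Y^\prime$ is the same as a map $h^\prime\colon Z\to X^\prime$ over $Y^\prime$, and $h^*\mathbf{L}^\mathscr{V}_{X/Y}\simeq h^{\prime *}g^{\prime *}\mathbf{L}^\mathscr{V}_{X/Y}$. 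Hence $(X^\prime, g^{\prime*}\mathbf{L}^\mathscr{V}_{X/Y})$ represents $\operatorname{Der}_{X^\prime/Y^\prime}$.

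\emph{Part (ii).} By (i), the cotangent complexes $\mathbf{L}_{X_i/Y_i}$ are compatible on all iterated fiber products $X_{i_0\cdots i_n}$. They therefore form a Cartesian section of $\operatorname{QCoh}$ over the Čech nerve of $\coprod_i X_i\to X$. Since $\operatorname{QCoh}$ is limit-preserving (\eqref{eq:QCohdefn1}, \eqref{eq:Qcohanalytic}; respectively $\operatorname{Coh}^{\leqslant0}$ by Lemma \ref{lem:coherentdescent}), they glue to an object $\mathbf{L}_{X/Y}$. It remains to check that this object represents $\operatorname{Der}_{X/Y}$. Both sides are sheaves in the $Z$-variable, and a map $Z[M]\to X$ over $Y$ restricts on the cover $Z\times_Y Y_i$ to maps into $X_i$ over $Y_i$. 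We then conclude by descent in $Z$, using that $(Z\times_YY_i)[M|]\simeq Z[M]\times_Y Y_i$ by Lemma \ref{lem:RelativespecLemma1}(iii). The one point that needs care is that this identification is compatible with the restriction maps on the Čech nerve, and this is what identifies the two limits.

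\emph{Part (iii).} Fix $(W,M)$ with a map $h\colon W\to X$. The sequence $W[M]\to X\to Y\to Z$ induces a fiber sequence of mapping spaces
\[
\operatorname{Map}_{/Y}(W[M],X)\to \operatorname{Map}_{/Z}(W[M],X)\to \operatorname{Map}_{/Z}(W[M],Y),
\]
where each space is taken fibered over the fixed restriction to the zero section. Under the universal property of $\mathbf{L}_{Y/Z}$, the last term is $\operatorname{Map}(h^*f^*\mathbf{L}_{Y/Z},M)$. Assume $\mathbf{L}_{X/Y}$ exists. Then $\operatorname{Der}_{X/Z}$ is the fiber of the map $\operatorname{Map}(h^*f^*\mathbf{L}_{Y/Z},M)\to\operatorname{Map}(h^*\mathbf{L}_{X/Y}[-1],M)$ induced by the connecting map of the pair. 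It is therefore represented by the fiber of $f^*\mathbf{L}_{Y/Z}\to ?$, namely by the extension $\mathbf{L}_{X/Z}$ fitting in $f^*\mathbf{L}_{Y/Z}\to \mathbf{L}_{X/Z}\to\mathbf{L}_{X/Y}$. Conversely, if $\mathbf{L}_{X/Z}$ exists, we define $\mathbf{L}_{X/Y}$ as the cofiber of the map $f^*\mathbf{L}_{Y/Z}\to\mathbf{L}_{X/Z}$. This map comes from composing with $f$, and the same fiber sequence shows that the cofiber represents $\operatorname{Der}_{X/Y}$.

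The main obstacle is that only \emph{connective} $M$ are allowed in the analytic case. This affects (iii) in two ways. First, a cofiber of objects of $\operatorname{Coh}^{\leqslant0}$ stays in $\operatorname{Coh}^{\leqslant0}$, since it is right exact, and cohomology stays finitely generated by Noetherianity, so the converse direction is fine. Second, for the forward direction we need to know that maps out of the extension are detected by connective test objects. To handle this, I would first establish the standard statement that $\operatorname{Der}$ takes $M\mapsto M[1]$ to loops, using $W[M]\simeq W\sqcup_{W[M[1]]}W$ (\emph{sic}: pushout of square-zero extensions). I would then use Yoneda within $\operatorname{Coh}^{\leqslant0}$ to identify the extension. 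The cofiber in the converse direction is handled directly as above.
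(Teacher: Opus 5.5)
The paper records this proposition as standard and gives no proof, so the comparison here is with the statement itself.

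\textbf{What works.} Your arguments for (i), for (ii), and for the half of (iii) that produces $\mathbf{L}_{X/Y}$ as $\operatorname{cofib}(f^*\mathbf{L}_{Y/Z}\to\mathbf{L}_{X/Z})$ are fine.

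\textbf{The gap.} It is in the other half of (iii): obtaining $\mathbf{L}_{X/Z}$ from $\mathbf{L}_{X/Y}$ and $\mathbf{L}_{Y/Z}$.
\begin{enumerate}
\item Your fiber sequence $\operatorname{Der}_{X/Y}\to\operatorname{Der}_{X/Z}\to\operatorname{Der}_{Y/Z}$ only computes the fiber over the \emph{trivial} derivation.
\item The fiber over a non-zero $\delta\colon W[M]\to Y$ is the space of maps $W[M]\to X$ over $Y$ along the structure map $\delta$, which does not factor through $W$. $\operatorname{Der}_{X/Y}$ only tests trivial extensions $W[M]\to W\to Y$, so it says nothing about these fibers.
\item Writing $\operatorname{Der}_{X/Z}$ as the fiber of a map to $\operatorname{Map}(h^*\mathbf{L}_{X/Y},M[1])$ presupposes that these fibers are governed by an obstruction class. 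The ``connecting map'' $\mathbf{L}_{X/Y}[-1]\to f^*\mathbf{L}_{Y/Z}$ you invoke is never constructed.
\item Your proposed delooping does not supply this. The identity $1\oplus M\simeq 1\times_{1\oplus M[1]}1$ exhibits $W[M]$ as a pushout in $\operatorname{Aff}$. The Yoneda embedding into sheaves does not preserve pushouts, so $W[M]\simeq W\sqcup_{W[M[1]]}W$ does not hold in the topos. $\operatorname{Map}(-,X)$ turns that square into a pullback only under an infinitesimal-cohesion hypothesis on $X$.
\end{enumerate}
The connectivity restriction is therefore not the real obstacle: the same problem already occurs in the $\mathscr{V}$-setting, where all $M$ are allowed.

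\textbf{The gap cannot be closed.} With the definition used here, this half of (iii) is false in general.
\begin{itemize}
\item For any monomorphism $X\hookrightarrow Y$ one has $\operatorname{Map}_{/Y}(W[M],X)\simeq\operatorname{Map}_{/Y}(W,X)$, so $\mathbf{L}_{X/Y}=0$.
\item In $\operatorname{Shv}_{\tau_{\mathrm{rat}}}(\operatorname{Afnd})$ take $Y=\mathbf{D}^1$. Let $X\subseteq Y$ be the subsheaf of $S$-points whose coordinate $T$ maps to $0$ in $H^0(\mathscr{O}(S))$. This is a union of path components, and the condition is local by Tate acyclicity.
\item Then $\mathbf{L}_{X/Y}=0$ and $\mathbf{L}_{Y/*}$ is free on $dT$.
\item At the point $0\colon *\to X$, however, $\operatorname{Map}(*[M],X)$ is the identity component of $\operatorname{Map}(K,M)$, because the $H^0(M)$-part of the image of $T$ must vanish.
\item This functor is not corepresentable on $\operatorname{Coh}^{\leqslant 0}(*)$. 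A corepresenting $L$ would satisfy $\operatorname{Map}(L,K)\simeq *$, while $\operatorname{Map}(L,K[1])$ would be connected with $\pi_1=K$. This contradicts $\Omega\operatorname{Map}(L,K[1])\simeq\operatorname{Map}(L,K)$.
\end{itemize}
The same example shows directly that $W[M]\not\simeq W\sqcup_{W[M[1]]}W$ in the topos.

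\textbf{What you can salvage.} The ``if'' direction needs an extra hypothesis, such as infinitesimal cohesion of $f$, or a definition of $\operatorname{Der}$ that uses all square-zero extensions rather than only trivial ones. For morphisms of derived rigid spaces, all three cotangent complexes exist by Theorem~\ref{thm:CoherentCotangent}. In that setting your cofiber argument gives the fiber sequence.
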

\egapar The proof of the following Corollary is standard. 
\begin{cor}\label{cor:formallyetalecotangent}
Let $f: X \to Y$ be a morphism in $\operatorname{Shv}_{\tau_\mathrm{dri}}(\operatorname{Aff})$ (resp. $\operatorname{Shv}_{\tau_\mathrm{rat}}(\operatorname{Afnd})$) such that the cotangent complex $\mathbf{L}_{X/Y}^\mathscr{V}$ exists (resp. $\mathbf{L}_{X/Y}$ exists). Then $\mathbf{L}_{X/Y}^\mathscr{V} \simeq 0$ (resp. $\mathbf{L}_{X/Y} \simeq 0$) if and only if $\mathbf{L}_{X/X \times_Y X}^\mathscr{V} \simeq 0$ (resp. $\mathbf{L}_{X/X \times_Y X} \simeq 0$). In this situation, if $g: Y \to Z$ is any further morphism then there is an equivalence $f^* \mathbf{L}^\mathscr{V}_{Y/Z} \simeq \mathbf{L}^\mathscr{V}_{X/Z}$ (resp. $f^* \mathbf{L}_{Y/Z} \simeq \mathbf{L}_{X/Z}$).
\end{cor}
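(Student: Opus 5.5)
We prove the two assertions of Corollary \ref{cor:formallyetalecotangent} separately. Both follow from the transitivity sequence and base-change of Proposition \ref{prop:cotangentprops}, applied to the diagonal $\Delta: X \to X \times_Y X$. The argument is the same in the $\mathscr{V}$-setting and the analytic setting, so we write it once, omitting superscripts. First we check that $\mathbf{L}_{X/X\times_Y X}$ exists. Let $p_1, p_2: X \times_Y X \to X$ be the projections. By Proposition \ref{prop:cotangentprops}(i), $\mathbf{L}_{X\times_Y X/X}$ exists (computed via $p_2$) and equals $p_1^*\mathbf{L}_{X/Y}$. Since $p_2\Delta = \operatorname{id}_X$ has cotangent complex $0$, Proposition \ref{prop:cotangentprops}(iii) shows that $\mathbf{L}_{X/X\times_Y X}$ exists. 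It also gives a fiber sequence
\begin{equation*}
\Delta^*p_1^*\mathbf{L}_{X/Y} \to 0 \to \mathbf{L}_{X/X\times_Y X},
\end{equation*}
hence $\mathbf{L}_{X/X\times_YX} \simeq \mathbf{L}_{X/Y}[1]$. In the analytic case there is a subtlety: the objects are required to lie in $\operatorname{Coh}^{\leqslant 0}$. This is still consistent, since the shift of a connective coherent complex is connective.

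From $\mathbf{L}_{X/X\times_YX} \simeq \mathbf{L}_{X/Y}[1]$ the equivalence of vanishing is immediate, because $[1]$ is an autoequivalence of the stable category $\operatorname{QCoh}(X)$. The only care needed is the identification $\Delta^*p_1^* \simeq \operatorname{id}$, which holds because $p_1\Delta = \operatorname{id}$.

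For the second assertion, take $g: Y \to Z$ and assume $\mathbf{L}_{X/Y} \simeq 0$. We need $\mathbf{L}_{Y/Z}$ to exist for the statement to make sense, and we take this as implicit in the hypothesis. Under that assumption, Proposition \ref{prop:cotangentprops}(iii) shows that $\mathbf{L}_{X/Z}$ exists and sits in a fiber sequence $f^*\mathbf{L}_{Y/Z} \to \mathbf{L}_{X/Z} \to \mathbf{L}_{X/Y} \simeq 0$. The first map is therefore an equivalence.

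The main obstacle is bookkeeping rather than mathematics: we must check that Proposition \ref{prop:cotangentprops}(iii) applies to the composite $X \to X\times_YX \to X$ with the correct roles. Here $g$ is $p_2$, whose cotangent complex is known by base-change, and $gf = \operatorname{id}$. We must also check that the fiber sequence identifies the connecting term with a shift.
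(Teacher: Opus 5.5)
Your proof is correct, and it is the standard argument the paper alludes to; the paper gives no details beyond calling the proof standard. Base change gives $\mathbf{L}_{X\times_Y X/X}\simeq p_1^*\mathbf{L}_{X/Y}$, and the transitivity sequence for $X \xrightarrow{\Delta} X\times_Y X \xrightarrow{p_2} X$ then yields $\mathbf{L}_{X/X\times_Y X}\simeq \mathbf{L}_{X/Y}[1]$. The second assertion is the transitivity sequence for $X \to Y \to Z$ with vanishing third term, and you are right to read the existence of $\mathbf{L}_{Y/Z}$ as an implicit hypothesis there.
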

\egapar Corollary \ref{cor:formallyetalecotangent} will be most frequently applied in the case when $f$ is a monomorphism, e.g., when $f$ is the inclusion of an analytic subspace into a (derived) rigid space.
\egapar For notational ease, we have usually suppressed the equivalence $\Psi$ introduced in Paragraph \ref{par:PsiEquivalencePar}. For the sake of the next Lemma we temporarily suspend this notational suppression. In particular we recall the definition of $\operatorname{QCoh}^\mathscr{V}$ from Paragraph \ref{par:PsiEquivalencePar}. 
\begin{lem}\label{lem:analytificationofcotangent}
Let $f: X \to Y$ be a morphism in $\operatorname{Shv}_{\tau_\mathrm{dri}}(\operatorname{Aff})$. Let $\sigma : i_!i^*X \to X$ denote the morphism obtained from the counit of $i_! \dashv i^*$ and let $\Psi$ denote the equivalence from \eqref{eq:Psiequivalence}. If $\mathbf{L}^\mathscr{V}_{X/Y}$ exists and the object
\begin{equation}\label{eq:cotangenti*formula}
    \Psi_{i^*X} \sigma^*\mathbf{L}^\mathscr{V}_{X/Y}
\end{equation}
belongs to $\operatorname{Coh}^{\leqslant 0}(i^*X)$, then $\mathbf{L}_{i^*X/i^*Y}$ exists and is computed by the formula \eqref{eq:cotangenti*formula}. 
\end{lem}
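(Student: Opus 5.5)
We verify the universal property of Definition \ref{defn:CotangentComplex}(ii) directly, by transporting the derivation functor for $i^*X/i^*Y$ across the adjunction $i_! \dashv i^*$ and reducing it to the derivation functor for $X/Y$.

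Fix $Z \in \operatorname{Shv}_{\tau_\mathrm{rat}}(\operatorname{Afnd})_{/i^*Y}$ and $M \in \operatorname{Coh}^{\leqslant 0}(Z)$. The first step is to compare square-zero extensions on the two sides. The algebra $1_Z \oplus M$ is locally affinoid. Indeed, on an affinoid $U = \operatorname{Spec}(B) \to Z$ it is $B \oplus M|_U$ with $M|_U$ coherent and connective. Then $H^0 = H^0(B) \oplus H^0(M|_U)$ is a finite $H^0(B)$-algebra, hence classical affinoid, and the higher cohomology is finitely generated. Lemma \ref{lem:affinoidrelative spectrum} therefore gives $i_!(Z[M]) \simeq \underline{\operatorname{Spec}}^\mathscr{V}_{i_!Z}(1 \oplus \Psi^{-1}_Z M) = (i_!Z)[\Psi^{-1}_ZM]_\mathscr{V}$. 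Here I use $\Psi$ to identify $\operatorname{QCoh}(Z)$ with $\operatorname{QCoh}^\mathscr{V}(i_!Z)$, and note that $\Psi$ is compatible with trivial square-zero extensions, since it comes from right Kan extension of a natural equivalence.

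The second step uses adjunction, with all mapping spaces taken over the appropriate bases:
\[
\operatorname{Map}_{/i^*Y}(Z[M], i^*X) \simeq \operatorname{Map}_{/Y}(i_!(Z[M]), X) \simeq \operatorname{Map}_{/Y}((i_!Z)[\Psi^{-1}_ZM]_\mathscr{V}, X).
\]
Here $i_!Z$ maps to $Y$ via $i_!Z \to i_!i^*Y \to Y$. Both sides are fibered over the space of maps $Z \to i^*X$, equivalently $i_!Z \to X$, via the zero section. By the universal property of $\mathbf{L}^\mathscr{V}_{X/Y}$, the fiber over a point $h: i_!Z \to X$ is $\operatorname{Map}_{\operatorname{QCoh}(i_!Z)}(h^*\mathbf{L}^\mathscr{V}_{X/Y}, \Psi_Z^{-1}M)$.

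The third step rewrites this fiber. Let $h^\flat: Z \to i^*X$ be the adjoint map, so that $h$ factors as $\sigma \circ i_!(h^\flat)$. Then, by naturality of $\Psi$ in pullbacks,
\[
\Psi_Z h^*\mathbf{L}^\mathscr{V}_{X/Y} \simeq (h^{\flat})^*\Psi_{i^*X}\sigma^*\mathbf{L}^\mathscr{V}_{X/Y}.
\]
The fiber thus becomes $\operatorname{Map}_{\operatorname{QCoh}(Z)}((h^\flat)^*L, M)$ with $L := \Psi_{i^*X}\sigma^*\mathbf{L}^\mathscr{V}_{X/Y}$. Since $L \in \operatorname{Coh}^{\leqslant 0}(i^*X)$ by hypothesis, this is exactly the representability statement required in Definition \ref{defn:CotangentComplex}(ii), and so $\mathbf{L}_{i^*X/i^*Y} \simeq L$.

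I expect the main obstacle to be coherence and naturality rather than any single computation. The identifications must be functorial in $(Z,M)$ across the Cartesian fibration $\operatorname{Coh}^{\leqslant 0}_{/i^*Y}$, and the base-point data must match under adjunction. This includes checking that the equivalence of Lemma \ref{lem:affinoidrelative spectrum} is compatible with the zero sections and with $\Psi$. I would handle this by checking everything on affinoid $Z$ first, where $i_!$ is the inclusion and all statements are tautological, and then extending by descent in $Z$. This extension is legitimate because both derivation functors send colimits in $Z$ to limits: $\underline{\operatorname{Spec}}$ and $i_!$ are compatible with base change and colimits, by Lemma \ref{lem:RelativespecLemma1}(iv) and because $i_!$ is a left adjoint.
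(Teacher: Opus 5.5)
Your proposal is correct and is essentially the paper's proof. Both arguments use the adjunction $i_! \dashv i^*$, the naturality of $\Psi$ (giving $u^*\Psi_{i^*X}\sigma^*\mathbf{L}^\mathscr{V}_{X/Y} \simeq \Psi_Z(\sigma \circ i_!(u))^*\mathbf{L}^\mathscr{V}_{X/Y}$), the universal property of $\mathbf{L}^\mathscr{V}_{X/Y}$, and Lemma \ref{lem:affinoidrelative spectrum} to identify $i_!(Z[M])$ with $(i_!Z)[\Psi_Z^{-1}M]_\mathscr{V}$; the only addition on your side is the explicit check that $1_Z \oplus M$ is locally affinoid, which the paper leaves implicit.
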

\begin{proof}
By definition, a map from $(Z, N) \in \operatorname{QCoh}_{/i^*Y}$ to $(i^*X, \Psi_{i^*X} \sigma^*\mathbf{L}^\mathscr{V}_{X/Y})$ is equivalent to the datum of a map $u: Z \to i^*X$ over $i^*Y$ together with a morphism $\varsigma: u^*\Psi_{i^*X} \sigma^* \mathbf{L}^\mathscr{V}_{X/Y} \to N$ in $\operatorname{QCoh}(Z)$. Adjunction induces an equivalence $\operatorname{Map}_{/i^*Y}(Z, i^*X) \simeq \operatorname{Map}_{/Y}(i_!Z,X)$ which sends $u$ to $u^\prime := \sigma \circ i_!(u)$. By naturality of $\Psi$ there is a diagram 
\begin{equation}
\begin{tikzcd}
	{\operatorname{QCoh}^\mathscr{V}(X)} & {\operatorname{QCoh}^\mathscr{V}(i_!i^*X)} & {\operatorname{QCoh}^\mathscr{V}(i_!Z)} \\
	& {\operatorname{QCoh}(i^*X)} & {\operatorname{QCoh}(Z)}
	\arrow["{\sigma^*}", from=1-1, to=1-2]
	\arrow["{i_!(u)^*}", from=1-2, to=1-3]
	\arrow["{\Psi_{i^*X}}"', from=1-2, to=2-2]
	\arrow["{\Psi_Z}", from=1-3, to=2-3]
	\arrow["{u^*}", from=2-2, to=2-3]
\end{tikzcd}
\end{equation}
hence $u^*\Psi_{i^*X} \sigma^* \mathbf{L}^\mathscr{V}_{X/Y} \simeq \Psi_Zu^{\prime,*}\mathbf{L}_{X/Y}^\mathscr{V}$. Let $\Psi_Z^{-1}$ denote a quasi-inverse to $\Psi_Z$ and put $N^\prime := \Psi_Z^{-1}N$, $\varsigma^\prime := \Psi_Z^{-1}\varsigma$. In this way we obtain the first in the following chain of equivalences, informally given by $u \mapsto u^\prime, \varsigma \mapsto \varsigma^\prime$:
\begin{equation}
\begin{aligned}
\operatorname{Map}_{\operatorname{QCoh}_{/i^*Y}}((Z,N), (i^*X, \Psi_{i^*X} \sigma^*\mathbf{L}^\mathscr{V}_{X/Y})) &\simeq \operatorname{Map}_{\operatorname{QCoh}^\mathscr{V}_{/Y}}((i_!Z,N^\prime), (X,  \mathbf{L}_{X/Y}^\mathscr{V})) \\
    &\simeq \operatorname{Map}_{/Y}( (i_!Z)[N^\prime]_\mathscr{V}, X) \\
    &\simeq \operatorname{Map}_{/Y}(i_!Z[N], X) \\
    &\simeq \operatorname{Map}_{/i^*Y}(Z[N], i^*X), 
\end{aligned}
\end{equation}
here the second equivalence is by definition and the third is from Lemma \ref{lem:affinoidrelative spectrum}. Hence $\mathbf{L}_{i^*X/i^*Y}$ exists and is computed by the formula \eqref{eq:cotangenti*formula}. 
\end{proof}
\begin{thm}\label{thm:CoherentCotangent}
Let $f: X \to Y$ be any morphism in $\operatorname{dRig}$. Then the cotangent complex $\mathbf{L}_{X/Y} \in \operatorname{Coh}^{\leqslant 0}(X)$ exists. 
\end{thm}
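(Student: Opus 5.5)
The plan is to reduce to affinoids, compute the cotangent complex there using $\mathscr{V}$-linear derived algebra, and then return to the analytic setting via Lemma \ref{lem:analytificationofcotangent}. The argument runs in three steps.

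\emph{Step 1: reduce to affinoid source and target.} Choose an effective epimorphism $\coprod_j V_j \twoheadrightarrow Y$ by affinoid analytic subspaces. Proposition \ref{prop:cotangentprops}(ii) then reduces us to $Y = \operatorname{Spec}(A)$ affinoid. Next cover $X$ by affinoid analytic subspaces $U_i \hookrightarrow X$. Each $U_i \to X$ is a monomorphism and is local on the base, so $\mathbf{L}_{U_i/X} \simeq 0$ by Corollary \ref{cor:formallyetalecotangent}. Proposition \ref{prop:cotangentprops}(iii) then gives $\mathbf{L}_{U_i/Y} \simeq \mathbf{L}_{X/Y}|_{U_i}$. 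These local objects are canonically compatible on overlaps, because the overlaps are again analytic subspaces. The functor $\operatorname{Der}_{X/Y}$ is a sheaf in $X$, and Lemma \ref{lem:coherentdescent} gives descent for $\operatorname{Coh}^{\leqslant 0}$. Together these glue the $\mathbf{L}_{U_i/Y}$ into an object of $\operatorname{Coh}^{\leqslant 0}(X)$ that represents $\operatorname{Der}_{X/Y}$. We are reduced to the case $X = \operatorname{Spec}(B)$ and $Y = \operatorname{Spec}(A)$ with $A \to B$ in $\operatorname{AfndAlg}$.

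\emph{Step 2: the affinoid case.} In $\operatorname{Shv}_{\tau_{\mathrm{dri}}}(\operatorname{Aff})$, the morphism $i_!X \to i_!Y$ is the morphism of representables $\operatorname{Spec}(B) \to \operatorname{Spec}(A)$. The algebraic cotangent complex $\mathbf{L}^{\mathscr{V}}_{B/A} \in \operatorname{Mod}_B\mathscr{V}$ exists in the derived algebraic context $\mathscr{V}$, constructed via $\operatorname{LSym}$ as in Raksit's framework. It corepresents derivations into square-zero extensions $B \oplus M$. Using Lemma \ref{lem:RelSpecRepresent} and descent (Lemma \ref{lem:NCdescent}), this extends to the statement that $\mathbf{L}^\mathscr{V}_{i_!X/i_!Y}$ exists. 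Proposition \ref{prop:analyticinclusion} says $i_!$ is fully faithful, so $i^*i_! \simeq \operatorname{id}$ and $\sigma$ is the identity. Lemma \ref{lem:analytificationofcotangent} then reduces everything to showing $\mathbf{L}^\mathscr{V}_{B/A} \in \operatorname{Coh}^{\leqslant 0}(X)$. Connectivity is automatic, since $A$ and $B$ are connective.

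\emph{Step 3: coherence, the main obstacle.} Here $\mathbf{L}^\mathscr{V}_{B/A}$ must be replaced by something finite. First factor $A \to B$ as
\[
A \to A\langle T_1,\dots,T_n\rangle \to B,
\]
where the second map is surjective on $H^0$; this is possible because $H^0B$ is topologically of finite type. For the first map, $\mathbf{L}^{\mathscr{V}}$ of the Tate algebra over $K$ should be the free module $\bigoplus_i K\langle T\rangle\, dT_i$. This has to be checked: the point is that $K[T] \to K\langle T\rangle$ is a homotopy epimorphism, so it is formally étale and $\mathbf{L}_{K\langle T\rangle/K[T]} = 0$. This is the key analytic input already used in the proof of Proposition \ref{prop:affineline}. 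Base change along $A \to A\langle T\rangle$ handles the general first map.

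For the second map $C := A\langle T\rangle \to B$, the transitivity sequence of Proposition \ref{prop:cotangentprops}(iii) reduces coherence to that of $\mathbf{L}_{B/C}$ for a map that is surjective on $H^0$. There I would argue by connectivity estimates, as in Lurie's proof for almost finitely presented maps. Since $H^0C \to H^0B$ is surjective, $\mathbf{L}_{B/C}$ is $1$-connective, with $H^{-1}\mathbf{L}_{B/C} \cong I/I^2$. The Noetherian hypotheses on $H^0$ and on the $H^nB$ give finite generation of $I/I^2$. Inductively killing homotopy by attaching finitely many cells, using rational presentations as in Lemma \ref{lem:derivedrational} and the Tor spectral sequence argument of Lemma \ref{lem:coherentdescent}, shows that each $H^{n}\mathbf{L}_{B/C}$ is finitely generated. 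Doing this attachment carefully in $\mathscr{V}$ is the main obstacle. One must know that cell attachments along maps $K \to C$ produce affinoid algebras, whose homotopy modules are again finitely generated. One must also know that the completed tensor products appearing agree with the algebraic ones on finitely generated modules, which is \cite[Proposition 2.3.88]{DAnG}.
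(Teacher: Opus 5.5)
Your outline is viable, but it takes a genuinely different route from the paper's, and the division of labour is essentially reversed. The paper imports existence \emph{and} pseudo-coherence. By \cite[Proposition 8.12]{AnalyticHKR} there is already some $\mathbf{L}_{X/Y}\in\operatorname{Coh}^{-}(X)$ representing $\operatorname{Der}_{X/Y}$, so the only thing left to check is connectivity. Because a global object exists from the outset, the reduction can run top-down. The paper does descent on $Y$ via Proposition~\ref{prop:cotangentprops}(i) and Lemma~\ref{lem:coherentdescent}, then uses $g^*\mathbf{L}_{X/Y}\simeq\mathbf{L}_{B/A}$ for affinoid $g\colon U\hookrightarrow X$ by Corollary~\ref{cor:formallyetalecotangent}. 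It ends with \cite[Corollary 2.1.25]{DAnG} for connectivity of $\mathbf{L}_{B/A}$. So what you call ``automatic'' is precisely what the paper verifies, and what you call the main obstacle is precisely what it cites. Your route builds $\mathbf{L}_{X/Y}$ bottom-up from the $\mathscr{V}$-linear cotangent complex of affinoids through Lemma~\ref{lem:analytificationofcotangent}. It then proves coherence by hand: a Tate-algebra factorization, formal \'etaleness of $K[T]\to K\langle T\rangle$ as in Proposition~\ref{prop:affineline}, and an analytic version of ``almost finitely presented implies almost perfect cotangent complex''. That is self-contained and explains \emph{why} coherence holds, but it costs far more than the paper's three lines.

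As written, though, two steps need repair.

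In Step~3, the identification $H^{-1}\mathbf{L}_{B/C}\cong I/I^2$ is false in general. By Hurewicz it is $H^0(B\hat{\otimes}^{\mathbf{L}}_C F)$ with $F=\operatorname{Fib}(C\to B)$, and $H^0(F)$ is an extension of $I$ by $\operatorname{coker}(H^{-1}C\to H^{-1}B)$. For example, with $C=K$ and $B=K\hat{\otimes}^{\mathbf{L}}_{K[T]}K$ one has $I=0$ but $H^{-1}\mathbf{L}_{B/C}\cong K$. Finite generation survives, so this is harmless. More substantively, quotients along maps $K\to C'$ only kill $H^0$ of the fiber. To make $C'\to B$ highly connective you must also attach cells along $K[m]\to C'$ for $m\geqslant 1$, i.e.\ push out along $\operatorname{LSym}_K(K[m])\to K$. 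You must then check these pushouts stay in $\operatorname{AfndAlg}$, which uses that $\operatorname{LSym}_K(K[m])$ has degreewise finite-dimensional homotopy; this is non-trivial in positive characteristic. Only then is $\mathbf{L}_{C'/C}$ a finite cell module and $\mathbf{L}_{B/C'}$ highly connective by \cite[Theorem 2.1.21]{DAnG}.

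In Step~1, Corollary~\ref{cor:formallyetalecotangent} and Proposition~\ref{prop:cotangentprops}(iii) presuppose that $\mathbf{L}_{U_i/X}$ and $\mathbf{L}_{X/Y}$ exist, so the argument is circular as stated. In a bottom-up approach you must show directly that $\operatorname{Der}_{U/X}$ is trivial for an analytic subspace $U$. For instance, for affinoid $Z$ the map $Z\to Z[M]$ is a nilpotent thickening, hence a homeomorphism on underlying spaces, so Theorem~\ref{thm:topologicalinvariance} applies. You should then glue over the poset of affinoid analytic subspaces of $X$, where the transition maps are equivalences, rather than over a cover whose overlaps need not be affinoid.
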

\begin{proof}
By \cite[Proposition 8.12]{AnalyticHKR} there exists a complex $\mathbf{L}_{X/Y} \in \operatorname{Coh}^{-}(X)$ such that $(X, \mathbf{L}_{X/Y})$ represents the functor $\operatorname{Der}_{X/Y}$. What we need to show is that $\mathbf{L}_{X/Y} \in \operatorname{Coh}^{\leqslant 0}(X)$. By descent on $Y$ using Proposition \ref{prop:cotangentprops}(i) and Lemma \ref{lem:coherentdescent} we reduce to the case when $Y = \operatorname{Spec}(A)$ is affinoid. By Corollary \ref{cor:formallyetalecotangent}, for every affinoid subspace $g: U=\operatorname{Spec}(B) \hookrightarrow X$ there is an equivalence $g^*\mathbf{L}_{X/Y} \simeq \mathbf{L}_{B/A}$. Hence by descent on $X$ using Lemma \ref{lem:coherentdescent} it suffices to show that, for a morphism $A \to B$ of affinoid algebras, $\mathbf{L}_{B/A}$ is connective. This follows from \cite[Corollary 2.1.25]{DAnG}. 
\end{proof}
\begin{lem}
\begin{enumerate}[(i)]
    \item Let $X \in \operatorname{Shv}_{\tau_\mathrm{dri}}(\operatorname{Aff})$ and let $E \in \operatorname{QCoh}(X)$. Then the cotangent complex of $\varrho: V_X^\mathscr{V}(E) \to X$ exists and equals $\varrho^*E$.
    \item Let $X \in \operatorname{Shv}_{\tau_\mathrm{rat}}(\operatorname{Afnd})$ and let $E \in \operatorname{Coh}^{\leqslant 0}(X)$. Then the cotangent complex of $\varrho: V_X^\mathrm{an}(E) \to X$ exists and equals $\varrho^*E$. 
\end{enumerate}
\end{lem}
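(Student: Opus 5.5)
We verify the universal property of Definition \ref{defn:CotangentComplex} directly from the functor of points of $V_X(E)$ in Definition \ref{defn:vectorbundles}. For (i), fix $(Z,M) \in \operatorname{QCoh}_{/X}$; by the universal property of $\varrho^*E$ it suffices to treat objects of $\operatorname{QCoh}_{/V_X^\mathscr{V}(E)}$, that is, pairs with a structure map $u: Z \to V^\mathscr{V}_X(E)$. Write $p: Z[M]_\mathscr{V} \to Z$ for the projection and $s: Z \to Z[M]_\mathscr{V}$ for the zero section. By Lemma \ref{lem:RelSpecRepresent} we have $p_*1 \simeq 1_Z \oplus M$. Composing with $Z \to X$, Definition \ref{defn:vectorbundles}(i) together with adjunction (extended from affines to arbitrary $Z$ by descent) identifies maps $Z[M]_\mathscr{V} \to V^\mathscr{V}_X(E)$ over $X$ with $\operatorname{Map}_{\operatorname{QCoh}(Z)}(h^*E, 1_Z \oplus M)$, where $h: Z \to X$. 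Under this identification, restricting along $s$ corresponds to composing with the projection $1_Z\oplus M \to 1_Z$. Hence the space of maps restricting to a given $u$, which corresponds to $\phi: h^*E \to 1_Z$, is the fiber over $\phi$ of
\[
\operatorname{Map}(h^*E, 1_Z) \times \operatorname{Map}(h^*E, M) \to \operatorname{Map}(h^*E, 1_Z).
\]
This fiber is $\operatorname{Map}_{\operatorname{QCoh}(Z)}(h^*E, M) \simeq \operatorname{Map}_{\operatorname{QCoh}(Z)}(u^*\varrho^*E, M)$, naturally in $(Z,M)$. This is exactly the representability statement for $\operatorname{Der}_{V^\mathscr{V}_X(E)/X}$ by $(V^\mathscr{V}_X(E), \varrho^*E)$.

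The step that needs care is making this identification of maps natural and valid for non-affine $Z$. I would first reduce to $Z$ affine, using that both sides send colimits in $Z$ to limits: $Z \mapsto Z[M]$ is compatible with base change by Lemma \ref{lem:RelativespecLemma1}(iii), and $\operatorname{QCoh}$ satisfies descent. For affine $Z=\operatorname{Spec}(B)$ the argument is concrete: $Z[M]_\mathscr{V} = \operatorname{Spec}(B\oplus M)$, and a map to $V(E)$ over $X$ is a module map $E \to f_*(B \oplus M)$. The splitting $B\oplus M \simeq B \times M$ of modules then gives the fiber sequence above.

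For (ii) the same argument works verbatim with $\underline{\operatorname{Spec}}^\mathrm{an}$, $\operatorname{Afnd}$ and the rational topology, using Lemma \ref{lem:RelativespecLemma1}(ii),(iv) and Definition \ref{defn:vectorbundles}(ii). We only need to check that $\varrho^*E \in \operatorname{Coh}^{\leqslant 0}(V^\mathrm{an}_X(E))$, and this holds because pullback preserves $\operatorname{Coh}^{\leqslant 0}$ by Lemma \ref{lem:coherentdescent}. Alternatively, (ii) can be deduced from (i) via Lemma \ref{lem:analytificationofcotangent}, using Lemma \ref{lem:affinoidrelative spectrum} to identify $i^*V^\mathscr{V}_{i_!X}(E)$ with $V^\mathrm{an}_X(E)$. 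I expect the direct argument to be simpler, so I will use it.
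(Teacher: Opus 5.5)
Your proposal is correct and follows the paper's route. The paper's proof is the one-line observation that both parts follow by comparing Definition \ref{defn:CotangentComplex} with the functor of points of $V_X(E)$ from Definition \ref{defn:vectorbundles}, and your computation of the fiber of $\operatorname{Map}(h^*E,1_Z\oplus M)\to\operatorname{Map}(h^*E,1_Z)$ is exactly that comparison made explicit. In (ii), note that $Z[M]\simeq\operatorname{Spec}(B\oplus M)$ for affinoid $Z=\operatorname{Spec}(B)$ uses that $B\oplus M$ is a derived affinoid algebra, which is where $M\in\operatorname{Coh}^{\leqslant 0}(Z)$ is needed.

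For your alternative route, the identification $i^*V^\mathscr{V}_{i_!X}(E)\simeq V^\mathrm{an}_X(E)$ is Lemma \ref{lem:RelativespecLemma1}(v), not Lemma \ref{lem:affinoidrelative spectrum}. The latter requires a locally affinoid algebra, so it does not apply to $\operatorname{LSym}_XE$.
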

\begin{proof}
Both statements follow easily by comparing the definition of the cotangent complex with the functor of points of $V_X^\mathscr{V}(E)$ (resp. $V_X^\mathrm{an}(E)$) as in Definition \ref{defn:vectorbundles}.
\end{proof}
\begin{defn}\label{defn:Derivedsmooth}
Let $f: X \to Y$ be a morphism of derived rigid spaces and let $d \geqslant 0$.
\begin{enumerate}[(i)]
    \item We say that $f$ is \emph{smooth} (resp. \emph{smooth of dimension $d$}) if $\mathbf{L}_{X/Y}$ is, locally in the analytic topology on $X$, free of finite rank (resp. free of rank $d$), and $t_!(f)$ is a smooth morphism of classical rigid spaces in the sense of \cite[Definition 1.6.5]{HuberEtale}.
    \item We say that $f$ is \emph{\'etale} (resp. \emph{finite \'etale}) if $\mathbf{L}_{X/Y} \simeq 0$ and $t_!(f)$ is an \'etale (resp. finite \'etale) morphism of classical rigid spaces in the sense of \cite[Definition 1.6.5]{HuberEtale}. 
\end{enumerate}
\end{defn}
\begin{lem}\label{lem:finiteetale}
Let $f: A \to B$ be an \'etale morphism between derived affinoid algebras. Then  $f$ is finite \'etale in the sense of Definition \ref{defn:Derivedsmooth} if and only if $B$ is a retract of a finite free $A$-module. 
\end{lem}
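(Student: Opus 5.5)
Throughout, $f\colon A \to B$ is étale, so $\mathbf{L}_{B/A} \simeq 0$ and $H^0(A) \to H^0(B)$ is an étale morphism of classical affinoid algebras. The strategy is to reduce both directions to statements about $H^0$, using two facts. First, a morphism with vanishing cotangent complex is \emph{derived strong}. Second, over a connective base, the property ``retract of a finite free module'' (finite projective) is detected on $H^0$ for modules that are flat in the derived sense.

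\emph{Step 1: $f$ is derived strong.} This should follow from $\mathbf{L}_{B/A} \simeq 0$ by the standard Postnikov argument. Consider the square $A \to B$ over $H^0(A) \to H^0(A) \hat{\otimes}^\mathbf{L}_A B$. The vanishing of $\mathbf{L}_{B/A}$ lets us lift the classical étale algebra $H^0(B)$ uniquely along the square-zero extensions $\tau_{\geqslant -n-1}A \to \tau_{\geqslant -n}A$, using the deformation theory of Definition \ref{defn:CotangentComplex}. The unique lift over $\tau_{\geqslant -n}A$ is $\tau_{\geqslant -n}A \hat{\otimes}^\mathbf{L}_{H^0(A)} H^0(B)$, and the derived strongness isomorphism $H^n(A)\hat{\otimes}_{H^0(A)}H^0(B)\xrightarrow{\sim}H^n(B)$ follows inductively. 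To compute Tor over $H^0(A)$ with the étale (hence flat) $H^0(B)$, I would use the Tor spectral sequence as in the proof of Lemma \ref{lem:coherentdescent}. I expect this to be the main technical obstacle. If it is already available in \cite{SoorThesis} or \cite{DAnG} (derived étale implies derived strong, flatness of étale maps of affinoids), I would cite it instead.

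\emph{Step 2 ($\Leftarrow$).} Suppose $B$ is a retract of $A^{\oplus n}$ in $\operatorname{Mod}_A\mathscr{V}$. Applying $H^0(A)\hat{\otimes}^\mathbf{L}_A(-)$ and then $H^0$ shows that $H^0(B)$ is a retract of $H^0(A)^{\oplus n}$. This uses derived strongness from Step 1, which identifies $H^0(H^0(A)\hat{\otimes}^\mathbf{L}_A B) \cong H^0(B)$. Hence $H^0(A) \to H^0(B)$ is étale and finite, i.e.\ finite étale classically. Since $\mathbf{L}_{B/A}\simeq 0$ is given, $f$ is finite étale in the sense of Definition \ref{defn:Derivedsmooth}.

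\emph{Step 3 ($\Rightarrow$).} Suppose $H^0(A)\to H^0(B)$ is finite étale. Then $H^0(B)$ is a finitely generated flat module over the Noetherian ring $H^0(A)$, hence projective, hence a retract of $H^0(A)^{\oplus n}$ via idempotent $e$. Because $A^{\oplus n}$ is projective in the connective part (cf.\ $\mathscr{V}^{\leqslant 0}\simeq\operatorname{sInd}(\mathscr{V}^0)$), the surjection $H^0(A)^{\oplus n}\to H^0(B)$ lifts to $A^{\oplus n}\to B$, and the idempotent $e$ lifts to an idempotent $P$ on $A^{\oplus n}$ with $H^0(P)=e$; such lifts of idempotents along the Postnikov tower are standard. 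Let $P A^{\oplus n}$ be the retract. This is an $A$-module $Q$ with $H^m(Q)\cong H^m(A)\hat\otimes_{H^0(A)}H^0(B)$. Using Step 1, the comparison map $Q\to B$ induces isomorphisms on all cohomology, hence is an equivalence, by left completeness/conservativity of cohomology on connective objects of $\mathscr{V}$. Hence $B$ is a retract of $A^{\oplus n}$.
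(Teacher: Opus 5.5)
Your proposal is correct and takes essentially the same route as the paper. The ``if'' direction goes by applying the additive functor $H^0$; your detour through $H^0(A)\hat{\otimes}^\mathbf{L}_A(-)$ is harmless, and derived strongness is not needed there. The ``only if'' direction lifts the retraction and the idempotent from $H^0(A)^{\oplus n}$ to $A^{\oplus n}$, splits the idempotent, and compares two derived strong $A$-modules that agree on $H^0$, following \cite[Lemma 2.2.2.2]{toen_homotopical_2008}.

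The paper likewise takes derived strongness of $B$ for granted; it is Lemma \ref{lem:smoothderivedstrong}, via \cite[Proposition 2.6.160]{DAnG}. So citing it is the right call, and you should drop your Step 1 Postnikov sketch: as written, the expression $\tau_{\geqslant -n}A \hat{\otimes}^\mathbf{L}_{H^0(A)} H^0(B)$ is ill-typed, since there is no map $H^0(A) \to \tau_{\geqslant -n}A$.
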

\begin{proof}
 The \emph{if} part of the statement is clear since $H^0$ is an additive functor. For the \emph{only if} part, we follow the same argument as in \cite[Lemma 2.2.2.2]{toen_homotopical_2008}. Since $H^0(B)$ is finite projective as a $H^0(A)$-module there is some $n \geqslant 0$ and a sequence $H^0(B) \xrightarrow[]{i} H^0(A)^{\oplus n} \xrightarrow[]{r} H^0(B)$ of $H^0(A)$-modules such that $ri = \operatorname{id}$. The morphisms $r$ and $p:= ir$ can be lifted to morphisms $\tilde{r}: A^{\oplus n} \to B$ and $\tilde{p}: A^{\oplus n} \to A^{\oplus n}$ which are unique up to homotopy, and further one has $\tilde{p} \tilde{p} \simeq \tilde{p}$ and $\tilde{r}\tilde{p} \simeq \tilde{r}$. The projector $\tilde{p}$ then gives rise to a split fiber sequence $M \to A^{\oplus n} \to N$ and $\tilde{r}$ induces a morphism $N \to B$. This is a morphism between derived strong $A$-modules which is an isomorphism on $H^0$, hence, it is an isomorphism. 
\end{proof}
\begin{lem}\label{lem:classicalsmooth}
Let $f: \operatorname{Spec}(B) \to \operatorname{Spec}(A)$ be a smooth morphism (in the sense of \cite[Definition 1.6.5]{HuberEtale}) between classical affinoids. Then:
\begin{enumerate}[(i)]
    \item For any finitely-generated $A$-module $M$ the canonical morphism $B \hat{\otimes}_A^\mathbf{L}M \to B \hat{\otimes}_AM$ is an equivalence;
    \item For any morphism $A \to C$ of classical affinoid algebras the canonical morphism $B \hat{\otimes}_A^\mathbf{L}C \to B \hat{\otimes}_AC$ is an equivalence; 
    \item If $f$ is \'etale, then $\mathbf{L}_{B/A} \simeq 0$. In general, $\mathbf{L}_{B/A}$ is, locally in the analytic topology on $\operatorname{Spec}(B)$, free of finite rank. In particular, a morphism of classical rigid spaces is smooth in the sense of \cite[Definition 1.6.5]{HuberEtale} and only if it is smooth in the sense of Definition \ref{defn:Derivedsmooth}. 
\end{enumerate}
\end{lem}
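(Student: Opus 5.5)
We reduce everything to the local structure of classical smooth morphisms from \cite{HuberEtale}: locally on $\operatorname{Spec}(B)$, $f$ factors as an étale morphism $\operatorname{Spec}(B) \to \operatorname{Spec}(A\langle T_1,\dots,T_d\rangle)$ followed by the projection to $\operatorname{Spec}(A)$. Moreover, étale morphisms of classical affinoids are locally compositions of rational localizations and finite étale maps. Since rational subdomains of $\operatorname{Spec}(B)$ are flat over $B$, we may pass to a rational cover of $\operatorname{Spec}(B)$ in all three parts. For (i) and (ii), this is checked by the fact that a derived tensor product is discrete if and only if it is discrete on each member of a rational cover. For (iii), freeness is local by Corollary \ref{cor:formallyetalecotangent}.

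\textbf{(i) and (ii).} We establish flatness-type statements for each building block and then compose them.
\begin{enumerate}[(a)]
\item For $A \to A\langle T_1,\dots,T_d\rangle$, the algebra $A\langle T\rangle$ is $A \hat{\otimes}_K K\langle T\rangle$ with $K\langle T\rangle \cong c_0(\mathbf{N}^d)$. Hence $A\langle T\rangle \hat{\otimes}^\mathbf{L}_A M \simeq K\langle T\rangle \hat{\otimes}^\mathbf{L}_K M$. This is discrete because $c_0(S)$ is projective in the sense of the derived algebraic context, so $c_0(S)\hat{\otimes}_K(-)$ is exact. Its value is $M\langle T\rangle = M \hat{\otimes}_A A\langle T\rangle$ for $M$ finitely generated, using that $M$ is a quotient of $A^n$ by a closed submodule.
\item For rational localizations, Lemma \ref{lem:derivedrational} gives $A\langle a/a_0\rangle$ as a derived quotient of $A\langle T\rangle$ by $(a_0T_i - a_i)$. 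On a classical $A$, this derived rational localization is strong and agrees with the classical rational localization, which is flat. Applying the derived strong property to $M$ gives (i) for this step.
\item For finite étale maps, $B$ is a finite projective $A$-module, so $B\hat{\otimes}^\mathbf{L}_A M \simeq B\otimes_A M$ is discrete.
\end{enumerate}
Composing the three steps gives (i). Part (ii) follows from (i) by writing $B \hat{\otimes}^\mathbf{L}_A C$ locally as $C\langle T\rangle$ followed by rational and finite étale steps over $C$; each step preserves discreteness by the same computations, applied with $C$ as the base.

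\textbf{(iii).} Use the transitivity sequence of Proposition \ref{prop:cotangentprops}(iii) together with the explicit cotangent complexes of the building blocks.
\begin{enumerate}[(a)]
\item For $A\to A\langle T\rangle$, we have $\mathbf{L}\simeq A\langle T\rangle^{\oplus d}$. This is the analytic version of the polynomial computation, since $\mathbf{A}^{d}$-type maps represent $V(1^{\oplus d})$ on the closed disk up to the Tate condition. Alternatively, cite \cite[Corollary 2.1.25]{DAnG} and the standard computation there.
\item Rational localizations are homotopy epimorphisms, so $\mathbf{L}\simeq 0$ by Corollary \ref{cor:formallyetalecotangent}.
\item For finite étale maps, $\mathbf{L}\simeq 0$. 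To see this, reduce via (ii) to the diagonal $B\hat{\otimes}_A B\to B$, which is a projection onto a direct factor since $B$ is finite étale. It is therefore a Zariski-open and closed idempotent localization, so $\mathbf{L}_{B/B\otimes_A B}=0$, and Corollary \ref{cor:formallyetalecotangent} applies.
\end{enumerate}
Hence étale maps have $\mathbf{L}=0$, and in general $\mathbf{L}_{B/A}$ is locally free of rank $d$. The equivalence of the two notions of smoothness is then immediate from Definition \ref{defn:Derivedsmooth}, since $t_!$ is the identity on classical spaces.

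\textbf{Main obstacle.} The delicate point is step (a) of (i)/(ii): identifying $K\langle T\rangle\hat{\otimes}^\mathbf{L}_K(-)$ with the naive completed tensor product on finitely generated, and more generally classical affinoid, modules. I would try to deduce this from the exactness of $c_0(S)\hat\otimes_K$ on $\operatorname{IndBan}_K$ (strict exactness) and the fact that finitely generated affinoid modules carry their canonical Banach structure with closed submodules.
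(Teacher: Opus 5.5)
Your proposal is correct and follows essentially the same route as the paper. You localize on $\operatorname{Spec}(B)$ and use Huber's structure theorem to reduce to two cases: a polydisk projection, handled by flatness of the Tate algebra $c_0(\mathbf{N}^d)$, and an \'etale map, reduced via de Jong--van der Put to rational localizations and finite \'etale maps; for (iii) you then use the transitivity sequence. The one real difference is the \'etale case of (iii). The paper argues in one stroke that the diagonal $B\hat{\otimes}_A B \to B$ of an \'etale map is an affinoid localization, hence by (ii) a homotopy epimorphism, so $A \to B$ is formally unramified and therefore formally \'etale. You instead check $\mathbf{L}=0$ separately for rational localizations and for finite \'etale maps (where the diagonal splits off a direct factor), then assemble the pieces via transitivity.
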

\begin{proof}
(i): Let $M$ be a finitely-generated $A$-module; we need to show that the canonical morphism $B \hat{\otimes}^\mathbf{L}_AM \to B \hat{\otimes}_A M$ is an equivalence. Now, if $\{B \to B_i\}_i$ is a family of rational localizations corresponding to a rational cover of $X = \operatorname{Spec}(B)$, then the $ B_i \hat{\otimes}^\mathbf{L}_B (-)$ form a conservative family. Moreover, rational localizations are transversal to finitely-generated modules. Hence the property (i) is local in the analytic topology on $\operatorname{Spec}(B)$.

By the structure theorem for smooth morphisms \cite[Corollary 1.6.10]{HuberEtale} we reduce to the case when $f$ is \'etale or $f$ is the projection off a polydisk. The latter case is clear since the Tate algebra $K \langle T_1, \dots, T_n \rangle$ is projective as a $K$-Banach space. For \'etale morphisms, by \cite[Proposition 3.1.4]{vanderPutetale} the claim reduces to the case when $f$ is finite \'etale. This is clear, since then $B$ is a finite projective $A$-module.  (ii): The proof is the same as in (i), except we use that rational localizations of affinoid algebras are transversal to arbitrary morphisms of affinoid algebras, c.f. \cite[Lemma 5.14]{BBKNonArch}. (iii): Let us first treat the \'etale case. By definition, the morphism $B \hat{\otimes}_A B \to B$ is an affinoid localization, in particular a homotopy epimorphism. Hence by (ii), the morphism $B \hat{\otimes}^\mathbf{L}_A B \to B$ is a homotopy epimorphism, so $A \to B$ is formally unramified in the sense of \cite[Definition 2.1.39]{DAnG}. But being formally unramified is equivalent to being formally \'etale \cite[Proposition 2.1.41]{DAnG}, that is to say, $\mathbf{L}_{B/A} \simeq 0$. In general, if $f$ is smooth then $f$ factors locally in the analytic topology on $\operatorname{Spec}(B)$ as an \'etale morphism followed by the projection off a polydisk, and one uses the fiber sequence for cotangent complexes, that is, Proposition \ref{prop:cotangentprops}(iii). 
\end{proof}
\begin{lem}\label{lem:smoothderivedstrong}
Let $f: \operatorname{Spec}(B) \to \operatorname{Spec}(A)$ be a smooth morphism of derived affinoids. Then $f$ is derived strong, i.e., for every $n \leqslant 0$ the canonical morphism 
\begin{equation}
H^0(B) \hat{\otimes}^\mathbf{L}_{H^0(A)} H^n(A)  \to H^n(B) 
\end{equation}
is an equivalence. 
\end{lem}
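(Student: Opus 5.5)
The plan is to reduce to a case where derived strongness can be checked directly, via locality on $\operatorname{Spec}(B)$ and a factorization of $f$. First, derived strongness of $A \to B$ is local in the rational topology on $\operatorname{Spec}(B)$. Let $\{B \to B_i\}_i$ be a derived rational cover. Each $B \to B_i$ is derived strong, and $H^0(B) \to H^0(B_i)$ is flat; this uses the transversality of rational localizations to finitely generated modules from the proof of Lemma \ref{lem:classicalsmooth}(i). Hence $H^n(B_i) \simeq H^0(B_i) \hat{\otimes}_{H^0(B)} H^n(B)$, and the comparison morphism for $A \to B_i$ factors as the comparison morphism for $A \to B$ base-changed along $H^0(B) \to H^0(B_i)$. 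Since the functors $H^0(B_i) \hat{\otimes}^{\mathbf{L}}_{H^0(B)}(-)$ are jointly conservative on finitely generated modules, it suffices to treat each $B_i$. Composites of derived strong morphisms are derived strong by the same kind of computation. So I may replace $f$ by any local factorization of it.

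Next I would use the classical structure theorem \cite[Corollary 1.6.10]{HuberEtale}. Locally on $t_!\operatorname{Spec}(B)$, the map $t_!(f)$ factors as an étale map followed by the projection from a polydisk. I lift this factorization to the derived level by Theorem \ref{thm:topologicalinvariance}. Choose functions $T_1,\dots,T_d$ in $H^0(B)$ whose differentials form a local basis; since $\mathbf{L}_{B/A}$ is locally free of rank $d$, I expect this to be possible after rescaling and shrinking. Lift them to $B$; this is possible because $K\langle T\rangle$ is projective, as in the proof of Proposition \ref{prop:affineline}. This gives a map $A\langle T_1,\dots,T_d\rangle := A \hat{\otimes}^{\mathbf{L}}_K K\langle T_1,\dots,T_d\rangle \to B$. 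The morphism $A \to A\langle T\rangle$ is derived strong, because $K\langle T\rangle$ is a projective Banach space and so $H^n(A\langle T\rangle) = H^n(A)\hat{\otimes}_K K\langle T\rangle \simeq H^0(A\langle T\rangle)\hat{\otimes}_{H^0(A)}H^n(A)$. By the fiber sequence of Proposition \ref{prop:cotangentprops}(iii), $\mathbf{L}_{B/A\langle T\rangle} \simeq 0$. Since $H^0$ of the map is classically étale, the map $A\langle T\rangle \to B$ is étale. This reduces the problem to $f$ étale.

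For $f$ étale, let $B' := A \hat{\otimes}^{\mathbf{L}}_{H^0(A)}$-strong lift, more precisely: take the classical étale algebra $H^0(B)$ over $H^0(A)$. By the derived topological invariance/deformation theory (formally étale lifting, \cite[Corollary 2.1.36]{DAnG}) there is a derived strong étale $A$-algebra $B'$ with $H^0(B') = H^0(B)$, constructed locally via \cite[Proposition 3.1.4]{vanderPutetale}: étale maps are locally rational subdomains of finite étale ones. Finite étale lifts are built as in Lemma \ref{lem:finiteetale} as retracts of $A^{\oplus n}$, hence derived strong, and rational localizations are derived strong by definition. Formal étaleness of both $B$ and $B'$ then gives a map $B' \to B$ under $A$ inducing an isomorphism on $H^0$. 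Since $\mathbf{L}_{B/B'} \simeq 0$ and $H^0(B') \simeq H^0(B)$, a Postnikov/connectivity argument, inductively comparing $H^n$ using $\mathbf{L}_{B/B'}$ and the Hurewicz-type estimate for cotangent complexes, shows $B' \simeq B$. Thus $B$ is derived strong.

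The main obstacle is this last step: proving that an étale map of derived affinoids which is an isomorphism on $H^0$ is an equivalence. I would argue by induction on truncation degree, using that the first nonvanishing cohomology of the cofiber of $B' \to B$ is detected by $H^0(B)\hat{\otimes}_{B}\mathbf{L}_{B/B'}$ (a connectivity statement from \cite{DAnG}).
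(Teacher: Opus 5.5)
Your preliminary reductions are sound. Derived strongness is local on $\operatorname{Spec}(B)$ for rational covers, and it is stable under composition. The map $A \to A\langle T_1,\dots,T_d\rangle$ is derived strong, and after shrinking, $A\langle T_1,\dots,T_d\rangle \to B$ is \'etale.

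The gap is in the \'etale case. Suppose you had a derived $A$-algebra $B'$ which is \'etale over $A$, satisfies $H^0(B') \cong H^0(B)$, and is \emph{known} to be derived strong. Then the comparison would be routine. \cite[Corollary 2.1.36]{DAnG} gives a map $B' \to B$ under $A$. A morphism of connective derived algebras which is an isomorphism on $H^0$ and has vanishing cotangent complex is an equivalence by \cite[Corollary 2.1.27]{DAnG}, exactly as in the proof of Lemma \ref{lem:Zariskiclosedquasismooth}. So the step you single out as the main obstacle is standard. The real difficulty is producing $B'$, and your construction does not do it.

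Derived rational localizations are strong by definition, so that part is fine. For the finite \'etale piece, a retract of $A^{\oplus n}$ cut out by lifting an idempotent is merely an $A$-module. It carries no derived algebra structure, so it cannot be fed into the \'etale lifting argument. Suppose instead you take the \'etale lift supplied by Proposition \ref{prop:derivedetalelifting}(iv) and try to recognise it as a retract of $A^{\oplus n}$ via Lemma \ref{lem:finiteetale}. Then you argue in a circle: the proof of Lemma \ref{lem:finiteetale} ends by calling $N \to B$ ``a morphism between derived strong $A$-modules''. That is, it assumes \'etale algebras are derived strong, which is the statement under proof. Given uniqueness of \'etale lifts, ``every \'etale $H^0(A)$-algebra has a derived strong \'etale lift over $A$'' is equivalent to the \'etale case of the lemma, so nothing has been reduced. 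This is also why your tentative definition of $B'$ by base change does not make sense: there is no map $H^0(A) \to A$ to base-change along.

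To close the gap you have to compute cohomology for some explicit model. Take a standard \'etale presentation $B' = A\langle T_1,\dots,T_m\rangle/^{\mathbf{L}}(f_1,\dots,f_m)$. One can check that $f_1^0,\dots,f_m^0$ is Koszul-regular in $H^0(A)\langle T\rangle$ with quotient flat over $H^0(A)$. This uses flatness of $H^0(A)\langle T\rangle$ over $H^0(A)$, regularity of the fibres, and the local flatness criterion. The Tor spectral sequence for $B' \simeq A\langle T\rangle \hat{\otimes}^{\mathbf{L}}_{K[y_1,\dots,y_m]} K$ (with $y_i \mapsto f_i$) then collapses. It gives $H^n(B') \cong H^0(B') \hat{\otimes}_{H^0(A)} H^n(A)$.

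The paper does not factor $f$ at all. It first uses Lemma \ref{lem:classicalsmooth}(i) to replace the derived tensor product by the underived one. It then localizes on $\operatorname{Spec}(B)$ so that both $\mathbf{L}_{B_i/A}$ and $\mathbf{L}_{H^0(B_i)/H^0(A)}$ are finite free, and concludes directly by \cite[Proposition 2.6.160]{DAnG}.
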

\begin{proof}
By Lemma \ref{lem:classicalsmooth}(i) this is equivalent to asking for 
\begin{equation}\label{eq:pinBiso}
H^0(B) \hat{\otimes}_{H^0(A)} H^n(A)  \to H^n(B)  
\end{equation}
to be an isomorphism of $H^0(B)$-modules. If $\{B \to B_i\}_i$ is a family of rational localizations corresponding to a rational cover of $X = \operatorname{Spec}(B)$, then the $\{H^0(B) \to H^0(B_i)\}_i$ also form a rational cover, in particular  we can check the isomorphism \eqref{eq:pinBiso} after applying each $H^0(B_i) \hat{\otimes}_{H^0(B)} (-)$. Using Lemma \ref{lem:classicalsmooth}(iii) we may choose the $B_i$ such that each $\mathbf{L}_{B_i/A}$ and $\mathbf{L}_{H^0(B_i)/H^0(A)}$ are finite free. Then the claim follows from \cite[Proposition 2.6.160]{DAnG}. 
\end{proof}
\begin{defn}
\begin{enumerate}[(i)]
    \item Let $A$ be a derived affinoid algebra. Given morphisms\footnote{In $D(\operatorname{IndBan}_K)$.} $f_1, \dots, f_n : K \to A \hat{\otimes}_K^\mathbf{L}K \langle T_1, \dots, T_n\rangle$, we define $\Delta(f_1,\dots,f_n) :=\operatorname{det}\left(\frac{\partial f_i^0}{\partial T_j}\right)_{i,j}$, where $f_i^0 := H^0(f_i)(1) \in H^0(A) \hat{\otimes}_K K \langle T_1, \dots, T_n \rangle$.
    \item A \emph{standard \'etale morphism} is a morphism $f: A \to B$ between derived affinoid algebras such that $B$ admits a presentation
\begin{equation}
    B \simeq (A \hat{\otimes}^\mathbf{L}_K K \langle T_1, \dots, T_n \rangle)/^\mathbf{L}(f_1, \dots, f_n)
\end{equation}
for some $f_i: K \to  A \hat{\otimes}^\mathbf{L}_K K \langle T_1, \dots, T_n \rangle $ such that $\Delta(f_1,\dots,f_n)$ maps to a unit in $H^0(B)$. 
\end{enumerate}
\end{defn}
\begin{prop}\label{prop:derivedetalelifting}
\begin{enumerate}[(i)]
    \item Every standard \'etale morphism is \'etale;
    \item Every \'etale morphism between derived affinoid algebras is standard \'etale;
    \item If $A$ is a classical affinoid algebra and $A \to B$ is any \'etale morphism of derived affinoid algebras, then $B$ is necessarily classical;
    \item If $A$ is a derived affinoid and $f: H^0(A) \to B$ is any \'etale morphism then there is a unique (up to contractible choice) \'etale morphism $\tilde{f}: A \to \tilde{B}$ such that $H^0(\tilde{f}) \simeq f$. 
    \end{enumerate}
\end{prop}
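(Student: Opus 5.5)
We prove the four parts in the order (i), (iii), (iv), (ii), since (ii) uses (iii) and (iv).

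\emph{Part (i).} Let $B \simeq (A \hat{\otimes}^\mathbf{L}_K K\langle T_1,\dots,T_n\rangle)/^\mathbf{L}(f_1,\dots,f_n)$ be a standard \'etale presentation.
\begin{itemize}
\item Put $P := A \hat{\otimes}^\mathbf{L}_K K\langle T\rangle$. By Proposition \ref{prop:cotangentprops}(iii), together with base change (Proposition \ref{prop:cotangentprops}(i)) from $K \to K\langle T\rangle$, we have $\mathbf{L}_{P/A} \simeq P^{\oplus n}$.
\item The quotient $P \to B$ is a base change of $K[x_1,\dots,x_n] \to K$. Hence $\mathbf{L}_{B/P} \simeq B^{\oplus n}[1]$.
\item The transitivity sequence then identifies $\mathbf{L}_{B/A}$ with the cone of the map $B^{\oplus n} \to B^{\oplus n}$ given by the Jacobian matrix $(\partial f_i/\partial T_j)$.
\item The determinant of this matrix is $\Delta(f_1,\dots,f_n)$, a unit in $H^0(B)$. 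An endomorphism of a finite free module which is invertible on $H^0$ is an equivalence, so $\mathbf{L}_{B/A} \simeq 0$.
\item For the classical condition, $H^0(B) \simeq (H^0(A)\langle T\rangle)/(f_1^0,\dots,f_n^0)$ with invertible Jacobian. This is standard \'etale in Huber's sense, so $t_!(f)$ is \'etale.
\end{itemize}

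\emph{Part (iii).} Let $A$ be classical and $A \to B$ \'etale. By Lemma \ref{lem:smoothderivedstrong} (\'etale is smooth of dimension $0$), $f$ is derived strong. Therefore $H^n(B) \simeq H^0(B)\hat{\otimes}_{H^0(A)} H^n(A) = 0$ for $n<0$, and $B$ is classical.

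\emph{Part (iv).} We lift $f: H^0(A) \to B$ in two steps.
\begin{itemize}
\item \textbf{Existence.} By Huber's local structure theorem we first work with a standard \'etale presentation $B = H^0(A)\langle T\rangle/(g_1,\dots,g_n)$ with invertible Jacobian; the gluing needed to remove this assumption is handled in the uniqueness step. Lift each $g_i$ along the surjection $\pi_0\mathrm{Map}(K, A\hat{\otimes}^\mathbf{L}_K K\langle T\rangle) \to H^0(A)\langle T\rangle$. The resulting $\tilde{B}$ is standard \'etale over $A$, hence \'etale by (i). Since $\tilde{f}$ is derived strong, $H^0(\tilde{B}) \simeq B$.
\item \textbf{Uniqueness.} We use the deformation-theoretic fact that for $\tilde{f}$ with $\mathbf{L}_{\tilde{B}/A} \simeq 0$, the restriction map $\mathrm{Map}_A(\tilde{B}, C) \to \mathrm{Map}_{H^0(A)}(H^0(\tilde{B}), H^0(C))$ is an equivalence for connective $C$. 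To prove it, write $A$ as the limit of its Postnikov tower $\tau^{\geqslant -m}A$. Each stage is a square-zero extension of the previous one by $H^{-m}(A)[m]$. Obstructions and choices at each stage live in $\mathrm{Map}(\mathbf{L}_{\tilde{B}/A}, -)$, which vanishes. This is the same formal-\'etaleness argument as \cite[Corollary 2.1.36]{DAnG}, used in Proposition \ref{prop:affineline}.
\end{itemize}
The equivalence makes the space of lifts contractible. It also glues the local lifts, which completes existence for general $B$.

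\emph{Part (ii).} Let $A \to B$ be \'etale. Then $H^0(A) \to H^0(B)$ is \'etale classically, so it is standard \'etale up to a rational cover. Lifting the classical presentation as in (iv) gives a standard \'etale $\tilde{B}$. By the uniqueness in (iv), there is an equivalence $\tilde{B} \simeq B$ under $A$.

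The main obstacle is the passage from local to global in Huber's structure theorem. It yields standard \'etale presentations only locally on $\operatorname{Spec} H^0(B)$, so we need a global one. First we would apply the argument of \cite[Proposition 3.1.4]{vanderPutetale}: factor an \'etale map of classical affinoids as a rational localization of a finite \'etale map. A finite \'etale algebra has a monogenic standard \'etale presentation. A rational localization can be absorbed by adjoining variables $T$ with relations $a_0T-a_i$ together with an inverse of $\Delta$; this keeps the presentation standard \'etale after rescaling. Finally, Lemma \ref{lem:finiteetale} controls the finite \'etale part in the derived setting.
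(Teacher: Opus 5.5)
\textbf{The gap: a global classical standard \'etale presentation.} Your parts (i) and (iii) are fine and agree with the paper; for (i) you compute $\mathbf{L}_{B/A}$ as the cone of the Jacobian instead of citing \cite[Proposition 4.5.83]{DAnG}. Your mechanism for (ii) and (iv) is exactly the paper's: lift a classical standard \'etale presentation along $\pi_0$, then compare lifts using formal \'etaleness as in \cite[Corollary 2.1.36]{DAnG}. (In your sketch of that lifting property, the Postnikov tower should be that of the target $C$, not of $A$.)

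The gap is the input you flag yourself. Part (ii) needs a \emph{global} presentation $H^0(B)\simeq H^0(A)\langle T_1,\dots,T_m\rangle/(g_1,\dots,g_m)$ with unit Jacobian, and your proposed route does not produce one, for two reasons.
\begin{enumerate}[(a)]
\item \cite[Proposition 3.1.4]{vanderPutetale} is local on both source and target. Around each point it gives affinoid neighbourhoods $V\to U$ with $V$ an affinoid subdomain of a finite \'etale $U$-space. It does not factor the given map $\operatorname{Spec}H^0(B)\to\operatorname{Spec}H^0(A)$.
\item A finite \'etale algebra need not be monogenic. If it has constant rank $r$ and is generated by one element $b$, Cayley--Hamilton identifies it with $H^0(A)[x]/(\chi_b)$, which is free. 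Finite \'etale algebras are in general only projective.
\end{enumerate}
The gluing you use for existence in (iv) has a related hole. Uniqueness gives compatible lifts on a rational cover, i.e.\ a derived rigid space over $\operatorname{Spec}(A)$. You would still have to show that the \v{C}ech limit $\tilde{B}$ is a derived affinoid algebra, \'etale over $A$, with $H^0(\tilde{B})\simeq B$. This is doable via derived strongness and Tate acyclicity for the coherent modules $B\hat{\otimes}_{H^0(A)}H^n(A)$, but it is not in your proposal.

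\textbf{How to close it.} The paper simply cites \cite[Proposition 1.7.1]{HuberEtale} (cf.\ \cite[Observation 3.1.2]{vanderPutetale}) for the global presentation. With it, (iv) needs no gluing: by (iii) $B$ is classical, and you lift its global presentation directly. For a self-contained argument, proceed as follows.
\begin{enumerate}[(1)]
\item Write $H^0(B)=R/I$ with $R=H^0(A)\langle T_1,\dots,T_m\rangle$.
\item Since $\mathbf{L}_{H^0(B)/H^0(A)}\simeq 0$ by Lemma \ref{lem:classicalsmooth}(iii), the transitivity sequence together with $H^{-1}(\mathbf{L}_{H^0(B)/R})\cong I/I^2$ (\cite[Theorem 2.1.21]{DAnG}) gives an isomorphism $I/I^2\xrightarrow{\sim}\bigoplus_j H^0(B)\,dT_j$, $g\mapsto dg$.
\item Pick $g_1,\dots,g_m\in I$ lifting a basis; then $\det(\partial g_i/\partial T_j)$ is a unit in $H^0(B)$.
\item Nakayama gives $e\in I$ with $(1-e)I\subseteq(g_1,\dots,g_m)$.
\item Adjoin $T_{m+1}$ with the relation $(1-e)T_{m+1}-1$. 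This is the rational subdomain $\{|1-e|\geqslant 1\}$, which contains $\operatorname{Spec}H^0(B)$ and on which $I$ is generated by $g_1,\dots,g_m$. Hence $H^0(B)\simeq H^0(A)\langle T_1,\dots,T_{m+1}\rangle/(g_1,\dots,g_m,(1-e)T_{m+1}-1)$.
\item The new Jacobian is block triangular with determinant $\det(\partial g_i/\partial T_j)\cdot(1-e)$. This is a unit in $H^0(B)$ because $1-e\mapsto 1$.
\end{enumerate}
This is the same trick as Lemma \ref{lem:Nakayama}. With it, your lifting-plus-uniqueness argument proves (ii) and (iv) as in the paper.
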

\begin{proof}
(i): By \cite[\S 3.1]{vanderPutetale} the morphism $H^0(A) \to H^0(B)$ is an \'etale morphism between classical affinoids and by \cite[Proposition 4.5.83]{DAnG} one has $\mathbf{L}_{B/A} \simeq 0$, hence $f$ is \'etale according to Definition \ref{defn:Derivedsmooth}. (ii): By \cite[Proposition 1.7.1]{HuberEtale} (see also \cite[Observation 3.1.2]{vanderPutetale}) the morphism $H^0(A) \to H^0(B)$ is standard \'etale in the sense that there exists $n \geqslant 0$ and $f_1, \dots, f_n \in H^0(A) \hat{\otimes}_K K \langle T_1, \dots, T_n\rangle$ such that 
\begin{equation}
    H^0(B) \simeq (H^0(A) \hat{\otimes}_K K \langle T_1,\dots, T_n \rangle )/(f_1, \dots, f_n), 
\end{equation}
and $\Delta(f_1,\dots, f_n)$ maps to a unit in $H^0(B)$. As $K$ is projective, we may lift the $f_i$ to obtain morphisms $f_i: K \to A \hat{\otimes}^\mathbf{L}_K K \langle T_1, \dots, T_n \rangle$ and we define 
\begin{equation}
    \tilde{B} := (A \hat{\otimes}_K^\mathbf{L} K \langle T_1,\dots, T_n \rangle) /^\mathbf{L}(f_1, \dots, f_n). 
\end{equation}
The morphisms $A \to B$ and $A \to \tilde{B}$ are formally \'etale (for the latter we use part (i)). Hence the \'etale lifting property \cite[Corollary 2.1.36]{DAnG} implies that there is a (unique up to contractible choice) equivalence $\tilde{B} \simeq B$ under $A$. (iii): This follows immediately from the derived strong property, c.f. Lemma \ref{lem:smoothderivedstrong}. (iv): By part (iii) $B$ is classical, and again by \cite[Proposition 1.7.1]{HuberEtale} $f: H^0(A) \to B$ is standard \'etale in the classical sense. Arguing as in the proof of (iii) we obtain an \'etale morphism $\tilde{f}:A \to \tilde{B}$ such that $H^0(\tilde{f}) \simeq f$; the uniqueness of this lift again follows from \cite[Corollary 2.1.36]{DAnG}.
\end{proof}
\begin{lem}\label{lem:etaleliftinggeneral}
Let $X$ be a derived rigid space with classical truncation $t_!X$. Let $f: Y \to t_!X$ be an \'etale morphism. Then there is a unique (up to contractible choice) \'etale morphism $\tilde{f}: \tilde{Y} \to X$ of derived rigid spaces such that $t_!(\tilde{f}) \simeq f$. 
\end{lem}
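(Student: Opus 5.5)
The approach is to reduce to the affinoid case, use Proposition \ref{prop:derivedetalelifting}(iv) there, and glue the local lifts by a descent argument whose coherence comes from uniqueness. Concretely, I would first build a stack of étale objects. For $X \in \operatorname{dRig}$ let $\operatorname{Et}(X)$ be the full subcategory of $\operatorname{dRig}_{/X}$ on étale morphisms, and similarly $\operatorname{Et}(t_!X)$ in $\operatorname{cRig}_{/t_!X}$. The functor $t_!$ induces $\tau_X: \operatorname{Et}(X) \to \operatorname{Et}(t_!X)$. Indeed $t_!$ preserves étaleness by definition, since the definition is phrased on $t_!(f)$ together with $\mathbf{L}_{X/Y}\simeq 0$.

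The claim is equivalent to the fibres of $\tau_X$ over each $f$ being contractible. I would prove the stronger statement that $\tau_X$ is an equivalence of $\infty$-categories.

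\begin{enumerate}[(1)]
\item \emph{Affinoid target and affinoid source.} Let $X = \operatorname{Spec}(A)$, and restrict to étale maps $\operatorname{Spec}(B)\to X$ with $B$ derived affinoid. Proposition \ref{prop:derivedetalelifting}(iv) gives essential surjectivity. For full faithfulness I would use the étale lifting property \cite[Corollary 2.1.36]{DAnG}. Given étale $A\to B$ and $A \to C$, one gets
\[
\operatorname{Map}_A(B,C)\simeq \operatorname{Map}_{A}(B,H^0(C))\simeq \operatorname{Map}_{H^0(A)}(H^0(B),H^0(C)).
\]
The second equivalence holds because $H^0(C)$ is discrete and $H^0(B)\simeq H^0(A)\otimes$-base change of $B$. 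That base change is classical étale by (iii), and equals $H^0(B)$ by the derived strong property (Lemma \ref{lem:smoothderivedstrong}).
\item \emph{General étale source over an affinoid.} An étale $Y \to t_!X$ is covered by affinoid analytic subspaces $V_j$ that are étale over $t_!X$. Intersections of these are again such (after refining to affinoid covers). Theorem \ref{thm:topologicalinvariance} identifies analytic subspaces of a derived lift with those of its truncation. I would therefore glue the lifts $\tilde V_j$ along the lifted overlaps using the equivalence from step (1), as a colimit in $\operatorname{Shv}_{\tau_{\mathrm{rat}}}(\operatorname{Afnd})$. This colimit is a derived rigid space, étale over $X$, with $t_!$ recovering $Y$. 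Here I use that $t_!\simeq e^*$ is a left adjoint and so preserves colimits (Lemma \ref{lem:classicaltruncationgeneralnonsense}). Full faithfulness for general sources follows by the same descent on the source.
\item \emph{General $X$.} I would cover $X$ by affinoid analytic subspaces $U_i$, identified with a cover of $t_!X$ by Theorem \ref{thm:topologicalinvariance}. Both $U\mapsto\operatorname{Et}(U)$ and $U\mapsto \operatorname{Et}(t_!U)$ satisfy descent on the analytic site: étale morphisms are local on the target, and the cotangent complex is local by Proposition \ref{prop:cotangentprops}(ii). The maps $\tau_U$ are compatible with restriction, since $t_!$ commutes with pullback along analytic subspaces. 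So $\tau_X$ is the limit of equivalences and hence an equivalence.
\end{enumerate}

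I expect the main obstacle to be the descent in steps (2) and (3). One must check that the glued object is genuinely a derived rigid space, and that being étale (including $\mathbf{L}\simeq 0$ and the classical condition) descends. One must also keep the homotopy coherence of the gluing under control. The full faithfulness in step (1), upgraded to the $\infty$-categorical statement, is what makes the coherence automatic.
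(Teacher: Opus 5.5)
Your strategy is viable, but it differs from the paper's, which never glues anything. The paper slices the adjunction $t_!\dashv t^*$ over $X$ and takes as its candidate the value of the right adjoint, $\tilde Y := t^*Y\times_{t^*t_!X}X$. This sheaf exists from the outset and satisfies $\operatorname{Map}_{/X}(T,\tilde Y)\simeq\operatorname{Map}_{/t_!X}(t_!T,Y)$.

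The paper then shows that $Z\mapsto\tilde Z$ sends analytic subspaces and covers of $Y$ to analytic subspaces and covers of $\tilde Y$ (via \eqref{eq:liftfunctorofpoints} and Theorem \ref{thm:topologicalinvariance}), and that it commutes with base change in $X$. So representability of $\tilde Y$ can be checked with $X$ and $Y$ affinoid. In that case Proposition \ref{prop:derivedetalelifting}(iv) and \'etale lifting show that the lift $\hat Y$ has the universal property of $\tilde Y$. This is the same computation as your step (1), applied to \emph{all} affinoids $T$ over $X$ rather than only \'etale ones. Uniqueness follows from the universal property: any other lift maps to $\tilde Y$, and one checks locally, again by \'etale lifting, that this map is an equivalence.

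What the adjoint buys is that no descent data or coherences ever need to be built. Your stronger conclusion $\operatorname{Et}(X)\simeq\operatorname{Et}(t_!X)$ also falls out of it, since full faithfulness is the universal property combined with uniqueness. Incidentally, the second equivalence in your step (1) is just the adjunction between $H^0$ and the inclusion of classical algebras; the base-change detour is unnecessary.

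If you pursue your route, three points need actual arguments rather than the sketch:
\begin{enumerate}[(1)]
\item \emph{Non-affinoid overlaps.} When $Y$ in step (2), or $X$ in step (3), is not separated, the \v{C}ech nerve of an affinoid cover has terms that are analytic subspaces of affinoids, not affinoids. ``Refining to affinoid covers'' means hypercovers, whose realizations are only controlled up to hypercompletion. You need an intermediate step proving the equivalence for analytic subspaces of affinoid \'etale objects. These lift by Theorem \ref{thm:topologicalinvariance}, and their rational covers have affinoid \v{C}ech nerves.
\item \emph{Arbitrary targets.} ``Descent on the source'' only gives full faithfulness into targets you already control. The limit argument in step (3) needs each $\tau_U$ to be fully faithful for arbitrary \'etale targets $W$. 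For this you need that a map $Z\to W$ factors through an analytic subspace $W_j$ if and only if its truncation factors through $t_!W_j$, which is \eqref{eq:liftfunctorofpoints}. Then $\operatorname{Map}_{/X}(-,W)$ is covered by the subsheaves $\operatorname{Map}_{/X}(-,W_j)$ compatibly with the classical side, which reduces to the previous case.
\item \emph{The glued object is a derived rigid space.} You must check that the lifted \v{C}ech nerve is a groupoid object. This uses that $t_!\simeq e^*$ preserves fibre products, and that the lifts of overlaps are analytic subspaces of the $\tilde V_j$. Effectivity of groupoids then makes the maps $\tilde V_j\to\tilde Y$ analytic subspaces.
\end{enumerate}
All three can be done with the tools in the paper, and all three are exactly what the adjoint construction lets the paper bypass.
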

\begin{proof}
The difficulty is to establish existence of the lift $\tilde{Y}$ and to show that it is represented by a derived rigid space. We consider the adjunction $t_! : \operatorname{Shv}_{\tau_{\mathrm{rat}}}(\operatorname{Afnd}) \leftrightarrows \operatorname{Shv}_{\tau_{\mathrm{rat}}}(\operatorname{Afnd}_\mathrm{cl}) : t^*$ from Lemma \ref{lem:classicaltruncationgeneralnonsense}. By \cite[Proposition 5.2.5.1]{HigherToposTheory} this induces an adjunction \begin{equation}
\operatorname{Shv}_{\tau_{\mathrm{rat}}}(\operatorname{Afnd})_{/X} \leftrightarrows \operatorname{Shv}_{\tau_{\mathrm{rat}}}(\operatorname{Afnd}_\mathrm{cl})_{/t_!X}
\end{equation}
in which the left adjoint is given by applying $t_!$, and the right adjoint is given on objects $Z \in \operatorname{Shv}_{\tau_{\mathrm{rat}}}(\operatorname{Afnd}_\mathrm{cl})_{/t_!X}$ by the formula $Z \mapsto \tilde{Z} :=t^*Z \times_{t^*t_!X} X$. We will show that, with $Y$ as in the statement of the Lemma, the sheaf $\tilde{Y}$ is representable. We first claim that, if $U \hookrightarrow Y$ is an analytic subspace in the sense of Definition \ref{defn:derivedrigidspace}(iii), then $\tilde{U} \hookrightarrow \tilde{Y}$ is also an analytic subspace. Let $S = \operatorname{Spec}(A) \to \tilde{Y}$ be given; then the associativity of fiber products implies that $\tilde{U} \times_{\tilde{Y}}S \to S$ is equivalent to the morphism $t^*U \times_{t^*Y}S \to S$. For any derived rigid space $T$ one has by adjunction
\begin{equation*}
\begin{aligned}
    \operatorname{Map}(T, t^*U \times_{t^*Y}S) &\simeq \operatorname{Map}(t_!T, U) \times_{\operatorname{Map}(t_!T, Y)} \operatorname{Map}(T, S) \\
    &\simeq \operatorname{Map}(t_!T, U) \times_{\operatorname{Map}(t_!T, Y)} \operatorname{Map}(t_!T, t_!S)\times_{\operatorname{Map}(t_!T, t_!S)} \operatorname{Map}(T, S)\\
    &\simeq \operatorname{Map}(t_!T, U \times_Y t_!S) \times_{\operatorname{Map}(t_!T, t_!S)} \operatorname{Map}(T,S),
\end{aligned}
\end{equation*}
where in the second line above we used that the map $\operatorname{Map}(T, S) \to \operatorname{Map}(t_!T, Y)$ factors through $\operatorname{Map}(T, S) \to \operatorname{Map}(t_!S, t_!S)$. This implies that $t^*U \times_{t^*Y}S \subseteq S$ is the unique analytic subspace lifting $U \times_Y t_!S \subseteq t_!S$, as it has the same universal property, c.f. \eqref{eq:liftfunctorofpoints}. Hence $\tilde{U} \hookrightarrow \tilde{Y}$ is an analytic subspace. 

Next, we claim that, if $\{U_i \to Y\}_{i \in I}$ is a cover of $Y \in \operatorname{dRig}$ by analytic subspaces, then $\{\tilde{U}_i \to \tilde{Y}\}_{i \in I}$ is also a cover. Let $S = \operatorname{Spec}(A) \to \tilde{Y}$ be given; then we obtain a morphism $t_!S \to Y$. There exists a cover $\{ S_j \to t_!S\}_{j \in J}$ and a function $\alpha: J \to I$ such that each $S_j \to Y$ factors as $S_j \to U_{\alpha(j)} \to Y$. By Theorem \ref{thm:topologicalinvariance} we may lift the the $S_j$ to analytic subspaces $\hat{S}_j$ of $S$; the universal property implies that there is a factorization $\hat{S}_j \to \tilde{U}_{\alpha(j)} \to \tilde{Y}$, and $\{\hat{S}_j \to S\}_{j \in J}$ is a cover. Hence $\{ \tilde{U}_i \to \tilde{U}\}_{i \in I}$ is a cover. 

Now let $X^\prime \to X$ be any morphism. Then the associativity of fiber products (together with the fact that $t^*$ preserves limits) implies that 
\begin{equation}
\tilde{Y} \times_X X^\prime \simeq \tilde{Y^\prime},
\end{equation} 
where $Y^\prime := Y \times_{t_!X} t_!X^\prime$. Hence by considering a cover of $X$ by affinoid subspaces, to prove that $\tilde{Y}$ is representable it suffices to treat the case when $X$ is affinoid, which we will assume in the next paragraph.

If $Y$ is itself affinoid, then by Proposition \ref{prop:derivedetalelifting}(iv) there is an \'etale morphism $\hat{Y} \to X$ lifting $f$ and by \cite[Corollary 2.1.36]{DAnG} we see that $\hat{Y}$ has the required universal property, so $\tilde{Y} \simeq \hat{Y}$ is representable. In general, $Y$ has a cover by affinoid analytic subspaces $\{U_i \hookrightarrow Y\}_i$; then by the previous discussion $\{ \tilde{U}_i \hookrightarrow \tilde{Y}\}_i$ is a cover by analytic subspaces, which we now know to be representable. Therefore, $\tilde{Y}$ is representable by a derived rigid space. It is clear that the morphism $\tilde{Y} \to X$ is \'etale. 

For uniqueness, assume that $\breve{f}: \breve{Y} \to X$ is another \'etale morphism such that $t_!(\breve{f}) \simeq f$. By definition of $\tilde{Y}$, one has $\operatorname{Map}_{/X}(\breve{Y},\tilde{Y}) \xrightarrow{\sim} \operatorname{Map}_{/t_!X}(Y,Y)$, so there is a unique (up to contractible choice) morphism $h:\breve{Y} \to \tilde{Y}$ over $X$ lifting $\operatorname{id}: Y \to Y$. We claim that $h$ is an isomorphism; this is local on $X$ so we may assume that $X$ is affinoid. This is also local on $\tilde{Y}$, hence by lifting an affinoid cover of $Y$ to $\tilde{Y}$, we may further assume that $Y = \operatorname{Spec}(A)$ and $\tilde{Y} \simeq \operatorname{Spec}(\tilde{A})$. If $V = \operatorname{Spec}(B) \subseteq \breve{Y}$ an affinoid subspace then $t_! V  \subseteq Y$ is also an affinoid subspace, so by \'etale lifting \cite[Corollary 2.1.36]{DAnG} we conclude that $h|_V : V \to \tilde{Y}$ is equivalent to the inclusion of the affinoid subspace $\tilde{t_!V} \subseteq \tilde{Y}$. Since $V$ was arbitrary we conclude that $h$ is an isomorphism.
\end{proof}
\begin{defn}
A morphism $f: X \to Y$ in $\operatorname{Shv}_{\tau_\mathrm{rat}}(\operatorname{Afnd})$ is called a \emph{Zariski-closed immersion} if $f$ is representable, and for every map $Y^\prime =\operatorname{Spec}(A) \to Y$ from an object of $\operatorname{Afnd}$, the pullback $X^\prime = \operatorname{Spec}(B) \to Y^\prime$ is such that $H^0(A) \to H^0(B)$ is surjective.
\end{defn}
\egapar This paragraph is not so relevant to this article and may be skipped. Let $A \to B$ be a morphism in $\operatorname{DAlg}^{\leqslant 0}(\mathscr{V})$. Then the condition that ``$A \to B$ is surjective on $H^0$" can be equivalently rephrased as ``$A\to B$ has connective fiber". The latter leads to the correct definition of a Zariski-closed immersion in the category $\operatorname{Aff} = \operatorname{DAlg}(\mathscr{V})^\mathrm{op}$, c.f. \cite{BenBassatHekkingBlowUp}.
\begin{defn}\label{defn:Zariskiclosedquasismooth}
A Zariski-closed immersion $f:X \to Y$ in $\operatorname{dRig}$ is called \emph{quasi-smooth} (resp. \emph{quasi-smooth of virtual dimension $-n$}) if $\mathbf{L}_{X/Y}[-1] \in \operatorname{QCoh}(X)$ is locally free of finite rank (resp. locally free of rank $n$). 
\end{defn}
\begin{defn}\label{defn:regularsurjection}
A morphism $A \to B$ in $\operatorname{DAlg}^{\leqslant 0}(\mathscr{V})$ is called a \emph{regular surjection} (of virtual dimension $-n$) if it is of the form $A \to A/^\mathbf{L}(f_1, \dots,f_n)$ for some morphisms $f_1, \dots, f_n: K \to A$.
\end{defn}
\egapar With notations as in Definition \ref{defn:regularsurjection}. The integer $n$ is intrinsic to the morphism $A \to B$ and does not depend on the choice of presentation: it can be recovered as the rank of the free $B$-module $\mathbf{L}_{B/A}[1]$. 
\begin{lem}\label{lem:Zariskiclosedquasismooth}
Let $f: X \to Y$ be a Zariski-closed immersion of derived rigid spaces. Then $f$ is quasi-smooth (resp. quasi-smooth of virtual dimension $-n$) if and only if locally in the analytic topology on $Y$, $f$ is of the form $\operatorname{Spec}(B) \to \operatorname{Spec}(A)$ where $A \to B$ is a regular surjection (resp. regular surjection of virtual dimension $-n$) in the sense of Definition \ref{defn:regularsurjection}.
\end{lem}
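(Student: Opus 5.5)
The plan is to reduce to the affinoid case, compute the cotangent complex of a regular surjection for the easy direction, and then, for the converse, lift local generators of the conormal module to a presentation.

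\textbf{Reduction.} Both conditions are local on $Y$ in the analytic topology. Quasi-smoothness is a condition on $\mathbf{L}_{X/Y}$, which is compatible with base change by Proposition \ref{prop:cotangentprops}(i). Since $f$ is representable, we may therefore assume $Y=\operatorname{Spec}(A)$ and $X=\operatorname{Spec}(B)$ are affinoid, with $H^0(A)\to H^0(B)$ surjective. By Corollary \ref{cor:formallyetalecotangent} we then have $\mathbf{L}_{X/Y}\simeq \mathbf{L}_{B/A}$.

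\textbf{The ``if'' direction.} Suppose $B=A/^{\mathbf{L}}(f_1,\dots,f_n)$. This is the pushout of $K\langle T_1,\dots,T_n\rangle\to K$ (or of $\operatorname{LSym}_K(K^{\oplus n})\to K$, sending $T_i\mapsto 0$) along the map classifying the $f_i$. Base change for cotangent complexes gives
\[
\mathbf{L}_{B/A}\simeq B\hat{\otimes}\mathbf{L}_{K/\operatorname{LSym}_K(K^{n})}\simeq B^{\oplus n}[1],
\]
using the transitivity sequence and $\mathbf{L}_{\operatorname{LSym}(K^n)/K}=\operatorname{LSym}(K^n)^{\oplus n}$. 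Hence $\mathbf{L}_{X/Y}[-1]$ is free of rank $n$.

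\textbf{The ``only if'' direction.} Suppose $\mathbf{L}_{B/A}[-1]$ is locally free of rank $n$. Shrinking $Y$ further, I use Theorem \ref{thm:topologicalinvariance} so that analytic subspaces of $X$ come from $Y$: a closed immersion's rational subdomains are restrictions of rational subdomains of $Y$. This lets me assume $\mathbf{L}_{B/A}[-1]\simeq B^{\oplus n}$.

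Let $I=\ker(H^0A\to H^0B)$. Since $A\to B$ is $H^0$-surjective, the fiber $F$ of $A\to B$ is connective, and the connectivity estimate for cotangent complexes gives $H^{-1}(\mathbf{L}_{B/A})\cong H^0(B\hat{\otimes}_A F)=I/I^2$. Thus $I/I^2$ is free of rank $n$ over $H^0B$.

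Choose $\bar f_1,\dots,\bar f_n\in I$ lifting a basis. As $K$ is projective, these lift to maps $f_i:K\to F\to A$ together with null-homotopies of their images in $B$. These data give a map $C:=A/^{\mathbf{L}}(f_1,\dots,f_n)\to B$ of derived algebras under $A$, obtained via the universal property of the pushout over $\operatorname{LSym}(K^n)$.

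The basis of $I/I^2$ generates $I$ only after localizing near $V(I)$, by Nakayama. So I shrink $Y$ again to a rational neighbourhood of $|X|$; in the rigid setting a neighbourhood where $1+I$ becomes units suffices, which may require a Laurent domain cut out by elements of $1+I$. After this, $H^0C\to H^0B$ is an isomorphism.

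Both $C$ and $B$ are then $A$-algebras with $H^0$-isomorphic targets, and the induced map $\mathbf{L}_{C/A}\to\mathbf{L}_{B/A}$ is an isomorphism, since both are $B^{\oplus n}[1]$ and the map is an isomorphism on $H^{-1}$ by construction. The transitivity sequence then gives $\mathbf{L}_{B/C}\simeq 0$ with $H^0C\cong H^0B$. I conclude $C\simeq B$ using the derived Nakayama/connectivity statement: a map of connective derived affinoids that is an isomorphism on $H^0$ with vanishing cotangent complex is an equivalence, inductively on Postnikov truncations via $H^{-k}$ of the cofiber and the Hurewicz-type estimate.

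\textbf{Main obstacle.} I expect two delicate points. The first is this last step, because the needed connectivity estimate must be imported from \cite{DAnG}. The second is the rigid-analytic localization making $I$ generated by the $\bar f_i$; for this I would use a Laurent domain $\{|g|\geq 1\}$ for a suitable $g\in 1+I$.
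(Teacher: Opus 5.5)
Your outline follows the paper's proof: reduce to affinoids, use Hurewicz, apply Nakayama on a Laurent neighbourhood of $|X|$, then conclude with Corollary 2.1.27 of \cite{DAnG}. Your ``if'' direction is fine. The gap is the step $H^{-1}(\mathbf{L}_{B/A})\cong H^0(B\hat{\otimes}^{\mathbf{L}}_A F)=I/I^2$, and the paper's proof asserts the same identification. The Hurewicz isomorphism is correct, but $H^0(B\hat{\otimes}^{\mathbf{L}}_AF)=H^0(B)\otimes_{H^0(A)}H^0(F)$, and $H^0(F)$ is not $I$ in general. The long exact sequence of $F\to A\to B$ gives $H^{-1}(A)\to H^{-1}(B)\to H^0(F)\to I\to 0$. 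So $H^0(F)$ is an extension of $I$ by $\operatorname{coker}(H^{-1}(A)\to H^{-1}(B))$, and $H^{-1}(\mathbf{L}_{B/A})$ only surjects onto $I/I^2$. The identification holds when $H^{-1}(A)\to H^{-1}(B)$ is surjective (e.g.\ $B$ discrete), but fails otherwise.

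For a concrete failure, take $A=K$ and $B=K/^{\mathbf{L}}(0)$, so $H^{-1}(B)=K$. This is a Zariski-closed immersion with $\mathbf{L}_{B/A}\simeq B[1]$, hence quasi-smooth of virtual dimension $-1$, and it is a regular surjection. Yet $I=0$, so your claim that $I/I^2$ is free of rank $1$ fails and there is no basis to lift. The correct presentation uses $f_1=0$ together with the non-trivial null-homotopy in $B$ coming from $H^{-1}(B)$. That information lives in $\ker(H^0(F)\to I)$, which your procedure (lift a basis of $I/I^2$ to $I$, then arbitrarily to $F$) never sees.

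The repair is to work with $H^0(F)$ instead of $I$.
\begin{enumerate}[(i)]
\item Choose a basis of the free $H^0(B)$-module $H^{-1}(\mathbf{L}_{B/A})\cong H^0(F)/I\,H^0(F)$. Lift it to $H^0(F)$ and then, by projectivity of $K$, to maps $\varphi_i\colon K\to F$.
\item Each $\varphi_i$ is exactly a map $f_i\colon K\to A$ together with a chosen null-homotopy of its image in $B$. These data define $C:=A/^{\mathbf{L}}(f_1,\dots,f_n)\to B$.
\item By naturality of the Hurewicz map, $B\hat{\otimes}^{\mathbf{L}}_C\mathbf{L}_{C/A}\simeq B^{\oplus n}[1]\to\mathbf{L}_{B/A}$ sends the standard basis to the classes of the $\varphi_i$. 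Hence it is an equivalence, and $\mathbf{L}_{B/C}\simeq 0$.
\item For $H^0$, apply right exactness of $-\otimes_{H^0(A)}H^0(B)$ to $H^0(F)\twoheadrightarrow I$. This shows the images $f_i^0\in I$ generate $I/I^2$, though they need not form a basis.
\item Lemma \ref{lem:Nakayama}, on a Laurent domain $\{|1-e|\geq 1\}$ with $e\in I$, then makes the $f_i^0$ generate $I$. So $H^0(C)\cong H^0(B)$, and your final step applies unchanged.
\end{enumerate}
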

\begin{proof}
This is essentially the same as \cite[Proposition 2.3.8]{KhanVirtualCartier}. One direction is clear: if $B = A/^\mathbf{L}(f_1,\dots,f_n)$ then one may compute $\mathbf{L}_{B/A} \simeq B^{\oplus n}[1]$. For the converse, we reduce to the case when $Y = \operatorname{Spec}(A)$ and $X = \operatorname{Spec}(B)$ are affinoid and $\mathbf{L}_{B/A}[-1]$ is free of rank $n$, for some $n \geqslant 0$. Let $F := \operatorname{Fib}(A \to B)$ taken in $\operatorname{Mod}_A \mathscr{V}$ and put $I := \ker(H^0(A) \to H^0(B))$; note that $I$ is finitely-generated and that $F$ is connective. By properties of the Hurewicz homomorphism \cite[Theorem 2.1.21]{DAnG} there is an isomorphism $I/I^2=H^0(F \hat{\otimes}^\mathbf{L}_A B) \cong H^{-1}(\mathbf{L}_{B/A})$. Hence $I/I^2$ is free of rank $n$ as a $H^0(B)$-module. By Nakayama (see Lemma \ref{lem:Nakayama} below), after possibly localizing to a rational open subset of $Y$ containing $Z$, we may choose generators $f_1^0, \dots, f_n^0$ for $I$ whose images $\overline{f_1^0}, \dots \overline{f_n^0}$ in $I/I^2$ form a basis. The elements $f_1^0, \dots, f_n^0$ lift to morphisms $f_1, \dots, f_n: K \to A$ unique up to homotopy and moreover the composites $f_i : A \to B$ are homotopic to $0$, hence we obtain a factorization $A \to A/^\mathbf{L}(f_1,\dots,f_n) \to B$. We claim that the second morphism is an equivalence. We already know that it is an isomorphism on $H^0$, so by \cite[Corollary 2.1.27]{DAnG} it suffices to show that its cotangent complex is zero. In the fiber sequence
\begin{equation}
 \mathbf{L}_{(A/^\mathbf{L}(f_1,\dots,f_n))/A} \hat{\otimes}^\mathbf{L}_{A/^\mathbf{L}(f_1,\dots,f_n)} B \to   \mathbf{L}_{B/A} \to \mathbf{L}_{B/(A/^\mathbf{L}(f_1,\dots,f_n))}
\end{equation}
the first morphism is the equivalence $B^{\oplus n}[1] \xrightarrow[]{\sim} \mathbf{L}_{B/A}$ induced by the choice of basis $\overline{f_1^0}, \dots, \overline{f_n^0}$ for $I/I^2$. Hence $A/^\mathbf{L}(f_1,\dots,f_n) \xrightarrow[]{\sim} B$.
\end{proof}
\begin{lem}\label{lem:Nakayama}
Let $A$ be a classical affinoid algebra, let $I\subseteq A$ be an ideal and let $M$ be a finitely-generated $A$-module. Put $X := \operatorname{Spec}(A) \supseteq \operatorname{Spec}(A/I) =: Z$. Let $m_1, \dots m_n \in M$ whose images $\overline{m}_1, \dots, \overline{m}_n \in M/IM$ generate $M/IM$ as an $A/I$-module. Then there exists a rational open subset $Z \subseteq U \subseteq X$ such that $m_1 \otimes 1, \dots, m_n \otimes 1$ generate $M \hat{\otimes}_A A_U$ as an $A_U$-module. 
\end{lem}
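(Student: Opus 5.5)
The idea is to reduce to the classical Nakayama lemma on the closed subset $Z$ and then spread surjectivity out to a rational neighbourhood of $Z$. The spreading-out comes from the facts that the support of a coherent module is Zariski-closed and that $Z$ has a fundamental system of rational neighbourhoods.

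First, let $N := M/(Am_1 + \dots + Am_n)$ be the cokernel of the map $\phi: A^{\oplus n} \to M$ sending the basis vectors to the $m_i$. This is a finitely generated $A$-module. Rational localizations $A \to A_U$ are flat, and by Kiehl's theory $M \hat{\otimes}_A A_U \cong M \otimes_A A_U$ for finitely generated $M$. Hence $\phi \otimes A_U$ is surjective if and only if $N \otimes_A A_U = 0$. By hypothesis $N/IN = 0$, because the $\overline{m}_i$ generate $M/IM$.

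Next, apply the classical Nakayama lemma in its determinant form to $N$ and $I$. Since $N = IN$ and $N$ is finitely generated, there is $a \in I$ with $(1-a)N = 0$. Put $g := 1 - a$. Then $g$ maps to $1$ in $A/I$, so $|g(x)| = 1$ for every point $x \in Z$.

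Finally, take the rational subset
\begin{equation*}
U := \{ x \in X : |g(x)| \geqslant 1 \} = X\langle 1/g \rangle.
\end{equation*}
This contains $Z$, and $g$ is a unit in $A_U$. Since $g$ annihilates $N$, we get $N \otimes_A A_U = 0$, so $m_1 \otimes 1, \dots, m_n \otimes 1$ generate $M \hat{\otimes}_A A_U$.

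The only potentially delicate step is the identification of the completed tensor product with the algebraic one, so that surjectivity commutes with localization. This holds because finitely generated modules over affinoid algebras carry canonical Banach structures and rational localizations are flat, by Kiehl. Even without that identification, $N \hat{\otimes}_A A_U$ is a quotient of $M \hat{\otimes}_A A_U$ on which the unit $g$ acts as zero, so it vanishes. The finitely generated submodule generated by the $m_i \otimes 1$ is then closed and cofinal, and the argument goes through.
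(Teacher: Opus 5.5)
Your proof is correct and is essentially the paper's argument: Nakayama applied to $M/M'$ (with $M'$ the submodule generated by the $m_i$) gives $f \in I$ with $(1-f)M \subseteq M'$, and one takes the rational subset $X(\frac{1}{1-f})$, which contains $Z$ and on which $1-f$ becomes a unit. Your extra remarks on identifying $M \hat{\otimes}_A A_U$ with $M \otimes_A A_U$ are a harmless elaboration of a point the paper leaves implicit.
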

\begin{proof}
Let $M^\prime \subseteq M$ be the submodule generated by $m_1, \dots, m_n$. By assumption, $M = M^\prime + IM$ so $M/M^\prime = I(M/M^\prime)$. By Nakayama's lemma there exists $f \in I$ such that $1-f$ acts by zero on $M/M^\prime$, in other words, $(1-f)M \subseteq M^\prime$. Therefore, we set $U := X(\frac{1}{1-f})$. 
\end{proof}
\begin{lem}\label{lem:ZariskiclosedRegular2}
Let $Z \xrightarrow{g} Y \xrightarrow{f} X$ be morphisms in $\operatorname{dRig}$ such that $f$ and $fg$ are both smooth and $g$ is a Zariski-closed immersion. Then $g$ is a quasi-smooth Zariski-closed immersion. 
\end{lem}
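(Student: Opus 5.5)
We need to show that $\mathbf{L}_{Z/Y}[-1]$ is locally free of finite rank. The plan is to use the transitivity fiber sequence of Proposition \ref{prop:cotangentprops}(iii) for $Z \xrightarrow{g} Y \xrightarrow{f} X$:
\begin{equation*}
g^*\mathbf{L}_{Y/X} \to \mathbf{L}_{Z/X} \to \mathbf{L}_{Z/Y}.
\end{equation*}
All three cotangent complexes exist by Theorem \ref{thm:CoherentCotangent} and lie in $\operatorname{Coh}^{\leqslant 0}$. The question is local on $Z$, and also on $Y$ and $X$. By Definition \ref{defn:Derivedsmooth} we may therefore assume the following: $X,Y,Z$ are affinoid, $g^*\mathbf{L}_{Y/X}\simeq 1_Z^{\oplus m}$, and $\mathbf{L}_{Z/X}\simeq 1_Z^{\oplus k}$.

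The key step is to show that the first map $\alpha: g^*\mathbf{L}_{Y/X} \to \mathbf{L}_{Z/X}$ is surjective on $H^0$. Since $g$ is a Zariski-closed immersion, $H^0$ of the algebra map $\mathscr{O}(Y)\to\mathscr{O}(Z)$ is surjective. By the Hurewicz-type statement \cite[Theorem 2.1.21]{DAnG} (as used in Lemma \ref{lem:Zariskiclosedquasismooth}), this gives $H^0(\mathbf{L}_{Z/Y})=0$, and equally $\mathbf{L}_{Z/Y}$ is $1$-connective, i.e. lies in $\operatorname{Coh}^{\leqslant -1}$. The long exact sequence then shows that $H^0(\alpha)$ is a surjection $H^0(Z)^{\oplus m}\to H^0(Z)^{\oplus k}$ of finite free $H^0(Z)$-modules, where $H^0(Z)$ is a classical affinoid algebra. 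A surjection of finite free modules splits, so its kernel $P$ is finite projective of rank $m-k$. After passing to a finer rational cover of $t_!Z$, which lifts to $Z$ by Theorem \ref{thm:topologicalinvariance}, $P$ becomes free.

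Now lift the splitting. Because both modules are free over $1_Z$, maps between them are determined up to homotopy by their effect on $H^0$. Hence $\alpha$ admits a section up to homotopy, and $\alpha$ is equivalent to a projection $1_Z^{\oplus m}\to 1_Z^{\oplus k}$ with fiber $1_Z^{\oplus(m-k)}$. Therefore
\begin{equation*}
\mathbf{L}_{Z/Y}\simeq \operatorname{cofib}(\alpha)\simeq \operatorname{fib}(\alpha)[1]\simeq 1_Z^{\oplus (m-k)}[1],
\end{equation*}
so $\mathbf{L}_{Z/Y}[-1]$ is locally free of rank $m-k$. This gives quasi-smoothness in the sense of Definition \ref{defn:Zariskiclosedquasismooth}.

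The main obstacle is the first claim: that $H^0(\mathbf{L}_{Z/Y})=0$, equivalently that $H^0(\alpha)$ is surjective, for a Zariski-closed immersion in this Ind-Banach setting. I would try to deduce it from \cite[Theorem 2.1.21]{DAnG} applied to the connective fiber $F$ of $\mathscr{O}(Y)\to\mathscr{O}(Z)$, which gives $\mathbf{L}_{Z/Y}\simeq (F\hat{\otimes}^\mathbf{L}\mathscr{O}(Z))[1]$ in low degrees. Failing that, I would compare with the classical fact that $\Omega_{H^0(Z)/H^0(Y)}=0$ for a surjection, using $H^0(\mathbf{L}_{B/A})\cong \Omega_{H^0B/H^0A}$. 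The lifting of the splitting from $H^0$ to the derived level uses only that free modules over a connective $1_Z$ have discrete mapping spaces on $H^0$ up to connected components, as in the proof of Lemma \ref{lem:finiteetale}.
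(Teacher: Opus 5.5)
Your proposal is correct and is essentially the paper's own argument: the same transitivity sequence, the same reduction to affinoids with free cotangent complexes, surjectivity of $H^0(\alpha)$ from the vanishing of $\Omega^1$ for the surjection $H^0\mathscr{O}(Y)\to H^0\mathscr{O}(Z)$, and lifting the classical splitting up to homotopy. Your ``fallback'' for the key step is exactly the route the paper takes, citing \cite[Corollary 2.6.140]{DAnG}.

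The only difference is the order of the last step. You localize first so that the kernel $P$ is free, and then identify $\alpha$ with a coordinate projection using matrices over $H^0\mathscr{O}(Z)$. The paper instead lifts the section first, obtains $\mathbf{L}_{Z/Y}[-1]$ as a retract of a finite free module, and concludes with Lemma \ref{lem:projectivederivedmodule}.
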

\begin{proof}
There is a fiber sequence $g^*\mathbf{L}_{Y/X} \to \mathbf{L}_{Z/X} \to \mathbf{L}_{Z/Y}$. We need to show that $\mathbf{L}_{Z/Y}[1]$ is locally free. The question is local on $Y$ and $Z$. Hence we may assume that $Z = \operatorname{Spec}(C)$, $Y= \operatorname{Spec}(B)$ and $X = \operatorname{Spec}(A)$ are all affinoid and $\mathbf{L}_{Z/X}$ and $\mathbf{L}_{Y/X}$ are both free. We have a fiber sequence 
\begin{equation}\label{eq:cotangent1}
    C \hat{\otimes}^\mathbf{L}_B \mathbf{L}_{B/A} \to \mathbf{L}_{C/A} \to \mathbf{L}_{C/B},
\end{equation}
which, after applying $H^0$, gives by \cite[Corollary 2.6.140]{DAnG} an exact sequence 
\begin{equation}\label{eq:Omega1}
   H^0(C) \hat{\otimes}_{H^0(B)} \Omega^1_{H^0(B)/H^0(A)} \to \Omega^1_{H^0(C)/H^0(A)} \to \Omega^1_{H^0(C)/H^0(B)} \to 0.
\end{equation}
But $\Omega^1_{H^0(C)/H^0(B)} = 0$, as $H^0(B) \to H^0(C)$ is surjective. Hence the first morphism in \eqref{eq:Omega1} is surjective and therefore admits a section since the middle term is free. By standard arguments, c.f. \cite[Lemma 2.2.2.2]{toen_homotopical_2008} or as in the proof of Lemma \ref{lem:finiteetale}, this lifts (up to homotopy) to a section of the first morphism in \eqref{eq:cotangent1}. Hence $\mathbf{L}_{C/B}[-1]$ is a retract of a finite free $C$-module. In particular, $\mathbf{L}_{C/B}[-1]$  is derived strong and $H^0(\mathbf{L}_{C/B}[-1])$ is a finite projective and hence locally finite free $H^0(C)$-module. Further, $\mathbf{L}_{C/B}[-1]$ belongs to $\operatorname{Coh}^{\leqslant 0}(\operatorname{Spec}(A))$ by Theorem \ref{thm:CoherentCotangent}. Therefore the claim follows from Lemma \ref{lem:projectivederivedmodule} below.
\end{proof}
\begin{lem}\label{lem:projectivederivedmodule}
Let $X = \operatorname{Spec}(A) \in \operatorname{Afnd}$. Assume that $P \in \operatorname{Coh}^{\leqslant 0}(X)$ is derived strong and $H^0(P)$ is locally finite free as a $H^0(A)$-module. Then $P$ is locally finite free. 
\end{lem}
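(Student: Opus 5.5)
The plan is to lift a local basis of $H^0(P)$ to an actual map from a free module and then check that this map is an equivalence, using the derived strong property on both sides. Since the statement is local in the analytic topology, Lemma \ref{lem:coherentdescent} and the compatibility of derived strong modules with derived rational localizations let us replace $X$ by a rational cover. Concretely, for a rational localization $A \to A^\prime$, base change $A^\prime \hat{\otimes}^\mathbf{L}_A P$ is again coherent. It is again derived strong because $A \to A^\prime$ is derived strong and $H^0(A)\to H^0(A^\prime)$ is flat on finitely generated modules, via the Tor spectral sequence \eqref{eq:Torspectral1}. We may therefore assume that $H^0(P) \cong H^0(A)^{\oplus r}$ is free.

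Next choose a basis $e_1,\dots,e_r$ of $H^0(P)$. Since $P$ is connective and $K$ is projective, each $e_i$ lifts to a morphism $K \to P$ in $\mathscr{V}$. By adjunction these assemble into an $A$-linear map $\phi: A^{\oplus r} \to P$, and by construction $H^0(\phi)$ is the isomorphism $H^0(A)^{\oplus r} \xrightarrow{\sim} H^0(P)$.

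It remains to show that $\phi$ is an equivalence, i.e.\ that $H^n(\phi)$ is an isomorphism for every $n \leqslant 0$. Both the source and the target are derived strong: $A^{\oplus r}$ trivially, and $P$ by hypothesis. Hence for each $n$ there is a commutative square comparing $H^n(A)\hat{\otimes}^\mathbf{L}_{H^0(A)} H^0(-)$ with $H^n(-)$. In this square the horizontal maps are equivalences and the left vertical map is $H^n(A)\hat{\otimes}^\mathbf{L}_{H^0(A)} H^0(\phi)$, which is an isomorphism. So $H^n(\phi)$ is an isomorphism for all $n\leqslant 0$. Since both objects lie in $\mathscr{V}^{\leqslant 0}$, the left $t$-structure detects equivalences on connective objects via the $H^n$ (the cohomology functors being jointly conservative on $\mathscr{V}^{\leqslant 0}$), so $\phi$ is an equivalence. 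This is the same argument as at the end of the proof of Lemma \ref{lem:finiteetale}.

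The main obstacle is the naturality of the derived strong comparison map. We need the map $H^n(A)\hat{\otimes}^\mathbf{L}_{H^0(A)}H^0(M)\to H^n(M)$ to be functorial in $M$ in the heart, so that the square above genuinely commutes. We also need conservativity of $\{H^n\}$ in the quasi-abelian setting of $\operatorname{IndBan}_K$. The first follows from constructing the comparison via the $H^*(A)$-module structure on $H^*(M)$. The second is standard for the left heart and is used implicitly throughout \cite{SoorThesis}. A secondary check is that derived strongness survives rational localization in the reduction step. If that proves delicate, one can instead avoid localizing $P$: choose a rational cover on which $H^0(P)$ is free, and run the above argument on each piece after verifying derived strongness there via \cite[Proposition 2.3.88]{DAnG}.
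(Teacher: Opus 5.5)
Your proposal is correct and follows essentially the same route as the paper: pass to a rational cover on which $H^0(P)$ becomes free, and check that the base change of $P$ stays derived strong. The paper simply cites \cite[Corollary 2.3.89]{DAnG} for that step, where you argue via flatness of rational localizations and the Tor spectral sequence. Then lift a basis to a map $A^{\oplus r} \to P$, and conclude because a morphism of derived strong modules inducing an isomorphism on $H^0$ is an equivalence; your remarks on naturality of the comparison map and conservativity of the $H^n$ just make explicit what the paper's final sentence leaves implicit.
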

\begin{proof}
Let $\{A \to A_i\}_i$ be a finite family of (derived) rational localizations corresponding to a cover of $\operatorname{Spec}(A)$, such that each $H^0(A_i) \hat{\otimes}^\mathbf{L}_{H^0(A)} H^0(P)$ is finite free. The assumption that $P \in \operatorname{Coh}^{\leqslant 0}(X)$ guarantees that the transversality hypotheses in \cite[Corollary 2.3.89]{DAnG} are satisfied and therefore $A_i \hat{\otimes}^\mathbf{L}_A P$ is derived strong as an $A_i$-module. By assumption, there exists $n \geqslant 0$ and an equivalence $H^0(A_i)^{\oplus n} \xrightarrow[]{\sim} H^0(A_i \hat{\otimes}^\mathbf{L}_A P)$; this lifts (up to homotopy) to a morphism $A_i^{\oplus n} \to A_i \hat{\otimes}^\mathbf{L}_A P$. This is a morphism between derived strong $A_i$-modules which is an isomorphism on $H^0$, therefore, it is an equivalence. Hence $P$ is locally finite free. 
\end{proof}
\egapar The following Lemma can be deduced from Lemma \ref{lem:ZariskiclosedRegular2}. However, in this particular case the proof is much simpler and so we decided to include it. The notion of a separated morphism is defined in Definition \ref{defn:separated} below.
\begin{lem}\label{lem:SmoothSection}
Let $\pi: Y \to X$ be a smooth separated morphism of derived rigid spaces (of dimension $n$). Assume that $\pi$ admits a section $f: X \to Y$. Then $f$ is a quasi-smooth Zariski-closed immersion (of virtual dimension $-n$).
\end{lem}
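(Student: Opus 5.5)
The plan has two parts. First show that $f$ is a Zariski-closed immersion. Then compute its cotangent complex from the fiber sequence for $X \xrightarrow{f} Y \xrightarrow{\pi} X$.

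\emph{Zariski-closed immersion.} I would write $f$ as the base-change of the diagonal. Concretely, there is a Cartesian square exhibiting $f: X \to Y$ as the pullback of $\Delta_{Y/X}: Y \to Y\times_X Y$ along $(f\pi,\operatorname{id}): Y \to Y \times_X Y$; this is the standard graph argument. Separatedness of $\pi$ (Definition \ref{defn:separated}) should say exactly that $\Delta_{Y/X}$ is a Zariski-closed immersion, or at least that its classical truncation is one. Representability and the surjectivity-on-$H^0$ condition are stable under base change, so $f$ is a Zariski-closed immersion. If separatedness is only phrased on classical truncations, I would combine representability of the diagonal of a derived rigid space (it is a monomorphism into an affinoid-locally representable object) with Theorem \ref{thm:topologicalinvariance}, and check surjectivity on $H^0$ on truncations, since $H^0$ of a pullback only depends on $H^0$ data.

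\emph{Cotangent complex.} By Proposition \ref{prop:cotangentprops}(iii) applied to $X \xrightarrow{f} Y \xrightarrow{\pi} X$, with $\pi f=\operatorname{id}$ and $\mathbf{L}_{X/X}\simeq 0$, there is a fiber sequence
\begin{equation*}
f^*\mathbf{L}_{Y/X} \to 0 \to \mathbf{L}_{X/Y}.
\end{equation*}
Hence $\mathbf{L}_{X/Y}\simeq f^*\mathbf{L}_{Y/X}[1]$, so $\mathbf{L}_{X/Y}[-1]\simeq f^*\mathbf{L}_{Y/X}$. Since $\pi$ is smooth of dimension $n$, $\mathbf{L}_{Y/X}$ is locally free of rank $n$ in the analytic topology on $Y$. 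Its pullback along $f$ is then locally free of rank $n$ on $X$: pull back the cover along $f$, using that $f^{-1}$ of an analytic subspace is an analytic subspace. This gives quasi-smoothness of virtual dimension $-n$ in the sense of Definition \ref{defn:Zariskiclosedquasismooth}, and Lemma \ref{lem:Zariskiclosedquasismooth} then gives the local regular-surjection description if it is wanted.

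The main obstacle is the first step: extracting from separatedness that the diagonal, and hence $f$, is a genuine Zariski-closed immersion at the derived level. In particular, one must check that the pulled-back affinoids $X'=\operatorname{Spec}(B)\to\operatorname{Spec}(A)$ really have $H^0(A)\to H^0(B)$ surjective. I would try first to reduce to affinoid $X$ and to an affinoid neighbourhood in $Y$ of the image of $f$. There I would argue classically: a section of a separated morphism of classical rigid spaces is a closed immersion, which applies to $t_!(f)$. I would then lift to the derived level using that $f$ is representable, via the graph square, and that the surjectivity condition is detected on $t_!$.
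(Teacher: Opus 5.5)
Your proposal is correct and is essentially the paper's proof. The paper also gets that $f$ is a Zariski-closed immersion from separatedness of $\pi$ together with $\pi f \simeq \operatorname{id}$; your graph square is the argument implicit in its one-line claim. It then reads off $\mathbf{L}_{X/Y}[-1] \simeq f^*\mathbf{L}_{Y/X}$ from the fiber sequence $f^*\mathbf{L}_{Y/X} \to 0 \to \mathbf{L}_{X/Y}$ exactly as you do.

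The obstacle you flag does not arise. Definition~\ref{defn:separated} already makes $\Delta_\pi$ a Zariski-closed immersion at the derived level, and that class is stable under base change directly from its definition, so no detour through classical truncations is needed.
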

\begin{proof}
As $\pi$ has representable diagonal, the section $f: X \to Y$ is automatically representable and $\pi f \simeq \operatorname{id}$ implies that $f$ is a Zariski-closed immersion. From $X \to Y \to X$ we obtain a fiber sequence $f^*\mathbf{L}_{Y/X} \to 0 \to \mathbf{L}_{X/Y}$, hence $\mathbf{L}_{X/Y}[-1] \simeq f^*\mathbf{L}_{Y/X} $ and we deduce the claim.
\end{proof}
\section{Partially-proper morphisms}
\label{sec:partiallyproper}
\egapar The notion of ``boundary" is an important feature of analytic geometry which does not appear in algebraic geometry. For the formulation and proof of the Grothendieck duality theorem, it is perhaps most straightforward to restrict to morphisms ``without boundary". These are the \emph{partially-proper} morphisms. All Stein spaces and all proper rigid spaces are partially proper, but, for instance, affinoids are generally not partially-proper.
\begin{defn}\label{defn:separated}
Let $f: X \to Y$ be a morphism of derived rigid spaces with diagonal $\Delta_f: X \to X \times_Y X$. We say that $f$ is \emph{quasi-separated} (resp. \emph{semi-separated}, resp. \emph{separated}), if $\Delta_f$ is a quasi-compact morphism (resp. a representable morphism, resp. a Zariski-closed immersion).
\end{defn}
\begin{defn}\cite[Definition 1.3.3]{HuberEtale}
A morphism $f: X \to Y$ of derived rigid spaces is called \emph{partially proper} if it is separated and for every morphism $Y^\prime \to Y$ in $\mathrm{dRig}$ and every $x \in |X \times_Y Y^\prime|$, the image $|f^\prime|(\overline{\{x\}}) \subseteq |Y^\prime|$ is closed. Here $f^\prime : X \times_Y Y^\prime \to Y^\prime$ denotes the base-change of $f$. A morphism of derived rigid spaces is called \emph{proper} if it is partially-proper and quasi-compact. A derived rigid space $X$ is called partially-proper (resp. proper), if the morphism $X \to *$ is partially-proper (resp. proper). 
\end{defn}
\begin{defn}
Let $X \in \mathrm{dRig}$. An analytic subspace is called \emph{partially proper} if the inclusion $U \hookrightarrow X$ is a partially-proper morphism.
We define $\operatorname{An}^\circ(X)$ to be the subframe of $\operatorname{An}(X)$ on the partially-proper subspaces. In this way we obtain a Grothendieck topology called the \emph{partially-proper analytic topology} on $X$ and denoted by $X_{\mathrm{pp}}$.
\end{defn}
\begin{lem}\label{lem:partiallypropersourcetarget}
The property of being partially-proper is local for the analytic topology on the target and local on the source for the partially-proper analytic topology. More precisely, if $f: X \to Y$ is a separated morphism of derived rigid spaces, then:
\begin{enumerate}[(i)]
    \item If $\{Y_i \to Y\}_i$ a cover of $Y$ by analytic subspaces such that each base-change $f_i: X_i \to Y_i$ is partially-proper, then  $f$ is partially-proper;
    \item If $\{X_i \to X\}_i$ is a cover of $X$ by partially-proper analytic subspaces such that each restriction $f|_{X_i}:X_i \to Y$ is partially-proper, then $f$ is partially-proper. 
\end{enumerate}
\end{lem}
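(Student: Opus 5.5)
The plan is to reduce both statements to conditions on underlying topological spaces and then check those conditions directly. The only non-topological ingredient in the definition of partially proper is separatedness, and that is assumed throughout. By Theorem \ref{thm:topologicalinvariance} and Paragraph \ref{par:HuberSpace}, analytic subspaces of a derived rigid space correspond to open subsets of $|X|$, and this correspondence is compatible with base change. So in both (i) and (ii) we fix an arbitrary $Y^\prime \to Y$ in $\operatorname{dRig}$ and a point $x \in |X^\prime|$, where $X^\prime := X \times_Y Y^\prime$. We must show that $T := |f^\prime|(\overline{\{x\}})$ is closed in $|Y^\prime|$.

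For (i), put $Y_i^\prime := Y_i \times_Y Y^\prime$. These are analytic subspaces covering $Y^\prime$, so the open sets $|Y_i^\prime|$ cover $|Y^\prime|$, and it suffices to show that $T \cap |Y_i^\prime|$ is closed in $|Y_i^\prime|$ for each $i$. Set $X_i^\prime := X^\prime \times_{Y^\prime} Y_i^\prime$, an open subset of $|X^\prime|$ equal to the preimage of $|Y_i^\prime|$. Then
\[
T \cap |Y_i^\prime| = |f_i^\prime|\big(\overline{\{x\}} \cap |X_i^\prime|\big).
\]
Since generizations stay inside open sets, $\overline{\{x\}} \cap |X_i^\prime|$ is the union of the closures in $|X_i^\prime|$ of the points $x^\prime \in \overline{\{x\}} \cap |X_i^\prime|$, and a priori this union is infinite. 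To control it, I would use the spectral structure. Either $x$ itself lies over $|Y_i^\prime|$, or we note the following. In locally spectral spaces coming from analytic adic spaces, $\overline{\{x\}}$ consists of specializations of $x$, and these form a chain, hence a totally ordered set. Choose $x^\prime$ to be the most general specialization of $x$ lying in $|X_i^\prime|$; this exists because the chain intersected with an open set is a generization-closed subchain, and one can take its generic end (I would invoke Huber's result that specializations of a point form a finite chain in the analytic setting, and if needed reduce to the rank-one generalization). With this choice, $\overline{\{x\}} \cap |X_i^\prime|$ is the closure of $x^\prime$ in $|X_i^\prime|$. Partial properness of $f_i$ applied to the base change $Y_i^\prime \to Y_i$ then shows that its image is closed in $|Y_i^\prime|$.

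For (ii), let $X_i^\prime := X_i \times_Y Y^\prime$. These are partially-proper analytic subspaces of $X^\prime$, since partially-proper inclusions are stable under base change. Pick $i$ with $x \in |X_i^\prime|$. The key claim is that a partially-proper analytic subspace $U \subseteq X^\prime$ has closed underlying set: applying the definition to the identity of $X^\prime$ shows that the closure of any point of $U$ lies in $U$, so $U$ is stable under specialization. Therefore $\overline{\{x\}} \subseteq |X_i^\prime|$, and the closure of $x$ in $|X_i^\prime|$ equals its closure in $|X^\prime|$. Partial properness of $f|_{X_i}$, base changed along $Y^\prime \to Y$, then gives that $T$ is closed.

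I expect the main obstacle to be the point-set step in (i): showing that $\overline{\{x\}} \cap |X_i^\prime|$ is the closure of a single point. This is exactly where the adic-space structure of $|X|$ is needed, namely that specializations of a point form a chain. An alternative I would try is to replace closedness of $T$ by the valuative criterion, characterizing it as specialization-stability plus a quasi-compactness condition, both of which can be checked locally on $Y^\prime$.
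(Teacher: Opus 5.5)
Your architecture matches the paper's. In (i) you fix $Y'\to Y$ and $x\in|X'|$, use that the $|Y_i'|$ form an open cover of $|Y'|$, and note that $T\cap|Y_i'|=|f_i'|(\overline{\{x\}}\cap|X_i'|)$. In (ii) you apply partial properness of $X_i'\hookrightarrow X'$ to the identity base change to get $\overline{\{x\}}\subseteq|X_i'|$, exactly as the paper does.

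The problem is the step in (i) that you single out as the main obstacle: your proposed resolution rests on a false claim. You assert that the points of $\overline{\{x\}}$ form a (finite) chain, and you pick ``the most general specialization of $x$ lying in $|X_i'|$''. In an analytic adic space it is the set $\{y : x\in\overline{\{y\}}\}$ of generalizations of $x$ that is totally ordered \cite[Lemma 1.1.10(i)]{HuberEtale}; this is what Lemma \ref{lem:maximalpoints} uses. The set $\overline{\{x\}}$ is not a chain. For example, the closure of the Gauss point of $\mathbf{D}^1$ contains one rank-two point for each residue class, and these are pairwise incomparable. So, as written, your (i) depends on an incorrect lemma.

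The step is in fact elementary, and you already state the needed fact (``generizations stay inside open sets''). You just do not apply it to the points of $\overline{\{x\}}\cap|X_i'|$. If $y\in\overline{\{x\}}\cap U$ with $U$ open, then $U$ is an open neighbourhood of $y$, hence contains $x$. This gives two cases:
\begin{enumerate}[(a)]
\item If $x\notin|X_i'|$, then $\overline{\{x\}}\cap|X_i'|$ is empty, so $T\cap|Y_i'|=\emptyset$.
\item If $x\in|X_i'|$, then $\overline{\{x\}}\cap|X_i'|$ is the closure of $x$ in the subspace $|X_i'|$. Partial properness of $f_i$, base-changed along $Y_i'\to Y_i$ (note $X_i'\simeq X_i\times_{Y_i}Y_i'$), then applies directly.
\end{enumerate}
This is what the paper's one-line proof of (i) uses implicitly, and your valuative-criterion fallback is unnecessary.

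There is also a smaller inaccuracy in (ii). A partially-proper analytic subspace need not have closed underlying set. For example, $|\mathring{\mathbf{D}}^1|$ is a nonempty proper open subset of the connected space $|\mathbf{D}^1|$, hence not closed. What your argument actually proves, and all that is used, is that such a subspace contains the closure of each of its points.
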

\begin{proof}
(i): Take $Y^\prime \to Y$, let $f^\prime: X^\prime \to Y^\prime$ (resp. $f^\prime_i: X^\prime_i \to Y^\prime_i$) be the base change of $f$ (resp. $f_i$) and let $x \in |X^\prime|$. By assumption $|f_i^\prime|(\overline{\{x\}} \cap |X_i^\prime|) = |f^\prime|( \overline{\{x\}}) \cap |Y_i^\prime|$ is closed for each $i$. Since the $|Y_i^\prime|$ are open subsets which cover $|Y^\prime|$, then $|f^\prime|( \overline{\{x\}})$ is closed in $|X^\prime|$. 

(ii): Take $Y^\prime \to Y$ and let $f^\prime :X^\prime \to Y^\prime$ (resp. $f^\prime|_{X_i}:X_i^\prime \to Y^\prime$) denote the base-changes. Let $x \in |X^\prime|$, then $x \in |X_i^\prime|$ for some $i$; let $j: X_i^\prime \hookrightarrow X^\prime$ be the inclusion. By assumption, $|j|(\overline{\{x\}} \cap |X_i^\prime|) \subseteq |X^\prime|$ is closed and contains $x$, so we must have $\overline{\{x\}} \subseteq |X_i^\prime|$. Then $|f^\prime|(\overline{\{x\}}) = |f^\prime|_{X_i^\prime}|(\overline{\{x\}})$ is closed in $|Y^\prime|$ by assumption. 
\end{proof}
\begin{lem}
A separated morphism $f: X \to Y$ of derived rigid spaces is partially-proper (resp. proper) if and only if its classical truncation $t_!(f) : t_!X \to t_!Y$ is partially-proper in the sense of Huber \cite[Definition 0.4.8]{HuberEtale}. 
\end{lem}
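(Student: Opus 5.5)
The plan is to reduce everything to Theorem \ref{thm:topologicalinvariance}(ii), which says that the counit $e_!t_!X \to X$ induces a homeomorphism $|t_!X| \xrightarrow{\sim} |X|$, naturally in $X$. Both notions of partial properness have the same shape: separatedness together with a topological condition on closures of points, required after every base change. Note that the statement assumes $f$ is separated, and Huber's definition also asks for separatedness of $t_!(f)$. So we need three things: separatedness matches, each individual closure condition matches, and the two collections of base changes can be compared.

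First, for any base change $Y' \to Y$ in $\operatorname{dRig}$, we will show that $t_!$ commutes with the fiber product at the level of underlying spaces. That is, the canonical map $|t_!(X\times_Y Y')| \to |t_!X \times_{t_!Y} t_!Y'|$ should be a homeomorphism. The map $t_!(X\times_YY') \to t_!X\times_{t_!Y}t_!Y'$ is a morphism of classical rigid spaces. Working locally on affinoids via rational subdomains (Theorem \ref{thm:topologicalinvariance}(i)), it becomes $\operatorname{Spec}$ of $H^0(B)\hat\otimes_{H^0(A)}H^0(C) \to H^0(B\hat\otimes^{\mathbf L}_AC)$. This is an isomorphism, since $H^0$ of a connective derived tensor product is the classical completed tensor product. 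Hence the map is an isomorphism of classical rigid spaces, and in particular a homeomorphism. Combining this with the naturality of $|t_!(-)|\simeq|-|$, the map $|f'|$ identifies with $|t_!(f)'|$, where $t_!(f)'$ is the base change of $t_!(f)$ along $t_!Y' \to t_!Y$. So the closure condition for $f$ along $Y'$ is equivalent to the closure condition for $t_!(f)$ along $t_!Y'$.

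Next we compare the families of test objects. For the direction ``$t_!(f)$ partially proper $\Rightarrow$ $f$ partially proper'', every derived base change $Y'$ yields the classical base change $t_!Y'$, so the previous paragraph suffices. For the converse, any classical $Y'' \to t_!Y$ gives $e_!Y'' \to e_!t_!Y \to Y$ by adjunction, and $t_!e_!Y''\simeq Y''$ because $e_!$ is fully faithful. So every classical test object arises from a derived one, and the conditions coincide. Huber's definition for $t_!(f)$ also requires separatedness. For this we use $t_!(X\times_YX)\simeq t_!X\times_{t_!Y}t_!X$, which is the isomorphism above, and the fact that $t_!$ preserves Zariski-closed immersions, since surjectivity on $H^0$ is exactly the classical condition. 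Thus $t_!(\Delta_f)=\Delta_{t_!(f)}$ is a closed immersion. The proper case then follows once we check that quasi-compactness is also detected on $|\cdot|$, which again follows from Theorem \ref{thm:topologicalinvariance}.

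I expect the main obstacle to be the first step: showing rigorously that $t_!$ commutes with fiber products of derived rigid spaces, at least on underlying spaces. The issue is that $t_!$ is a left adjoint and so is not formally left exact. To handle this, I would first argue for affinoids using $H^0(B\hat\otimes^{\mathbf L}_AC)\cong H^0(B)\hat\otimes_{H^0(A)}H^0(C)$. I would then glue, using that analytic subspaces of $X\times_YY'$ correspond to those of $t_!$ by Theorem \ref{thm:topologicalinvariance}(ii), and that fiber products of rigid spaces are computed on affinoid covers.
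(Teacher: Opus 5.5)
Your proposal is correct and follows the same route as the paper, whose proof is simply Theorem \ref{thm:topologicalinvariance} combined with the fact that $t_!$ preserves limits. The step you flag as the main obstacle is actually formal, and your remark that $t_!$ is not formally left exact is mistaken: by Lemma \ref{lem:classicaltruncationgeneralnonsense} one has $t_!\simeq e^*$, which is right adjoint to $e_!$, so $t_!$ preserves all limits and both $t_!(X\times_YY')\simeq t_!X\times_{t_!Y}t_!Y'$ and $t_!(\Delta_f)\simeq\Delta_{t_!(f)}$ hold outright, which makes your affinoid-by-affinoid gluing unnecessary.
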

\begin{proof}
This follows from Theorem \ref{thm:topologicalinvariance} and the fact that $t_!$ is limit-preserving.
\end{proof}
\begin{lem}\label{lem:Zariskiclosedclosed}
Let $f: Z \to X$ be a Zariski-closed immersion of derived rigid spaces. Then $|f|:|Z| \to |X|$ is a closed immersion of topological spaces.
\end{lem}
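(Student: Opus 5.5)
The plan is to reduce to classical affinoids and then use the classical fact that a Zariski-closed subset of an affinoid adic space is closed.

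First, reduce to the case where $X$ is affinoid. The property of $|f|$ being a closed immersion of topological spaces (a homeomorphism onto a closed subset) is local on $|X|$. Choose a cover $\{U_i \hookrightarrow X\}_i$ by affinoid analytic subspaces; the $|U_i|$ are open and cover $|X|$. Since $f$ is representable, each pullback $Z_i := Z \times_X U_i \to U_i$ is again a Zariski-closed immersion of affinoids. Moreover $|Z_i| = |f|^{-1}(|U_i|)$, because analytic subspaces pull back to analytic subspaces. So it suffices to treat $X = \operatorname{Spec}(A)$ and $Z = \operatorname{Spec}(B)$ affinoid, with $H^0(A) \to H^0(B)$ surjective.

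Next, pass to classical truncations. By Theorem \ref{thm:topologicalinvariance}(ii), the counit morphisms induce homeomorphisms $|t_!Z| \simeq |Z|$ and $|t_!X| \simeq |X|$, compatibly with $|f|$ by naturality of the counit. We are therefore reduced to the classical morphism $\operatorname{Spec}(H^0(A)/I) \to \operatorname{Spec}(H^0(A))$, where $I$ is the kernel of $H^0(A) \to H^0(B)$. By paragraph \ref{par:HuberSpace}, these spaces are the underlying spaces of the associated analytic adic spaces $\operatorname{Spa}(H^0(A)/I)$ and $\operatorname{Spa}(H^0(A))$, where the integral subrings are the power-bounded elements.

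For the classical statement, the map $\operatorname{Spa}(H^0(A)/I) \to \operatorname{Spa}(H^0(A))$ is injective with image $V(I) = \{x : |g(x)| = 0 \ \forall g \in I\}$. Each set $\{|g| = 0\}$ is closed, being the complement of the union of the rational opens $\{|\varpi^n| \leqslant |g|\}$, so $V(I)$ is closed. To see that the map is a homeomorphism onto its image, note that rational subsets of $\operatorname{Spa}(H^0(A)/I)$ are pullbacks of rational subsets of $\operatorname{Spa}(H^0(A))$: lift the defining functions along the surjection, and the unit-ideal condition lifts after adding generators of $I$. Alternatively one can simply cite Huber, Lemma 1.4.5 / \S1.4.

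The main obstacle is bookkeeping rather than content: checking that the reduction to affinoids respects the topological spaces, namely that $|Z_i| \to |Z|$ is an open embedding onto $|f|^{-1}(|U_i|)$. This follows from functoriality of $|\cdot|$ on analytic subspaces (Definition \ref{defn:derivedrigidspace}(iii)) together with the fact that open subsets of $|X|$ biject with analytic subspaces of $X$.
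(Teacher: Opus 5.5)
Your proposal is correct and follows the same route as the paper: reduce to $X$ affinoid, pass to classical truncations via Theorem~\ref{thm:topologicalinvariance}, and identify $|Z|\to|X|$ with $\operatorname{Spa}(H^0(A)/I)\to\operatorname{Spa}(H^0(A))$, which is a homeomorphism onto the closed set $V(I)$. You also supply details the paper leaves implicit: closedness of $\{|g|=0\}$ as the complement of the rational opens $\{|\varpi^n|\leqslant|g|\}$, the embedding property obtained by lifting rational subsets, and the bookkeeping for the affinoid reduction.
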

\begin{proof}
One reduces to the case when $X$ is affinoid, and then using the invariance of $|X|$ under classical truncation, to the case when $Z = \operatorname{Spec}(B) \to \operatorname{Spec}(A) = X$ is a morphism of classical affinoids such that $A \to B$ is surjective. We make use of the connection to valuation theory, c.f. Paragraph \ref{par:HuberSpace}: it seems to be difficult to prove the Lemma without this. Put $I = \operatorname{ker}(A \to B)$, then the morphism $|Z| \to |X|$ identifies with the morphism of Huber spectra $\operatorname{Spa}(B, B^\circ) \to \operatorname{Spa}(A,A^\circ)$ which is a homeomorphism onto $ \{ x \in \operatorname{Spa}(A,A^\circ): |f|_x = 0 \ \forall f \in I\} $, which is closed\footnote{This follows directly from the definition of the topology on the valuation spectrum, \cite{huber_continuous_1993}.} in $\operatorname{Spa}(A,A^\circ)$. 
\end{proof}
\begin{cor}\label{cor:partiallyproperproperties}
\begin{enumerate}[(i)]
    \item Any Zariski-closed immersion of derived rigid spaces is proper;
    \item Let $f, g$ be composable morphisms of derived rigid spaces such that $f$ is separated and $fg$ is partially-proper (resp. proper). Then $g$ is partially-proper (resp. proper). 
\end{enumerate}
\end{cor}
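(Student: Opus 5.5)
For part (i), I would first check that a Zariski-closed immersion $f: Z \to X$ is separated. Since $f$ is a Zariski-closed immersion, it is a monomorphism: locally on $X$ it is $\operatorname{Spec}(B) \to \operatorname{Spec}(A)$ with $H^0(A) \to H^0(B)$ surjective. I expect the main obstacle to be the derived subtlety here. Surjectivity on $H^0$ alone does not make $f$ a monomorphism in the derived world, because $B \hat{\otimes}^\mathbf{L}_A B \neq B$ in general. So instead I would verify directly that the diagonal $\Delta_f: Z \to Z \times_X Z$ is a Zariski-closed immersion. Locally $\Delta_f$ is the multiplication map $B \hat{\otimes}^\mathbf{L}_A B \to B$. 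On $H^0$ this is $H^0(B) \hat{\otimes}_{H^0(A)} H^0(B) \to H^0(B)$, which is surjective (indeed an isomorphism, since $H^0(A) \to H^0(B)$ is surjective). Representability of $\Delta_f$ follows from representability of $f$, so $f$ is separated.

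Next I would show that $f$ is quasi-compact. This is local on the target; for an affinoid target the source is affinoid, hence quasi-compact.

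For the closure condition, take any $X' \to X$. The base change $f': Z' \to X'$ is again a Zariski-closed immersion, since the definition is stable under pullback. By Lemma \ref{lem:Zariskiclosedclosed}, $|f'|$ is a closed embedding of topological spaces. For $x \in |Z'|$, the closure $\overline{\{x\}}$ is closed in $|Z'|$, so its image is closed in $|X'|$. Hence $f$ is partially proper and quasi-compact, that is, proper.

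For part (ii), let $g: X \to Y$ and $f: Y \to S$, with $f$ separated and $fg$ partially proper. I would use the standard graph factorization
\[
g = p_2 \circ \Gamma_g, \qquad \Gamma_g: X \to X \times_S Y, \qquad p_2: X \times_S Y \to Y.
\]
The map $\Gamma_g$ is the base change of $\Delta_f: Y \to Y \times_S Y$ along $g \times \operatorname{id}$, so it is a Zariski-closed immersion. By part (i), $\Gamma_g$ is therefore proper. The map $p_2$ is the base change of $fg$ along $f$. Partial properness is stable under base change by definition, because the condition quantifies over all $Y' \to Y$. Quasi-compactness is also stable under base change, so $p_2$ is partially proper (resp. proper).

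It then remains to show that partially proper (resp. proper) morphisms are closed under composition. For this I would pass to classical truncations and cite Huber's corresponding result via the Lemma comparing with $t_!(f)$, which requires checking separatedness of the composite. Alternatively, I would argue directly on the underlying spaces: the image of $\overline{\{x\}}$ under a closed embedding is the closure of a point, and $|p_2'|$ sends point-closures to closed sets. The direct argument needs a little care, since the image under $|p_2'|$ of a closed set that is not a point-closure need not be closed. This is avoided because the image of $\overline{\{x\}}$ under the closed embedding is $\overline{\{\Gamma(x)\}}$, which is again a point-closure.

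Separatedness of $g$ follows from separatedness of $fg$: the diagonal $\Delta_g$ factors through $\Delta_{fg}$ via the Zariski-closed immersion $X \times_Y X \to X \times_S X$, which is a base change of $\Delta_f$.
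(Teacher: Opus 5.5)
Your proposal is correct and follows essentially the paper's route: (i) comes from stability of Zariski-closed immersions under base change together with Lemma \ref{lem:Zariskiclosedclosed}, and (ii) comes from the graph trick $g = p_2\circ\Gamma_g$, where $\Gamma_g$ is a base change of $\Delta_f$ and is handled by (i). You also fill in details the paper leaves implicit: separatedness and quasi-compactness of Zariski-closed immersions (correctly checked via $H^0$ of the diagonal rather than a false monomorphism claim), separatedness of $g$, and why the composite satisfies the closure condition. All of these check out.
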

\begin{proof}
(i): Since the class of Zariski-closed immersions is closed under base-change, this follows from Lemma \ref{lem:Zariskiclosedclosed}. 
(ii): Since the class of partially-proper morphisms is closed under base-change and composition, this is equivalent (by the graph trick) to the assertion that the diagonal of a partially-proper morphism is partially proper, which is (i). 
\end{proof}
\egapar The reason why we have to prove the following Proposition is because we don't know \emph{a priori} that our analytification agrees with Huber's \cite{HuberEtale}.
\begin{prop}
Let $f: X \to Y$ be a separated morphism between derived schemes $X, Y$ over $K$ which are locally homotopically of finite presentation. Then $f^\mathrm{an}: X^\mathrm{an} \to Y^\mathrm{an}$ is a partially-proper morphism of derived rigid spaces.
\end{prop}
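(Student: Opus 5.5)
We need to show that $f^\mathrm{an}$ is separated, and that closures of points have closed images after arbitrary base change. First, separatedness. The diagonal $\Delta_f: X \to X \times_Y X$ is a Zariski-closed immersion of derived schemes. Since $(-)^\mathrm{an}$ is left exact, $\Delta_{f^\mathrm{an}} \simeq (\Delta_f)^\mathrm{an}$. So it suffices to show that analytification carries Zariski-closed immersions to Zariski-closed immersions.

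This is local on the target, because $(-)^\mathrm{an}$ preserves effective epimorphisms and analytic subspaces (by the proof that $X^\mathrm{an}$ is a derived rigid space). So we may assume the target is $\operatorname{Spec}(A)$ with $A$ homotopically of finite presentation, and the source is $\operatorname{Spec}(B)$ with $A \to B$ surjective on $H^0$. By left exactness, $\operatorname{Spec}(B)^\mathrm{an} \to \operatorname{Spec}(A)^\mathrm{an}$ is the base change along a map to $\mathbf{A}^{n,\mathrm{an}}$ of an analytified closed immersion. Concretely, for any affinoid $\operatorname{Spec}(C) \to \operatorname{Spec}(A)^\mathrm{an}$, the pullback is represented by $\operatorname{Spec}(C \hat{\otimes}^\mathbf{L}_A B)$, using Corollary~\ref{cor:lowershriekrepresentable} and the adjunction as in Proposition~\ref{prop:affineline}. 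Its $H^0$ is $H^0(C) \otimes_{H^0(A)} H^0(B)$ (completed, but finitely generated modules over an affinoid are already complete), which is a quotient of $H^0(C)$. Hence it is a Zariski-closed immersion.

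Second, the closure condition. By the Lemma just above, partial properness may be checked on classical truncations, and $t_!$ commutes with analytification of classical truncations up to the evident comparison. I would therefore reduce to classical $X$, $Y$ of finite type over $K$. By Lemma~\ref{lem:partiallypropersourcetarget}(i), we may work locally on $Y^\mathrm{an}$, hence locally on $Y$, and assume $Y$ is affine.

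Since $f$ is separated, Nagata compactification lets us factor $f$ as an open immersion $X \hookrightarrow \bar{X}$ followed by a proper $\bar X \to Y$. This does not directly help, because open immersions are not partially proper. Instead, I would use the valuative characterization of partial properness from Huber (\cite[Definition 1.3.3]{HuberEtale} and its equivalent forms). The condition on closures of points is equivalent to the unique existence of lifts of specializations, with respect to rank-one generalizations, in the adic spectra. Separatedness of $f$ gives uniqueness. For existence, points of $|X^\mathrm{an}|$ are analytic valuations on affine pieces. A specialization $y' \rightsquigarrow y$ in $|Y^\mathrm{an}|$, together with a lift $x$ of the generic point $y'$, gives a $K$-morphism from the spectrum of the valuation ring to $Y$, lifted to $X$ on the generic point. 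By the algebraic valuative criterion, one might a priori need properness to extend this. However, analytic specializations of rank one points only change the valuation ring inside the same residue field: the center stays within the affine chart of $X$ containing $x$, because $x$ bounded on functions means the secondary specialization is also bounded. So the lift exists in $|X^\mathrm{an}|$ automatically. The rigid case needs care, since $Y'$ is arbitrary; one uses that base change preserves the lifting property.

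The main obstacle is making this last valuation-theoretic argument rigorous, namely identifying $|X^\mathrm{an}|$ with Huber's adic space of $X$ compatibly with base change $Y' \to Y^\mathrm{an}$. Short of that, I would instead check that $X^\mathrm{an}$ agrees with Huber's analytification, via the universal property of Proposition~\ref{prop:affineline} on affinoid test objects, and then quote Huber's result that analytification of separated morphisms is partially proper.
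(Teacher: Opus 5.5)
Your proposal is sound in outline, but it takes a genuinely different route from the paper. The paper never looks at points of $X^{\mathrm{an}}$. For $X=\operatorname{Spec}(A)$ with $A$ finite cellular, it assembles $X^{\mathrm{an}}\to *$ by base change and composition from three kinds of pieces:
\begin{enumerate}[(i)]
    \item analytified $0$-cells, i.e. $\mathbf{A}^{1,\mathrm{an}}\to *$, which is partially proper by \cite[Lemma 5.7.3]{HuberEtale};
    \item $1$-cells, which are base changes of the Zariski-closed immersion $*\to\mathbf{A}^{1,\mathrm{an}}$;
    \item higher cells, which are isomorphisms on classical truncations.
\end{enumerate}
A retract of a finite cellular algebra (Proposition \ref{prop:AniAlgcompact}) then gives a Zariski-closed immersion into such a space. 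The relative affine case follows from Corollary \ref{cor:partiallyproperproperties}(ii). The general case is glued via Lemma \ref{lem:partiallypropersourcetarget}, using separatedness of $f$ to make $X_i\times_X X_j$ affine, so that each $X_i^{\mathrm{an}}\to X^{\mathrm{an}}$ is partially proper.

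You instead reduce to classical truncations and verify lifting of specializations pointwise on adic spectra. The paper's argument is formal and imports a single analytic fact. Yours in effect reproves that fact on every affine chart. In exchange, the non-affine case costs you nothing, because lifting is chart-local, so separatedness of $f$ enters only through $\Delta_f$. Your Part 1 also supplies the separatedness of $f^{\mathrm{an}}$, which the paper's appeal to Lemma \ref{lem:partiallypropersourcetarget} tacitly needs.

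The valuation-theoretic step should be stated differently. A specialization $y$ of a rank-one point $y'$ does not in general give a morphism $\operatorname{Spec}(R)\to Y$, and $R$ need not have a center on the affine chart. For example, take $Y=\mathbf{A}^1$, $y'$ the Gauss point of the unit disc and $y$ its specialization towards $\infty$; then $T\notin R$. So the algebraic valuative criterion is beside the point.

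What works is the following, for $x$ of rank one (which suffices by Huber's criterion):
\begin{enumerate}[(i)]
    \item Extend the valuation ring defining $y$ on the residue field of $f'(x)$ to a valuation ring $R\subseteq k(x)^\circ$ of $k(x)$, using Chevalley's extension theorem.
    \item Note that if $x$ lies in the affinoid $\{|\varpi^{n}b_j|\leqslant 1\}$ of $\operatorname{Spec}(B)^{\mathrm{an}}\times_{\operatorname{Spec}(A)^{\mathrm{an}}}\operatorname{Spec}(C)$, for generators $b_j$ of $B$ over $A$, then $\varpi^{n+1}b_j\in k(x)^{\circ\circ}\subseteq R$.
    \item Conclude that the specialization of $x$ defined by $R$ lies in $\{|\varpi^{n+1}b_j|\leqslant 1\}$.
\end{enumerate}

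Your ``main obstacle'' is already handled by the paper. The functor $t_!$ preserves limits, and Paragraph \ref{par:HuberSpace} identifies $|t_!Z|$ with Huber's adic space. Left exactness of $(-)^{\mathrm{an}}$ together with Proposition \ref{prop:affineline} exhibits $t_!$ of that fibre product as the increasing union of these affinoids. So no comparison with Huber's analytification is needed, which the paper deliberately avoids.

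Finally, in Part 1 you also need the higher cohomology of $C\hat{\otimes}^{\mathbf{L}}_A B$ to be finitely generated over its $H^0$, so that it is a derived affinoid. This holds because $B$ is almost perfect over $A$.
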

\begin{proof}
Suppose first that $X = \operatorname{Spec}(A)$ is affine and $A$ is finite cellular in the sense of Definition \ref{defn:cellular}. Using that partially-proper morphisms are stable under base-change and composition, it suffices to show that the analytifications of both $\operatorname{Spec}(K[D^{n+1}]) \to \operatorname{Spec}(K[S^{n}]) $ and $\operatorname{Spec}(K[T]) \to *$ are both partially-proper. By Proposition \ref{prop:affineline}, the analytification of the latter is $\mathbf{A}^{1,\mathrm{an}} \to *$, which is partially-proper by \cite[Lemma 5.7.3]{HuberEtale}. For the former, we note that $K[S^{n}] \simeq K \otimes_{K[S^{n-1}]}^\mathbf{L}K$ and $K[S^1] = K \otimes^\mathbf{L}_{K[T]}K$. Hence by induction we see that the analytification of $\operatorname{Spec}(K[D^{n+1}]) \to \operatorname{Spec}(K[S^{n}])$ induces the identity on classical truncations when $n \geqslant 1$, and so is partially proper. When $n = 0$, a $1$-cell attachment is the base-change of $\operatorname{Spec}(K) \to \operatorname{Spec}(K[T])$, which analytifies to $* \to \mathbf{A}^{1, \mathrm{an}}$, which is partially-proper as it is a Zariski-closed immersion. Hence $X^\mathrm{an}$ is partially-proper. If $X = \operatorname{Spec}(A)$ with $A$ homotopically of finite presentation, then by Proposition \ref{prop:AniAlgcompact}, $A$ is a retract of a finite cellular algebra. Using the previous, it follows that $X^\mathrm{an}$ admits a Zariski-closed immersion to a partially-proper derived rigid space, hence itself is partially proper. 

Now it follows from Corollary \ref{cor:partiallyproperproperties}(ii) that if $f: X= \operatorname{Spec}(A) \to \operatorname{Spec}(B) =Y$ is a morphism between hfp affine derived schemes, then $f^\mathrm{an}: X^\mathrm{an} \to Y^\mathrm{an}$ is partially-proper. Now we treat the general case, i.e., let $f: X \to Y$ be a separated morphism between locally hfp derived schemes; we will show that $f^\mathrm{an}$ is partially-proper. 
Using Lemma \ref{lem:partiallypropersourcetarget}(i) we reduce to the case when $Y$ is affine. Now let $\{X_i \to X\}_i$ be an affine cover of $X$; by Lemma \ref{lem:partiallypropersourcetarget}(ii) and the affine case treated above, it suffices to show that each $X_i^\mathrm{an} \to X^\mathrm{an}$ is partially-proper. By Lemma \ref{lem:partiallypropersourcetarget}(i) it suffices to show that each $X_i^\mathrm{an} \times_{X^\mathrm{an}} X_j^\mathrm{an} \to X_i^\mathrm{an}$ is partially-proper. By separatedness, each $X_i \times_X X_j \to X_i$ is a morphism between affines, so this follows from the affine case.
\end{proof}
\egapar Our convention for specializations and generalizations follows Huber \cite[Definition 1.1.9]{HuberEtale}. We feel it is important to emphasize this point because it appears that different authors have adopted opposite conventions for the meaning of the words ``specialization" and ``generalization". 
\begin{defn}\cite[Definition 1.1.9]{HuberEtale}
Let $X$ be a topological space and let $x, y \in X$. If $y \in \overline{\{x\}}$ then we say that \emph{$x$ is a specialization of $y$} (resp. \emph{$y$ is a generalization of $x$}). If $x$ is a specialization of $y$ we write $y \succ x$. The relation $\succ$ is then called the \emph{specialization preorder} on $X$. A \emph{maximal point} of $X$ is a point which is maximal with respect to the specialization preorder. We write $X_{\mathrm{max}}$ for the set of maximal points of $X$. 
\end{defn}
\egapar We remark that if $X$ is a (derived) rigid space and $U \subseteq X$ is an open subset, then $U$ is a partially-proper open subset if and only if it is closed under specializations in $X$. 
\begin{lem}\label{lem:maximalpoints}
Let $X$ be a (derived) rigid space. Then for every $x \in X$ there is a maximal $y \in X$ such that $y \succ x$, i.e., every $x \in X$ is a specialization of a maximal point. 
\end{lem}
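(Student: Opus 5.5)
The plan is to reduce to an affinoid and then to a classical statement about Huber spectra. By Theorem \ref{thm:topologicalinvariance}(ii), $|X| \cong |t_!X|$ is a homeomorphism, and the specialization preorder is purely topological. So we may assume $X$ is a classical rigid space. By Paragraph \ref{par:HuberSpace}, we may then identify $|X|$ with the underlying space of the associated analytic adic space. Let $x \in |X|$ and choose an affinoid open $U = \operatorname{Spa}(A,A^\circ)$ containing $x$.

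The first key step is to produce a maximal generalization inside $U$. In an analytic adic space, the generalizations of a point form a totally ordered chain; this is Huber's result that the set of generalizations of a point of an analytic adic space is totally ordered, cf. \cite[Lemma 1.1.10]{HuberEtale}. Concretely, the generalizations of $x$ in $\operatorname{Spa}(A,A^\circ)$ are the valuations obtained from $x$ by coarsening along convex subgroups, subject to the constraint of remaining continuous, i.e. analytic. Since the rank is finite, or more generally the convex subgroups form a chain, there is a coarsest such continuous coarsening $y$. This point is the rank-one valuation $y$ attached to $x$, namely the unique rank-one point in the closure-chain. Then $y \succ x$, and $y$ has no proper generalizations in $U$, because rank-one valuations on an analytic Huber ring admit no nontrivial continuous coarsening.

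The second key step is to pass from maximality in $U$ to maximality in $X$. Since $U$ is open, any generalization $z \succ y$ in $X$ must lie in $U$, because $y \in \overline{\{z\}}$ and $U$ is an open neighbourhood of $y$, so $z \in U$. Hence $y$ is maximal in $X$, which proves the Lemma.

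The main obstacle is the first key step: pinning down the citation and argument that generalizations in an analytic adic space form a chain with a maximal (rank-one) element. I would first try to cite Huber's Lemma 1.1.10 directly. Failing that, I would argue via the bijection between generalizations of $x$ and convex subgroups of the value group containing $|\varpi|_x$, where $\varpi$ is a topologically nilpotent unit. The largest proper convex subgroup not containing $|\varpi|_x$ then yields the rank-one point.
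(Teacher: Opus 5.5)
\textbf{Verdict.} Your proof is correct, but it produces the maximal point by a different mechanism than the paper.

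\textbf{The paper's route.} The paper's proof is one line. It cites \cite[Lemma 1.1.10(i)]{HuberEtale} for the fact that the generalizations of $x$ form a chain, and then invokes Zorn's lemma, without constructing anything.

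\textbf{Your route.} You first make explicit the reductions the paper leaves tacit, via Theorem \ref{thm:topologicalinvariance}(ii) and Paragraph \ref{par:HuberSpace}. You then work in an affinoid chart $U=\operatorname{Spa}(A,A^\circ)$ and proceed as follows.
\begin{enumerate}[(i)]
\item You use that the generalizations of an analytic point are exactly its continuous coarsenings $v_x/H$.
\item You take $H_0$ to be the union of all convex subgroups not containing $|\varpi|_x$. This is again such a convex subgroup, because convex subgroups form a chain.
\item You note that $v_x/H_0$ has rank one, so it admits no further continuous coarsening. It is therefore maximal in $U$.
\item You globalize by observing that any generalization of a point lies in every open neighbourhood of that point.
\end{enumerate}

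\textbf{Comparison.} Your approach identifies the maximal points concretely as the rank-one points. It also supplies explicitly the upper bound that the Zorn step tacitly needs. A chain on its own need not have a maximal element, so the paper's argument really rests on something further. That could be your convex-subgroup argument, or the spectrality of an affinoid neighbourhood containing all generalizations of $x$: compactness of the constructible topology bounds chains of generalizations. The paper's route is shorter and treats Huber's lemma as a black box.

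\textbf{A small slip.} In your last paragraph, the generalizations of $x$ correspond to convex subgroups \emph{not} containing $|\varpi|_x$. Since $|\varpi|_x$ is cofinal in the value group, the only convex subgroup containing it is the whole group. Your very next sentence already uses the correct version.
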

\begin{proof}
By \cite[Lemma 1.1.10(i)]{HuberEtale} the set of generalizations of $x$ forms a chain; now use Zorn's lemma.
\end{proof}
\begin{defn}\cite[Definition 5.1.2]{HuberEtale}
A (derived) rigid space $X$ is called \emph{taut} if it is quasi-separated and for every quasi-compact open subset $U \subseteq |X|$, the closure $\overline{U}$ of $U$ in $|X|$ is also quasi-compact. A morphism $f: X \to Y$ of (derived) rigid spaces is called \emph{taut} if for every taut open subset $U \subseteq Y$ the preimage $f^{-1}(U)$ is taut.  
\end{defn}
\egapar From \cite[Lemma 5.1.3, Lemma 5.1.4]{HuberEtale} we have the following: Every qcqs morphism and every partially proper morphism of derived rigid varieties is taut. Further, the class of taut morphisms is stable under composition and base-change, and is right-cancellative.
\begin{thm}[Huber]\label{thm:Huberpartiallyproperetale}
If $f: X \to Y$ is a morphism between taut classical rigid spaces which is \'etale and partially-proper, then for every $x \in X_{\mathrm{max}}$ there exists partially-proper open subsets $f(x) \in V_x \subseteq Y$ and $x \in U_x \subseteq f^{-1}(V_x)$ such that $f$ restricts to a finite \'etale morphism $U_x \to V_x$.
\end{thm}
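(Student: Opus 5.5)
The plan is to reduce to the local structure of étale morphisms and then exploit partial properness to upgrade a local étale neighbourhood into a finite étale one over a partially-proper open. Fix $x \in X_{\mathrm{max}}$ and put $y := f(x)$. Since $f$ is étale, the fibre $f^{-1}(y)$ is discrete. Partial properness says that $f$ is separated and satisfies the valuative-type closedness of Definition 1.3.3 of \cite{HuberEtale}. Tautness of $X$ and $Y$ ensures that closures of quasi-compact opens are quasi-compact. This is what will let us shrink to partially-proper neighbourhoods.

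First I would take a quasi-compact (affinoid) open neighbourhood $W$ of $y$. Its closure $\overline{W}$ is quasi-compact by tautness of $Y$. Pulling back, $f^{-1}(\overline{W}) \to \overline{W}$ is partially proper, and it is quasi-compact near $x$: by tautness of $X$, the closure of an affinoid neighbourhood of $x$ is quasi-compact. Next I would invoke the local structure of étale morphisms, in the form of Huber's result \cite[Proposition 1.7.1]{HuberEtale} or van der Put \cite[Proposition 3.1.4]{vanderPutetale}. It says that, locally at $x$, the map $f$ factors as an open immersion followed by a finite étale map. So there is an open $x \in U' \subseteq X$, an open $y \in V' \subseteq Y$, a finite étale $E \to V'$, and an open immersion $U' \hookrightarrow E$ over $V'$.

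The key step is to arrange $U' = E \times_{V'} V''$ for a smaller $V''$. Consider the complement $C := E \setminus U'$, intersected with the part of $E$ lying over a quasi-compact neighbourhood. Because $f$ is partially proper and $E \to V'$ is finite, hence proper, the image of the closure of $C$ in $Y$ is closed. I would then want to show that this image does not contain $y$. The reason is that every point of $E$ over $y$ that specializes to a point of $U'$ lies in $U'$, by partial properness and the fact that the generalizations of a point form a chain \cite[Lemma 1.1.10]{HuberEtale}. Here maximality of $x$ is what is needed: the points of $E$ over the maximal point $y$ are maximal. Removing the closed image gives an open $V'' \ni y$ over which $U'$ and a union of components of $E$ agree, so $U' \cap f^{-1}(V'') \to V''$ is finite étale.

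Finally, one must make $V_x$ partially proper, i.e.\ closed under specialization, rather than merely open. I would take $V_x$ to be the interior of the set of specializations of $V''$ inside $\overline{V''}$, or equivalently the largest partially-proper open contained in a suitable closure. Then I would use tautness and finite étaleness to check that the finite étale cover extends over it. The main obstacle is exactly this step, together with showing that $y \notin$ image of $\overline{C}$: partial properness gives closedness of images only of closures of points, so passing to closures of opens needs quasi-compactness via tautness. If this fails, I would simply cite Huber's original argument \cite[Lemma 8.1.4 / Proposition 8.1.?]{HuberEtale}, since the statement is Huber's theorem for classical rigid spaces.
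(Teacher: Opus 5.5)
There is a genuine gap, and it sits exactly where the hypotheses do their work. (For comparison, the paper proves this statement purely by citation: it is claim (a) in the proof of \cite[Proposition 8.3.4]{HuberEtale}. Your fallback reference ``Lemma 8.1.4 / Proposition 8.1.?'' is a placeholder and should be replaced by that.)

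\textbf{The key step is misformulated.} In general $y$ \emph{does} lie in the image of $C=E\setminus U'$, because $E$ can have further points over $y$ outside $U'$. The relevant set is the boundary $\overline{U'}\setminus U'$. For that set your claim holds, but for a reason unrelated to partial properness. Since $U'$ is quasi-compact, $\overline{U'}$ consists of specializations of points of $U'$, so the boundary contains no maximal points. On the other hand, every point of $E$ over the maximal point $y$ is maximal.

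This step therefore produces a finite \'etale $U''\to V''$ with $V''$ merely open. It does so for \emph{every} \'etale $f$, for instance the inclusion $\{|T|\le|\varpi|\}\hookrightarrow\mathbf{D}^1$, where the conclusion of the theorem is false. Indeed, a partially proper $V_x\ni x$ contains all of $\overline{\{x\}}$, including a specialization of $x$ not lying in $\{|T|\le|\varpi|\}$, while the image of a finite map $U_x\to V_x$ is closed.

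So all the content is in your last step. There, shrinking $V''$ (or taking interiors of specializations of $V''$) cannot work, because $\overline{\{y\}}$ need not lie in $V''$. You also never say how the partial properness of $f$ enters.

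\textbf{The missing idea} is to build the neighbourhood around the whole closed set $\overline{\{x\}}$, and to use partial properness only at the end, to get properness.
\begin{enumerate}[(i)]
    \item $\overline{\{x\}}$ is quasi-compact by tautness. Closures of distinct maximal points are disjoint, because generalizations form a chain. A quasi-compact open meets $f^{-1}(y)$ in finitely many points. Hence there is a quasi-compact open $W\supseteq\overline{\{x\}}$ with $W\cap f^{-1}(y)=\{x\}$.
    \item The quasi-compact set $T:=f(\overline{W}\setminus W)$ misses $\overline{\{y\}}$. Suppose $p\in\overline W\setminus W$ maps into $\overline{\{y\}}$. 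Its maximal generalization is a generalization of a point of $W$, so it lies in $W\cap f^{-1}(y)=\{x\}$. This gives $p\in\overline{\{x\}}\subseteq W$, a contradiction.
    \item Choose a quasi-compact open $Q\supseteq T$ with $Q\cap\overline{\{y\}}=\emptyset$, and set $V_x:=Y\setminus\overline{Q}$. This is partially proper, because $\overline Q$ is quasi-compact (by tautness), hence equal to the set of specializations of $Q$, hence stable under generalization. By the chain property again, $V_x$ contains $\overline{\{y\}}$.
    \item Put $U_x:=W\cap f^{-1}(V_x)$. It is open and closed in $f^{-1}(V_x)$, since its boundary would lie over $T\subseteq\overline{Q}$. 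Hence $U_x$ is partially proper in $X$.
    \item The map $U_x\to V_x$ is partially proper, being the restriction of a base change of $f$ to a clopen subset. It is quasi-compact, since $W$ is quasi-compact. So it is proper and \'etale, hence finite \'etale.
\end{enumerate}
Note that the local structure theorem for \'etale maps is not needed at all.
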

\begin{proof}
This follows from claim (a) in the proof of \cite[Proposition 8.3.4]{HuberEtale}. 
\end{proof}
\begin{rmk}
With notations as in Theorem \ref{thm:Huberpartiallyproperetale}. We note that the subsets $\{U_x\}_{x \in X_\mathrm{max}}$ cover $X$. Indeed, one has $X_\mathrm{max} \subseteq U := \bigcup_{x \in X_{\mathrm{max}}} U_x$, and $U$ is closed under specializations, so the claim follows from Lemma \ref{lem:maximalpoints}.
\end{rmk}
\egapar We can bootstrap Theorem \ref{thm:Huberpartiallyproperetale} to obtain the corresponding result for morphisms between \emph{derived} rigid spaces.
\begin{prop}\label{prop:ppetalestructure}
If $f: X \to Y$ is a morphism between taut derived rigid spaces which is étale and partially-proper, then for every $x \in X_{\mathrm{max}}$ there exists partially-proper open subsets $f(x) \in V_x \subseteq Y$ and $x \in U_x \subseteq f^{-1}(V_x)$ such that $f$ restricts to a finite \'etale morphism $U_x \to V_x$.
\end{prop}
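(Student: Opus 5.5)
The plan is to reduce to Huber's classical statement, Theorem \ref{thm:Huberpartiallyproperetale}, by classical truncation and then lift back using the topological invariance of Theorem \ref{thm:topologicalinvariance}.

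\textbf{Step 1: pass to classical truncations.} Apply $t_!$ to $f$. By Definition \ref{defn:Derivedsmooth}, $t_!(f)$ is \'etale. By the Lemma preceding Lemma \ref{lem:Zariskiclosedclosed}, $t_!(f)$ is partially proper. The underlying spaces satisfy $|t_!X| \cong |X|$ and $|t_!Y| \cong |Y|$, compatibly with $f$. Tautness and quasi-separatedness are conditions on these spaces, so $t_!X$ and $t_!Y$ are taut classical rigid spaces. Maximal points also depend only on the topology, so $x \in X_{\max}$ corresponds to a maximal point of $t_!X$.

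\textbf{Step 2: apply Huber and lift the subsets.} Theorem \ref{thm:Huberpartiallyproperetale} gives partially proper opens $V^{\mathrm{cl}}_x \subseteq t_!Y$ and $U^{\mathrm{cl}}_x \subseteq t_!(f)^{-1}(V^{\mathrm{cl}}_x)$ on which $t_!(f)$ restricts to a finite \'etale map. By Theorem \ref{thm:topologicalinvariance}(ii) these lift uniquely to analytic subspaces $V_x \subseteq Y$ and $U_x \subseteq X$, with $t_!V_x = V^{\mathrm{cl}}_x$ and $t_!U_x = U^{\mathrm{cl}}_x$.

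The lifted subspaces have the required properties.
\begin{itemize}
\item They are partially proper, since being partially proper for an open is closure under specialization, a purely topological condition.
\item $f$ maps $U_x$ into $V_x$ by the factorization criterion \eqref{eq:liftfunctorofpoints}.
\end{itemize}

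\textbf{Step 3: the restriction is finite \'etale.} Let $g: U_x \to V_x$ be the restriction of $f$. Since analytic subspace inclusions are monomorphisms with vanishing cotangent complex, Corollary \ref{cor:formallyetalecotangent} gives $\mathbf{L}_{U_x/V_x} \simeq 0$. By construction $t_!(g)$ is finite \'etale. By Definition \ref{defn:Derivedsmooth}(ii), $g$ is therefore finite \'etale.

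\textbf{Main obstacle.} The only subtle step is the compatibility $t_!(g) = t_!(f)|_{U^{\mathrm{cl}}_x}$, that is, that $t_!$ commutes with restriction to analytic subspaces. This should follow from Theorem \ref{thm:topologicalinvariance}(ii) and the fact that $t_!$ preserves the relevant fiber products, since $t_!$ is limit-preserving.

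If instead one wants finiteness in the stronger sense of Lemma \ref{lem:finiteetale} (locally a retract of a finite free module), this can be checked affinoid-locally on $V_x$ by that lemma.
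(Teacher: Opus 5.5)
Your argument is correct, and its skeleton is the paper's: pass to $t_!(f)$, apply Theorem \ref{thm:Huberpartiallyproperetale}, and lift the opens back using Theorem \ref{thm:topologicalinvariance}.

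The difference is in the last step. You check that $g\colon U_x \to V_x$ is finite \'etale directly from Definition \ref{defn:Derivedsmooth}(ii). You get $\mathbf{L}_{U_x/V_x} \simeq 0$ from Corollary \ref{cor:formallyetalecotangent} (analytic subspace inclusions are monomorphisms) together with the transitivity sequence, and you observe that $t_!(g)$ is Huber's finite \'etale map. That is all the statement, read literally, requires. The paper instead cites the \'etale lifting Lemma \ref{lem:etaleliftinggeneral}, which identifies $U_x \to V_x$ with the unique \'etale lift of the classical finite \'etale map $t_!U_x \to t_!V_x$.

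That extra identification is exactly what your closing remark quietly needs. To apply Lemma \ref{lem:finiteetale} affinoid-locally on $V_x$, you must know that $g^{-1}(W)$ is a derived affinoid for each affinoid $W \subseteq V_x$. Nothing in the paper lets you conclude this just from $t_!(g^{-1}(W))$ being affinoid. It follows instead from the uniqueness part of Lemma \ref{lem:etaleliftinggeneral}, combined with the affinoid lift of Proposition \ref{prop:derivedetalelifting}(iv). The result is later used in this stronger form: the proof of Proposition \ref{prop:finiteetale!} reduces to affinoid source and target.
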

\begin{proof}
This follows from Theorem \ref{thm:topologicalinvariance}, Theorem \ref{thm:Huberpartiallyproperetale}, and Lemma \ref{lem:etaleliftinggeneral}.
\end{proof}
\egapar We recall that $\mathbf{D}^n$ (resp. $\mathring{\mathbf{D}}^n$) is our notation for the $n$-dimensional closed (resp. open) unit polydisk, regarded as a rigid space over $K$.
\egapar Let $Y$ and $U$ be classical affinoid rigid spaces and let $U^\prime \subseteq U$ be an affinoid subdomain. We recall \cite[\nopp 9.6.2]{BGR} that we write $U^\prime \Subset_Y U$ and say that \emph{$U^\prime$ is relatively compact in $U$ over $Y$} if there exists $n \geqslant 0$ and a Zariski-closed immersion $U \to Y \times \mathbf{D}^n$ over $Y$ such that $U^\prime \to Y \times \mathbf{D}^n$ factors over $Y \times \mathring{\mathbf{D}}^n \to Y \times \mathbf{D}^n$.
\begin{defn}
A morphism $f: X \to Y$ of classical rigid spaces over $K$ is called \emph{Kiehl partially-proper} if it is separated and for every $K$-affinoid variety $Y^\prime = \operatorname{Spec}(A)$ the pullback $f^\prime : X^\prime \to Y^\prime$ satisfies the following: there exists two affinoid covers $\{U_i\}_{i \in I}$ and $\{U_i^\prime\}_{i \in I}$ of $X^\prime$ such that $U_i^\prime \Subset_Y U_i$ for every $i \in I$. A morphism of derived rigid spaces is called Kiehl partially-proper if it is separated and its classical truncation is Kiehl partially-proper. A morphism of derived rigid spaces is called \emph{Kiehl proper} if it is Kiehl partially-proper and quasi-compact. 
\end{defn}
\begin{thm}\label{thm:KiehlPartiallyProper}
If $f: X \to Y$ is a Kiehl partially-proper morphism of rigid spaces over $K$ then it is partially proper. The converse holds if $K$ is discretely valued.
\end{thm}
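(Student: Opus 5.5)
The plan is to check Huber's closure condition directly on adic spaces, using the relative compactness relations to control specializations. Both properties are stable under base change: for Kiehl partial properness this is built into the definition, which quantifies over all affinoid $Y' \to Y$. By Lemma \ref{lem:partiallypropersourcetarget}(i), partial properness is local on the target. So we may assume $Y = \operatorname{Spec}(A)$ is affinoid, fix covers $\{U_i\}$, $\{U_i'\}$ of $X$ with $U_i' \Subset_Y U_i$, and prove that for every base change $Y'' \to Y$ and every $x$, the image $|f''|(\overline{\{x\}})$ is closed. Since relative compactness is preserved under base change (pull back the Zariski-closed immersion $U_i \to Y \times \mathbf{D}^n$), it suffices to treat $Y'' = Y$, after reducing again to an affinoid base.

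By Huber's criterion \cite[Lemma 1.3.10]{HuberEtale}, closedness of images of closures of points for a separated, locally finite type morphism is equivalent to the valuative existence property. Concretely: given a point $x \in |X|$ with image $y = f(x)$, and a specialization $y' \prec y$ in $|Y|$ (a rank-one-or-higher specialization inside the valuation spectrum), there is a specialization $x' \prec x$ with $f(x') = y'$. I would verify this as follows.

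\textbf{Step 1: place the point.} The point $x$ lies in some $U_i'$.

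\textbf{Step 2: the fibre disk.} Choose the closed immersion $U_i \hookrightarrow Y \times \mathbf{D}^n$ with $U_i' \subseteq Y \times \mathring{\mathbf{D}}^n$. At $x$, the coordinates $T_j$ satisfy $|T_j(x)| < 1$ in the strong sense that $U_i'$ sits inside some radius-$r<1$ polydisk over $Y$. This uses quasi-compactness of $U_i'$ and the maximum modulus principle.

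\textbf{Step 3: produce the specialization.} Specializations $x'$ of $x$ in $\operatorname{Spa}(\mathscr{O}(U_i))$ lifting $y'$ correspond to extending the valuation ring of $x$ compatibly with the valuation ring of $y'$. I would realize this by composing valuations: take the valuation ring $V_x$ of the residue field of $x$, and intersect it with the preimage of $V_{y'}$ under the residue map of the coarsening. Because the $T_j$ are bounded by $r<1$ at $x$, they remain bounded by $1$ (indeed topologically nilpotent) at the composite valuation. This gives a continuous valuation $x'$ on $\mathscr{O}(Y \times \mathbf{D}^n)$. It still kills the ideal defining $U_i$, so $x' \in U_i$ with $f(x') = y'$.

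\textbf{Step 4: conclude.} Separatedness is part of the hypothesis, so Huber's criterion applies and $f$ is partially proper.

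The main obstacle is Step 3: making the valuative construction rigorous, in particular checking continuity of the composite valuation. The strict inequality coming from $\mathring{\mathbf{D}}^n$ is exactly what is needed there. As a fallback, I would cite \cite[Remark 1.3.19]{HuberEtale}, where this implication and the converse for discretely valued $K$ are stated.

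For the converse when $K$ is discretely valued, I would not attempt a new proof but invoke Huber's remark. Its proof relies on Kiehl's characterization of partially proper morphisms, which in turn rests on the existence of sufficiently many relatively compact affinoid refinements; that existence is available in the discretely valued case via formal models and Noetherian arguments.
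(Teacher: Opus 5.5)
\paragraph{Comparison with the paper.} The paper gives no argument for this statement. It cites \cite[Lemma 1.5.9]{HuberEtale} for the forward implication, and L\"utkebohmert together with \cite[Remark 1.3.18(iii)]{HuberEtale} for the converse. For the converse you also only cite, so the real difference is that you actually prove the forward direction. That proof works in substance.

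Your route makes it visible that the open polydisc is the only input. It forces $T_j$ into the maximal ideal at the rank-one point, which is exactly what fails for the closed polydisc, since that is not partially proper. The cost is some valuation-theoretic bookkeeping. The paper's citation is shorter but opaque about where the hypothesis enters.

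\paragraph{Repairs needed in Steps 2--3.}
\begin{enumerate}[(1)]
\item No criterion from Huber is needed at the start. Since $|f|(\overline{\{x\}}) \subseteq \overline{\{f(x)\}}$ always holds, closedness of the image is literally the statement that every specialization $y'$ of $y = f(x)$ lifts to a specialization of $x$.
\item As phrased in Step 3, intersecting $V_x$ with a preimage of $V_{y'}$ does not define a valuation ring of the larger field. Work instead with the rank-one generalizations $\tilde x \mapsto \tilde y$ and their completed residue fields $\kappa(\tilde y) \subseteq \kappa(\tilde x)$.
\begin{itemize}
\item Points of $\overline{\{y\}}$ are valuation rings $R_{y'} \subseteq R_y$ of $\kappa(\tilde y)$, and you need $R_{x'} \subseteq R_x$ with $R_{x'} \cap \kappa(\tilde y) = R_{y'}$.
\item To get it, view $R_{y'}$ as the preimage of a valuation ring of the residue field of $R_y$. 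Extend that ring to the residue field of $R_x$ (Chevalley), then pull back into $R_x$.
\end{itemize}
\item The strict inequality belongs at $\tilde x$. Quasi-compactness of $U_i'$ inside $Y \times \mathring{\mathbf{D}}^n$ gives $|T_j(\tilde x)| \leq r < 1$, i.e.\ $T_j$ lies in the maximal ideal $\mathfrak{m}$ of $\kappa(\tilde x)^\circ$. Any valuation ring contained in $\kappa(\tilde x)^\circ$ contains $\mathfrak{m}$, so $R_{x'}$ is open. Hence $x'$ is automatically continuous and $|T_j(x')| \leq 1$.
\item Finish the membership check as follows.
\begin{itemize}
\item $R_{x'}$ contains the image of $A^\circ$, because $R_{x'} \cap \kappa(\tilde y) = R_{y'}$.
\item $R_{x'}$ is an open, hence closed, subring. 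So it contains the images of convergent series in $A^\circ\langle T\rangle$, and therefore of the power-bounded elements of the quotient $\mathscr{O}(U_i)$.
\item The ideal of $U_i$ is killed because $x$ and $x'$ have the same support.
\end{itemize}
Thus $x'$ is a point of the open subset $|U_i| \subseteq |X|$ specializing $x$, with $f(x') = y'$.
\end{enumerate}
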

\begin{proof}
The forwards direction is \cite[Lemma 1.5.9]{HuberEtale}. The converse is a hard theorem essentially due to L\"utkebohmert \cite{Lutkebohmert1990} and Huber \cite[Remark 1.3.18(iii)]{HuberEtale}.
\end{proof}
\begin{prop}\label{prop:SmooothPartiallyProper}
Let $f: X \to Y$ be a smooth and Kiehl partially-proper morphism of derived rigid spaces over $K$ with $Y = \operatorname{Spec}(A)$ affinoid. Then there exists a cover $\{ V_i \}_{i \in I}$ of $X$ by partially-proper open subsets and integers $d_i \geqslant 0$ such that $f|_{V_i}$ factors as $V_i \to \mathring{\mathbf{D}}^{d_i} \times Y \to Y $ where the first morphism is a quasi-smooth closed immersion and the second morphism is the projection.
\end{prop}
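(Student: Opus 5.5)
Use the Kiehl partially-proper condition over the affinoid $Y$ itself: it gives two affinoid covers $\{U_i\}_{i\in I}$, $\{U_i^\prime\}_{i\in I}$ of $t_!X$ with $U_i^\prime \Subset_{t_!Y} U_i$. Concretely, there are Zariski-closed immersions $\iota_i : U_i \hookrightarrow t_!Y \times \mathbf{D}^{d_i}$ over $t_!Y$ such that $\iota_i(U_i^\prime) \subseteq t_!Y \times \mathring{\mathbf{D}}^{d_i}$.

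The plan is to set $V_i := \iota_i^{-1}(t_!Y \times \mathring{\mathbf{D}}^{d_i}) \subseteq U_i$. This is an open subset containing $U_i^\prime$, so the $V_i$ cover $|X|$. Since $V_i \to t_!Y \times \mathring{\mathbf{D}}^{d_i}$ is a base change of a Zariski-closed immersion, it is proper by Corollary \ref{cor:partiallyproperproperties}(i). Since $\mathring{\mathbf{D}}^{d_i} \to *$ is partially proper, so is $V_i \to t_!Y$. As $t_!X \to t_!Y$ is separated, Corollary \ref{cor:partiallyproperproperties}(ii) shows that $V_i \hookrightarrow t_!X$ is partially proper. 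By Theorem \ref{thm:topologicalinvariance}, each $V_i$ lifts uniquely to a partially-proper analytic subspace of $X$, still denoted $V_i$.

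Next I lift the map to the polydisk. The composite $V_i \to t_!Y \times \mathring{\mathbf{D}}^{d_i} \to \mathring{\mathbf{D}}^{d_i}$ is a morphism of classical spaces, that is, a family of functions $g_1,\dots,g_{d_i}$ in $H^0$ of the structure sheaf of $V_i$ of sup-norm $<1$ on each affinoid. Locally on affinoids $\operatorname{Spec}(B)\subseteq V_i$, each $g_j$ lifts to a morphism $K \to B$, because $K$ is projective. I still have to check that these local lifts define a morphism of sheaves $V_i \to \mathring{\mathbf{D}}^{d_i}$. Mapping into $\mathring{\mathbf{D}}^{d_i} = \operatorname{colim}_r \mathbf{D}^{d_i}_r$ over radii $r<1$ is, for a derived affinoid, the same as a point of $\operatorname{Map}(K[T_1,\dots,T_{d_i}], B)$ satisfying a bound on $H^0$; the argument is that of Proposition \ref{prop:affineline}, using the homotopy epimorphism and formal étaleness of $K[T]\to K\langle T/r\rangle$. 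Hence $\operatorname{Map}(V_i, \mathring{\mathbf{D}}^{d_i}) \simeq \operatorname{Map}(V_i,\mathbf{A}^{d_i,\mathrm{an}}) \times_{\operatorname{Map}(t_!V_i, \mathbf{A}^{d_i,\mathrm{an}})} \operatorname{Map}(t_!V_i,\mathring{\mathbf{D}}^{d_i})$, in the spirit of \eqref{eq:liftfunctorofpoints}. A map $V_i \to \mathbf{A}^{d_i,\mathrm{an}}$ is a $d_i$-tuple of global sections in $\Omega^\infty$ of $\Gamma(V_i,1)$, and $H^0$ of this is surjective onto the classical global functions. This surjectivity needs a short argument: the obstruction lies in $\lim^1$ or $H^1$-type terms, and one can shrink $V_i$ or argue affinoid-by-affinoid, because the cover can be refined by partially-proper pieces of the form $\iota_i^{-1}(\text{smaller open polydisk})$, which again cover.

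Having obtained $h_i : V_i \to \mathring{\mathbf{D}}^{d_i}\times Y$ over $Y$ whose truncation is $t_!\iota_i|_{V_i}$, I would show that $h_i$ is a Zariski-closed immersion. It is representable: $V_i$ is separated over $Y$, and locally on the target this reduces to a map of derived affinoids, for which one argues as in Lemma \ref{lem:SmoothSection}. It is surjective on $H^0$ because its truncation is. Finally, $V_i \to Y$ is smooth and $\mathring{\mathbf{D}}^{d_i}\times Y \to Y$ is smooth, the latter having free cotangent complex by the lemma on $V^\mathrm{an}_X(E)$ and classically smooth. Lemma \ref{lem:ZariskiclosedRegular2} then gives that $h_i$ is a quasi-smooth Zariski-closed immersion.

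The main obstacle is the lifting step: producing an honest derived morphism $V_i \to \mathring{\mathbf{D}}^{d_i}$ compatible on overlaps, and not merely affinoid-locally. The first thing I would try is to choose, for each $i$, a closed ball of radius $r<1$ such that $\iota_i(U_i^\prime)\subseteq t_!Y\times\mathbf{D}^{d_i}_r$. Since $U_i^\prime$ is quasi-compact, the maximum modulus principle provides such an $r$. I would then replace $V_i$ by the preimage $W_i$ of the open ball of some radius $r^\prime\in(r,1)$. Functions on $W_i$ are a limit over affinoids, and the lifts are then obtained via Proposition \ref{prop:affineline} and \cite[Corollary 2.1.36]{DAnG} (the formally étale lifting property) applied on a cofinal system of affinoids.
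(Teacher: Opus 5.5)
\textbf{There is a gap in the lifting step.} Your classical construction of $V_i$, the partial-properness argument via Corollary \ref{cor:partiallyproperproperties}, and the final appeal to Lemma \ref{lem:ZariskiclosedRegular2} all match the paper. However, the step you flag as the main obstacle is never carried out, and neither of your suggestions closes it as written.

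For the first suggestion, you only gesture at why derived global functions on $V_i$ surject onto classical global functions on $t_!V_i$ (``$\lim^1$ or $H^1$-type terms''). ``Shrinking $V_i$'' is not an option, because the $V_i$ must still cover $X$.

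For the second suggestion, formal \'etaleness only makes the extension along $K[T]\to K\langle T/r\rangle$ unique. The lift of a function in $H^0(B)$ to a map $K\to B$ is unique only up to non-unique homotopy, since the relevant mapping space has $\pi_1$ given by $H^{-1}(B)$. So lifts chosen independently on a cofinal system of affinoids need not assemble into a morphism of sheaves.

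Your representability argument for $h_i$ also fails. Lemma \ref{lem:SmoothSection} concerns a section of a morphism with representable diagonal. The graph trick gives nothing here, because $V_i\to Y$ is not representable.

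\textbf{The missing idea is to lift \emph{before} restricting to the open polydisk, on the affinoid $U_i$, where there is nothing to glue.} This is what the paper does.
\begin{enumerate}[(i)]
\item Let $\tilde U_i=\operatorname{Spec}(B_i)\subseteq X$ be the derived affinoid lifting $U_i$ (Theorem \ref{thm:topologicalinvariance}).
\item The classical map $U_i\to\mathbf{D}^{d_i}$ lifts to $\tilde U_i\to\mathbf{D}^{d_i}$ by exactly the argument of Proposition \ref{prop:affineline}: lift $K[T_1,\dots,T_{d_i}]\to H^0(B_i)$ to a map into $B_i$, then extend along the formally \'etale homotopy epimorphism $K[T_1,\dots,T_{d_i}]\to K\langle T_1,\dots,T_{d_i}\rangle$.
\item The resulting map $\tilde U_i\to Y\times\mathbf{D}^{d_i}$ is a morphism of derived affinoids that is surjective on $H^0$, i.e.\ a Zariski-closed immersion.
\item Set $V_i:=\tilde U_i\times_{\mathbf{D}^{d_i}}\mathring{\mathbf{D}}^{d_i}$. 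Then $V_i\to Y\times\mathring{\mathbf{D}}^{d_i}$ is a Zariski-closed immersion simply as a base change. Also $V_i\subseteq X$ is an analytic subspace containing $U_i'$, by \eqref{eq:liftfunctorofpoints}.
\end{enumerate}
From there the rest of your argument goes through verbatim, and the auxiliary radius $r$ and the spaces $W_i$ are unnecessary.

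Your second route could also be repaired: exhaust $V_i$ by the countable tower of preimages of closed polydisks of radii $r_n\uparrow 1$, and use that $\pi_0$ of a sequential limit of spaces surjects onto the limit of the $\pi_0$'s. That is more work, though, and still leaves representability to be checked separately.
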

\begin{proof}
Since $f$ is Kiehl partially-proper, there exists two covers $\{U_i\}_{i \in I}$ and $\{U_i^\prime\}_{i \in I}$ and $d_i \geqslant 0$ such that, for every $i \in I$ there is a commutative square
\begin{equation}
\begin{tikzcd}
	{U_i^\prime} & {Y \times \mathring{\mathbf{D}}^{d_i}} \\
	{U_i} & {Y \times \mathbf{D}^{d_i}}
	\arrow[from=1-1, to=1-2]
	\arrow[hook, from=1-1, to=2-1]
	\arrow[hook, from=1-2, to=2-2]
	\arrow[from=2-1, to=2-2]
\end{tikzcd}
\end{equation}
where the vertical arrows are open immersions and the bottom arrow is a Zariski-closed immersion, and all morphisms are over $Y$. We define $V_i := U_i \times_{\mathbf{D}^{d_i}} \mathring{\mathbf{D}}^{d_i}$. Then $V_i \to Y \times  \mathring{\mathbf{D}}^{d_i}$ is a Zariski-closed immersion, $V_i \to X$ is an open immersion, and $U_i^\prime \subseteq V_i$, so the $\{V_i\}_{i \in I}$ cover $X$. The composite $V_i \to Y \times  \mathring{\mathbf{D}}^{d_i} \to Y$ is $f|_{V_i}$, which is smooth by assumption, so by Lemma \ref{lem:ZariskiclosedRegular2}, the Zariski-closed immersion $V_i \to Y \times  \mathring{\mathbf{D}}^{d_i}$ is quasi-smooth. Further, $V_i \to Y \times  \mathring{\mathbf{D}}^{d_i} \to Y$ is partially-proper by Corollary \ref{cor:partiallyproperproperties}, hence, by Corollary \ref{cor:partiallyproperproperties}(ii) the morphism $V_i \to X$ is partially proper as $f: X \to Y$ is, c.f. Theorem \ref{thm:KiehlPartiallyProper}.
\end{proof}
\section{Some features of the six-functor formalism}\label{sec:6FF1}
\egapar The theory of six-functor formalisms has become an invaluable tool in the proof of all kinds of duality theorems, c.f. \cite{ScholzeSixFunctors}. In this section we freely make use of the language of six-functor formalisms as developed in \cite{heyer_6-functor_2024}. 
\egapar We briefly recall the construction of the six-functor formalism on $\operatorname{Shv}_{\tau_{\mathrm{rat}}}(\operatorname{Afnd})$ from \cite[\S 3.1.5]{SoorThesis}. Firstly, there is a ``trivial" six-functor formalism on $(\operatorname{Afnd}, \operatorname{all})$ in which for every $X = \operatorname{Spec}(A) \in \operatorname{Afnd}$ we set $\operatorname{QCoh}(X) := \operatorname{Mod}_A\mathscr{V}$ equipped with its natural (closed) monoidal structure $\hat{\otimes}^\mathbf{L}_A$. For every morphism $f: \operatorname{Spec}(A) \to \operatorname{Spec}(B)$ in $\operatorname{Afnd}$, we define $f^* := A \hat{\otimes}^\mathbf{L}_B-$, $f_*$ to be the forgetful functor $\operatorname{Mod}_A \mathscr{V} \to \operatorname{Mod}_B \mathscr{V}$, we define $f_! := f_*$, and $f^! := R\underline{\operatorname{Hom}}_B(A,-)$. By the extension procedure of \cite[Theorem 3.4.11]{heyer_6-functor_2024} or \cite[\S 2.3.2]{SoorThesis}, this six-functor formalism extends uniquely to a six-functor formalism
\begin{equation}\label{eq:6ff1}
    \operatorname{QCoh}: \operatorname{Corr}(\operatorname{Shv}_{\tau_{\mathrm{rat}}}(\operatorname{Afnd}), E)^\otimes \to \operatorname{Pr}^{L, \otimes}_{\mathrm{st},\mathscr{V}}
\end{equation}
here $E$ is the class of edges for which $!$-functors are defined. The functor $\operatorname{QCoh}^*: \operatorname{Shv}_{\tau_{\mathrm{rat}}}(\operatorname{Afnd})^\mathrm{op} \to \operatorname{Pr}^{L, \otimes}_{\mathrm{st}, \mathscr{V}}$ is right Kan extended from $\operatorname{Afnd}^\mathrm{op}$, in particular it agrees with the definition of $\operatorname{QCoh}$ from \eqref{eq:Qcohanalytic}. The class $E$ of $!$-able morphisms is stable under base change and composition, and is right-cancellative. The class $E$ is \emph{$!$-local on the source and target}, \emph{$*$-local on the target}, \emph{stable under disjoint unions}, and \emph{tame}, c.f. \cite[Definition 2.3.9]{SoorThesis} and \cite[Theorem 3.4.11]{heyer_6-functor_2024}. The class $E$ contains all representable morphisms, and for such morphisms $f$ there is a canonical equivalence $f_! \simeq f_*$. Further, by \cite[Theorem 3.1.40]{SoorThesis}, the class $E$ contains all qcqs morphisms of derived rigid spaces and, for such a morphisms $f$, there is again a canonical equivalence $f_! \simeq f_*$. 
\egapar The first goal of this section is to prove the following. 
\begin{thm}\label{thm:taut!able}
Let $f: X \to Y$ be a taut morphism of derived rigid spaces. Then $f$ belongs to the class $E$ of $!$-able morphisms.
\end{thm}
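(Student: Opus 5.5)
The plan is to use the locality properties of $E$ recalled above: $E$ is $!$-local on the source and target, $*$-local on the target, and stable under disjoint unions. We also use the fact that $E$ contains all qcqs morphisms of derived rigid spaces. Being taut is stable under base change, and every qcqs morphism is taut. Since $E$ is $*$-local on the target, we may choose an affinoid cover $\{Y_j \to Y\}_j$ by analytic subspaces and reduce to $Y = \operatorname{Spec}(A)$ affinoid. Then $X$ is a taut derived rigid space, because affinoids are taut and $f$ is taut. Since $Y$ is qcqs, the morphism $X \to Y$ lies in $E$ if $X \to *$ does and $X \to Y$ is a map between objects $!$-able over $*$: indeed $E$ is right-cancellative and $Y \to *$ is qcqs. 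So it suffices to show that $X \to Y$ lies in $E$ for $X$ taut and $Y$ affinoid.

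The key step is to exhibit $X$ as a colimit of quasi-compact opens in a way that $E$ can detect. $!$-locality on the source asks for a cover of $X$ by objects $!$-able over $Y$ whose \v{C}ech nerve is also $!$-able. Because $X$ is quasi-separated, any quasi-compact open $U \subseteq X$ is qcqs over $Y$, so $U \to Y$ lies in $E$, as do all finite intersections $U_{i_0} \cap \dots \cap U_{i_n}$. The difficulty is that $E$ also requires the inclusions $U \hookrightarrow X$ to be $!$-able. These inclusions are not qcqs in general, and this is exactly where tautness enters. I would first handle the case where $X$ is taut with a countable or directed exhaustion. For each quasi-compact open $U$, the closure $\overline{U}$ is quasi-compact, so $U$ is contained in a quasi-compact open $U'$ with $\overline{U} \subseteq U'$. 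The inclusion $U \hookrightarrow X$ then factors as $U \hookrightarrow U' \hookrightarrow X$. Moreover, $X = U' \cup (X \setminus \overline{U})$ is an open cover whose pieces, after base change, make $U \to X$ locally either $U \to U'$ (qcqs) or the empty map. Using $*$-locality on the target for the cover $\{U', X \setminus \overline{U}\}$ of $X$, we get that $U \hookrightarrow X$ lies in $E$. This argument uses Lemma \ref{lem:Zariskiclosedclosed}-style topological facts and Theorem \ref{thm:topologicalinvariance} to pass between opens of $|X|$ and analytic subspaces. It also uses the fact that $X \setminus \overline{U}$ is an analytic subspace, which needs care since it is only a union of rational subdomains. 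I would resolve that by covering $X \setminus \overline{U}$ by arbitrary analytic subspaces, which is harmless since we only need a cover.

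With $U \hookrightarrow X$ known to lie in $E$ for every quasi-compact open $U$, take the cover $\coprod_U U \twoheadrightarrow X$ over all quasi-compact opens. By stability under disjoint unions, the source is $!$-able over $Y$ and over $X$, and so are all terms of its \v{C}ech nerve, which are disjoint unions of quasi-compact opens by quasi-separatedness. $!$-locality on the source then gives $X \to Y$ in $E$. The main obstacle is the middle step, that $U \hookrightarrow X$ is $!$-able for a quasi-compact open $U$ of a taut $X$. In particular, one must verify that the complement of the quasi-compact closure really gives an analytic cover on which the inclusion is trivial, and that empty base changes cause no issue in the tame class $E$. If $*$-locality on the target turns out to be insufficient here, my fallback is to use tameness directly: write $X$ as the filtered union of quasi-compact opens with quasi-compact closures and invoke \cite[Theorem 3.4.11]{heyer_6-functor_2024}.
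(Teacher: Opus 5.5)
\paragraph{The gap: the cover is never shown to be a universal $!$-cover.}
Your final step misreads what $!$-locality on the source requires. It is not enough to have a covering family whose members and \v{C}ech nerve lie in $E$. The family must be a \emph{universal $!$-cover}: $\operatorname{QCoh}^!$ must satisfy descent along it after every base change, i.e.\ the functor
\[ \operatorname{QCoh}^!(X) \to \lim_{U \in qc(X)^{\mathrm{op}}} \operatorname{QCoh}^!(U) \]
must be an equivalence. An analytic cover only gives $*$-descent. For quasi-compact opens one does not in general have $i_U^! \simeq i_U^*$; partial properness is exactly what Proposition \ref{prop:partiallyproperopen} needs to obtain $j^! \simeq j^*$. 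So $!$-descent along an infinite cover by quasi-compact opens is a genuine assertion, and your proposal never addresses it.

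This assertion is Theorem \ref{thm:tautuniversal!descent}, and it is where tautness is really used. After passing to left adjoints, one must show that $\operatorname{colim}_{U \in qc(X)} \operatorname{QCoh}(U) \to \operatorname{QCoh}(X)$, taken along $i_{U,!} \simeq i_{U,*}$, is an equivalence in $\mathrm{Pr}^L_{\mathrm{st}}$.
\begin{enumerate}[(i)]
    \item Tautness, via Huber's Lemma 5.3.4(i), makes this colimit cofinally equivalent to $\operatorname{colim}_Z \Gamma_Z\operatorname{QCoh}(X)$, where $Z$ runs over quasi-compact closed constructible subsets of $|X|$.
    \item One then shows that this colimit is all of $\operatorname{QCoh}(X)$ by checking $\operatorname{colim}_Z j_{Z,*}j_Z^*M \simeq 0$ after restriction to each affinoid. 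This again uses tautness: every affinoid avoids the open complement of some such $Z$.
\end{enumerate}
Your fallback via tameness and the extension theorem does not replace this. Those results describe the class $E$ and its closure properties, but membership of $f$ must still be certified by a universal $!$-cover, which is exactly the missing input.

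\paragraph{The step you call the main obstacle is automatic.}
A taut space is quasi-separated by definition. Hence for quasi-compact open $U$ the inclusion $U \hookrightarrow X$ is quasi-compact. Being a monomorphism, it is also quasi-separated. So it is qcqs and lies in $E$ directly by the fact you already quoted. Your $*$-locality argument with the cover $\{U', X \setminus \overline{U}\}$ is valid but unnecessary. The fact that tautness enters your argument only at this point is a symptom that its actual role, establishing universal $!$-descent, is missing.
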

\egapar The proof of Theorem \ref{thm:taut!able} relies on particular properties of certain subsets of the underlying topological space. Hence, we recall the following Definition.
\begin{defn}\cite[\nopp 1.1.13]{HuberEtale}
Let $X$ be a locally spectral topological space. A subset $T \subseteq X$ is called:
\begin{enumerate}[(i)]
    \item \emph{retro-compact} if $T \cap U$ is quasi-compact for every quasi-compact open subset $U \subseteq X$. 
    \item \emph{globally constructible} if it is an element of the Boolean algebra generated by retro-compact subsets of $X$, 
    \item \emph{constructible} if every point of $X$ has an open neighbourhood $U$ such that $U \cap T$ is globally constructible in $U$,
    \item \emph{open constructible} (resp. \emph{closed constructible}) if it is both constructible and open (resp. closed) in $X$.
    \item \emph{ind-constructible} (resp. \emph{pro-constructible}) if every point of $x$ has an open neighbourhood $U$ such that $T \cap U$ is the union (resp. intersection) of constructible subsets of $U$.
\end{enumerate}
\end{defn}
\begin{rmk}
The open constructible subsets are precisely those constructible subsets $T$ such that $T \cap U$ is a quasi-compact open subset of $U$, for every quasi-compact open subset $U \subseteq X$. (The proof of this fact is trivial). 
\end{rmk}
\begin{warning}
The open (resp. closed) constructible subsets as defined above are not the same as the open (resp. closed) subsets in the constructible topology: these are in fact the ind-constructible (resp. pro-constructible) subsets. This follows the nomenclature used in \cite{HuberEtale}. We warn the reader that not all authors use this convention.
\end{warning}
\egapar If $X \in \operatorname{dRig}$ and $Z \subseteq |X|$ is a closed subset with open complement $j: V \hookrightarrow X$, we define the full subcategory $\Gamma_Z\operatorname{QCoh}(X) \subseteq \operatorname{QCoh}(X)$ on those objects $M$ such that $j^*M \simeq 0$. We note that the tautological inclusion $\Gamma_Z\operatorname{QCoh}(X)$ preserves colimits. 
\begin{lem}\label{lem:GammaZcolimit}
If $Z \subseteq |X|$ is closed constructible (with open constructible complement $j: U \hookrightarrow X$), then the tautological inclusion $\operatorname{incl}_Z: \Gamma_Z\operatorname{QCoh}(X) \to \operatorname{QCoh}(X)$ a right adjoint:
\begin{equation}
    \operatorname{incl}_Z \dashv  \Gamma_Z := \operatorname{Fib}(\operatorname{id} \to j_*j^*). 
\end{equation}
Further, the right adjoint $\Gamma_Z$ is colimit-preserving, and $\Gamma_Z\operatorname{QCoh}(X)$ is presentable. 
\end{lem}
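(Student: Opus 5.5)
The plan is to identify $\Gamma_Z\operatorname{QCoh}(X)$ with the kernel of $j^*$ and use the recollement formalism. First I show that $j_*$ preserves colimits and satisfies the projection formula. This is the crucial input, and it is where the constructibility of $Z$ enters: $U$ being open constructible means that $U \cap W$ is quasi-compact for every quasi-compact open $W \subseteq X$. Hence $j: U \hookrightarrow X$ is a quasi-compact monomorphism. It is quasi-separated because it is a monomorphism: its diagonal is an equivalence. So $j$ is qcqs. By \cite[Theorem 3.1.40]{SoorThesis}, $j \in E$ and $j_! \simeq j_*$. Since $j_!$ is a left adjoint, $j_*$ preserves colimits. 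Alternatively, one can check this directly: the question is local on $X$ by descent, so we may assume $X$ is affinoid. Then $U$ is a finite union of rational subdomains and $j_*$ is computed by a finite \v{C}ech complex, i.e.\ a finite limit, which commutes with colimits in the stable setting.

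Next I check that $j^*j_* \simeq \operatorname{id}$. Since $j$ is a monomorphism, $U \times_X U \simeq U$. Base change for the qcqs morphism $j$, which holds in the six-functor formalism via $j_! \simeq j_*$, then gives $j^*j_* \simeq \operatorname{id}_U^*\operatorname{id}_{U,*} = \operatorname{id}$.

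With this in hand I define $\Gamma_Z := \operatorname{Fib}(\operatorname{id} \to j_*j^*)$. For any $M$ we get $j^*\Gamma_Z M \simeq \operatorname{Fib}(j^*M \to j^*j_*j^*M) \simeq 0$, so $\Gamma_Z M \in \Gamma_Z\operatorname{QCoh}(X)$. For the adjunction, take $N \in \Gamma_Z\operatorname{QCoh}(X)$. Then
$$\operatorname{Map}(N, j_*j^*M) \simeq \operatorname{Map}(j^*N, j^*M) \simeq 0,$$
so applying $\operatorname{Map}(N,-)$ to the fiber sequence gives $\operatorname{Map}(N,\Gamma_Z M) \simeq \operatorname{Map}(N,M)$. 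This is exactly $\operatorname{incl}_Z \dashv \Gamma_Z$, and the equivalence is induced by the counit $\Gamma_Z M \to M$.

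For colimit preservation, $\Gamma_Z$ is a fiber of colimit-preserving functors: $j^*$ is a left adjoint and $j_*$ preserves colimits by the first step. Fibers commute with colimits in stable categories, so $\Gamma_Z$ preserves colimits. For presentability, $\Gamma_Z\operatorname{QCoh}(X)$ is the fiber of the colimit-preserving functor $j^*: \operatorname{QCoh}(X) \to \operatorname{QCoh}(U)$ between presentable categories, taken over $0$. This fiber is a limit in $\operatorname{Pr}^L$, hence presentable. Alternatively, it is the essential image of the colimit-preserving idempotent $\Gamma_Z$, which is accessible.

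The main obstacle is the first step: establishing that $j$ is qcqs, and therefore that $j_*$ commutes with colimits and satisfies base change. Everything after that is formal.
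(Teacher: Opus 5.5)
Your proposal is correct and follows the paper's own argument. The paper uses that open constructibility of $U$ makes $j$ quasi-compact, so $j_! \simeq j_*$ preserves colimits and satisfies base change, which gives $j^*j_* \simeq \operatorname{id}$. You simply make explicit the steps the paper calls formal: quasi-separatedness via the monomorphism, the adjunction via the fiber sequence, and presentability as a kernel in $\operatorname{Pr}^L$.
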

\begin{proof}
Since $j:U \hookrightarrow X$ is open constructible, it is a quasi-compact morphism, and hence $j_! \xrightarrow{\sim} j_*$ preserves limits and colimits and satisfies base-change, so that $j^*j_* \xrightarrow{\sim} \operatorname{id}$. The Lemma follows formally from this.
\end{proof}
\egapar The proof of Theorem \ref{thm:tautuniversal!descent} rests on a delicate cofinality argument. For this it is necessary to introduce the following Definition.
\begin{defn}
Let $X \in \mathrm{dRig}$ and let $\mathscr{C} \in \mathrm{Pr}^{L}_{\mathrm{st}}$. A fully-faithful colimit-preserving functor $F: \mathscr{C} \hookrightarrow \operatorname{QCoh}(X)$ with target $\operatorname{QCoh}(X)$, is called \emph{compactly supported} if there exists a quasi-compact closed constructible subset $Z \subseteq |X|$ such that $F$ factors over $\Gamma_Z\operatorname{QCoh}(X) \subseteq \operatorname{QCoh}(X)$. We denote the full subcategory of ${\mathrm{Pr}^L_{st}}_{/\operatorname{QCoh}(X)}$ on such by $cs(X)$. 
\end{defn}
\begin{thm}\label{thm:tautuniversal!descent}
Let $X$ be a taut derived rigid space and denote by $qc(X)$ the poset of quasi-compact open subsets of $X$. Then $\{U \to X\}_{U \in qc(X)}$ is a universal $!$-cover. 
\end{thm}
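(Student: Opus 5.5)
We need to show that for every base change $X' \to X$ along a morphism from a derived rigid space (or more generally an object of $\operatorname{Shv}_{\tau_{\mathrm{rat}}}(\operatorname{Afnd})$), the family $\{U\times_X X' \to X'\}_{U\in qc(X)}$ is a $!$-cover. By the definitions in \cite{heyer_6-functor_2024}, this amounts to showing that the natural functor
\begin{equation*}
\operatorname{QCoh}^!(X') \to \lim_{U \in qc(X)} \operatorname{QCoh}^!(U\times_X X')
\end{equation*}
is an equivalence, together with the same statement for all iterated fiber products. Since $qc(X)$ is a filtered poset closed under finite intersections, the fiber products $U\cap V$ stay in the family, so the \v{C}ech condition reduces to a limit over the poset itself. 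Each inclusion $j_U: U \hookrightarrow X$ is an analytic subspace, hence $!$-able with $j_U^!\simeq j_U^*$. Therefore the limit is the limit along $*$-restrictions, and passing to left adjoints (as in \cite[Lemma 2.3.x]{SoorThesis}) it suffices to prove the dual colimit statement
\begin{equation*}
\operatorname{colim}_{U\in qc(X)} j_{U,!}j_U^! \;\xrightarrow{\sim}\; \operatorname{id}
\end{equation*}
in $\operatorname{QCoh}(X')$. Equivalently, $\operatorname{QCoh}(X')$ should be the colimit in $\operatorname{Pr}^L$ of the $\operatorname{QCoh}(U')$ along the $j_!$ functors.

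The key step is a cofinality argument, and this is where tautness enters. For $U\in qc(X)$, the closure $\overline U$ is quasi-compact. By tautness and the structure of locally spectral spaces, I would choose a quasi-compact closed constructible $Z$ with $U\subseteq Z\subseteq W$ for some $W\in qc(X)$; concretely, take $Z$ to be the complement of a suitable open constructible set containing $X\setminus W$, for $W$ a quasi-compact neighbourhood of $\overline U$. Then I would compare the diagram $U\mapsto \operatorname{QCoh}(U)$ (with $j_!$ transitions) against the diagram $cs(X)$, or more precisely $Z\mapsto \Gamma_Z\operatorname{QCoh}(X)$, indexed by quasi-compact closed constructible subsets $Z$. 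Using Lemma \ref{lem:GammaZcolimit}, each $\Gamma_Z\operatorname{QCoh}(X)$ is presentable with a colimit-preserving right adjoint $\Gamma_Z$. The interleaving $U\subseteq Z\subseteq W$ shows that the two ind-systems are mutually cofinal. This requires functors $\operatorname{QCoh}(U)\to\Gamma_Z\operatorname{QCoh}(W)\to \operatorname{QCoh}(W)$ compatible with $j_!$. Here one uses that $j_{U\subseteq W}$ is qc, so that $j_!$ is computed and controlled by base change (\cite[Theorem 3.1.40]{SoorThesis}), together with that the support of $j_!$ lies in $\overline U\subseteq Z$.

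It then remains to show that $\operatorname{colim}_Z \Gamma_Z\operatorname{QCoh}(X)\to\operatorname{QCoh}(X)$ is an equivalence. I would check this locally on $X$: the statement is $*$-local because all constructions commute with restriction to an affinoid $V$, and restriction of $\Gamma_Z$ to $V$ is $\Gamma_{Z\cap V}$. On an affinoid (quasi-compact) $V$, the whole space $|V|$ is itself a quasi-compact closed constructible set, so the colimit is attained. The base change along $X'\to X$ is handled the same way, since preimages of quasi-compact opens and of closed constructibles behave well after restriction to affinoids of $X'$, and the argument is local on $X'$ as well.

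I expect the main obstacle to be the cofinality step: producing the closed constructible $Z$ between $U$ and a quasi-compact $W$, and verifying that the $j_!$-transition functors genuinely factor through $\Gamma_Z$ compatibly in the $\infty$-categorical sense, rather than merely on objects. I would first try to reduce this to a statement about posets of subsets of $|X|$, which is possible by Theorem \ref{thm:topologicalinvariance} and Huber's results \cite[Lemma 5.1.3, 5.1.4]{HuberEtale}, and then conclude via a cofinal functor between the indexing posets.
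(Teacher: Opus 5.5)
Your route is essentially the paper's. You rewrite $!$-descent as the statement that $\operatorname{QCoh}(X)$ is the colimit in $\mathrm{Pr}^L$ of the $\operatorname{QCoh}(U)$ along the $j_!$. You compare this with $\operatorname{colim}_{Z\in ccc(X)}\Gamma_Z\operatorname{QCoh}(X)$ using tautness; the paper gets a quasi-compact closed constructible $Z\supseteq U$ directly from \cite[Lemma 5.3.4(i)]{HuberEtale}. You then show $\operatorname{colim}_Z\Gamma_Z\operatorname{QCoh}(X)\simeq\operatorname{QCoh}(X)$.

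The coherence issue you flag is handled as you suggest. The paper works in $(\mathrm{Pr}^L_{\mathrm{st}})_{/\operatorname{QCoh}(X)}$ via the auxiliary category $cs(X)$ of fully faithful compactly supported functors, which is essentially a poset. Hence $qc(X)\to cs(X)$ and $ccc(X)\to cs(X)$ are right cofinal by Quillen's Theorem A. The factorization of $\Gamma_Z\operatorname{QCoh}(X)\hookrightarrow\operatorname{QCoh}(X)$ through $i_!$, for a quasi-compact open $i\colon W\hookrightarrow X$ containing $Z$, comes from two facts: $\operatorname{id}\to i_*i^*$ is an equivalence on $\Gamma_Z\operatorname{QCoh}(X)$, and $i_!\simeq i_*$.

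Two corrections are needed.

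First, the claim $j_U^!\simeq j_U^*$ is false and must be removed. Since $j_U$ is qcqs, $j_{U,!}\simeq j_{U,*}$, so $j_U^!$ is the \emph{right} adjoint of $j_{U,*}$. For a rational subdomain $\operatorname{Spec}(B)\subseteq\operatorname{Spec}(A)$ it is $R\underline{\operatorname{Hom}}_A(B,-)$. The equivalence $j^!\simeq j^*$ holds only for partially proper opens (Proposition \ref{prop:partiallyproperopen}). If it held for all $U\in qc(X)$, the limit would be along $*$-restrictions and the theorem would follow from $*$-descent without using tautness at all. Likewise, the counit $\operatorname{colim}_Uj_{U,!}j_U^!\to\operatorname{id}$ exists only for the adjunction $j_!\dashv j^!$. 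Your reformulation along the $j_!$ is correct on its own, so nothing downstream depends on this error.

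Second, in the local step the colimit runs over $ccc(X)$, not $ccc(V)$. That $|V|$ is closed constructible in $V$ is irrelevant. What you need is some $Z_0\in ccc(X)$ with $V\subseteq Z_0$, which again uses tautness. Then, writing $j_Z$ for the inclusion of $|X|\setminus Z$, base change along the quasi-compact $j_Z$ gives $i_V^*j_{Z,*}j_Z^*M\simeq 0$ for all $Z\supseteq Z_0$. This yields $\operatorname{colim}_Z\Gamma_ZM\xrightarrow{\sim}M$. Essential surjectivity then follows, as in the paper, from $\Gamma_{Z'}$ commuting with colimits (Lemma \ref{lem:GammaZcolimit}).
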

\begin{proof}
For the sake of brevity we may write $\mathscr{Q} := \operatorname{QCoh}(X)$. We consider the colimit
\begin{equation}\label{eq:colim1}
   \underset{[F:\mathscr{C} \to \mathscr{Q}] \in cs(X)}{\operatorname{colim}} \mathscr{C},
\end{equation}
taken in ${\mathrm{Pr}^{L}_{\mathrm{st}}}_{/\mathscr{Q}}$. Now we consider two further colimits (again both taken in ${\mathrm{Pr}^{L}_{\mathrm{st}}}_{/\mathscr{Q}}$). Firstly, the colimit \begin{equation}\label{eq:colim2}
\underset{[i: U \hookrightarrow X] \in qc(X)}{\operatorname{colim}} \operatorname{QCoh}_!(U)
\end{equation}
here we note that $i_!$ is defined since $i: U \hookrightarrow X$ is quasi-compact as $X$ is quasi-separated, and secondly
\begin{equation}\label{eq:colim3}
\underset{[\iota: Z \hookrightarrow |X|] \in ccc(X)} {\operatorname{colim}}\Gamma_Z \operatorname{QCoh}(X)
\end{equation}
where $ccc(X)$ is\footnote{Here $ccc$ stands for quasi-Compact and Closed Constructible.} the poset of quasi-compact closed constructible subsets of $|X|$. 
We note that the indexing category $qc(X)$ for \eqref{eq:colim2} maps to the indexing category $cs(X)$ for \eqref{eq:colim1} by $[i: U \to X] \mapsto [i_!: \operatorname{QCoh}(U) \to \operatorname{QCoh}(X)]$. Indeed, since $X$ is taut, by \cite[Lemma 5.3.4(i)]{HuberEtale} there is a quasi-compact closed constructible subset $Z$ such that $U \subseteq Z$. If $j: V \hookrightarrow X$ is the complement of $Z$, then $j^*i_! \simeq 0$ by base-change, and so $i_!$ factors over $\Gamma_Z\operatorname{QCoh}(X)$. We claim that this functor is right cofinal. We use Quillen's Theorem A. We need to show that, for any fixed $[\mathscr{C} \to \mathscr{Q}] \in cs(X)$ that $qc(X) \times_{cs(X)} cs(X)_{\mathscr{C}/}$ is weakly contractible. Firstly, it is nonempty: let $[Z \to |X|] \in ccc(X)$ be such that $\mathscr{C} \to \mathscr{Q}$ factors over $\Gamma_Z\operatorname{QCoh}(X)$ and let $U \supseteq Z$ be a quasi-compact open subset containing $Z$, with $i: U \hookrightarrow X$ the inclusion. By $*$-descent we see that the natural transformation $\operatorname{id} \to i_*i^*$ restricts to an equivalence on $\Gamma_Z\operatorname{QCoh}(X)$. Hence using that $i_! \xrightarrow{\sim} i_*$ (as $i$ is qcqs), we see that $\mathscr{C} \to \mathscr{Q}$ factors over $i_!$ and hence $qc(X) \times_{cs(X)} cs(X)_{\mathscr{C}/}$ is nonempty. It is easily seen to be filtered, by using that $qc(X)$ is filtered. Hence, it is weakly contractible. 

It is clear that the indexing category $ccc(X)$ for \eqref{eq:colim3} maps to $cs(X)$ by $[\iota: Z \to |X|] \mapsto \Gamma_Z\operatorname{QCoh}(X)$. It is essentially tautological that this functor is right cofinal. Putting this together we obtain an equivalence in ${\mathrm{Pr}^{L}_{\mathrm{st}}}_{/\mathscr{Q}}$: 
\begin{equation}\label{eq:cofinalityQCoh!}
\underset{[i: U \hookrightarrow X] \in qc(X)}{\operatorname{colim}} \operatorname{QCoh}_!(U) \simeq \underset{[\iota: Z \hookrightarrow |X|] \in ccc(X)} {\operatorname{colim}}\Gamma_Z \operatorname{QCoh}(X).
\end{equation}
Next, we claim that the canonical morphism 
\begin{equation}
    \underset{[\iota: Z \hookrightarrow |X|] \in ccc(X)} {\operatorname{colim}}\Gamma_Z \operatorname{QCoh}(X) \to \operatorname{QCoh}(X)
\end{equation}
is an equivalence. By passing to right adjoints and recalling how colimits in $\operatorname{Pr}^L_{\mathrm{st}}$ are computed, it is equivalent to show that the natural functor 
\begin{equation}\label{eq:GammaZdescent}
    \operatorname{QCoh}(X) \to  \underset{[\iota: Z \hookrightarrow |X|] \in ccc(X)^{\mathrm{op}}} {\operatorname{lim}}\Gamma_Z \operatorname{QCoh}(X) 
\end{equation}
is an equivalence. First we show that it is fully-faithful. This amounts to showing that for each $M \in \operatorname{QCoh}(X)$, the canonical morphism
\begin{equation}
\underset{[\iota: Z \hookrightarrow |X|] \in ccc(X)}{\operatorname{colim}}\Gamma_Z M  \to M
\end{equation}
is an equivalence. For each $Z \in ccc(X)$ we write $j_Z:V \hookrightarrow X$ for its open constructible complement. Since we are in a stable setting, it is equivalent to show that $\operatorname{colim}_Zj_{Z,*}j^*_ZM \simeq 0$. If $i_0: U \hookrightarrow X$ is the inclusion of an affinoid subspace then by \cite[Lemma 5.3.4(i)]{HuberEtale} there exists $Z_0 \in ccc(X)$ whose complement $[j_{Z_0}: V_0 \hookrightarrow X] \in ccc(X)$ satisfies $V_0 \cap U = \emptyset$. Since $j_{Z_0,*}$ satisfies base-change one has $i^*_0j_{Z_0,*} \simeq 0$. Hence $i_0^* \operatorname{colim}_Zj_{Z,*}j^*_ZM \simeq \operatorname{colim}_Zi_0^*j_{Z,*}j^*_ZM \simeq 0$. Since $i_0$ was arbitrary, we conclude by $*$-descent that $\operatorname{colim}_Zj_{Z,*}j^*_ZM \simeq 0$. 

Secondly, we show that \eqref{eq:GammaZdescent} is essentially surjective. This amounts to showing that for each Cartesian section $(M_Z)_{Z \in ccc(X)}$ on the right-hand side of \eqref{eq:GammaZdescent} and each $Z^\prime \in ccc(X)$, the canonical morphism $M_{Z^\prime} \to \Gamma_{Z^{\prime}} \operatorname{colim}_Z \operatorname{incl}_Z M_Z$ is an equivalence. This follows straightforwardly from the fact that $\Gamma_Z$ preserves colimits, c.f. Lemma \ref{lem:GammaZcolimit}. Hence by \eqref{eq:cofinalityQCoh!} and passing to right adjoints we have shown that the canonical morphism
\begin{equation}
    \operatorname{QCoh}^!(X) \to \underset{qc(X)^{\mathrm{op}}}{\operatorname{lim}} \operatorname{QCoh}^!(U)
\end{equation}
is an equivalence. It is easy to show that $qc(X)$ is cofinal in the sieve that it generates, hence, we obtain the statement of the Theorem. 
\end{proof}
\egapar Now we can complete the proof of Theorem \ref{thm:taut!able}.
\begin{proof}[Proof of Theorem \ref{thm:taut!able}]
Since the class $E$ is $*$-local on the target, we reduce to the case when $Y$ is affinoid, hence $X$ is taut. Since the class $E$ is $!$-local on the source, the Theorem follows from Theorem \ref{thm:tautuniversal!descent} together with the fact that every qcqs morphism belongs to $E$. 
\end{proof}
\begin{rmk}
Let $X$ be a taut derived rigid space with structure morphism $f: X \to *$. Then Theorem \ref{thm:tautuniversal!descent} allows for a very intuitive and canonical description of the functor $f_!$. One has $f_! \simeq \operatorname{colim}_{U \in qc(X)} f_{U,*}i_U^!$, where for each $U \in qc(X)$ we write $i_U: U \hookrightarrow X$ for the inclusion and $f_U := f i_U$ for the restriction of $f$ to $U$. Hence in this case we should think of $f_!$ as ``global sections with quasi-compact support".  
\end{rmk}
\begin{lem}
Let $f: X \to Y$ be a separated \'etale morphism of derived rigid spaces which belongs to the class $E$. (By Theorem \ref{thm:taut!able} this is satisfied if $f$ is taut). Then there is a canonical morphism $f^! \to f^*$.  
\end{lem}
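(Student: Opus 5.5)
The plan is the standard diagonal argument of \cite[\S4.5]{heyer_6-functor_2024}, adapted to the present setting. Consider the Cartesian square formed by the two projections $p_1, p_2 : X \times_Y X \to X$, and the diagonal $\Delta : X \to X \times_Y X$. Since $f$ is separated, $\Delta$ is a Zariski-closed immersion. In particular $\Delta$ is representable, so $\Delta \in E$ and $\Delta_! \simeq \Delta_*$.

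The first step is to show that $\Delta$ is also an analytic subspace, i.e.\ an open immersion. Granting this, $\Delta$ is simultaneously an open immersion and representable, and one gets a canonical equivalence $\Delta^! \simeq \Delta^*$. The argument for the openness is as follows.
\begin{itemize}
\item Since $f$ is étale, $\mathbf{L}_{X/Y} \simeq 0$, hence $\mathbf{L}_{X/X\times_Y X} \simeq 0$ by Corollary \ref{cor:formallyetalecotangent}.
\item On classical truncations, $t_!(\Delta)$ is the diagonal of an étale separated morphism of classical rigid spaces. By Huber it is therefore an open and closed immersion.
\item By Theorem \ref{thm:topologicalinvariance}(ii), $t_!(\Delta)$ lifts to a unique analytic subspace $W \subseteq X\times_Y X$.
\item The map $X \to W$ is étale, since its cotangent complex vanishes, and it is an isomorphism on classical truncations. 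By the uniqueness part of Lemma \ref{lem:etaleliftinggeneral}, it is an equivalence.
\end{itemize}
Hence $\Delta$ is the inclusion of an analytic subspace that is qcqs. For any quasi-compact open immersion $j$ one has $j_! \simeq j_*$ and $j^*j_* \simeq \operatorname{id}$, so $j^* \dashv j_* \simeq j_!$. Since also $j_! \dashv j^!$, uniqueness of adjoints gives $j^! \simeq j^*$.

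Next I construct the map. Start from the unit $\operatorname{id} \to \Delta^!\Delta_!$, where $\Delta^!\Delta_! \simeq \Delta^*\Delta_*$. Applying base change to the Cartesian square, $p_2^* f^! \simeq p_1^! f^*$, using the $!$-base change equivalence $g^* f^! \to f'^! g^*$ for the square. Combining these gives the chain
\[
f^! \simeq \Delta^*p_2^*f^! \to \Delta^*p_1^!f^* \simeq \Delta^!p_1^!f^* \simeq (p_1\Delta)^!f^* \simeq f^*.
\]
Here the first equivalence uses $p_2\Delta = \operatorname{id}$, and the middle equivalence $\Delta^* \simeq \Delta^!$ was established in the first step. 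Canonicity is clear because every arrow is a natural transformation built from the six-functor formalism.

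The main obstacle is the openness of the diagonal, namely transferring Huber's classical statement to the derived setting. I would handle it exactly via topological invariance and étale lifting as described above. A further point to check is that the base-change map $p_2^*f^! \to p_1^!f^*$ exists in the formalism for $f \in E$; this is standard, being the mate of $f^*p_{1,!}\simeq p_{2,!}\ldots$ via proper base change.
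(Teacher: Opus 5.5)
Your overall route is the paper's: show that $\Delta_f$ is both an open immersion and a Zariski-closed immersion, deduce $\Delta_f^!\simeq\Delta_f^*$, and then run the diagonal construction of \cite[Lemma 4.6.4]{heyer_6-functor_2024}. That construction is exactly your chain $f^!\simeq\Delta^*p_2^*f^!\to\Delta^*p_1^!f^*\simeq\Delta^!p_1^!f^*\simeq f^*$. The middle arrow is only a base-change morphism, not an equivalence as you call it, but a morphism is all you need. Your proof of openness via Theorem \ref{thm:topologicalinvariance} and the uniqueness part of Lemma \ref{lem:etaleliftinggeneral} is fine, and it fills in a step the paper only asserts.

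The gap is your justification of $\Delta^!\simeq\Delta^*$. From $j^*\dashv j_*\simeq j_!\dashv j^!$ you only learn that $j^*$ is the \emph{left} adjoint and $j^!$ the \emph{right} adjoint of the same functor. Uniqueness of adjoints gives no comparison between a left and a right adjoint. In fact the general claim you state is false. Suppose $j^!\simeq j^*$ for a quasi-compact open immersion $j\colon U=\operatorname{Spec}(B)\hookrightarrow X=\operatorname{Spec}(A)$. Then $j_*$ is both left and right adjoint to $j^*$. Put $F:=\operatorname{Fib}(1_X\to j_*1_U)$; then $j^*F\simeq 0$, and the connecting map $j_*1_U\to F[1]$ is null because $\operatorname{Map}(j_*1_U,F[1])\simeq\operatorname{Map}(1_U,j^*F[1])$ is contractible. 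Thus $A\to B$ is a split epimorphism of $A$-modules, so $H^0(A)\to H^0(B)$ is surjective with kernel generated by an idempotent, and $U$ is clopen. Already the rational subdomain $\{|T|\leqslant|\varpi|\}\subseteq\mathbf{D}^1$ is therefore a counterexample; this is also why Proposition \ref{prop:partiallyproperopen} needs partial properness.

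The missing ingredient is closedness, which you noted but never used at this step. By Lemma \ref{lem:Zariskiclosedclosed} the image of $|\Delta|$ is also closed, hence clopen. So $X\times_YX\simeq X\sqcup W'$ with $W'$ the complementary analytic subspace, and $\operatorname{QCoh}(X\times_YX)\simeq\operatorname{QCoh}(X)\times\operatorname{QCoh}(W')$. Under this splitting $\Delta^*$ is the projection, and $\Delta_!\simeq\Delta_*$ is $M\mapsto(M,0)$, whose right adjoint is again the projection; this gives $\Delta^!\simeq\Delta^*$. That is exactly the paper's argument via \cite[Theorem 2.3.17]{SoorThesis}, and with this repair the rest of your proof goes through.
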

\begin{proof}
We first note that if $Z =U \coprod V$ is a derived rigid space written as a disjoint union of two subspaces and $s :U \to Z, t:V \to Z$ are the inclusions then there is a canonical equivalence $s^! \xrightarrow[]{\sim} s^*$, see for instance the argument given in the proof of \cite[Theorem 2.3.17]{SoorThesis}. Now if $f$ is separated \'etale, then $\Delta_f$ is an open immersion and also a Zariski-closed immersion. Hence $\Delta_f$ corresponds to an open and closed inclusion into $X \times_Y X$ and so by the preceding there is a natural equivalence $\Delta_f^! \xrightarrow{\sim} \Delta_f^*$. As in \cite[Lemma 4.6.4]{heyer_6-functor_2024} this yields the canonical morphism $f^! \to f^*$. 
\end{proof}
\begin{prop}\label{prop:partiallyproperopen}
Let $j: U \to X$ be a partially proper open immersion of derived rigid spaces. Then the canonical morphism $j^! \xrightarrow{\sim} j^*$ is an equivalence.
\end{prop}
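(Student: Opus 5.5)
The plan is to reduce to a statement about open constructible subsets, where $j$ is qcqs and the comparison map is already understood. Then glue the local statements together using the universal $!$-descent of Theorem \ref{thm:tautuniversal!descent} together with $*$-descent. First I would make sure the map exists and is the one constructed in the preceding Lemma. $j$ is étale with $\mathbf{L}_{U/X}\simeq 0$ by Corollary \ref{cor:formallyetalecotangent}, separated because it is a monomorphism, and taut because it is partially proper. So $j\in E$ by Theorem \ref{thm:taut!able}, and the canonical morphism $j^!\to j^*$ is defined.

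Both sides are compatible with restriction to analytic subspaces of $X$, using base change for $j^*$ and $!$-base change for $j^!$ along open immersions $X'\hookrightarrow X$. The map $j^!\to j^*$ is compatible with this base change, and the class of partially proper open immersions is stable under base change. Since $\operatorname{QCoh}$ satisfies $*$-descent, I may therefore assume that $X$ is affinoid; in particular $X$ is taut. Then $U\subseteq X$ is partially proper in a taut space, hence taut. By Theorem \ref{thm:tautuniversal!descent}, $\operatorname{QCoh}^!(U)\simeq\lim_{V\in qc(U)}\operatorname{QCoh}^!(V)$, and the same holds for $\operatorname{QCoh}^*$ because these $V$ cover $U$. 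So it suffices to show that for each quasi-compact open $V\subseteq U$, with inclusions $i_V\colon V\hookrightarrow U$ and $k=j i_V$, the map $i_V^*j^!M\to i_V^*j^*M$ is an equivalence. Here $k$ is qcqs, so $k^*\simeq k^!$ via the map of the preceding Lemma, and $i_V^*\simeq i_V^!$ likewise. The problem therefore reduces to checking a compatibility of the canonical maps, $i_V^!j^!\simeq k^!\to k^*\simeq i_V^*j^*$.

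The substantive input is that for a quasi-compact (hence open constructible) open immersion $k\colon V\hookrightarrow X$ one has $k^!\simeq k^*$. This comes from $k_!\simeq k_*$ together with $k^*k_*\simeq\operatorname{id}$. What this gives is a right adjoint of $k_!$ equal to $k^*$ restricted appropriately. The delicate point is the following: the statement $j^!\simeq j^*$ is genuinely false without partial properness, since for a general open $j$ we have $j_!\neq j_*$. So the argument must use partial properness somewhere. My plan is to identify $j_!$ as
\[
j_!\simeq\operatorname{colim}_{V\in qc(U)}k_{V,*}i_V^!,
\]
as in the Remark after Theorem \ref{thm:taut!able}. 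I would then show directly that $j^*$ is right adjoint to this. Concretely, I need $\operatorname{Map}(j_!N,M)\simeq\lim_V\operatorname{Map}(i_V^*N,k_V^*M)$. Computing $\operatorname{Map}(j^*M', N)$-type compatibilities, the key identity is
\[
k_V^* j_!\simeq i_V^*\quad\text{on }\Gamma\text{-supported pieces,}
\]
that is, $j_!N$ restricted to $V$ recovers $N|_V$.

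The main obstacle is this last step, which needs the closure of each quasi-compact $V$ in $X$ to remain inside $U$. Here partial properness enters: $U$ is closed under specialization, and by \cite[Lemma 5.3.4(i)]{HuberEtale} the closure $\overline V$ is quasi-compact and consists of specializations of points of $V$, so $\overline V\subseteq U$. Hence there is $Z\in ccc(X)$ with $V\subseteq Z\subseteq U$. Using the equivalence \eqref{eq:cofinalityQCoh!} and the fact that $\Gamma_Z\operatorname{QCoh}(X)\simeq\Gamma_Z\operatorname{QCoh}(U)$ for such $Z$ (by $*$-descent for the cover $\{U, X\setminus Z\}$), the image of $j_!$ is $\operatorname{colim}_{Z\subseteq U}\Gamma_Z\operatorname{QCoh}(X)$. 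The Yoneda computation then identifies its right adjoint with $M\mapsto(\Gamma_Z M)_Z\mapsto j^*M$, which yields $j^!\simeq j^*$.
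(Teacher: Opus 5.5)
\textbf{Error in the reduction.} Your second and third paragraphs assert that for a quasi-compact open immersion $k$ one has $k^!\simeq k^*$, and likewise $i_V^!\simeq i_V^*$, deducing this from $k_!\simeq k_*$ and $k^*k_*\simeq\operatorname{id}$. That claim is false, and the reduction built on it fails.

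From $k_!\simeq k_*$ you only learn that $k^!$ is the \emph{right} adjoint of $k_*$, whereas $k^*$ is its \emph{left} adjoint. Full faithfulness of $k_*$ gives $k^!k_*\simeq\operatorname{id}$, not $k^!\simeq k^*$.

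For a rational subdomain $\operatorname{Spec}(B)\subseteq\operatorname{Spec}(A)$ one has $k^!=R\underline{\operatorname{Hom}}_A(B,-)$. Take $A=K\langle T\rangle$ and $B=K\langle T/\varpi\rangle$. There is no nonzero bounded $A$-linear map $B\to A$: test it on $(T/\varpi)^n$, which has norm $1$ in $B$. Hence $k^!A\not\simeq B=k^*A$.

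In fact, if your claim held, the reduction ``$i_V^!j^!\simeq k^!\to k^*\simeq i_V^*j^*$'' would prove $j^!\simeq j^*$ for every taut open immersion, with no partial properness at all. That includes rational subdomains, which you yourself note is false. So that reduction, and the ``substantive input'' of your third paragraph, must be discarded.

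\textbf{What survives.} Your last paragraph gives a correct proof that does not use the false claim, and it is a genuinely different route from the paper's. To make it precise, fix $Z\in ccc(X)$ with $Z\subseteq U$, and let $w\colon X\setminus Z\hookrightarrow X$.
\begin{itemize}
\item Base change $w^*j_!\simeq j'_!w'^*$ shows that $j_!$ carries $\Gamma_Z\operatorname{QCoh}(U)$ into $\Gamma_Z\operatorname{QCoh}(X)$.
\item Together with $j^*j_!\simeq\operatorname{id}$ and descent for the cover $\{U,X\setminus Z\}$, this makes $j_!$ and $j^*$ inverse equivalences between these two categories.
\item Since $w$ is quasi-compact, $j^*\Gamma_Z\simeq\Gamma_Zj^*$.
\end{itemize}
Partial properness enters exactly once. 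It ensures that the quasi-compact closed constructible subsets of the taut space $U$ are closed constructible in $X$: such a subset is constructible in $X$, so its closure consists of its specializations and stays in $U$.

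Now write $N\simeq\operatorname{colim}_Z\Gamma_ZN$, as in the proof of Theorem \ref{thm:tautuniversal!descent} applied to $U$. This gives
\[
\operatorname{Map}(j_!N,M)\simeq\lim_Z\operatorname{Map}(j_!\Gamma_ZN,\Gamma_ZM)\simeq\lim_Z\operatorname{Map}(\Gamma_ZN,j^*M)\simeq\operatorname{Map}(N,j^*M),
\]
so $j_!\dashv j^*$. You should still check that the resulting identification $j^!\simeq j^*$ is the canonical map.

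\textbf{Comparison with the paper.} The paper first treats $U=X\setminus\overline{V}$ for a rational $V$. It exhausts $U$ by rational $U_n$ together with rational $V_n$ satisfying $U_n\cup V_n=X$ and $U_n\cap V_{n+1}=\emptyset$, and invokes an external criterion from the author's thesis. It then uses Huber's result that every partially proper open of an affinoid is a union of such $U$, and descends along $U$.

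Your route avoids both that external criterion and the structure result. It uses only the $\Gamma_Z$ formalism and the taut $!$-descent theorem already proved in this section, and it isolates specialization-closedness as the exact input. The paper's route is more concrete, but it rests on results proved elsewhere.
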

\begin{proof}
\emph{Step 1.} We first prove the theorem in a simple case. Let $X = \operatorname{Spec}(A)$ be an affinoid derived rigid space. Let $f, g \in H^0(A)$ without common zero and let $V = \operatorname{Spec}(A\langle f/g \rangle)$ be the corresponding rational localization. Let $U$ be the open complement of $S:=\overline{V}$ in $|X|$, equivalently, $U$ is the interior of the closed constructible subset $|X| \setminus V$. Then by \cite[Lemma 5.3.4(ii)]{HuberEtale} the morphism $j:U \hookrightarrow X$ is partially-proper. We define the following rational open subsets of $X$:
\begin{equation}
\begin{aligned}
   V_n := \operatorname{Spec}(A\langle \varpi f^n/g^n\rangle) && \text{ and } && U_n:= \operatorname{Spec}(A\langle g^n/\varpi f^n\rangle), 
\end{aligned}
\end{equation}
it is not hard to see that $U_n \cup V_n = X$ and $U_n \cap V_{n+1} = \emptyset$ for every $n \geqslant 1$, and that $\bigcup_{n \geqslant 1} U_n = U$. Hence the hypotheses of \cite[Proposition 3.1.53]{SoorThesis} are satisfied in this case, and so the canonical morphism $j^! \to j^*$ is an equivalence. 

\emph{Step 2.} To prove the proposition in general, we use descent to reduce to the case when $X$ is affinoid. If $j: U \hookrightarrow X$ is a partially-proper open subspace of an affinoid space, then by \cite[Lemma 8.1.8(iii)]{huber_continuous_1993} we can write $U = \bigcup_i U_i $ where each $j_i: U_i \to X$ is of the form considered in Step 1. By Step 1, we know that each $j_i$ satisfies $j_i^! \xrightarrow[]{\sim} j_i^*$. Hence by descent on $U$ we conclude that $j^* \xrightarrow{\sim} j^!$.
\end{proof}
\begin{prop}\label{prop:finiteetale!}
Let $f:X \to Y$ be a finite \'etale morphism of derived rigid spaces. Then the canonical morphism $f^! \to f^*$ is an equivalence.
\end{prop}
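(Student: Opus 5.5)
The plan is to reduce to the affinoid case and then compute both sides explicitly. A finite \'etale morphism is finite, and in particular qcqs and representable, so $f \in E$ and $f_! \simeq f_*$. The canonical morphism $f^! \to f^*$ from the preceding Lemma is built from the equivalence $\Delta_f^! \simeq \Delta_f^*$, where $\Delta_f$ is the inclusion of a clopen subspace. Both $f^!$ and $f^*$ are compatible with restriction to analytic subspaces of $Y$: for $f^*$ this is clear, and for $f^!$ it follows because $f$ is qcqs, so base change holds for $f_! \simeq f_*$, and passing to right adjoints gives compatibility of $f^!$ with $*$-restriction along open immersions. Since $\operatorname{QCoh}$ satisfies $*$-descent, it therefore suffices to treat the case $Y = \operatorname{Spec}(A)$ affinoid. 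Then $X = \operatorname{Spec}(B)$, and Lemma \ref{lem:finiteetale} shows that $B$ is a retract of a finite free $A$-module.

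In the affinoid case the trivial six-functor formalism gives $f^! = R\underline{\operatorname{Hom}}_A(B,-)$ and $f^* = B \hat{\otimes}^\mathbf{L}_A -$. Since $B$ is dualizable in $\operatorname{Mod}_A\mathscr{V}$, being a retract of $A^{\oplus n}$, we get
\[
R\underline{\operatorname{Hom}}_A(B,N) \simeq B^\vee \hat{\otimes}^\mathbf{L}_A N,
\qquad B^\vee := R\underline{\operatorname{Hom}}_A(B,A).
\]
The problem therefore reduces to showing that the canonical map $B \to B^\vee$ (or $B^\vee \to B$, depending on the direction) induced by the construction of $f^! \to f^*$ is an equivalence of $B$-modules.

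The main obstacle is identifying this map and proving it is an equivalence. Unwinding \cite[Lemma 4.6.4]{heyer_6-functor_2024}, the map should be the one adjoint to the trace pairing $B \hat{\otimes}^\mathbf{L}_A B \to B \to A$, where the first arrow is projection onto the clopen diagonal summand of $B \hat{\otimes}^\mathbf{L}_A B$, which splits as $B \times C$ because $\Delta_f$ is clopen. Equivalently, it is the trace form. To show this pairing is perfect I would use the derived-strong property of both sides. Each of $B$ and $B^\vee$ is a retract of a finite free module, so both are derived strong $A$-modules. A morphism between derived strong $A$-modules that is an isomorphism on $H^0$ is an equivalence, by the same argument as in Lemma \ref{lem:finiteetale} and Lemma \ref{lem:projectivederivedmodule}. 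This reduces the claim to $H^0$, namely to the classical statement that for a finite \'etale map of classical affinoids $H^0(A) \to H^0(B)$ the trace form identifies $H^0(B)$ with $\operatorname{Hom}_{H^0(A)}(H^0(B),H^0(A))$. Here I also use $H^0(B^\vee) \cong \operatorname{Hom}_{H^0(A)}(H^0(B),H^0(A))$, which holds because $B$ is a retract of a finite free module. The classical statement is standard for finite \'etale (separable) algebras and can be checked locally, where the algebra is a product of separable field extensions. The most care is needed in verifying that the abstract Beck--Chevalley map really coincides with this trace map; I would check it by computing the unit and counit of $f_* \dashv f^!$ explicitly through the clopen decomposition of $X \times_Y X$.
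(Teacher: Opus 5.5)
Your strategy matches the paper's. You reduce to $Y=\operatorname{Spec}(A)$, $X=\operatorname{Spec}(B)$ affinoid and use Lemma~\ref{lem:finiteetale} to get $B$ dualizable over $A$, so everything comes down to a single map between $B$ and $B^\vee$. You then use the trace pairing. The paper only asserts that the map adjoint to the trace pairing is ``the required inverse''. Your reduction to $H^0$ via the derived-strong property, plus classical nondegeneracy of the trace form, is a sensible way to fill in what the paper leaves implicit. Two points need correcting.

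\emph{First, the direction of the canonical map.} At $A$ the canonical map is $f^!A=B^\vee\to B=f^*A$. So it cannot coincide with the trace-form map $B\to B^\vee$ adjoint to $B\hat{\otimes}^{\mathbf{L}}_AB\to B\xrightarrow{\operatorname{tr}}A$.
\begin{itemize}
\item Unwind the construction through the splitting $B\hat{\otimes}^{\mathbf{L}}_AB\simeq B\times C$, with diagonal idempotent $e=\sum_i x_i\otimes y_i\in H^0(B)\otimes_{H^0(A)}H^0(B)$. On $H^0$ the canonical map is then $\varphi\mapsto\sum_i x_i\varphi(y_i)$.
\item This map is the \emph{inverse} of the trace form, by the identity $\sum_i x_i\operatorname{tr}(y_ib)=b$.
\item That identity follows by applying $\operatorname{id}\otimes\operatorname{tr}$ to $(1\otimes b)e=(b\otimes 1)e$ and using $\sum_i x_i\operatorname{tr}(y_i)=1$.
\item The last equality holds because $(\operatorname{id}\otimes\operatorname{tr})(e)$ is the trace over $B$ (via the first factor) of multiplication by $e$ on $B\otimes_AB$, i.e.\ the rank of $e(B\otimes_AB)\cong B$.
\end{itemize}
So proving that the trace pairing is perfect does not by itself finish the argument. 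You also need this identity, which the paper's one-line assertion uses tacitly. It is simpler to run your derived-strong argument directly on the canonical map $B^\vee\to B$. Both sides are retracts of finite free $A$-modules, and its $H^0$ is the classical map above. That map is an isomorphism: check it modulo each maximal ideal of $H^0(A)$, where $H^0(B)$ becomes a product of finite separable field extensions.

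\emph{Second, the reduction to affinoid $Y$.} Passing to right adjoints in $j^*f_*\simeq f'_*j'^*$ gives $f^!j_*\simeq j'_*f'^!$, not $j'^*f^!\simeq f'^!j^*$. Nor can you replace $j^*$ by $j^!$, since $j^!\not\simeq j^*$ for rational subdomains. The compatibility you want does hold here, for a different reason. $B$ is dualizable locally on $Y$, so $f$ is suave by \cite[Lemma 4.5.7, Lemma 4.5.10]{heyer_6-functor_2024}. Suave base change then makes $f^!$ commute with $*$-pullback. Together with $*$-descent, this justifies checking the comparison map on an affinoid cover of $Y$.
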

\begin{proof}
By descent on $Y$ we reduce to the case when $X = \operatorname{Spec}(A)$ and $Y = \operatorname{Spec}(B)$ are affinoid. We construct an explicit inverse to the natural morphism $f^! \to f^*$. By Lemma \ref{lem:finiteetale} $A$ is dualizable as a $B$-module. Hence it suffices to construct an inverse to the natural morphism $A^\lor := R\underline{\operatorname{Hom}}_B(A,B)=f^!B \to f^*B=A$. This is done using the theory of the trace which we now recall. As $A$ is dualizable there is a canonical equivalence $R\underline{\operatorname{End}}_B(A) \simeq A \hat{\otimes}^\mathbf{L}_BA^\lor$ which we may compose with the evaluation map to obtain a canonical morphism $R\underline{\operatorname{End}}_B(A) \to B$. On the other hand the adjoint to the multiplication morphism $A \hat{\otimes}^\mathbf{L}_B A \to A$ gives a canonical morphism $A \to R\underline{\operatorname{End}}_B(A)$. The \emph{trace morphism} $\operatorname{tr}: A \to B$ is defined to be the composite $A \to R\underline{\operatorname{End}}_B(A) \to B$. The \emph{trace pairing} is defined to be the composite $A \hat{\otimes}^\mathbf{L}_BA \to A \xrightarrow[]{\mathrm{tr}} B$ where the first morphism is again multiplication. By adjunction we obtain a canonical morphism $A \to A^\lor$ which is the required inverse. 
\end{proof}
\begin{thm}\label{thm:partiallyproperDetale}
Let $f: X \to Y$ be a partially proper \'etale morphism of derived rigid spaces. Then the canonical morphism $f^! \to f^*$ is an equivalence.
\end{thm}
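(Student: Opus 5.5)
The plan is to reduce, by locality of the canonical morphism $f^! \to f^*$, to the two cases already treated: partially-proper open immersions (Proposition \ref{prop:partiallyproperopen}) and finite \'etale morphisms (Proposition \ref{prop:finiteetale!}). First I would check that the statement makes sense. A partially-proper morphism is separated and taut, so $f \in E$ by Theorem \ref{thm:taut!able}, and the canonical morphism $f^! \to f^*$ is defined by the Lemma preceding Proposition \ref{prop:partiallyproperopen}. Its construction via $\Delta_f^! \simeq \Delta_f^*$ is compatible with base change along $Y' \to Y$ and with restriction along open immersions, in the sense of \cite[Lemma 4.6.4]{heyer_6-functor_2024}.

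\emph{Step 1: localize on $Y$.} Since $\operatorname{QCoh}^*$ satisfies descent, the question of whether $f^! \to f^*$ is an equivalence can be checked after pulling back to an affinoid cover $\{Y_i \to Y\}$. On such a cover both $f^!$ and $f^*$ base-change: for $f^*$ this is trivial, and for $f^!$ it holds because open immersions $g$ satisfy $g^! \simeq g^*$ for quasi-compact ones and $f^!$ commutes with $*$-pullback along open immersions. We may therefore assume $Y$ is affinoid, hence taut. Then $X$ is taut as well, because $f$ is partially proper.

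\emph{Step 2: localize on $X$ in the partially-proper topology.} By Proposition \ref{prop:ppetalestructure} and the subsequent Remark, $X$ is covered by partially-proper open subspaces $U_x$, each mapping finite \'etale onto a partially-proper open $V_x \subseteq Y$. For the inclusions $j_x: U_x \hookrightarrow X$, Proposition \ref{prop:partiallyproperopen} gives $j_x^! \simeq j_x^*$. By $*$-descent for $\operatorname{QCoh}(X)$ along the open cover $\{U_x\}$, it suffices to show that $j_x^* f^! \to j_x^* f^*$ is an equivalence for each $x$. Using $j_x^*\simeq j_x^!$ on the left, we get $j_x^*f^! \simeq (fj_x)^! = (k_x g_x)^! \simeq g_x^! k_x^!$, where $g_x: U_x \to V_x$ is finite \'etale and $k_x: V_x \hookrightarrow Y$ is a partially-proper open immersion. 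Applying Proposition \ref{prop:partiallyproperopen} to $k_x$ and Proposition \ref{prop:finiteetale!} to $g_x$ yields $g_x^! k_x^! \simeq g_x^* k_x^* \simeq j_x^* f^*$.

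\emph{Main obstacle.} The hard part is ensuring that this chain of equivalences is the restriction of the \emph{canonical} map $f^! \to f^*$, and not merely some abstract equivalence. To handle this, I would show that the canonical morphism of the Lemma, built from $\Delta^!\simeq\Delta^*$, is compatible with composition of separated \'etale morphisms: the diagonal of $k_x g_x$ decomposes via the diagonals of $g_x$ and $k_x$, each of which is clopen. I would also show that for open immersions it reduces to the map of Proposition \ref{prop:partiallyproperopen}. Both facts should follow formally from the functoriality of the construction in \cite[Lemma 4.6.4]{heyer_6-functor_2024}, together with the fact that for an open immersion the diagonal is an isomorphism.
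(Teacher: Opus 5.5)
\paragraph{Overall route.} Your plan follows the paper's route. You localize on $Y$ in the analytic topology, then localize on $X$ in the partially-proper topology via Proposition~\ref{prop:partiallyproperopen}, apply Proposition~\ref{prop:ppetalestructure}, and finish with Proposition~\ref{prop:finiteetale!}. Your Step~2 is correct and is more explicit than the paper, which absorbs the partially-proper open immersion $k_x\colon V_x\hookrightarrow Y$ into the phrase ``we reduce to the case when $f$ is finite \'etale''. Your chain $j_x^*f^!\simeq (k_xg_x)^!\simeq g_x^!k_x^!\simeq g_x^*k_x^*\simeq j_x^*f^*$ spells out what is meant there, and your worry about canonicity of the map is legitimate.

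\paragraph{The gap in Step~1.} The reason you give for base-changing $f^!$ along an affinoid cover is false in this formalism.
\begin{itemize}
\item Quasi-compact open immersions satisfy $g_!\simeq g_*$, not $g^!\simeq g^*$. For a rational subdomain $j\colon \operatorname{Spec}(A_U)\to\operatorname{Spec}(A)$ one has $j^!=R\underline{\operatorname{Hom}}_A(A_U,-)$, which is not $A_U\hat{\otimes}^{\mathbf{L}}_A-$. The identity $j^!\simeq j^*$ is exactly the partially-proper phenomenon of Proposition~\ref{prop:partiallyproperopen}, and affinoid subdomains are typically not partially proper.
\item For a general $f\in E$, $f^!$ does not commute with $*$-pullback along open immersions. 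Already for affinoids you would be comparing $A_U\hat{\otimes}^{\mathbf{L}}_A R\underline{\operatorname{Hom}}_A(A',M)$ with $R\underline{\operatorname{Hom}}_A(A',A_U\hat{\otimes}^{\mathbf{L}}_A M)$. These agree when $A'$ is dualizable over $A$, but not in general.
\end{itemize}
You cannot skip this reduction, because Proposition~\ref{prop:ppetalestructure} needs $Y$, and hence $X$, to be taut. So Step~1 as written does not go through; the paper states this step only as ``by $*$-descent'', and the extra input really is needed.

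\paragraph{A fix.} Route the locality on $Y$ through suaveness.
\begin{enumerate}
\item By Lemma~\ref{lem:suavelocal}, suaveness of the taut map $f$ is local on $Y$ in the analytic topology and local on $X$ in the partially-proper topology. So you may take $Y$ affinoid when proving suaveness.
\item Over an affinoid $Y$, your Step~2 cover gives $f|_{U_x}=k_x\circ g_x$. Here $k_x$ is a monomorphism with $k_x^!\simeq k_x^*$ (Proposition~\ref{prop:partiallyproperopen}), hence $\mathrm{D}$-\'etale and in particular suave, cf.\ \cite[Lemma 4.6.4]{heyer_6-functor_2024}. The map $g_x$ is finite \'etale, so locally it is given by a dualizable algebra (Lemma~\ref{lem:finiteetale}), and it is suave by \cite[Lemma 4.5.10]{heyer_6-functor_2024}.
\item Hence $f$ is suave. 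Then $f^!\simeq f^*\hat{\otimes}_X f^!1_Y$, and the formation of $f^!1_Y$ commutes with pullback. This gives $g'^*f^!\simeq f'^!g^*$ for the inclusions of an affinoid cover of $Y$, compatibly with the canonical map $f^!\to f^*$.
\item Therefore the equivalence can be checked over each affinoid $Y_i$, where your Step~2 applies verbatim.
\end{enumerate}
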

\begin{proof}
By $*$-descent, the condition that $f^! \to f^*$ is an equivalence is local in the analytic topology on $Y$. By Proposition \ref{prop:partiallyproperopen} and descent on $X$, this condition is also local in the partially-proper analytic topology on $X$. Hence by Proposition \ref{prop:ppetalestructure} we reduce to the case when $f$ is finite \'etale. This follows from Proposition \ref{prop:finiteetale!}.
\end{proof}
\egapar In the language of abstract six-functor formalisms, Theorem \ref{thm:partiallyproperDetale} implies that every partially proper \'etale morphism of derived rigid spaces is $\mathrm{D}$-\'etale in the sense of \cite[Definition 4.6.1]{heyer_6-functor_2024} for the six-functor formalism $\mathrm{D} = \operatorname{QCoh}$ of \eqref{eq:6ff1}, c.f. \cite[Lemma 4.6.4]{heyer_6-functor_2024}.
\egapar In the remainder of this section we make use of the notion of a \emph{suave} morphism, introduced in \cite[\S4.5]{heyer_6-functor_2024}. We say that a morphism $f: X \to Y$ in $\operatorname{Shv}_{\tau_{\mathrm{rat}}}(\operatorname{Afnd})$ is \emph{suave} if it belongs to the class $E$ of $!$-able morphisms and it is suave in the sense of \cite[Definition 4.5.1]{heyer_6-functor_2024}. By \cite[Lemma 4.5.9]{heyer_6-functor_2024}, the class of suave morphisms is stable under base change and composition. The relevance to Grothendieck duality theory is that for every suave morphism $f: X \to Y$ there is a canonical equivalence $f^* \hat{\otimes}_X f^!1_Y \simeq f^!$, c.f. \cite[Corollary 4.5.11]{heyer_6-functor_2024}. By Theorem \ref{thm:taut!able} it makes sense to ask whether any taut morphism is suave. 
\begin{lem}\label{lem:suavelocal}
Let $f: X \to Y$ be a taut morphism in $\operatorname{dRig}$. Then, the property that $f$ is suave is local in the analytic topology on $Y$ and local in the partially-proper analytic topology on $X$.
\end{lem}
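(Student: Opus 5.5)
The plan is to deduce both assertions from the general locality statements for suaveness in Heyer--Mann \cite[\S4.5]{heyer_6-functor_2024}. We combine these with the descent properties of $E$ recalled above ($E$ is $*$-local on the target and $!$-local on the source) and with Proposition \ref{prop:partiallyproperopen}. By Theorem \ref{thm:taut!able}, $f$ lies in $E$. Moreover, since taut morphisms are stable under base change and composition, every restriction of $f$ to an analytic subspace of $Y$, or to a partially-proper open of $X$, also lies in $E$.

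\emph{Locality on the target.} Let $\{Y_i \hookrightarrow Y\}_i$ be an analytic cover. Analytic open immersions are $*$-covers, and the class $E$ is $*$-local on the target. By \cite[Lemma 4.5.7]{heyer_6-functor_2024} (or the corresponding statement in \emph{loc. cit.}), suaveness can be checked after base change along a universal $*$-cover of the target. Hence $f$ is suave if and only if each $f_i: X \times_Y Y_i \to Y_i$ is suave. The ``only if'' direction is stability of suaveness under base change.

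\emph{Locality on the source.} Let $\{j_i: U_i \hookrightarrow X\}_i$ be a cover by partially-proper open subspaces. Each $j_i$ is partially proper and \'etale; it is taut, hence in $E$. By Proposition \ref{prop:partiallyproperopen} (equivalently Theorem \ref{thm:partiallyproperDetale}), we have $j_i^! \simeq j_i^*$, so $j_i$ is $\operatorname{QCoh}$-\'etale in the sense of \cite[Definition 4.6.1]{heyer_6-functor_2024}. In particular $j_i$ is suave, with invertible dualizing object $1$. Thus if $f$ is suave, so is each $f j_i$ by composition.

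For the converse, we use that suaveness is local on the source for a universal $!$-cover by suave maps, as in \cite[Lemma 4.5.8]{heyer_6-functor_2024}. The needed input is that $\{U_i \to X\}$ is a universal $!$-cover, and this step is the main obstacle. Since each $j_i^! \simeq j_i^*$, $!$-descent along this family coincides with $*$-descent. It therefore reduces to $*$-descent for $\operatorname{QCoh}$ along the analytic cover generated by the $U_i$, which holds because $\operatorname{QCoh}$ is a sheaf for the analytic topology. Finite intersections $U_{i_0}\cap\dots\cap U_{i_n}$ are again partially-proper opens, so the identification $(-)^! \simeq (-)^*$ persists on the whole \v{C}ech nerve. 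Both the cover and the identification are stable under base change along any $X^\prime \to X$, which makes the cover universal.

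With this, suaveness of each $f j_i$ implies suaveness of $f$. The alternative criterion for suaveness, that $f^*\hat\otimes f^!1 \to f^!$ is an equivalence together with dualizability, can be checked locally after applying the conservative family $j_i^* = j_i^!$, because it commutes with the relevant functors. This gives the same conclusion if the citation to \emph{loc. cit.} needs adjustment.
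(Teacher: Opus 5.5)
Your proposal is correct and follows the paper's proof: locality on the target is \cite[Lemma 4.5.7]{heyer_6-functor_2024}, and locality on the source is Proposition \ref{prop:partiallyproperopen} combined with \cite[Lemma 4.5.8]{heyer_6-functor_2024}; your \v{C}ech-nerve argument, which uses $j^!\simeq j^*$ to turn $!$-descent into $*$-descent, spells out the universal $!$-cover hypothesis that the paper leaves implicit. Drop the closing fallback remark: it is unnecessary, and it is not an accurate characterization of suaveness in general, since suave morphisms need not have dualizable or invertible $f^!1$ (the paper needs suaveness of $\Delta_f$ as well to get invertibility in Theorem \ref{thm:GD1}).
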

\begin{proof}
The first part is \cite[Lemma 4.5.7]{heyer_6-functor_2024}, and the second part follows from Proposition \ref{prop:partiallyproperopen} and \cite[Lemma 4.5.8]{heyer_6-functor_2024}.
\end{proof}
\egapar By substituting Theorem \ref{thm:partiallyproperDetale} for Proposition \ref{prop:partiallyproperopen}, it should be the case that, for a taut morphism $f: X \to Y$ in $\operatorname{dRig}$, the property of being suave is local in the partially-proper \'etale topology on $X$. The reason why we did not state this in Lemma \ref{lem:suavelocal} is because, in this paper we have not proven \'etale descent for quasi-coherent sheaves. 
\begin{prop}\label{prop:Opendisksuave}
The canonical morphism $f: \mathring{\mathbf{D}}^1 \to *$ is suave and there is an equivalence $f^!1_* \simeq 1_{\mathring{\mathbf{D}}^1}[1]$. 
\end{prop}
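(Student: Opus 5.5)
The plan is to realize the open disk as a filtered union of closed disks and to make $f_!$ on it explicit. We write
\[
\mathring{\mathbf{D}}^1=\operatorname{colim}_n \mathbf{D}_n, \qquad \mathbf{D}_n=\operatorname{Spec}(K\langle \varpi^{-1/n}\text{-radius}\rangle),
\]
that is, closed disks of radii $r_n\uparrow 1$, with $r_n\in|K^\times|^{\mathbf{Q}}$ chosen so that these are rational subdomains. Since $\mathring{\mathbf{D}}^1$ is partially proper, it is taut, so $f\in E$ by Theorem \ref{thm:taut!able}. Moreover, by Theorem \ref{thm:tautuniversal!descent} and the following Remark, we have
\[
f_!\simeq \operatorname{colim}_n f_{n,*}i_n^!, \qquad f_n:\mathbf{D}_n\to *,
\]
because the $\mathbf{D}_n$ are cofinal among the quasi-compact opens.

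Suaveness will be checked through the criterion of \cite[\S4.5]{heyer_6-functor_2024}: it suffices to show that the natural map $\pi_1^*\hat\otimes\,\pi_2^!1\to\pi_2^!\pi_1^*$ is an equivalence, or equivalently to exhibit a candidate dualizing object $\omega$ with the required unit and counit for the kernel $\Delta_!1$ in the Lu--Zheng $2$-category. First we compute $f^!1$. For $M\in\operatorname{QCoh}(\mathring{\mathbf{D}}^1)$, which is a compatible system $M_n\in\operatorname{Mod}_{A_n}$ with $A_n=\mathscr{O}(\mathbf{D}_n)$, Proposition \ref{prop:partiallyproperopen} does not apply directly to $i_n$, since $\mathbf{D}_n$ is not partially proper. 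Instead we compute $i_n^!$ explicitly: the map $i_n:\mathbf{D}_n\hookrightarrow\mathbf{D}_{n+1}$ is qcqs, so $i_{n,!}=i_{n,*}$ and $i_n^!=R\underline{\operatorname{Hom}}_{A_{n+1}}(A_n,-)$. Thus
\[
\operatorname{Map}(f_!M,K)\simeq \lim_n \operatorname{Map}_{K}(M_n,K),
\]
and $f^!K$ is the system $(A_n^\vee)_n$ of continuous duals, glued along the transposes of restriction.

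The key computation is the identification $\lim$-systemwise $A_n^\vee\simeq \mathscr{O}(\mathring{\mathbf{D}}^1)\,dT[1]$, namely the classical residue pairing. We use the short exact sequence
\[
0\to K\langle T/r_n\rangle\to K\langle T/r_n\rangle\{\{\cdots\}\}\to H^1_c\to 0,
\]
or more concretely the Čech description: the local cohomology $H^1_{\mathbf{D}_n}$ is the space of Laurent tails $\sum_{k<0}a_kT^k$ that converge for $|T|>r_n$. This gives, at the level of the colimit, $f_!1\simeq \big(\varinjlim_n H^1_{\mathbf{D}_n}\big)[-1]$, which is the classical space of "germs at the boundary" appearing in van der Put duality. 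Taking residues, $\operatorname{Res}(g\,h\,dT)$ defines a perfect pairing between $\mathscr{O}(\mathring{\mathbf{D}}^1)$ and $\varinjlim H^1_{\mathbf{D}_n}$, and it is compatible with the Ind-Banach structures. Since $c_0$-type spaces are projective, this is a computation in $\operatorname{IndBan}_K$ with no higher $\operatorname{Ext}$. This yields $f^!1\simeq 1[1]$, using the trivialization $dT$ of $\Omega^1$.

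For suaveness we then verify the base-change form of the criterion. For an arbitrary affinoid $S=\operatorname{Spec}(B)$, the same computation works over $B$ because $\hat\otimes_K B$ preserves the explicit exact sequences above, as these consist of projective Banach spaces. This shows that $f^!$ commutes with $*$-pullback and that $f^!(-)\simeq f^*(-)\hat\otimes f^!1$ for all $M$, not only for $M=1$. We will reduce to $M=1$ and $M=B$ by colimit preservation: $f^!$ preserves colimits because $f_!$ preserves compact objects. The main obstacle is this last point. Because the colimit of duals is not a priori dual to a limit in $\operatorname{IndBan}$, I would handle it by exhibiting the nuclearity, that is, the compactness of the transition maps $A_{n+1}\to A_n$. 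This makes $\varinjlim A_n^\vee$ and $\varprojlim A_n$ honestly dual, in the sense that the pro-system is trace-class, and this is precisely what gives dualizability of $\Delta_!1$ as a kernel.
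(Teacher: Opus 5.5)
\textbf{The identification of $f^!1$ does not follow from what you compute.} In the $!$-descent description you set up (Theorem~\ref{thm:tautuniversal!descent}), $f^!1$ is the system $(i_n^!f^!1)_n=(A_n^\vee)_n$, and that part is fine. Your key step, however, is the residue pairing between $\mathscr{O}(\mathring{\mathbf{D}}^1)$ and the boundary germs. That pairing only identifies $R\underline{\operatorname{Hom}}_K(f_!1,K)$, i.e.\ the global sections $f_*f^!1$. This does not by itself determine the object $f^!1\in\operatorname{QCoh}(\mathring{\mathbf{D}}^1)$, and the ``systemwise'' statement $A_n^\vee\simeq\mathscr{O}(\mathring{\mathbf{D}}^1)\,dT[1]$ compares the wrong objects.

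In the same description, the $n$-th component of $1[1]$ is $i_n^!1[1]$. Factor $i_n$ through an open disk of radius in $(r_n,r_{n+1}]$ and apply Proposition~\ref{prop:partiallyproperopen} twice: this gives $i_n^!1\simeq R\underline{\operatorname{Hom}}_{A_{n+1}}(A_n,A_{n+1})$. So what you need is a morphism $1[1]\to f^!1$ (adjoint to a trace $f_!1[1]\to 1$) whose image under each $i_n^!$ is an equivalence $R\underline{\operatorname{Hom}}_{A_{n+1}}(A_n,A_{n+1})[1]\xrightarrow{\sim}A_n^\vee$. This local duality for a closed disk inside a slightly larger one is the real content of the statement. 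Your formula $f_!1\simeq(\varinjlim_nH^1_{\mathbf{D}_n})[-1]$ is essentially this duality passed to the colimit, and it is only asserted; the displayed exact sequence is a placeholder, not an argument. Once the local duality is established compatibly in $n$, the residue only serves to define the trace. No duality between $\varprojlim_nA_n$ and $\varinjlim_nA_n^\vee$ is then needed, and hence no nuclearity.

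\textbf{The reduction for suaveness fails.} You reduce $f^!(-)\simeq f^*(-)\hat\otimes f^!1$ over a base $\operatorname{Spec}(B)$ to the case of the unit, using colimit-preservation. But already for $B=K$, the category $D(\operatorname{IndBan}_K)$ is not generated under colimits by $K$. Its generators are the $c_0(S)$, and $K$ does not see bornologies: the Ind-object $\operatorname{colim}_F c_0(\mathbf{N})/F$, with $F$ running over finite-dimensional subspaces, is nonzero but receives no nonzero map from any shift of $K$. Since $c_0(S)$ is not dualizable, $f^!$ has no a priori reason to commute with $-\hat\otimes_Kc_0(S)$. So the local duality above must be proved with arbitrary coefficients, $R\underline{\operatorname{Hom}}_{A_{n+1}}(A_n,A_{n+1}\hat\otimes_K N)[1]\simeq R\underline{\operatorname{Hom}}_K(A_n,N)$, and also after base change. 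Your claim that $f_!$ preserves compact objects is likewise unproved.

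The paper avoids all of this. It factors $f=gj$ through $j:\mathring{\mathbf{D}}^1\hookrightarrow\mathbf{D}^1$ and notes that $j_!M$ is killed by $K\langle T,T^{-1}\rangle$, hence, as a $K[T]$-module, by $K[T,T^{-1}\rangle$ and $K(\!(T^{-1})\!)$. It then runs the Clausen--Scholze computation $R\underline{\operatorname{Hom}}_K(j_!M,N)\simeq R\underline{\operatorname{Hom}}_{K[T]}(j_!M,N[T][1])\simeq R\underline{\operatorname{Hom}}_{K\langle T\rangle}(j_!M,N\langle T\rangle[1])$. The inputs are $N[T,T^{-1}\rangle/N[T]=N\langle T,T^{-1}\rangle/N\langle T\rangle$, full faithfulness of $\operatorname{Mod}_{K\langle T\rangle}\mathscr{V}\to\operatorname{Mod}_{K[T]}\mathscr{V}$, and finally $j^!\simeq j^*$. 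This is natural in arbitrary $M$ and $N$, so it yields $f^!\simeq f^*[1]$ directly, with no dualization of Banach spaces at all.
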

\begin{proof}
The proof is essentially the same as \cite[Proposition 4.31]{soor_quasicoherent_2023} which was in turn based on \cite[Theorem 12.17]{CondensedComplexGeometry}. We recall that the canonical morphism $K[T] \to K \langle T \rangle$ is a homotopy monomorphism, so that the forgetful functor $\operatorname{Mod}_{K\langle T \rangle}\mathscr{V} \to \operatorname{Mod}_{K[T]} \mathscr{V}$ is fully-faithful. We define the following ind-Banach algebra:
\begin{equation}
    K[T, T^{-1}\rangle := \left\{ \sum_{n \in \mathbf{Z}}a_n T^n: a_n = 0 \text{ for }n\gg 0, a_n\to 0 \text{ as } n \to -\infty \right\}.  
\end{equation}
In other words, it is those functions on the closed annulus $\{ |T| \geqslant 1\}$ with a pole at $\infty$. There is a canonical equivalence $ K \langle T \rangle\hat{\otimes}^\mathbf{L}_{K[T]}   K [T, T^{-1}\rangle\xrightarrow[]{\sim} K \langle T, T^{-1}\rangle$. In particular, if an object $M \in \operatorname{Mod}_{K\langle T \rangle}\mathscr{V}$ dies on base-change to $K \langle T, T^{-1} \rangle$, when viewed in $\operatorname{Mod}_{K[T]}\mathscr{V}$ the object $M$ dies on base-change to $K[T, T^{-1} \rangle$.

We factor $f$ as $f = gj$ where $j: \mathring{\mathbf{D}}^1 \hookrightarrow \mathbf{D}$ is the inclusion and $g: \mathbf{D}^1 \to *$ is the structure morphism. Let $M \in \operatorname{QCoh}(\mathring{\mathbf{D}}^1)$. By base-change, $j_!M$ dies on base-change to $K\langle T, T^{-1} \rangle$ and so, when viewed as an object of $\operatorname{Mod}_{K[T]} \mathscr{V}$ it dies on base-change to $K[T, T^{-1} \rangle$. In particular it dies on base-change to $K(\!(T^{-1})\!)$. Now for $N \in \operatorname{QCoh}(*)$ one computes as in \cite[Theorem 12.17]{CondensedComplexGeometry}, using that any morphism from $M$ to a $K(\!(T^{-1})\!)$-module vanishes:
\begin{equation}
\begin{aligned}
R\underline{\operatorname{Hom}}_{\mathring{\mathbf{D}}^1}(M, f^!N) &\simeq R\underline{\operatorname{Hom}}_{K }(j_! M, N)  \\
&\simeq R\underline{\operatorname{Hom}}_{K[T]}(j_!M, R\underline{\operatorname{Hom}}_K(K[T],N)) \\ 
&\simeq R\underline{\operatorname{Hom}}_{K[T]}(j_!M, N(\!(T^{-1})\!)/N[T]) \\
&\simeq R\underline{\operatorname{Hom}}_{K[T]}(j_!M, N[T][1]).
\end{aligned}
\end{equation}
Now since $j_!M$ dies on base-change to $K[T, T^{-1} \rangle $ we have further that 
\begin{equation}
\begin{aligned}
R\underline{\operatorname{Hom}}_{K[T]}(j_!M, N[T][1]) &\simeq R\underline{\operatorname{Hom}}_{K[T]}(j_!M, N[T, T^{-1}\rangle/N[T])  \\ 
&\simeq R\underline{\operatorname{Hom}}_{K[T]}(j_!M, N\langle T, T^{-1}\rangle/N\langle T \rangle) \\
&\simeq R\underline{\operatorname{Hom}}_{K\langle T \rangle}(j_!M, N\langle T, T^{-1}\rangle/N\langle T \rangle)  \\
&\simeq R\underline{\operatorname{Hom}}_{K\langle T \rangle}(j_!M, N\langle T \rangle [1])  \\
&\simeq R\underline{\operatorname{Hom}}_{\mathring{\mathbf{D}}^1}(M, f^*N[1]).
\end{aligned}
\end{equation}
so that $f^! \xrightarrow{\sim} f^*[1]$ and $f$ is suave. Here in the second line we used that $N[T, T^{-1}\rangle/N[T] = N\langle T, T^{-1}\rangle/N\langle T \rangle$, in the third line we used fully-faithfulness of $\operatorname{Mod}_{K\langle T \rangle} \mathscr{V} \to \operatorname{Mod}_{K[T]} \mathscr{V}$, in the fourth line we used that $j_!M$ dies on base-change to $K \langle T, T^{-1} \rangle$, and in the last line we used that $j^! \xrightarrow{\sim} j^*$ (Proposition \ref{prop:partiallyproperopen}). 
\end{proof}
\begin{prop}\label{prop:ZariskiclosedSuave}
Let $f: X \to Y$ be a quasi-smooth Zariski-closed immersion of derived rigid spaces. Then $f$ is suave. 
\end{prop}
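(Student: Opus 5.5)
We reduce to a local computation with Koszul complexes. Suaveness is local in the analytic topology on $Y$ (Lemma \ref{lem:suavelocal}; a Zariski-closed immersion is representable, hence qcqs, hence taut and in $E$). So by Lemma \ref{lem:Zariskiclosedquasismooth} we may assume $Y = \operatorname{Spec}(A)$ is affinoid and $X = \operatorname{Spec}(B)$ with $B = A/^\mathbf{L}(f_1,\dots,f_n)$. Then $f$ is the base change of the zero section $0: \operatorname{Spec}(A) \to \operatorname{Spec}(A\hat{\otimes}^\mathbf{L}_K K\langle T_1,\dots,T_n\rangle)$ along the map $T_i \mapsto f_i$. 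Indeed, $B \simeq A \hat{\otimes}^\mathbf{L}_{A\langle T\rangle} A$ with $A\langle T \rangle \to A$ sending $T_i \mapsto 0$; after rescaling the $f_i$ by a power of $\varpi$ we may assume they are power-bounded, which does not change the ideal. Since suave morphisms are stable under base change \cite[Lemma 4.5.9]{heyer_6-functor_2024}, it suffices to treat the zero section of $\mathbf{D}^n_Y \to Y$. Factoring $A\langle T_1,\dots,T_n\rangle \to A$ as a composite of $n$ maps, each killing one variable, and using stability under composition, we reduce to $n=1$. By base change along $Y \to *$, it is enough to show that $s: * \to \mathbf{D}^1 = \operatorname{Spec}(K\langle T\rangle)$, $T \mapsto 0$, is suave.

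This is now a morphism of affinoids, where $s_! = s_*$ is the forgetful functor and $s^! = R\underline{\operatorname{Hom}}_{K\langle T\rangle}(K,-)$. The key point is that $K$ is a perfect $K\langle T\rangle$-module, resolved by the Koszul complex $K\langle T\rangle \xrightarrow{T} K\langle T\rangle$. Hence $s^! N \simeq K^\lor \hat{\otimes}^\mathbf{L}_{K\langle T\rangle} N$ with $K^\lor \simeq K[-1]$, so $s^! \simeq s^*(-)\,[-1]$ and in particular $s^!$ preserves colimits. For suaveness in the sense of \cite[Definition 4.5.1]{heyer_6-functor_2024}, I would use the criterion for morphisms of affine objects: $f: \operatorname{Spec}(B)\to\operatorname{Spec}(A)$ with $B$ dualizable (perfect) as an $A$-module is suave. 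Equivalently, one exhibits $s^!1$ as an invertible object together with the natural map $s^*(-)\hat{\otimes} s^!1 \to s^!$ being an equivalence, and checks the compatibility with base change required of a suave morphism. Both properties follow from the explicit Koszul formula, since the Koszul resolution is stable under arbitrary base change $A\langle T\rangle \to C$.

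The main obstacle is matching this concrete computation with Heyer--Mann's definition of suaveness, which is phrased via the $2$-category of kernels: one needs a right adjoint $1$-morphism with the appropriate unit and counit. I would first try to verify directly that $s^!1 = K[-1]$ defines a left or right adjoint of $s$ in $\operatorname{Corr}$-kernels. The unit and counit come from the Koszul self-duality $\underline{\operatorname{Hom}}_{K\langle T\rangle}(K,K\langle T\rangle)\simeq K[-1]$ and the evaluation maps. The triangle identities should reduce to the standard duality for the perfect module $K$ over $K\langle T\rangle$. If this direct check becomes messy, an alternative is to invoke the general fact that a proper morphism $f$ with $f_*$ preserving compact (dualizable) objects, and $f_*1$ dualizable, is suave. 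Here $s_*1 = K$ is perfect.
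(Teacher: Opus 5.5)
Your proposal is correct and is essentially the paper's proof: reduce via Lemma \ref{lem:suavelocal} and Lemma \ref{lem:Zariskiclosedquasismooth} to $A \to A/^\mathbf{L}(f_1,\dots,f_n)$, use the Koszul complex to see that the target is perfect (hence dualizable) as an $A$-module, and conclude by the affine criterion \cite[Lemma 4.5.10]{heyer_6-functor_2024} that you describe. Your intermediate reduction to the zero section $* \to \mathbf{D}^1$ is valid but unnecessary. It involves rescaling the $f_i$, lifting them to a map out of $A\langle T_1,\dots,T_n\rangle$, and splitting into $n=1$ steps, whereas the Koszul complex already gives dualizability of $A/^\mathbf{L}(f_1,\dots,f_n)$ over $A$ directly.
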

\begin{proof}
By Lemma \ref{lem:Zariskiclosedquasismooth} and Lemma \ref{lem:suavelocal}, we reduce to the case when $X = \operatorname{Spec}(B)$ and $Y= \operatorname{Spec}(A)$ are affinoid and $A \to B$ is of the form $A \to A/^\mathbf{L}(f_1,\dots,f_n)$ for some $f_i: K \to A$. We recall that $A/^\mathbf{L}(f_1,\dots,f_n)$ is computed via the Koszul complex and therefore is perfect and hence dualizable in $\operatorname{Mod}_A \mathscr{V}$. Therefore the claim follows from \cite[Lemma 4.5.10]{heyer_6-functor_2024}.
\end{proof}
\begin{thm}\label{thm:GD1}
Let $f: X \to Y$ be a smooth and Kiehl partially-proper morphism of derived rigid spaces. Then $f$ is suave, $f^!1_Y$ is invertible and there is a canonical equivalence $f^!1_Y \simeq (\Delta^!_f1_{X \times_Y X})^{\otimes -1}$, where $\Delta_f: X \to X \times_Y X$ is the diagonal of $f$.
\end{thm}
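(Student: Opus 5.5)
We plan to first prove suaveness of $f$, and then derive the formula for $f^!1_Y$ from the formal properties of suave morphisms in \cite{heyer_6-functor_2024}.

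\emph{Step 1: reduction to a local model.} By Lemma \ref{lem:suavelocal}, suaveness of $f$ is local in the analytic topology on $Y$. We may therefore assume $Y = \operatorname{Spec}(A)$ is affinoid. Suaveness is also local in the partially-proper analytic topology on $X$, since Kiehl partially-proper morphisms are partially proper (Theorem \ref{thm:KiehlPartiallyProper}) and hence taut. So Proposition \ref{prop:SmooothPartiallyProper} reduces us to the case of a composite
\begin{equation*}
V \to \mathring{\mathbf{D}}^{d} \times Y \to Y,
\end{equation*}
where the first map is a quasi-smooth Zariski-closed immersion and the second is the projection.

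\emph{Step 2: suaveness of the two factors.} The first map is suave by Proposition \ref{prop:ZariskiclosedSuave}. For the projection, $\mathring{\mathbf{D}}^d \to *$ is a $d$-fold product of copies of $\mathring{\mathbf{D}}^1 \to *$, each of which is suave by Proposition \ref{prop:Opendisksuave}. Suave morphisms are stable under base change and composition \cite[Lemma 4.5.9]{heyer_6-functor_2024}. Hence $\mathring{\mathbf{D}}^d\times Y \to Y$ is suave, and so is the composite. We conclude that $f$ is suave.

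\emph{Step 3: invertibility of the dualizing object.} The object $f^!1_Y$ is invertible if and only if it is so locally on $X$. Since $f^!$ restricts along partially-proper open immersions (Proposition \ref{prop:partiallyproperopen} gives $j^!\simeq j^*$), we may work in the local model of Step 1. There $f^!1 \simeq g^*p^!1 \,\hat\otimes\, g^!1$ for the factorization $f = p\circ g$. By Proposition \ref{prop:Opendisksuave} and base change, $p^!1 \simeq 1[d]$. For the Koszul regular surjection $A \to B = A/^{\mathbf L}(f_1,\dots,f_n)$, we have $g^!1 = R\underline{\operatorname{Hom}}_A(B,A) \simeq B[-n]$ by Koszul self-duality. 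Both are invertible, so $f^!1_Y$ is invertible. (Alternatively, one could invoke the general criterion in \cite{heyer_6-functor_2024} that a suave morphism with suave diagonal has invertible dualizing object.)

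\emph{Step 4: the formula for $f^!1_Y$.} The diagonal $\Delta_f$ is a Zariski-closed immersion, because $f$ is separated. It is quasi-smooth: apply Lemma \ref{lem:SmoothSection} to the projection $\mathrm{pr}_1 : X\times_Y X \to X$, which is smooth and separated, with section $\Delta_f$. Hence $\Delta_f$ is suave by Proposition \ref{prop:ZariskiclosedSuave}. Since $\mathrm{pr}_1\circ\Delta_f=\mathrm{id}$, we get
\begin{equation*}
1_X \simeq \Delta_f^!\,\mathrm{pr}_1^!1_X \simeq \Delta_f^*\mathrm{pr}_1^!1_X\,\hat\otimes\,\Delta_f^!1.
\end{equation*}
By base change for dualizing objects of suave maps, $\mathrm{pr}_1^!1_X \simeq \mathrm{pr}_2^*f^!1_Y$, so $\Delta_f^*\mathrm{pr}_1^!1_X \simeq f^!1_Y$. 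Thus $f^!1_Y \,\hat\otimes\, \Delta_f^!1 \simeq 1_X$, which gives the claimed equivalence.

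The main obstacle I expect is Step 3, specifically the need to know that $!$-pullback along the partially proper open subsets $V_i$ agrees with $*$-pullback. That fact is supplied by Proposition \ref{prop:partiallyproperopen}, applied to the inclusions $V_i \hookrightarrow X$, which are partially proper by Proposition \ref{prop:SmooothPartiallyProper}.
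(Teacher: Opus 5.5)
Your proposal is correct and follows the paper's proof. Suaveness comes from the local factorization of Proposition~\ref{prop:SmooothPartiallyProper} together with Propositions~\ref{prop:ZariskiclosedSuave} and~\ref{prop:Opendisksuave}, and the formula comes from suaveness of $\Delta_f$ (Lemma~\ref{lem:SmoothSection}); your Step 4 simply unpacks the diagonal argument behind \cite[Corollary 4.5.18]{heyer_6-functor_2024}, which the paper cites.

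Note that Step 4 already proves invertibility on its own: $f^!1_Y \hat\otimes_X \Delta_f^!1 \simeq 1_X$ exhibits an inverse. The local Koszul computation in Step 3 is therefore redundant.
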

\begin{proof}
For the first part, by Lemma \ref{lem:suavelocal}, the question is local in the partially-proper analytic topology on $X$ and the analytic topology on $Y$. Hence, by Theorem \ref{thm:KiehlPartiallyProper}, we may assume that $f$ factors as $X  \to Y \times \mathring{\mathbf{D}}^d \to Y$ where the first morphism is a quasi-smooth Zariski-closed immersion and the second is the projection. The first morphism is suave by Proposition \ref{prop:ZariskiclosedSuave}. The second morphism is suave because it is the composite of $d$ suave morphisms, by Proposition \ref{prop:Opendisksuave}. Hence, $f$ is suave. For the second part, by Lemma \ref{lem:SmoothSection} and Proposition \ref{prop:ZariskiclosedSuave}, the morphism $\Delta_f$ is suave, hence, the claim follows from \cite[Corollary 4.5.18]{heyer_6-functor_2024}, see also \cite[Remark 4.5.12]{heyer_6-functor_2024}. 
\end{proof}
\section{Duality}\label{sec:duality}
\egapar It will be necessary in this section to consider a six-functor formalism defined on the larger topos $\operatorname{Shv}_{\tau_{\mathrm{dri}}}(\operatorname{Aff})$. As in the beginning of \S\ref{sec:6FF1}, there is a ``trivial" six-functor formalism $\operatorname{QCoh}$ on $(\operatorname{Aff}, \operatorname{all})$ in which for every morphism $f: X \to Y$ in $\operatorname{Aff}$ one has $f_! = f_*$. By the extension procedure of \cite[Theorem 3.4.11]{heyer_6-functor_2024} or \cite[\S 2.3.2]{SoorThesis} this six-functor formalism extends uniquely to a six-functor formalism
\begin{equation}\label{eq:Big6FF}
    \operatorname{QCoh}: \operatorname{Corr}(\operatorname{Shv}_{\tau_\mathrm{dri}}(\operatorname{Aff}), E_\mathscr{V})^\otimes \to \operatorname{Pr}^{L, \otimes}_{\mathrm{st}, \mathscr{V}},
\end{equation}
where we have denoted by $E_\mathscr{V}$ the class of $!$-able morphisms in this six-functor formalism. The class $E_\mathscr{V}$ is again \emph{$!$-local on the source and target}, \emph{$*$-local on the target}, \emph{stable under disjoint unions} and \emph{tame} in the sense of \cite[Definition 2.3.9]{SoorThesis} or \cite[Theorem 3.4.11]{heyer_6-functor_2024}. The class $E_\mathscr{V}$ contains all representable morphisms and for such $f$ there is an equivalence $f_! \simeq f_*$. 
\begin{prop}
The functor $i_!: \operatorname{Shv}_{\tau_{\mathrm{rat}}}(\operatorname{Afnd}) \to \operatorname{Shv}_{\tau_{\mathrm{dri}}}(\operatorname{Aff})$ induces a morphism of geometric setups\footnote{See \cite[Definition 2.1.1]{heyer_6-functor_2024}.} $(\operatorname{Shv}_{\tau_{\mathrm{rat}}}(\operatorname{Afnd}), \operatorname{rep}) \to (\operatorname{Shv}_{\tau_{\mathrm{dri}}}(\operatorname{Aff}), \operatorname{rep})$. The restriction of the six-functor formalism of \eqref{eq:Big6FF} to $(\operatorname{Shv}_{\tau_{\mathrm{rat}}}(\operatorname{Afnd}), \operatorname{rep})$ is equivalent to the restriction of the six-functor formalism of \eqref{eq:6ff1} to $(\operatorname{Shv}_{\tau_{\mathrm{rat}}}(\operatorname{Afnd}), \operatorname{rep})$. 
\end{prop}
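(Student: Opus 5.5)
The statement has two parts, and I will treat them in turn. The first is that $i_!$ induces a morphism of geometric setups $(\operatorname{Shv}_{\tau_{\mathrm{rat}}}(\operatorname{Afnd}), \operatorname{rep}) \to (\operatorname{Shv}_{\tau_{\mathrm{dri}}}(\operatorname{Aff}), \operatorname{rep})$. The second is that the two six-functor formalisms agree after restriction along it.

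For the first part, three things have to be checked.
\begin{enumerate}[(i)]
\item $i_!$ preserves finite limits. This is Proposition \ref{prop:analyticinclusion}.
\item $i_!$ sends representable morphisms to representable morphisms. This is the Corollary following Proposition \ref{prop:NCrepresentable}.
\item Pullback squares along representable morphisms are carried to pullback squares. This follows from left exactness.
\end{enumerate}
Together these give exactly the data of a morphism of geometric setups in the sense of \cite[Definition 2.1.1]{heyer_6-functor_2024}.

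For the second part, I would pull back \eqref{eq:Big6FF} along the induced functor
\[
\operatorname{Corr}(\operatorname{Shv}_{\tau_{\mathrm{rat}}}(\operatorname{Afnd}),\operatorname{rep})^\otimes \to \operatorname{Corr}(\operatorname{Shv}_{\tau_{\mathrm{dri}}}(\operatorname{Aff}),E_\mathscr{V})^\otimes,
\]
and compare the result with the restriction of \eqref{eq:6ff1}. I would not try to compare the formalisms directly. Instead I would use the uniqueness part of the extension theorem \cite[Theorem 3.4.11]{heyer_6-functor_2024}, or more simply its underlying mechanism: for a geometric setup in which the $!$-able class is the representable morphisms, a six-functor formalism is determined by three pieces of data.
\begin{enumerate}[(a)]
\item Its $*$-part, a functor $\operatorname{QCoh}^*$ to $\operatorname{Pr}^{L,\otimes}$.
\item The requirement that $f_! \simeq f_*$ for every representable $f$.
\item The base-change and projection-formula equivalences.
\end{enumerate}
For (c), Lemma \ref{lem:representable} shows these are automatic for representable morphisms, and the same holds on the analytic side, since both properties are local on the target.

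The first concrete step is to identify the $*$-parts. Restricting $\operatorname{QCoh}^*$ from \eqref{eq:Big6FF} along $i_!$ gives $\operatorname{QCoh}^\mathscr{V}\circ i_!$. The equivalence $\Psi$ of \eqref{eq:Psiequivalence}, which comes from Lemma \ref{lem:rightKanextensionLemma}, identifies this symmetric monoidally with the $*$-part of \eqref{eq:6ff1}, because the latter is right Kan extended from $\operatorname{Afnd}^\mathrm{op}$.

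The second step is to pass from $*$-parts to the full formalism. Both restricted formalisms are formalisms on $(\operatorname{Shv}_{\tau_{\mathrm{rat}}}(\operatorname{Afnd}),\operatorname{rep})$ in which $f_! \simeq f_*$ for representable $f$. They are also both obtained by the Liu--Zheng/Mann-type construction that extends a $*$-functor with right adjoints satisfying base change and the projection formula. I would apply the uniqueness statement for such formalisms, namely that the space of formalisms with $!=*$ on a given class is equivalent to the space of $*$-functors satisfying the adjointability conditions. With it, the equivalence $\Psi$ of $*$-parts upgrades to an equivalence of the six-functor formalisms.

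I expect the main obstacle to be this upgrading step. I need a precise uniqueness result in the literature, likely \cite[Proposition 3.3.3]{heyer_6-functor_2024} or the construction in \cite[\S 2.3]{SoorThesis}, to the effect that a six-functor formalism on $(\mathscr{C}, E)$ with $f_! = f_*$ for all $f\in E$ is determined by its $*$-part. I also need to check that $\Psi$ is compatible with the $*$-pushforwards along representable maps. For that I would show that $\Psi$ intertwines $f^*$, hence by passing to right adjoints also $f_*$. Then I would verify that the Beck--Chevalley maps match, which can be done locally on affinoids, where both sides are literally the module-category formalism on $\operatorname{Afnd}$.
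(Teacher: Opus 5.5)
Your proposal is correct and follows the paper's argument. The first part comes from left exactness of $i_!$ (Proposition \ref{prop:analyticinclusion}) together with preservation of representability (Proposition \ref{prop:NCrepresentable} and its corollary); the second comes from the fact that on representable morphisms $f_! = f_*$ is right adjoint to $f^*$, so both restricted formalisms are determined by their $*$-parts, which $\Psi$ of \eqref{eq:Psiequivalence} identifies. The uniqueness step you flag as the main obstacle is the same one the paper treats as immediate, so your extra checks (that $\Psi$ intertwines the pullbacks, and hence the pushforwards) only make explicit what the paper leaves implicit.
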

\begin{proof}
The first part follows from Proposition \ref{prop:NCrepresentable}. The second part is then clear since in both cases, for any representable morphism $f$ one has $f_! = f_*$ is right adjoint to $f^*$, hence, the restriction to $(\operatorname{Shv}_{\tau_{\mathrm{rat}}}(\operatorname{Afnd}), \operatorname{rep})$ is determined by the underlying functor $\operatorname{QCoh}^*: \operatorname{Shv}_{\tau_{\mathrm{rat}}}(\operatorname{Afnd}) \to \operatorname{Cat}_\infty$, which agrees in both cases by the discussion in Paragraph \ref{par:PsiEquivalencePar}. 
\end{proof}
\egapar For the next Lemma it is helpful to recall the notion of a \emph{prim} morphism from \cite[\S 4.5]{heyer_6-functor_2024}. We say that a morphism $f: X \to Y$ in $\operatorname{Shv}_{\tau_{\mathrm{dri}}}(\operatorname{Aff})$ is \emph{prim} if it belongs to the class $E_\mathscr{V}$ of $!$-able morphisms of \eqref{eq:Big6FF} and it is prim in the sense of \cite[Definition 4.5.1]{heyer_6-functor_2024}. The class of prim morphisms is stable under base-change and composition. For the purposes of this article, the most important feature of a prim morphism $f$ is that the functor $f_*$ is colimit-preserving, satisfies the projection formula and is compatible with base change. The first two assertions here follow from \cite[Corollary 4.5.11]{heyer_6-functor_2024} and the latter is \cite[Lemma 4.5.13(ii)]{heyer_6-functor_2024}. 
\begin{lem}\label{lem:A1Gm}
We consider the structure morphism $\pi: a_! \mathbf{A}^{1, \mathrm{alg}}/a_!\mathbf{G}_m^\mathrm{alg} \to *$.
In the six-functor formalism of \eqref{eq:Big6FF}:
\begin{enumerate}[(i)]
    \item The morphism $\pi$ belongs to the class $E_\mathscr{V}$ of $!$-able morphisms;
    \item The morphism $\pi$ is prim.
    \item The unit morphism $\operatorname{id} \to \pi_*\pi^*$ is an equivalence, so that $\pi^*$ is fully-faithful. 
\end{enumerate}
The same is true for any base-change of $\pi$. 
\end{lem}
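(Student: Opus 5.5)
We present $\mathcal{X}:=a_!\mathbf{A}^{1,\mathrm{alg}}/a_!\mathbf{G}_m^{\mathrm{alg}}$ as the geometric realization of its Čech nerve along the atlas $q:a_!\mathbf{A}^{1,\mathrm{alg}}\to\mathcal{X}$, and reduce all three assertions to statements about affine objects of $\operatorname{Aff}$ and the standard algebra of graded modules. Since $a_!$ is left exact and preserves representables (Corollary~\ref{cor:lowershriekrepresentable}), the Čech nerve of $q$ is $[n]\mapsto a_!(\mathbf{A}^1\times\mathbf{G}_m^{n})$. Every term is representable in $\operatorname{Aff}$ by $\operatorname{Spec}(K[T]\otimes K[S_1^{\pm1},\dots,S_n^{\pm1}])$ with the fine bornology, and all face maps are representable. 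In particular $q$ is a representable morphism by Proposition~\ref{prop:NCrepresentable}, being a $\mathbf{G}_m$-torsor, which is representable after base change to the atlas.

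\emph{(i).} The class $E_\mathscr{V}$ contains all representable morphisms and is $!$-local on the source. It therefore suffices to show that $q$ is a universal $!$-cover and that $\pi\circ q$, which is the representable map $\operatorname{Spec}K[T]\to *$, lies in $E_\mathscr{V}$. Since $q$ is representable, it is enough to know that it is $!$-able and that $\operatorname{QCoh}^!$ satisfies descent along its Čech nerve. We plan to show that $q$ is prim, indeed cohomologically smooth of relative dimension $1$. The fiber $\mathbf{G}_m\to *$ has $f_!=f_*$ with $f_*$ being the forgetful functor along $K\to K[S^{\pm1}]$, and it admits the explicit dualizing object $\Omega^1$, by the standard algebraic computation in $\operatorname{Mod}_{K[S^{\pm 1}]}\mathscr{V}$. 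For suave covers, $!$-descent follows from $*$-descent via \cite[Lemma 4.5.7]{heyer_6-functor_2024}. This settles (i).

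\emph{(ii) and (iii).} By \cite[Definition 4.5.1]{heyer_6-functor_2024}, primness of $\pi$ can be checked after pulling back along the universal $!$-cover $q$, using locality of primness on the source for suave covers \cite[Lemma 4.5.8]{heyer_6-functor_2024}. The more robust route, however, is the concrete description
\[
\operatorname{QCoh}(\mathcal{X})\simeq\lim_{\Delta}\operatorname{Mod}_{K[T]\otimes K[\mathbf{G}_m^{\bullet}]}\mathscr{V}\simeq \operatorname{Mod}_{K[T]}\bigl(\operatorname{Gr}_{\mathbf{Z}}\mathscr{V}\bigr),
\]
the category of graded $K[T]$-modules in $\mathscr{V}$, with $T$ in degree $1$. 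Here descent along the coaction of $K[S^{\pm1}]$ identifies comodules with $\mathbf{Z}$-graded objects, which is formal in any stable presentable $\mathscr{V}$-linear setting. Under this description:
\begin{itemize}
\item $\pi^*$ sends $N$ to $N[T]$;
\item $\pi_*$ is taking the degree-$0$ part, which is colimit-preserving;
\item the unit $N\to (N[T])_0=N$ is an equivalence, which gives (iii).
\end{itemize}
For primness we exhibit the codualizing object explicitly. The functor $\pi_!$ should be $M\mapsto \operatorname{colim}_k M_{-k}$ along multiplication by $T$, and $\pi_*\simeq\pi_!(-\otimes\mathcal{P})$ with $\mathcal{P}=K[T]\langle\text{twist}\rangle$ (compare the Lemma 4.5.10/prim criterion of \cite{heyer_6-functor_2024} with $\pi_*=\operatorname{Hom}(1,-)$ and $1$ compact). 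Alternatively, one checks that $\pi_*$ commutes with colimits, satisfies the projection formula, and is compatible with base change, and then deduces primness from the characterization via the $\pi_*\dashv$ kernel.

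\emph{Base change and the main obstacle.} All constructions above are compatible with base change along any $Y\to *$, since the Čech description pulls back. Replacing $\mathscr{V}$ by $\operatorname{QCoh}(Y)$ then gives the statement for any base change of $\pi$. The main obstacle is the identification of the dualizing/codualizing data needed to verify primness honestly within Heyer–Mann's definition, as opposed to merely the projection formula. We would first try to show that $\Delta_\pi$ is prim and that $\pi$ is prim with codualizing object $1$ twisted by degree, checking the required counit identities after pullback along $q$ to the affine $\mathbf{A}^1$, where they reduce to graded-module algebra.
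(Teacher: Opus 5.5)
\paragraph{Verdict.} Part (iii) is fine, but (i) and (ii) have a genuine gap: the key claim that $q$ is suave is false in this formalism.

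\paragraph{Part (iii).} Identifying $\operatorname{QCoh}(a_!\mathbf{A}^{1,\mathrm{alg}}/a_!\mathbf{G}_m^{\mathrm{alg}})$ with graded $K[T]$-modules needs only $*$-descent along $q$. This is automatic, since $q$ is an effective epimorphism and $\operatorname{QCoh}^*$ is right Kan extended. The computation $\pi_*\pi^*N=(N[T])_0\simeq N$ is then the paper's statement that the derived $\mathbf{G}_m$-invariants of $K[T]$ are $K$, and you do not even need the projection formula.

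\paragraph{Why (i) and (ii) fail as written.} You claim that $q$, i.e.\ $a_!\mathbf{G}_m^{\mathrm{alg}}\to *$, is suave with dualizing object $\Omega^1[1]$. This is false in the formalism \eqref{eq:Big6FF}.
\begin{itemize}
\item There $f_!=f_*$ is the forgetful functor, so $f^!=R\underline{\operatorname{Hom}}_K(K[S^{\pm1}],-)\simeq\prod_{\mathbf{Z}}(-)$, because $K[S^{\pm1}]\simeq\bigoplus_{\mathbf{Z}}K$ with the fine bornology.
\item $\prod_{\mathbf{Z}}(-)$ does not commute with countable direct sums, whereas $f^!1\hat{\otimes}f^*(-)$ does. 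So $f^!\not\simeq f^!1\hat{\otimes}f^*$, and $f$ is not suave.
\item The $\Omega^1$-formula belongs to a compactly supported $f_!$, not to $f_!=f_*$. Maps such as $a_!\mathbf{G}_m^{\mathrm{alg}}\to *$ are prim here, but not suave.
\end{itemize}
So the ``suave cover'' justification of $!$-descent along $q$ collapses, and with it (i) and your first route to (ii).

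Your second route does not close the gap either. The guess $\pi_!M=\operatorname{colim}_k M_{-k}$ along $T$ kills the $T$-torsion module $K[T]/T$. In fact $\pi_!\simeq\pi_*$ is the degree-$0$ part, so $\pi_!(K[T]/T)\simeq K$. Also, colimit-preservation, the projection formula and base change for $\pi_*$ are consequences of primness, not a criterion for it.

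\paragraph{The missing idea: descendability.} Your graded picture already contains it. Take $p:*\to */a_!\mathbf{G}_m^{\mathrm{alg}}$.
\begin{itemize}
\item The object $p_*1=K[S^{\pm1}]=\bigoplus_n 1(n)$ contains the unit as a direct summand, via the comodule retraction $S^n\mapsto\delta_{n0}S^n$. So $p_*1$ is descendable.
\item By \cite[Lemma 4.7.4]{heyer_6-functor_2024}, $p$ is then a universal $*$- and $!$-cover, and hence so is its base change $q$.
\item Factor $\pi=\pi_2\pi_1$ through $*/a_!\mathbf{G}_m^{\mathrm{alg}}$. The map $\pi_1$ pulls back along $p$ to the affine map $a_!\mathbf{A}^{1,\mathrm{alg}}\to *$, so it is $!$-able and prim by locality on the target \cite[Lemma 4.5.7]{heyer_6-functor_2024}.
\item Since $\pi_2p=\operatorname{id}$, $\pi_2$ is $!$-able by $!$-locality on the source. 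It is prim by \cite[Corollary 4.7.5(ii)]{heyer_6-functor_2024}, because $p$ is prim.
\end{itemize}
This is the paper's proof of (i) and (ii). The statement for base changes then follows because $E_\mathscr{V}$ and prim morphisms are stable under base change, and prim $\pi_*$ is compatible with base change.
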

\begin{proof}
(i): First we consider the canonical morphism $p: * \to */a_! \mathbf{G}_{m}^\mathrm{alg}$. We claim that this is descendable. The adjunction $p^* \dashv p_*$ is comonadic (this is essentially equivalent to the fact that $\operatorname{QCoh}^*$ preserves limits), and identifies $\operatorname{QCoh}(*/a_! \mathbf{G}_{m}^\mathrm{alg})$ with the category of comodule objects for the Hopf algebra $K[T, T^{-1}]$. In particular $p_*1$ corresponds to the cofree comodule $K[T, T^{-1}]$. The comodule morphism $K \to K[T, T^{-1}]$ admits a retraction given by $T^{n} \mapsto \delta_{n0} T^n$. Hence, $p_*1$ is descendable. By \cite[Lemma 4.7.4]{heyer_6-functor_2024} this implies that $p$ is a universal $*$- and $!$-cover. Now we factor the map $\pi$ as $a_! \mathbf{A}^{1, \mathrm{alg}}/a_!\mathbf{G}_m^\mathrm{alg} \xrightarrow[]{\pi_1} */a_!\mathbf{G}_m^\mathrm{alg} \xrightarrow[]{\pi_2} *$. The pullback of $\pi_1$ along $p$ is the structure morphism $a_! \mathbf{A}^{1, \mathrm{alg}} \to *$. Hence, since the class $E_\mathscr{V}$ is \emph{$!$-local on the target}, the map $\pi_1$ belongs to $E_\mathscr{V}$. Since the class $E_\mathscr{V}$ is \emph{$!$-local on the source}, the morphism $\pi_2$ belongs to $E_\mathscr{V}$. Hence $\pi$ belongs to $E_\mathscr{V}$. (ii): By \cite[Lemma 4.5.7]{heyer_6-functor_2024} and the preceding discussion, the morphisms $\pi_1$ and $p$ are prim. The latter combined with \cite[Corollary 4.7.5(ii)]{heyer_6-functor_2024} implies that $\pi_2$ is prim. Hence $\pi$ is prim.  (iii):  By (ii) and the projection formula, it is sufficient to show that the canonical morphism $K \to \pi_*K[T]$ is an equivalence. The object $\pi_*K[T]$ is computed as the derived $a_!\mathbf{G}_m^\mathrm{alg}$-invariants of $K[T]$, which is indeed equivalent to $K$.
\end{proof}
\begin{defn}
Let $s: X \to Y$ be a morphism in $\operatorname{Shv}_{\tau_{\mathrm{dri}}}(\operatorname{Aff})$. 
\begin{enumerate}[(i)]
    \item \begin{enumerate}
        \item The \emph{equivariant deformation to the normal cone of $s$}, denoted $\mathscr{D}_{X/Y}^\mathscr{V}$, is defined the be the Weil restriction of $s \times \operatorname{id}: X \times */a_!\mathbf{G}_m^\mathrm{alg} \to Y \times */a_!\mathbf{G}_m^\mathrm{alg}$ along $0_Y: Y \times */\mathbf{G}_m^\mathrm{alg}\to Y \times a_!\mathbf{A}^{1, \mathrm{alg}}/a_!\mathbf{G}_m^\mathrm{alg}$.
        \item The \emph{deformation to the normal cone of $s$}, denoted $D_{X/Y}^\mathscr{V}$, is defined to be the pullback of $\mathscr{D}_{X/Y}^\mathscr{V}$ along $Y \times a_!\mathbf{A}^{1, \mathrm{alg}} \to Y \times a_!\mathbf{A}^{1,\mathrm{alg}}/a_!\mathbf{G}_m^\mathrm{alg}$. Equivalently, $D_{X/Y}^\mathscr{V}$ is the Weil restriction of $s$ along $0_Y: Y \to Y \times a_!\mathbf{A}^{1,\mathrm{alg}}$.
    \end{enumerate} 
    \item \begin{enumerate}
        \item The \emph{ equivariant normal bundle of $s$}, denoted $\mathscr{N}_{X/Y}^\mathscr{V}$, is defined to be the pullback of $\mathscr{D}_{X/Y}^\mathscr{V}$ along $0_Y: Y \times */a_!\mathbf{G}_m^\mathrm{alg}\to Y \times a_!\mathbf{A}^{1, \mathrm{alg}}/a_!\mathbf{G}_m^\mathrm{alg}$.  
        \item The \emph{normal bundle of $s$}, denoted $N_{X/Y}^\mathscr{V}$, is defined to be the pullback of $\mathscr{N}_{X/Y}^\mathscr{V}$ along $Y \to Y \times */a_!\mathbf{G}_m^\mathrm{alg}$.  
    \end{enumerate}
\end{enumerate}
\end{defn}
\begin{lem}\label{lem:deformationspace1}\cite[Theorem 4.15]{BenBassatHekkingBlowUp}
Let $s: X \to Y$ be a morphism in $\operatorname{Shv}_{\tau_\mathrm{dri}}(\operatorname{Aff})$. Then there is a $a_!\mathbf{G}_m^\mathrm{alg}$-equivariant diagram in $\operatorname{Shv}_{\tau_\mathrm{dri}}(\operatorname{Aff})$ in which all squares are Cartesian: 
\begin{equation}\label{eq:deformationdiagram1}
\begin{tikzcd}
	X & {X \times a_!\mathbf{A}^{1,\mathrm{alg}}} & {X \times a_!\mathbf{G}^{\mathrm{alg}}_m} \\
	{N_{X/Y}^\mathscr{V}} & {D_{X/Y}^\mathscr{V}} & {Y \times a_!\mathbf{G}^{\mathrm{alg}}_m} \\
	Y & {Y \times a_!\mathbf{A}^{1, \mathrm{alg}}} & {Y \times a_!\mathbf{G}^{\mathrm{alg}}_m}
	\arrow["{0_X}", from=1-1, to=1-2]
	\arrow["{s_0}"', from=1-1, to=2-1]
	\arrow["{\tilde{s}}"', from=1-2, to=2-2]
	\arrow[from=1-3, to=1-2]
	\arrow["{s \times \mathrm{id}}", from=1-3, to=2-3]
	\arrow[from=2-1, to=2-2]
	\arrow["v", from=2-1, to=3-1]
	\arrow["u"', from=2-2, to=3-2]
	\arrow[from=2-3, to=2-2]
	\arrow["{\operatorname{id}}", from=2-3, to=3-3]
	\arrow["{0_Y}", from=3-1, to=3-2]
	\arrow[from=3-3, to=3-2]
\end{tikzcd}
\end{equation}
\end{lem}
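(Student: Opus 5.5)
The plan is to build the whole diagram by formal properties of Weil restriction (Lemmas \ref{lem:weilresdef}, \ref{lem:weilresbasechange}) in the topos $\mathscr{X} := \operatorname{Shv}_{\tau_\mathrm{dri}}(\operatorname{Aff})$. Equivariance comes for free if we work over the quotient stack $Y \times a_!\mathbf{A}^{1,\mathrm{alg}}/a_!\mathbf{G}_m^\mathrm{alg}$ with $\mathscr{D}^\mathscr{V}_{X/Y}$ and then pull back along the $a_!\mathbf{G}_m^\mathrm{alg}$-torsor $Y \times a_!\mathbf{A}^{1,\mathrm{alg}} \to Y\times a_!\mathbf{A}^{1,\mathrm{alg}}/a_!\mathbf{G}_m^\mathrm{alg}$. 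Write $\iota := 0_Y$ and note $D^\mathscr{V}_{X/Y} = \iota_\flat(X)$ with $X$ regarded in $\mathscr{X}_{/Y}$ via $s$. The map $u: D^\mathscr{V}_{X/Y} \to Y\times a_!\mathbf{A}^{1,\mathrm{alg}}$ is the structure map, and $\tilde s$ is adjoint, under $\iota^\sharp \dashv \iota_\flat$, to the morphism $\iota^\sharp(X \times a_!\mathbf{A}^{1,\mathrm{alg}}) = X \to X$ over $Y$ given by the identity.

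\textbf{Right column.} Let $j: Y\times a_!\mathbf{G}_m^\mathrm{alg} \hookrightarrow Y\times a_!\mathbf{A}^{1,\mathrm{alg}}$. By Lemma \ref{lem:weilresbasechange}, $j^\sharp\iota_\flat \simeq \iota'_\flat j'^\sharp$, where $\iota'$ is the base change of $\iota$ along $j$. Since $0$ and $\mathbf{G}_m$ are disjoint (this is checked on $a$ using left exactness of $a_!$, Lemma \ref{lem:alginclusioncontinuity}), the source of $\iota'$ is the initial object $\emptyset$. Then $\mathscr{X}_{/\emptyset}\simeq *$, so $\iota'_\flat$ sends everything to the terminal object. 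Hence the top-right and middle-right squares are Cartesian, with $D^\mathscr{V}_{X/Y}\times_{Y\times\mathbf{A}^1} (Y\times\mathbf{G}_m)\simeq Y\times a_!\mathbf{G}_m^\mathrm{alg}$. For the top-right square we pull $\tilde s$ back along $j$ and identify the result with $s \times \operatorname{id}$; the same base-change argument applies, since $X\times\mathbf{A}^1 = \iota_\flat$ of the terminal object over $X$ restricted appropriately.

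\textbf{Left column.} By definition $N^\mathscr{V}_{X/Y} = \iota^\sharp\iota_\flat X$, so the lower-left square is Cartesian. The map $s_0$ is $\iota^\sharp(\tilde s)$ composed with $X\simeq\iota^\sharp(X\times\mathbf{A}^1)$, so the upper-left square is Cartesian provided $0_X$ is the base change of $\tilde s$ along the inclusion $N \to D$. This is again formal, since the composite of the two left squares is the pullback of $X\times \mathbf{A}^1\to Y\times \mathbf{A}^1$ along $0_Y$. The remaining point is a pasting argument: the outer rectangle of each column is Cartesian, which is clear by construction, and the lower square is Cartesian, so the upper square is Cartesian by the pasting lemma.

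\textbf{Expected obstacle.} I expect the main difficulty to be equivariance, that is, making the $a_!\mathbf{G}_m^\mathrm{alg}$-actions on all the corners compatible. My approach is to carry out every construction above over $B := */a_!\mathbf{G}_m^\mathrm{alg}$, using the quotient versions $\mathscr{D},\mathscr{N}$, and then pull back along the effective epimorphism $*\to B$. Lemma \ref{lem:weilresbasechange} guarantees that Weil restriction commutes with this pullback, and Cartesianness of squares descends because pullback along an effective epimorphism is conservative on Cartesian squares. A secondary check is that $a_!$ preserves the disjointness $0\times_{\mathbf{A}^1}\mathbf{G}_m=\emptyset$, and this follows from left exactness of $a_!$.
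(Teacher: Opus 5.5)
Your proposal is correct and follows the paper's proof. The only non-formal input is the bottom-right square, which the paper also obtains from three facts: Lemma \ref{lem:weilresbasechange}, the vanishing $Y\times_{Y\times a_!\mathbf{A}^{1,\mathrm{alg}}}(Y\times a_!\mathbf{G}_m^{\mathrm{alg}})\simeq\emptyset$ via exactness of $a_!$, and the fact that Weil restriction along $\emptyset\to Y\times a_!\mathbf{G}_m^{\mathrm{alg}}$ returns the terminal object; the other squares, and the equivariance the paper leaves implicit, then follow by pasting as you describe. Your aside that $X\times a_!\mathbf{A}^{1,\mathrm{alg}}$ is a Weil restriction along $\iota=0_Y$ is wrong as stated, since any $\iota_\flat(Z)$ restricts to the terminal object over $Y\times a_!\mathbf{G}_m^{\mathrm{alg}}$ whereas $X\times a_!\mathbf{A}^{1,\mathrm{alg}}$ restricts to $X\times a_!\mathbf{G}_m^{\mathrm{alg}}$. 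It is also unnecessary, because your pasting argument already handles the top-right square.
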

\begin{proof}
We only need to show that the bottom right square is Cartesian. The pullback of $0_Y: Y \to Y \times a_!\mathbf{A}^{1, \mathrm{alg}}$ along $Y \times a_!\mathbf{G}_m^\mathrm{alg} \to Y \times a_!\mathbf{A}^{1, \mathrm{alg}}$ is $\emptyset$; this follows from the fact that $a_!$ is limit-preserving. Hence, by the compatibility of Weil restrictions with pullback (Lemma \ref{lem:weilresbasechange}), the pullback of $D_{X/Y}^\mathscr{V}$ to $Y \times a_!\mathbf{G}_m^\mathrm{alg}$ is the Weil restriction of the empty morphism $[\emptyset \to \emptyset]$ along $\emptyset \to Y \times a_! \mathbf{G}_m^\mathrm{alg}$. But Weil restriction preserves the final object (it is a right adjoint), so this is nothing but $Y \times a_! \mathbf{G}_m^\mathrm{alg}$. 
\end{proof}
\begin{rmk}\label{rmk:Normalbundle}
\begin{enumerate}[(i)]
\item By definition, one has $D_{X/Y}^\mathscr{V} \simeq 0_{Y, \flat}X$. The morphism $\tilde{s}: X \times a_!\mathbf{A}^{1, \mathrm{alg}} \to D_{X/Y}^\mathscr{V}$ is deduced by adjunction from $\operatorname{id}: X = 0_Y^\sharp(X \times a_!\mathbf{A}^{1, \mathrm{alg}}) \to X$. In particular $\tilde{s}$ is a morphism over $Y \times a_!\mathbf{A}^{1, \mathrm{alg}}$ so that one has $s \times \operatorname{id} \simeq u \tilde{s}$.
\item We note also that $N_{X/Y}^\mathscr{V} = 0_Y^\sharp 0_{Y, \flat} X$. Hence the counit $0_Y^\sharp 0_{Y, \flat} \to \operatorname{id}$ yields a canonical morphism $p: N_{X/Y}^\mathscr{V} \to X$ over $Y$, in particular one has $v \simeq sp$. 
\end{enumerate}
\end{rmk}
\begin{thm}\label{thm:benbassatextendedRees}
\begin{enumerate}[(i)]
\item If $A \to B$ is a morphism in $\operatorname{DAlg}^{\leqslant 0}(\mathscr{V})$, corresponding to a morphism $X = \operatorname{Spec}(B) \to \operatorname{Spec}(A) =Y$ in $\operatorname{Aff}$, then $D_{X/Y}^\mathscr{V}$ is representable: one has $D_{X/Y}^\mathscr{V} \simeq \operatorname{Spec}(R_{B/A})$ where $R_{B/A} \in \operatorname{DAlg}(\mathscr{V})$ is the extended Rees algebra of \cite[Definition 4.9]{BenBassatHekkingBlowUp}.
\item With notations as in (i). If $A \to B$ is of the form $A \to A/^\mathbf{L}(f_1, \dots, f_n)$ for some $f_i: K \to A$, then there is an equivalence $R_{B/A} \simeq A[T, S_1, \dots, S_n]/^\mathbf{L}(TS_1 - f_1, \dots, TS_n -f_n)$. 
\end{enumerate}
\end{thm}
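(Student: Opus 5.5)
The plan is to identify the functor of points of $D_{X/Y}^\mathscr{V} = 0_{Y,\flat}X$ over $Y \times a_!\mathbf{A}^{1,\mathrm{alg}}$ and then show it is corepresented by $R_{B/A}$. By Remark \ref{rmk:Normalbundle}(i) and the adjunction $0_Y^\sharp \dashv 0_{Y,\flat}$, for an affine $T = \operatorname{Spec}(C)$ with a map $T \to Y \times a_!\mathbf{A}^{1,\mathrm{alg}}$ we have
\[
\operatorname{Map}_{/Y\times a_!\mathbf{A}^{1,\mathrm{alg}}}(T, D_{X/Y}^\mathscr{V}) \simeq \operatorname{Map}_{/Y}(T \times_{a_!\mathbf{A}^{1,\mathrm{alg}}} *, X).
\]
Such a $T$ corresponds to $C \in \operatorname{DAlg}_{A[t]}\mathscr{V}$, where $A[t] := A \hat{\otimes}^\mathbf{L}_K \alpha(K[t])$; since $a_!$ is left exact and preserves representables, the fiber $T\times_{a_!\mathbf{A}^{1,\mathrm{alg}}}*$ is $\operatorname{Spec}(C/^\mathbf{L}t)$. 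So the right-hand side is $\operatorname{Map}_{\operatorname{DAlg}_A\mathscr{V}}(B, C/^\mathbf{L}t)$. The goal is then an equivalence, natural in $C$,
\[
\operatorname{Map}_{\operatorname{DAlg}_A\mathscr{V}}(B, C/^\mathbf{L}t) \simeq \operatorname{Map}_{\operatorname{DAlg}_{A[t]}\mathscr{V}}(R_{B/A}, C),
\]
i.e. $R_{B/A}$ is the left adjoint of $C \mapsto C/^\mathbf{L}t$ evaluated on $B$. This is the universal property that \cite[Theorem 4.15]{BenBassatHekkingBlowUp} proves in an abstract derived algebraic context. I would simply check that its hypotheses hold for $(\mathscr{V},\mathscr{V}^{\leqslant 0},\mathscr{V}^{\geqslant 0},\mathscr{V}^0)$ of Example \ref{example:IndBancontext}. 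Their construction uses only the monad $\operatorname{LSym}$, filtered/graded objects, and the connectivity of $A \to B$ (so that one may resolve $B$ by free $A$-algebras). Then the formula transfers verbatim.

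This argument first gives representability when $T$ is affine. For a general $T$ in $\operatorname{Shv}_{\tau_\mathrm{dri}}(\operatorname{Aff})$ one needs $0_{Y,\flat}X$ to be the sheaf $\operatorname{Spec}(R_{B/A})$. Both sides are sheaves: the left by Lemma \ref{lem:weilresdef}, and the right because $\tau_{\mathrm{dri}}$ is subcanonical by Lemma \ref{lem:NCdescent}. It therefore suffices to compare them on $\operatorname{Aff}_{/Y\times a_!\mathbf{A}^{1,\mathrm{alg}}}$, which is exactly the computation above.

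For (ii), the reduction goes through the universal case, using base change. Write $A \to A/^\mathbf{L}(f_1,\dots,f_n)$ as the pushout of $K[x_1,\dots,x_n] \to K$ (sending $x_i \mapsto 0$) along $x_i \mapsto f_i$. Then use that $R$ is compatible with base change in $A$. This holds by Lemma \ref{lem:weilresbasechange}, since Weil restriction commutes with pullback along $Y' \to Y$, and the square defining the deformation space base-changes accordingly. We thereby reduce to $A_0 = \alpha(K[x_1,\dots,x_n])$, $B_0 = K$. There, $R_{B_0/A_0}$ is the analytification-free algebraic statement $K[t, s_1,\dots,s_n]$ with $x_i = ts_i$. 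This follows either from the algebraic theory via the fully faithful colimit-preserving functor $\alpha$ (which commutes with the Rees construction, being symmetric monoidal and compatible with $\operatorname{LSym}$), or by checking the universal property directly. For the direct check: a map $K[t,s]\to C$ over $K[x,t]$ is the datum of elements $s_i$ with $ts_i \simeq x_i$. A map $K\to C/^\mathbf{L}t$ over $K[x]$ is a nullhomotopy of each $x_i$ in $C/^\mathbf{L}t$. These match via the fiber sequence $C \xrightarrow{t} C \to C/^\mathbf{L}t$. Base-changing back gives $A[T,S_1,\dots,S_n]/^\mathbf{L}(TS_i - f_i)$.

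The main obstacle I expect is the transfer of \cite[Theorem 4.15]{BenBassatHekkingBlowUp}, and in particular making sure the positive-characteristic subtleties are handled by using $\operatorname{LSym}$ rather than $\mathbf{E}_\infty$ structures. If that transfer is problematic, I would fall back on proving (ii) directly as above. Then I would deduce (i) for general connective $A \to B$ by writing $B$ as a sifted colimit of regular surjections over polynomial $A$-algebras, using Remark \ref{rmk:connectivediscrete}(ii), and checking that $0_{Y,\flat}$ converts these colimits correctly.
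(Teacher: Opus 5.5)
Your main line matches the paper. For (i) you invoke Ben-Bassat--Hekking, which the paper also does (it cites their Theorem~4.6 for exactly this). For (ii) the paper only says the argument is identical to their Example~4.12, and your base change to the universal case $\alpha(K[x_1,\dots,x_n])\to K$ together with the fiber sequence $C\xrightarrow{t}C\to C/^{\mathbf{L}}t$ carries that computation out correctly.

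Your fallback is not needed, and as stated it would not close the argument. Part (ii) only covers regular surjections out of $A$ itself. Any sifted-colimit presentation of $B$ over $A$ instead forces you to handle the non-surjective free maps $A\to A\hat{\otimes}^{\mathbf{L}}_K\operatorname{LSym}(P)$. Their deformation spaces are $\operatorname{Spec}$ of genuinely nonconnective algebras: already in one variable, $C\mapsto\operatorname{Map}_{\mathscr{V}}(K,C/^{\mathbf{L}}t)$ is corepresented by $\operatorname{LSym}_{A[t]}\bigl((A[t]/^{\mathbf{L}}t)[-1]\bigr)$, and (ii) says nothing about this case.
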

\begin{proof}
(i): This is part of \cite[Theorem 4.6]{BenBassatHekkingBlowUp}. (ii): The proof is identical to \cite[Example 4.12]{BenBassatHekkingBlowUp}. 
\end{proof}
\begin{lem}\label{lem:IdentifyNormal}
With notations as in Lemma \ref{lem:deformationspace1}. Assume that $s: X \to Y$ admits a cotangent complex in the sense of Definition \ref{defn:CotangentComplex}(i). Then there is a canonical equivalence $N_{X/Y}^\mathscr{V} \simeq V_X^\mathscr{V}(\mathbf{L}_{X/Y}^\mathscr{V}[-1])$ in $\operatorname{Shv}_{\tau_{\mathrm{dri}}}(\operatorname{Aff})$. Under this equivalence, the morphism $s_0: X \to N_{X/Y}^\mathscr{V}$ identifies with the zero-section $X \to V_X^\mathscr{V}(\mathbf{L}_{X/Y}^\mathscr{V}[-1])$, and the morphism $p$ of Remark \ref{rmk:Normalbundle}(ii) identifies with the canonical projection $V_X^\mathscr{V}(\mathbf{L}_{X/Y}^\mathscr{V}[-1]) \to X$. 
\end{lem}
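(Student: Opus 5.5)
We compare functors of points over $X$, using only the defining adjunctions of the Weil restriction and of the cotangent complex. By Remark \ref{rmk:Normalbundle}(ii) we have $N_{X/Y}^\mathscr{V} = 0_Y^\sharp 0_{Y,\flat}X$. Hence for $T = \operatorname{Spec}(C) \in \operatorname{Aff}$ with a map $t: T \to Y$, a $T$-point of $N_{X/Y}^\mathscr{V}$ over $Y$ is the same as a $T$-point of $0_{Y,\flat}X$ lying over $T \xrightarrow{0} T \times a_!\mathbf{A}^{1,\mathrm{alg}} \to Y \times a_!\mathbf{A}^{1,\mathrm{alg}}$. By adjunction this is a map $0_Y^\sharp(T) \to X$ over $Y$, where the object $0_Y^\sharp(T)$ is computed by regarding $T$ over $Y\times a_!\mathbf{A}^{1,\mathrm{alg}}$ via the zero section, i.e.
\begin{equation*}
T \times_{Y \times a_!\mathbf{A}^{1,\mathrm{alg}}} Y \simeq T \times_{a_!\mathbf{A}^{1,\mathrm{alg}}} * \simeq \operatorname{Spec}\big(C \hat{\otimes}^\mathbf{L}_{K[\epsilon]} K\big),
\end{equation*}
with $\epsilon \mapsto 0$ on both sides. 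Since $a_!$ is left exact and preserves representables, this is $\operatorname{Spec}$ of $C \otimes^\mathbf{L}_{K[\epsilon]} K \simeq C \oplus C[1]$, the trivial square-zero extension of $C$ by $C[1]$ in $\operatorname{DAlg}(\mathscr{V})$. Thus, writing $T[C[1]]_\mathscr{V}$ for this square-zero extension, we obtain the first key identification
\begin{equation*}
N_{X/Y}^\mathscr{V}(T) \simeq \operatorname{Map}_{/Y}(T[C[1]]_\mathscr{V}, X).
\end{equation*}
Checking that the trivial square-zero structure really arises as a derived algebra, and not merely as an $\mathbf{E}_\infty$-algebra, requires the Koszul computation $K\otimes^\mathbf{L}_{K[\epsilon]}K \simeq \operatorname{LSym}_K(K[1])$, together with the observation that its truncation to weights $\leqslant 1$ agrees with the pushout after composing with $C$-linear maps. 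I expect this to be the main obstacle. I would handle it by using the $\mathbf{G}_m$-grading: the pushout is graded, and the relevant mapping spaces over $X$ only see the weight $\leqslant 1$ part after restriction along $T \to T[C[1]]$. Alternatively, I would work directly with the functor $\operatorname{Der}_{X/Y}$ of Definition \ref{defn:CotangentComplex}(i) applied to pairs $(T \to X, C[1])$.

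Next, fibring over the point $T \to X$ given by the composite $T \to T[C[1]] \to X$, the definition of $\mathbf{L}_{X/Y}^\mathscr{V}$ (Definition \ref{defn:CotangentComplex}(i)) gives
\begin{equation*}
\operatorname{Map}_{/Y}(T[C[1]]_\mathscr{V}, X) \simeq \coprod_{x: T \to X} \operatorname{Map}_{\operatorname{QCoh}(T)}(x^*\mathbf{L}_{X/Y}^\mathscr{V}, C[1]),
\end{equation*}
where the coproduct is taken as a fibration over $\operatorname{Map}_{/Y}(T,X)$. On the other hand, Definition \ref{defn:vectorbundles}(i) gives
\begin{equation*}
V_X^\mathscr{V}(\mathbf{L}^\mathscr{V}_{X/Y}[-1])(x) \simeq \operatorname{Map}(x^*\mathbf{L}_{X/Y}^\mathscr{V}[-1], C) = \operatorname{Map}(x^*\mathbf{L}_{X/Y}^\mathscr{V}, C[1]).
\end{equation*}
The two functors of points agree naturally in $T$. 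Since both sides are $\tau_{\mathrm{dri}}$-sheaves by Lemma \ref{lem:RelativespecLemma1}(i), this yields the equivalence in $\operatorname{Shv}_{\tau_\mathrm{dri}}(\operatorname{Aff})$.

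Finally, we identify $s_0$ and $p$ by tracing them through these identifications. The counit $p$ corresponds to restricting a map $T[C[1]] \to X$ along the inclusion $T \to T[C[1]]$ coming from the augmentation, which is exactly the projection of the vector bundle. The map $s_0$ is obtained from $0_X$ by the Cartesian square in Lemma \ref{lem:deformationspace1}. It corresponds to the map $T[C[1]] \to T \to X$ factoring through the retraction, i.e. to the zero derivation, which is the zero section.
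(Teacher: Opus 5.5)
Your argument is correct and is essentially the paper's proof. Both proofs identify $0_Y^\sharp 0_{Y,\sharp}T$ with the trivial square-zero extension $T[1_T[1]]$ via $\{0\}\times_{a_!\mathbf{A}^{1,\mathrm{alg}}}\{0\}\simeq\operatorname{Spec}(K\oplus K[1])$. They then pass, fibrewise over $\operatorname{Map}_{/Y}(T,X)$, through the defining property of $\mathbf{L}^\mathscr{V}_{X/Y}$ and the functor of points of $V_X^\mathscr{V}$, and recover $s_0$ and $p$ as the zero derivation and restriction along the augmentation.

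The step you flag as the main obstacle, which the paper simply asserts, needs no weight-truncation argument. Décalage gives $\operatorname{LSym}^i_K(K[1])\simeq(\Lambda^iK)[i]\simeq 0$ for $i\geqslant 2$, so $K\hat{\otimes}^\mathbf{L}_{K[T]}K\simeq\operatorname{LSym}_K(K[1])\to K\oplus K[1]$ is already an equivalence of derived algebras. The grading argument you sketch is not justified as phrased, since maps out of a graded algebra see all weights, but it is also unnecessary.
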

\begin{proof}
The proof is essentially the same as in \cite[Proposition 3.16]{hekking_khan_deformation_2025}. We reproduce the argument here for the reader's convenience. We recall that $N_{X/Y}^\mathscr{V} = 0_Y^\sharp 0_{Y, \flat} X$, c.f. Remark \ref{rmk:Normalbundle}, so that by adjunction for any $S \in \operatorname{Shv}_{\tau_\mathrm{dri}}(\operatorname{Aff})_{/Y}$ one has $\operatorname{Map}_{/Y}(S, N_{X/Y}^\mathscr{V}) \simeq \operatorname{Map}_{/Y}(0_Y^\sharp 0_{Y, \sharp} S, X)$ compatibly with the natural maps to $\operatorname{Map}_{/Y}(S,X)$. Now 
\begin{equation}
0_Y^\sharp 0_{Y, \sharp} S = S \times (\{0\} \times_{a_!\mathbf{A}^{1,\mathrm{alg}}} \{0\}) = S \times \operatorname{Spec}(K \oplus K[1]) = S[1_S[1]]
\end{equation}
is the trivial square-zero extension of $S$ by $1_S[1]$. If $[t:T \to X] \in \operatorname{Shv}_{\tau_\mathrm{dri}}(\operatorname{Aff})_{/X}$ there is a chain of equivalences
\begin{equation}\label{eq:normalbundle2}
\begin{aligned}
    \operatorname{Map}_{/X}(T,V_X^\mathscr{V}(\mathbf{L}^\mathscr{V}_{X/Y}[-1])) &\simeq \operatorname{Map}_{\operatorname{QCoh}(T)}(t^*\mathbf{L}_{X/Y}^\mathscr{V}, 1_T[1])\\
    &\simeq \operatorname{Map}_{/Y}(T[1_T[1]], X)_t \\
    &\simeq \operatorname{Map}_{/Y}(T, N_{X/Y}^\mathscr{V})_t \\
    &\simeq  \operatorname{Map}_{/X}(T, N_{X/Y}^\mathscr{V}).
\end{aligned}
\end{equation}
Here the first equivalence is by Definition \ref{defn:vectorbundles}, the second is by Definition \ref{defn:CotangentComplex}, the third is by the above discussion, and the last is tautological. Hence there is an equivalence $N_{X/Y}^\mathscr{V} \simeq V_X^\mathscr{V}(\mathbf{L}_{X/Y}^\mathscr{V}[-1])$ over $X$. The subscript $(-)_t$ means that we take the fiber over $t: T \to X$ under the natural map to $\operatorname{Map}_{/Y}(T,X)$. For the last statement, we note that under the chain of equivalences \eqref{eq:normalbundle2} with $t = \operatorname{id}: X \to X$, the canonical map $X \to N_{X/Y}^\mathscr{V}$ corresponds to the zero derivation $X[1_X[1]] \to X$, which corresponds to the zero morphism $\mathbf{L}_{X/Y}^\mathscr{V} \to 1_X[1]$, which gives the zero-section $X \to V_X^\mathscr{V}(\mathbf{L}_{X/Y}^\mathscr{V}[-1])$. 
\end{proof}
\egapar We denote by $\breve{s}: X \times a_! \mathbf{A}^{1,\mathrm{alg}}/a_!\mathbf{G}_m^\mathrm{alg} \to \mathscr{D}_{X/Y}^\mathscr{V}$ the morphism obtained from $\tilde{s}$ after ``quotienting by the action of $a_!\mathbf{G}_m^\mathrm{alg}$", so that there is a Cartesian square:
\begin{equation}\label{eq:CartesianDeformation}
\begin{tikzcd}
	{X \times a_!\mathbf{A}^{1, \mathrm{alg}}} & {D_{X/Y}^\mathscr{V}} \\
	{X \times a_!\mathbf{A}^{1, \mathrm{alg}}/a_!\mathbf{G}_m^\mathrm{alg}} & {\mathscr{D}_{X/Y}^\mathscr{V}}
	\arrow["{\tilde{s}}", from=1-1, to=1-2]
	\arrow[from=1-1, to=2-1]
	\arrow["\lrcorner"{anchor=center, pos=0.125}, draw=none, from=1-1, to=2-2]
	\arrow[from=1-2, to=2-2]
	\arrow["{\breve{s}}", from=2-1, to=2-2]
\end{tikzcd}
\end{equation}
\egapar For every $n \in \mathbf{Z}$ we may view the $a_!\mathbf{G}_m^\mathrm{alg}$-equivariant $K[T]$-module $T^{-n}K[T]$ as an (invertible) object  $1(n) \in \operatorname{QCoh}(a_!\mathbf{A}^{1,\mathrm{alg}}/a_!\mathbf{G}_m^\mathrm{alg})$. For $X \in \operatorname{Shv}_{\tau_{\mathrm{dri}}}(\operatorname{Aff})$ we use the shorthand $M \mapsto M(n)$ to denote the endofunctor of $\operatorname{QCoh}(X \times a_!\mathbf{A}^{1,\mathrm{alg}}/a_!\mathbf{G}_m^\mathrm{alg})$ given by tensoring with $\pi_2^*1(n)$. Here $\pi_2: X \times a_!\mathbf{A}^{1,\mathrm{alg}}/a_!\mathbf{G}_m^\mathrm{alg} \to a_!\mathbf{A}^{1,\mathrm{alg}}/a_!\mathbf{G}_m^\mathrm{alg}$ is the projection. 
\begin{prop}\label{prop:deformationofdualizing}
Let $s: X \to Y$ be a morphism in $\operatorname{Shv}_{\tau_{\mathrm{dri}}}(\operatorname{Aff})$. Suppose that there exists a cover $\{Y_i = \operatorname{Spec}(A_i) \to Y\}_i$ of $Y$ by objects of $\operatorname{Aff}$ such that:
\begin{itemize}
    \item[$\star$] Each pullback $X_i := Y_i \times_Y X$ also belongs to $\operatorname{Aff}$. Let us say that $X_i = \operatorname{Spec}(B_i)$.
    \item[$\star$] Each $A_i$ and $B_i$ belongs to $\operatorname{DAlg}^{\leqslant 0}(\mathscr{V})$ and the induced morphism $A_i \to B_i$ is a regular surjection of virtual dimension $-n$, in the sense of Definition \ref{defn:regularsurjection}. 
\end{itemize}
Then: 
\begin{enumerate}[(i)]
    \item The morphism $\breve{s}: X \times a_! \mathbf{A}^{1,\mathrm{alg}}/a_!\mathbf{G}_m^\mathrm{alg} \to \mathscr{D}_{X/Y}^\mathscr{V}$ is representable, and further is \emph{suave} with respect to the six-functor formalism \eqref{eq:Big6FF}. In particular the formation of $\breve{s}^!1$ commutes with pullback.
    \item Let us denote by $\pi_X$ the canonical morphism $X \times a_! \mathbf{A}^{1,\mathrm{alg}}/a_!\mathbf{G}_m^\mathrm{alg} \to X$. Then, the canonical morphism $\pi_X^*\pi_{X,*}(\breve{s}^!1 (n))\to \breve{s}^!1(n)$ is an equivalence. 
    \item With $s_0$ as in Lemma \ref{lem:deformationspace1}: There is a canonical equivalence $s_0^!1 \simeq s^!1$.  
\end{enumerate}
\end{prop}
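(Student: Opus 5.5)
The plan is to work locally on $Y$ and reduce to the explicit extended Rees algebra of Theorem \ref{thm:benbassatextendedRees}(ii). By Lemma \ref{lem:weilresbasechange}, the formation of $\mathscr{D}_{X/Y}^\mathscr{V}$ and $D_{X/Y}^\mathscr{V}$ commutes with base change along $Y_i \to Y$. Representability and suaveness of $\breve{s}$ can be checked after pulling back along the cover $\{Y_i \times a_!\mathbf{A}^{1,\mathrm{alg}}/a_!\mathbf{G}_m^\mathrm{alg}\}$ of the base. This uses Proposition \ref{prop:NCrepresentable} for representability, together with locality of suaveness on the target \cite[Lemma 4.5.7]{heyer_6-functor_2024}. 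The same reduction applies to the formation of $\breve{s}^!1$, since suave dualizing objects commute with pullback.

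Via the Cartesian square \eqref{eq:CartesianDeformation}, I further pull back along the cover $D^\mathscr{V}_{X/Y} \to \mathscr{D}^\mathscr{V}_{X/Y}$. This is the base change of $p : * \to */a_!\mathbf{G}_m^\mathrm{alg}$, which is descendable by the proof of Lemma \ref{lem:A1Gm}, and hence a universal $!$-cover. After these reductions, $\tilde{s}$ is the morphism $\operatorname{Spec}(B[T]) \to \operatorname{Spec}(R)$, where $B = A/^\mathbf{L}(f_1,\dots,f_n)$ and $R = A[T,S_1,\dots,S_n]/^\mathbf{L}(TS_j - f_j)$. The morphism $R \to B[T]$ kills the $S_j$, so $B[T] \simeq R/^\mathbf{L}(S_1,\dots,S_n)$. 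It is therefore a regular surjection, and $B[T]$ is a perfect $R$-module given by a Koszul complex. As in Proposition \ref{prop:ZariskiclosedSuave}, \cite[Lemma 4.5.10]{heyer_6-functor_2024} then gives suaveness. This proves (i).

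For (ii), I compute $\tilde{s}^!1$ explicitly: it is $R\underline{\operatorname{Hom}}_R(B[T],R)\simeq B[T][-n]$, using self-duality of the Koszul complex. The equivariant structure is a twist by the weights of the $S_j$. Since $S_j$ has weight $-1$ relative to $T$, I expect $\breve{s}^!1 \simeq \pi_X^*(\det \mathbf{L}_{X/Y}[-1])^{\vee}[-n](m)$ for an appropriate weight $m$. More invariantly, $\breve{s}^!1$ is pulled back from $X$ up to a twist $(m)$. Granting this description, (ii) follows from Lemma \ref{lem:A1Gm}(iii) and the projection formula for the prim morphism $\pi_X$. Indeed, for $E$ on $X$ we have $\pi_{X,*}\pi_X^*E \simeq E$, and the counit $\pi_X^*\pi_{X,*}\pi_X^*E \to \pi_X^*E$ is then an equivalence. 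One point needs care: $\pi_{X,*}$ of a nonzero twist $1(k)$ on $a_!\mathbf{A}^{1,\mathrm{alg}}/a_!\mathbf{G}_m^\mathrm{alg}$ is the weight-$k$ part of $T^{-k}K[T]$. I expect this to be $K$ for $k \geqslant 0$ and zero otherwise, so I must check that $\breve{s}^!1(n)$, for the stated twist $n$ (the virtual codimension), lands in the range where $\pi_X^*\pi_{X,*}$ acts as the identity. \textbf{This weight bookkeeping, i.e.\ making the identification of $\breve{s}^!1$ canonical and globally (not just on $Y_i$) of the form $\pi_X^*(-)$ twisted, is the main obstacle.} I would try to obtain it globally by showing that the counit $\pi_X^*\pi_{X,*} \to \operatorname{id}$ on $\breve{s}^!1(n)$ is an equivalence, which is a local statement. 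This holds because $\pi_X$ commutes with base change (being prim) and $\pi_{X,*}$ commutes with the base changes $X_i \to X$. So only the local Koszul computation is needed.

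For (iii), I pull $\breve{s}^!1$ back along the two fibers of the Cartesian diagram \eqref{eq:deformationdiagram1}, after removing the $\mathbf{G}_m$-quotient. By (i), the formation of $\tilde{s}^!1$ commutes with pullback. Pulling back to the fiber over $1 \in \mathbf{G}_m$ gives $s^!1$, and pulling back to the fiber over $0$ gives $s_0^!1$. By (ii), $\breve{s}^!1(n) \simeq \pi_X^*E$ with $E := \pi_{X,*}(\breve{s}^!1(n))$. Restricting along the points $0$ and $1$ of $a_!\mathbf{A}^{1,\mathrm{alg}}$, viewed as sections of $\pi_X$ (the twist $(n)$ becomes trivial on each fiber), both pullbacks are identified with $E$. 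This yields the canonical equivalence $s_0^!1 \simeq E \simeq s^!1$.
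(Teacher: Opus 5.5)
Your proposal follows the paper's proof essentially step for step. For (i) you reduce to the unquotiented $\tilde{s}$ and to the affine regular case via Proposition \ref{prop:NCrepresentable}, \cite[Lemma 4.5.7]{heyer_6-functor_2024} and Lemma \ref{lem:weilresbasechange}, then use $R_{B/A}/^{\mathbf{L}}(S_1,\dots,S_n)\simeq B[T]$ and \cite[Lemma 4.5.10]{heyer_6-functor_2024}. For (ii) you localize using prim base change and conclude with Lemma \ref{lem:A1Gm}(iii). For (iii) you restrict $\tilde{s}^!1\simeq\varrho_X^*E$ to the $0$- and $1$-fibres and apply suave base change.

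The step you flagged as the main obstacle is the one-line equivariant Koszul computation in the paper, and you should carry it out rather than leave the weight $m$ undetermined. Take $T$ of weight $1$ and $S_j$ of weight $-1$. The Koszul generators $e_j$ (with $de_j=S_j$) then have weight $-1$, so the generator $(e_1\wedge\cdots\wedge e_n)^*$ of $R\underline{\operatorname{Hom}}_{R_{B/A}}(B[T],R_{B/A})$ has weight $n$. Hence $\tilde{s}^!1\simeq T^nB[T][-n]$, i.e.\ locally $\breve{s}^!1\simeq 1(-n)[-n]$, so $m=-n$. The twist $(n)$ cancels this exactly, giving $\breve{s}^!1(n)\simeq 1[-n]$ in the essential image of $\pi_X^*$.

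Exact cancellation is what is required; there is no larger ``range'' in which the counit is an equivalence. Note that $\pi_*1(k)$ is the weight-$0$ part of $T^{-k}K[T]$, not the weight-$k$ part. The counit $\pi^*\pi_*1(k)\to 1(k)$ is the inclusion $K[T]\hookrightarrow T^{-k}K[T]$ for $k>0$ and $0\to 1(k)$ for $k<0$, so it is an equivalence only for $k=0$.
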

\begin{proof}
(i): By Proposition \ref{prop:NCrepresentable} and \cite[Lemma 4.5.7]{heyer_6-functor_2024}, we reduce to proving the analogous claim for the ``unquotiented" morphism $\tilde{s}: X \times a_! \mathbf{A}^{1,\mathrm{alg}} \to D_{X/Y}^\mathscr{V}$. Further, by \emph{loc. cit.} the question is local on the target. Hence by the compatibility of Weil restrictions with base-change, we may assume that $Y= \operatorname{Spec}(A)$, $X = \operatorname{Spec}(B)$ are such that $A, B \in \operatorname{DAlg}^{\leqslant 0}(\mathscr{V})$ and $A \to B$ is of the form $A \to A/^\mathbf{L}(f_1, \dots,f_n)$ for some $f_i : K \to A$. In this case, by Theorem \ref{thm:benbassatextendedRees}, $D_{X/Y}^\mathscr{V} = \operatorname{Spec}(R_{B/A})$, where $R_{B/A}$ can be computed by the formula $R_{B/A} = A[T, S_i]/^\mathbf{L}(TS_i-f_i)$. In particular, note that $R_{B/A}/^\mathbf{L}(S_i) \simeq B[T]$ is dualizable as an $R_{B/A}$-module, hence by \cite[Lemma 4.5.10]{heyer_6-functor_2024} the morphism $\tilde{s}$ is suave. (ii): Using Lemma \ref{lem:A1Gm} and prim base-change \cite[Lemma 4.5.13]{heyer_6-functor_2024} together with part (i) and the compatibility of Weil restrictions with base-change, we see that the question is local on $Y$. Hence we may again assume that assume that $Y= \operatorname{Spec}(A)$, $X = \operatorname{Spec}(B)$ are such that $A, B \in \operatorname{DAlg}^{\leqslant 0}(\mathscr{V})$ and $A \to B$ is of the form $A \to A/^\mathbf{L}(f_1, \dots,f_n)$ for some $f_i : K \to A$. By suave base-change, the underlying $K[T]$-module of $\breve{s}^!1$ is $\tilde{s}^!1$, and using notations as before, the Koszul complex yields a $a_!\mathbf{G}_m^\mathrm{alg}$-equivariant equivalence 
\begin{equation}
\tilde{s}^!1 \simeq R\underline{\operatorname{Hom}}_{R_{B/A}} (B[T], R_{B/A}) \simeq T^{n}B[T][-n],
\end{equation}
hence $\breve{s}^!1(n) \simeq 1[-n]$ is in the essential image of $\pi_X^*$, so that $\pi_X^*\pi_{X,*}(\breve{s}^!1 (n))\to \breve{s}^!1(n)$ is an equivalence by Lemma \ref{lem:A1Gm}(iii).  (iii): Let $\varrho_X: X \times a_!\mathbf{A}^{1,\mathrm{alg}} \to X$ denote the projection on to the first factor. By (i) and (ii) and using the Cartesian square \eqref{eq:CartesianDeformation}, the canonical morphism $\varrho^*_X \pi_{X,*}(\breve{s}^!1(n)) \to \tilde{s}^!1$ is an equivalence. Hence $\tilde{s}^!1$ is in the essential image of the functor $\varrho_X^*$, so that the fibers of $\tilde{s}^!1$ over the zero-section and the unit section are canonically equivalent. But by (i) the formation of $\tilde{s}^!1$ is compatible with pullback, which gives the canonical equivalence $s_0^!1 \simeq s^!1$.
\end{proof}
\egapar In the following Definition we recall that $\operatorname{AniAlg}$ refers to the category of abstract animated $K$-algebras, and $D(K)$ refers to the derived category of abstract $K$-modules. 
\begin{defn}\label{defn:VectorBundleStack}
Let $n \geqslant 1$. 
\begin{enumerate}[(i)]
\item We define the functor
\begin{equation}
\operatorname{GL}_n^\mathscr{V}: \operatorname{Aff}^\mathrm{op} \to \infty\mathrm{Grpd}: \operatorname{Spec}(A) \mapsto \operatorname{Aut}_{\operatorname{Mod}_A\mathscr{V}}(A^{\oplus n}).
\end{equation} 
\item We define the functor 
\begin{equation}
    \operatorname{GL}_n^\mathrm{alg}: \operatorname{AniAlg} \to \infty\mathrm{Grpd}: A \mapsto \operatorname{Aut}_{\operatorname{Mod}_AD(K)}(A^{\oplus n}).  
\end{equation}
\item We define $\operatorname{Vect}^\mathscr{V}_n$ to be the functor from $\operatorname{Aff}^\mathrm{op}$ to $\infty\mathrm{Grpd}$ which sends $\operatorname{Spec}(A)$ to the $\infty$-groupoid of $\tau_\mathrm{dri}$-locally free objects $M \in \operatorname{QCoh}(\operatorname{Spec}(A))$, of rank $n$. 
\item We define $\operatorname{Vect}^\mathrm{alg}_n$ to be the functor from $\operatorname{AniAlg}$ to $\infty\mathrm{Grpd}$ which sends $A$ to the $\infty$-groupoid of Zariski-locally free $A$-modules of rank $n$. 
\end{enumerate}
\end{defn}
\begin{prop}\label{prop:VectorBundlesGL_n}
With notations as in Definition \ref{defn:VectorBundleStack}. 
\begin{enumerate}[(i)]
    \item The functors $\operatorname{GL}_n^\mathscr{V}$ and $\operatorname{GL}_n^\mathrm{alg}$ are representable. In particular $\operatorname{GL}_n^\mathscr{V}$ (resp. $\operatorname{GL}_n^\mathrm{alg}$) is a sheaf in the topology $\tau_\mathrm{dri}$ (resp. the Zariski topology). 
    \item When $\operatorname{GL}_n^\mathscr{V}$ is regarded as an object of $\operatorname{Shv}_{\tau_{\mathrm{dri}}}(\operatorname{Aff})$ and $\operatorname{GL}_n^\mathrm{alg}$ is regarded as an object of $\operatorname{Shv}_{\tau_{\mathrm{Zar}}}(\operatorname{AniAlg}^\mathrm{op})$, there is a canonical equivalence $a_!\mathrm{GL}_n^\mathrm{alg} \simeq \mathrm{GL}_n^\mathscr{V}$. 
    \item There is a canonical equivalence $\operatorname{Vect}^\mathscr{V}_n \simeq */\operatorname{GL}_n^\mathscr{V}$ in $\operatorname{Shv}_{\tau_{\mathrm{dri}}}(\operatorname{Aff})$ and a canonical equivalence $\operatorname{Vect}^\mathrm{alg}_n \simeq */\operatorname{GL}_n^\mathrm{alg}$ in $\operatorname{Shv}_{\tau_\mathrm{Zar}}(\operatorname{AniAlg}^\mathrm{op})$. 
\end{enumerate}
\end{prop}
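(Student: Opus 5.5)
We plan to prove the three parts in order, comparing throughout with the algebraic picture through the fully-faithful, colimit-preserving, symmetric monoidal functor $\alpha: \operatorname{AniAlg} \to \operatorname{DAlg}(\mathscr{V})$.

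For (i), we first treat $\operatorname{GL}_n^\mathrm{alg}$. The classical argument applies: for $A \in \operatorname{AniAlg}$ one has $\operatorname{End}_{\operatorname{Mod}_A}(A^{\oplus n}) \simeq \operatorname{Map}_{D(K)}(K^{\oplus n^2}, A)$. This is corepresented by the polynomial algebra $K[x_{ij}]$. The automorphisms form the union of those components that become invertible on $H^0$, namely where $\det$ is a unit. Hence $\operatorname{GL}_n^\mathrm{alg} \simeq \operatorname{Spec}(K[x_{ij}][\det^{-1}])$.

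For $\operatorname{GL}_n^\mathscr{V}$ and arbitrary $A \in \operatorname{DAlg}(\mathscr{V})$, we have $\operatorname{End}_{\operatorname{Mod}_A\mathscr{V}}(A^{\oplus n}) \simeq \operatorname{Map}_\mathscr{V}(K^{\oplus n^2}, A) \simeq \operatorname{Map}_{\operatorname{DAlg}}(\operatorname{LSym}_K K^{\oplus n^2}, A)$. Since $K^{\oplus n^2} \in \mathscr{V}^0$, Construction \ref{construction:LSymMonad}(ii) gives $\operatorname{LSym}_K K^{\oplus n^2} = \alpha(K[x_{ij}])$. The subspace of automorphisms is cut out by invertibility of $\det$. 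By adjunction $\operatorname{Map}(K,A)$ this is exactly $\operatorname{Map}(\alpha(K[x_{ij}][\det^{-1}]), A)$. We need that $\alpha$ preserves the localization, i.e. that the universal property of inverting an element is the same in both contexts. This holds because $\alpha$ is colimit-preserving and $K[x][x^{-1}] = K[x,y]/^{\mathbf{L}}(xy-1)$ is a pushout of polynomial algebras. The one subtlety is that invertibility of an endomorphism of $A^{\oplus n}$ is detected by $\det$ even for nonconnective $A$. To see this we reduce to invertibility in the $\mathbf{E}_1$-ring $\pi_0\operatorname{Map}(K,A)$ of $\Theta(A)$, where matrix invertibility is detected by the determinant, which is a commutative-ring statement. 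Representability then implies the sheaf property, since $\tau_\mathrm{dri}$ is subcanonical ($\tau_{\mathrm{dri}} \subseteq \tau_\mathrm{desc}$ and Lemma \ref{lem:NCdescent}). The Zariski topology is likewise subcanonical.

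For (ii), we use Corollary \ref{cor:lowershriekrepresentable}, which applies because $a$ is continuous (Lemma \ref{lem:alginclusioncontinuity}) and both topologies are subcanonical. It gives $a_!\operatorname{Spec}(R) \simeq \operatorname{Spec}(\alpha R)$. By (i) we have $\alpha(K[x_{ij}][\det^{-1}])$ corepresenting $\operatorname{GL}_n^\mathscr{V}$, so the two agree. One still has to check that this identification is compatible with the group structure. For this we note that $a_!$ is left exact, so it preserves the group objects. Multiplication is induced on both sides by composition of matrices, i.e. by the same comultiplication on $K[x_{ij}][\det^{-1}]$.

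For (iii), we plan the standard argument in the topos $\operatorname{Shv}_{\tau_\mathrm{dri}}(\operatorname{Aff})$.
\begin{enumerate}[(1)]
\item Show that $\operatorname{Vect}_n^\mathscr{V}$ is a $\tau_\mathrm{dri}$-sheaf. It is the subfunctor of the sheaf $\operatorname{QCoh}^{\simeq}$ (descendable descent) cut out by a local condition.
\item The point $* \to \operatorname{Vect}_n^\mathscr{V}$ classifying $1^{\oplus n}$ is an effective epimorphism, by the very definition of $\tau_\mathrm{dri}$-locally free.
\item Its \v{C}ech nerve $* \times_{\operatorname{Vect}_n} *$ is the sheaf of isomorphisms $1^{\oplus n} \simeq 1^{\oplus n}$, which is $\operatorname{GL}_n^\mathscr{V}$.
\end{enumerate}
By effectivity of groupoids in an $\infty$-topos, $\operatorname{Vect}_n^\mathscr{V} \simeq \operatorname{colim}(\check{C}) = */\operatorname{GL}_n^\mathscr{V}$. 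The algebraic case is identical in $\operatorname{Shv}_{\tau_\mathrm{Zar}}(\operatorname{AniAlg}^\mathrm{op})$, using Zariski descent for modules.

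The main obstacle is step (1) in the nonconnective setting. One must make sure that $\operatorname{QCoh}^\simeq$ is really a sheaf for $\tau_\mathrm{dri}$, and not just for $\tau_\mathrm{desc}$. This holds because $\tau_\mathrm{dri} \subseteq \tau_\mathrm{desc}$, so every $\tau_\mathrm{dri}$-covering sieve is in particular a descendable covering sieve. A secondary difficulty is the determinant criterion in (i) for nonconnective $A$.
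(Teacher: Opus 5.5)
Your proposal is correct, but it takes a different route from the paper in each part.

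\textbf{Parts (i) and (ii).} The paper never inverts the determinant. It notes that $\operatorname{Mat}_n^{\mathscr V}$ and $\operatorname{Mat}_n^{\mathrm{alg}}$ are represented by $a_!\mathbf{A}^{n^2,\mathrm{alg}}$ and $\mathbf{A}^{n^2,\mathrm{alg}}$. It then exhibits $\operatorname{GL}_n$ as the fiber of the multiplication map $m\colon \operatorname{Mat}_n\times\operatorname{Mat}_n\to\operatorname{Mat}_n$ over the identity section. Representability follows because representables are closed under finite limits. Part (ii) follows at once because $a_!$ is left exact and preserves representables.

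You instead write down the explicit coordinate ring $\alpha(K[x_{ij}][\det^{-1}])$. This costs you two checks:
\begin{enumerate}[(a)]
\item that $\alpha$ carries the localization to the corresponding localization in $\operatorname{DAlg}(\mathscr V)$, which your presentation $K[x,y]/^{\mathbf L}(xy-1)$ as a pushout of polynomial algebras handles;
\item that $\det$ detects invertibility for nonconnective $A$, via the commutative ring $\pi_0\operatorname{Map}_{\mathscr V}(K,A)$.
\end{enumerate}
The paper's square tacitly needs the same commutativity input as (b). The fiber of $m$ over $1$ is a priori the space of elements equipped with a right inverse. It agrees with $\operatorname{Aut}(A^{\oplus n})$ only because right-invertible matrices over a commutative ring are invertible. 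Your version makes this explicit and also yields the coordinate ring.

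\textbf{Part (iii).} The paper constructs the map $\operatorname{Vect}_n^{\mathscr V}\to */\operatorname{GL}_n^{\mathscr V}$ by hand, sending $M$ to the torsor $\underline{\operatorname{Isom}}(A^{\oplus n},M)$. Its inverse pulls back the universal object $\operatorname{Std}\in\operatorname{QCoh}(*/\operatorname{GL}_n^{\mathscr V})$ obtained by $*$-descent, and the paper asserts the two are mutually inverse. Your \v{C}ech-nerve argument is more formal: the sheaf property from descendable descent and $\tau_{\mathrm{dri}}\subseteq\tau_{\mathrm{desc}}$, the effective epimorphism $*\to\operatorname{Vect}_n^{\mathscr V}$, the identification $*\times_{\operatorname{Vect}_n^{\mathscr V}}*\simeq\operatorname{GL}_n^{\mathscr V}$, and effectivity of groupoids. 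It proves the sheaf property of $\operatorname{Vect}_n^{\mathscr V}$, which the paper uses tacitly, and avoids checking by hand that two constructions are inverse. The paper's version has the advantage of exhibiting the universal bundle $\operatorname{Std}$ explicitly, which is what gets used later, e.g.\ in Lemma \ref{lem:DualizingOfVectorBundle}.

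\textbf{A minor caveat.} Your remark that both group laws come from ``the same comultiplication'' identifies them only up to homotopy. A coherent identification of group objects is cleaner via the natural $\mathbf{E}_1$-map $\operatorname{GL}_n^{\mathrm{alg}}\to a^*\operatorname{GL}_n^{\mathscr V}$ induced by base change $\operatorname{Mod}_R D(K)\to\operatorname{Mod}_{\alpha R}\mathscr V$. Your representability computation then shows its adjoint is an equivalence. The paper does not address this point either.
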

\begin{proof}
(i): We consider the functors
\begin{equation}
\begin{aligned}
   \operatorname{Mat}_n^\mathscr{V}&: \operatorname{Aff}^\mathrm{op} \to \infty\mathrm{Grpd}: \operatorname{Spec}(A) \mapsto \operatorname{End}_{\operatorname{Mod}_A\mathscr{V}}(A^{\oplus n}), \\  
   \operatorname{Mat}_n^\mathrm{alg}&: \operatorname{AniAlg} \to \infty\mathrm{Grpd}: A \mapsto \operatorname{End}_{\operatorname{Mod}_AD(K)}(A^{\oplus n}).
\end{aligned}
\end{equation}
Since finite direct sums are the same as direct products, we see that $\operatorname{Mat}_n^\mathscr{V}$ (resp. $\operatorname{Mat}_n^\mathrm{alg}$)  is represented by $a_!\mathbf{A}^{n^2, \mathrm{alg}}$ (resp. $\mathbf{A}^{n^2, \mathrm{alg}}$). Further, there are Cartesian squares
\begin{equation}
\begin{aligned}
\begin{tikzcd}
	{\operatorname{GL}_n^\mathscr{V}} & {\operatorname{Mat}_n^\mathscr{V} \times \operatorname{Mat}_n^\mathscr{V}  } \\
	{\{*\}} & {\operatorname{Mat}_n^\mathscr{V}}
	\arrow[from=1-1, to=1-2]
	\arrow[from=1-1, to=2-1]
	\arrow["\lrcorner"{anchor=center, pos=0.125}, draw=none, from=1-1, to=2-2]
	\arrow["m", from=1-2, to=2-2]
	\arrow[from=2-1, to=2-2]
\end{tikzcd} && \text {and} &&
\begin{tikzcd}
	{\operatorname{GL}_n^\mathrm{alg}} & {\operatorname{Mat}_n^\mathrm{alg} \times \operatorname{Mat}_n^\mathrm{alg}  } \\
	{\{*\}} & {\operatorname{Mat}_n^\mathrm{alg}}
	\arrow[from=1-1, to=1-2]
	\arrow[from=1-1, to=2-1]
	\arrow["\lrcorner"{anchor=center, pos=0.125}, draw=none, from=1-1, to=2-2]
	\arrow["m", from=1-2, to=2-2]
	\arrow[from=2-1, to=2-2]
\end{tikzcd}
\end{aligned}
\end{equation}
where the bottom arrow is the identity section and the right arrow is the multiplication morphism. Therefore $\operatorname{GL}_n^\mathscr{V}$ and $\operatorname{GL}_n^\mathrm{alg}$ are representable and there is a canonical equivalence $a_!\operatorname{GL}_n^\mathrm{alg} \simeq \operatorname{GL}_n^\mathscr{V}$. (iii): Let us treat the case of $\operatorname{Vect}^\mathscr{V}_n$, as the case of $\operatorname{Vect}_n^\mathrm{alg}$ is similar and perhaps even well-known. 
By definition, $*/\mathrm{GL}^\mathscr{V}_n$ is the sheaf which sends $\operatorname{Spec}(A)$ to the $\infty$-groupoid of $\mathrm{GL}_n^\mathscr{V}$-torsors (in the topology $\tau_\mathrm{dri}$) over $\operatorname{Spec}(A)$. The canonical morphism $\operatorname{Vect}_n^\mathscr{V} \to */ \mathrm{GL}_n^\mathscr{V}$ is therefore given on $\operatorname{Spec}(A)$-points by $M \mapsto \underline{\operatorname{Isom}}(A^{\oplus n},M)$. Here $\underline{\operatorname{Isom}}(A^{\oplus n},M)$ is the $\tau_\mathrm{dri}$-sheafification of the presheaf on $\operatorname{Aff}_{/\operatorname{Spec}(A)}$ which sends $\operatorname{Spec}(B) \to \operatorname{Spec}(A)$ to $\operatorname{Isom}(B^{\oplus n}, B \hat{\otimes}^\mathbf{L}_AM)$. Conversely, by $*$-descent the standard representation of $\operatorname{GL}_n^\mathscr{V}$ on $K^{\oplus n}$ determines a locally free rank $n$ object $\operatorname{Std} \in \operatorname{QCoh}(*/ \mathrm{GL}_n^\mathscr{V})$, so that any $f: \operatorname{Spec}(A) \to */\mathrm{GL}_n^\mathscr{V}$ determines a locally free rank $n$ object $M =f^*\operatorname{Std} \in \operatorname{QCoh}(\operatorname{Spec}(A))$. These are mutually inverse and we obtain the canonical equivalence $\operatorname{Vect}_n^\mathscr{V} \simeq */ \mathrm{GL}_n^\mathscr{V}$. 
\end{proof}
\begin{cor}
There is a canonical equivalence $a_!\operatorname{Vect}_n^\mathrm{alg} \simeq \operatorname{Vect}^\mathscr{V}_n$. In particular $\operatorname{Vect}^\mathscr{V}_n$ is left Kan extended from $\operatorname{AniAlg}^\mathrm{op}$. 
\end{cor}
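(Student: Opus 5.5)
The plan is to combine Proposition \ref{prop:VectorBundlesGL_n}(ii) and (iii) using the fact that $a_!$ is a left-exact left adjoint (Lemma \ref{lem:alginclusioncontinuity}). Left-exactness and colimit preservation together mean that $a_!$ carries groupoid objects to groupoid objects and commutes with geometric realizations. Write $*/G$ for the colimit, in the relevant $\infty$-topos, of the bar construction $[m] \mapsto G^{\times m}$. Then
\begin{equation*}
a_!(*/\operatorname{GL}_n^\mathrm{alg}) \simeq \underset{[m] \in \Delta^\mathrm{op}}{\operatorname{colim}}\, a_!\big((\operatorname{GL}_n^\mathrm{alg})^{\times m}\big) \simeq \underset{[m] \in \Delta^\mathrm{op}}{\operatorname{colim}}\, (a_!\operatorname{GL}_n^\mathrm{alg})^{\times m} \simeq \underset{[m]\in \Delta^\mathrm{op}}{\operatorname{colim}}\, (\operatorname{GL}_n^\mathscr{V})^{\times m} \simeq */\operatorname{GL}_n^\mathscr{V}.
\end{equation*}
The justifications are as follows. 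The first step uses that $a_!$ preserves colimits. The second uses left-exactness: $a_!$ preserves finite products and the terminal object, and it carries the group structure on $\operatorname{GL}_n^\mathrm{alg}$ to the group structure on $a_!\operatorname{GL}_n^\mathrm{alg}$. The third uses Proposition \ref{prop:VectorBundlesGL_n}(ii).

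I need to check that the identification in (ii) is one of group objects, not just of underlying sheaves. This follows from the proof of (ii). There, both sides are cut out of $\operatorname{Mat}_n \times \operatorname{Mat}_n$ by the Cartesian square over the multiplication map. Moreover, $a_!$ carries $\mathbf{A}^{n^2,\mathrm{alg}}$ with its multiplication to $\operatorname{Mat}_n^\mathscr{V}$ with its multiplication: $a$ is fully faithful and limit-preserving, and $\operatorname{Mat}_n^\mathscr{V}$ is represented by $a_!\mathbf{A}^{n^2,\mathrm{alg}}$ through the same matrix coordinates.

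Combining this with Proposition \ref{prop:VectorBundlesGL_n}(iii) on both sides gives $a_!\operatorname{Vect}_n^\mathrm{alg} \simeq a_!(*/\operatorname{GL}_n^\mathrm{alg}) \simeq */\operatorname{GL}_n^\mathscr{V} \simeq \operatorname{Vect}_n^\mathscr{V}$.

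For the final assertion, I will show that $\operatorname{Vect}_n^\mathscr{V}$ is (the sheafification of) the left Kan extension of $\operatorname{Vect}_n^\mathrm{alg}$ along $a$. By definition, $a_! = L \circ a_{P,!} \circ j$. So it suffices to observe that $a_!\operatorname{Vect}^\mathrm{alg}_n$ is $L$ applied to the presheaf left Kan extension of $\operatorname{Vect}^\mathrm{alg}_n$ along $a$, where $\operatorname{Vect}^\mathrm{alg}_n$ is already a Zariski sheaf. This is the intended meaning of ``left Kan extended'' here, namely up to $\tau_\mathrm{dri}$-sheafification.

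The main obstacle is the compatibility of the equivalence in (ii) with group structures. If it proves awkward to track directly, I would instead construct the comparison map $a_!\operatorname{Vect}_n^\mathrm{alg} \to \operatorname{Vect}_n^\mathscr{V}$ by adjunction from the natural map $\operatorname{Vect}_n^\mathrm{alg} \to a^*\operatorname{Vect}_n^\mathscr{V}$, which base-changes a module along $\alpha$. I would then check that this map is an equivalence after passing to the Čech nerve of $* \to \operatorname{Vect}_n$, where it reduces to the equivalence of (ii).
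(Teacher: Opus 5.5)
Your proposal is correct and follows the same route as the paper, whose proof is the single line ``Immediate from Proposition~\ref{prop:VectorBundlesGL_n}''. You have made explicit the step the paper leaves implicit: $a_!$ is a left-exact left adjoint, so it commutes with the formation of $*/G$, and part (ii) (read as an equivalence of group objects, as the paper also tacitly does) then combines with part (iii) on both sides to give $a_!\operatorname{Vect}_n^\mathrm{alg} \simeq \operatorname{Vect}_n^\mathscr{V}$.
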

\begin{proof}
Immediate from Proposition \ref{prop:VectorBundlesGL_n}.
\end{proof}
\egapar Let $\mathscr{X}$ be an $\infty$-topos and let $G, H$ be group objects in $\mathscr{X}$. By the enhancement of Giraud's axiom on effective quotients \cite[Theorem 2.15]{NikolausPrincipal}, there is a canonical equivalence $\operatorname{Map}_*(*/G,*/H) \simeq \operatorname{Hom}(G,H)$. Specializing to the case when $\mathscr{X} = \operatorname{Shv}_{\tau_{\mathrm{dri}}}(\operatorname{Aff})$, $G = \operatorname{GL}_n^\mathscr{V}$ and $H = \operatorname{GL}_1^\mathscr{V}$, there is a canonical equivalence $\operatorname{Hom}(\operatorname{GL}_n^\mathscr{V},\operatorname{GL}_1^\mathscr{V}) \simeq \mathbf{Z}$ furnished by the determinant. Indeed, this follows from the corresponding statement about $\operatorname{GL}_n^\mathrm{alg}$ together with Proposition \ref{prop:VectorBundlesGL_n} and the fact that the functor $a_!$ is fully-faithful when restricted to representable objects. We conclude that $\mathbf{Z} \xrightarrow[]{\sim}\operatorname{Map}_*(*/\operatorname{GL}_n^\mathscr{V}, */\operatorname{GL}_1^\mathscr{V})$. From this and Proposition \ref{prop:VectorBundlesGL_n} it follows that there is a canonical equivalence
\begin{equation}\label{eq:MapVect}
    \mathbf{Z} \xrightarrow[]{\sim}\operatorname{Map}_*(\operatorname{Vect}^\mathscr{V}_n, \operatorname{Vect}_1^\mathscr{V}).
\end{equation}
Concretely, the space of pointed maps on the right identifies with those morphisms such that the induced morphism $\operatorname{Vect}^\mathscr{V}_n(*) \to \operatorname{Vect}^\mathscr{V}_1(*)$ carries $K^{\oplus n}$ to $K$. We obtain for every $k \in \mathbf{Z}$ a point $\operatorname{det}^k$ in the space \eqref{eq:MapVect}. When $k=1$ we abbreviate this to $\operatorname{det}$. In this way, for every $X \in \operatorname{Shv}_{\tau_{\mathrm{dri}}}(\operatorname{Aff})$ we obtain a canonical morphism $\operatorname{det}^k_X: \operatorname{Vect}^\mathscr{V}_n(X) \to  \operatorname{Vect}_1^\mathscr{V}(X)$.
\egapar For every $m, n \geqslant 0$, there is a canonical morphism of group objects $\operatorname{GL}_m^\mathscr{V} \times \operatorname{GL}_n^\mathscr{V} \to \operatorname{GL}_{m+n}^\mathscr{V}$ given by block sum. Since $\operatorname{GL}_1^\mathscr{V}$ is abelian, the multiplication $\operatorname{GL}_1^\mathscr{V} \times \operatorname{GL}_1^\mathscr{V} \to \operatorname{GL}_1^\mathscr{V}$ is a morphism of group objects. There is a commutative square of group objects 
\begin{equation}
\begin{tikzcd}
	{\operatorname{GL}_m^\mathscr{V} \times \operatorname{GL}_n^\mathscr{V}} & {\operatorname{GL}_1^\mathscr{V} \times \operatorname{GL}_1^\mathscr{V}} \\
	{\operatorname{GL}_{m+n}^\mathscr{V}} & {\operatorname{GL}_1^\mathscr{V}}
	\arrow["{\operatorname{det} \times \operatorname{det}}", from=1-1, to=1-2]
	\arrow[from=1-1, to=2-1]
	\arrow[from=1-2, to=2-2]
	\arrow["{\operatorname{det}}", from=2-1, to=2-2]
\end{tikzcd}
\end{equation}
hence, using that geometric realization commutes with products, we obtain a commutative square 
\begin{equation}
\begin{tikzcd}
	{*/\operatorname{GL}_m^\mathscr{V} \times */\operatorname{GL}_n^\mathscr{V}} & {*/\operatorname{GL}_1^\mathscr{V} \times */\operatorname{GL}_1^\mathscr{V}} \\
	{*/\operatorname{GL}_{m+n}^\mathscr{V}} & {*/\operatorname{GL}_1^\mathscr{V}}
	\arrow["{\operatorname{det} \times \operatorname{det}}", from=1-1, to=1-2]
	\arrow[from=1-1, to=2-1]
	\arrow[from=1-2, to=2-2]
	\arrow["{\operatorname{det}}", from=2-1, to=2-2]
\end{tikzcd}
\end{equation}
in $\operatorname{Shv}_{\tau_\mathrm{dri}}(\operatorname{Aff})$. Transporting this through the equivalence provided by Proposition \ref{prop:VectorBundlesGL_n}, we see that for each $X \in \operatorname{Shv}_{\tau_\mathrm{dri}}(\operatorname{Aff})$, the functor $\operatorname{det}_X: \operatorname{Vect}^\mathscr{V}_n(X) \to \operatorname{Vect}_1^\mathscr{V}(X)$ takes direct sums to tensor products.
\egapar\label{par:detdual} The inverse gives a canonical anti-automorphism of group objects $(-)^{-1}: \operatorname{GL}_n^\mathscr{V} \to \operatorname{GL}_n^{\mathscr{V},\mathrm{op}}$. The determinant morphism commutes with this anti-automorphism. Hence $\operatorname{det}: */\mathrm{GL}_n^\mathscr{V} \to */\mathrm{GL}_1^\mathscr{V}$ commutes with the induced morphism $(-)^{-1} : */\operatorname{GL}_n^\mathscr{V} \to */\operatorname{GL}_n^{\mathscr{V},\mathrm{op}}$. Transporting this through the equivalence provided by Proposition \ref{prop:VectorBundlesGL_n}, we see that for each $X \in \operatorname{Shv}_{\tau_\mathrm{dri}}(\operatorname{Aff})$, the functor $\operatorname{det}_X: \operatorname{Vect}^\mathscr{V}_n(X) \to \operatorname{Vect}_1^\mathscr{V}(X)$ commutes with duality. We recall that for $\operatorname{Vect}_1^\mathscr{V}(X)$ the dual coincides with the $\otimes$-inverse. 
\egapar The proof of the following Lemma is adapted from \cite[Proposition 7.7.2]{camargo_notes_2026}.
\begin{lem}\label{lem:DualizingOfVectorBundle}
Let $X \in \operatorname{Shv}_{\tau_{\mathrm{dri}}}(\operatorname{Aff})$ and let $E \in \operatorname{Vect}_n^\mathscr{V}(X)$ be locally free of rank $n$ with dual $E^\lor$. Let $s_0: X \to V_X^\mathscr{V}(E^\lor)$ be the zero-section. Then:
\begin{enumerate}[(i)]
    \item $s_0$ is suave with respect to the six-functor formalism \eqref{eq:Big6FF}, and $s_0^!1[n]$ is locally free of rank $1$. 
    \item When $X = *$ and $E = K^{\oplus n}$, the object $s_0^!1[n]$ is canonically equivalent to $K$.
    \item The assignment $E \mapsto s_0^!1[n]$ takes direct sums to tensor products. 
    \item There is a canonical equivalence $s_0^!1[n] \simeq \operatorname{det}_XE$ compatible with pullbacks.
\end{enumerate}  
\end{lem}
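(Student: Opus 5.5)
The plan is to reduce everything to the universal case over $\operatorname{Vect}^\mathscr{V}_n \simeq */\operatorname{GL}_n^\mathscr{V}$ and then use the rigidity statement \eqref{eq:MapVect}. For (i), suaveness of $s_0$ is local on $X$ by \cite[Lemma 4.5.7]{heyer_6-functor_2024}, and the zero-section is compatible with base change by Lemma \ref{lem:RelativespecLemma1}(iii). We may therefore pass to a $\tau_\mathrm{dri}$-cover by affines $\operatorname{Spec}(A)$ on which $E \simeq A^{\oplus n}$. There $V_X^\mathscr{V}(E^\lor) = \operatorname{Spec}(A[S_1,\dots,S_n])$, with $\operatorname{LSym}$ of a free module being a polynomial algebra, and $s_0$ corresponds to $A[S_1,\dots,S_n] \to A[S_1,\dots,S_n]/^\mathbf{L}(S_1,\dots,S_n) = A$. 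The Koszul complex shows that $A$ is perfect, hence dualizable, over $A[S_\bullet]$. Then \cite[Lemma 4.5.10]{heyer_6-functor_2024} gives suaveness with $s_0^!1 \simeq R\underline{\operatorname{Hom}}_{A[S_\bullet]}(A, A[S_\bullet])$. The Koszul computation identifies this with $A[-n]$, so $s_0^!1[n]$ is locally free of rank $1$. Specializing to $A=K$ gives (ii), with the equivalence $s_0^!1[n]\simeq K$ furnished by the standard basis of the Koszul resolution.

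For (iii), take $E = E_1 \oplus E_2$. Then $V_X(E^\lor) \simeq V_X(E_1^\lor)\times_X V_X(E_2^\lor)$ by Lemma \ref{lem:RelativespecLemma1}(vi), and $s_0$ factors as
\[
X \xrightarrow{s_{0,1}} V_X(E_1^\lor) \xrightarrow{s_{0,2}'} V_X(E_1^\lor)\times_X V_X(E_2^\lor),
\]
where $s_{0,2}'$ is the base change of $s_{0,2}$. Suave base change \cite[Lemma 4.5.13]{heyer_6-functor_2024} and the composition formula $(gf)^!1 \simeq f^!1 \otimes f^*g^!1$ for suave maps then give $s_0^!1 \simeq s_{0,1}^!1\otimes s_{0,2}^!1$. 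One has to check that this equivalence is natural, and in particular independent of the order of the factors up to the appropriate sign.

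For (iv), the key observation is that the construction $E\mapsto s_0^!1[n]$ is compatible with pullback by suave base change. It therefore defines a morphism of sheaves $\operatorname{Vect}_n^\mathscr{V}\to\operatorname{Vect}_1^\mathscr{V}$: apply it to the universal bundle $\operatorname{Std}$ on $*/\operatorname{GL}_n^\mathscr{V}$, using that the class $E_\mathscr{V}$ and suaveness are $*$-local on the target so that the universal $s_0$ is suave. By (ii) this morphism is pointed. By \eqref{eq:MapVect} it equals $\operatorname{det}^k$ for a unique $k\in\mathbf{Z}$, and pulling back gives the canonical equivalence compatible with pullbacks. To pin down $k$, restrict along $*/\operatorname{GL}_1^\mathscr{V}\times\dots \to */\operatorname{GL}_n^\mathscr{V}$ using (iii), which reduces to $n=1$. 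There one computes the $\mathbf{G}_m$-weight of $R\underline{\operatorname{Hom}}_{K[S]}(K,K[S])[1]\simeq K[S]/S$ twisted by the generator; this weight is that of $E$, so $k=1$. The sign conventions relating $E$ and $E^\lor$ here are the main obstacle: it is easy to land on $\det E^\lor$ instead of $\det E$. I would settle it by the explicit weight computation of the Koszul generator $S^\vee$, which transforms like $E$ because $S$ is a coordinate on $E$, i.e.\ a section of $E^\lor$.
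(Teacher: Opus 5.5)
Your proposal is correct and follows essentially the same route as the paper. Parts (i)--(ii) use the local Koszul computation, part (iii) uses the Cartesian square of zero-sections with suave base change, and part (iv) uses the rigidity $\mathbf{Z}\simeq\operatorname{Map}_*(\operatorname{Vect}^\mathscr{V}_n,\operatorname{Vect}^\mathscr{V}_1)$, reduction to $n=1$ via (iii), and the $\mathbf{G}_m$-equivariant Koszul computation on $a_!\mathbf{A}^{1,\mathrm{alg}}/a_!\mathbf{G}_m^\mathrm{alg}$ to force $k=1$, with the same resolution of the $E$ versus $E^\lor$ sign as the paper. The differences are cosmetic: you build the map of sheaves by evaluating on the universal bundle over $*/\operatorname{GL}_n^\mathscr{V}$ rather than by Yoneda on affine points, and you should say explicitly, as the paper does, that $s_0$ is $!$-able because it is a section of the representable projection.
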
 
\begin{proof}
(i): By Lemma \ref{lem:RelSpecRepresent}, the morphism $s_0$ is representable as it is a section of the representable morphism $p$. In particular $s_0$ belongs to the class $E_\mathscr{V}$ of $!$-able morphisms. By \cite[Lemma 4.5.7]{heyer_6-functor_2024}, we reduce to the case $X = \operatorname{Spec}(A)$ is affine and $E \simeq A^{\oplus n}$ is free of rank $n$. In this case $s_0$ is equivalent to the zero-section $X \to X \times a_!\mathbf{A}^{n, \mathrm{alg}}$, and one computes directly using the Koszul complex that $s_0^!1 \simeq A[-n]$. (ii): We note that, in this case that Koszul complex yields a \emph{canonical} equivalence $s_0^!1 \simeq K[-n]$. (iii): First, we note that, if $E \in \operatorname{Vect}_n^\mathscr{V}(X)$ and $F \in \operatorname{Vect}_m^\mathscr{V}(X)$ then by Lemma \ref{lem:RelativespecLemma1}(vi) there is a canonical equivalence $V_X^\mathscr{V}((E \oplus F)^\lor) \simeq V_X^\mathscr{V}(E^\lor) \times_X V_X^\mathscr{V}(F^\lor)$. We denote by $t_0:X \to V_X^\mathscr{V}(F^\lor)$ the zero-section. Hence using the associativity of fiber products we see that the square 
\begin{equation}
\begin{tikzcd}
	X & {V_X^\mathscr{V}(E^\lor)} \\
	{V_X^\mathscr{V}(F^\lor)} & {V_X^\mathscr{V}((E \oplus F)^\lor)}
	\arrow["{s_0}", from=1-1, to=1-2]
	\arrow["{t_0}"', from=1-1, to=2-1]
	\arrow["\lrcorner"{anchor=center, pos=0.125}, draw=none, from=1-1, to=2-2]
	\arrow["{t_0^\prime}", from=1-2, to=2-2]
	\arrow["{s_0^\prime}", from=2-1, to=2-2]
\end{tikzcd}
\end{equation}
is Cartesian. All maps here are suave, hence by suaveness and suave-base change, we obtain a chain of equivalences $(t_0^\prime s_0)^! 1 \simeq s_0^! t_0^{\prime,!}1 \simeq s_0^!1 \hat{\otimes}_X s_0^* t_0^{\prime,!}1 \simeq s_0^!1 \hat{\otimes}_X t_0^!1$, proving the claim. 
(iv): By (i) the assignment $E \mapsto s_0^!1[n]$ determines for each $X = \operatorname{Spec}(A) \in \operatorname{Aff}$ a morphism $\operatorname{Vect}_n^\mathscr{V}(X) \to \operatorname{Vect}_1^\mathscr{V}(X)$ and hence by Yoneda, a  morphism $\delta:\operatorname{Vect}_n^\mathscr{V} \to \operatorname{Vect}_1^\mathscr{V}$, which is even a pointed morphism, by (ii). By the preceding discussion we know that $\delta \simeq \operatorname{det}^k$ for some $k \in \mathbf{Z}$, and it remains to show that $k = 1$. By (iii), this can be checked after restriction along the morphism $\coprod_{i=1}^n \operatorname{Vect}_1^\mathscr{V} \to \operatorname{Vect}_n^\mathscr{V}$ induced by direct sum. In this way we reduce to the case when $n=1$ and we need to show that $\delta$ is equivalent to the identity. Now we use the equivalence $\operatorname{Vect}_1^\mathscr{V} \simeq */\operatorname{GL}_1^\mathscr{V}$ of Proposition \ref{prop:VectorBundlesGL_n}. The universal locally free rank $1$ module over\footnote{We recall that $\mathbf{G}_m^\mathrm{alg} = \operatorname{Spec}(K[T, T^{-1}])$.} $X = */\mathrm{GL}_1^\mathscr{V} = */a_!\mathbf{G}_m^\mathrm{alg}$ is $E = K \cdot T^{-1}$. Hence in this case one has $V_X^\mathscr{V}(E^\lor) = a_!\mathbf{A}^{1, \mathrm{alg}}/a_!\mathbf{G}_m^\mathrm{alg}$. Considering the zero-section $s_0: */a_!\mathbf{G}_m^\mathrm{alg} \to a_!\mathbf{A}^{1, \mathrm{alg}}/a_!\mathbf{G}_m^\mathrm{alg}$, the Koszul complex gives a $a_!\mathbf{G}_m^\mathrm{alg}$-equivariant equivalence $s_0^!1 = R\underline{\operatorname{Hom}}_{K[T]}(K, K[T]) \simeq K\cdot T^{-1}[-1]$. This shows that $s_0^!1[1]$ is the universal locally free rank $1$ module over $*/a_!\mathbf{G}_m^\mathrm{alg}$ and hence $\delta \simeq \operatorname{id}$.
\end{proof}

\begin{thm}\label{thm:MainThmFinal}
Let $f: X \to Y$ be a smooth and Kiehl partially-proper morphism of derived rigid spaces, of dimension $d$. Then $f$ is suave with respect to the six-functor formalism \eqref{eq:6ff1}, and there is a canonical equivalence $f^!1_Y \simeq (\operatorname{det}_X\mathbf{L}_{X/Y}) [d]$ compatible with pullbacks.
\end{thm}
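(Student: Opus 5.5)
Suaveness of $f$ and invertibility of $f^!1_Y$ are already given by Theorem \ref{thm:GD1}. That theorem also gives a canonical equivalence $f^!1_Y \simeq (\Delta_f^!1_{X\times_YX})^{\otimes -1}$, so it remains to identify $\Delta_f^!1$. Write $\Delta := \Delta_f$ and $s := \Delta$. By Lemma \ref{lem:SmoothSection}, $\Delta$ is a quasi-smooth Zariski-closed immersion of virtual dimension $-d$, since it is a section of the smooth separated projection $\mathrm{pr}_1: X\times_Y X \to X$. Moreover $\mathbf{L}_{X/X\times_YX}[-1]\simeq \Delta^*\mathbf{L}_{X\times_YX/X}\simeq \mathbf{L}_{X/Y}$, using base change (Proposition \ref{prop:cotangentprops}(i)). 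The plan is to compute $\Delta^!1\simeq \det(\mathbf{L}_{X/Y})^{\otimes -1}[-d]$ canonically and then invert.

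\textbf{Step 1 (transport to the big topos).} Apply $i_!$ and work in $\operatorname{Shv}_{\tau_\mathrm{dri}}(\operatorname{Aff})$ with the six-functor formalism \eqref{eq:Big6FF}. By the comparison proposition at the beginning of \S\ref{sec:duality}, the two formalisms agree on representable morphisms. Since $\Delta$ is representable, $\Delta^!$ is unchanged, and it may be computed in the big formalism.

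By Lemma \ref{lem:Zariskiclosedquasismooth}, $i_!\Delta$ is locally on the target a regular surjection of virtual dimension $-d$ between connective algebras. This is exactly the hypothesis of Proposition \ref{prop:deformationofdualizing}, and part (iii) of that proposition gives $\Delta^!1 \simeq s_0^!1$. Here $s_0: X \to N^\mathscr{V}_{X/X\times_YX}$ is the zero section of the normal bundle.

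\textbf{Step 2 (identify the normal bundle).} We need the cotangent complex $\mathbf{L}^\mathscr{V}$ of $i_!\Delta$ in the big topos. Locally on the target it is that of a regular surjection $A\to A/^\mathbf{L}(f_1,\dots,f_d)$, namely free of rank $d$ in degree $-1$. It should agree with the analytic one because, for morphisms between affinoids, the big-topos and analytic cotangent complexes coincide (compare Lemma \ref{lem:analytificationofcotangent}); I would check existence by Proposition \ref{prop:cotangentprops}(ii) and descent.

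Lemma \ref{lem:IdentifyNormal} then identifies $N^\mathscr{V}\simeq V_X^\mathscr{V}(\mathbf{L}_{X/X\times_YX}[-1])\simeq V_X^\mathscr{V}(\mathbf{L}_{X/Y})$, with $s_0$ the zero section. Set $E := \mathbf{L}_{X/Y}^\vee$, which is locally free of rank $d$, so that $E^\vee=\mathbf{L}_{X/Y}$. Lemma \ref{lem:DualizingOfVectorBundle}(iv) gives $s_0^!1\simeq \det_X(\mathbf{L}_{X/Y}^\vee)[-d]$. By \ref{par:detdual}, this is $(\det_X\mathbf{L}_{X/Y})^{\otimes-1}[-d]$. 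Inverting yields $f^!1_Y\simeq \det_X(\mathbf{L}_{X/Y})[d]$.

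\textbf{Step 3 (compatibility with pullback).} Each ingredient is compatible with base change along $Y'\to Y$: suave $!$-pullback commutes with base change; the diagonal, the deformation space and the Weil restriction base-change by Lemma \ref{lem:weilresbasechange}; cotangent complexes base-change by Proposition \ref{prop:cotangentprops}(i); and $\det$ is natural.

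The main obstacle is Step 2: the existence and identification of $\mathbf{L}^\mathscr{V}$ for $i_!\Delta$ in the big topos, matching it with the analytic $\mathbf{L}_{X/Y}$. This requires knowing that square-zero extensions by arbitrary (non-coherent) modules over affinoids behave like the analytic ones. I would try first to prove this locally using the explicit regular presentation together with Lemma \ref{lem:affinoidrelative spectrum}, and then glue via Proposition \ref{prop:cotangentprops}(ii). A secondary point is checking that the resulting equivalences glue canonically, rather than only locally; this is handled by the naturality built into Lemma \ref{lem:DualizingOfVectorBundle}(iv) and Proposition \ref{prop:deformationofdualizing}.
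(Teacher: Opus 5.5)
Your proposal is correct and follows essentially the same route as the paper. Theorem \ref{thm:GD1} reduces the statement to computing $\Delta_f^!1$; this is identified with $s_0^!1$ via Proposition \ref{prop:deformationofdualizing} applied to $i_!(\Delta_f)$, and then computed using Lemma \ref{lem:IdentifyNormal}, Lemma \ref{lem:DualizingOfVectorBundle} and Paragraph \ref{par:detdual}, before inverting. The obstacle you flag in Step 2 is handled in the paper by Lemma \ref{lem:analytificationofcotangent} applied to $i_!(\Delta_f)$ (using $i^*i_!\simeq\operatorname{id}$), which is essentially the argument you sketch: the locally Koszul big-topos cotangent complex matches the analytic $\mathbf{L}_{X/X\times_YX}\simeq\mathbf{L}_{X/Y}[1]$.
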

\begin{proof}
By Theorem \ref{thm:GD1}, it remains to identify the object $\Delta_f^! 1$, which is inverse to $f^!1$. here $\Delta_f: X \to X \times_Y X$ is the diagonal of $f$. By Lemma \ref{lem:SmoothSection} the morphism $\Delta_f$ is quasi-smooth of virtual dimension $-d$, hence by Lemma \ref{lem:Zariskiclosedquasismooth} it is locally regular of virtual dimension $-d$. Hence we may apply Proposition \ref{prop:deformationofdualizing} to the morphism $i_!(\Delta_f)$; this uses that the functor $i_!$ preserves covers, pullbacks and representable objects, c.f. Proposition \ref{prop:analyticinclusion}. We deduce that there is a canonical equivalence $\Delta_f^!1 \simeq s_0^!1$, where $s_0: i_!X \to N_{i_!X/i_!X \times_Y X}$ is the zero-section of the normal bundle. By Lemma \ref{lem:IdentifyNormal}, the latter identifies with the zero-section $s_0: i_!X \to V_{i_!X}^\mathscr{V}(\mathbf{L}_{i_!X/i_!X \times_YX}^\mathscr{V}[-1])$. Hence using Lemma \ref{lem:DualizingOfVectorBundle} and Lemma \ref{lem:analytificationofcotangent} together with the usual fiber sequence on cotangent complexes, we deduce that there is a canonical equivalence $s_0^!1 \simeq (\operatorname{det}_X \mathbf{L}_{X/Y}^\lor)[-d]$. Hence $\Delta_f^!1 \simeq (\operatorname{det}_X \mathbf{L}_{X/Y}^\lor)[-d]$. Finally, by the discussion of Paragraph \ref{par:detdual}, the functor $\operatorname{det}_X$ takes duals to inverses. Hence $(\Delta_f^!1)^{\otimes -1} \simeq (\operatorname{det}_X \mathbf{L}_{X/Y})[d]$. 
\end{proof}

\section{Bonus}\label{sec:bonus}
\egapar It is natural to ask if the deformation to the normal cone can also be implemented as a derived rigid space. In this section we construct such a space and investigate its geometry. This can be regarded as an ``analytified" counterpart to the results of \cite{BenBassatHekkingBlowUp}. It gives a possibly more rigorous way to do the gluing in \cite[Remark 4.2.5]{zavyalov_poincare_2023} and also upgrades some of the results of \emph{loc. cit.} to the derived setting. We emphasize that that results of this section are not necessary for the proof of the Main Theorem \ref{thm:MainThmFinal}. 
\begin{defn}
Let $s: X \to Y$ be a morphism in $\operatorname{Shv}_{\tau_\mathrm{rat}}(\operatorname{Afnd})$. 
\begin{enumerate}[(i)]
\item \begin{enumerate}
    \item We define the \emph{analytic equivariant deformation to the normal cone of $s$}, denoted $\mathscr{D}_{X/Y}$, to be the Weil restriction of $s \times \operatorname{id} : X \times */\mathbf{G}_m^\mathrm{an} \to Y \times */\mathbf{G}_m^\mathrm{an}$ along $0_Y: Y \times */\mathbf{G}_m^\mathrm{an} \to Y \times \mathbf{A}^{1, \mathrm{an}}/\mathbf{G}_m^\mathrm{an}$.
    \item We define the \emph{analytic deformation to the normal cone of $s$}, denoted $D_{X/Y}$, to be the pullback of $\mathscr{D}_{X/Y}$ along $Y \times \mathbf{A}^{1, \mathrm{an}} \to Y \times \mathbf{A}^{1, \mathrm{an}}/\mathbf{G}_m^\mathrm{an}$. Equivalently, $D_{X/Y}$ is the Weil restriction of $s$ along $0_Y: Y \to Y \times \mathbf{A}^{1, \mathrm{an}}$. 
\end{enumerate}
\item \begin{enumerate}
    \item The \emph{analytic equivariant normal bundle of $s$}, denoted $\mathscr{N}_{X/Y}$, is defined to be the pullback of $\mathscr{D}_{X/Y}$ along $0_Y: Y \times */\mathbf{G}_m^\mathrm{an} \to Y \times \mathbf{A}^{1, \mathrm{an}}/\mathbf{G}_m^\mathrm{an}$.
    \item The \emph{analytic normal bundle of $s$}, denoted $N_{X/Y}$, is defined to be the pullback of $\mathscr{N}_{X/Y}$ along $Y \to Y \times */\mathbf{G}_m^\mathrm{an}$. 
\end{enumerate}
\end{enumerate}
\end{defn}

\begin{lem}\label{lem:deformationAnalytic}
Let $s: X \to Y$ be a morphism in $\operatorname{Shv}_{\tau_{\mathrm{rat}}}(\operatorname{Afnd})$. Then:
\begin{enumerate}[(i)]
    \item There is a canonical equivalence $D_{X/Y} \simeq i^*D_{i_!X/i_!Y}^\mathscr{V}$ (and hence a canonical equivalence $N_{X/Y} \simeq i^*N^\mathscr{V}_{i_!X/i_!Y}$); 
    \item There is a $\mathbf{G}_m^\mathrm{an}$-equivariant diagram in $\operatorname{Shv}_{\tau_{\mathrm{rat}}}(\operatorname{Afnd})$ in which all squares are Cartesian: 
    \begin{equation}
\begin{tikzcd}
	X & {X \times \mathbf{A}^{1,\mathrm{an}}} & {X \times \mathbf{G}^{\mathrm{an}}_m} \\
	{N_{X/Y}} & {D_{X/Y}} & {Y \times \mathbf{G}^{\mathrm{an}}_m} \\
	Y & {Y \times \mathbf{A}^{1, \mathrm{an}}} & {Y \times \mathbf{G}^{\mathrm{an}}_m}
	\arrow["{0_X}", from=1-1, to=1-2]
	\arrow["{s_0}"', from=1-1, to=2-1]
	\arrow["{\tilde{s}}"', from=1-2, to=2-2]
	\arrow[from=1-3, to=1-2]
	\arrow["{s \times \mathrm{id}}", from=1-3, to=2-3]
	\arrow[from=2-1, to=2-2]
	\arrow["v", from=2-1, to=3-1]
	\arrow["u"', from=2-2, to=3-2]
	\arrow[from=2-3, to=2-2]
	\arrow["{\operatorname{id}}", from=2-3, to=3-3]
	\arrow["{0_Y}", from=3-1, to=3-2]
	\arrow[from=3-3, to=3-2]
\end{tikzcd}
    \end{equation}
\end{enumerate}
\end{lem}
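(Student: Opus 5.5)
For (i), the plan is to transport the Weil restriction along the geometric morphism $i_! \dashv i^*$ using Lemma \ref{lem:weilresfunctorial}, applied with $F = i_!$ and $G = i^*$. Proposition \ref{prop:analyticinclusion} supplies exactly the hypotheses of that lemma: $i_!$ is left-exact and fully faithful. The first thing to check is that $i_!$ carries the analytic data to the algebraic data. By the definition of analytification, $i_!(Y \times \mathbf{A}^{1,\mathrm{an}}) \simeq i_!i^*a_!\mathbf{A}^{1,\mathrm{alg}} \times i_!Y$, but this is \emph{not} $i_!Y \times a_!\mathbf{A}^{1,\mathrm{alg}}$. So the comparison has to be set up with care.

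The route I would take is to compute $i^*$ of the algebraic Weil restriction through its functor of points on $\operatorname{Afnd}_{/Y \times \mathbf{A}^{1,\mathrm{an}}}$. Take an affinoid $T = \operatorname{Spec}(C)$ mapping to $Y \times \mathbf{A}^{1,\mathrm{an}}$. By adjunction this is the same as a map $i_!T \to i_!Y \times a_!\mathbf{A}^{1,\mathrm{alg}}$, because $\operatorname{Map}(i_!T, a_!\mathbf{A}^{1,\mathrm{alg}}) = \operatorname{Map}(T, \mathbf{A}^{1,\mathrm{an}})$. Consequently
\[
\operatorname{Map}(T, i^*D^\mathscr{V}_{i_!X/i_!Y}) \simeq \operatorname{Map}_{/i_!Y}\bigl(i_!T \times_{i_!Y \times a_!\mathbf{A}^1} i_!Y,\; i_!X\bigr).
\]
On the analytic side we have
\[
\operatorname{Map}(T, D_{X/Y}) \simeq \operatorname{Map}_{/Y}\bigl(T \times_{Y \times \mathbf{A}^{1,\mathrm{an}}} Y,\; X\bigr).
\]
The key fact is then that $i_!$ preserves the fiber product $T \times_{Y\times\mathbf{A}^{1,\mathrm{an}}} Y$. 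One has $0_Y \colon Y \to Y \times \mathbf{A}^{1,\mathrm{an}}$ as the base change of $* \to \mathbf{A}^{1,\mathrm{an}}$, and the fiber product is the derived zero locus $T/^{\mathbf{L}}(t)$, which is an affinoid. Since $i_!$ preserves representables, this yields
\[
i_!(T/^{\mathbf{L}}t) = \operatorname{Spec}(C/^{\mathbf{L}}t) = i_!T \times_{a_!\mathbf{A}^1} *.
\]
Full faithfulness of $i_!$ then identifies the two mapping spaces naturally in $T$, and both sides are sheaves, so $D_{X/Y} \simeq i^*D^\mathscr{V}_{i_!X/i_!Y}$. The statement for $N_{X/Y}$ follows by pulling back along $0_Y$, using that $i^*$ preserves limits and that $i^*(i_!Y) \simeq Y$.

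For (ii), I would argue exactly as in Lemma \ref{lem:deformationspace1}, now inside the topos $\operatorname{Shv}_{\tau_\mathrm{rat}}(\operatorname{Afnd})$. The top-left and middle-left squares, together with the morphisms $\tilde s$, $s_0$, $u$, $v$, come from the definitions and Remark \ref{rmk:Normalbundle}, with adjunction units and counits as before. For the lower-right square, the pullback of $0_Y$ along $Y \times \mathbf{G}_m^\mathrm{an} \to Y \times \mathbf{A}^{1,\mathrm{an}}$ is empty, since $0 \notin \mathbf{G}_m^\mathrm{an}$ can be checked on classical truncations. Lemma \ref{lem:weilresbasechange} then turns the restriction into the Weil restriction along $\emptyset$, which is the terminal object. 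The upper-right square follows in the same way by base change. Alternatively, (ii) can be deduced from Lemma \ref{lem:deformationspace1} by applying $i^*$ together with (i), since $i^*$ preserves limits.

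The main obstacle is the compatibility identified in (i): $i_!$ does not commute with products involving the affine line, so the comparison must go through the fiber products over $0$, where everything is affinoid.
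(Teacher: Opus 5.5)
Your proposal is correct. For (ii), your alternative route is exactly the paper's argument: apply $i^*$ to the $a_!\mathbf{G}_m^{\mathrm{alg}}$-equivariant diagram of Lemma \ref{lem:deformationspace1} for $i_!(s)$ and use (i). Since $i^*$ preserves limits and $i^*a_!\mathbf{G}_m^{\mathrm{alg}}=\mathbf{G}_m^{\mathrm{an}}$, the Cartesian squares and the equivariance come for free. Your intrinsic rerun inside $\operatorname{Shv}_{\tau_{\mathrm{rat}}}(\operatorname{Afnd})$ also works. The emptiness of $*\times_{\mathbf{A}^{1,\mathrm{an}}}\mathbf{G}_m^{\mathrm{an}}$ is immediate, because $i^*$ is also a left adjoint by Proposition \ref{prop:analyticinclusion}; this fiber product is therefore $i^*a_!\emptyset\simeq\emptyset$.

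For (i) the routes genuinely differ. The paper just invokes Lemma \ref{lem:weilresfunctorial} with $F=i_!$. You instead compute the functor of points of both sides on affinoids over $Y\times\mathbf{A}^{1,\mathrm{an}}$ and conclude by full faithfulness of $i_!$. Taken literally, Lemma \ref{lem:weilresfunctorial} identifies $D_{X/Y}$ with $i^*$ of the Weil restriction along $i_!(0_Y)\colon i_!Y\to i_!Y\times i_!\mathbf{A}^{1,\mathrm{an}}$, not along $0_{i_!Y}\colon i_!Y\to i_!Y\times a_!\mathbf{A}^{1,\mathrm{alg}}$. To land on $D^{\mathscr{V}}_{i_!X/i_!Y}$ one also needs three further inputs:
\begin{enumerate}[(a)]
\item $i_!(0_Y)$ is the base change of $0_{i_!Y}$ along the counit $i_!\mathbf{A}^{1,\mathrm{an}}\to a_!\mathbf{A}^{1,\mathrm{alg}}$, i.e.\ $*\times_{a_!\mathbf{A}^{1,\mathrm{alg}}}i_!\mathbf{A}^{1,\mathrm{an}}\simeq *$;
\item Lemma \ref{lem:weilresbasechange};
\item the fact that $i^*$ inverts that counit.
\end{enumerate}
The paper leaves this implicit. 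Your computation supplies exactly this input, in the form $i_!(T\times_{\mathbf{A}^{1,\mathrm{an}}}*)\simeq i_!T\times_{a_!\mathbf{A}^{1,\mathrm{alg}}}*$. So the paper's route is shorter and purely formal, while yours makes the mismatch you correctly flagged explicit.

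One phrasing fix. Left exactness of $i_!$ only gives the fiber product over $i_!\mathbf{A}^{1,\mathrm{an}}$; what you actually use is the comparison with the fiber product over $a_!\mathbf{A}^{1,\mathrm{alg}}$. The cleanest justification runs as follows. The object $i_!T\times_{a_!\mathbf{A}^{1,\mathrm{alg}}}*\simeq\operatorname{Spec}(C/^{\mathbf{L}}t)$ lies in $\operatorname{Afnd}$. The functor $i^*$ is limit-preserving with $i^*i_!\simeq\operatorname{id}$, so it sends this object to $T\times_{\mathbf{A}^{1,\mathrm{an}}}*$. This identifies the latter with $\operatorname{Spec}(C/^{\mathbf{L}}t)$ without needing to factor $T\to\mathbf{A}^{1,\mathrm{an}}$ through a closed disk.
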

\begin{proof}
(i): This follows from Lemma \ref{lem:weilresfunctorial} in which we take the functor $F = i_!$. (ii): Using part (i), the diagram is constructed in the following way: one applies Lemma \ref{lem:deformationspace1} to the morphism $i_!(s): i_!X \to i_!Y$ to obtain a diagram of the form \eqref{eq:deformationdiagram1}, and then one applies the functor $i^*$ to the whole diagram. 
\end{proof}
\egapar\label{paragraph:NormalBundleAnalytic} With notations as in Lemma \ref{lem:deformationAnalytic}. An identical discussion to Remark \ref{rmk:Normalbundle} holds. That is, one has $s \times \operatorname{id} \simeq u \tilde{s}$,  and there is a canonical morphism $p: N_{X/Y} \to X$ over $Y$. 
\begin{lem}
With notations as in Lemma \ref{lem:deformationAnalytic}. If the morphism $s:X \to Y$ admits a cotangent complex $\mathbf{L}_{X/Y}$, then there is a canonical equivalence $N_{X/Y} \simeq V_X^\mathrm{an}(\mathbf{L}_{X/Y}[-1])$.
\end{lem}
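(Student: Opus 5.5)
The plan is to mimic the proof of Lemma \ref{lem:IdentifyNormal} directly in the analytic topos, with the analytic objects of Definition \ref{defn:CotangentComplex}(ii) and Definition \ref{defn:vectorbundles}(ii). By Paragraph \ref{paragraph:NormalBundleAnalytic} one has $N_{X/Y} = 0_Y^\sharp 0_{Y,\flat}X$, where now $0_Y: Y \to Y \times \mathbf{A}^{1,\mathrm{an}}$. By the adjunction $0_Y^\sharp \dashv 0_{Y,\flat}$, for every $S \in \operatorname{Shv}_{\tau_\mathrm{rat}}(\operatorname{Afnd})_{/Y}$ we get $\operatorname{Map}_{/Y}(S, N_{X/Y}) \simeq \operatorname{Map}_{/Y}(0_Y^\sharp 0_{Y,\sharp}S, X)$. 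This equivalence is compatible with the maps to $\operatorname{Map}_{/Y}(S,X)$ induced by $p$ and by the inclusion $S \to 0_Y^\sharp 0_{Y,\sharp}S$.

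The key computation is $0_Y^\sharp 0_{Y,\sharp}S \simeq S \times (\{0\}\times_{\mathbf{A}^{1,\mathrm{an}}}\{0\})$, and I would show $\{0\}\times_{\mathbf{A}^{1,\mathrm{an}}}\{0\} \simeq \operatorname{Spec}(K \oplus K[1])$. Since the rational topology is finitary and $\{0\}$ factors through each disk $\operatorname{Spec}(K\langle \varpi^n T\rangle)$, which is a monomorphism into $\mathbf{A}^{1,\mathrm{an}}$, the fiber product may be computed in $\operatorname{Spec}(K\langle T\rangle)$. There it is $\operatorname{Spec}(K \hat{\otimes}^\mathbf{L}_{K\langle T\rangle} K)$, and the Koszul resolution of $K$ over $K\langle T\rangle$ (using that $T$ is a nonzerodivisor with strict image) gives $K\oplus K[1]$. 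Alternatively, I would obtain this from Lemma \ref{lem:deformationAnalytic}(i) and the algebraic computation $K\otimes^\mathbf{L}_{K[T]}K$, combined with the fact that $K[T]\to K\langle T\rangle$ is a homotopy epimorphism. Hence $0_Y^\sharp 0_{Y,\sharp}S \simeq S[1_S[1]]$, computed via $\underline{\operatorname{Spec}}^\mathrm{an}$. Here $1_S[1]$ need not lie in $\operatorname{Coh}^{\leqslant 0}$, which is the main subtlety discussed below.

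For $[t: T\to X]$ with $T$ affinoid, I would then write the chain
\begin{equation*}
\operatorname{Map}_{/X}(T, V_X^\mathrm{an}(\mathbf{L}_{X/Y}[-1])) \simeq \operatorname{Map}(t^*\mathbf{L}_{X/Y}, 1_T[1]) \simeq \operatorname{Map}_{/Y}(T[1_T[1]],X)_t \simeq \operatorname{Map}_{/X}(T,N_{X/Y}).
\end{equation*}
By Lemma \ref{lem:RelativespecLemma1}(ii) both sides are sheaves on $\operatorname{Afnd}_{/X}$, so Yoneda yields the equivalence over $X$. The same bookkeeping as in Lemma \ref{lem:IdentifyNormal} identifies the zero section and the projection.

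The main obstacle is the middle step. $\operatorname{Der}_{X/Y}$ in Definition \ref{defn:CotangentComplex}(ii) only tests against $M\in\operatorname{Coh}^{\leqslant 0}$, whereas $1_T[1]$ is coconnective. To handle this, I would first try to bypass the issue via Lemma \ref{lem:deformationAnalytic}(i), $N_{X/Y}\simeq i^*N^\mathscr{V}_{i_!X/i_!Y}$, together with Lemma \ref{lem:IdentifyNormal}, Lemma \ref{lem:RelativespecLemma1}(v) and Lemma \ref{lem:analytificationofcotangent}. This requires comparing $\mathbf{L}_{X/Y}$ with $\sigma^*\mathbf{L}^\mathscr{V}_{i_!X/i_!Y}$, which I would check locally on affinoids using Theorem \ref{thm:CoherentCotangent}. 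Failing that, I would argue directly on affinoids that the derivation functor, being defined via square-zero extensions of derived algebras, extends to all of $\operatorname{QCoh}$ and is still corepresented by $\mathbf{L}_{B/A}$.
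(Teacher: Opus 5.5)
Your direct chain is exactly the paper's proof, which says the argument is that of Lemma \ref{lem:IdentifyNormal} \emph{mutatis mutandis}. Your fallback (apply Lemma \ref{lem:IdentifyNormal} to $i_!(s)$ and then $i^*$, via Lemma \ref{lem:deformationAnalytic}(i) and Lemma \ref{lem:RelativespecLemma1}(v)) is the paper's stated alternative. The obstacle you flag is not real, however, so no fallback is needed. $1_T[1]$ is the suspension of the connective object $1_T$, so for $T=\operatorname{Spec}(A)$ one has $H^{-k-1}(1_T[1]) = H^{-k}(A)$ for $k \geqslant 0$, which is finitely generated over $H^0(A)$, and $H^j(1_T[1])=0$ for $j \geqslant 0$. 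Hence $1_T[1] \in \operatorname{Coh}^{\leqslant 0}(T)$, and the middle step of your chain is literally the defining property of $\mathbf{L}_{X/Y}$ in Definition \ref{defn:CotangentComplex}(ii).
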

\begin{proof}
This is the same, \emph{mutatis mutandis}, as the proof of Lemma \ref{lem:IdentifyNormal}. Alternatively, it can be deduced by applying Lemma \ref{lem:IdentifyNormal} to the morphism $i_!(s): i_!X \to i_!Y$ and then applying the functor $i^*$ to the result, using Lemma \ref{lem:deformationAnalytic}(i) and Lemma \ref{lem:RelativespecLemma1}(v). 
\end{proof}
\begin{lem}
With notations as in Lemma \ref{lem:deformationAnalytic}. If the morphism $s: X \to Y$ is a quasi-smooth Zariski-closed immersion of derived rigid spaces, then $D_{X/Y}$ is representable by a derived rigid space. 
\end{lem}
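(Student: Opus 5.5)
The plan is to reduce to the affinoid case with an explicit regular presentation, identify $D_{X/Y}$ there with a concrete derived rigid space, and then glue.

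\emph{Step 1 (locality).} Let $\{Y_i \to Y\}_i$ be a cover by affinoid analytic subspaces. The Weil restriction is compatible with base change (Lemma \ref{lem:weilresbasechange}, transported through the slice equivalence of Proposition \ref{prop:Slicetopoifunctoriality}). Hence $D_{X/Y} \times_{Y \times \mathbf{A}^{1,\mathrm{an}}} (Y_i \times \mathbf{A}^{1,\mathrm{an}}) \simeq D_{X_i/Y_i}$, where $X_i := X \times_Y Y_i$. Each $Y_i \times \mathbf{A}^{1,\mathrm{an}} \hookrightarrow Y \times \mathbf{A}^{1,\mathrm{an}}$ is an analytic subspace, and effective epimorphisms are stable under pullback. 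So $\{D_{X_i/Y_i} \hookrightarrow D_{X/Y}\}_i$ is a cover by analytic subspaces. It therefore suffices to show that each $D_{X_i/Y_i}$ is a derived rigid space. By Lemma \ref{lem:Zariskiclosedquasismooth} we may further assume $Y = \operatorname{Spec}(A)$ is affinoid and $X = \operatorname{Spec}(B)$ with $B = A/^\mathbf{L}(f_1,\dots,f_n)$.

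\emph{Step 2 (algebraic model).} By Lemma \ref{lem:deformationAnalytic}(i) we have $D_{X/Y} \simeq i^* D^\mathscr{V}_{i_!X/i_!Y}$. By Theorem \ref{thm:benbassatextendedRees}, together with $i_!$ preserving representables, this gives $D^\mathscr{V}_{i_!X/i_!Y} \simeq \operatorname{Spec}(R)$ with $R = A[T,S_1,\dots,S_n]/^\mathbf{L}(TS_j - f_j)$. Here $A[T,S]$ means $A \hat{\otimes}^\mathbf{L}_K \alpha(K[T,S])$, that is, $\operatorname{Spec}(A) \times a_!\mathbf{A}^{n+1,\mathrm{alg}}$.

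\emph{Step 3 (analytification).} I will compute $i^*\operatorname{Spec}(R)$ directly, without passing through an analytification of non-finite-type objects. First, $\operatorname{Spec}(R)$ is the fiber product in $\operatorname{Shv}_{\tau_{\mathrm{dri}}}(\operatorname{Aff})$ of $i_!Y \times a_!\mathbf{A}^{n+1,\mathrm{alg}}$ and $*$ over $a_!\mathbf{A}^{n,\mathrm{alg}}$. The map to $a_!\mathbf{A}^{n,\mathrm{alg}}$ is $(TS_j - f_j)_j$ and the map from $*$ is zero. Second, $i^*$ preserves limits, and $i^* a_! \mathbf{A}^{m,\mathrm{alg}} \simeq \mathbf{A}^{m,\mathrm{an}}$ by Proposition \ref{prop:affineline}. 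Third, $i^* i_! \simeq \operatorname{id}$ since $i_!$ is fully faithful. Combining these,
\begin{equation*}
D_{X/Y} \simeq \big(Y \times \mathbf{A}^{n+1,\mathrm{an}}\big) \times_{\mathbf{A}^{n,\mathrm{an}}} \{0\}.
\end{equation*}
This is a fiber product of derived rigid spaces, so Remark \ref{rmk:rigidfinitelimits} shows it lies in $\operatorname{dRig}$. Concretely, it is covered by the affinoids $\operatorname{Spec}\big(A\langle \varpi^m T, \varpi^m S\rangle/^\mathbf{L}(TS_j - f_j)\big)$.

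\emph{Main obstacle.} The point needing care is Step 3. Computing $i^*$ of $\operatorname{Spec}(R)$ requires the fiber-product presentation to hold \emph{in the topos} $\operatorname{Shv}_{\tau_{\mathrm{dri}}}(\operatorname{Aff})$, not merely in $\operatorname{Aff}$. It also requires identifying $i^*(i_!Y \times a_!\mathbf{A}^{n+1,\mathrm{alg}})$ with $Y \times \mathbf{A}^{n+1,\mathrm{an}}$. The first is fine because the Yoneda embedding into a subcanonical sheaf topos preserves limits, and $\tau_{\mathrm{dri}}$ is subcanonical since it is coarser than $\tau_{\mathrm{desc}}$ (Lemma \ref{lem:NCdescent}). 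The second follows because $i^*$ preserves products and $a_!$ is left exact, so $a_!\mathbf{A}^{n+1,\mathrm{alg}} \simeq (a_!\mathbf{A}^{1,\mathrm{alg}})^{n+1}$. The map $TS_j - f_j$ is built from ring operations on $\mathbf{A}^1$, so it is compatible under $i^*$. Step 1's base-change compatibility also needs checking: pullback along an analytic subspace must commute with Weil restriction along $0_Y$. This is exactly the content of Lemma \ref{lem:weilresbasechange} applied to the Cartesian square formed by $0_{Y_i}$ and $0_Y$.
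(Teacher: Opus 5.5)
Your proposal is correct and follows the paper's own proof. Both localize on $Y$ using base-change compatibility of Weil restriction and reduce to a regular presentation via Lemma \ref{lem:Zariskiclosedquasismooth}. Both then use Theorem \ref{thm:benbassatextendedRees} to write $D^\mathscr{V}_{i_!X/i_!Y}$ as $\{0\}\times_{a_!\mathbf{A}^{n,\mathrm{alg}}}(i_!Y\times a_!\mathbf{A}^{n+1,\mathrm{alg}})$, and apply $i^*$ to obtain $\{0\}\times_{\mathbf{A}^{n,\mathrm{an}}}(Y\times\mathbf{A}^{n+1,\mathrm{an}})$. Your ``main obstacle'' paragraph supplies details the paper leaves implicit: subcanonicity of $\tau_{\mathrm{dri}}$, limit-preservation of $i^*$, and $i^*i_!\simeq\operatorname{id}$.
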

\begin{proof}
By the compatibility of Weil restrictions with base-change, the question is local on $Y$.
Hence using Lemma \ref{lem:Zariskiclosedquasismooth} we reduce to the case when $Y = \operatorname{Spec}(A)$ and $X = \operatorname{Spec}(B)$ where $B \simeq A/^\mathbf{L}(f_1, \dots f_n)$ for some $f_i: K \to A$. In this case, by Theorem \ref{thm:benbassatextendedRees}, $D_{i_!X/i_!Y}^\mathscr{V} \simeq \operatorname{Spec}(R_{B/A})$ where $R_{B/A} \simeq  A[T, S_i]/^\mathbf{L}(T S_i  - f_i)$, so that one has $D_{i_!X/i_!Y}^\mathscr{V}  \simeq \{0\} \times_{a_!\mathbf{A}^{n, \mathrm{alg}}} (i_!Y \times a_!\mathbf{A}^{n+1, \mathrm{alg}})$ and hence $D_{X/Y} \simeq \{0\} \times_{\mathbf{A}^{n, \mathrm{an}}} (Y \times \mathbf{A}^{n+1, \mathrm{an}})$ is a derived rigid space. 
\end{proof}
\begin{prop}
Let $f:Y \to X$ be a smooth separated morphism of derived rigid spaces which admits a section $s: X \to Y$. (Hence by Lemma \ref{lem:SmoothSection}, $s$ is a quasi-smooth Zariski-closed immersion). Then there is a commutative diagram of derived rigid spaces 
\begin{equation}
\begin{tikzcd}
	{N_{X/Y}} & {D_{X/Y}} & {Y \times \mathbf{G}^{\mathrm{an}}_m} \\
	X & {X \times \mathbf{A}^{1, \mathrm{an}}} & {X \times \mathbf{G}^{\mathrm{an}}_m}
	\arrow[from=1-1, to=1-2]
	\arrow["{p}"', shift right, from=1-1, to=2-1]
	\arrow["{\tilde{f}}"', shift right, from=1-2, to=2-2]
	\arrow[from=1-3, to=1-2]
	\arrow["{f \times \operatorname{id}}"', shift right, from=1-3, to=2-3]
	\arrow["{s_0}"{description}, shift right, curve={height=6pt}, from=2-1, to=1-1]
	\arrow[from=2-1, to=2-2]
	\arrow["{\tilde{s}}"{description}, shift right, curve={height=6pt}, from=2-2, to=1-2]
	\arrow["{s \times \operatorname{id}}"{description}, shift right, curve={height=6pt}, from=2-3, to=1-3]
	\arrow[from=2-3, to=2-2]
\end{tikzcd}
\end{equation}
Here $s_0, \tilde{s}$ are as in Lemma \ref{lem:deformationAnalytic}, $\tilde{f}$ is the composite $(f \times \operatorname{id}) \circ u$, and $p$ is as in Paragraph \ref{paragraph:NormalBundleAnalytic}. Further, the morphism $\tilde{f}$ is smooth, and if $f$ is partially-proper, then $\tilde{f}$ is also partially proper. 
\end{prop}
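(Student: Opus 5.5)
The diagram comes first, and it is mostly formal. I apply functoriality of the deformation space, i.e. of Weil restriction, to the map of pairs $(X \xrightarrow{s} Y) \to (X \xrightarrow{\mathrm{id}} X)$ over $f$. Concretely, the commutative square $f \circ s = \mathrm{id}_X$ gives a morphism from $s$ to $\mathrm{id}_X$ in the arrow category. Weil restriction along $0_Y: Y \to Y\times\mathbf{A}^{1,\mathrm{an}}$, followed by pushforward along $f\times\mathrm{id}$, gives a map
\begin{equation*}
D_{X/Y} \to D_{X/X} \simeq X\times \mathbf{A}^{1,\mathrm{an}} ,
\end{equation*}
which I call $\tilde f$. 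Here $D_{X/X}$ is the Weil restriction of $\mathrm{id}$, which is the terminal object over $X\times\mathbf{A}^{1,\mathrm{an}}$. One checks that $\tilde f$ agrees with $(f\times\mathrm{id})\circ u$. Pulling back along $0_X$ and along $X\times\mathbf{G}_m^{\mathrm{an}}$, and using the Cartesian squares of Lemma \ref{lem:deformationAnalytic}(ii) together with the compatibility of Weil restriction with base change (Lemma \ref{lem:weilresbasechange}), produces the outer columns and identifies them with $p$ and $f\times\mathrm{id}$. The identities $\tilde f\tilde s\simeq\mathrm{id}$, $ps_0\simeq \mathrm{id}$ and $(f\times\mathrm{id})(s\times\mathrm{id}) \simeq \mathrm{id}$ follow from $fs\simeq\mathrm{id}$ by naturality. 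Representability of all objects by derived rigid spaces is the preceding Lemma.

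For smoothness of $\tilde f$ I work locally on $Y$ and $X$ and use the cotangent complex. The restriction of $\tilde f$ over $X\times\mathbf{G}_m^{\mathrm{an}}$ is $f\times\mathrm{id}$, which is smooth. The restriction over $0$ is $p: N_{X/Y}=V_X^{\mathrm{an}}(s^*\mathbf{L}_{Y/X})\to X$, which is smooth of the same dimension by Lemma \ref{lem:SmoothSection} and the identification $N_{X/Y}\simeq V_X^{\mathrm{an}}(\mathbf{L}_{X/Y}[-1])$. To make this rigorous I localize. Using Proposition \ref{prop:SmooothPartiallyProper}, or more simply the local structure of smooth morphisms as \'etale over a polydisk, I reduce to the case where $Y\to X$ is, analytically locally near $s(X)$, \'etale over $X\times\mathbf{D}^d$ with $s$ sent to the zero section. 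The explicit Rees presentation of Theorem \ref{thm:benbassatextendedRees}, $R_{B/A}=A[T,S_i]/^{\mathbf L}(TS_i-f_i)$, then shows that $D_{X/Y}$ is locally \'etale over $X\times\mathbf{A}^{1,\mathrm{an}}\times\mathbf{A}^{d,\mathrm{an}}$. From this I read off that $\mathbf{L}_{D_{X/Y}/X\times\mathbf{A}^1}$ is locally free of rank $d$, and that the classical truncation is smooth by Huber's criterion. The main obstacle is this step. The deformation space of a regular immersion is not visibly smooth over $\mathbf{A}^1$ unless the $f_i$ come from coordinates, so I will choose the $f_i$ to be coordinates $S_i$ of the polydisk pulled back to $Y$. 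I will also need the \'etale-local structure to pass through Weil restriction. I expect to use that $D_{-/-}$ sends \'etale maps of pairs that are isomorphisms on $X$ to \'etale maps, which I would check on Rees algebras via the cotangent complex.

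For partial properness I use Lemma \ref{lem:partiallypropersourcetarget} and the classical truncation, together with Theorem \ref{thm:topologicalinvariance}. By Corollary \ref{cor:partiallyproperproperties}(ii) it suffices to show that $u\colon D_{X/Y}\to Y\times\mathbf{A}^{1,\mathrm{an}}$ is partially proper, since then $(f\times\mathrm{id})\circ u$ is partially proper. Separatedness follows from the local Rees description, because $D_{X/Y}$ is locally Zariski-closed in an affine space over $Y$. For the specialization condition I argue pointwise on $|D_{X/Y}|$, which is the union of the open $|Y\times\mathbf{G}_m^{\mathrm{an}}|$ and the closed $|N_{X/Y}|$. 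The closure of a point in the generic part maps to a closed subset, using partial properness of $f$ and the fact that $N_{X/Y}\to X$ is a vector bundle and hence partially proper. A local Rees computation checks that specializations do not escape through the boundary in the $S_i$ directions.
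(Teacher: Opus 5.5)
Your construction of the diagram is fine; the paper treats it as evident, and its proof opens with ``the only thing to establish is that $\tilde f$ is smooth''. Your smoothness argument is correct in outline but takes a different route.

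\textbf{The paper's route.} It reduces to $X=\operatorname{Spec}(B)$ and $Y=\operatorname{Spec}(A)$ affinoid with $\mathbf{L}_{A/B}$ free, using base change of Weil restrictions and, away from $s(X)$, Lemma \ref{lem:deformationempty}. It applies Lemma \ref{lem:Nakayama} twice to choose $f_i\in I$ with $A/^{\mathbf{L}}(f_1,\dots,f_n)\simeq B$ and $df_1,\dots,df_n$ a basis of $\mathbf{L}_{A/B}$. It then computes $\mathbf{L}_{R_{B/A}/B[T]}$ from $B[T]\to A[T,S_i]\to R_{B/A}$, transfers this with Lemma \ref{lem:analytificationofcotangent}, and handles the classical truncation by the Jacobi criterion.

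\textbf{Your route.} You start from an \'etale chart $(f,g)\colon Y\to X\times\mathbf{A}^{d,\mathrm{an}}$ adapted to the section. The inputs agree: the paper's condition $(\star)$ says exactly that $(f,f_1,\dots,f_n)$ is \'etale with zero fibre $X$. Your viewpoint makes the rest more formal than you anticipate. The paper's presentation $D_{X/Y}\simeq\{0\}\times_{\mathbf{A}^{n,\mathrm{an}}}(Y\times\mathbf{A}^{n+1,\mathrm{an}})$, with equations $TS_i-f_i$, says precisely that $D_{X/Y}\simeq Y\times_{X\times\mathbf{A}^{n,\mathrm{an}}}(X\times\mathbf{A}^{1,\mathrm{an}}\times\mathbf{A}^{n,\mathrm{an}})$. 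This is the base change of the chart along $(x,T,S)\mapsto(x,TS)$. Hence $D_{X/Y}$ is \'etale over $X\times\mathbf{A}^{1+n,\mathrm{an}}$, and both halves of smoothness (cotangent complex and classical truncation) follow at once. You need no cotangent computation on Rees algebras, no Jacobi criterion, and no general statement about \'etale maps of pairs.

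\textbf{What you must still supply.}
First, the derived chart. Proposition \ref{prop:SmooothPartiallyProper} is unavailable, since $f$ is not assumed Kiehl partially proper. So lift classical \'etale coordinates, after subtracting $f^*s^*g_i$ so that they vanish on $s(X)$, and check \'etaleness via the cotangent complex.
Second, a shrinking step you did not mention. The pullback of your chart along the zero section is \'etale and separated over $X$, and $s$ identifies $X$ only with an open and closed piece of it. You must therefore shrink $Y$ around $s(X)$ to discard the other pieces before $A/^{\mathbf{L}}(g_1,\dots,g_d)\simeq B$ holds. This is exactly what the paper's Nakayama localizations achieve.

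\textbf{Partial properness.} The paper gives no argument for this claim, so here you go beyond it, but your sketch should be repaired.
Corollary \ref{cor:partiallyproperproperties}(ii) is a cancellation statement. What you need is stability of partial properness under composition.
Your pointwise argument on $|D_{X/Y}|$ is vague and beside the point. Partial properness of $u$ does not involve $f$ at all, and the closure condition must hold after every base change.
Use instead the local description you already invoke for separatedness:
\begin{itemize}
\item Cover $Y$ by affinoids $W$ on which $s$ is cut out by a regular sequence $f_1,\dots,f_n$ (Lemma \ref{lem:Zariskiclosedquasismooth}). When $W\cap s(X)=\emptyset$, take $n=1$ and $f_1=1$ (Lemma \ref{lem:deformationempty}).
\item Over each $W\times\mathbf{A}^{1,\mathrm{an}}$, the map $u$ is a Zariski-closed immersion into $W\times\mathbf{A}^{1+n,\mathrm{an}}$ followed by a base change of $\mathbf{A}^{n,\mathrm{an}}\to *$. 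The first is partially proper by Corollary \ref{cor:partiallyproperproperties}(i), the second because $\mathbf{A}^{n,\mathrm{an}}\to *$ is partially proper.
\item Hence $u$ is partially proper by Lemma \ref{lem:partiallypropersourcetarget}(i), and $\tilde f=(f\times\mathrm{id})\circ u$ is partially proper whenever $f$ is.
\end{itemize}
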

\begin{proof}
The only thing to establish is that $\tilde{f}$ is smooth. Let $x \in X$ be any point of the underlying topological space and let $x \in U \subseteq X$ be an affinoid open neighbourhood. Put $V := f^{-1}(U)$, then $s$ restricts to $s: U \to V$. Let $s(x) \in V^\prime \subseteq V$ be an affinoid open subset chosen to be small enough so that $\mathbf{L}_{Y/X}$ is free when restricted to $V^\prime$. Let $U^\prime$ be the pullback of $V^\prime$ along $U \to V$. Then $x \in U^\prime$ is an affinoid neighbourhood of $x$. By universal property of the fiber product, $U^\prime \to V^\prime$ factors over $V^{\prime \prime} := V^\prime \times_V f^{-1}(U^\prime) = V^\prime \times_U U^\prime$. We note that $s(x) \in V^{\prime \prime}$ is an affinoid open neighbourhood and $s: X \leftrightarrows Y:f$ restricts to $s: U^\prime \leftrightarrows V^{\prime \prime} : f $. By the compatibility of Weil restrictions with base-change one computes $D_{X/Y} \times_{X} U^\prime  = D_{U^\prime/f^{-1}(U^\prime)}$ and $D_{U^\prime/f^{-1}(U^\prime)} \times_{f^{-1}(U^\prime)} V^{\prime \prime} = D_{U^\prime/V^{\prime \prime}}$, since $U^\prime \to f^{-1}(U^\prime)$ factors over $V^{\prime \prime}$. As $x$ varies, the subspaces $U^\prime$ and $V^{\prime \prime}$ constructed in this way cover $X$ and $s(X)$, respectively. If $W \subseteq Y \setminus s(X)$ is a subspace contained in the complement, then by Lemma \ref{lem:deformationempty} below, $D_{X/Y} \times_Y W = D_{\emptyset/W} = W \times \mathbf{G}^{\mathrm{an}}_m$. Hence to prove the Proposition, we can always replace $Y$ by an open neighbourhood containing $s(X)$. We have effectively reduced to the situation that $X = \operatorname{Spec}(B)$, $Y = \operatorname{Spec}(A)$ are both derived affinoids, $f:Y \to X$ is a smooth morphism with section $s$, and $\mathbf{L}_{Y/X} = \mathbf{L}_{A/B}$ is free, of rank $n$ say. We will now work under these assumptions. 

From $B \to A \to B$ one obtains a fiber sequence $B\hat{\otimes}^\mathbf{L}_A\mathbf{L}_{A/B} \to 0 \to \mathbf{L}_{B/A}$. The connecting homomorphism $d: \mathbf{L}_{B/A}[1] \to B\hat{\otimes}^\mathbf{L}_A\mathbf{L}_{A/B}$ induces on $H^0$ the isomorphism $I/I^2 \xrightarrow[]{\sim} H^0(B) \hat{\otimes}_{H^0(A)}H^0(\mathbf{L}_{A/B})$ induced by $d: I \to H^0(\mathbf{L}_{A/B})$. By Nakayama (see Lemma \ref{lem:Nakayama}), after possibly replacing $Y$ by a rational open subset containing $s(X)$, we may choose $f_i^0 \in I$ whose images $\overline{f^0_i}$ in $I/I^2$ form a basis. Then the $df_i^0$ form a basis for $H^0(\mathbf{L}_{A/B})$ modulo $I$. But by Nakayama's lemma again we see that, after possibly replacing $Y$ by a rational open subset containing $s(X)$, the $df_i^0$ form a basis for $H^0(\mathbf{L}_{A/B})$. If $f_i: K \to A$ are lifts of $f_i^0$, then $df_i: K \to \mathbf{L}_{A/B}$ are lifts of $df_i^0$ and hence form a basis for $\mathbf{L}_{A/B}$. Further, proceeding as in Lemma \ref{lem:Zariskiclosedquasismooth} we see that $A \to B$ factors over an equivalence $A \to A/^\mathbf{L}(f_1,\dots,f_n) \xrightarrow[]{\sim} B$. In other words, we may and will assume the following:
\begin{itemize}
    \item[($\star$)] $A \to B$ is of the form $A \to A/^\mathbf{L}(f_1,\dots,f_n)$ where the $f_i:K \to A$ are such that $df_1,\dots,df_n$ forms a basis for $\mathbf{L}_{A/B}$.
\end{itemize}
Now by Theorem \ref{thm:benbassatextendedRees}, the Rees algebra may be computed as $R_{B/A} = A[T,S_i]/^\mathbf{L}(TS_i-f_i)$. Hence $\mathbf{L}_{R_{B/A}/A[T,S_i]} \simeq \oplus_{i=1}^n R_{B/A}u_i[1]$, here $u_i$ is just a placeholder for the basis element. We consider the sequence $B[T] \to A[T,S_i] \to R_{B/A}$. This induces a fiber sequence 
\begin{equation}
R_{B/A}\hat{\otimes}^\mathbf{L}_{A[T,S_i]} \mathbf{L}_{A[T,S_i]/B[T]}  \to \mathbf{L}_{R_{B/A}/B[T]} \to \mathbf{L}_{R_{B/A}/A[T,S_i]}
\end{equation}
The connecting homomorphism $\oplus_{i=1}^n R_{B/A}u_i \to R_{B/A} \hat{\otimes}^\mathbf{L}_{A[T,S_i]}\mathbf{L}_{A[T,S_i]/B[T]}$ is determined by $u_i \mapsto 1 \otimes (T dS_i - df_i)$. By $(\star)$ the cotangent complex $\mathbf{L}_{A[T,S_i]/B[T]}$ is free of rank $2n$ on (the images of) $dS_i$ and $df_i$. By rotating the triangle, it is then clear that $\mathbf{L}_{R_{B/A}/B[T]}$ is free of rank $n$. In other words, we have computed that the cotangent complex of $D^\mathscr{V}_{i_!X/i_!Y} \to i_!X \times a_!\mathbf{A}^{1, \mathrm{alg}}$ is free of rank $n$. By Lemma \ref{lem:analytificationofcotangent} this implies that the cotangent complex of $D_{X/Y} \to X \times \mathbf{A}^{1, \mathrm{an}}$ is free of rank $n$. To complete the proof we need to show that the classical truncation is smooth. In the factorization $t_!D_{X/Y} \to t_!Y \times \mathbf{A}^{n+1,\mathrm{an}} \to t_!X \times \mathbf{A}^{1, \mathrm{an}}$ the first morphism exhibits $t_!D_{X/Y}$ as the zero-locus of the $TS_i-f_i^0$. Hence smoothness of $t_!D_{X/Y} \to t_!X \times \mathbf{A}^{1, \mathrm{an}}$ may be deduced from the Jacobi criterion \cite[Proposition 2.5]{Lutkebohmertmaximum}.
\end{proof}
\begin{lem}\label{lem:deformationempty}
Let $Y \in \mathrm{dRig}$. Then there is a canonical equivalence $ Y \times \mathbf{G}_m^\mathrm{an} \xrightarrow[]{\sim} D_{\emptyset/Y}$ over $Y \times \mathbf{A}^{1, \mathrm{an}}$. 
\end{lem}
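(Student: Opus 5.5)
The plan is to argue with the functor of points and the adjunction defining Weil restriction, after reducing the analytic statement to the algebro-analytic one in $\operatorname{Shv}_{\tau_{\mathrm{dri}}}(\operatorname{Aff})$. By definition $D_{\emptyset/Y}$ is the Weil restriction $0_{Y,\flat}\emptyset$ of the empty morphism $\emptyset \to Y$ along $0_Y: Y \to Y \times \mathbf{A}^{1,\mathrm{an}}$. The natural candidate map comes by adjunction from the identity of $\emptyset$. Indeed, $0_Y^\sharp(Y \times \mathbf{G}_m^\mathrm{an}) = Y \times_{Y \times \mathbf{A}^{1,\mathrm{an}}} (Y \times \mathbf{G}_m^\mathrm{an}) \simeq Y \times (\{0\} \times_{\mathbf{A}^{1,\mathrm{an}}} \mathbf{G}_m^\mathrm{an}) \simeq \emptyset$, since the fiber product $\{0\}\times_{\mathbf{A}^{1,\mathrm{an}}}\mathbf{G}_m^{\mathrm{an}}$ is empty. (On affinoid points, a map to $\mathbf{G}_m^{\mathrm{an}}$ sends $T$ to a unit, which is incompatible with $T\mapsto 0$ unless the affinoid is zero; and the zero affinoid is the initial sheaf.) This emptiness produces the map $Y \times \mathbf{G}_m^\mathrm{an} \to D_{\emptyset/Y}$ over $Y \times \mathbf{A}^{1,\mathrm{an}}$.

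To show this map is an equivalence, I would compute the functor of points. For $T \to Y \times \mathbf{A}^{1,\mathrm{an}}$ with $T$ affinoid, adjunction gives
\[
\operatorname{Map}_{/Y\times\mathbf{A}^{1,\mathrm{an}}}(T, D_{\emptyset/Y}) \simeq \operatorname{Map}_{/Y}(T\times_{Y\times\mathbf{A}^{1,\mathrm{an}}} Y, \emptyset).
\]
This space is contractible if $T_0 := T \times_{\mathbf{A}^{1,\mathrm{an}}}\{0\}$ is empty, and empty otherwise. Thus $D_{\emptyset/Y}$ is the subsheaf of $Y\times\mathbf{A}^{1,\mathrm{an}}$ consisting of those $T$ whose zero fiber is empty, namely whose coordinate $t$ is invertible in $H^0(\mathscr{O}(T))$. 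Here the fiber is $\operatorname{Spec}(A/^{\mathbf{L}} t)$, and it vanishes iff $t$ is a unit in $H^0A$. Using the description of $\mathbf{G}_m^{\mathrm{an}}\subseteq\mathbf{A}^{1,\mathrm{an}}$ as an analytic subspace, together with the functor-of-points formula \eqref{eq:liftfunctorofpoints}, this is exactly the subsheaf $Y\times\mathbf{G}_m^{\mathrm{an}}$. A second route is to invoke Lemma \ref{lem:deformationAnalytic}(i), which gives $D_{\emptyset/Y} \simeq i^*D^{\mathscr{V}}_{\emptyset/i_!Y}$, and to run the same argument in $\operatorname{Shv}_{\tau_{\mathrm{dri}}}(\operatorname{Aff})$ using the last step of the proof of Lemma \ref{lem:deformationspace1}. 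There, the pullback of $0_Y$ to $Y\times a_!\mathbf{G}_m^{\mathrm{alg}}$ is empty, and Weil restriction preserves the terminal object.

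The main obstacle is the claim that "$A/^{\mathbf{L}}t \simeq 0$ iff $t$ is a unit in $H^0A$" for derived affinoids $A$, together with the identification of this condition with factoring through $\mathbf{G}_m^{\mathrm{an}}$. For the first part, note that $A/^{\mathbf{L}}t$ is connective with $H^0 = H^0A/t$, and a connective object vanishes iff its $H^0$ does. For the second part, Theorem \ref{thm:topologicalinvariance} reduces the question to classical truncations, where it is the usual statement that $\mathbf{G}_m^{\mathrm{an}}$ is the locus where $T$ is invertible.
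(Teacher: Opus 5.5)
Your proof is correct, but it takes a different route from the paper. The paper first reduces to $Y=\operatorname{Spec}(A)$ affinoid. It then regards $A\to 0$ as the regular surjection $A\to A/^{\mathbf{L}}(1)$ and reads off from Theorem \ref{thm:benbassatextendedRees}(ii) that $R_{0/A}\simeq A[T,S]/^{\mathbf{L}}(TS-1)$, hence $D^{\mathscr{V}}_{\emptyset/i_!Y}\simeq i_!Y\times a_!\mathbf{G}_m^{\mathrm{alg}}$. Applying $i^*$ via Lemma \ref{lem:deformationAnalytic}(i) finishes the proof.

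You never compute a Rees algebra. From the adjunction $0_Y^\sharp\dashv 0_{Y,\flat}$ and strictness of the initial object, you show that $D_{\emptyset/Y}\to Y\times\mathbf{A}^{1,\mathrm{an}}$ is a monomorphism whose affinoid points are exactly those $T$ with empty zero-fiber. You then match this locus with $Y\times\mathbf{G}_m^{\mathrm{an}}$ by an explicit computation.

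Your argument buys several things: it is self-contained, needs no reduction to affinoid $Y$, and does not depend on the Ben-Bassat--Hekking identification of the deformation space with $\operatorname{Spec}$ of the extended Rees algebra. It also proves the general fact that Weil restriction of $\emptyset$ along a morphism $W\to V$ is the subsheaf of $V$ of points avoiding $W$. The paper's argument is a one-liner given machinery already used elsewhere in \S\ref{sec:bonus}, and it yields the algebro-analytic statement in $\operatorname{Shv}_{\tau_{\mathrm{dri}}}(\operatorname{Aff})$ directly.

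For the final identification you could bypass Theorem \ref{thm:topologicalinvariance}. Exactly as in Proposition \ref{prop:affineline}, adjunction gives $\operatorname{Map}(\operatorname{Spec}(A),\mathbf{G}_m^{\mathrm{an}})\simeq\operatorname{Map}(K[T,T^{-1}],A)$. This is the union of those components of $\operatorname{Map}(K[T],A)$ on which $t$ is a unit of $H^0(A)$.

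One sentence needs fixing: a connective object does not vanish merely because its $H^0$ does (take $M[1]$ with $M\neq 0$ in the heart). What you actually need is that $A/^{\mathbf{L}}t$ is a connective \emph{algebra}. Then each $H^n(A/^{\mathbf{L}}t)$ is a module over $H^0(A/^{\mathbf{L}}t)$ and vanishes when the latter does. Alternatively, $A/^{\mathbf{L}}t\simeq 0$ if and only if multiplication by $t$ is an automorphism of $A$. That holds if and only if $t$ acts invertibly on every $H^n(A)$, i.e. if and only if $t\in H^0(A)^\times$.
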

\begin{proof}
The canonical morphism is deduced by adjunction from the empty morphism $\emptyset = 0_Y^\sharp(Y \times \mathbf{G}_m^\mathrm{an}) \to \emptyset$ over $Y$. To check this is an equivalence we reduce to the case when $Y = \operatorname{Spec}(A)$ is affinoid. By Theorem \ref{thm:benbassatextendedRees} we see that $R_{0/A} = A[T,S]/^\mathbf{L}(TS-1)$ so that $D^\mathscr{V}_{\emptyset/i_!Y} = i_!Y \times a_! \mathbf{G}_m^\mathrm{alg}$. Hence $Y \times \mathbf{G}_m^\mathrm{an} \xrightarrow[]{\sim} D_{\emptyset/Y}$.
\end{proof}
\printbibliography

@misc{hekking2021gradedalgebrasprojectivespectra,
      title={Graded algebras, projective spectra and blow-ups in derived algebraic geometry}, 
      author={Jeroen Hekking},
      year={2021},
      eprint={2106.01270},
      archivePrefix={arXiv},
      primaryClass={math.AG},
      url={https://arxiv.org/abs/2106.01270}, 
}

@misc{kedlaya2015reifiedvaluationsadicspectra,
      title={Reified valuations and adic spectra}, 
      author={Kiran S. Kedlaya},
      year={2015},
      eprint={1309.0574},
      archivePrefix={arXiv},
      primaryClass={math.NT},
      url={https://arxiv.org/abs/1309.0574}, 
}

@article{ShiftedSymplectic,
    AUTHOR = {Pantev, Tony and To\"en, Bertrand and Vaqui\'e, Michel and
              Vezzosi, Gabriele},
     TITLE = {Shifted symplectic structures},
   JOURNAL = {Publ. Math. Inst. Hautes \'Etudes Sci.},
  FJOURNAL = {Publications Math\'ematiques. Institut de Hautes \'Etudes
              Scientifiques},
    VOLUME = {117},
      YEAR = {2013},
     PAGES = {271--328},
      ISSN = {0073-8301,1618-1913},
   MRCLASS = {14F05 (14A15 18F20 18G30 53D05 53D12)},
  MRNUMBER = {3090262},
MRREVIEWER = {Andrey\ Yu.\ Lazarev},
       DOI = {10.1007/s10240-013-0054-1},
       URL = {https://doi.org/10.1007/s10240-013-0054-1},
}

@misc{schneider2023reciprocitylaws,
      title={Reciprocity laws for $(\varphi_L,\Gamma_L)$-modules over {L}ubin-{T}ate extensions}, 
      author={Peter Schneider and Otmar Venjakob},
      year={2023},
      eprint={2301.11606},
      archivePrefix={arXiv},
      primaryClass={math.NT},
      url={https://arxiv.org/abs/2301.11606}, 
}

@book{GaitsgoryStudy2,
    AUTHOR = {Gaitsgory, Dennis and Rozenblyum, Nick},
     TITLE = {A study in derived algebraic geometry. {V}olume {II}.
              {D}eformations, {L}ie theory and formal geometry},
    SERIES = {Mathematical Surveys and Monographs},
    VOLUME = {221},
 PUBLISHER = {American Mathematical Society, Providence, RI},
      YEAR = {2017},
     PAGES = {xxxv+436},
      ISBN = {978-1-4704-3570-7},
   MRCLASS = {14F05 (18D05 18G55)},
  MRNUMBER = {3701353},
MRREVIEWER = {Adrian\ Langer},
       DOI = {10.1090/surv/221.2},
       URL = {https://doi.org/10.1090/surv/221.2},
}

@article{Roch,
    AUTHOR = {Roch, G.},
     TITLE = {Ueber die {A}nzahl der willk\"urlichen {C}onstanten in
              algebraischen {F}unctionen},
   JOURNAL = {J. Reine Angew. Math.},
  FJOURNAL = {Journal f\"ur die Reine und Angewandte Mathematik. [Crelle's
              Journal]},
    VOLUME = {64},
      YEAR = {1865},
     PAGES = {372--376},
      ISSN = {0075-4102,1435-5345},
   MRCLASS = {99-04},
  MRNUMBER = {1579304},
       DOI = {10.1515/crll.1865.64.372},
       URL = {https://doi.org/10.1515/crll.1865.64.372},
}

@book{ChevalleyIntroduction,
    AUTHOR = {Chevalley, Claude},
     TITLE = {Introduction to the {T}heory of {A}lgebraic {F}unctions of
              {O}ne {V}ariable},
    SERIES = {Mathematical Surveys},
    VOLUME = {No. VI},
 PUBLISHER = {American Mathematical Society, New York},
      YEAR = {1951},
     PAGES = {xi+188},
   MRCLASS = {14.0X},
  MRNUMBER = {42164},
MRREVIEWER = {O.\ Zariski},
}

@article{HasseDuality,
    AUTHOR = {Hasse, Helmut},
     TITLE = {Theorie der {D}ifferentiale in algebraischen
              {F}unktionenk\"orpern mit vollkommenem {K}onstantenk\"orper},
   JOURNAL = {J. Reine Angew. Math.},
  FJOURNAL = {Journal f\"ur die Reine und Angewandte Mathematik. [Crelle's
              Journal]},
    VOLUME = {172},
      YEAR = {1935},
     PAGES = {55--64},
      ISSN = {0075-4102,1435-5345},
   MRCLASS = {99-04},
  MRNUMBER = {1581436},
       DOI = {10.1515/crll.1935.172.55},
       URL = {https://doi.org/10.1515/crll.1935.172.55},
}

@article{SerreComplex,
    AUTHOR = {Serre, Jean-Pierre},
     TITLE = {Un th\'eor\`eme de dualit\'e},
   JOURNAL = {Comment. Math. Helv.},
  FJOURNAL = {Commentarii Mathematici Helvetici},
    VOLUME = {29},
      YEAR = {1955},
     PAGES = {9--26},
      ISSN = {0010-2571,1420-8946},
   MRCLASS = {56.0X},
  MRNUMBER = {67489},
MRREVIEWER = {S.\ Chern},
       DOI = {10.1007/BF02564268},
       URL = {https://doi.org/10.1007/BF02564268},
}

@inproceedings{SerreICM,
    AUTHOR = {Serre, Jean-Pierre},
     TITLE = {Cohomologie et g\'eom\'etrie alg\'ebrique},
 BOOKTITLE = {Proceedings of the {I}nternational {C}ongress of
              {M}athematicians, 1954, {A}msterdam, vol. {III}},
     PAGES = {515--520},
 PUBLISHER = {Erven P. Noordhoff N. V., Groningen},
      YEAR = {1956},
   MRCLASS = {14.0X},
  MRNUMBER = {87208},
MRREVIEWER = {F.\ Hirzebruch},
}

@book{hartshorne_residues_1966,
	address = {Berlin, Heidelberg},
	series = {Lecture {Notes} in {Mathematics}},
	title = {Residues and {Duality}},
	volume = {20},
	isbn = {978-3-540-03603-6 978-3-540-34794-1},
	url = {http://link.springer.com/10.1007/BFb0080482},
	doi = {10.1007/BFb0080482},
	urldate = {2022-09-18},
	publisher = {Springer},
	author = {Hartshorne, Robin},
	year = {1966},
}

@article{NikolausPrincipal,
    AUTHOR = {Nikolaus, Thomas and Schreiber, Urs and Stevenson, Danny},
     TITLE = {Principal {$\infty$}-bundles: general theory},
   JOURNAL = {J. Homotopy Relat. Struct.},
  FJOURNAL = {Journal of Homotopy and Related Structures},
    VOLUME = {10},
      YEAR = {2015},
    NUMBER = {4},
     PAGES = {749--801},
      ISSN = {2193-8407,1512-2891},
   MRCLASS = {55R99 (18G60 55U35)},
  MRNUMBER = {3423073},
MRREVIEWER = {Timothy\ Porter},
       DOI = {10.1007/s40062-014-0083-6},
       URL = {https://doi.org/10.1007/s40062-014-0083-6},
}

@misc{camargo_notes_2026,
	title = {Notes on {Solid} {Geometry}},
	url = {http://arxiv.org/abs/2603.03012},
	doi = {10.48550/arXiv.2603.03012},
	urldate = {2026-05-26},
	publisher = {arXiv},
	author = {Camargo, Juan Esteban Rodríguez},
	month = mar,
	year = {2026},
	note = {arXiv:2603.03012 [math.AG]},
}

@article {XiaLiuMorphism,
    AUTHOR = {Xia, Mingchen},
     TITLE = {On {L}iu morphisms in non-{A}rchimedean geometry},
   JOURNAL = {Israel J. Math.},
  FJOURNAL = {Israel Journal of Mathematics},
    VOLUME = {255},
      YEAR = {2023},
    NUMBER = {2},
     PAGES = {821--850},
      ISSN = {0021-2172,1565-8511},
   MRCLASS = {14G22 (18F20 32C99 32P05)},
  MRNUMBER = {4619557},
       DOI = {10.1007/s11856-022-2456-6},
       URL = {https://doi.org/10.1007/s11856-022-2456-6},
}

@incollection{BraveNew,
    AUTHOR = {To\"en, Bertrand and Vezzosi, Gabriele},
     TITLE = {Brave new algebraic geometry and global derived moduli spaces
              of ring spectra},
 BOOKTITLE = {Elliptic cohomology},
    SERIES = {London Math. Soc. Lecture Note Ser.},
    VOLUME = {342},
     PAGES = {325--359},
 PUBLISHER = {Cambridge Univ. Press, Cambridge},
      YEAR = {2007},
      ISBN = {978-0-521-70040-5; 0-521-70040-X},
   MRCLASS = {14F20 (14A20 55P43 55U40)},
  MRNUMBER = {2330521},
MRREVIEWER = {Thomas\ H\"uttemann},
       DOI = {10.1017/CBO9780511721489.018},
       URL = {https://doi.org/10.1017/CBO9780511721489.018},
}

@book {BerkeleyLectures,
    AUTHOR = {Scholze, Peter and Weinstein, Jared},
     TITLE = {Berkeley lectures on {$p$}-adic geometry},
    SERIES = {Annals of Mathematics Studies},
    VOLUME = {207},
 PUBLISHER = {Princeton University Press, Princeton, NJ},
      YEAR = {2020},
     PAGES = {x+250},
      ISBN = {978-0-691-20209-9; 978-0-691-20208-2; 978-0-691-20215-0},
   MRCLASS = {14G45 (14A15 14F30 14G22 14G35 14M15)},
  MRNUMBER = {4446467},
MRREVIEWER = {Lance\ Edward\ Miller},
}

@article {BrantnerMathew,
    AUTHOR = {Brantner, D. Lukas B. and Mathew, Akhil},
     TITLE = {Deformation theory and partition {L}ie algebras},
   JOURNAL = {Acta Math.},
  FJOURNAL = {Acta Mathematica},
    VOLUME = {235},
      YEAR = {2025},
    NUMBER = {1},
     PAGES = {1--148},
      ISSN = {0001-5962,1871-2509},
   MRCLASS = {14D06 (17B55 32G13)},
  MRNUMBER = {4995932},
       DOI = {10.4310/acta.2025.v235.n1.a1},
       URL = {https://doi.org/10.4310/acta.2025.v235.n1.a1},
}

@misc{raksit_hochschild_2026,
	title = {Hochschild homology and the derived de {Rham} complex revisited},
	url = {http://arxiv.org/abs/2007.02576},
	doi = {10.48550/arXiv.2007.02576},
	urldate = {2026-04-08},
	publisher = {arXiv},
	author = {Raksit, Arpon},
	month = jan,
	year = {2026},
	note = {arXiv:2007.02576 [math]},
}

@misc{hekking_khan_deformation_2025,
	title = {Deformation to the normal bundle and blow-ups via derived {Weil} restrictions},
	url = {http://arxiv.org/abs/2511.19412},
	doi = {10.48550/arXiv.2511.19412},
	urldate = {2026-03-18},
	publisher = {arXiv},
	author = {Hekking, Jeroen and Khan, Adeel A. and Rydh, David},
	month = nov,
	year = {2025},
	note = {arXiv:2511.19412 [math]},
}

@article {KhanVirtualCartier,
    AUTHOR = {Khan, Adeel A. and Rydh, David},
     TITLE = {Virtual {C}artier divisors and blow-ups},
   JOURNAL = {Selecta Math. (N.S.)},
  FJOURNAL = {Selecta Mathematica. New Series},
    VOLUME = {31},
      YEAR = {2025},
    NUMBER = {4},
     PAGES = {Paper No. 67, 28},
      ISSN = {1022-1824,1420-9020},
   MRCLASS = {14A30 (14A20)},
  MRNUMBER = {4932694},
       DOI = {10.1007/s00029-025-01060-7},
       URL = {https://doi.org/10.1007/s00029-025-01060-7},
}

@article {AnalyticHKR,
    AUTHOR = {Kelly, Jack and Kremnizer, Kobi and Mukherjee, Devarshi},
     TITLE = {An analytic {H}ochschild-{K}ostant-{R}osenberg theorem},
   JOURNAL = {Adv. Math.},
  FJOURNAL = {Advances in Mathematics},
    VOLUME = {410},
      YEAR = {2022},
     PAGES = {Paper No. 108694, 84},
      ISSN = {0001-8708,1090-2082},
   MRCLASS = {14A30},
  MRNUMBER = {4491247},
MRREVIEWER = {Ralf\ Meyer},
       DOI = {10.1016/j.aim.2022.108694},
       URL = {https://doi.org/10.1016/j.aim.2022.108694},
}

@phdthesis{SoorThesis,
    AUTHOR = {Soor, Arun},
     TITLE = {Topics in derived analytic geometry},
SCHOOL = {Mathematical Institute, University of Oxford},
      YEAR = {2025},
    MONTH = {August},
}

@misc{kelly_localising_2025,
	title = {Localising invariants in derived bornological geometry},
	url = {http://arxiv.org/abs/2505.15750},
	doi = {10.48550/arXiv.2505.15750},
	urldate = {2026-03-14},
	publisher = {arXiv},
	author = {Kelly, Jack and Mukherjee, Devarshi},
	month = oct,
	year = {2025},
	note = {arXiv:2505.15750 [math]},
}

@article{Lutkebohmert1990,
author = {Lütkebohmert, Werner},
journal = {Mathematische Annalen},
number = {1-3},
pages = {341-372},
title = {Formal-algebraic and rigid-analytic geometry.},
url = {http://eudml.org/doc/164642},
volume = {286},
year = {1990},
}

@book{SGA4-1,
     TITLE = {Th\'eorie des topos et cohomologie \'etale des sch\'emas.
              {T}ome 1: {T}h\'eorie des topos},
    SERIES = {Lecture Notes in Mathematics},
    VOLUME = {269},
      NOTE = {S\'eminaire de G\'eom\'etrie Alg\'ebrique du Bois-Marie
              1963--1964 (SGA 4)},
 PUBLISHER = {Springer-Verlag, Berlin-New York},
        editor = {Artin, Michael and Grothendieck, Alexander and Verdier, J. L. and Bourbaki, N. and Deligne, P. and Saint-Donat, Bernard},
      YEAR = {1972},
     PAGES = {xix+525},
     shorthand = {SGA4-1},
   MRCLASS = {14-06},
  MRNUMBER = {354652},
}

@article {BenBassatHekkingBlowUp,
    AUTHOR = {Ben-Bassat, Oren and Hekking, Jeroen},
     TITLE = {Blow-ups and normal bundles in connective and nonconnective
              derived geometries},
   JOURNAL = {Adv. Math.},
  FJOURNAL = {Advances in Mathematics},
    VOLUME = {480},
      YEAR = {2025},
     PAGES = {Paper No. 110530, 59},
      ISSN = {0001-8708,1090-2082},
   MRCLASS = {14A30 (14D23 14F08 14N35)},
  MRNUMBER = {4959048},
       DOI = {10.1016/j.aim.2025.110530},
       URL = {https://doi.org/10.1016/j.aim.2025.110530},
}

@article {ToenChampsAffines,
    AUTHOR = {To{\"e}n, Bertrand},
     TITLE = {Champs affines},
   JOURNAL = {Selecta Math. (N.S.)},
  FJOURNAL = {Selecta Mathematica. New Series},
    VOLUME = {12},
      YEAR = {2006},
    NUMBER = {1},
     PAGES = {39--135},
      ISSN = {1022-1824,1420-9020},
   MRCLASS = {14F35 (14A20 18F10 55U35)},
  MRNUMBER = {2244263},
MRREVIEWER = {Mark\ Hovey},
       DOI = {10.1007/s00029-006-0019-z},
       URL = {https://doi.org/10.1007/s00029-006-0019-z},
}

@misc{DAnG,
	title = {A {Perspective} on the {Foundations} of {Derived} {Analytic} {Geometry}},
	url = {http://arxiv.org/abs/2405.07936},
	doi = {10.48550/arXiv.2405.07936},
	urldate = {2024-07-01},
	publisher = {arXiv},
	author = {Ben-Bassat, Oren and Kelly, Jack and Kremnizer, Kobi},
	month = may,
	year = {2024},
	note = {arXiv:2405.07936 [math]},
}

@article{huber_continuous_1993,
	title = {Continuous valuations},
	volume = {212},
	issn = {0025-5874},
	url = {https://doi.org/10.1007/BF02571668},
	doi = {10.1007/BF02571668},
	number = {3},
	journal = {Mathematische Zeitschrift},
	author = {Huber, Roland},
	year = {1993},
	mrnumber = {1207303},
	pages = {455--477},
}

@Book{JohnstoneStone,
 Author = {Johnstone, Peter T.},
 Title = {Stone spaces},
 FSeries = {Cambridge Studies in Advanced Mathematics},
 Series = {Camb. Stud. Adv. Math.},
 Volume = {3},
 Year = {1986},
 Publisher = {Cambridge University Press, Cambridge},
 Language = {English},
 zbMATH = {3940199},
 Zbl = {0586.54001}
}

@misc{liu_enhanced_2017,
	title = {Enhanced six operations and base change theorem for higher {Artin} stacks},
	url = {http://arxiv.org/abs/1211.5948},
	doi = {10.48550/arXiv.1211.5948},
	urldate = {2024-06-13},
	publisher = {arXiv},
	author = {Liu, Yifeng and Zheng, Weizhe},
	month = sep,
	year = {2017},
	note = {arXiv:1211.5948 [math]},
}

@article{MathewGalois,
    AUTHOR = {Mathew, Akhil},
     TITLE = {The {G}alois group of a stable homotopy theory},
   JOURNAL = {Adv. Math.},
  FJOURNAL = {Advances in Mathematics},
    VOLUME = {291},
      YEAR = {2016},
     PAGES = {403--541},
      ISSN = {0001-8708,1090-2082},
   MRCLASS = {14F35 (11F11 11S20 14F20 18D10 18G55 55P43 55U35)},
  MRNUMBER = {3459022},
MRREVIEWER = {Rui\ Miguel\ Saramago},
       DOI = {10.1016/j.aim.2015.12.017},
       URL = {https://doi.org/10.1016/j.aim.2015.12.017},
}

@misc{heyer_6-functor_2024,
	title = {6-{Functor} {Formalisms} and {Smooth} {Representations}},
	url = {http://arxiv.org/abs/2410.13038},
	doi = {10.48550/arXiv.2410.13038},
	urldate = {2025-01-21},
	publisher = {arXiv},
	author = {Heyer, Claudius and Mann, Lucas},
	month = oct,
	year = {2024},
	note = {arXiv:2410.13038 [math]},
}

@Book{GaitsgoryStudy1,
 Author = {Gaitsgory, Dennis and Rozenblyum, Nick},
 Title = {A study in derived algebraic geometry. {Volume} {I}: {Correspondences} and duality},
 FSeries = {Mathematical Surveys and Monographs},
 Series = {Mathematical Surveys and Monographs},
 ISSN = {0076-5376},
 Volume = {221},
 ISBN = {978-1-4704-3569-1; 978-1-4704-3568-4; 978-1-4704-4085-5},
 Year = {2017},
 Publisher = {Providence, RI: American Mathematical Society},
 Language = {English},
 zbMATH = {6780490},
 Zbl = {1408.14001}
}

@misc{zavyalov_poincare_2023,
      title={Poincar\'e Duality in abstract 6-functor formalisms}, 
      author={Bogdan Zavyalov},
      year={2026},
      eprint={2301.03821},
      archivePrefix={arXiv},
      primaryClass={math.AG},
      url={https://arxiv.org/abs/2301.03821}, 
}

@article{soor_quasicoherent_2023,
	title = {Quasicoherent sheaves for dagger analytic geometry},
	  journal = {Israel J. Math.},
	author = {Soor, Arun},
	month = nov,
	year = {2023},
    note = {to appear},
}

@incollection {ChiarellottoDuality,
    AUTHOR = {Chiarellotto, Bruno},
     TITLE = {Duality in rigid analysis},
 BOOKTITLE = {{$p$}-adic analysis ({T}rento, 1989)},
    SERIES = {Lecture Notes in Math.},
    VOLUME = {1454},
     PAGES = {142--172},
 PUBLISHER = {Springer, Berlin},
      YEAR = {1990},
      ISBN = {3-540-53477-6},
   MRCLASS = {32C37 (32E10 32P05)},
  MRNUMBER = {1094850},
MRREVIEWER = {Peter\ Ullrich},
       DOI = {10.1007/BFb0091137},
       URL = {https://doi.org/10.1007/BFb0091137},
}

@article {BeyerSerre,
    AUTHOR = {Beyer, Peter},
     TITLE = {On {S}erre-duality for coherent sheaves on rigid-analytic
              spaces},
   JOURNAL = {Manuscripta Math.},
  FJOURNAL = {Manuscripta Mathematica},
    VOLUME = {93},
      YEAR = {1997},
    NUMBER = {2},
     PAGES = {219--245},
}

@UNPUBLISHED{ScholzeSixFunctors,
    AUTHOR = "Scholze, Peter",
    TITLE = "{S}ix-{F}unctor {F}ormalisms",
    NOTE = "Available at \url{https://people.mpim-bonn.mpg.de/scholze/SixFunctors.pdf}",
    PAGES = "90",
    URL = "https://people.mpim-bonn.mpg.de/scholze/SixFunctors.pdf",
    YEAR = "2022",
}

@UNPUBLISHED{CondensedComplexGeometry,
    AUTHOR = "Clausen, Dustin and Scholze, Peter",
    TITLE = "Condensed {M}athematics and {C}omplex {G}eometry",
    NOTE = "Available at \url{https://people.mpim-bonn.mpg.de/scholze/Complex.pdf}",
    PAGES = "148",
    URL = "https://people.mpim-bonn.mpg.de/scholze/Complex.pdf",
    YEAR = "2022",
}

@Article{Lutkebohmertmaximum,
 Author = {Bosch, Siegfried and L{\"u}tkebohmert, Werner and Raynaud, Michel},
 Title = {Formal and rigid geometry. {III}: {The} relative maximum principle},
 FJournal = {Mathematische Annalen},
 Journal = {Math. Ann.},
 ISSN = {0025-5831},
 Volume = {302},
 Number = {1},
 Pages = {1--29},
 Year = {1995},
 Language = {English},
 DOI = {10.1007/BF01444485},
 zbMATH = {769467},
 Zbl = {0839.14013}
}

@Article{vanderPutetale,
 Author = {de Jong, Johan and van der Put, Marius},
 Title = {{\'E}tale cohomology of rigid analytic spaces},
 FJournal = {Documenta Mathematica},
 Journal = {Doc. Math.},
 ISSN = {1431-0635},
 Volume = {1},
 Pages = {1--56},
 Year = {1996},
 Language = {English},
 zbMATH = {897856},
 Zbl = {0922.14012}
}

@unpublished{SpectralAlgebraicGeometry,
    author = {Jacob Lurie},
    title = {Spectral Algebraic Geometry},
    note = {Available at the author's webpage \url{https://www.math.ias.edu/~lurie/}},
year= {2018},
}

@book {BGR,
    AUTHOR = {Bosch, Siegfried and G\"{u}ntzer, Ulrich and Remmert, Reinhold},
     TITLE = {Non-{A}rchimedean analysis},
    SERIES = {Grundlehren der mathematischen Wissenschaften [Fundamental
              Principles of Mathematical Sciences]},
    VOLUME = {261},
      NOTE = {A systematic approach to rigid analytic geometry},
 PUBLISHER = {Springer-Verlag, Berlin},
      YEAR = {1984},
     PAGES = {xii+436},
      ISBN = {3-540-12546-9},
   MRCLASS = {32K10 (30G05 46P05)},
  MRNUMBER = {746961},
MRREVIEWER = {W.\ Bartenwerfer},
       DOI = {10.1007/978-3-642-52229-1},
       URL = {https://doi.org/10.1007/978-3-642-52229-1},
}

@book {HuberEtale,
    AUTHOR = {Huber, Roland},
     TITLE = {\'{E}tale cohomology of rigid analytic varieties and adic
              spaces},
    SERIES = {Aspects of Mathematics},
    VOLUME = {E30},
 PUBLISHER = {Friedr. Vieweg \& Sohn, Braunschweig},
      YEAR = {1996},
     PAGES = {x+450},
      ISBN = {3-528-06794-2},
   MRCLASS = {14G22 (14F20)},
  MRNUMBER = {1734903},
MRREVIEWER = {Lorenzo\ Ramero},
       DOI = {10.1007/978-3-663-09991-8},
       URL = {https://doi.org/10.1007/978-3-663-09991-8},
}

@article {vanderPutSerre,
    AUTHOR = {van der Put, Marius},
     TITLE = {Serre duality for rigid analytic spaces},
   JOURNAL = {Indag. Math. (N.S.)},
  FJOURNAL = {Koninklijke Nederlandse Akademie van Wetenschappen.
              Indagationes Mathematicae. New Series},
    VOLUME = {3},
      YEAR = {1992},
    NUMBER = {2},
     PAGES = {219--235},
}

@book{toen_homotopical_2008,
	series = {Mem. {Am}. {Math}. {Soc}.},
	title = {Homotopical algebraic geometry. {II}: {Geometric} stacks and applications},
	volume = {902},
	isbn = {978-0-8218-4099-3 978-1-4704-0508-3},
	language = {English},
	publisher = {Providence, RI: American Mathematical Society (AMS)},
	author = {Toën, Bertrand and Vezzosi, Gabriele},
	year = {2008},
	doi = {10.1090/memo/0902},
	note = {ISSN: 0065-9266},
}

@book {HigherToposTheory,
    AUTHOR = {Lurie, Jacob},
     TITLE = {Higher topos theory},
    SERIES = {Annals of Mathematics Studies},
    VOLUME = {170},
 PUBLISHER = {Princeton University Press, Princeton, NJ},
      YEAR = {2009},
     PAGES = {xviii+925},
      ISBN = {978-0-691-14049-0; 0-691-14049-9},
   MRCLASS = {18-02 (18B25 18E35 18G30 18G55 55U40)},
  MRNUMBER = {2522659},
MRREVIEWER = {Mark\ Hovey},
       DOI = {10.1515/9781400830558},
       URL = {https://doi.org/10.1515/9781400830558},
}

@UNPUBLISHED{CondensedMathematics,
    AUTHOR = "Clausen, Dustin and Scholze, Peter",
    TITLE = "Condensed {M}athematics",
    NOTE = "Available at \url{https://www.math.uni-bonn.de/people/scholze/Condensed.pdf}",
    PAGES = "77",
    URL = "https://www.math.uni-bonn.de/people/scholze/Condensed.pdf",
    YEAR = "2019",
}

@article {BBKNonArch,
    AUTHOR = {Ben-Bassat, Oren and Kremnizer, Kobi},
     TITLE = {Non-archimedean analytic geometry as relative algebraic
              geometry},
   JOURNAL = {Ann. Fac. Sci. Toulouse Math. (6)},
  FJOURNAL = {Annales de la Facult\'{e} des Sciences de Toulouse.
              Math\'{e}matiques. S\'{e}rie 6},
    VOLUME = {26},
      YEAR = {2017},
    NUMBER = {1},
}

@misc{kerodon,
    title        = {Kerodon},
    author       = {Jacob Lurie},
    howpublished = {\url{https://kerodon.net}},
    year         = {2018},
  }

@article{GrosseKlonneRigid,
 Author = {Grosse-Kl{\"o}nne, Elmar},
 Title = {Rigid analytic spaces with overconvergent structure sheaf},
 FJournal = {Journal f{\"u}r die Reine und Angewandte Mathematik},
 Journal = {J. Reine Angew. Math.},
 ISSN = {0075-4102},
 Volume = {519},
 Pages = {73--95},
 Year = {2000},
 Language = {English},
 DOI = {10.1515/crll.2000.018},
 zbMATH = {1414388},
 Zbl = {0945.14013}
}

@article{schneiders_quasi-abelian_1999,
	title = {Quasi-abelian categories and sheaves},
	volume = {76},
	issn = {0249-633X},
	url = {https://eudml.org/doc/94927},
	language = {eng},
	urldate = {2022-12-21},
	journal = {Mémoires de la Société Mathématique de France},
	author = {Schneiders, Jean-Pierre},
	year = {1999},
	note = {Publisher: Société mathématique de France},
	pages = {III1--VI140},
}

@article{emerton_interpolation_2006,
	title = {On the interpolation of systems of eigenvalues attached to automorphic {Hecke} eigenforms},
	volume = {164},
	issn = {1432-1297},
	url = {https://doi.org/10.1007/s00222-005-0448-x},
	doi = {10.1007/s00222-005-0448-x},
	number = {1},
	journal = {Inventiones mathematicae},
	author = {Emerton, Matthew},
	month = apr,
	year = {2006},
	pages = {1--84},
}

@article{pan_locally_2022-1,
	title = {On locally analytic vectors of the completed cohomology of modular curves},
	volume = {10},
	doi = {10.1017/fmp.2022.1},
	journal = {Forum of Mathematics, Pi},
	author = {Pan, Lue},
	year = {2022},
	note = {Publisher: Cambridge University Press},
	pages = {e7},
}

@article {Equivariant ,
    AUTHOR = {Ardakov, Konstantin},
     TITLE = {Equivariant {$\wideparen{\mathcal D}$}-modules on rigid analytic spaces},
   JOURNAL = {Ast\'{e}risque},
  FJOURNAL = {Ast\'{e}risque},
    NUMBER = {423},
      YEAR = {2021},
     PAGES = {161},
      ISSN = {0303-1179},
      ISBN = {978-2-85629-936-4},
   MRCLASS = {14F10 (14G22 32C38)},
  MRNUMBER = {4234536},
       DOI = {10.24033/ast},
       URL = {https://doi.org/10.24033/ast},
}

\Addresses
\end{document}